\newtheorem{thm}{Theorem}[section]
\newtheorem*{thm*}{Theorem}
\newtheorem{claim}[thm]{Claim}
\newtheorem{cor}[thm]{Corollary}
\newtheorem{lem}[thm]{Lemma}
\newtheorem*{lem*}{Lemma}
\newtheorem{mainthm}{Theorem}
\newtheorem*{mainthm*}{Theorem}
\newtheorem{maincor}[mainthm]{Corollary}
\newtheorem{prop}[thm]{Proposition}
\theoremstyle{definition}
\newtheorem*{case*}{Case}
\newtheorem{defn}[thm]{Definition}
\newtheorem*{defn*}{Definition}
\newtheorem*{exmp*}{Example}
\newtheorem{hyp}[thm]{Hypothesis}
\newtheorem{step}{Step}\renewcommand{\thestep}{}
\theoremstyle{remark}
\newtheorem{case}{Case}\renewcommand{\thecase}{}
\newtheorem{rmk}[thm]{Remark}
\newtheorem*{rmk*}{Remark}
\def\alphenumi{
  \def\theenumi{\alph{enumi}}
  \def\p@enumi{\theenumi}
  \def\labelenumi{(\@alph\c@enumi)}}
\def\thecase{\@arabic\c@case}
\def\thestep{\@arabic\c@step}
\def\hhmm{\number\hh:\ifnum\mm<10{}0\fi\number\mm}
\let\oldmarginpar\marginpar
\renewcommand\marginpar[1]{\-\oldmarginpar[\raggedleft\footnotesize #1]%
{\raggedright\footnotesize #1}}
\newcommand\CC{\mathbb{C}}
\newcommand\EE{\mathbb{E}}
\newcommand\KK{\mathbb{K}}
\newcommand\NN{\mathbb{N}}
\newcommand\RR{\mathbb{R}}
\newcommand\VV{\mathbb{V}}
\newcommand\ZZ{\mathbb{Z}}
\newcommand\cX{{\mathcal{X}}}
\newcommand\cY{{\mathcal{Y}}}
\newcommand\cZ{{\mathcal{Z}}}
\newcommand\fg{{\mathfrak{g}}}
\newcommand\fu{{\mathfrak{u}}}
\newcommand\fX{{\mathfrak{X}}}
\newcommand\sA{{\mathscr{A}}}
\newcommand\sB{{\mathscr{B}}}
\newcommand\sC{{\mathscr{C}}}
\newcommand\sD{{\mathscr{D}}}
\newcommand\sE{{\mathscr{E}}}
\newcommand\sF{{\mathscr{F}}}
\newcommand\sG{{\mathscr{G}}}
\newcommand\sH{{\mathscr{H}}}
\newcommand\sL{{\mathscr{L}}}
\newcommand\sM{{\mathscr{M}}}
\newcommand\sO{{\mathscr{O}}}
\newcommand\sP{{\mathscr{P}}}
\newcommand\sR{{\mathscr{R}}}
\newcommand\sU{{\mathscr{U}}}
\newcommand\sV{{\mathscr{V}}}
\newcommand\sX{{\mathscr{X}}}
\newcommand\sY{{\mathscr{Y}}}
\newcommand\sZ{{\mathscr{Z}}}
\newcommand\bchi{{\boldsymbol{\chi}}}
\newcommand\bB{{\mathbf{B}}}
\newcommand\eps{\varepsilon}
\newcommand\su{{\mathfrak{s}\mathfrak{u}}}
\newcommand\SO{\operatorname{SO}}
\newcommand\U{\operatorname{U}}
\newcommand\less{\setminus}
\newcommand\ad{{\operatorname{ad}}}
\newcommand\Ad{{\operatorname{Ad}}}
\newcommand\Aut{\operatorname{Aut}}
\newcommand\Center{\operatorname{Center}}
\newcommand\dist{\operatorname{dist}}
\newcommand\End{\operatorname{End}}
\newcommand{\esssup}{\operatornamewithlimits{ess\ sup}}
\newcommand\Exp{\operatorname{Exp}}
\newcommand\Hom{\operatorname{Hom}}
\newcommand\Ind{\operatorname{Index}}
\newcommand\Ker{\operatorname{Ker}}
\newcommand\Real{\operatorname{Re}}
\newcommand\Ric{\operatorname{Ric}}
\newcommand\Ran{\operatorname{Ran}}
\newcommand\Stab{\operatorname{Stab}}
\newcommand\tr{\operatorname{tr}}
\newcommand\vol{\operatorname{vol}}
\newcommand\Vol{\operatorname{Vol}}
\newcommand\apriori{{\emph{a priori }}}
\newcommand\Apriori{{\emph{A priori }}}
\newcommand\id{{\mathrm{id}}}
\newcommand\loc{{\mathrm{loc}}}
\newcommand\mutatis{{\emph{mutatis mutandis }}}
\newcommand\spinc{\text{$\text{spin}^c$ }}
\newcommand\Spinc{\text{$\text{Spin}^c$}}
\numberwithin{equation}{section}
\numberwithin{section}{chapter}
\begin{document}

\frontmatter

\title[{\L}ojasiewicz--Simon gradient inequalities]{{\L}ojasiewicz--Simon gradient inequalities for coupled Yang--Mills energy functions}

\author[Paul M. N. Feehan]{Paul M. N. Feehan}
\author[Manousos Maridakis]{Manousos Maridakis}


\dedicatory{Paul Feehan dedicates this monograph to the memory of his parents, Martin and Odilla Feehan.\\[2pt]
Manousos Maridakis dedicates this monograph to his parents, Petros and Pavlina Maridakis.}

\subjclass[2010]{Primary 58E15, 57R57; secondary 37D15, 58D27, 70S15, 81T13}



\maketitle

\setcounter{page}{7}

\tableofcontents

\chapter*{Preface}
\label{chap:Preface}
Our primary goal in this monograph is to prove \emph{{\L}ojasiewicz--Simon gradient inequalities} for coupled Yang--Mills energy functions using Sobolev spaces that impose \emph{minimal regularity requirements} on pairs of connections and sections. Our {\L}ojasiewicz--Simon gradient inequalities for coupled Yang--Mills energy functions generalize that of the pure Yang--Mills energy function due to the first author \cite[Theorems 23.1 and 23.17]{Feehan_yang_mills_gradient_flow_v4} for base manifolds of arbitrary dimension and due to R\r{a}de \cite[Proposition 7.2]{Rade_1992} for dimensions two and three.

Our {\L}ojasiewicz--Simon gradient inequalities for coupled Yang--Mills energy functions (for example, Theorems \ref{mainthm:Lojasiewicz-Simon_gradient_inequality_boson_Yang--Mills_energy_function} and \ref{mainthm:Lojasiewicz-Simon_gradient_inequality_fermion_Yang--Mills_energy_function} and the many other examples in Section \ref{sec:Lojasiewicz-Simon_gradient_inequality_coupled Yang--Mills_function}) are proved by applying the {\L}ojasiewicz--Simon gradient inequality for an abstract analytic function on a Banach space given by Theorem \ref{mainthm:Lojasiewicz-Simon_gradient_inequality} from our article \cite{Feehan_Maridakis_Lojasiewicz-Simon_harmonic_maps_v6}. Now Theorem \ref{mainthm:Lojasiewicz-Simon_gradient_inequality} requires that the Hessian operator for the analytic function be \emph{Fredholm with index zero}. While the Hessian operator for the coupled Yang--Mills energy function is a linear second-order partial differential operator, it only becomes \emph{elliptic} when combined with a Coulomb gauge condition \cite{DK, FU}. Coupled Yang--Mills energy functions are invariant under the action of gauge transformations (or bundle automorphisms) and so, in principle, one can always find a gauge transformation to produce the required Coulomb gauge condition with the aid of a \emph{slice theorem}. However, in order to prove the most useful version of the {\L}ojasiewicz--Simon gradient inequality, one must have a stronger version of the slice theorem for the action of the group of gauge transformations that goes beyond the usual statements found in standard references such as Donaldson and Kronheimer \cite{DK} or Freed and Uhlenbeck \cite{FU} for connections over four-dimensional manifolds and proved by applying the Implicit Function Theorem. Therefore, a secondary goal of this monograph is to prove a slice theorem using Sobolev norms with \emph{borderline} (or \emph{critical}) Sobolev exponents and which is valid in all dimensions.

Since its discovery by {\L}ojasiewicz in the context of analytic functions on Euclidean space \cite{Lojasiewicz_1965} and generalization by Simon to a class of analytic functions on certain H\"older spaces \cite[Theorem 3]{Simon_1983}, {\L}ojasiewicz--Simon gradient inequalities have played a major role in analyzing questions such as
\begin{inparaenum}[\itshape a\upshape)]
\item global existence, convergence, and analysis of singularities for solutions to nonlinear evolution equations that are realizable as gradient or gradient-like systems for an energy function,
\item uniqueness of tangent cones, and
\item energy gaps and discreteness of energies.
\end{inparaenum}

There are essentially four approaches to establishing a {\L}ojasiewicz--Simon gradient inequality for a particular energy function arising in geometric analysis or mathematical physics:
\begin{inparaenum}[(1)]
\item \label{item:LS_first_principles} establish the inequality from first principles without relying on the {\L}ojasiewicz inequality for analytic functions on Euclidean space,
\item \label{item:LS_Lyapunov-Schmidt_reduction_to_Lojasiewicz} employ Lyapunov--Schmidt reduction to deduce the gradient inequality for an analytic function on a Banach space from the {\L}ojasiewicz inequality for analytic functions on Euclidean space,
\item \label{item:Adapt_Simon} adapt the argument employed by Simon in the proof of his \cite[Theorem 3]{Simon_1983}, or
\item \label{item:Apply_abstract_LS} apply an abstract version of the {\L}ojasiewicz--Simon gradient inequality for an analytic or Morse--Bott function on a Banach space.
\end{inparaenum}
Approach \eqref{item:LS_Lyapunov-Schmidt_reduction_to_Lojasiewicz} is exactly that employed by Simon in \cite{Simon_1983} and by R\r{a}de for the Yang-Mills energy function \cite{Rade_1992}. Occasionally a development from first principles may be necessary, as discussed by Colding and Minicozzi in \cite{Colding_Minicozzi_2014sdg}, or may yield the best results, as discussed by the first author in \cite{Feehan_lojasiewicz_inequality_ground_state}. However, in most cases one can derive a {\L}ojasiewicz--Simon gradient inequality for a specific energy function from an abstract version for an analytic or Morse--Bott function on a Banach space. For this strategy to work well, one desires an abstract {\L}ojasiewicz--Simon gradient inequality with the weakest possible hypotheses and proofs of such gradient inequalities were provided by the authors in our article \cite[Theorems 1--4]{Feehan_Maridakis_Lojasiewicz-Simon_harmonic_maps_v6}.

Appendix \ref{chap:Convergence_gradient_flows_validity_Lojasiewicz-Simon_gradient_inequality} describes the application of {\L}ojasiewicz--Simon gradient inequalities to long-time existence and convergence for solutions to gradient systems for an analytic function while our monograph \cite{Feehan_yang_mills_gradient_flow_v4} provides a comprehensive development of these ideas. Given a constant $T > 0$, an open subset $\sU$ of a Banach space $\sX$, a smooth function $\sE:\sU\to\RR$, and a point $x_0 \in \sU$, a smooth map, $u:[0, T) \to \sX$, is called a \emph{gradient flow} for $\sE$ if it is a solution to the Cauchy problem for the \emph{gradient system},
\[
\dot u(t) = -\sE'(u(t)), \quad\text{for all } t\in [0,T), \quad u(0) = x_0,
\]
as an identity in $\sX^*$ (the continuous dual space of $\sX$), where we abbreviate $\dot u = dt/dt$ and $\sE'(x):\sX \to \sX^*$ is the differential of $\sE$ at a point $x\in\sU$. The best known examples of gradient flows occurring in geometric analysis include pure and coupled Yang-Mills flows, harmonic map flow, Ricci curvature flow, mean curvature flow, and Yamabe scalar curvature flow.

Simon's approach \cite{Simon_1983} to studying long-time existence and convergence of a solution to the preceding gradient system relies on his celebrated generalization to infinite dimensions of the {\L}ojasiewicz gradient inequality to a specific class of analytic energy functions on $C^{2,\alpha}$ H\"older spaces of sections of a vector bundle over a closed, smooth Riemannian manifold. Over the intervening years, his {\L}ojasiewicz--Simon gradient inequality has since been generalized by many authors --- see our article \cite{Feehan_Maridakis_Lojasiewicz-Simon_harmonic_maps_v6} and references cited therein and our Theorems \ref{mainthm:Lojasiewicz-Simon_gradient_inequality}, \ref{mainthm:Lojasiewicz-Simon_gradient_inequality_dualspace}, and \ref{mainthm:Lojasiewicz-Simon_gradient_inequality2} in this monograph. For example, if $\sX$ is continuously embedded in a Hilbert space $\sH$ and $\sE$ is analytic and $x_\infty \in \sX$ is a critical point such that the Hessian operator, $\sE''(x_\infty):\sX\to\sX^*$, is Fredholm with index zero, then there exist constants $Z \in (0,\infty)$, $\sigma \in (0,1]$, and $\theta \in [1/2, 1)$ such that gradient map obeys \cite[Theorem 1]{Feehan_Maridakis_Lojasiewicz-Simon_harmonic_maps_v6}
\[
\|\sE'(x)\|_{\sX^*} \geq Z|\sE(x) - \sE(x_\infty)|^\theta,
\quad\text{for all } x \in \sX \text{ such that } \|x-x_\infty\|_\sX < \sigma.
\]
The preceding inequality is precisely that asserted by Theorem \ref{mainthm:Lojasiewicz-Simon_gradient_inequality_dualspace} here. We shall apply our more general Theorems \ref{mainthm:Lojasiewicz-Simon_gradient_inequality} and \ref{mainthm:Lojasiewicz-Simon_gradient_inequality2} to derive {\L}ojasiewicz--Simon gradient inequalities for all of the energy functions considered in this monograph, namely the
\begin{inparaenum}[\itshape a\upshape)]
\item pure Yang--Mills energy function,
\item boson coupled Yang--Mills energy function,
\item fermion coupled Yang--Mills energy function,
\item Yang--Mills--Higgs energy function,
\item Seiberg--Witten energy function,
\item non-Abelian monopole energy function, and the
\item multiple spinor Seiberg--Witten energy function.
\end{inparaenum}

\chapter*{Acknowledgments}
\label{chap:Acknowledgments}
Paul Feehan is very grateful to the Max Planck Institute for Mathematics, Bonn, and the Institute for Advanced Study, Princeton, for their support during the preparation of this monograph. He would like to thank Peter Tak{\'a}{\v{c}} for many helpful conversations regarding the {\L}ojasiewicz--Simon gradient inequality, for explaining his proof of \cite[Proposition 6.1]{Feireisl_Takac_2001} and how it can be generalized as described in this monograph, and for his kindness when hosting his visit to the Universit{\"a}t R{\"o}stock. He would also like to thank Brendan Owens for several useful conversations and his generosity when hosting his visit to the University of Glasgow. He thanks Brendan Owens and Chris Woodward for helpful communications and comments regarding Morse--Bott theory, Alessandro Carlotto for useful comments regarding the integrability of critical points of the Yamabe function. We thank Thomas Parker and Penny Smith for helpful questions and comments.
\bigskip
\bigskip

\rightline{January 14, 2019\footnote{To appear in \emph{Memoirs of the American Mathematical Society.}}}
\bigskip

\leftline{Paul M. N. Feehan and Manousos Maridakis}
\leftline{Department of Mathematics}
\leftline{Rutgers, The State University of New Jersey}
\leftline{Piscataway, NJ 08854-8019}
\leftline{United States}





\mainmatter

\chapter{Introduction}
\label{chap:Introduction}
Our primary goal in this work is to prove {\L}ojasiewicz--Simon gradient inequalities for coupled Yang--Mills energy functions. A key feature of our results is that we use systems of Sobolev norms that are as \emph{weak as possible}. This property is very useful in applications to the analysis of gradient flows, the primary example of an application of {\L}ojasiewicz--Simon gradient inequalities in geometric analysis as illustrated by results of the first author in \cite{Feehan_yang_mills_gradient_flow_v4}. Our gradient inequalities use $W^{1,p}$ Sobolev norms for coupled Yang--Mills pairs over manifolds of arbitrary dimension $d \geq 2$, including the case $p=d/2$, where the Sobolev exponent is \emph{borderline} (or \emph{critical}) in sense that we explain later in this Introduction.

In the remainder of our Introduction, we outline the history of {\L}ojasiewicz--Simon gradient inequalities in Section \ref{sec:Lojasiewicz-Simon_history} and survey their applications in geometric analysis, mathematical physics, and applied mathematics. In Section \ref{sec:Lojasiewicz-Simon_gradient_inequality_abstract_function}, we review our abstract {\L}ojasiewicz--Simon gradient inequality for an analytic function on a Banach space. We state our results on {\L}ojasiewicz--Simon gradient inequalities for coupled Yang--Mills energy functions in Section \ref{sec:Lojasiewicz-Simon_gradient_inequality_coupled Yang--Mills_function}. Unlike the case of the harmonic map energy function considered in \cite{Feehan_Maridakis_Lojasiewicz-Simon_harmonic_maps_v6}, one must restrict the Hessian of a coupled Yang--Mills energy function to a suitable slice for the action of the group of gauge transformations in order to obtain an elliptic operator that has the Fredholm property required by our abstract {\L}ojasiewicz--Simon gradient inequality \cite[Theorem 2]{Feehan_Maridakis_Lojasiewicz-Simon_harmonic_maps_v6}. In order to obtain the strongest possible version of the resulting {\L}ojasiewicz--Simon gradient inequality for a coupled Yang--Mills energy function, we therefore need to prove existence of a global transformation to Coulomb gauge valid for borderline Sobolev exponents --- going beyond standard results described in \cite{DK, FU} or previous results due to the first author \cite{FeehanSlice} --- and we state the required theorem in Section \ref{sec:Automorphisms and transformation to Coulomb gauge}.

\section[A brief history of {\L}ojasiewicz--Simon gradient inequalities]{A brief history of {\L}ojasiewicz--Simon gradient inequalities and their applications to gradient flows and energy gaps in geometric analysis}
\label{sec:Lojasiewicz-Simon_history}
Since its discovery by {\L}ojasiewicz in the context of analytic functions on Euclidean space \cite[Proposition 1, p. 92]{Lojasiewicz_1965} and subsequent generalization by Simon to a class of analytic functions on certain H\"older spaces \cite[Theorem 3]{Simon_1983}, the \emph{{\L}ojasiewicz--Simon gradient inequality} has played a significant role in analyzing questions such as
\begin{inparaenum}[\itshape a\upshape)]
\item global existence, convergence, and analysis of singularities for solutions to nonlinear evolution equations that are realizable as gradient-like systems for an energy function,
\item uniqueness of tangent cones, and
\item energy gaps and discreteness of energies.
\end{inparaenum}
For applications of the {\L}ojasiewicz--Simon gradient inequality to gradient flows arising in geometric analysis, beginning with the harmonic map energy function, we refer to Irwin \cite{IrwinThesis}, Kwon \cite{KwonThesis}, Liu and Yang \cite{Liu_Yang_2010}, Simon \cite{Simon_1985}, and Topping \cite{ToppingThesis, Topping_1997}; for applications to gradient flow for the Chern--Simons function, see Morgan, Mrowka, and Ruberman \cite{MMR}; for applications to gradient flow for the Yamabe function, see Brendle \cite[Lemma 6.5 and Equation (100)]{Brendle_2005} and Carlotto, Chodosh, and Rubinstein \cite{Carlotto_Chodosh_Rubinstein_2015}; for applications to Yang--Mills gradient flow, we refer to our monograph \cite{Feehan_yang_mills_gradient_flow_v4}, R\r{a}de \cite{Rade_1992}, and Yang \cite{Yang_2003aim}; for applications to mean curvature flow, we refer to the survey by Colding and Minicozzi \cite{Colding_Minicozzi_2014sdg}; and for applications to Ricci curvature flow, see Ache \cite{Ache_2011arxiv}, Haslhofer \cite{Haslhofer_2012cvpde}, Haslhofer and M{\"u}ller \cite{Haslhofer_Muller_2014}, and Kr{\"o}ncke \cite{Kroncke_2015cvpde, Kroncke_2013arxiv}.

For applications of the {\L}ojasiewicz--Simon gradient inequality to proofs of global existence, convergence, convergence rate, and stability of nonlinear evolution equations arising in other areas of mathematical physics (including the Cahn--Hilliard, Ginzburg--Landau, Kirchoff--Carrier, porous medium, reaction-diffusion, and semilinear heat and wave equations), we refer to the monograph by Huang \cite{Huang_2006} for a comprehensive introduction and to the articles by Chill \cite{Chill_2003, Chill_2006}, Chill and Fiorenza \cite{Chill_Fiorenza_2006}, Chill, Haraux, and Jendoubi \cite{Chill_Haraux_Jendoubi_2009}, Chill and Jendoubi \cite{Chill_Jendoubi_2003, Chill_Jendoubi_2007}, Feireisl and Simondon \cite{Feireisl_Simondon_2000}, Feireisl and Tak{\'a}{\v{c}} \cite{Feireisl_Takac_2001}, Grasselli, Wu, and Zheng \cite{Grasselli_Wu_Zheng_2009}, Haraux \cite{Haraux_2012}, Haraux and Jendoubi \cite{Haraux_Jendoubi_1998, Haraux_Jendoubi_2007, Haraux_Jendoubi_2011}, Haraux, Jendoubi, and Kavian \cite{Haraux_Jendoubi_Kavian_2003}, Huang and Tak{\'a}{\v{c}} \cite{Huang_Takac_2001}, Jendoubi \cite{Jendoubi_1998jfa}, Rybka and Hoffmann \cite{Rybka_Hoffmann_1998, Rybka_Hoffmann_1999}, Simon \cite{Simon_1983}, and Tak{\'a}{\v{c}} \cite{Takac_2000}. For applications to fluid dynamics, see the articles by Feireisl, Lauren{\c{c}}ot, and Petzeltov{\'a} \cite{Feireisl_Laurencot_Petzeltova_2007}, Frigeri, Grasselli, and Krej{\v{c}}{\'{\i}} \cite{Frigeri_Grasselli_Krejcic_2013}, Grasselli and Wu \cite{Grasselli_Wu_2013}, and Wu and Xu \cite{Wu_Xu_2013}.

For applications of the {\L}ojasiewicz--Simon gradient inequality to proofs of energy gaps and discreteness of energies for Yang--Mills connections, we refer to our article \cite{Feehan_yangmillsenergygapflat}. A key feature of our versions of the {\L}ojasiewicz--Simon gradient inequality for the pure Yang--Mills energy function \cite[Theorems 23.1 and 23.17]{Feehan_yang_mills_gradient_flow_v4} is that they hold for $W^{1,2}$ Sobolev norms for base manifolds of dimensions two, three or four and $W^{2,p}$ Sobolev norms for base manifolds of arbitrary dimension. Those norms are considerably weaker than the $C^{2,\alpha}$ H{\"o}lder norms originally employed by Simon in \cite[Theorem 3]{Simon_1983} and this affords considerably greater flexibility in applications. For example, when $(X,g)$ is a closed, four-dimensional, Riemannian manifold, the $W^{1,2}$ Sobolev norm on (bundle-valued) one-forms is (in a suitable sense) \emph{quasi-conformally invariant} with respect to conformal changes in the Riemannian metric $g$.

\section[{\L}ojasiewicz--Simon gradient inequalities for analytic functions]{{\L}ojasiewicz--Simon gradient inequalities for analytic functions on Banach spaces}
\label{sec:Lojasiewicz-Simon_gradient_inequality_abstract_function}
There are essentially three approaches to establishing a {\L}ojasiewicz--Simon gradient inequality for a particular energy function arising in geometric analysis or mathematical physics:
\begin{inparaenum}[\itshape 1\upshape)]
\item establish the inequality from first principles,
\item adapt the argument employed by Simon in the proof of his
\cite[Theorem 3]{Simon_1983}, or
\item apply an abstract version of the {\L}ojasiewicz--Simon gradient inequality for an analytic or Morse--Bott function on a Banach space.
\end{inparaenum}
Most famously, the first approach is exactly that employed by Simon in \cite{Simon_1983}, although this is also the avenue followed by Kwon \cite{KwonThesis}, Liu and Yang \cite{Liu_Yang_2010} and Topping \cite{ToppingThesis, Topping_1997} for the harmonic map energy function and by R\r{a}de for the Yang--Mills energy function. Occasionally a development from first principles may be necessary, as discussed by Colding and Minicozzi in \cite{Colding_Minicozzi_2014sdg}. However, in almost all of the examples cited in Section \ref{sec:Lojasiewicz-Simon_history}, one can derive a {\L}ojasiewicz--Simon gradient inequality for a specific application from an abstract version for an analytic or Morse--Bott function on a Banach space. For this strategy to work well, one desires an abstract {\L}ojasiewicz--Simon gradient inequality with the weakest possible hypotheses and a proof of such a gradient inequality (quoted as Theorem \ref{mainthm:Lojasiewicz-Simon_gradient_inequality} here) was the one of the goals of our article \cite{Feehan_Maridakis_Lojasiewicz-Simon_harmonic_maps_v6}.

We now recall from \cite{Feehan_Maridakis_Lojasiewicz-Simon_harmonic_maps_v6} a generalization of Simon's infinite-dimensional version \cite[Theorem 3]{Simon_1983} of the {\L}ojasiewicz gradient inequality \cite{Lojasiewicz_1965}. As we explained in detail in \cite{Feehan_Maridakis_Lojasiewicz-Simon_harmonic_maps_v6}, Theorem \ref{mainthm:Lojasiewicz-Simon_gradient_inequality} generalizes Huang's \cite[Theorems 2.4.2 (i) and 2.4.5]{Huang_2006} and other previously published versions of the {\L}ojasiewicz--Simon gradient inequality for analytic functions on Banach spaces.

We begin with the concept of a gradient map \cite[Section 2.1B]{Huang_2006}, \cite[Section 2.5]{Berger_1977}.

\begin{defn}[Gradient map]
\label{defn:Huang_2-1-1}
(See \cite[Definition 2.1.1]{Huang_2006}.)
Let $\sU\subset \sX$ be an open subset of a Banach space, $\sX$, and let $\tilde\sX$ be a Banach space with continuous embedding, $\tilde\sX \subseteqq \sX^*$. A continuous map, $\sM:\sU\to \tilde\sX$, is called a \emph{gradient map} if there exists a $C^1$ function, $\sE:\sU\to\RR$, such that
\begin{equation}
\label{eq:Differential_and_gradient_maps}
\sE'(x)v = \langle v,\sM(x)\rangle_{\sX\times\sX^*}, \quad \forall\, x \in \sU, \quad v \in \sX,
\end{equation}
where $\langle \cdot , \cdot \rangle_{\sX\times\sX^*}$ is the canonical bilinear form on $\sX\times\sX^*$. The real-valued function, $\sE$, is called a \emph{potential} for the gradient map, $\sM$.
\end{defn}

When $\tilde\sX = \sX^*$ in Definition \ref{defn:Huang_2-1-1}, then the differential and gradient maps coincide.

Let $\sX$ be a Banach space and let $\sX^*$ denote its continuous dual space. We call a bilinear form\footnote{Unless stated otherwise, all Banach spaces are considered to be real in this monograph.}, $b:\sX\times\sX \to \RR$, \emph{definite} if $b(x,x) \neq 0$ for all $x \in \sX\less\{0\}$. We say that a continuous \emph{embedding} of a Banach space into its continuous dual space, $\jmath:\sX\to\sX^*$, is \emph{definite} if the pullback of the canonical pairing, $\sX\times\sX \ni (x,y) \mapsto \langle x,\jmath(y)\rangle_{\sX\times\sX^*} \to \RR$, is a definite bilinear form.

\begin{mainthm}[{\L}ojasiewicz--Simon gradient inequality for analytic functions on Banach spaces]
\label{mainthm:Lojasiewicz-Simon_gradient_inequality}
(See \cite[Theorem 2]{Feehan_Maridakis_Lojasiewicz-Simon_harmonic_maps_v6}.)
Let $\sX$ and $\tilde\sX$ be Banach spaces with continuous embeddings, $\sX \subset \tilde\sX \subset \sX^*$, and such that the embedding, $\sX \subset \sX^*$, is definite. Let $\sU \subset \sX$ be an open subset, $\sE:\sU\to\RR$ be a $C^2$ function with real analytic gradient map, $\sM:\sU\to\tilde\sX$, and $x_\infty\in\sU$ be a critical point of $\sE$, that is, $\sM(x_\infty) = 0$. If $\sM'(x_\infty):\sX\to \tilde\sX$ is a Fredholm operator with index zero, then there are constants, $Z \in (0,\infty)$, and $\sigma \in (0,1]$, and $\theta \in [1/2, 1)$, with the following significance. If $x \in \sU$ obeys
\begin{equation}
\label{eq:Lojasiewicz-Simon_gradient_inequality_neighborhood}
\|x-x_\infty\|_\sX < \sigma,
\end{equation}
then
\begin{equation}
\label{eq:Lojasiewicz-Simon_gradient_inequality_analytic_function}
\|\sM(x)\|_{\tilde\sX} \geq Z|\sE(x) - \sE(x_\infty)|^\theta.
\end{equation}
\end{mainthm}

\begin{rmk}[Comments on the embedding hypothesis in Theorem \ref{mainthm:Lojasiewicz-Simon_gradient_inequality}]
\label{rmk:Embedding_hypothesis_Huang_theorem_2-4-5}
The hypothesis in Theorem \ref{mainthm:Lojasiewicz-Simon_gradient_inequality} on the continuous embedding, $\sX \subset \sX^*$, is easily achieved given a continuous embedding of $\sX$ into a Hilbert space $\sH$.
\end{rmk}

\begin{rmk}[On the choice of Banach spaces in applications of Theorem \ref{mainthm:Lojasiewicz-Simon_gradient_inequality}]
\label{rmk:Choice_Banach_and_Hilbert_spaces_Lojasiewicz-Simon_gradient_inequality}
The hypotheses of Theorem \ref{mainthm:Lojasiewicz-Simon_gradient_inequality} are designed to give the most flexibility in applications of a {\L}ojasiewicz--Simon gradient inequality to analytic functions on Banach spaces. An example of a convenient choice of Banach spaces modeled as Sobolev spaces, when $\sM'(x_\infty)$ is realized as an elliptic partial differential operator of order $m$, would be
\[
\sX = W^{k,p}(X;V), \quad \tilde\sX = W^{k-m,p}(X;V), \quad\text{and}\quad \sX^* = W^{-k,p'}(X;V),
\]
where $k\in\ZZ$ is an integer, $p \in (1,\infty)$ is a constant with dual H\"older exponent $p'\in(1,\infty)$ defined by $1/p+1/p'=1$, while $X$ is a closed Riemannian manifold of dimension $d\geq 2$ and $V$ is a Riemannian vector bundle with a compatible connection, $\nabla:C^\infty(X;V) \to C^\infty(X;T^*X\otimes V)$, and $W^{k,p}(X;V)$ denotes a Sobolev space defined in the standard way \cite{Aubin_1998}. When the integer $k$ is chosen large enough, the verification of analyticity of the gradient map, $\sM:\sU\to\tilde\sX$, is straightforward. Normally, that is the case when $k\geq m+1$ and $(k-m)p>d$ or $k-m=d$ and $p=1$, since $W^{k-m,p}(X;\CC)$ is then a Banach algebra by \cite[Theorem 4.39]{AdamsFournier}. If the Banach spaces are instead modeled as H\"older spaces, as in Simon \cite{Simon_1983}, a convenient choice of Banach spaces would be
\[
\sX = C^{k,\alpha}(X;V) \quad \text{and} \quad \tilde\sX = C^{k-m,\alpha}(X;V),
\]
where $\alpha \in (0,1)$ and $k\geq m$, and these H\"older spaces are defined in the standard way \cite{Aubin_1998}. Following Remark~\ref{rmk:Embedding_hypothesis_Huang_theorem_2-4-5}, the definiteness of the embedding  $C^{k,\alpha}(X;V)=\sX \subset \sX^*$ in this case  is the achieved by observing that $C^{k,\alpha}(X;V) \subset L^2(X;V)$.
\end{rmk}

We refer the reader to \cite[Theorem 4]{Feehan_Maridakis_Lojasiewicz-Simon_harmonic_maps_v6} for a statement and proof of our abstract {\L}ojasiewicz--Simon gradient inequality for Morse--Bott functions on Banach spaces.

Theorem \ref{mainthm:Lojasiewicz-Simon_gradient_inequality} appears to us to be the most widely applicable abstract version of the {\L}ojasiewicz--Simon gradient inequality that we are aware of in the literature. However, for applications where $\sM'(x_\infty)$ is realized as an elliptic partial differential operator of \emph{even} order, $m=2n$, and the nonlinearity of the gradient map is sufficiently mild, it often suffices to choose $\sX$ to be the Banach space, $W^{n,2}(X;V)$, and choose $\tilde\sX = \sX^*$ to be the Banach space, $W^{-n,2}(X;V)$. The distinction between the differential, $\sE'(x) \in \sX^*$, and the gradient, $\sM(x) \in \tilde\sX$, then disappears. Similarly, the distinction between the Hessian, $\sE''(x_\infty) \in (\sX\times\sX)^*$, and the Hessian operator, $\sM'(x_\infty) \in \sL(\sX,\tilde\sX)$, disappears. Finally, if $\sE:\sX\supset\sU \to \RR$ is real analytic, then the simpler Theorem \ref{mainthm:Lojasiewicz-Simon_gradient_inequality_dualspace} is often adequate for applications.

\begin{mainthm}[{\L}ojasiewicz--Simon gradient inequality for analytic functions on Banach spaces]
\label{mainthm:Lojasiewicz-Simon_gradient_inequality_dualspace}
(See \cite[Theorem 1]{Feehan_Maridakis_Lojasiewicz-Simon_harmonic_maps_v6}.)
Let $\sX \subset \sX^*$ be a continuous, definite embedding of a Banach space into its dual space. Let $\sU \subset \sX$ be an open subset, $\sE:\sU\to\RR$ be an analytic function, and $x_\infty\in\sU$ be a critical point of $\sE$, that is, $\sE'(x_\infty) = 0$. Assume that $\sE''(x_\infty):\sX\to \sX^*$ is a Fredholm operator with index zero. Then there are constants $Z \in (0, \infty)$, and $\sigma \in (0,1]$, and $\theta \in [1/2,1)$, with the following significance. If $x \in \sU$ obeys
\begin{equation}
\label{eq:Lojasiewicz-Simon_gradient_inequality_neighborhood_dualspace}
\|x-x_\infty\|_\sX < \sigma,
\end{equation}
then
\begin{equation}
\label{eq:Lojasiewicz-Simon_gradient_inequality_analytic_function_dualspace}
\|\sE'(x)\|_{\sX^*} \geq Z|\sE(x) - \sE(x_\infty)|^\theta.
\end{equation}
\end{mainthm}

While Theorem \ref{mainthm:Lojasiewicz-Simon_gradient_inequality} has important applications to proofs of global existence, convergence, convergence rates, and stability of gradient flows defined by an energy function, $\sE:\sX\supset \sU \to \RR$, with gradient map, $\sM:\sX\supset \sU \to \tilde\sX$, (see \cite[Section 2.1]{Feehan_yang_mills_gradient_flow_v4} for an introduction and Simon \cite{Simon_1983} for his pioneering development), the gradient inequality \eqref{eq:Lojasiewicz-Simon_gradient_inequality_analytic_function} is most useful when it has the form,
\[
\|\sM(x)\|_{\sH} \geq Z|\sE(x) - \sE(x_\infty)|^\theta, \quad\forall\, x \in \sU \text{ with } \|x-x_\infty\|_\sX < \sigma,
\]
where $\sH$ is a Hilbert space and the Banach space, $\sX$, is a dense subspace of $\sH$ with continuous embedding, $\sX \subset \sH$, and so $\sH^* \subset \sX^*$ is also a continuous embedding.  We refer to Appendix \ref{chap:Convergence_gradient_flows_validity_Lojasiewicz-Simon_gradient_inequality} for applications of this version of the gradient inequality to prove convergence of gradient flows and to Feehan and Maridakis \cite[Section 1.2]{Feehan_Maridakis_Lojasiewicz-Simon_harmonic_maps_v6} for further discussion.

As we shall explain further in Section \ref{subsec:Lojasiewicz-Simon_gradient_inequality_boson_fermion_Yang--Mills_function_L2}, an $L^2$ gradient inequality for a coupled Yang--Mills energy function like \eqref{eq:Boson_Yang--Mills_energy_function} or \eqref{eq:Fermion_Yang--Mills_energy_function} does not follow from Theorem \ref{mainthm:Lojasiewicz-Simon_gradient_inequality} when $X$ has dimension $d \geq 5$. However, the desired $L^2$ gradient inequalities \emph{are} implied by the forthcoming Theorem \ref{mainthm:Lojasiewicz-Simon_gradient_inequality2}.

\begin{mainthm}[Generalized {\L}ojasiewicz--Simon gradient inequality for analytic functions on Banach spaces]
\label{mainthm:Lojasiewicz-Simon_gradient_inequality2}
(See \cite[Theorem 3]{Feehan_Maridakis_Lojasiewicz-Simon_harmonic_maps_v6}.)
Let $\sX$ and $\tilde\sX$ be Banach spaces with continuous embeddings, $\sX \subset \tilde\sX \subset \sX^*$, and such that the embedding, $\sX \subset \sX^*$, is definite.  Let $\sU \subset \sX$ be an open subset, $\sE:\sU\to\RR$ be an analytic function, and $x_\infty\in\sU$ be a critical point of $\sE$, that is, $\sE'(x_\infty) = 0$. Let
\[
\sX\subset \sG \subset \tilde\sG \quad \text{and} \quad \tilde\sX \subset \tilde\sG \subset \sX^*,
\]
be continuous embeddings of Banach spaces such that the compositions,
\[
\sX\subset \sG\subset \tilde\sG \quad \text{and}\quad \sX\subset \tilde\sX\subset \tilde\sG,
\]
induce the same embedding, $\sX \subset \tilde\sG$. Let $\sM:\sU\to\tilde\sX$ be a gradient map for $\sE$ in the sense of Definition \ref{defn:Huang_2-1-1}. Suppose that for each $x \in \sU$, the bounded, linear operator,
\[
\sM'(x): \sX \to \tilde \sX,
\]
has an extension
\[
\sM_1(x): \sG \to \tilde\sG
\]
such that the map
\[
\sU \ni x \mapsto \sM_1(x) \in \sL(\sG, \tilde\sG) \quad\hbox{is continuous}.
\]
If $\sM'(x_\infty):\sX\to \tilde\sX$ and $\sM_1(x_\infty):\sG\to \tilde\sG$ are Fredholm operators with index zero, then there are constants, $Z \in (0,\infty)$ and $\sigma \in (0,1]$ and $\theta \in [1/2, 1)$, with the following significance. If $x \in \sU$ obeys
\begin{equation}
\label{eq:Lojasiewicz-Simon_gradient_inequality_neighborhood_general2}
\|x-x_\infty\|_\sX < \sigma,
\end{equation}
then
\begin{equation}
\label{eq:Lojasiewicz-Simon_gradient_inequality_analytic_functional_general2}
\|\sM(x)\|_{\tilde\sG} \geq Z|\sE(x) - \sE(x_\infty)|^\theta.
\end{equation}
\end{mainthm}

\begin{rmk}[Generalized {\L}ojasiewicz--Simon gradient inequality for analytic functions on Banach spaces with gradient map valued in a Hilbert space]
\label{rmk:Lojasiewicz-Simon_gradient_inequality2_Hilbert}
Suppose now that $\tilde\sG = \sH$, a Hilbert space, so that the embedding $\sG\subset \sH$  in Theorem \ref{mainthm:Lojasiewicz-Simon_gradient_inequality2}, factors through $\sG\subset \sH\simeq \sH^* $ and therefore
\[
\sE'(x)v = \langle v, \sM(x) \rangle_{\sX\times\sX^*} = (v, \sM(x))_\sH, \quad\forall\, x \in \sU \text{ and } v \in \sX,
\]
using the continuous embeddings, $\tilde\sX \subset \sH \subset \sX^*$. As we noted in Remark \ref{rmk:Embedding_hypothesis_Huang_theorem_2-4-5}, the hypothesis in Theorem \ref{mainthm:Lojasiewicz-Simon_gradient_inequality2} that the embedding, $\sX \subset \sX^*$, is definite is implied by the assumption that $\sX \subset \sH$ is a continuous embedding into a Hilbert space. By Theorem \ref{mainthm:Lojasiewicz-Simon_gradient_inequality2}, if $x \in \sU$ obeys
\begin{equation}
\label{eq:Lojasiewicz-Simon_gradient_inequality_neighborhood_general_Hilbert_space}
\|x-x_\infty\|_\sX < \sigma,
\end{equation}
then
\begin{equation}
\label{eq:Lojasiewicz-Simon_gradient_inequality_analytic_function_Hilbert_space}
\|\sM(x)\|_{\sH} \geq Z|\sE(x) - \sE(x_\infty)|^\theta,
\end{equation}
as desired.
\end{rmk}

\section[{\L}ojasiewicz--Simon inequalities for coupled Yang--Mills energies]{{\L}ojasiewicz--Simon gradient inequalities for coupled Yang--Mills energy functions}
\label{sec:Lojasiewicz-Simon_gradient_inequality_coupled Yang--Mills_function}
In this subsection, we summarize consequences of Theorem \ref{mainthm:Lojasiewicz-Simon_gradient_inequality} for coupled Yang--Mills energy functions.

\subsection{{\L}ojasiewicz--Simon gradient inequalities for boson and fermion coupled Yang--Mills energy functions}
\label{subsec:Lojasiewicz-Simon_gradient_inequality_boson_fermion_Yang--Mills_function}
We begin with a definition (due to Parker \cite{ParkerGauge}) of two coupled Yang--Mills energy functions.

\begin{defn}[Boson and fermion coupled Yang--Mills energy functions]
\label{defn:Boson_and_fermion_coupled_Yang--Mills_energy_function}
\cite[Section 2]{ParkerGauge}
Let $(X,g)$ be a closed, smooth Riemannian manifold of dimension $d \geq 2$, and $G$ be a compact Lie group, $P$ be a smooth principal $G$-bundle over $X$, and $\EE$ be a complex finite-dimensional $G$-module equipped with a $G$-invariant Hermitian inner product, $\varrho: G \to \Aut_\CC(\EE)$ be a unitary representation \cite[Definitions 2.1.1 and 2.16]{BrockertomDieck}, and $E = P\times_\varrho\EE$ be a smooth Hermitian vector bundle over $X$, and $m$ and $s$ be smooth real-valued functions on $X$.

We define the \emph{boson coupled Yang--Mills energy function} by
\begin{equation}
\label{eq:Boson_Yang--Mills_energy_function}
\sE_g(A,\Phi)
:=
\frac{1}{2} \int_X \left(|F_A|^2 + |\nabla_A \Phi|^2 -  m|\Phi|^2 - s|\Phi|^4\right)
\,d\vol_g,
\end{equation}
for all smooth connections, $A$ on $P$, and smooth sections, $\Phi$ of $E$, where
\[
\nabla_A: C^\infty(X;E) \to C^\infty(T^*X\otimes E),
\]
is the covariant derivative induced on $E$ by the connection $A$ on $P$ and $F_A \in \Omega^2(X;\ad P)$ is the curvature of $A$ and $\ad P := P\times_{\ad}\fg$ denotes the real vector bundle associated to $P$ by the adjoint representation of $G$ on its Lie algebra,
$\Ad:G \ni u \to \Ad_u \in \Aut(\fg)$, with fiber metric defined through the Killing form on $\fg$.

Suppose that $X$ admits a \spinc structure comprising a Hermitian vector bundle $W$ over $X$ and a \emph{Clifford multiplication map}, $c:T^*X \to \End_\CC(W)$, thus
\begin{equation}
\label{eq:Clifford_multiplication}
c(\alpha)^2 = -g(\alpha,\alpha)\,\id_W, \quad\forall\, \alpha \in \Omega^1(X),
\end{equation}
and
\[
D_A := c\circ\nabla_A: C^\infty(X;W\otimes E) \to C^\infty(X;W\otimes E),
\]
is the corresponding \emph{Dirac operator} \cite[Appendix D]{LM}, \cite[Sections 1.1 and 1.2]{KMBook}, where $\nabla_A$ denotes the covariant derivative induced on $\otimes^n(T^*X)\otimes E$ (for $n \geq 0$) and $W\otimes E$ by the connection $A$ on $P$ and Levi-Civita connection for the metric $g$ on $TX$.

We define the \emph{fermion coupled Yang--Mills energy function} by
\begin{equation}
\label{eq:Fermion_Yang--Mills_energy_function}
\sF_g(A,\Psi)
:=
\frac{1}{2} \int_X \left(|F_A|^2 + \langle \Psi, D_A\Psi\rangle -  m|\Psi|^2\right)
\,d\vol_g,
\end{equation}
for all smooth connections, $A$ on $P$, and smooth sections, $\Psi$ of $W\otimes E$.
\end{defn}

We recall from \cite[Corollary D.4]{LM} that a closed orientable smooth manifold $X$ admits a \spinc structure if and only if the second Stiefel-Whitney class $w_2(X) \in H^2(X;\ZZ/2\ZZ)$ is the mod $2$ reduction of an integral class. One calls $W$ the \emph{fundamental spinor bundle} and it carries irreducible representations of $\Spinc(d)$; when $X$ is even-dimensional, there is a splitting $W = W^+\oplus W^-$ and Clifford multiplication restricts to give $\rho: T^*X \to \Hom_\CC(W^\pm, W^\mp)$ \cite[Definition D.9]{LM}.

Although initially defined for smooth connections and sections, the energy functions $\sE_g$ and $\sF_g$ in Definition \ref{defn:Boson_and_fermion_coupled_Yang--Mills_energy_function}, extend to the case of Sobolev connections and sections of class $W^{1,2}$.

A short calculation shows that the gradient of the boson coupled Yang--Mills energy function $\sE_g$ in \eqref{eq:Boson_Yang--Mills_energy_function} with respect to the $L^2$ metric on $C^\infty(X;\Lambda^1\otimes\ad P\oplus E)$,
\begin{equation}
\label{eq:Definition_gradient_boson_coupled_Yang--Mills_energy_function}
\left(\sM_g(A,\Phi), (a,\phi)\right)_{L^2(X,g)}
:=
\left.\frac{d}{dt}\sE_g(A+ta, \Phi+t\phi)\right|_{t=0}
=
\sE_g'(A,\Phi)(a,\phi),
\end{equation}
for all $(a,\phi) \in C^\infty(X;\Lambda^1\otimes\ad P\oplus E)$, is given by
\begin{multline}
\label{eq:Gradient_boson_coupled_Yang--Mills_energy_function}
\left(\sM_g(A,\Phi), (a,\phi)\right)_{L^2(X,g)}
\\
=
(d_A^*F_A, a)_{L^2(X)} + \Real (\nabla_A^*\nabla_A \Phi, \phi )_{L^2(X)} + \Real(\nabla_A\Phi, \rho(a)\Phi )_{L^2(X)}
\\
- \Real ( m\Phi, \phi )_{L^2(X)}
- 2\Real \int_X s|\Phi|^2 \langle \Phi,\phi\rangle\, d\vol_g,
\end{multline}
where $d_A^* = d_A^{*,g}: \Omega^l(X; \ad P) \to \Omega^{l-1}(X; \ad P)$ is the $L^2$ adjoint of the exterior covariant derivative $d_A:\Omega^l(X; \ad P) \to \Omega^{l+1}(X; \ad P)$, for integers $l\geq 0$. As customary, we let
\[
\Lambda^l = \Lambda^l(T^*X)
\]
denote the vector bundle over $X$ whose fiber $\Lambda^l(T_x^*X)$ over each point $x \in X$ is the $l$-th exterior power of the cotangent space, $T_x^*X$, with $\Lambda^0(T^*X) := X\times\RR$ and $\Lambda^1(T^*X) = T^*X$.

We call $(A,\Phi)$ a \emph{boson Yang--Mills pair} (with respect to the Riemannian metric $g$ on $X$) if it is a critical point for $\sE_g$, that is, $\sM_g(A,\Phi) = 0$.

Similarly, one finds that the gradient of the fermion coupled Yang--Mills energy function $\sF_g$ in \eqref{eq:Fermion_Yang--Mills_energy_function} with respect to the $L^2$ metric on $C^\infty(X;\Lambda^1\otimes\ad P\oplus W\otimes E)$,
\begin{multline}
\label{eq:Definition_gradient_fermion_coupled_Yang--Mills_energy_function}
\left(\sM_g(A,\Psi), (a,\psi)\right)_{L^2(X,g)}
:=
\left.\frac{d}{dt}\sF_g(A+ta, \Psi+t\psi)\right|_{t=0}
\\
= \sF_g'(A,\Psi)(a,\psi),
\end{multline}
for all $(a,\psi) \in C^\infty(X;\Lambda^1\otimes\ad P\oplus W\otimes E)$, is given by
\begin{multline}
\label{eq:Gradient_fermion_coupled_Yang--Mills_energy_function}
\left(\sM_g(A,\Psi), (a,\psi)\right)_{L^2(X,g)}
=
(d_A^*F_A, a)_{L^2(X)} + \Real(D_A \Psi - m\Psi, \psi )_{L^2(X)}
\\
+ \frac{1}{2}(\Psi, \rho(a)\Psi )_{L^2(X)},
\end{multline}
where the action of $a \in \Omega^1(X;\ad P) \equiv C^\infty(T^*X\otimes\ad P)$ on $\Psi \in C^\infty(X;W\otimes E)$ is defined by
\begin{multline*}
\rho(\alpha\otimes\xi)(\phi\otimes \eta)
:=
c(\alpha)\phi\otimes \varrho_*(\xi)\eta,
\\
\forall\, \alpha \in \Omega^1(X), \quad \xi \in C^\infty(X;\ad P),
\quad \phi \in C^\infty(X;W), \quad \eta \in C^\infty(X;E),
\end{multline*}
where $\varrho_*:\fg \to \End_\CC(\EE)$ is the representation of the Lie algebra induced by the representation $\varrho: G \to \End_\CC(\EE)$ of the Lie group.

We call $(A,\Psi)$ a \emph{fermion Yang--Mills pair} (with respect to the Riemannian metric $g$ on $X$) if it is a critical point for $\sF_g$, that is, $\sM_g(A,\Psi) = 0$.

Note that both the boson and fermion coupled Yang--Mills energy functions reduce to the pure \emph{Yang--Mills energy function} when $\Phi \equiv 0$ or $\Psi \equiv 0$, respectively,
\begin{equation}
\label{eq:Yang--Mills_energy_function}
\sE_g(A)  := \frac{1}{2}\int_X |F_A|^2\,d\vol_g,
\end{equation}
and $A$ is a \emph{Yang--Mills connection} (with respect to the Riemannian metric $g$ on $X$) if it is a critical point for $\sE_g$, that is,
\[
\sM_g(A) = d_A^{*,g}F_A = 0.
\]
Given a Hermitian or Riemannian vector bundle, $V$, over $X$ and covariant derivative, $\nabla_A$, which is compatible with the fiber metric on $V$, we denote the Banach space of sections of $V$ of Sobolev class $W^{k,p}$, for any $k\in \NN$ and $p \in [1,\infty]$, by $W_A^{k,p}(X; V)$, with norm,
\begin{equation}
\label{eq:Sobolev_norm_WAkp_sections_vector_bundle_over_manifold_finite_p}
\|v\|_{W_A^{k,p}(X)} := \left(\sum_{j=0}^k \int_X |\nabla_A^j v|^p\,d\vol_g \right)^{1/p},
\end{equation}
when $1\leq p<\infty$ and
\begin{equation}
\label{eq:Sobolev_norm_WAkp_sections_vector_bundle_over_manifold_infinite_p}
\|v\|_{W_A^{k,\infty}(X)} := \sum_{j=0}^k \esssup_X |\nabla_A^j v|,
\end{equation}
when $p=\infty$, where $v \in W_A^{k,p}(X; V)$. If $k=0$, then we denote $\|v\|_{W^{0,p}(X)} = \|v\|_{L^p(X)}$. For $p \in [1,\infty)$ and nonnegative integers $k$, we use \cite[Theorem 3.12]{AdamsFournier} (applied to $W_A^{k,p}(X;V)$ and noting that $X$ is a closed manifold) and Banach space duality to define
\[
W_A^{-k,p'}(X;V) := \left(W_A^{k,p}(X;V)\right)^*,
\]
where $p'\in (1,\infty]$ is the dual exponent defined by $1/p+1/p'=1$ and we use the fiber metric on $V$ to replace $V^*$ by $V$ on the left-hand side. Elements of the Banach space dual $(W_A^{k,p}(X;V))^*$ may be characterized via \cite[Section 3.10]{AdamsFournier} as distributions in the Schwartz space $\sD'(X;V)$ \cite[Section 1.57]{AdamsFournier}.

As our first application of Theorem \ref{mainthm:Lojasiewicz-Simon_gradient_inequality}, we have the following generalization of \cite[Theorem 23.17]{Feehan_yang_mills_gradient_flow_v4} from the case of the pure Yang--Mills energy function \eqref{eq:Yang--Mills_energy_function}, when $p=2$ and $X$ has dimension $d=2,3$, or $4$, and R\r{a}de's \cite[Proposition 7.2]{Rade_1992}, when $p=2$ and $X$ has dimension $d=2$ or $3$. Because gauge transformations of class $W^{2,2}$ are continuous when $d=2$ or $3$ and standard versions of the slice theorem \cite[Proposition 2.3.4]{DK}, \cite[Theorem 3.2]{FU}, \cite[Theorem 10.4]{Lawson} for the action of gauge transformations are applicable, the proof of the analogue of Theorem \ref{mainthm:Lojasiewicz-Simon_gradient_inequality_boson_Yang--Mills_energy_function} for the pure Yang--Mills energy function due to R\r{a}de is simpler for $d=2,3$ and $p = 2$.

\begin{mainthm}[{\L}ojasiewicz--Simon gradient inequality for the boson coupled Yang--Mills energy function]
\label{mainthm:Lojasiewicz-Simon_gradient_inequality_boson_Yang--Mills_energy_function}
Let $(X,g)$ be a closed, smooth Riemannian manifold of dimension $d\geq 2$, and $G$ be a compact Lie group, $P$ be a smooth principal $G$-bundle over $X$, and $E = P\times_\varrho\EE$ be a smooth Hermitian vector bundle over $X$ defined by a finite-dimensional unitary representation, $\varrho: G \to \Aut_\CC(\EE)$. Let $A_1$ be a $C^\infty$ reference connection on $P$, and $(A_\infty,\Phi_\infty)$ a boson coupled Yang--Mills pair on $(P,E)$ for $g$ of class $W^{1,q}$, with $q \in [2,\infty)$ obeying $q > d/2$. If $p \in [2,\infty)$ obeys $d/2 \leq p \leq q$, then the gradient map,
\[
\sM_g: (A_1,0)+W_{A_1}^{1,p}(X;\Lambda^1\otimes\ad P\oplus E)
\to W_{A_1}^{-1,p}(X;\Lambda^1\otimes\ad P\oplus E),
\]
is \emph{real analytic} and there are constants $Z \in (0, \infty)$, and $\sigma \in (0,1]$, and $\theta \in [1/2,1)$, depending on $A_1$, $(A_\infty,\Phi_\infty)$, $g$, $G$, $p$, and $q$ with the following significance. If $(A,\Phi)$ is a $W^{1,q}$ Sobolev pair on $(P,E)$ obeying the \emph{{\L}ojasiewicz--Simon neighborhood} condition,
\begin{equation}
\label{eq:Lojasiewicz-Simon_gradient_inequality_boson_Yang--Mills_pair_neighborhood}
\|(A,\Phi) - (A_\infty,\Phi_\infty)\|_{W^{1,p}_{A_1}(X)} < \sigma,
\end{equation}
then the boson coupled Yang--Mills energy function \eqref{eq:Boson_Yang--Mills_energy_function} obeys the \emph{{\L}ojasiewicz--Simon gradient inequality}
\begin{equation}
\label{eq:Lojasiewicz-Simon_gradient_inequality_boson_Yang--Mills_energy_function}
\|\sM_g(A,\Phi)\|_{W^{-1,p}_{A_1}(X)}
\geq
Z|\sE_g(A,\Phi) - \sE_g(A_\infty,\Phi_\infty)|^\theta.
\end{equation}
\end{mainthm}

The statement of Theorem \ref{mainthm:Lojasiewicz-Simon_gradient_inequality_boson_Yang--Mills_energy_function} simplifies with the addition of the rather mild assumption that $A_1 = A_\infty$ and that $(A_\infty,\Phi_\infty)$ is $C^\infty$ (which can be assumed, modulo a $W^{2,q}$ gauge transformation, provided by the regularity Theorem \ref{thm:Parker_1982_5-3}).

\begin{maincor}[{\L}ojasiewicz--Simon gradient inequality for the boson coupled Yang--Mills energy function]
\label{maincor:Lojasiewicz-Simon_gradient_inequality_boson_Yang--Mills_energy_function}
Let $(X,g)$ be a closed, smooth Riemannian manifold of dimension $d \geq 2$, and $G$ be a compact Lie group, $P$ be a smooth principal $G$-bundle over $X$, and $E = P\times_\varrho\EE$ be a smooth Hermitian vector bundle over $X$ defined by a finite-dimensional unitary representation, $\varrho: G \to \Aut_\CC(\EE)$. Let $(A_\infty,\Phi_\infty)$ be a smooth boson coupled Yang--Mills pair for $g$ on $(P,E)$. If $p \in [2,\infty)$ obeys $p \geq d/2$, then the gradient map,
\[
\sM_g: (A_\infty,0)+W_{A_\infty}^{1,p}(X;\Lambda^1\otimes\ad P\oplus E)
\to W_{A_\infty}^{-1,p}(X;\Lambda^1\otimes\ad P\oplus E),
\]
is \emph{real analytic} and, for $d/2 < q < \infty$ obeying $q\geq p$, there are constants $Z \in (0, \infty)$, and $\sigma \in (0,1]$, and $\theta \in [1/2,1)$, depending on $(A_\infty,\Phi_\infty)$, $g$, $G$, $p$, and $q$ with the following significance. If $(A,\Phi)$ is a $W^{1,q}$ Sobolev pair on $(P,E)$ that obeys the \emph{{\L}ojasiewicz--Simon neighborhood} condition,
\begin{equation}
\label{eq:Lojasiewicz-Simon_gradient_inequality_boson_Yang--Mills_pair_neighborhood_Ainfty}
\|(A,\Phi) - (A_\infty,\Phi_\infty)\|_{W^{1,p}_{A_\infty}(X)} < \sigma,
\end{equation}
then the boson coupled Yang--Mills energy function \eqref{eq:Boson_Yang--Mills_energy_function} obeys the \emph{{\L}ojasiewicz--Simon gradient inequality},
\begin{equation}
\label{eq:Lojasiewicz-Simon_gradient_inequality_boson_Yang--Mills_energy_function_Ainfty}
\|\sM_g(A,\Phi)\|_{W^{-1,p}_{A_\infty}(X)}
\geq
Z|\sE_g(A,\Phi) - \sE_g(A_\infty,\Phi_\infty)|^\theta.
\end{equation}
\end{maincor}

Similarly, for the fermion coupled Yang--Mills energy function, we have the

\begin{mainthm}[{\L}ojasiewicz--Simon gradient inequality for the fermion coupled Yang--Mills energy function]
\label{mainthm:Lojasiewicz-Simon_gradient_inequality_fermion_Yang--Mills_energy_function}
Assume the hypotheses of Theorem
\ref{mainthm:Lojasiewicz-Simon_gradient_inequality_boson_Yang--Mills_energy_function},
except that we require that $X$ admit a \spinc structure $(\rho,W)$,
replace the role of $\sE_g$ in
\eqref{eq:Boson_Yang--Mills_energy_function} by $\sF_g$ in
\eqref{eq:Fermion_Yang--Mills_energy_function}, and replace the role
of the pair $(A,\Phi)$ and critical point $(A_\infty,\Phi_\infty)$ of
$\sE_g$ by the pair $(A,\Psi)$ and critical point
$(A_\infty,\Psi_\infty)$ of $\sF_g$, where $\Psi$ and $\Psi_\infty$
are sections of $W\otimes E$. Then the conclusions of Theorem \ref{mainthm:Lojasiewicz-Simon_gradient_inequality_boson_Yang--Mills_energy_function} hold \emph{mutatis mutandis} for $\sF_g$.
\end{mainthm}

\begin{rmk}[{\L}ojasiewicz--Simon gradient inequality for coupled Yang--Mills energy functions on quotient spaces]
\label{rmk:Lojasiewicz-Simon_gradient_inequality_quotient_spaces}
We recall that the space of all smooth connections on $P$ is an affine space, $\sA(p) = A_1 + \Omega^1(X;\ad P)$. While the energy functions $\sE_g$ and $\sF_g$ in Definition \ref{defn:Boson_and_fermion_coupled_Yang--Mills_energy_function} were initially defined on affine spaces modeled on $C^\infty(X;\Lambda^1\otimes\ad P\oplus E)$ or $C^\infty(X;\Lambda^1\otimes\ad P\oplus W\otimes E)$, the functions are invariant under the action of the group of gauge transformations, $\Aut(P)$, and thus descend to the corresponding quotient spaces. The resulting configuration spaces may be given the structure of smooth Banach manifolds in a standard way \cite[Sections 4.2.1]{DK}, \cite[Chapter 3]{FU}, \cite[Section 1]{GroisserParkerGeometryDefinite} and, with minor modifications of standard proofs, the structure of real analytic Banach manifolds as discussed in Section \ref{subsec:Analyticity_of_quotient_spaces}.
\end{rmk}

\begin{rmk}[{\L}ojasiewicz--Simon gradient inequality for the Yang--Mills energy function over a Riemann surface]
\label{rmk:Lojasiewicz-Simon_gradient_inequality_Yang--Mills_Riemann_surface_Morse--Bott}
When $d=2$, it is known in many cases (see \cite{Feehan_lojasiewicz_inequality_ground_state}) that the pure Yang--Mills energy function obeys the Morse--Bott condition in the sense of \cite[Definition 1.9]{Feehan_Maridakis_Lojasiewicz-Simon_harmonic_maps_v6} and so by \cite[Theorem 4]{Feehan_Maridakis_Lojasiewicz-Simon_harmonic_maps_v6} (our abstract {\L}ojasiewicz--Simon gradient inequality for Morse--Bott functions on Banach spaces), one has the optimal {\L}ojasiewicz--Simon exponent, $\theta = 1/2$.
\end{rmk}

We have chosen to derive the {\L}ojasiewicz--Simon gradient inequalities (in Theorems \ref{mainthm:Lojasiewicz-Simon_gradient_inequality_boson_Yang--Mills_energy_function} and \ref{mainthm:Lojasiewicz-Simon_gradient_inequality_fermion_Yang--Mills_energy_function}) for two specific coupled Yang--Mills energy functions, motivated by physical considerations, namely the properties of \emph{regularity}, \emph{naturality}, and \emph{conformal invariance} (in dimension four) described by Parker in \cite[Section 2]{ParkerGauge}.

However, it is clear from the proofs of Theorems \ref{mainthm:Lojasiewicz-Simon_gradient_inequality_boson_Yang--Mills_energy_function} and \ref{mainthm:Lojasiewicz-Simon_gradient_inequality_fermion_Yang--Mills_energy_function} that one can expect the same conclusions for any energy function on pairs of connections and sections with the same nonlinearity structure. Indeed, proofs of such results can be obtained by simple modifications of our proof of the {\L}ojasiewicz--Simon gradient inequality for the boson coupled Yang--Mills energy function, just as we do in this monograph for the case of the fermion coupled Yang--Mills energy function.

\subsection{{\L}ojasiewicz--Simon $W^{-1,2}$ gradient inequalities for boson and fermion coupled Yang--Mills energy functions}
\label{subsec:Lojasiewicz-Simon_gradient_inequality_boson_fermion_Yang--Mills_function_L2}
For reasons noted prior to the statement of Theorem \ref{mainthm:Lojasiewicz-Simon_gradient_inequality2}, it is desirable to replace the {\L}ojasiewicz--Simon gradient inequality \eqref{eq:Lojasiewicz-Simon_gradient_inequality_boson_Yang--Mills_energy_function_Ainfty} in Theorem \ref{mainthm:Lojasiewicz-Simon_gradient_inequality_boson_Yang--Mills_energy_function} with one where the term $\|\sM_g(A,\Phi)\|_{W^{-1,p}_{A_\infty}(X)}$ is replaced by $\|\sM_g(A,\Phi)\|_{L^2(X)}$. Such a modification is trivial when we have a continuous embedding, $L^2(X) \subset W^{-1,p}(X)$, or equivalently when $W^{1,p'}(X) \subset L^2(X)$, where $p \in [1,\infty)$ and $p' = p/(p-1) \in (1,\infty]$ is the dual Sobolev exponent. By \cite[Theorem 4.12]{AdamsFournier}, with $d \geq 2$, we have a continuous Sobolev embedding $W^{1,s}(X) \subset L^{s^*}(X)$ for $1\leq s < d$. When $d = 2$, $3$, or $4$, then $p = 2$ is the smallest value allowed by Theorem \ref{mainthm:Lojasiewicz-Simon_gradient_inequality_boson_Yang--Mills_energy_function} and because $p' = 2$, the continuous Sobolev embedding $W^{1,2}(X) \subset L^2(X)$ and, in particular, its dual embedding $L^2(X) \subset W^{-1,2}(X)$ guarantee that the desired replacement described above is possible. For $d \geq 5$, the smallest and thus most favorable choice is $p = d/2$, in which case $p' = (d/2)/((d/2)-1) = d/(d-2) < d$. We need $(p')^* \geq 2$. Now,
\begin{multline*}
  (p')^* = dp'/(d-p') = (d^2/(d-2))/(d - d/(d-2))
  \\
  = (d/(d-2))/(1 - 1/(d-2)) = d/(d-3)
\end{multline*}
and thus $(p')^* \geq 2$ is equivalent to $d \geq 2(d-3)$, that is, $d \leq 6$. Consequently, Theorem \ref{mainthm:Lojasiewicz-Simon_gradient_inequality_boson_Yang--Mills_energy_function} only allows replacement of the term $\|\sM_g(A,\Phi)\|_{W^{-1,p}_{A_\infty}(X)}$ by $\|\sM_g(A,\Phi)\|_{L^2(X)}$ when $d \leq 6$.

However, the following {\L}ojasiewicz--Simon $W^{-1,2}$ gradient inequalities for boson and fermion coupled Yang--Mills energy functions are corollaries of the proofs of Theorems \ref{mainthm:Lojasiewicz-Simon_gradient_inequality_boson_Yang--Mills_energy_function} and \ref{mainthm:Lojasiewicz-Simon_gradient_inequality_fermion_Yang--Mills_energy_function}, respectively, that are valid for any $d \geq 2$. They are proved in Chapter \ref{chap:Lojasiewicz-Simon_gradient_inequality_Hilbert_space} and $L^2$ gradient inequalities follow immediately from these using the continuous Sobolev embedding $L^2(X) \subset W^{-1,2}(X)$. 

\begin{maincor}[{\L}ojasiewicz--Simon $W^{-1,2}$ gradient inequality for the boson coupled Yang--Mills energy function]
\label{maincor:Lojasiewicz-Simon_L2_gradient_inequality_boson_Yang-Mills_energy_function}
Assume the hypotheses of Theorem \ref{mainthm:Lojasiewicz-Simon_gradient_inequality_boson_Yang--Mills_energy_function}. If $(A,\Phi)$ is a $W^{1,q}$ Sobolev pair on $(P,E)$ obeying the \emph{{\L}ojasiewicz--Simon neighborhood} condition \eqref{eq:Lojasiewicz-Simon_gradient_inequality_boson_Yang--Mills_pair_neighborhood}, then the boson coupled Yang--Mills energy function \eqref{eq:Boson_Yang--Mills_energy_function} obeys the \emph{{\L}ojasiewicz--Simon gradient inequality},
\begin{equation}
\label{eq:Lojasiewicz-Simon_gradient_inequality_boson_Yang-Mills_energy_function_Wminus12}
\|\sM_g(A,\Phi)\|_{W^{-1,2}(X)}
\geq
Z|\sE(A,\Phi) - \sE(A_\infty, \Phi_\infty)|^\theta.
\end{equation}
\end{maincor}

\begin{maincor}[{\L}ojasiewicz--Simon $W^{-1,2}$ gradient inequality for the fermion coupled Yang--Mills energy function]
\label{maincor:Lojasiewicz-Simon_L2_gradient_inequality_fermion_Yang-Mills_energy_function}
Assume the hypotheses of Theorem \ref{mainthm:Lojasiewicz-Simon_gradient_inequality_fermion_Yang--Mills_energy_function}. Then the conclusions of Corollary \ref{maincor:Lojasiewicz-Simon_L2_gradient_inequality_boson_Yang-Mills_energy_function} hold \emph{mutatis mutandis} for the fermion coupled Yang--Mills energy function $\sF_g$ in \eqref{eq:Fermion_Yang--Mills_energy_function}.
\end{maincor}

\subsection{{\L}ojasiewicz--Simon gradient inequality for Yang--Mills--Higgs energy functions}
\label{subsec:Lojasiewicz-Simon_gradient_inequality_Yang--Mills--Higgs_function}
A well-known example in complex differential geometry of a coupled Yang--Mills energy function is the \emph{Yang--Mills--Higgs} function, which we now describe. See Bradlow \cite{Bradlow_1990, Bradlow_1991}, Bradlow and Garc{\'{\i}}a-Prada \cite{BradlowGP}, Hitchin \cite{Hitchin_1987}, Hong \cite{Hong_2001}, Li and Zhang
\cite{Li_Zhang_2011}, and Simpson \cite{Simpson_1988} for additional details and further references.

We shall follow the description by Bradlow and Garc{\'{\i}}a-Prada
\cite[Section 3]{BradlowGP}, but refer the reader to Hong \cite{Hong_2001}, Li and Zhang \cite{Li_Zhang_2011}, and the cited references for variants of the Yang--Mills--Higgs function described here. Let $E$ be a complex vector bundle with Hermitian metric $H$ over a compact K{\"a}hler manifold $(X,\omega)$. Let $\sA_H$ denote the affine space of smooth connections on $E$ that are \emph{unitary} (that is, compatible with the metric $H$), and $\Omega^0(X;E)$ denote the vector space of smooth sections of $E$, and $\tau \in \RR$.

One defines the \emph{Yang--Mills--Higgs energy function} on $(A, \Phi) \in \sA_H \times \Omega^0(X;E)$ by
\begin{equation}
\label{eq:Yang–Mills–Higgs_energy_function}
\sE_{H,\omega,\tau}(A, \Phi)
:=
\frac{1}{2}\int_X \left(|F_A|^2 + 2|\nabla_A\Phi|^2 + \left|\Phi\otimes \Phi^* - \tau\,\id_E\right|^2 \right)\,d\vol_\omega,
\end{equation}
where $\Phi^* := \langle \cdot, \Phi \rangle_H$, the dual of $\Phi$ with respect to the metric $H$.

By definition, a \emph{Yang--Mills--Higgs pair} $(A, \Phi)$ is a critical point of the Yang--Mills--Higgs function $\sE_{H,\omega,\tau}$, so $\sM_{H,\omega,\tau}(A, \Phi) = 0$, or equivalently $(A, \Phi)$ satisfies the second-order \emph{Yang--Mills--Higgs equations} (the Euler-Lagrange equations defined by the function \eqref{eq:Yang–Mills–Higgs_energy_function}). A calculation reveals that a pair is an \emph{absolute minimum} of $\sE_{H,\omega,\tau}$ if and only if it obeys the first-order \emph{vortex equations},
\begin{equation}
\label{eq:Vortex_equations}
\begin{aligned}
F_A^{0,2} &= 0,
\\
\bar\partial\Phi &=0,
\\
\Lambda F_A &= \sqrt{-1}\left(\Phi\otimes\Phi^* - \tau\,\id_E\right),
\end{aligned}
\end{equation}
where $\Lambda F_A$ denotes contraction of $F_A$ with $\omega$. Let $\fu(E)\subset \End_\CC(E)$ denote the subbundle of skew-Hermitian endomorphisms of $E$.

The proof of Theorem \ref{mainthm:Lojasiewicz-Simon_gradient_inequality_boson_Yang--Mills_energy_function} carries over \emph{mutatis mutandis} to give

\begin{mainthm}[{\L}ojasiewicz--Simon gradient inequality for the Yang--Mills--Higgs energy function]
\label{mainthm:Lojasiewicz-Simon_gradient_inequality_Yang--Mills--Higgs_energy_function}
Let $X$ be a compact, K{\"a}hler manifold of complex dimension $n \geq 1$ and $E$ be a complex vector bundle with Hermitian metric $H$ over $X$. Let $A_1$ be a smooth reference connection on the principal frame bundle for $E$. Assume that $d = 2n\geq 2$ and $p \in (1,\infty)$ obey one of the conditions in Theorem \ref{mainthm:Lojasiewicz-Simon_gradient_inequality_boson_Yang--Mills_energy_function}. Then the gradient map,
\[
\sM_{H,\omega,\tau}: (A_1,0)+W_{A_1}^{1,p}(X;\Lambda^1\otimes \fu(E) \oplus E)
\to
W_{A_1}^{-1,p}(X;\Lambda^1\otimes\fu(E) \oplus E),
\]
is \emph{real analytic} and the remaining conclusions of Theorem \ref{mainthm:Lojasiewicz-Simon_gradient_inequality_boson_Yang--Mills_energy_function} and Corollary \ref{maincor:Lojasiewicz-Simon_L2_gradient_inequality_boson_Yang-Mills_energy_function} hold \emph{mutatis mutandis} for the Yang--Mills--Higgs energy function \eqref{eq:Yang–Mills–Higgs_energy_function} near a Yang--Mills--Higgs pair, $(A_\infty, \Phi_\infty)$.
\end{mainthm}

\subsection{{\L}ojasiewicz--Simon gradient inequality for the Seiberg--Witten energy function}
\label{subsec:Lojasiewicz-Simon_gradient_inequality_Seiberg-Witten_function}
For another example of a coupled Yang--Mills energy function whose absolute minima can be readily identified, we consider the Seiberg--Witten equations.

Expositions of the Seiberg--Witten equations are now provided by many authors but, for the sake of consistency, we shall follow our development in \cite{FL2a}. Let $(\rho, W)$ denote a \spinc structure on a four-dimensional manifold, $X$, with Riemannian metric, $g$. We recall from \cite[Equation (2.55)]{FL2a} that a pair $(B, \Psi)$, comprising a \spinc connection, $B$, on $W=W^+\oplus W^-$ and a section, $\Psi$, of $W^+$ is a \emph{Seiberg--Witten monopole} if
\begin{equation}
\label{eq:Seiberg-Witten_equations}
\begin{aligned}
\tr(F_B^+) - \rho^{-1}(\Psi\otimes\Psi^*)_0 &= 0,
\\
D_B\Psi &= 0,
\end{aligned}
\end{equation}
recalling that $\rho:\Lambda^+\cong\su(W^+)$ is the isomorphism of Riemannian vector bundles induced by Clifford multiplication, $D_B:C^\infty(X;W^+)\to C^\infty(X;W^-)$ is the Dirac operator, and $(\cdot)_0$ denotes the trace-free part of $\Psi\otimes\Psi^* \in \End_\CC(W^+)$. We have restricted $B$ to $W^+$, so $F_B^+ \in  C^\infty(X;\fu(W^+)\otimes\Lambda^+) = \Omega^+(X;\fu(W^+))$ and $\tr(F_B^+) \in C^\infty(X;i\Lambda^+) = \Omega^+(X;i\RR)$,  using the fiberwise trace homomorphism, $\tr:\fu(W^+)\to i\underline{\RR}$. The Seiberg--Witten equations \eqref{eq:Seiberg-Witten_equations} are a system of first-order partial differential equations in $(B,\Psi)$ and thus cannot be the Euler-Lagrange equations of any action function. However, as we recall from \cite{Doria_2005, Doria_2006, Hong_Schabrun_2010, Jost_Peng_Wang_1996, NicolaescuSWNotes}, Seiberg--Witten monopoles have a variational interpretation by an argument which is the reverse of those provided by Bradlow and Garc{\'{\i}}a-Prada \cite[Section 3]{BradlowGP} or Hong \cite[Section 1]{Hong_2001} in their derivations of the vortex equations or Li and Zhang \cite[Section 1]{Li_Zhang_2011} for the Hermitian-Einstein equations.

Thus, from \cite[Equation (1.6)]{Hong_Schabrun_2010} or \cite[Propsition 2.1.4]{NicolaescuSWNotes}, the Seiberg--Witten energy function is
\begin{multline}
\label{eq:Seiberg-Witten_energy_function}
\sE_g(B,\Psi)
=
\int_X \left( \left|\nabla_B\Psi\right|^2 + \frac{1}{2}|\tr(F_B)|^2 + \frac{R}{4}|\Psi|^2 + \frac{1}{8}|\Psi|^2\right)\,d\vol_g
\\
+ 2\pi^2c_1(W^+)^2,
\end{multline}
where $c_1(W^+)^2 := \int_X c_1(W^+)^2$. The topological term, $2\pi^2c_1(W^+)^2$, is independent of the pair $(B,\Psi)$ and does not affect the critical points. In particular,
\[
\sE(B,\Psi) \geq 2\pi^2c_1(W^+)^2,
\]
and a pair $(B,\Psi)$ is a Seiberg--Witten monopole if and only if equality is achieved.

Hong and Schabrun derive a version of the {\L}ojasiewicz--Simon gradient inequality \cite[Lemma 5.3]{Hong_Schabrun_2010} based in part on an earlier proof due to Wilkin for the Yang--Mills--Higgs function over a Riemann surface
\cite[Proposition 3.5]{Wilkin_2008}. However, the proof of Theorem \ref{mainthm:Lojasiewicz-Simon_gradient_inequality_boson_Yang--Mills_energy_function} carries over \emph{mutatis mutandis} to give

\begin{mainthm}[{\L}ojasiewicz--Simon gradient inequality for the Seiberg--Witten energy function]
\label{mainthm:Lojasiewicz-Simon_gradient_inequality_Seiberg-Witten_energy_function}
Let $(X,g)$ be a closed, four-dimensional, oriented, Riemannian smooth manifold with \spinc structure $(\rho,W)$. Let $B_1$ be a smooth reference \spinc connection on $W$. Assume that $p \in (1,\infty)$ obeys the hypotheses of Theorem \ref{mainthm:Lojasiewicz-Simon_gradient_inequality_boson_Yang--Mills_energy_function} with $d=4$. Then the gradient map,
\[
\sM_g: (B_1,0)+W_{B_1}^{1,p}(X;i\Lambda^1 \oplus W^+)
\to
W_{B_1}^{-1,p}(X;i\Lambda^1 \oplus W^+),
\]
is \emph{real analytic} and the remaining conclusions of Theorem \ref{mainthm:Lojasiewicz-Simon_gradient_inequality_boson_Yang--Mills_energy_function} and Corollary \ref{maincor:Lojasiewicz-Simon_L2_gradient_inequality_boson_Yang-Mills_energy_function} hold \emph{mutatis mutandis} for the Seiberg--Witten energy function \eqref{eq:Seiberg-Witten_energy_function} near a Seiberg--Witten monopole, $(B_\infty,\Psi_\infty)$.
\end{mainthm}


\subsection{{\L}ojasiewicz--Simon gradient inequality for non-Abelian monopole energy functions}
\label{subsec:Lojasiewicz-Simon_gradient_inequality_non-Abelian-monopole_function}
For our final example of a coupled Yang--Mills energy function whose absolute minima can be readily identified, we have the non-Abelian monopoles arising in the work of the first author and Leness \cite{FL2a}, Okonek and Teleman \cite{OTVortex}, and Pidstrigatch and Tyurin \cite{PTLocal}.

Following \cite{FL2a}, we consider pairs $(A,\Phi)$ obeying
\begin{equation}
\label{eq:Feehan_Leness_2001_2-32}
\begin{aligned}
(F_A^+)_0 - \rho^{-1}(\Phi\otimes\Phi^*)_{00} &= 0,
\\
D_A\Phi &= 0,
\end{aligned}
\end{equation}
where $A$ is a unitary connection on a Hermitian vector bundle, $E$, with curvature $F_A \in C^\infty(X;\Lambda^2\otimes\fu(E)) = \Omega^2(X;\fu(E))$ and $(F_A^+)_0 \in C^\infty(X;\Lambda^+\otimes\su(E)) = \Omega^+(X;\su(E))$, while $\rho:\Lambda^+\otimes\su(E)\cong \su(W^+)\otimes\su(E)$ is the isomorphism of Riemannian vector bundles induced by Clifford multiplication, $D_A:C^\infty(X;W^+\otimes E)\to C^\infty(X;W^-\otimes E)$ is the Dirac operator, and $(\cdot)_{00}$ denotes the trace-free part of $\Phi\otimes\Phi^* \in \End_\CC(W^+\otimes E)$. Let $\su(E) \subset \End_\CC(E)$ denote the subbundle of skew-Hermitian, trace-free endomorphisms of $E$.

By extending the derivations of the Seiberg--Witten  energy function in \cite{Hong_Schabrun_2010} or \cite{NicolaescuSWNotes}, we find that the \emph{non-Abelian monopole energy function} is
\begin{equation}
\label{eq:Non-Abelian_monopole_energy_function}
\begin{aligned}
\sE_g(A,\Phi) &= \int_X \left( \left|\nabla_A\Phi\right|^2 + \frac{1}{2}|F_A|^2 + \frac{R}{4}|\Phi|^2 + \frac{1}{8}|\Phi|^2\right)\,d\vol_g
\\
&\quad - 4\pi^2c^2(E) + \frac{1}{2}\|F_{A_w}^+\|_{L^2(X)}^2 - 2\|F_{A_e}^+\|_{L^2(X)}^2.
\end{aligned}
\end{equation}
The connections $A_e$ on $\det E$ and $A_w$ on $\det W^+$ are fixed, with no dynamical role, so the true variables in the $\SO(3)$-monopole equations are the $\SO(3)$ connection $\hat A$ induced by $A$ on the bundle $\su(E)$ and the (spinor) section $\Phi$ of $W^+\otimes E$. The action function, $\sE_g(A,\Phi)$, again has a universal lower bound and is achieved if and only if $(A,\Phi)$ is a \emph{non-Abelian monopole}, namely a solution to \eqref{eq:Feehan_Leness_2001_2-32}. Again the proof of Theorem \ref{mainthm:Lojasiewicz-Simon_gradient_inequality_boson_Yang--Mills_energy_function} carries over \emph{mutatis mutandis} to give

\begin{mainthm}[{\L}ojasiewicz--Simon gradient inequality for the non-Abelian monopole energy function]
\label{mainthm:Lojasiewicz-Simon_gradient_inequality_non-Abelian_monopole_energy_function}
Let $(X,g)$ be a closed, four-dimensional, oriented, smooth Riemannian manifold with \spinc structure $(\rho,W)$. Let $E$ be a Hermitian vector bundle over $X$, and $A_e$ be a smooth connection on $\det E$, and $B$ be a smooth \spinc connection on $W$, and $A_1$ be a smooth reference connection on $E$ inducing $A_e$ on $\det E$. Assume that $p \in (1,\infty)$ obeys the hypotheses of Theorem \ref{mainthm:Lojasiewicz-Simon_gradient_inequality_boson_Yang--Mills_energy_function} with $d=4$. Then the gradient map,
\begin{multline*}
\sM_g: (A_1,0)+W_{A_1}^{1,p}(X;\Lambda^1\otimes\su(E) \oplus W^+\otimes E)
\\
\to
W_{A_1}^{-1,p}(X;\Lambda^1\otimes\su(E) \oplus W^+\otimes E),
\end{multline*}
is \emph{real analytic} and the remaining conclusions of Theorem \ref{mainthm:Lojasiewicz-Simon_gradient_inequality_boson_Yang--Mills_energy_function} and Corollary \ref{maincor:Lojasiewicz-Simon_L2_gradient_inequality_boson_Yang-Mills_energy_function} hold \emph{mutatis mutandis} for the non-Abelian monopole energy function \eqref{eq:Non-Abelian_monopole_energy_function} near a non-Abelian monopole, $(A_\infty,\Phi_\infty)$.
\end{mainthm}

\subsection{{\L}ojasiewicz--Simon gradient inequality for the multiple spinor Seiberg--Witten energy function}
\label{subsec:Lojasiewicz-Simon_gradient_inequality_SW_multiple_spinor_function}
In our next example of a coupled Yang--Mills energy function, the absolute minima can be identified as solutions to the multiple spinor Seiberg--Witten equations arising in the work of Haydys \cite{Haydys_2017arxiv, Haydys_2019}, and Haydys and Walpuski \cite{Haydys_Walpuski_2015}.

Let $X$ be a closed, oriented Riemannian three-manifold with fixed spin structure $(\rho, W)$, and $L$ be a Hermitian line bundle over $X$, and $E$ be a Hermitian vector bundle of rank $n\geq 1$, trivial determinant $\det E$, and a compatible fixed connection $B$ on $E$. Following \cite{Haydys_Walpuski_2015}, we consider pairs $(A,\Psi)$ obeying
\begin{equation}
\label{eq:SW_multiple_spinor}
\begin{aligned}
\rho(F_A) - \mu(\Psi) &= 0,
\\
D_{A\otimes B}\Psi &= 0,
\end{aligned}
\end{equation}
where $A$ is a unitary connection on $L$, with curvature $F_A \in \Omega^2(X;\fu(L))$, while $\rho:\Lambda^2\otimes\fu(L)\cong \su(W)\otimes\su(L)$ is the isomorphism of Riemannian vector bundles induced by Clifford multiplication,
\[
  D_{A\otimes B}:C^\infty(X; \Hom(E;W\otimes L))\to C^\infty(X;\Hom(E;W\otimes L))
\]
is the Dirac operator, and
\[
  \mu: \Hom(E;W\otimes L) \to \fu(L)\otimes \su(W)\simeq i\su(W)
\]
sends $\Psi\in \Hom(E;W\otimes L)$ to the trace-free part of $\Psi\otimes\Psi^* \in \End_\CC(W\otimes L)$.

By extending the derivations of the Seiberg--Witten energy function in \cite{Hong_Schabrun_2010} or \cite{NicolaescuSWNotes}, we find that the \emph{Seiberg--Witten multiple spinor energy function} is
\begin{multline}
\label{eq:SW_multiple_spinor_energy_function}
\sE_g(A,\Psi) = \int_X \left( \left|\nabla_A\Psi\right|^2 + \frac{1}{2}|F_A|^2 + \frac{R}{4}|\Psi|^2 + \frac{1}{2}|\mu(\Psi)|^2 \right.
\\
+ \left. \langle \rho(F_B)\Psi, \Psi\rangle\right)\,d\vol_g.
\end{multline}
The connection $B$ on $E$ is fixed, with no dynamical role, so the true variables in multiple spinor Seiberg--Witten equations \eqref{eq:SW_multiple_spinor} are the unitary connection $A$ and the (spinor) section $\Psi$ of $\End(E;W\otimes L)\simeq E^*\otimes W\otimes L$. The energy function, $\sE_g(A,\Psi)$ has universal lower bound zero and that is achieved if and only if $(A,\Psi)$ is solution to \eqref{eq:SW_multiple_spinor}. Note that the term $\langle \rho(F_B)\Psi, \Psi\rangle$ is quadratic in $\Psi$ while $|\mu(\Psi)|^2$ is fourth order in $\Psi$. Again the proof of Theorem \ref{mainthm:Lojasiewicz-Simon_gradient_inequality_boson_Yang--Mills_energy_function} carries over \emph{mutatis mutandis} to give

\begin{mainthm}[{\L}ojasiewicz--Simon gradient inequality for the Seiberg--Witten multiple spinor energy function]
\label{mainthm:Lojasiewicz-Simon_gradient_inequality_ SW_multiple_spinor_energy_function}
Let $X$ be a closed, oriented Riemannian three-manifold with fixed spin structure $(\rho, W)$, and $L$ be a Hermitian line bundle over $X$, and $E$ be a Hermitian vector bundle of rank $n\geq 1$, trivial determinant $\det E$, and a compatible fixed connection $B$ on $E$. Let $A_1$ be a fixed $C^\infty$ reference connection on $L$ and endow $W$ with the unique smooth spin connection. Assume that $p \in (1,\infty)$ obeys the hypotheses of Theorem \ref{mainthm:Lojasiewicz-Simon_gradient_inequality_boson_Yang--Mills_energy_function} with $d=3$. Then the gradient map,
\begin{multline*}
\sM_g: (A_1,0)+W_{A_1}^{1,p}(X;\Lambda^1\otimes\fu(L) \oplus E^*\otimes W\otimes L)
\\
\to
W_{A_1}^{-1,p}(X;\Lambda^1\otimes\fu(L) \oplus E^*\otimes W\otimes L),
\end{multline*}
is \emph{real analytic} and the remaining conclusions of Theorem \ref{mainthm:Lojasiewicz-Simon_gradient_inequality_boson_Yang--Mills_energy_function} and Corollary \ref{maincor:Lojasiewicz-Simon_L2_gradient_inequality_boson_Yang-Mills_energy_function} hold \emph{mutatis mutandis} for the Seiberg--Witten multiple spinor energy function \eqref{eq:SW_multiple_spinor_energy_function} near a Seiberg--Witten multi-monopole, $(A_\infty,\Psi_\infty)$.
\end{mainthm}

\subsection{{\L}ojasiewicz--Simon gradient inequality for the pure Yang--Mills energy function near $G_2$-instantons}
\label{subsec:Lojasiewicz-Simon_gradient_inequality_for_G_2_instantons}
Our final example is concerned with $G_2$-instantons, which are known to be absolute minima of the pure Yang--Mills energy function in dimension seven; for a development of the {\L}ojasiewicz--Simon gradient inequality for the Yang--Mills energy function near anti-self-dual connections over four-dimensional manifolds or flat connections over manifolds of arbitrary dimension, we refer the reader to \cite{Feehan_yang_mills_gradient_flow_v4} or \cite{Feehan_lojasiewicz_inequality_ground_state}, respectively. For an introduction to Yang--Mills gauge theory in dimension seven, see Donaldson and Segal \cite{Donaldson_Segal_2011}, Joyce \cite{Joyce_compact_manifolds_special_holonomy} and Kovalev \cite{Kovalev_2003}. For recent progress concerning $G_2$-instantons, see Walpuski \cite{WalpuskiThesis} and references therein.

Let $(X, \phi)$ be a closed $G_2$-manifold with non-degenerate three-form $\phi \in \Omega^3(X)$ (the \emph{associative form}), $G$ be a compact Lie group, and $P$ be a principal $G$-bundle over $X$.  Following \cite{WalpuskiThesis}, a connection $A$ on $P$ is a \emph{$G_2$-instanton} if obeys the anti-self-duality equation,
\begin{equation}
\label{eq:G_2_instanton}
*(F_A\wedge \phi) =  -F_A.
\end{equation}
It is proven in Walpuski \cite[Proposition 1.97]{WalpuskiThesis} that when $A$ is a $G_2$-instanton, then $A$ is an absolute minimum of the Yang-Mills energy function,
\begin{equation}
\label{eq:YM_energy_function}
\sE_g(A,\Psi) = \frac{1}{2}\int_X |F_A|^2 \,d\vol_g,
\end{equation}
with minimum value $- 4\pi^2\langle p_1(\ad P)\smile [\phi], [X]\rangle$. (Note that in our convention, the definition of the Yang-Mills energy function differs from that of \cite{WalpuskiThesis} by a factor of $1/2$.) In this case, a direct application of Theorem \ref{mainthm:Lojasiewicz-Simon_gradient_inequality_boson_Yang--Mills_energy_function} and Corollary \ref{maincor:Lojasiewicz-Simon_L2_gradient_inequality_boson_Yang-Mills_energy_function} gives

\begin{mainthm}[{\L}ojasiewicz--Simon gradient inequality for $G_2$-instantons]
\label{mainthm:Lojasiewicz-Simon_gradient_inequality_YM_energy_function}
Let $(X,\phi)$ be a closed $G_2$-manifold, $G$ be a compact Lie group and $P$ be a principal $G$-bundle over $X$ with a fixed $C^\infty$ reference connection $A_1$. Assume that $p,q \in (1,\infty)$ satisfy $7/2\leq p \leq q$ with $q>7/2$ and that $A_\infty$ is $W^{1,q}$ connection on $P$ satisfying the $G_2$-instanton equation \eqref{eq:G_2_instanton}. Then there are constants $Z \in (0, \infty)$, and $\sigma \in (0,1]$, and $\theta \in [1/2,1)$, depending on $A_1$, $A_\infty$, $g$, $G$ $p$, and $q$ with the following significance. For any connection $A$ of class $W^{1,q}$ obeying the \emph{{\L}ojasiewicz--Simon neighborhood} condition,
\begin{equation}
\label{eq:Lojasiewicz-Simon_gradient_inequality_YM_neighborhood}
\|A- A_\infty\|_{W^{1,p}_{A_1}(X)} < \sigma,
\end{equation}
then the Yang-Mills energy function \eqref{eq:YM_energy_function} obeys the \emph{{\L}ojasiewicz--Simon gradient inequality}
\begin{equation}
\label{eq:Lojasiewicz-Simon_gradient_inequality_YM_energy_function}
\|d^*_AF_A\|_{L^2(X)}
\geq
Z|\sE_g(A)) + 4\pi^2\langle p_1(\ad P)\smile [\phi], [X]\rangle |^\theta.
\end{equation}
\end{mainthm}

\section{Applications of the {\L}ojasiewicz--Simon gradient inequality for coupled Yang--Mills energy functions}
\label{sec:Lojasiewicz-Simon_gradient_inequality_coupled_Yang--Mills_energy_functions_applications}
Our interest in {\L}ojasiewicz--Simon gradient inequalities for coupled Yang--Mills and harmonic map energy functions is motivated by the wealth of potential applications. We shall survey some of those applications below.

In \cite{Feehan_yang_mills_gradient_flow_v4}, we apply the {\L}ojasiewicz--Simon gradient inequality for the pure Yang--Mills energy function \cite[Theorems 23.1 and 23.17]{Feehan_yang_mills_gradient_flow_v4} to prove global existence, convergence, convergence rate, and stability results for solutions $A(t)$ to the associated gradient flow,
\[
\frac{\partial A}{\partial t} = -\sM_g(A(t)), \quad A(0) = A_0,
\]
that is,
\[
\frac{\partial A}{\partial t} = -d_{A(t)}^{*,g}F_{A(t)}, \quad A(0) = A_0.
\]
Given our {\L}ojasiewicz--Simon gradient inequalities for the boson and fermion coupled Yang--Mills energy functions, Theorems \ref{mainthm:Lojasiewicz-Simon_gradient_inequality_boson_Yang--Mills_energy_function} and
\ref{mainthm:Lojasiewicz-Simon_gradient_inequality_fermion_Yang--Mills_energy_function}, the main conclusions in \cite{Feehan_yang_mills_gradient_flow_v4} for pure Yang--Mills gradient flow should extend easily to the more general case of coupled Yang--Mills gradient flows.

In \cite{Feehan_yangmillsenergygapflat}, we applied the {\L}ojasiewicz--Simon gradient inequality to prove an energy gap result for Yang--Mills connections with small $L^{d/2}$ energy. The proof of that result should extend without difficulty to the case of solutions to the coupled Yang--Mills equations.

\section{Automorphisms and transformation to Coulomb gauge}
\label{sec:Automorphisms and transformation to Coulomb gauge}
For some energy functions, the associated Hessian is already an elliptic second-order partial differential operator on a Sobolev space, but for others the Hessian is only elliptic when combined with a type of Coulomb gauge condition \cite{DK, FU} and it is only then that one can apply Theorem \ref{mainthm:Lojasiewicz-Simon_gradient_inequality}. For example, in the first category, one has the harmonic map energy and Yamabe functions, while in the second category one has the Yang--Mills and coupled Yang--Mills energy functions.

The Yang--Mills energy function is invariant under the action of gauge transformations (or bundle automorphisms) and so, in principle, one can always find a gauge transformation to produce the required Coulomb gauge condition with the aid of a \emph{slice theorem}. However, in order to prove the most useful version of the {\L}ojasiewicz--Simon gradient inequality, it is convenient to have a stronger version of the slice theorem for the action of the group of gauge transformations, going beyond the usual statements found in standard references such as Donaldson and Kronheimer \cite{DK} or Freed and Uhlenbeck \cite{FU} and proved by applying the Implicit Function Theorem. One stronger version of a slice theorem, valid in dimension four, was proved by the first author as \cite[Theorem 1.1]{FeehanSlice} but it nevertheless falls short of what we need for our application to the proofs of our {\L}ojasiewicz--Simon gradient inequalities (even when translated to the setting of pairs). Thus, a second purpose of this monograph is to prove a stronger version of
\cite[Theorem 1.1]{FeehanSlice} for both connections and pairs rather than just connections as in \cite{FeehanSlice}, but using standard Sobolev norms with borderline Sobolev exponents rather than the critical-exponent norms employed in \cite{FeehanSlice} and valid in all dimensions.

\subsection{Transformation to Coulomb gauge}
\label{subsec:Transformation_to_Coulomb_gauge}
We first state the desired result for connections and then its analogue for pairs.

\begin{mainthm}[Existence of $W^{2,q}$ Coulomb gauge transformations for $W^{1,q}$ connections that are $W^{1,\frac{d}{2}}$ close to a reference connection]
\label{mainthm:Feehan_proposition_3-4-4_Lp}
Let $(X,g)$ be a closed, smooth Riemannian manifold of dimension $d \geq 2$, and $G$ be a compact Lie group, and $P$ be a smooth principal $G$-bundle over $X$. If $A_1$ is a $C^\infty$ connection on $P$, and $A_0$ is a Sobolev connection on $P$ of class $W^{1,q}$ with $d/2<q<\infty$, and $p \in (1,\infty)$ obeys $d/2 \leq p \leq q$, then there exists a constant $\zeta = \zeta(A_0,A_1,g,G,p,q) \in (0,1]$ with the following significance. If $A$ is a $W^{1,q}$ connection on $P$ that obeys
\begin{equation}
\label{eq:Feehan_3-4-4_Lp_A_minus_A0_W1p_close}
\|A - A_0\|_{W_{A_1}^{1,p}(X)} < \zeta,
\end{equation}
then there exists a gauge transformation $u \in \Aut(P)$ of class $W^{2,q}$ such that
\[
d_{A_0}^*(u(A) - A_0) = 0,
\]
and
\[
\|u(A) - A_0\|_{W_{A_1}^{1,p}(X)} < 2N\|A - A_0\|_{W_{A_1}^{1,p}(X)},
\]
where $N = N(A_0,A_1,g,G,p,q) \in [1,\infty)$ is the constant in the forthcoming Proposition \ref{prop:Feehan_2001_lemma_6-6}.
\end{mainthm}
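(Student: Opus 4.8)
The plan is to realize the sought gauge transformation as $u = \exp\xi$ for a section $\xi$ of $\ad P$ and to solve the resulting nonlinear elliptic equation by a continuity argument. Writing $a := A - A_0$ and expanding the gauge action of $e^\xi$ together with the exponential series, the Coulomb condition $d_{A_0}^*(e^\xi(A) - A_0) = 0$ becomes
\[
\Delta_{A_0}\xi \;=\; d_{A_0}^*a \;+\; d_{A_0}^*Q_{A_0}(\xi,a), \qquad \Delta_{A_0} := d_{A_0}^*d_{A_0}\colon \Omega^0(\ad P)\to\Omega^0(\ad P),
\]
where $Q_{A_0}(\xi,a)$ collects the terms of order $\geq 2$ in $\xi$, and those of order $\geq 1$ in $a$ carrying a factor of $\xi$, all built algebraically from $\xi$, $d_{A_0}\xi$, $a$ and Lie brackets. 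Integration by parts shows that the right-hand side is, for every $\xi$ and $a$, $L^2$-orthogonal to the kernel $\fK$ of $d_{A_0}$ on $\Omega^0(\ad P)$; by elliptic regularity for the $W^{1,q}$-coefficient operator $\Delta_{A_0}$ and the embedding $W_{A_1}^{2,q}(X)\hookrightarrow C^0(X)$ (valid since $q>d/2$), $\fK$ is a finite-dimensional space of smooth covariant-constant sections, and $\Delta_{A_0}$ restricts to a Banach space isomorphism from the $L^2$-orthogonal complement $\fK^\perp$ onto the corresponding complement in each relevant Sobolev space, with bounded inverse $G_{A_0}$. Solving for $\xi\in\fK^\perp$ is thus equivalent to the fixed-point equation $\xi = G_{A_0}\bigl(d_{A_0}^*a + d_{A_0}^*Q_{A_0}(\xi,a)\bigr)$.

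The two exponent hypotheses enter exactly here. Since $q > d/2$, the space $W_{A_1}^{2,q}(X;\ad P)$ is a Banach algebra embedding continuously into $C^0$, so $\xi\mapsto Q_{A_0}(\xi,a)$ is real analytic from a neighborhood of $0$ in $W_{A_1}^{2,q}(X)$ into $W_{A_1}^{1,q}(X)$, with every product well defined; this is what forces us to work with connections of Sobolev class $W^{1,q}$ rather than merely $W^{1,d/2}$. On the other hand the borderline value $p = d/2$ (and \afortiori $p > d/2$) is precisely the threshold at which a one-form $b$ small in $W_{A_1}^{1,p}(X)$ produces a genuinely small perturbation $d_{A_0}^*[b,\,\cdot\,]$ of the leading operator: routing every bilinear estimate through the critical Sobolev embedding $W^{1,d/2}\hookrightarrow L^d$ bounds such terms by $\|b\|_{W_{A_1}^{1,p}(X)}$ times the $W_{A_1}^{2,p}(X)$ or $W_{A_1}^{1,p}(X)$ norm of the remaining factor, up to a constant depending only on $A_0,A_1,g,G,p$. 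This is the mechanism that allows closeness to be measured in the weak norm $W^{1,p}$.

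Because the nonlinearity is only well behaved on the $W^{1,q}$-scale while we wish to assume closeness only on the weaker $W^{1,p}$-scale, a single application of the inverse function theorem is unavailable, and I would instead use the \emph{method of continuity} along the segment $A^{(t)} := A_0 + ta$, $t\in[0,1]$. Let $\mathcal T$ be the set of $t$ for which there is a $W^{2,q}$ gauge transformation $u^{(t)}$ with $d_{A_0}^*(u^{(t)}(A^{(t)}) - A_0) = 0$ and $\|u^{(t)}(A^{(t)}) - A_0\|_{W_{A_1}^{1,p}(X)} \leq 2Nt\|a\|_{W_{A_1}^{1,p}(X)}$, where $N$ is the constant of Proposition \ref{prop:Feehan_2001_lemma_6-6}; after choosing $\zeta$ small, $0\in\mathcal T$. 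For openness at $t_0\in\mathcal T$, write $u^{(t_0)}(A^{(t_0)}) = A_0 + a_0$ with $d_{A_0}^*a_0 = 0$ and $a_0$ small in $W_{A_1}^{1,p}(X)$; then for $t$ near $t_0$, $u^{(t_0)}(A^{(t)}) = A_0 + a_0 + (t-t_0)\Ad_{u^{(t_0)}}a$, and the increment $(t-t_0)\Ad_{u^{(t_0)}}a$ is small in $W_{A_1}^{1,q}(X)$ because $a$ is a fixed $W^{1,q}$-one-form; the operator $\Delta_{A_0} + d_{A_0}^*[a_0,\,\cdot\,]$ is a small perturbation of $\Delta_{A_0}$ by the borderline estimate, so a fixed-point argument on the $W^{1,q}$-scale produces a further correction, elliptic bootstrapping promotes $\xi$ from $W^{2,p}$ to $W^{2,q}$, and re-deriving the a priori bound below yields $t\in\mathcal T$. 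For closedness, a sequence $t_j\to t_\infty$ in $\mathcal T$ gives $a_j := u^{(t_j)}(A^{(t_j)}) - A_0$ with $d_{A_0}^*a_j = 0$, $\|a_j\|_{W_{A_1}^{1,p}(X)}\leq 2N\zeta$, and — after propagating a uniform (not necessarily small) $W_{A_1}^{1,q}(X)$ bound via the $L^q$-analogues of the same identities — a subsequence with $a_j\rightharpoonup a_\infty$ weakly in $W_{A_1}^{1,q}(X)$ and $u^{(t_j)}\to u^{(t_\infty)}$ strongly in $C^0$; the limit satisfies $d_{A_0}^*a_\infty = 0$, is gauge-equivalent to $t_\infty a$, and by weak lower semicontinuity obeys the $2Nt_\infty$ bound, so $t_\infty\in\mathcal T$. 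Hence $\mathcal T=[0,1]$ and $t=1$ gives the theorem, with $u\in W^{2,q}$ as required.

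The a priori estimate producing the factor $2N$ — which must be re-established at each step of the continuity argument — is the heart of the matter. Given a Coulomb representative $a' := u(A)-A_0$ with $d_{A_0}^*a' = 0$, one applies the elliptic estimate of Proposition \ref{prop:Feehan_2001_lemma_6-6} for $d_{A_0}\oplus d_{A_0}^*$ on $W_{A_1}^{1,p}(X)$-one-forms, writes $d_{A_0}a' = \Ad_u F_A - F_{A_0} = d_{A_0}a + (\text{quadratic and lower-order remainders})$, bounds the $L^p$ norm of those remainders by $\|a\|_{W_{A_1}^{1,p}(X)}$ times small factors (using $\zeta$ small and the critical multiplication estimates, and using the smallness of $\|a'\|_{W_{A_1}^{1,p}(X)}$ to control its harmonic part through the lower-order term of the elliptic estimate), and absorbs the resulting small multiples of $\|a'\|_{W_{A_1}^{1,p}(X)}$ to obtain $\|a'\|_{W_{A_1}^{1,p}(X)}\leq 2N\|a\|_{W_{A_1}^{1,p}(X)}$. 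The principal obstacle is precisely this borderline bookkeeping at $p = d/2$: since $W^{1,d/2}\not\hookrightarrow L^\infty$ for $d\geq 4$, every product must be estimated through $W^{1,d/2}\hookrightarrow L^d$ with derivatives spent carefully and with $\zeta$ doing the absorbing, and one must simultaneously carry along a uniform $W^{1,q}$ bound on the Coulomb representatives — not controlled by $\zeta$ but finite given the data — in order to have the compactness needed to close the continuity argument.
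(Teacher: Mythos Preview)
Your overall architecture --- continuity along $A_t=A_0+t(A-A_0)$, openness via an inverse-function/fixed-point argument at the $W^{2,q}$ scale with linearization $d_{A_0}^*d_{A_0+a_0}$ (a small perturbation of $\Delta_{A_0}$ because $a_0$ is $W^{1,p}$-small), closedness by compactness, and an a~priori $W^{1,p}$ bound feeding both steps --- is exactly the paper's. But two of your steps have genuine gaps.

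\textbf{The a~priori estimate.} You describe Proposition~\ref{prop:Feehan_2001_lemma_6-6} as an elliptic estimate for $d_{A_0}\oplus d_{A_0}^*$ on one-forms and propose to feed it $d_{A_0}a' = \Ad_{u^{-1}}F_A - F_{A_0} - \tfrac12[a',a']$. Expanding $F_A=F_{A_0}+d_{A_0}a+\tfrac12[a,a]$ leaves the term $u^{-1}F_{A_0}u-F_{A_0}$, and your claimed identity ``$=d_{A_0}a+(\text{remainders})$'' is false: this term is \emph{not} quadratic in $a$ or $a'$, it depends on $u$, and $u$ need not be close to the identity (or even to the center). At the borderline $p=d/2$ you have no $C^0$ control on $u$ from the $W^{1,p}$ data, so you cannot absorb it. The paper's Proposition~\ref{prop:Feehan_2001_lemma_6-6} avoids curvature entirely: it rewrites the gauge identity as a first-order equation for $u$,
\[
d_{A_0}u = u\,(u(A)-A_0) - (A-A_0)\,u,
\]
applies $d_{A_0}^*$ to obtain a second-order equation $\Delta_{A_0}u_0=\cdots$ for the component $u_0$ of $u$ orthogonal to $\Ker\Delta_{A_0}$, and closes the estimate using the \emph{pointwise} bound $|u|\le 1$ (from unitarity) together with smallness of both $\|A-A_0\|_{L^s}$ and $\|u(A)-A_0\|_{L^s}$. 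The unitarity bound is exactly what replaces the $C^0$ control you are missing; the output $\|u_0\|_{W^{2,p}}\lesssim\|d_{A_0}^*(A-A_0)\|_{L^p}$ is then substituted back into $u(A)-A_0=u^{-1}(A-A_0)u+u^{-1}d_{A_0}u_0$ to finish.

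\textbf{Closedness.} Your sentence ``propagating a uniform $W_{A_1}^{1,q}$ bound via the $L^q$-analogues of the same identities'' is where the argument stalls: rerunning the a~priori bound with $q$ in place of $p$ would require a~priori $L^s$-smallness of $u_{t_m}(A_{t_m})-A_0$, which is precisely what is not yet known. The paper instead observes that the curvatures $F(u_{t_m}(A_{t_m}))$ have $L^q$ norm equal to $\|F(A_{t_m})\|_{L^q}$, uniformly bounded because $A_t$ is an affine path between two fixed $W^{1,q}$ connections, and then invokes Uhlenbeck's weak compactness theorem to extract a weak $W^{1,q}$ limit $B_\infty$; the intertwining identity $d_{A_0}u_t=u_t(B_t-A_0)-(A_t-A_0)u_t$ then yields weak $W^{2,q}$ convergence of $u_{t_m}$. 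This Uhlenbeck input is the missing ingredient in your sketch.
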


For a description of the action of the group of gauge transformations in Theorem \ref{mainthm:Feehan_proposition_3-4-4_Lp} and the definition of the Coulomb gauge condition for connections, we refer the reader to Section \ref{sec:Coulomb_gauge_slice_quotient_space_connections}, and for an explanation of the remainder of the notation in Theorem \ref{mainthm:Feehan_proposition_3-4-4_Lp}, we refer the reader to Section \ref{subsec:Lojasiewicz-Simon_gradient_inequality_boson_fermion_Yang--Mills_function}.

The essential point in Theorem \ref{mainthm:Feehan_proposition_3-4-4_Lp} is that the result holds for the critical exponent, $p = d/2$ with $d \geq 3$, when the Sobolev space $W^{2,p}(X)$ fails to embed in $C(X)$ (see \cite[Theorem 4.12]{AdamsFournier}) and a proof of Theorem \ref{mainthm:Feehan_proposition_3-4-4_Lp} by the Implicit Function Theorem in the case $p > d/2$ fails when $p = d/2$. In this situation, a $W^{2,\frac{d}{2}}$ gauge transformation $u$ of $P$ is not continuous, the set $\Aut^{2,\frac{d}{2}}(P)$ of $W^{2,\frac{d}{2}}$ gauge transformations is not a manifold, and $\Aut^{2,\frac{d}{2}}(P)$ cannot act smoothly on the affine space $\sA^{1,\frac{d}{2}}(P)$ of $W^{1,\frac{d}{2}}$ connections on $P$. When $d = 4$ and $p \geq 2$, this phenomenon is discussed by Freed and Uhlenbeck in \cite[Appendix A]{FU}.

While the Quantitative Implicit Function Theorem \ref{thm:Quantitative_implicit_function_theorem} could be applied to give a shorter proof of Theorem \ref{mainthm:Feehan_proposition_3-4-4_Lp} when $p > d/2$, it is unclear whether this method could yield the borderline case $p = d/2$. See Remark \ref{rmk:Feehan_proposition_3-4-4_Lp_Ld_close}  for an explanation. However, to illustrate use of the Quantitative Implicit Function Theorem in an application that is of interest in its own right, we shall apply Theorem \ref{thm:Quantitative_implicit_function_theorem} to prove

\begin{mainthm}[Existence of $W^{2,q}$ Coulomb gauge transformations for $W^{1,q}$ connections that are $L^r$ close to a reference connection]
\label{mainthm:Feehan_proposition_3-4-4_Lp_Lr_close}
Let $(X,g)$ be a closed, smooth Riemannian manifold of dimension $d \geq 2$, and $G$ be a compact Lie group, and $P$ be a smooth principal $G$-bundle over $X$. If $A_1$ is a $C^\infty$ connection on $P$, and $A_0$ is a Sobolev connection on $P$ of class $W^{1,q}$ with $d/2<q<\infty$, and $r$ is a constant that obeys $d < r \leq q^* = dq/(d-q)$ if $q<d$ or $d < r < \infty$ if $q\geq d$, then there are constants $\delta = \delta(A_0,A_1,g,G,r) \in (0,1]$ and $C = C(A_0,A_1,g,G,r) \in (0,\infty)$ with the following significance. If $A$ is a $W^{1,q}$ connection on $P$ that obeys
\begin{equation}
\label{eq:Feehan_3-4-4_Lp_A_minus_A0_Lr_close}
\|A - A_0\|_{L^r(X)} < \delta,
\end{equation}
then there exists a gauge transformation $u \in \Aut(P)$ of class $W^{2,q}$ such that
\[
d_{A_0}^*(u(A) - A_0) = 0,
\]
and
\[
\|u(A) - A_0\|_{L^r(X)} < C\|A - A_0\|_{L^r(X)}.
\]
\end{mainthm}

\begin{rmk}[Borderline Sobolev exponents]
\label{rmk:Feehan_proposition_3-4-4_Lp_Ld_close}  
The constant $C$ in Theorem \ref{mainthm:Feehan_proposition_3-4-4_Lp_Lr_close} and the $L^r(X;\Lambda^1\otimes\ad P)$ norm depend continuously on $r \geq d$ (see \cite[Theorem 2.17]{AdamsFournier} or \cite[Problem 7.1]{GilbargTrudinger}) and so (as in the proof of \cite[Corollary 2.2]{UhlLp}), we may take the limit as $r\downarrow d$ to give
 \[
\|u(A) - A_0\|_{L^d(X)} \leq C\|A - A_0\|_{L^d(X)}.
\]
However, it is unclear whether one can also relax the condition \eqref{eq:Feehan_3-4-4_Lp_A_minus_A0_Lr_close} by allowing $r = d$ because our direct proof in Section \ref{sec:Coulomb_gauge_slice_quotient_space_connections_Lr_close} of Theorem \ref{mainthm:Feehan_proposition_3-4-4_Lp_Lr_close} via the Quantitative Implicit Function Theorem \ref{thm:Quantitative_implicit_function_theorem} precludes that choice. It is also unclear that one could achieve such a result by applying the Method of Continuity (by analogy with our proof of Theorem \ref{mainthm:Feehan_proposition_3-4-4_Lp}) since there is no obvious analogue of Proposition \ref{prop:Feehan_2001_lemma_6-6}. The analogous comments apply to the forthcoming Theorem \ref{mainthm:Feehan_proposition_3-4-4_Lp_pairs_Lr_close}.
\end{rmk}

The proof of Theorem \ref{mainthm:Feehan_proposition_3-4-4_Lp} adapts \mutatis to establish the following refinement of \cite[Proposition 2.8]{FL1} and \cite[Theorem 4.1]{ParkerGauge}.

\begin{mainthm}[Existence of $W^{2,q}$ Coulomb gauge transformations for $W^{1,q}$ pairs that are $W^{1,\frac{d}{2}}$ close to a reference pair]
\label{mainthm:Feehan_proposition_3-4-4_Lp_pairs}
Let $(X,g)$ be a closed, smooth Riemannian manifold of dimension $d \geq 2$, and $G$ be a compact Lie group, $P$ be a smooth principal $G$-bundle over $X$, and $E = P\times_\varrho\EE$ be a smooth Hermitian vector bundle over $X$ defined by a finite-dimensional unitary representation, $\varrho: G \to \Aut_\CC(\EE)$. If $A_1$ is a $C^\infty$ connection on $P$, and $(A_0,\Phi_0)$ is a Sobolev pair on $(P,E)$ of class $W^{1,q}$ with $d/2<q<\infty$, and $p \in (1,\infty)$ obeys $d/2 \leq p \leq q$, then there exists a constant $\zeta = \zeta(A_1,A_0,\Phi_0,g,G,p,q) \in (0,1]$ with the following significance. If $(A,\Phi)$ is a $W^{1,q}$ pair on $(P,E)$ that obeys
\begin{equation}
\label{eq:Feehan_3-4-4_Lp_APhi_minus_A0Phi0_W1p_close}
\|(A,\Phi) - (A_0,\Phi_0)\|_{W_{A_1}^{1,p}(X)} < \zeta,
\end{equation}
then there exists a gauge transformation $u \in \Aut(P)$ of class $W^{2,q}$ such that
\[
d_{A_0,\Phi_0}^*(u(A,\Phi) - (A_0,\Phi_0)) = 0,
\]
and
\[
\|u(A,\Phi) - (A_0,\Phi_0)\|_{W_{A_1}^{1,p}(X)}
< 2N\|(A,\Phi) - (A_0,\Phi_0)\|_{W_{A_1}^{1,p}(X)},
\]
where $N = N(A_1,A_0,\Phi_0,g,G,p,q) \in [1,\infty)$ is the constant in the forthcoming Proposition \ref{prop:Feehan_2001_lemma_6-6_pairs}.
\end{mainthm}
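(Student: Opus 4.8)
The plan is to follow the proof of Theorem~\ref{mainthm:Feehan_proposition_3-4-4_Lp} \emph{mutatis mutandis}, carrying the section component of the pair along with the connection. Write $(a,\phi) := (A,\Phi)-(A_0,\Phi_0)$, so the hypothesis reads $\|(a,\phi)\|_{W^{1,p}_{A_1}(X)}<\zeta$, and let $d_{A_0,\Phi_0}\colon\Omega^0(X;\ad P)\to\Omega^1(X;\ad P)\oplus C^\infty(X;E)$ be the linearization at $(A_0,\Phi_0)$ of the gauge action $u\mapsto u(A_0,\Phi_0)$; its first component is $\pm d_{A_0}$ and its second is the algebraic map $\xi\mapsto\varrho_*(\xi)\Phi_0$, and $d^*_{A_0,\Phi_0}$ is the formal $L^2$-adjoint appearing in the Coulomb condition. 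The new contributions relative to the connection case are built from $\Phi_0$ or $\phi$ and are of order zero in the gauge parameter, hence no worse than terms already controlled in the proof of Theorem~\ref{mainthm:Feehan_proposition_3-4-4_Lp}.

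\textbf{Step 1 (Coulomb-gauge elliptic estimate).} As the pairs analogue of Proposition~\ref{prop:Feehan_2001_lemma_6-6}, I would establish a first-order elliptic estimate --- this is Proposition~\ref{prop:Feehan_2001_lemma_6-6_pairs} --- providing a constant $N=N(A_1,A_0,\Phi_0,g,G,p,q)\in[1,\infty)$ such that, for $r\in\{p,q\}$ and every pair $(b,\psi)\in W^{1,q}_{A_1}$ satisfying the Coulomb constraint $d^*_{A_0,\Phi_0}(b,\psi)=0$ and lying in the $L^2$-orthogonal complement of the kernel of the associated elliptic Coulomb-gauge operator (a finite-dimensional space determined by $(A_0,\Phi_0)$),
\[
\|(b,\psi)\|_{W^{1,r}_{A_1}(X)} \le N\bigl(\|d_{A_0}b\|_{L^r(X)}+\|\nabla_{A_0}\psi\|_{L^r(X)}+\|(b,\psi)\|_{L^r(X)}\bigr).
\]
Since $(A_0,\Phi_0)$ is only of class $W^{1,q}$ with $q>d/2$, this step uses the rough-coefficient form of the Calder\'on--Zygmund and elliptic estimates, exactly as in the connection case, together with the borderline Sobolev embedding $W^{1,d/2}(X)\hookrightarrow L^d(X)$ (and $W^{2,q}(X)\hookrightarrow C(X)$ for $q>d/2$).

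\textbf{Step 2 (continuity method).} Set $(A_t,\Phi_t):=(A_0,\Phi_0)+t(a,\phi)$, $t\in[0,1]$, and let $T\subseteq[0,1]$ be the set of $t$ for which there is a gauge transformation $u_t\in\Aut(P)$ of class $W^{2,q}$ with $d^*_{A_0,\Phi_0}(u_t(A_t,\Phi_t)-(A_0,\Phi_0))=0$ and $\|u_t(A_t,\Phi_t)-(A_0,\Phi_0)\|_{W^{1,p}_{A_1}(X)}\le 2Nt\,\|(a,\phi)\|_{W^{1,p}_{A_1}(X)}$. Then $0\in T$ with $u_0=\id$. For openness, writing $u=\exp\xi$ one applies the Implicit Function Theorem to the smooth map $\xi\mapsto d^*_{A_0,\Phi_0}(\exp(\xi)(A_t,\Phi_t)-(A_0,\Phi_0))$ from a neighborhood in $W^{2,q}_{A_1}(X;\ad P)$, modulo $\fk_{A_0,\Phi_0}:=\ker d_{A_0,\Phi_0}$, to $L^q(X)$; this is legitimate because $q>d/2$ forces $W^{2,q}(X)\hookrightarrow C(X)$, so $W^{2,q}$ gauge transformations are continuous and act smoothly on $W^{1,q}$ pairs, and the differential at a solution is $d^*_{A_0,\Phi_0}d_{A_0,\Phi_0}$ plus an $O(\zeta)$ correction, invertible by Step~1. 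For closedness, given $t_n\to t_\infty$ in $T$, the defining bound keeps $b_{t_n}:=u_{t_n}(A_{t_n},\Phi_{t_n})-(A_0,\Phi_0)$ bounded in $W^{1,p}_{A_1}$; combining Step~1 with the Bianchi-type identity expressing $d_{A_0}$ of the connection part of $b_{t_n}$ through $F_{u_{t_n}(A_{t_n})}-F_{A_0}$ and a quadratic term (and its section analogue), controlled via the pointwise gauge-invariance of $|F_A|$ and $|\nabla_A\Phi|$ and the borderline product bound $\|b\wedge b\|_{L^{d/2}}\le C\|b\|_{W^{1,d/2}}^2$ coming from $W^{1,d/2}\hookrightarrow L^d$, yields --- for $\zeta$ small --- a uniform $W^{1,p}_{A_1}$ bound, and a further elliptic bootstrap using $(A,\Phi)\in W^{1,q}$ improves $u_{t_n}$ to a uniform $W^{2,q}$ bound; a subsequence then converges strongly enough to pass to the limit and exhibit $u_{t_\infty}$, and Step~1 applied to the limit re-derives the inequality $\le 2Nt_\infty\|(a,\phi)\|_{W^{1,p}_{A_1}}$ after one more shrinking of $\zeta$. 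Hence $T=[0,1]$; taking $t=1$ and $u:=u_1$ gives the Coulomb gauge transformation, its quantitative $W^{1,p}$ bound, and --- by elliptic regularity for $d^*_{A_0,\Phi_0}d_{A_0,\Phi_0}\xi_1$ with $(A,\Phi)\in W^{1,q}$ --- its $W^{2,q}$ regularity.

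\textbf{Main obstacle.} The hard part, inherited from the proof of Theorem~\ref{mainthm:Feehan_proposition_3-4-4_Lp}, is the closedness step at the \emph{borderline} exponent $p=d/2$: one must show that the \apriori $W^{1,p}$ bound does not degenerate along the path and does bootstrap to a $W^{2,q}$ bound, even though $W^{2,d/2}(X)\not\hookrightarrow C(X)$ and $W^{1,d/2}(X)$ is not a Banach algebra. This forces the quadratic terms to be absorbed by \emph{sharp} Sobolev multiplication and $L^{d/2}$-duality estimates, exploiting the pointwise gauge-invariance of $|F_A|$ and $|\nabla_A\Phi|$ (so that the critical $L^{d/2}$-norms dominating $\|(a,\phi)\|_{W^{1,d/2}}$ up to lower order are unchanged under $u_t$), rather than the embedding $W^{1,p}\hookrightarrow C^0$ available only for $p>d$, and requires the rough-coefficient elliptic estimates of Step~1 to be uniform along the path. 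By contrast, the new section terms --- $\varrho_*(\xi)\phi$, the higher-order-in-$\xi$ part of $\varrho(\exp(-\xi))\Phi_0$, and the $\Phi_0$-dependent zeroth-order part of $d^*_{A_0,\Phi_0}$ --- are of order zero in $\xi$ and are estimated using the fixed datum $\Phi_0\in W^{1,q}$; they contribute only bookkeeping, which is why the proof for pairs is a \emph{mutatis mutandis} extension of the proof for connections and a refinement of \cite[Proposition~2.8]{FL1} and \cite[Theorem~4.1]{ParkerGauge}.
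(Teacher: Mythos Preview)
Your overall strategy is correct and matches the paper: the proof is indeed a \emph{mutatis mutandis} adaptation of the proof of Theorem~\ref{mainthm:Feehan_proposition_3-4-4_Lp} via the method of continuity, with openness by the Implicit Function Theorem (relying on the surjectivity Lemma~\ref{lem:Surjectivity_perturbed_Laplace_operator_pairs}) and closedness by a compactness argument, and you are right that the section component contributes only zeroth-order terms that are harmless.

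There are two points where your sketch diverges from the paper's actual argument and where you should be more careful. First, your Step~1 mischaracterizes Proposition~\ref{prop:Feehan_2001_lemma_6-6_pairs}: the paper does \emph{not} prove an elliptic slice estimate of the form $\|(b,\psi)\|_{W^{1,r}}\le N(\|d_{A_0}b\|_{L^r}+\|\nabla_{A_0}\psi\|_{L^r}+\|(b,\psi)\|_{L^r})$. Instead, following the proof of Proposition~\ref{prop:Feehan_2001_lemma_6-6}, one writes the intertwining identity $d_{A_0}u=u(B-A_0)-(A-A_0)u$ (and its section analogue), applies $d^*_{A_0,\Phi_0}$, uses the Coulomb condition on $B-A_0$, and obtains a second-order elliptic equation for $u_0:=u-\Pi u$ whose right-hand side is controlled by $\|(A,\Phi)-(A_0,\Phi_0)\|_{W^{1,p}}$. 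The key \apriori ingredient is Corollary~\ref{cor:Spectrum_Delta_A_Sobolev_and_W2p_apriori_estimate_pairs}, namely $\|u_0\|_{W^{2,p}}\le C\|\Delta_{A_0,\Phi_0}u_0\|_{L^p}$, an estimate on the \emph{gauge transformation}, not on the slice element. Second, for closedness the paper does not perform the hands-on bootstrap you describe; it invokes Uhlenbeck's Weak Compactness Theorem directly (using the uniform $L^q$ curvature bound, which you correctly identify as coming from gauge invariance of $|F_A|$) to extract a weakly $W^{1,q}$-convergent subsequence of $u_{t_m}(A_{t_m})$, then reads off weak $W^{2,q}$ convergence of $u_{t_m}$ from the intertwining relation, and finally re-applies Proposition~\ref{prop:Feehan_2001_lemma_6-6_pairs} at the limit. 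Your route via a direct $W^{1,p}\to W^{1,q}$ bootstrap on $b_{t_n}$ is morally equivalent but would require you to essentially reprove the local content of Uhlenbeck compactness; the paper's approach is cleaner.
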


For a description of the action of the group of gauge transformations in Theorem \ref{mainthm:Feehan_proposition_3-4-4_Lp_pairs} and the definition of the Coulomb gauge condition for pairs, we refer the reader to Section \ref{sec:Coulomb_gauge_slice_quotient_space_pairs}. The proof of Theorem \ref{mainthm:Feehan_proposition_3-4-4_Lp_Lr_close} adapts \mutatis to establish the following refinement of Theorem \ref{mainthm:Feehan_proposition_3-4-4_Lp_pairs}.

\begin{mainthm}[Existence of $W^{2,q}$ Coulomb gauge transformations for $W^{1,q}$ pairs that are $L^r$ close to a reference pair]
\label{mainthm:Feehan_proposition_3-4-4_Lp_pairs_Lr_close}
Let $(X,g)$ be a closed, smooth Riemannian manifold of dimension $d \geq 2$, and $G$ be a compact Lie group, $P$ be a smooth principal $G$-bundle over $X$, and $E = P\times_\varrho\EE$ be a smooth Hermitian vector bundle over $X$ defined by a finite-dimensional unitary representation, $\varrho: G \to \Aut_\CC(\EE)$. If $A_1$ is a $C^\infty$ connection on $P$, and $(A_0,\Phi_0)$ is a Sobolev pair on $(P,E)$ of class $W^{1,q}$ with $d/2<q<\infty$, and $r$ is a constant that obeys $d < r \leq q^* = dq/(d-q)$ if $q<d$ or $d < r < \infty$ if $q\geq d$, then there are constants $\delta = \delta(A_0,\Phi_0,A_1,g,G,r) \in (0,1]$ and $C = C(A_0,\Phi_0,A_1,g,G,r) \in (0,\infty)$ with the following significance. If $(A,\Phi)$ is a $W^{1,q}$ pair on $(P,E)$ that obeys
\begin{equation}
\label{eq:Feehan_3-4-4_Lp_APhi_minus_A0Phi0_L^r_close}
\|(A,\Phi) - (A_0,\Phi_0)\|_{L^r(X)} < \delta,
\end{equation}
then there exists a gauge transformation $u \in \Aut(P)$ of class $W^{2,q}$ such that
\[
d_{A_0,\Phi_0}^*(u(A,\Phi) - (A_0,\Phi_0)) = 0,
\]
and
\[
\|u(A,\Phi) - (A_0,\Phi_0)\|_{L^r(X)} < C\|(A,\Phi) - (A_0,\Phi_0)\|_{L^r(X)}.
\]
\end{mainthm}

\subsection{Real analytic Banach manifold structures on quotient spaces}
\label{subsec:Analyticity_of_quotient_spaces}
In order to establish the analyticity of the pure or coupled Yang--Mills energy functions on affine spaces of $W^{1,q}$ connections $\sA^{1,q}(P)$ or pairs $\sP^{1,q}(P,E)$, respectively, it is not necessary to know that their quotient spaces with respect to the action of the group $\Aut^{2,q}(P)$ of gauge transformations are analytic Banach manifolds. Nevertheless, because this readily follows from the proofs of Theorem \ref{mainthm:Feehan_proposition_3-4-4_Lp} and Theorem \ref{mainthm:Feehan_proposition_3-4-4_Lp_pairs}, respectively, we include the relevant statements here for the case of connections, noting that the analogous statements for pairs are similar.

Theorem \ref{mainthm:Feehan_proposition_3-4-4_Lp} provides the essential ingredient one needs to show not only that the quotient space $\sB(P) := \sA^{1,q}(P)/\Aut^{2,q}(P)$ is a $C^\infty$ but also a \emph{real analytic} Banach manifold away from orbits $[A] = \{u(A): u \in \Aut^{2,q}(P)\}$ corresponding to $W^{1,q}$ connections $A$ on $P$ whose stabilizers (or isotropy groups), $\Stab(A) := \{u \in \Aut^{2,q}(P): u(A) = A\}$, are non-minimal, that is, contain the $\Center(G)$ as a proper subgroup. To show that
\[
\sB^*(P) = \left\{A \in \sA^{1,q}(P): \Stab(A) = \Center(G) \right\}/\Aut^{2,q}(P),
\]
is a $C^\infty$ Banach manifold, one only needs the `easy case' of Theorem \ref{mainthm:Feehan_proposition_3-4-4_Lp} where $p=q$, as the condition $q>d/2$ ensures that the proofs using $H^{k+1}(X)$ Sobolev spaces (with $d=4$ and $k\geq 2$) due to Donaldson and Kronheimer \cite[Sections 4.2.1 and 4.2.2]{DK} or Freed and Uhlenbeck \cite[pp. 48-51]{FU} apply \emph{mutatis mutandis}. We have the following analogue of \cite[Proposition 4.2.9]{DK}, \cite[Corollary, p. 50]{FU}, for real analytic Banach manifolds and $X$ of dimension $d\geq 2$ rather than $C^\infty$ Hilbert manifolds and $X$ of dimension four.

\begin{maincor}[Real analytic Banach manifold structure on the quotient space of $W^{1,q}$ connections]
\label{maincor:Slice}
Let $(X,g)$ be a closed, smooth Riemannian manifold of dimension $d \geq 2$, and $G$ be a compact Lie group, and $P$ be a smooth principal $G$-bundle over $X$, and $q$ obey $d/2<q<\infty$. If $A_1$ is a $C^\infty$ reference connection on $P$ and $[A] \in \sB(P)$, then there is a constant $\eps = \eps(A_1,[A],g,G,q) \in (0,1]$ with the following significance. If
\[
\bB_A(\eps) := \left\{a \in W_{A_1}^{1,q}(X;\Lambda^1\otimes\ad P): d_A^*a = 0 \text{ and } \|a\|_{W_{A_1}^{1,q}(X)} < \eps\right\},
\]
then the map,
\[
\pi_A: \bB_A(\eps)/\Stab(A) \ni [a] \mapsto [A+a] \in \sB(P),
\]
is a homeomorphism onto an open neighborhood of $[A]\in \sB(P)$. For $a \in \bB_A(\eps)$, the stabilizer of $a$ in $\Stab(A)$ is naturally isomorphic to that of $\pi_A(a)$ in $\Aut^{2,q}(P)$. In particular, the inverse coordinate charts, $\pi_A$, determine real analytic transition functions for $\sB^*(P)$, giving it the structure of a real analytic Banach manifold, and each map $\pi_A$ is a real analytic diffeomorphism from the open subset of points $[a] \in \bB_A(\eps)/\Stab(A)$ where $\pi_A(a)$ has stabilizer isomorphic to $\Center(G)$.
\end{maincor}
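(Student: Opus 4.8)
The plan is to construct the real analytic Banach manifold structure on $\sB^*(P)$ from Coulomb-gauge slices, following the classical construction of Donaldson and Kronheimer \cite[Sections 4.2.1 and 4.2.2]{DK} and Freed and Uhlenbeck \cite[pp.~48--51]{FU}, but with two new ingredients. First, in place of the Implicit Function Theorem used there to produce a Coulomb representative, one invokes Theorem \ref{mainthm:Feehan_proposition_3-4-4_Lp}, which supplies the Coulomb gauge transformation together with the quantitative bound $\|u(A)-A_0\|_{W^{1,q}_{A_1}(X)} < 2N\|A-A_0\|_{W^{1,q}_{A_1}(X)}$ and is valid for $X$ of arbitrary dimension $d \geq 2$. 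Second, one keeps track of \emph{real} analyticity, rather than merely smoothness, of every map entering the construction. The standing hypothesis $q > d/2$ is used throughout: it makes $W^{2,q}(X)$ a Banach algebra contained in $C(X)$ \cite[Theorem 4.39]{AdamsFournier}, so that $\Aut^{2,q}(P)$ is a Banach Lie group acting smoothly (and, as below, real analytically) on $\sA^{1,q}(P)$, and it keeps all elliptic estimates in the non-critical range. Fix $[A] \in \sB(P)$ and a representative connection $A$ of class $W^{1,q}$ (by the regularity Theorem \ref{thm:Parker_1982_5-3} one may take $A$ smooth, though this is not needed), so that $\bB_A(\eps)$ is an open ball about $0$ in the closed subspace $\ker d_A^* \subset W^{1,q}_{A_1}(X;\Lambda^1\otimes\ad P)$.

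The first task is to verify that $\pi_A$ is a homeomorphism onto an open set. For $u \in \Stab_A$ one has $u(A+a) = A + \Ad_u a$ and $d_A^*(\Ad_u a) = \Ad_u(d_A^* a) = 0$, since $\Ad_u$ commutes with $d_A$ when $u(A) = A$; because $\Stab_A$ is compact, after shrinking $\eps$ the ball $\bB_A(\eps)$ is $\Stab_A$-invariant, so $\pi_A$ descends to $\bB_A(\eps)/\Stab_A$ and is continuous. For injectivity, if $u(A+a) = A+a'$ with $a, a' \in \bB_A(\eps)$ and $u \in \Aut^{2,q}(P)$, the standard quantitative argument (as in \cite[Sections 4.2.1 and 4.2.2]{DK} and \cite[pp.~48--51]{FU}, which carries over \emph{mutatis mutandis} since $q > d/2$) forces $u$, for $\eps$ small, into an arbitrarily small neighborhood of $\Stab_A$, and then, the Coulomb condition pinning down $u$ uniquely modulo $\Stab_A$, into $\Stab_A$ itself with $a' = \Ad_u a$; the same argument yields the asserted natural isomorphism of stabilizers. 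For surjectivity onto a neighborhood, given $[A']$ in a small quotient-neighborhood of $[A]$, choose a representative $A'$ with $\|A'-A\|_{W^{1,q}_{A_1}(X)}$ small and apply Theorem \ref{mainthm:Feehan_proposition_3-4-4_Lp} with $A_0 = A$ and $p = q$ (its ``easy case''): there is $u \in \Aut^{2,q}(P)$ with $d_A^*(u(A')-A) = 0$ and $\|u(A')-A\|_{W^{1,q}_{A_1}(X)} < 2N\|A'-A\|_{W^{1,q}_{A_1}(X)} < \eps$, so $a := u(A')-A \in \bB_A(\eps)$ and $\pi_A([a]) = [A']$; the same estimate shows $\pi_A^{-1}$ is continuous, hence $\pi_A$ is a homeomorphism onto an open neighborhood of $[A]$.

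The main obstacle is the real analyticity of the transition functions. Here one re-examines the proof of the easy case $p = q$ of Theorem \ref{mainthm:Feehan_proposition_3-4-4_Lp}: the gauge transformation is obtained as $u(A') = \exp(\xi(A'))$, where $\xi(A')$ solves the Coulomb equation $F(A',\xi) := d_{A_0}^*\bigl(\exp(\xi)(A') - A_0\bigr) = 0$ in a fixed closed complement of $\ker d_{A_0} \subset W^{2,q}(X;\ad P)$, the latter being the Lie algebra of $\Stab_{A_0}$. The map $F : \sA^{1,q}(P) \times W^{2,q}(X;\ad P) \to L^q(X;\ad P)$ is real analytic, being assembled from the affine structure on $\sA^{1,q}(P)$, the real analytic exponential and multiplication maps of the compact Lie group $G$, the Banach-algebra multiplication on $W^{2,q}(X)$, and the bounded linear operator $d_{A_0}^*$. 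Its partial differential $\partial_\xi F(A_0,0)$ is (up to sign) the covariant Laplacian $d_{A_0}^* d_{A_0}$ on $\ad P$, which by elliptic theory (in the non-critical range $q > d/2$) restricts to a Banach space isomorphism from the chosen complement of $\ker d_{A_0}$ onto a closed subspace of $L^q(X;\ad P)$ which, by the Hodge-type decomposition $L^q = \ker d_{A_0} \oplus d_{A_0}^* W^{1,q}_{A_1}$, contains the range of $F$. The analytic Implicit Function Theorem therefore produces $\xi(A')$, hence $B \mapsto u_B := \exp(\xi(B))$, as a real analytic map near $A_0$. Consequently, for two slices $\bB_A(\eps)$ and $\bB_{A'}(\eps')$ with overlapping images, the transition function sends $[a]$ (with $\pi_A([a])$ in the overlap) to $[u_{A+a}(A+a) - A']$, where $u_B \in \Aut^{2,q}(P)$ is the Coulomb gauge transformation with base point $A'$ furnished by Theorem \ref{mainthm:Feehan_proposition_3-4-4_Lp} applied to the connection $B$; this is the composition of the affine map $a \mapsto A+a$, the real analytic map $B \mapsto u_B$ just established, the real analytic action map $(v,C) \mapsto v(C)$, and an affine map, hence is real analytic. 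On $\sB^*(P)$ the stabilizer equals $\Center(G)$, which acts trivially on $\ad P$ and therefore on each slice, so $\bB_A(\eps)/\Stab_A = \bB_A(\eps)$ is an open subset of the Banach space $\ker d_A^*$; the inverse charts $\pi_A^{-1}$ thus endow $\sB^*(P)$ with the structure of a real analytic Banach manifold, and each $\pi_A$ restricts to a real analytic diffeomorphism from the open locus where $\pi_A(a)$ has stabilizer $\Center(G)$ onto its image. The delicate points, namely the invertibility of $d_{A_0}^* d_{A_0}$ on the slice complement and the real analyticity of every constituent map between the Sobolev spaces involved, both rest essentially on $q > d/2$.
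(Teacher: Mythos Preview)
Your proof is correct and follows essentially the same approach as the paper: both reduce to the classical Donaldson--Kronheimer/Freed--Uhlenbeck slice construction, observe that the Coulomb gauge-fixing map (your $F$, the paper's $H$ in \eqref{eq:Donaldson_Kronheimer_2-3-5a}--\eqref{eq:Donaldson_Kronheimer_2-3-5b}) is real analytic because $q>d/2$ makes $W^{2,q}(X)$ a Banach algebra, and apply the analytic Implicit Function Theorem to obtain real analytic transition functions. The paper additionally records explicitly that $\sB(P)$ is Hausdorff via the $L^2$ distance metric \eqref{eq:Donaldson_Kronheimer_4-2-3}, which you leave implicit in your citation of \cite{DK, FU}; note also that your parenthetical invocation of Theorem~\ref{thm:Parker_1982_5-3} is misplaced (that result concerns critical pairs, while $A$ here is arbitrary), though as you say it is not needed.
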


As in \cite[p. 328]{TauFrame}, one may consider the quotient space of framed connections modulo gauge transformations, $\sB'(P) := (\sA^{1,q}(P)\times P|_{x_0})/\Aut^{2,q}(P)$, for some fixed base point $x_0 \in X$, and now the obvious analogue of Theorem \ref{mainthm:Feehan_proposition_3-4-4_Lp} shows that $\sB'(P)$ is a real analytic Banach manifold.

Corollary \ref{maincor:Slice} may be easily extended to the setting of pairs by applying Theorem \ref{mainthm:Feehan_proposition_3-4-4_Lp_pairs} in place of Theorem \ref{mainthm:Feehan_proposition_3-4-4_Lp}. We leave such extensions to the reader, but refer to \cite[Theorem 4.2]{ParkerGauge} and \cite[Proposition 2.8]{FL1} for statements and proofs of $C^\infty$ Banach manifold structures for quotient spaces of pairs.

\section{Outline of the monograph}
\label{sec:Outline}
To apply Theorem \ref{mainthm:Lojasiewicz-Simon_gradient_inequality} to pure or coupled Yang--Mills energy functions and obtain the best possible results in those applications, one requires the global Coulomb gauge constructions provided by Theorems \ref{mainthm:Feehan_proposition_3-4-4_Lp} or \ref{mainthm:Feehan_proposition_3-4-4_Lp_pairs}, and those results are proved in Chapter \ref{chap:Coulomb_gauge_slice_quotient_space_connections_pairs}. In Chapter \ref{chap:Lojasiewicz-Simon_gradient_inequality_coupled_Yang--Mills_energy_functions}, we derive {\L}ojasiewicz--Simon gradient inequalities for the coupled Yang--Mills energy functions, proving Theorems \ref{mainthm:Lojasiewicz-Simon_gradient_inequality_boson_Yang--Mills_energy_function}, \ref{mainthm:Lojasiewicz-Simon_gradient_inequality_fermion_Yang--Mills_energy_function}, \ref{mainthm:Lojasiewicz-Simon_gradient_inequality_Yang--Mills--Higgs_energy_function}, \ref{mainthm:Lojasiewicz-Simon_gradient_inequality_Seiberg-Witten_energy_function}, \ref{mainthm:Lojasiewicz-Simon_gradient_inequality_non-Abelian_monopole_energy_function}, and Corollary \ref{maincor:Lojasiewicz-Simon_gradient_inequality_boson_Yang--Mills_energy_function}. In Chapter \ref{chap:Lojasiewicz-Simon_gradient_inequality_Hilbert_space}, we extend the methods of Chapter \ref{chap:Lojasiewicz-Simon_gradient_inequality_coupled_Yang--Mills_energy_functions} to derive the stronger {\L}ojasiewicz--Simon $L^2$ gradient inequalities for boson and fermion coupled Yang--Mills energy functions, proving Corollaries \ref{maincor:Lojasiewicz-Simon_L2_gradient_inequality_boson_Yang-Mills_energy_function} and \ref{maincor:Lojasiewicz-Simon_L2_gradient_inequality_fermion_Yang-Mills_energy_function}. Appendix \ref{chap:Fredholm_properties_elliptic_PDEs_Sobolev_spaces} establishes the Fredholm properties and computes the index of an elliptic partial differential operator with smooth coefficients acting on Sobolev spaces, Appendix \ref{chap:Equivalence_Sobolev_norms_for_Sobolev_and_smooth_connections} discusses the equivalence of Sobolev norms defined by Sobolev and smooth connections, and Appendix \ref{chap:Fredholm_properties_Hodge_Laplacian_Sobolev_coefficients} establishes the Fredholm properties and computes the index of a Hodge Laplacian with smooth and Sobolev coefficients. Appendix \ref{chap:Convergence_gradient_flows_validity_Lojasiewicz-Simon_gradient_inequality} contains a summary of key convergence results for gradient flows under the validity of a {\L}ojasiewicz--Simon gradient inequality.  Appendix \ref{chap:Huang_2-4-2} contains a review of Huang's \cite[Theorem 2.4.2 (i)]{Huang_2006} for the {\L}ojasiewcz--Simon gradient inequality for analytic functions on Banach spaces. Because precise statements in useful generality are difficult to find in the literature, in Appendix \ref{chap:Quantitative_implicit_function_theorem} we provide statements and proofs of quantitative implicit and inverse function theorems for maps of Banach spaces.

\section{Notation and conventions}
\label{sec:Notation}
For the notation of function spaces, we follow Adams and Fournier \cite{AdamsFournier}, and for functional analysis, Brezis \cite{Brezis} and Rudin \cite{Rudin}. We let $\NN:=\left\{0,1,2,3,\ldots\right\}$ denote the set of non-negative integers. We use $C=C(*,\ldots,*)$ to denote a constant which depends at most on the quantities appearing on the parentheses. In a given context, a constant denoted by $C$ may have different values depending on the same set of arguments and may increase from one inequality to the next. If $\sX, \sY$ is a pair of Banach spaces, then $\sL(\sX,\sY)$ denotes the Banach space of all continuous linear operators from $\sX$ to $\sY$. We denote the continuous dual space of $\sX$ by $\sX^* = \sL(\sX,\RR)$. We write $\alpha(x) = \langle x, \alpha \rangle_{\sX\times\sX^*}$ for the canonical pairing between $\sX$ and its dual space, where $x \in \sX$ and $\alpha \in \sX^*$. If $T \in \sL(\sX, \sY)$, then its adjoint is denoted by $T^* \in \sL(\sY^*,\sX^*)$, where $(T^*\beta)(x) := \beta(Tx)$ for all $x \in \sX$ and $\beta \in \sY^*$. If $x_0 \in \sX$ and $r \in (0,\infty)$, we let $B_r(x_0)$ denote the open ball of radius $r$ and, if $x_0$ is the origin, denote the ball by $B_r$.

\chapter[Existence of Coulomb gauge transformations]{Existence of Coulomb gauge transformations for connections and pairs}
\label{chap:Coulomb_gauge_slice_quotient_space_connections_pairs}
In Section \ref{sec:Coulomb_gauge_slice_quotient_space_connections}, we prove our refinement, Theorem \ref{mainthm:Feehan_proposition_3-4-4_Lp}, of the standard construction of a $W^{2,q}$ Coulomb gauge transformation $u$, with $q > d/2$, for a $W^{1,q}$ connection $A$ on a principal $G$-bundle $P$ over a closed Riemannian smooth manifold of dimension $d \geq 2$. We extend this result in Section \ref{sec:Coulomb_gauge_slice_quotient_space_pairs} to the action of gauge transformations on affine spaces of $W^{1,q}$ pairs, obtaining our refinement, Theorem \ref{mainthm:Feehan_proposition_3-4-4_Lp_pairs} of the standard constructions of Coulomb gauge transformations in that context due to Parker \cite{ParkerGauge} and the first author and Leness \cite{FL1}. Finally, in Section \ref{sec:Regularity_for_pairs}, we extend known regularity results for solutions to the Yang--Mills equations in dimensions greater than or equal to two and coupled Yang--Mills equations in dimension four to the case of solutions to the coupled Yang--Mills equations in dimensions greater than or equal to two.

\section{Action of Sobolev gauge transformations on Sobolev connections}
\label{sec:Action_Sobolev_gauge_transformations}
Suppose that $P$ is a smooth principal $G$-bundle over a smooth manifold $X = P/G$ of dimension $d$, where $P\times G \to P$ is a right action of $G$ on $P$. For $q > d/2$, let $\Aut^{2,q}(P)$ denote the Banach Lie group of Sobolev $W^{2,q}$ automorphisms (or gauge transformations) of  $P$ \cite[Section 2.3.1]{DK}, \cite[Appendix A and p. 32 and pp. 45--51]{FU}, \cite[Section 3.1.2]{FrM}. We recall that there is a smooth left action,
\[
\Aut^{2,q}(P)\times P \to P,
\]
which commutes with the right action of $G$ on $P$. This induces a smooth right (affine) action on the affine space $\sA^{1,q}(P)$ of Sobolev $W^{1,q}$ connections on $P$,
\begin{equation}
\label{eq:Right_action_group_gauge_transformations_on_affine_space_connections}
\sA^{1,q}(P) \times \Aut^{2,q}(P) \ni (A,u) \to u(A) \in \sA^{1,q}(P),
\end{equation}
defined by pull-back,
\[
u(A) := u^*A, \quad\forall\, u \in \Aut^{2,q}(P) \text{ and } A \in \sA^{1,q}(P).
\]
We recall from Definition \ref{defn:Boson_and_fermion_coupled_Yang--Mills_energy_function} that we (implicitly) assume that $G$ is a matrix Lie group, equipped with a unitary representation, $\varrho: G \to \Aut_\CC(\EE)$. The constraint $q > d/2$ is required to ensure that $W^{2,q}(X) \subset C(X)$ by the Sobolev Embedding \cite[Theorem 4.12]{AdamsFournier} and thus $u \in \Aut^{2,q}(P)$ is a continuous gauge transformation of $P$ and that $W^{2,q}(X)$ is a Banach algebra by \cite[Theorem 4.39]{AdamsFournier}.

Given a $W^{1,q}$ connection $A_0$ on $P$ and a $C^\infty$ reference connection $A_1$ on $P$, the standard construction of a slice for the action of $\Aut^{2,q}(P)$ on $\sA^{1,q}(P)$ provides constants $\eps = \eps(A_0,A_1,g,P) \in (0,1]$ and  $C = C(A_0,A_1,g,P) \in [1,\infty)$ such that if $A$ is close to $A_0$ in the sense that,
\[
\|A - A_0\|_{W_{A_1}^{1,q}(X)} < \eps,
\]
then there exists $u \in \Aut^{2,q}(P)$ such that $u(A)$ is in \emph{Coulomb gauge relative to $A_0$}, that is,
\[
d_{A_0}^*(u(A) - A_0) = 0,
\]
and $u(A)$ is close to $A_0$,
\[
\|u(A) - A_0\|_{W_{A_1}^{1,q}(X)} < C\eps.
\]
For example, see \cite[Proposition 2.3.4]{DK}, \cite[Theorem 3.2]{FU}, \cite[Theorem 10.4]{Lawson} or \cite[Theorem 1.1]{FeehanSlice} for statements of the Slice Theorem and their proofs using the Implicit Function Theorem for smooth maps of Banach spaces.

Our Theorem \ref{mainthm:Feehan_proposition_3-4-4_Lp} relaxes the condition that $A$ be $W_{A_1}^{1,q}$ close to $A_0$ for $q>d/2$ to $W_{A_1}^{1,p}$ close for $p$ obeying $d/2 \leq p \leq q$ when $d \geq 3$ and provides $W^{1,p}$ bounds for $u(A)-A_0$ in terms of $A-A_0$. This is significant since $\Aut^{2,\frac{d}{2}}(P)$ is not a smooth manifold and the action \eqref{eq:Right_action_group_gauge_transformations_on_affine_space_connections} cannot be smooth when $q = d/2$, so the Implicit Function Theorem does not apply.

\section[Estimates for Laplace operators with Sobolev coefficients]{A priori estimates for Laplace operators with Sobolev coefficients and existence and uniqueness of strong solutions}
\label{sec:Apriori_estimates_Laplace_operators_Sobolev_coefficients}
Before proceeding to the proof of Theorem
\ref{mainthm:Feehan_proposition_3-4-4_Lp}, we begin with some
preparatory lemmata and remarks that have some interest in their own
right. Standard theory for existence and uniqueness of strong
solutions to (scalar) second-order elliptic partial differential
equations, such as \cite[Chapter 9]{GilbargTrudinger}, requires that
the second-order coefficients be continuous and the lower-order
coefficients be bounded. Here, we observe that one can relax those
requirements on the lower-order coefficients and accommodate the
setting we employ in this monograph.

For a smooth connection $A$ on $P$ and integers $l \geq 0$, we let
\begin{equation}
\label{eq:Hodge_Laplace_operator}
\Delta_A = d_A^*d_A + d_Ad_A^* \quad\text{on } \Omega^l(X;\ad P)
\end{equation}
denote the \emph{Hodge Laplace operator}. Our proof of Theorem \ref{mainthm:Feehan_proposition_3-4-4_Lp} will require \apriori $L^p$ estimates, existence and uniqueness results, Fredholm properties, and Hodge decompositions involving the Hodge Laplacian \eqref{eq:Hodge_Laplace_operator} when $A$ is a $W^{1,q}$ \emph{Sobolev} connection. When $A$ is a $C^\infty$ connection and we restrict our attention to $p=2$, those properties are immediate consequences of more general results (for example, see Gilkey \cite{Gilkey2}) for elliptic operators on sections of vector bundles over closed manifolds.

\begin{prop}[\Apriori $L^p$ estimate for a Laplace operator with Sobolev coefficients]
\label{prop:W2p_apriori_estimate_Delta_A_Sobolev}
Let $(X,g)$ be a closed, smooth Riemannian manifold of dimension $d \geq 2$, and $G$ be a compact Lie group, and $P$ be a smooth principal $G$-bundle over $X$, and $l\geq 0$ be an integer. If $A$ is a $W^{1,q}$ connection on $P$ with $q > d/2$, and $A_1$ is a $C^\infty$ connection on $P$, and $p$ obeys $d/2 \leq p \leq q$, then
\begin{equation}
\label{eq:DeltaA_W2p_to_W1p}
\Delta_A:W_{A_1}^{2,p}(X;\Lambda^l\otimes\ad P) \to L^p(X;\Lambda^l\otimes\ad P)
\end{equation}
is a bounded operator. If in addition $p \in (1,\infty)$, then there is a constant $C = C(A,A_1,g,G,l,p,q) \in [1,\infty)$ such that
\begin{equation}
\label{eq:W2p_apriori_estimate_Delta_A_Sobolev}
\|\xi\|_{W_{A_1}^{2,p}(X)} \leq C\left(\|\Delta_A\xi\|_{L^p(X)}
+ \|\xi\|_{L^p(X)}\right),
\quad
\forall\, \xi \in W_{A_1}^{2,p}(X;\Lambda^l\otimes\ad P).
\end{equation}
\end{prop}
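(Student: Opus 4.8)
The plan is to treat $\Delta_A$ as a lower-order perturbation of the smooth Laplace-type operator $\Delta_{A_1}$, for which the mapping property and the \apriori estimate are classical. Writing $a := A - A_1 \in W_{A_1}^{1,q}(X;\Lambda^1\otimes\ad P)$ and using $\nabla_A = \nabla_{A_1} + a$, a direct expansion of $\Delta_A = d_A^*d_A + d_Ad_A^*$ shows that $\Delta_A - \Delta_{A_1}$ is a \emph{first-order} operator of the schematic form $(\Delta_A - \Delta_{A_1})\xi = b_1 * \nabla_{A_1}\xi + b_0 * \xi$, where $*$ denotes a bilinear pointwise bundle operation (built from metric contractions and the Lie bracket), $b_1$ is algebraic in $a$, so $b_1 \in W_{A_1}^{1,q}(X)$, and $b_0$ is algebraic in $\nabla_{A_1}a$ and $a\otimes a$, so $b_0 \in L^q(X)$ --- the $a\otimes a$ contribution lies in $L^{dq/(2(d-q))}(X) \subseteq L^q(X)$ since $q > d/2$, or in $L^\infty(X)$ if $q \geq d$ --- and, crucially, no second derivative of $a$ occurs. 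Throughout I would freely use the equivalence of the norms $W_{A_1}^{k,p}(X)$ with standard Sobolev norms (Appendix \ref{sec:Equivalence_Sobolev_norms_for_Sobolev_and_smooth_connections}), the Sobolev embedding and Rellich--Kondrachov theorems \cite[Theorems 4.12 and 6.3]{AdamsFournier}, and the classical $L^p$ elliptic estimate $\|\xi\|_{W_{A_1}^{2,p}(X)} \leq C\bigl(\|\Delta_{A_1}\xi\|_{L^p(X)} + \|\xi\|_{L^p(X)}\bigr)$ for $1 < p < \infty$, valid because $\Delta_{A_1}$ has smooth coefficients.

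For the boundedness of \eqref{eq:DeltaA_W2p_to_W1p} it suffices, since $\Delta_{A_1}\colon W_{A_1}^{2,p}(X)\to L^p(X)$ is bounded, to estimate the two perturbation terms by H\"older's inequality and Sobolev embedding. I would bound $b_1 * \nabla_{A_1}\xi$ by placing $\nabla_{A_1}\xi \in W_{A_1}^{1,p}(X) \hookrightarrow L^{r_1}(X)$ and $b_1 \in W_{A_1}^{1,q}(X) \hookrightarrow L^{r_2}(X)$ with $1/r_1 + 1/r_2 = 1/p$, which is feasible exactly because $q \geq d/2$ (the hypothesis $p \leq q$ disposes of the subcase $p \geq d$, where $b_1 \in L^\infty(X)$); and $b_0 * \xi$ by placing $b_0 \in L^q(X)$ and $\xi \in W_{A_1}^{2,p}(X) \hookrightarrow L^s(X)$ with $1/s = 1/p - 1/q \geq 0$ --- here, when $p > d/2$ one has $\xi \in L^\infty(X)$, and when $p = d/2$ the \emph{strict} inequality $q > d/2 = p$ renders $s$ finite, so $W^{2,d/2}(X) \hookrightarrow L^s(X)$ applies. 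This is the step accommodating the borderline exponent, and where the exponent bookkeeping must be done most carefully.

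For the \apriori estimate \eqref{eq:W2p_apriori_estimate_Delta_A_Sobolev}, the key point is that $T := \Delta_A - \Delta_{A_1}\colon W_{A_1}^{2,p}(X;\Lambda^l\otimes\ad P) \to L^p(X;\Lambda^l\otimes\ad P)$ is a \emph{compact} operator. To see this I would take $(\xi_n)$ bounded in $W_{A_1}^{2,p}(X)$; by Rellich--Kondrachov a subsequence converges in $W_{A_1}^{1,p}(X)$. Moreover, in each of the products $b_1 * \nabla_{A_1}\xi_n$ and $b_0 * \xi_n$, the Lebesgue exponent to which the differentiated factor must be embedded --- $L^{r_1}(X)$ for $\nabla_{A_1}\xi_n \in W_{A_1}^{1,p}(X)$, and $L^s(X)$ for $\xi_n \in W_{A_1}^{2,p}(X)$ --- is \emph{strictly} subcritical precisely because $q > d/2$, so the corresponding embeddings are compact; passing to a further subsequence, $\nabla_{A_1}\xi_n$ converges in $L^{r_1}(X)$ and $\xi_n$ in $L^s(X)$, and H\"older's inequality (with $b_1$ held in $L^{r_2}(X)$, $b_0$ in $L^q(X)$) then shows $T\xi_n$ is Cauchy in $L^p(X)$. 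In the one edge case $p = q$ (necessarily $> d/2$) the embedding $W_{A_1}^{2,p}(X) \hookrightarrow L^s(X)$ with $s = \infty$ is not compact, but $W_{A_1}^{2,p}(X) \hookrightarrow C(X)$ is, by Morrey's inequality and the Arzel\`{a}--Ascoli theorem, which supplies the needed convergence. Hence $T$ is compact.

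Finally, I would obtain \eqref{eq:W2p_apriori_estimate_Delta_A_Sobolev} by the usual contradiction argument: if it failed there would exist $\xi_n$ with $\|\xi_n\|_{W_{A_1}^{2,p}(X)} = 1$ and $\|\Delta_A\xi_n\|_{L^p(X)} + \|\xi_n\|_{L^p(X)} \to 0$; passing to a subsequence, $\xi_n \rightharpoonup \xi$ weakly in the reflexive space $W_{A_1}^{2,p}(X)$ and $\xi_n \to \xi$ strongly in $L^p(X)$ (compact Rellich embedding), so $\xi = 0$; then $T\xi_n \to T\xi = 0$ in $L^p(X)$ by compactness of $T$, hence $\|\Delta_{A_1}\xi_n\|_{L^p(X)} \leq \|\Delta_A\xi_n\|_{L^p(X)} + \|T\xi_n\|_{L^p(X)} \to 0$, and the elliptic estimate for $\Delta_{A_1}$ forces $\|\xi_n\|_{W_{A_1}^{2,p}(X)} \to 0$, a contradiction. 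The only genuine obstacle throughout is the exponent bookkeeping at the critical exponent $p = d/2$, where $W^{2,p}(X)$ fails to embed in $C(X)$: everything goes through because the hypotheses --- $q > d/2$ strictly, $d/2 \leq p \leq q$ --- leave slightly more room than the estimates consume.
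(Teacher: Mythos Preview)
Your proposal is correct and takes a genuinely different route from the paper. The paper does not compare $\Delta_A$ directly to $\Delta_{A_1}$; instead it chooses a $C^\infty$ connection $A_s$ with $\|A-A_s\|_{W^{1,q}_{A_1}}\leq\eps$ small, writes $A=A_s+a$ and $A_s=A_1+a_1$ (with $a_1$ possibly large), expands $\Delta_A-\Delta_{A_s}$, and bounds the perturbation terms quantitatively so that --- after an interpolation inequality to control the $a_1$-contribution and rearrangement with $\eps$ small --- they are absorbed into the left-hand side of the smooth-coefficient estimate for $\Delta_{A_s}$. Your approach replaces this small-perturbation/absorption argument with a compact-perturbation/contradiction argument: you show $T=\Delta_A-\Delta_{A_1}$ is compact on $W^{2,p}_{A_1}\to L^p$ by arranging the Sobolev exponents to be strictly subcritical (this is exactly where $q>d/2$ strictly is used), and then run the standard normalized-sequence contradiction. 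The paper in fact proves the compactness of $\Delta_A-\Delta_{A_1}$ later (Corollary~\ref{cor:Fredholmness_and_index_Laplace_operator_on_W2p_Sobolev_connection}), so you are effectively front-loading that ingredient. Your route is conceptually cleaner and avoids the auxiliary approximation $A_s$ and the interpolation step; the paper's route is more constructive, producing a constant that can in principle be traced through the estimates rather than emerging from a contradiction.
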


\begin{rmk}[Regularity of distributional solutions to elliptic partial differential equations]
\label{rmk:Regularity_distributional_solutions_elliptic_PDEs}
Suppose as in the hypotheses of Proposition \ref{prop:W2p_apriori_estimate_Delta_A_Sobolev} that $A_1$ is a smooth connection on $P$. By analogy with
\cite[Definition 2.56]{Maly_Ziemer_1997}, we call $\xi \in L^1(X;\Lambda^l\otimes\ad P)$ a \emph{distributional solution} to the equation $\Delta_{A_1}\xi = 0$ if
\[
(\xi, \Delta_{A_1}\eta)_{L^2(X)} = 0,
\quad\forall\, \eta \in C^\infty(X;\Lambda^l\otimes\ad P).
\]
In the case of the scalar Laplace operator on functions, $C^\infty$-smoothness of distributional solutions is provided by \emph{Weyl's Lemma} \cite[Theorem 18.G]{Zeidler_nfaa_v2a}. More generally, the $C^\infty$-smoothness of a solution $u \in L_\loc^1(\Omega)$ to a scalar (second-order) elliptic equation on an open subset $\Omega \subset \RR^d$ is a consequence of regularity theory for solutions in $H_\loc^s(\Omega)$, for $s \in \RR$ \cite[Theorem 6.33]{Folland}. Such regularity results extend to the case of elliptic systems (see \cite{Feehan_yang_mills_gradient_flow_v4} and references therein) and so we conclude that if $\xi$ is a \emph{distributional solution} to the equation $\Delta_{A_1}\xi = 0$, then $\xi \in C^\infty(X;\Lambda^l\otimes\ad P)$.
\end{rmk}

Proposition \ref{prop:W2p_apriori_estimate_Delta_A_Sobolev} implies that the domain of the unbounded operator $\Delta_A$ on $L^p(X;\Lambda^l\otimes\ad P)$ is
\[
\sD_p(\Delta_A) = W_{A_1}^{2,p}(X;\Lambda^l\otimes\ad P).
\]
(We omit the subscript $p$ when that is clear from the context.) In order to give criteria for when the term $\|\xi\|_{L^p(X)}$ can be eliminated from the right-hand side of the \apriori estimate \eqref{eq:W2p_apriori_estimate_Delta_A_Sobolev}, we need to analyze the spectrum of the Hodge Laplacian with Sobolev coefficients. The forthcoming Proposition \ref{prop:Gilbarg_Trudinger_theorem_8-6} is an analogue of \cite[Theorem 8.6]{GilbargTrudinger}, for a scalar, second-order, strictly elliptic equation in divergence form with homogeneous Dirichlet boundary condition over a bounded domain $\Omega \subset \RR^d$. However, it is not a direct consequence since the first and zeroth-order coefficients of the Laplace operator $\Delta_A$ on $\Omega^l(X;\ad P)$ are not necessarily bounded unless $q > d$, which we do not wish to assume, for a $W^{1,q}$ connection $A$ on $P$.

Let $\sX$ be a Banach space and $T:\sX\to\sX$ a bounded operator. Recall from \cite[Definition 4.17 (c)]{Rudin} that the \emph{spectrum}, $\sigma(T)$, of $T$ is the set of all $\lambda\in\CC$ such that $T - \lambda$ is not invertible. Thus $\lambda \in \sigma(T)$ if and only if at least one of the following two statements is true:
\begin{inparaenum}[\itshape a\upshape)]
\item The range of $T - \lambda$ is not all of $\sX$, or
\item $T - \lambda$ is not one-to-one.
\end{inparaenum}
In the latter case, $\lambda$ is an \emph{eigenvalue} of $T$; the corresponding \emph{eigenspace} is $\Ker(T - \lambda)$; each $x \in \Ker(T - \lambda)$ (except $x = 0$) is an \emph{eigenvector} of $T$ and satisfies the equation $Tx = \lambda x$.

If $T:\sD(\sX)\subset\sX\to\sX$ is a closed operator with dense domain, $\sD(\sX) \subset \sX$, then we say that $\lambda \notin \sigma(T)$ if the operator $T-\lambda:\sD(\sX)\to\sX$ has a bounded inverse and otherwise that $\lambda \in \sigma(T)$ \cite[Section 5.6, p. 357]{KadisonRingrose1}, \cite[Section III.6.1, pp. 174--175]{Kato}.

\begin{prop}[Spectral properties of a Laplace operator with Sobolev coefficients]
\label{prop:Gilbarg_Trudinger_theorem_8-6}
Let $(X,g)$ be a closed, smooth Riemannian manifold of dimension $d \geq 2$, and $G$ be a compact Lie group, $P$ be a smooth principal $G$-bundle over $X$, and $l \geq 0$ be an integer. If $A$ is a $W^{1,q}$ connection on $P$ with $d/2 < q < \infty$, and $A_1$ is a $C^\infty$ reference connection on $P$, and $p \in (1,\infty)$ obeys $d/2 \leq p \leq q$, then the spectrum, $\sigma(\Delta_A)$, of the unbounded operator,
\[
\Delta_A: \sD(\Delta_A) \subset L^p(X;\Lambda^l\otimes\ad P) \to L^p(X;\Lambda^l\otimes\ad P),
\]
is countable without accumulation points, consisting of non-negative, real eigenvalues, $\lambda$, with finite multiplicities equal to $\dim\Ker(\Delta_A-\lambda)$.
\end{prop}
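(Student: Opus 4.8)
The plan is to regard $\Delta_A$ as an unbounded operator on $L^p(X;\Lambda^l\otimes\ad P)$ with domain $W_{A_1}^{2,p}(X;\Lambda^l\otimes\ad P)$, to show that it is closed, has at least one point in its resolvent set, and has compact resolvent, and then to invoke the standard spectral theory of closed operators with compact resolvent; reality and non-negativity of the eigenvalues will come separately from an $L^2$ energy identity applied to eigensections. The reason this cannot simply be quoted from, say, \cite[Theorem 8.6]{GilbargTrudinger} is precisely the point emphasized in the remark preceding the statement: because $A$ is only of class $W^{1,q}$ with $q$ possibly strictly less than $d$, the first- and zeroth-order coefficients of $\Delta_A$ need not be bounded, so one must instead build on the \apriori estimate of Proposition~\ref{prop:W2p_apriori_estimate_Delta_A_Sobolev} and the Fredholm properties of $\Delta_A$ established in the forthcoming Appendix~\ref{subsec:Fredholm_properties_Hodge_Laplacian_Sobolev_coefficients}.

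First I would record that $\Delta_A$ is closed and densely defined: closedness follows from the boundedness of $\Delta_A:W_{A_1}^{2,p}\to L^p$ together with the \apriori estimate \eqref{eq:W2p_apriori_estimate_Delta_A_Sobolev} and completeness of $W_{A_1}^{2,p}$, while density holds because $C^\infty(X;\Lambda^l\otimes\ad P)$ is dense in $L^p(X;\Lambda^l\otimes\ad P)$. Next, by Appendix~\ref{subsec:Fredholm_properties_Hodge_Laplacian_Sobolev_coefficients}, the operator $\Delta_A:W_{A_1}^{2,p}\to L^p$ is Fredholm of index zero; since the inclusion $W_{A_1}^{2,p}(X;\Lambda^l\otimes\ad P)\hookrightarrow L^p(X;\Lambda^l\otimes\ad P)$ is compact (Rellich--Kondrachov, $X$ being closed, \cite[Theorem 6.3]{AdamsFournier}), the perturbed operator $\Delta_A-\lambda:W_{A_1}^{2,p}\to L^p$ is Fredholm of index zero for every $\lambda\in\CC$, and hence is invertible if and only if it is injective.

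The crux is then to exhibit one value of $\lambda$ for which $\Delta_A-\lambda$ is injective; I would take $\lambda=-1$. Given $\xi\in W_{A_1}^{2,p}$ with $\Delta_A\xi=-\xi$, an elliptic bootstrap — feeding $\Delta_A\xi=-\xi$ back through Proposition~\ref{prop:W2p_apriori_estimate_Delta_A_Sobolev} and the Sobolev embedding theorem \cite[Theorems 4.12 and 4.39]{AdamsFournier}, using $q>d/2$ — upgrades $\xi$ to $W_{A_1}^{2,q}$, whence $\xi\in L^\infty(X)$; combining with $A-A_1\in W^{1,q}$, which embeds in $L^{dq/(d-q)}$ with $dq/(d-q)>d\geq 2$ when $q>d/2$ (or in $L^\infty$ if $q\geq d$), one gets $\xi$, $d_A\xi$, and $d_A^*\xi$ all in $L^2(X)$. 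Integration by parts — valid for such $\xi$ despite the unbounded coefficients of $\Delta_A$, by approximating $\xi$ in $W_{A_1}^{2,p}$ by smooth sections and passing to the limit — then gives $-\|\xi\|_{L^2(X)}^2=(\Delta_A\xi,\xi)_{L^2(X)}=\|d_A\xi\|_{L^2(X)}^2+\|d_A^*\xi\|_{L^2(X)}^2\geq 0$, forcing $\xi=0$. Thus $\Delta_A+1$ is bijective with bounded inverse, so $-1$ lies in the resolvent set, and since $(\Delta_A+1)^{-1}$ factors through the compact inclusion $W_{A_1}^{2,p}\hookrightarrow L^p$ it is a compact operator, i.e., $\Delta_A$ has compact resolvent. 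Standard spectral theory for closed operators with compact resolvent \cite[Section III.6]{Kato}, \cite[Chapter 6]{Brezis} then yields that $\sigma(\Delta_A)$ is countable with no accumulation point in $\CC$, consists entirely of eigenvalues, and that each eigenvalue $\lambda$ has finite multiplicity $\dim\Ker(\Delta_A-\lambda)$. Finally, for reality and non-negativity, I would repeat the bootstrap-and-integration-by-parts argument for an eigensection $\xi\neq 0$ (over the complexification $\Lambda^l\otimes\ad P\otimes\CC$, using that $\Delta_A$ is formally self-adjoint) to get $\lambda\|\xi\|_{L^2(X)}^2=\|d_A\xi\|_{L^2(X)}^2+\|d_A^*\xi\|_{L^2(X)}^2\geq 0$, so $\lambda\in[0,\infty)$.

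The main obstacle I anticipate is the step producing a point of the resolvent set — more precisely, making the $L^2$ energy identity rigorous for an arbitrary element of the domain $W_{A_1}^{2,p}$ when $A$ is merely $W^{1,q}$ with $q$ possibly below $d$. This requires care in two places: the elliptic bootstrap must be kept within the exponent range $[d/2,q]$ permitted by Proposition~\ref{prop:W2p_apriori_estimate_Delta_A_Sobolev}, with the borderline cases ($p=d/2$ for $d\geq 3$, and $d=2$ with $p=1$) handled via the critical Sobolev embeddings in \cite[Theorem 4.12]{AdamsFournier}; and the identity $(\Delta_A\xi,\xi)_{L^2}=\|d_A\xi\|_{L^2}^2+\|d_A^*\xi\|_{L^2}^2$ must be justified by a density argument in which one checks that the products arising from the rough connection term $A-A_1$ genuinely lie in $L^2$, which is exactly where the hypothesis $q>d/2$ is used.
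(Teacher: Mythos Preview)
Your approach is correct, but the paper takes a somewhat different route. Rather than exhibiting a resolvent point, the paper uses the Fredholm index-zero property of $\Delta_A:W_{A_1}^{2,p}\to L^p$ (Corollary~\ref{cor:Fredholmness_and_index_Laplace_operator_on_W2p_Sobolev_connection}) to construct directly the Green's operator $G_A$ as the partial inverse on $K^\perp$ (where $K=\Ker\Delta_A$), observes that $G_A:L^p\to L^p$ is compact via the Rellich--Kondrachov embedding, applies the spectral theorem for compact operators \cite[Theorem~4.25]{Rudin}, and then reads off $\sigma(\Delta_A)$ from $\sigma(G_A)$ via the correspondence $\lambda\leftrightarrow\lambda^{-1}$. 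Reality is obtained from self-adjointness of $G_A$ as a bounded operator on $L^2$, and non-negativity from the same energy identity you use. (The paper also records, in Remark~\ref{rmk:Gilbarg_Trudinger_theorem_8-6_and_Weyl_compact_perturbation}, a third route via Weyl's theorem and the compactness of $\Delta_A-\Delta_{A_1}$.) Your compact-resolvent framework is clean and self-contained, but it pays for this with the bootstrap needed to establish injectivity of $\Delta_A+1$; the Green's-operator route sidesteps that entirely, since $G_A$ exists immediately from Fredholm index zero without any resolvent point being produced.

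One imprecision worth flagging: ``feeding $\Delta_A\xi=-\xi$ back through Proposition~\ref{prop:W2p_apriori_estimate_Delta_A_Sobolev}'' does not by itself upgrade regularity, since that proposition is an \apriori estimate rather than a regularity theorem. To pass from $W^{2,p}$ to $W^{2,q}$ you should either split off the smooth-coefficient part $\Delta_{A_1}$ and bootstrap via elliptic regularity for $\Delta_{A_1}$ (in the spirit of Lemma~\ref{lem:Regularity_distributional_solution_Hodge_Laplacian_Sobolev}), or else first prove injectivity of $\Delta_A+1$ on $W^{2,q}$ directly---where $q>d/2$ makes the $L^2$ energy identity straightforward---and then argue that the $W^{2,p}$ and $W^{2,q}$ kernels agree.
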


\begin{proof}
Corollary \ref{cor:Fredholmness_and_index_Laplace_operator_on_W2p_Sobolev_connection} implies that the operator,
\[
\Delta_A: W_{A_1}^{2,p}(X;\Lambda^l\otimes\ad P) \to L^p(X;\Lambda^l\otimes\ad P),
\]
is Fredholm and, setting
\[
  K := \Ker(\Delta_A: W_{A_1}^{2,p}(X;\Lambda^l\otimes\ad P) \to L^p(X;\Lambda^l\otimes\ad P)),
\]
that
\[
\Delta_A: K^\perp\cap W_{A_1}^{2,p}(X;\Lambda^l\otimes\ad P) \to K^\perp\cap L^p(X;\Lambda^l\otimes\ad P),
\]
is invertible. We denote the Green's operator of $\Delta_A$ by
\[
G_A: L^p(X;\Lambda^l\otimes\ad P) \to W_{A_1}^{2,p}(X;\Lambda^l\otimes\ad P),
\]
so that $G_A\Delta_A = 1-\Pi_A$, where
\[
  \Pi_A:W_{A_1}^{2,p}(X;\Lambda^l\otimes\ad P) \to K
\]
is $L^2$-orthogonal projection, and $\Delta_AG_A = 1-\Pi_A$, where
\[
  \Pi_A:L^p(X;\Lambda^l\otimes\ad P) \to K
\]
again denotes $L^2$-orthogonal projection.

The Sobolev embedding,
\[
  W_{A_1}^{2,p}(X;\Lambda^l\otimes\ad P) \Subset L^p(X;\Lambda^l\otimes\ad P),
\]
is compact by \cite[Theorem 6.3]{AdamsFournier} and hence the composition of $G_A$ with this embedding,
\[
G_A: L^p(X;\Lambda^l\otimes\ad P) \to L^p(X;\Lambda^l\otimes\ad P),
\]
is compact by \cite[Proposition 6.3]{Brezis}. But then \cite[Theorem 4.25]{Rudin} implies that preceding operator has the spectral properties, aside from reality and non-negativity,  described in the conclusion of Proposition \ref{prop:Gilbarg_Trudinger_theorem_8-6}.

To relate the spectra of $G_A$ and $\Delta_A$, observe that for any $\lambda \in \CC\less\{0\}$ and $\chi \in K^\perp\cap L^p(X;\Lambda^l\otimes\ad P)$, the equation,
\[
(\Delta_A-\lambda)\xi = \chi,
\]
for $\xi \in K^\perp\cap W_{A_1}^{2,p}(X;\Lambda^l\otimes\ad P)$ is equivalent to the equation,
\[
(G_A\Delta_A-\lambda G_A)\xi = G_A\chi,
\]
that is,
\[
(G_A - \lambda^{-1})\xi = -\lambda^{-1}G_A\chi.
\]
In other words, $\lambda\neq 0$ is in the spectrum of $\Delta_A$ if and only if $\lambda^{-1}$ is in the spectrum of $G_A$.

To see that the eigenvalues of $G_A$ are real as claimed, note that $\Delta_A$ has $L^2$-adjoint, $\Delta_A^* = \Delta_A$, and so $G_A$ has $L^2$-adjoint, $G_A^* = G_A$, and the operator,
\[
G_A: L^2(X;\Lambda^l\otimes\ad P) \to L^2(X;\Lambda^l\otimes\ad P),
\]
is bounded and self-adjoint. Thus, $\sigma(G_A) \subset \RR$ by \cite[Theorem VI.8]{Reed_Simon_v1} and hence $\sigma(\Delta_A) \subset \RR$.

Finally, to see that the eigenvalues of $\Delta_A$ are non-negative, observe that if $\Delta_A\xi = \lambda\xi$ for $\lambda\in\sigma(\Delta_A)\less\{0\}$ and $\xi \in W_{A_1}^{2,p}(X;\Lambda^l\otimes\ad P)\less\{0\}$, then
\begin{multline*}
  \lambda\|\xi\|_{L^2(X)}^2 = (\lambda\xi,\xi)_{L^2(X)} = (\Delta_A\xi,\xi)_{L^2(X)}
  \\
  = (d_A\xi,d_A\xi)_{L^2(X)} + (d_A^*\xi,d_A^*\xi)_{L^2(X)} \geq 0
\end{multline*}
by \eqref{eq:Hodge_Laplace_operator}, and thus $\lambda \geq 0$, as claimed.
\end{proof}

\begin{rmk}[Spectral properties of a Laplace operator with Sobolev coefficients on $L^p$ spaces and compact perturbations]
\label{rmk:Gilbarg_Trudinger_theorem_8-6_and_Weyl_compact_perturbation}
We recall from Weyl's Theorem \cite[Theorem IV.5.35]{Kato} that if $T$ is a closed operator on a Banach space $\sX$ and $K$ is an operator on $\sX$ that is compact relative to $T$, then $T$ and $T+K$ have the same essential spectrum. In particular, under the hypotheses of Corollary \ref{cor:Fredholmness_and_index_Laplace_operator_on_W2p_Sobolev_connection}, the operator,
\[
\Delta_A - \Delta_{A_1}:W_{A_1}^{2,p}(X;\Lambda^l\otimes\ad P)
\to
L^p(X;\Lambda^l\otimes\ad P),
\]
is compact by the proof of that corollary. Therefore, the essential spectrum of $\Delta_A$ as an unbounded operator on $L^p(X;\Lambda^l\otimes\ad P)$ is empty and hence the spectrum of $\Delta_A$ consists purely of real eigenvalues with finite multiplicity, since the same is true of $\Delta_{A_1}$. These observations could be used to give an alternative proof of Proposition \ref{prop:Gilbarg_Trudinger_theorem_8-6}, in place of the one that we provide.
\end{rmk}

\begin{cor}[\Apriori $L^p$ estimate for a Laplace operator with Sobolev coefficients]
\label{cor:Spectrum_Delta_A_Sobolev_and_W2p_apriori_estimate}
Let $(X,g)$ be a closed, smooth Riemannian manifold of dimension $d \geq 2$, and $G$ be a compact Lie group, and $P$ be a smooth principal $G$-bundle over $X$, and $l \geq 0$ be an integer. If $A$ is a $W^{1,q}$ connection on $P$ with $d/2<q<\infty$, and $A_1$ is a $C^\infty$ connection on $P$, and $p \in (1,\infty)$ obeys $d/2 \leq p \leq q$, then the kernel $\Ker \Delta_A\cap W_{A_1}^{2,p}(X;\Lambda^l\otimes\ad P)$ of the operator \eqref{eq:DeltaA_W2p_to_W1p} is finite-dimensional and
\begin{equation}
\label{eq:W2p_apriori_estimate_Delta_A_orthogonal_kernel}
\|\xi\|_{W_{A_1}^{2,p}(X)} \leq C\|\Delta_A\xi\|_{L^p(X)},
\quad
\forall\, \xi \in \left(\Ker \Delta_A\right)^\perp
\cap W_{A_1}^{2,p}(X;\Lambda^l\otimes\ad P),
\end{equation}
where $\perp$ denotes $L^2$-orthogonal complement and $C = C(A,A_1,g,G,l,p,q) \in [1,\infty)$.
\end{cor}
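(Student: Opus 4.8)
The plan is to deduce the corollary from the Fredholm property of $\Delta_A$ established in Corollary~\ref{cor:Fredholmness_and_index_Laplace_operator_on_W2p_Sobolev_connection} (already invoked in the proof of Proposition~\ref{prop:Gilbarg_Trudinger_theorem_8-6}), together with the \apriori estimate \eqref{eq:W2p_apriori_estimate_Delta_A_Sobolev} of Proposition~\ref{prop:W2p_apriori_estimate_Delta_A_Sobolev}. First, finite-dimensionality of $\Ker\Delta_A\cap W_{A_1}^{2,p}(X;\Lambda^l\otimes\ad P)$ is immediate, since by Corollary~\ref{cor:Fredholmness_and_index_Laplace_operator_on_W2p_Sobolev_connection} the operator \eqref{eq:DeltaA_W2p_to_W1p} (viewed, composed with the embedding $L^p \subset W^{-1,p}$, as a map into $L^p$) is Fredholm, so its kernel $K$ has finite dimension; equivalently, this is the statement in Proposition~\ref{prop:Gilbarg_Trudinger_theorem_8-6} that $\lambda = 0$ is an eigenvalue of finite multiplicity. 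Moreover, since $p \geq d/2$ and $X$ is closed, the Sobolev Embedding Theorem \cite[Theorem 4.12]{AdamsFournier} provides a continuous embedding $W_{A_1}^{2,p}(X;\Lambda^l\otimes\ad P) \hookrightarrow L^2(X;\Lambda^l\otimes\ad P)$ --- this uses $W^{2,p}(X)\hookrightarrow L^r(X)$ for every finite $r$ in the borderline case $2p = d$ --- so the $L^2$-orthogonal projection onto the finite-dimensional subspace $K$ restricts to a bounded projection $\Pi_A$ on $W_{A_1}^{2,p}(X;\Lambda^l\otimes\ad P)$ as well as on $L^p(X;\Lambda^l\otimes\ad P)$.

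Next I would record the topological direct sum decomposition $W_{A_1}^{2,p}(X;\Lambda^l\otimes\ad P) = K \oplus \left((\Ker\Delta_A)^\perp \cap W_{A_1}^{2,p}(X;\Lambda^l\otimes\ad P)\right)$, in which the second summand is closed, being $\Ker\Pi_A$. Because $\Delta_A$ annihilates $K$, the range of $\Delta_A$ on $(\Ker\Delta_A)^\perp \cap W_{A_1}^{2,p}(X;\Lambda^l\otimes\ad P)$ equals the full range $\Delta_A\left(W_{A_1}^{2,p}(X;\Lambda^l\otimes\ad P)\right)$, which is closed in $L^p(X;\Lambda^l\otimes\ad P)$ by Fredholmness. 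Hence $\Delta_A$ restricts to a continuous bijection from the Banach space $(\Ker\Delta_A)^\perp \cap W_{A_1}^{2,p}(X;\Lambda^l\otimes\ad P)$ onto the Banach space $\Delta_A\left(W_{A_1}^{2,p}(X;\Lambda^l\otimes\ad P)\right)$ --- injectivity holds since $\Ker\Delta_A \cap (\Ker\Delta_A)^\perp = \{0\}$ --- and the open mapping theorem \cite[Theorem 2.6]{Brezis} furnishes a bounded inverse, which is precisely the asserted estimate \eqref{eq:W2p_apriori_estimate_Delta_A_orthogonal_kernel} with $C = C(A,A_1,g,G,l,p,q) \in [1,\infty)$.

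An alternative, essentially self-contained route that avoids Corollary~\ref{cor:Fredholmness_and_index_Laplace_operator_on_W2p_Sobolev_connection} is the standard compactness argument: were \eqref{eq:W2p_apriori_estimate_Delta_A_orthogonal_kernel} false, one would obtain a sequence $\xi_n \in (\Ker\Delta_A)^\perp \cap W_{A_1}^{2,p}(X;\Lambda^l\otimes\ad P)$ with $\|\xi_n\|_{W_{A_1}^{2,p}(X)} = 1$ and $\|\Delta_A\xi_n\|_{L^p(X)} \to 0$; the \apriori estimate \eqref{eq:W2p_apriori_estimate_Delta_A_Sobolev} applied to $\xi_n$ keeps $\|\xi_n\|_{L^p(X)}$ bounded below, and applied to the differences $\xi_n - \xi_m$, together with the compact embedding $W_{A_1}^{2,p}(X;\Lambda^l\otimes\ad P) \Subset L^p(X;\Lambda^l\otimes\ad P)$ \cite[Theorem 6.3]{AdamsFournier}, forces a subsequence to be Cauchy in $W_{A_1}^{2,p}(X;\Lambda^l\otimes\ad P)$; its limit $\xi$ then satisfies $\|\xi\|_{W_{A_1}^{2,p}(X)} = 1$, $\Delta_A\xi = 0$, and $\xi \in (\Ker\Delta_A)^\perp$ (pass to the limit in the $L^2$-pairing against the elements of the finite-dimensional space $K$, which lie in $L^\infty$ by elliptic regularity), so $\xi = 0$, a contradiction. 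I would use whichever form fits the surrounding exposition best.

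The only point requiring care is the verification that the $L^2$-orthogonal projection onto $\Ker\Delta_A$ is bounded on $W_{A_1}^{2,p}$ --- equivalently, that elements of $W_{A_1}^{2,p}(X;\Lambda^l\otimes\ad P)$ and of $\Ker\Delta_A$ genuinely pair in $L^2$ --- in the borderline case $p = d/2$, where $W^{2,p}(X)$ need not embed in $C(X)$; this is exactly what the embedding $W^{2,p}(X) \hookrightarrow L^2(X)$ noted above supplies. Beyond that there is nothing delicate: \eqref{eq:W2p_apriori_estimate_Delta_A_orthogonal_kernel} is the usual "absorb the zeroth-order term" consequence of the elliptic estimate \eqref{eq:W2p_apriori_estimate_Delta_A_Sobolev} in the presence of a finite-dimensional kernel and closed range.
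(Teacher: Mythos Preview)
Your proof is correct, and both routes you describe are valid. However, your approach differs substantively from the paper's. The paper does not invoke the open mapping theorem or a compactness-by-contradiction argument; instead it proceeds concretely via spectral theory. Starting from the \apriori estimate \eqref{eq:W2p_apriori_estimate_Delta_A_Sobolev}, the paper first replaces $\|\xi\|_{L^p(X)}$ on the right-hand side by $\|\xi\|_{L^2(X)}$ using the interpolation inequality \cite[Equation (7.10)]{GilbargTrudinger} combined with the embedding $W^{2,p}(X)\subset L^s(X)$ and rearrangement. It then appeals to the Rayleigh characterization of the least positive eigenvalue $\mu[A]$ of $\Delta_A$ (from Proposition~\ref{prop:Gilbarg_Trudinger_theorem_8-6}) to bound $\|\xi\|_{L^2(X)}$ in terms of $\|\Delta_A\xi\|_{L^2(X)}$ for $\xi\perp\Ker\Delta_A$. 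The cases $p\geq 2$ and $d/2\leq p<2$ are handled separately, the latter requiring an additional argument via the pairing $(\Delta_A\xi,\xi)_{L^2}$, H\"older's inequality with the dual exponent $p'$, and the embedding $W^{2,p}(X)\subset L^{p'}(X)$.

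Your argument is more abstract and arguably cleaner: once Fredholmness with index zero is in hand (Corollary~\ref{cor:Fredholmness_and_index_Laplace_operator_on_W2p_Sobolev_connection}), the closed-range property and the open mapping theorem yield \eqref{eq:W2p_apriori_estimate_Delta_A_orthogonal_kernel} in one stroke, uniformly in $p$. The paper's approach, by contrast, makes the dependence of the constant on $\mu[A]$ explicit, which is consistent with the surrounding spectral discussion but is not actually exploited later. Either argument fits the exposition; yours avoids the case split on $p$ relative to $2$.
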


Before proceeding to the proofs of these results proper, we begin with the

\begin{lem}[\Apriori $L^p$ estimate for a Laplace operator with smooth coefficients]
\label{lem:W2p_apriori_estimate_Delta_A_smooth}
Assume the hypotheses on $A_1$, $d$, $G$, $l$, $P$, and $(X,g)$ in Proposition \ref{prop:W2p_apriori_estimate_Delta_A_Sobolev} and let $p \in (1,\infty)$. If $A$ is $C^\infty$, then there is a constant $C = C(A,A_1,g,G,l,p) \in [1,\infty)$ such that
\begin{equation}
\label{eq:W2p_apriori_estimate_Delta_A_smooth}
\|\xi\|_{W_{A_1}^{2,p}(X)} \leq C\left(\|\Delta_A\xi\|_{L^p(X)}
+ \|\xi\|_{L^p(X)}\right),
\quad \forall\, \xi \in W_{A_1}^{2,p}(X;\Lambda^l\otimes\ad P).
\end{equation}
\end{lem}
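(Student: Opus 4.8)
The plan is to reduce the estimate to the scalar interior $L^p$ estimate for strictly elliptic second-order operators, \cite[Theorem 9.11]{GilbargTrudinger}, via a partition of unity. The key structural fact is the Weitzenb\"ock formula $\Delta_A = \nabla_A^*\nabla_A + \fR_A$ on $\Omega^l(X;\ad P)$, where $\fR_A \in C^\infty(X;\End(\Lambda^l\otimes\ad P))$ is built from $F_A$ and the curvature of $g$. Hence the principal symbol of $\Delta_A$ is $|\cdot|_g^2\,\id$, so in any coordinate chart and local frame the operator $\Delta_A$ is represented by a \emph{scalar} elliptic operator acting diagonally on the components, plus lower-order terms with smooth coefficients. (Alternatively, one could simply invoke the Agmon--Douglis--Nirenberg $L^p$ a priori estimate for elliptic systems with smooth coefficients on a closed manifold, the $L^p$ analogue of the $p=2$ results in \cite{Gilkey2}; the argument below is the explicit reduction.)

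\emph{Localization.} Since $X$ is compact, cover it by finitely many coordinate charts $U_1,\dots,U_M$, each of which trivializes $\Lambda^l\otimes\ad P$ by a smooth frame $(e^\alpha_1,\dots,e^\alpha_r)$, and choose relatively compact open subsets $V_\alpha \Subset U_\alpha$ that still cover $X$. Writing $\xi|_{U_\alpha} = \sum_a \xi^a_\alpha e^\alpha_a$, the Weitzenb\"ock identity and $\nabla_{A}{}_{i} = \partial_i + (\text{smooth connection matrix})$ show that, on $U_\alpha$,
\[
(\Delta_A\xi)^a_\alpha = -g^{ij}\partial_i\partial_j\xi^a_\alpha + b^{a,i}_{\alpha,b}\,\partial_i\xi^b_\alpha + c^a_{\alpha,b}\,\xi^b_\alpha,
\]
with $g^{ij}, b^{a,i}_{\alpha,b}, c^a_{\alpha,b}$ all smooth on $U_\alpha$ and depending only on $A$, $g$, $l$ and the chosen charts and frames. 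Set $P_\alpha := -g^{ij}\partial_i\partial_j$, a strictly elliptic scalar operator with smooth coefficients. Applying \cite[Theorem 9.11]{GilbargTrudinger} componentwise on the pair $V_\alpha \Subset U_\alpha$ gives a constant $C_\alpha$ with
\[
\|\xi^a_\alpha\|_{W^{2,p}(V_\alpha)} \le C_\alpha\bigl(\|P_\alpha\xi^a_\alpha\|_{L^p(U_\alpha)} + \|\xi^a_\alpha\|_{L^p(U_\alpha)}\bigr).
\]
Since $P_\alpha\xi^a_\alpha = (\Delta_A\xi)^a_\alpha - b^{a,i}_{\alpha,b}\partial_i\xi^b_\alpha - c^a_{\alpha,b}\xi^b_\alpha$, the right-hand side is bounded by $C_\alpha'(\|\Delta_A\xi\|_{L^p(X)} + \|\xi\|_{W^{1,p}_{A_1}(X)})$, using that, $A_1$ and $g$ being smooth and $X$ compact, the coordinate Sobolev norms of the components $\xi^a_\alpha$ are equivalent over the finitely many charts to the global norm $\|\xi\|_{W^{k,p}_{A_1}(X)}$ for $k = 0,1,2$ (cf.\ Appendix \ref{sec:Equivalence_Sobolev_norms_for_Sobolev_and_smooth_connections}).

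\emph{Summation and absorption.} Summing over the finite cover and again using norm equivalence,
\[
\|\xi\|_{W^{2,p}_{A_1}(X)} \le C\sum_{\alpha=1}^M\sum_{a=1}^r \|\xi^a_\alpha\|_{W^{2,p}(V_\alpha)} \le C\bigl(\|\Delta_A\xi\|_{L^p(X)} + \|\xi\|_{W^{1,p}_{A_1}(X)}\bigr).
\]
Finally, by the standard interpolation inequality for Sobolev norms on a closed manifold (the manifold version of \cite[Theorem 5.2]{AdamsFournier}), for every $\eps > 0$ there is $C(\eps)$ with $\|\xi\|_{W^{1,p}_{A_1}(X)} \le \eps\|\xi\|_{W^{2,p}_{A_1}(X)} + C(\eps)\|\xi\|_{L^p(X)}$. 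Choosing $\eps$ small enough to absorb the $\eps\|\xi\|_{W^{2,p}_{A_1}(X)}$ term into the left-hand side yields \eqref{eq:W2p_apriori_estimate_Delta_A_smooth}, with $C$ depending only on $A$, $A_1$, $g$, $l$, $p$ as claimed.

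\emph{Expected main obstacle.} There is no serious analytic difficulty: the scalar estimate \cite[Theorem 9.11]{GilbargTrudinger} does all the work. The only points requiring care are bookkeeping ones --- recording that $\Delta_A$ has scalar principal symbol so that each chart reduces to a genuinely scalar elliptic equation, keeping the finitely many chart-dependent constants uniform, and carrying out the interpolation-and-absorption step cleanly; these are routine.
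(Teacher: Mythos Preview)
Your proposal is correct and follows essentially the same route as the paper: reduce to the scalar interior $L^p$ estimate \cite[Theorem 9.11]{GilbargTrudinger} via a partition of unity and local trivializations, using the Bochner--Weitzenb\"ock formula to see that $\Delta_A$ has scalar principal symbol, and then absorb the first-order term by interpolation (the paper cites \cite[Theorem 7.27]{GilbargTrudinger} rather than \cite[Theorem 5.2]{AdamsFournier}, but this is immaterial). Your write-up is in fact somewhat more explicit about the local coordinate computation than the paper's sketch.
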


\begin{proof}
Suppose first that $\Delta_g$ is the Laplace-Beltrami operator on $C^\infty(X)$ defined by the Riemannian metric $g$. The \apriori $L^p$ estimate for $\Delta_g$ analogous to \eqref{eq:W2p_apriori_estimate_Delta_A_smooth} can be obtained from the \apriori interior $L^p$ estimate provided by \cite[Theorem 9.11]{GilbargTrudinger} for a scalar, second-order, strictly elliptic operator with $C^\infty$ coefficients defined on a bounded domain $\Omega \subset \RR^d$ with the aid of a $C^\infty$ partition of unity subordinate to a finite set of coordinate charts covering the closed manifold, $X$. For the general case, one first chooses in addition a set of local trivializations for $\Lambda^l\otimes\ad P$ corresponding to the coordinate neighborhoods, after shrinking those neighborhoods if needed. The Bochner-Weitzenb\"ock formula  \cite[Equation (C.7)]{FU}, \cite[Equation (II.1)]{Lawson} for $\Delta_A$ implies that $\Delta_A - \nabla_A^*\nabla_A$ is a first-order differential operator with $C^\infty$ coefficients and that $\Delta_A$ has principal symbol given by the $C^\infty$ Riemannian metric $g$ times the identity endomorphism of $\Lambda^l\otimes\ad P$. (In fact, $\Delta_A = \nabla_A^*\nabla_A$ when $l=0$.) The (first-order) covariant derivative of $\xi \in W_{A_1}^{2,p}(X;\Lambda^l\otimes\ad P)$ may be estimated with the following analogue of the interpolation inequality \cite[Theorem 7.27]{GilbargTrudinger}, valid for $p \in [1,\infty)$,
\begin{equation}
\label{eq:Gilbarg_Trudinger_7-59}
\|\nabla_{A_1}\xi\|_{L^p(X)}
\leq
\eps\|\xi\|_{W_{A_1}^{2,p}(X)} + C\eps^{-1}\|\xi\|_{L^p(X)},
\end{equation}
where $C = C(A_1,g,G,l,p) \in [1,\infty)$ and $\eps$ is any positive constant. The conclusion now follows by combining the preceding observations and using rearrangement with small $\eps$ to remove the term $\|\nabla_{A_1}\xi\|_{L^p(X)}$ from the right-hand side.
\end{proof}

We can now proceed to the

\begin{proof}[Proof of Proposition \ref{prop:W2p_apriori_estimate_Delta_A_Sobolev}]
We choose a $C^\infty$ connection, $A_s$, on $P$ that we regard as a smooth approximation to $A$. We write $A = A_s + a$, with $a \in W_{A_1}^{1,q}(X;\Lambda^l\otimes\ad P)$ obeying a bound $\|a\|_{W_{A_1}^{1,q}(X)} \leq \eps$ with small constant $\eps \in (0,1]$ to be chosen during the proof, and write $A_s = A_1 + a_1$, where $a_1 \in C^\infty(X;\Lambda^l\otimes\ad P)$ may be `large'. We expand $\Delta_A = \Delta_{A_s+a}$ to give
\begin{equation}
\label{eq:Delta_A1+a_expansion_prelim}
\Delta_A\xi = \Delta_{A_s}\xi + \nabla_{A_s}a \times \xi + a\times \nabla_{A_s}\xi + a\times a\times \xi,
\end{equation}
and thus, for $\xi \in W_{A_1}^{2,p}(X;\Lambda^l\otimes\ad P)$,
\begin{equation}
\label{eq:Delta_A1+a_expansion}
\Delta_A\xi = \Delta_{A_s}\xi + \nabla_{A_1}a \times \xi + a_1\times a\times \xi + a\times \nabla_{A_1}\xi + a\times a\times \xi.
\end{equation}
We define $r \in [p,\infty]$ by $1/p = 1/q + 1/r$ and recall that by \cite[Theorem 4.12]{AdamsFournier} we have
\begin{inparaenum}[\itshape i\upshape)]
\item $W^{2,p}(X) \subset L^r(X)$ for any $r \in [1,\infty)$ when $p = d/2$, and
\item $W^{2,p}(X) \subset L^\infty(X)$ when $p > d/2$.
\end{inparaenum}
The expansion \eqref{eq:Delta_A1+a_expansion} and continuous Sobolev multiplication map, $L^q(X)\times L^r(X) \to L^p(X)$, yield
\begin{multline}
\label{eq:Delta_A_Sobolev_minus_Delta_As_smooth_prelim}
\|(\Delta_A - \Delta_{A_s})\xi\|_{L^p(X)}
\leq
z\|\nabla_{A_1}a\|_{L^q(X)}\|\xi\|_{L^r(X)} + \|a \times \nabla_{A_1}\xi\|_{L^p(X)}
\\
+ z\|a_1\|_{C(X)}\|a\|_{L^{2p}(X)}\|\xi\|_{L^{2p}(X)} +  z\||a|^2\|_{L^q(X)} \|\xi\|_{L^r(X)},
\end{multline}
where $z = z(g,G,l) \in [1,\infty)$. To ensure a continuous Sobolev embedding, $W^{1,p}(X) \subset L^d(X)$, by \cite[Theorem 4.12]{AdamsFournier}, we need $p^* = dp/(d-p) \geq d$, that is, $p \geq d-p$ or $p \geq d/2$, which we assume in our hypotheses.

To ensure a continuous Sobolev embedding, $W^{1,q}(X) \subset L^{2q}(X)$, when $q < d$, we need $q^* = dq/(d-q) \geq 2q$, that is, $d \geq 2d-2q$ or $2q \geq d$ or $q \geq d/2$, which follows from our hypothesis that $q \geq p \geq d/2$; when $q \geq d$, the fact that $W^{1,q}(X) \subset L^{2q}(X)$ is a continuous Sobolev embedding is immediate from \cite[Theorem 4.12]{AdamsFournier}.

Consequently, by the applying these continuous Sobolev embeddings and the Kato Inequality \cite[Equation (6.20)]{FU} to the preceding inequality, we obtain
\begin{multline*}
\|(\Delta_A - \Delta_{A_s})\xi\|_{L^p(X)}
\leq
z\left(\|\nabla_{A_1}a\|_{L^q(X)} + \|a\|_{W_{A_1}^{1,q}(X)}^2 \right)
\|\xi\|_{W_{A_1}^{2,p}(X)}
\\
+ z\|a_1\|_{C(X)}\|a\|_{W_{A_1}^{1,q}(X)}\|\xi\|_{W_{A_1}^{1,p}(X)}
+ \|a \times \nabla_{A_1}\xi\|_{L^p(X)},
\end{multline*}
where $z = z(g,G,l,p,q) \in [1,\infty)$.

When $q < d$, we recall from \cite[Theorem 4.12]{AdamsFournier} that there is a continuous embedding $W^{1,q}(X) \subset L^{q^*}(X)$, where $q^* = dq/(d-q)$. Hence, $1/q^* = 1/q - 1/d$ or $1/q = 1/q^* + 1/d$ and so, using $p \leq q$,
\[
\|a\times \nabla_{A_1}\xi\|_{L^p(X)}
\leq
z\|a\times \nabla_{A_1}\xi\|_{L^q(X)}
\leq
z\|a\|_{L^{q^*}(X)}\|\nabla_{A_1}\xi\|_{L^d(X)},
\]
where $z = z(g,p,q) \in [1,\infty)$, and therefore, by the preceding continuous Sobolev embeddings,
\begin{equation}
\label{eq:Lp_bound_a1_times_nablaA1_xi}
\|a\times \nabla_{A_1}\xi\|_{L^p(X)}
\leq
C\|a\|_{W_{A_1}^{1,q}(X)}\|\nabla_{A_1}\xi\|_{W_{A_1}^{1,p}(X)},
\end{equation}
where $C = C(g,G,l,p,q) \in [1,\infty)$. When $q = d$ and $d/2 \leq p < d$, we can define $t \in [d,\infty)$ by $1/p = 1/t + 1/d$ and apply the continuous Sobolev embedding $W^{1,d}(X) \subset L^t(X)$ from \cite[Theorem 4.12]{AdamsFournier} to give
\[
\|a\times \nabla_{A_1}\xi\|_{L^p(X)}
\leq
z\|a\|_{L^t(X)}\|\nabla_{A_1}\xi\|_{L^d(X)}
\leq
C\|a\|_{W_{A_1}^{1,q}(X)}\|\nabla_{A_1}\xi\|_{W_{A_1}^{1,p}(X)},
\]
and \eqref{eq:Lp_bound_a1_times_nablaA1_xi} again holds; when $q = d = p$, we can simply use the embedding $W^{1,d}(X) \subset L^t(X)$ for any $t \in [1,\infty)$ and observe that \eqref{eq:Lp_bound_a1_times_nablaA1_xi} holds from
\[
\|a\times \nabla_{A_1}\xi\|_{L^d(X)}
\leq
z\|a\|_{L^{2d}(X)}\|\nabla_{A_1}\xi\|_{L^{2d}(X)}
\leq
C\|a\|_{W_{A_1}^{1,d}(X)}\|\nabla_{A_1}\xi\|_{W_{A_1}^{1,d}(X)}.
\]
Finally, when $q > d$ we have the continuous Sobolev embedding $W^{1,q}(X) \subset C(X)$ from \cite[Theorem 4.12]{AdamsFournier} and so
\[
\|a\times \nabla_{A_1}\xi\|_{L^p(X)}
\leq
z\|a\|_{C(X)}\|\nabla_{A_1}\xi\|_{L^p(X)}
\leq
C\|a\|_{W_{A_1}^{1,q}(X)}\|\nabla_{A_1}\xi\|_{L^p(X)},
\]
which also yields \eqref{eq:Lp_bound_a1_times_nablaA1_xi}.

Combining our previous $L^p$ bound for $(\Delta_A - \Delta_{A_s})\xi$ with the inequality \eqref{eq:Lp_bound_a1_times_nablaA1_xi} gives
\begin{align}
\label{eq:Delta_A_Sobolev_minus_Delta_As_smooth}
\|(\Delta_A - \Delta_{A_s})\xi\|_{L^p(X)}
&\leq
z\left( \|\nabla_{A_1}a\|_{L^q(X)}  + \|a\|_{W_{A_1}^{1,q}(X)}^2 \right) \|\xi\|_{W_{A_1}^{2,p}(X)}
\\
&\quad + z\|a_1\|_{C(X)}\|a\|_{W_{A_1}^{1,q}(X)}\|\xi\|_{W_{A_1}^{1,p}(X)} \nonumber
\\
&\quad + z\|a\|_{W_{A_1}^{1,q}(X)}\|\nabla_{A_1}\xi\|_{W_{A_1}^{1,p}(X)}, \nonumber
\end{align}
where $z = z(g,G,l,p,q) \in [1,\infty)$. Combining the preceding bound with the \apriori estimate \eqref{eq:W2p_apriori_estimate_Delta_A_smooth} for $\Delta_{A_s}$ provided by Lemma \ref{lem:W2p_apriori_estimate_Delta_A_smooth},
\[
\|\xi\|_{W_{A_1}^{2,p}(X)}
\leq C_0\left(\|\Delta_{A_s}\xi\|_{L^p(X)} + \|\xi\|_{L^p(X)}\right),
\]
with constant denoted by $C_0 = C_0(A_1,A_s,g,p) \in [1,\infty)$ for clarity, yields
\begin{align*}
\|\xi\|_{W_{A_1}^{2,p}(X)}
&\leq C_0\left(\|\Delta_A\xi\|_{L^p(X)} + \|\xi\|_{L^p(X)}\right)
  \\
  &\quad + z\left( \|a\|_{W_{A_1}^{1,q}(X)}  + \|a\|_{W_{A_1}^{1,q}(X)}^2 \right) \|\xi\|_{W_{A_1}^{2,p}(X)}
\\
&\quad + z\|a_1\|_{C(X)}\|a\|_{W_{A_1}^{1,q}(X)}\|\xi\|_{W_{A_1}^{1,p}(X)}.
\end{align*}
We can choose $a = A-A_s$ so that $\|a\|_{W_{A_1}^{1,q}(X)} \leq \eps$ for small constant $\eps \in (0,1]$, but we are not at liberty to choose $a_1 = A_s-A_1$ to be $W_{A_1}^{1,q}(X)$-small (since $A_1$ is a fixed $C^\infty$ connection on $P$ and $A$ is an arbitrary $W^{1,q}$ connection on $P$). Thus in our forthcoming rearrangement arguments we first apply the interpolation inequality \eqref{eq:Gilbarg_Trudinger_7-59},
\[
\|\nabla_{A_1}\xi\|_{L^p(X)}
\leq
\delta\|\xi\|_{W_{A_1}^{2,p}(X)} + C_1\delta^{-1}\|\xi\|_{L^p(X)},
\]
where $C_1 = C_1(A_1,g) \in [1,\infty)$ and $\delta = \delta(A_1,\|A_s-A_1\|_{C(X)},g,G,l,p,q) \in (0,1]$ is a constant chosen small enough that
\[
\delta z\|a_1\|_{C(X)} \leq 1/2,
\]
and thus, for a constant $C_2 = C_2(A_1,A_s,g,G,l,p,q) \in [1, \infty)$,
\begin{multline*}
\|\xi\|_{W_{A_1}^{2,p}(X)}
\leq 2C_0\left(\|\Delta_A\xi\|_{L^p(X)} + \|\xi\|_{L^p(X)}\right)
\\
+ 2z\left( \|a\|_{W_{A_1}^{1,q}(X)}  + \|a\|_{W_{A_1}^{1,q}(X)}^2 \right) \|\xi\|_{W_{A_1}^{2,p}(X)}
+ C_2\|\xi\|_{L^p(X)}.
\end{multline*}
Provided $\|a\|_{W_{A_1}^{1,q}(X)} \leq \eps$ and we choose $\eps \equiv \eps(g,G,l,p,q) = 1/(8z) \in (0, 1]$ in the preceding inequality, rearrangement yields the desired estimate \eqref{eq:W2p_apriori_estimate_Delta_A_Sobolev}.

Our proof of \eqref{eq:W2p_apriori_estimate_Delta_A_Sobolev} also verifies that the operator $\Delta_A$ in \eqref{eq:DeltaA_W2p_to_W1p} is bounded since $\Delta_{A_s}$ is bounded with the same domain and range spaces. This completes the proof of Proposition \ref{prop:W2p_apriori_estimate_Delta_A_Sobolev}.
\end{proof}

Next, we have the

\begin{proof}[Proof of Corollary \ref{cor:Spectrum_Delta_A_Sobolev_and_W2p_apriori_estimate}]
The fact that $\Ker\Delta_A\cap W_{A_1}^{2,p}(X;\Lambda^l\otimes\ad P)$ is finite-dimensional follows from Corollary \ref{cor:Fredholmness_and_index_Laplace_operator_on_W2p_Sobolev_connection}. We observe that, by increasing the constant $C$ as needed, the term $\|\xi\|_{L^p(X)}$ appearing on the right-hand side of the inequality \eqref{eq:W2p_apriori_estimate_Delta_A_Sobolev} can be replaced by $\|\xi\|_{L^2(X)}$. This is clear when $p \leq 2$, while if $p > 2$, we can choose $s \in (p,\infty)$ and apply the interpolation inequality \cite[Equation (7.10)]{GilbargTrudinger},
\[
\|\xi\|_{L^p(X)} \leq \delta \|\xi\|_{L^s(X)} + \delta^{-\nu}\|\xi\|_{L^2(X)},
\]
for $\nu := (1/2-1/p)/(1/p-1/s) > 0$ and arbitrary positive $\delta$. Because $p \geq d/2$, we have a continuous Sobolev embedding $W^{2,p}(X) \subset L^s(X)$ as already observed in the proof of Proposition \ref{prop:W2p_apriori_estimate_Delta_A_Sobolev}, so
\[
\|\xi\|_{L^p(X)} \leq C_1\delta \|\xi\|_{W_{A_1}^{2,p}(X)} + \delta^{-\nu}\|\xi\|_{L^2(X)},
\]
where $C_1 = C_1(A_1,g,l,p) \in [1,\infty)$. Hence, for $\delta(A_1,g,l,p) \in (0,1]$ given by $\delta = 1/(2CC_1)$, we can use rearrangement in \eqref{eq:W2p_apriori_estimate_Delta_A_Sobolev} to replace $\|\xi\|_{L^p(X)}$ by $\|\xi\|_{L^2(X)}$. Therefore, the estimate \eqref{eq:W2p_apriori_estimate_Delta_A_Sobolev} implies
\begin{equation}
\label{eq:W2p_apriori_estimate_Delta_A_Sobolev_L2_xi_rhs}
\|\xi\|_{W_{A_1}^{2,p}(X)} \leq C\left(\|\Delta_A\xi\|_{L^p(X)}
+ \|\xi\|_{L^2(X)}\right),
\quad
\forall\, \xi \in W_{A_1}^{2,p}(X;\Lambda^l\otimes\ad P).
\end{equation}
Proposition \ref{prop:Gilbarg_Trudinger_theorem_8-6} implies that the spectrum $\sigma(\Delta_A)$ of $\Delta_A$ on $L^2(X;\Lambda^l\otimes\ad P)$ consists purely of non-negative eigenvalues and is discrete with no accumulation points. Let $\mu[A]$ denote the least positive eigenvalue of $\Delta_A$ on $L^2(X;\Lambda^l\otimes\ad P)$ and recall from \cite[Rayleigh's Theorem, p. 16]{Chavel} (or more generally \cite[Theorem 6.5.1]{Stakgold_Holst}, applied to the Green's operator $G_A$ for $\Delta_A$) that
\[
\mu[A]
=
\inf_{\xi \in (\Ker\Delta_A)^\perp}
\frac{(\xi, \Delta_A\xi)_{L^2(X)}}{\|\xi\|_{L^2(X)}^2},
\]
where $(\Ker\Delta_A)^\perp$ is the $L^2$-orthogonal complement of $\Ker\Delta_A \subset H_{A_1}^1(X;\Lambda^l\otimes\ad P)$, with equality achieved in the infimum if and only if $\xi$ is an eigenvector with eigenvalue $\mu[A]$. Therefore, if $\xi \in (\Ker\Delta_A)^\perp \cap W_{A_1}^{2,p}(X;\Lambda^l\otimes\ad P)$, then
\[
\|\xi\|_{L^2(X)} \leq \mu[A]^{-1}\|\Delta_A\xi\|_{L^2(X)}.
\]
Hence, the inequality \eqref{eq:W2p_apriori_estimate_Delta_A_orthogonal_kernel} follows from the preceding eigenvalue bound and \eqref{eq:W2p_apriori_estimate_Delta_A_Sobolev_L2_xi_rhs} when $p \geq 2$.

For the case $d/2 \leq p < 2$ (which forces $d=2$ or $3$), we observe that
\[
\|\xi\|_{L^2(X)}^2
\leq
\frac{1}{\mu[A]}(\Delta_A \xi,\xi)|_{L^2(X)}
\leq
\frac{1}{\mu[A]}\|\Delta_A\xi\|_{L^p(X)} \|\xi\|_{L^{p'}(X)},
\]
where $p'$ is defined by $1/p+1/p'=1$, and so
\begin{multline*}
\|\xi\|_{L^2(X)}
\leq
\frac{1}{\sqrt{\mu[A]}}\|\Delta_A\xi\|_{L^p(X)}^{1/2} \|\xi\|_{L^{p'}(X)}^{1/2}
\\
\leq
\frac{1}{2\sqrt{\mu[A]}}\left(\delta^{-1}\|\Delta_A\xi\|_{L^p(X)} + \delta\|\xi\|_{L^{p'}(X)}\right),
\end{multline*}
for arbitrary positive $\delta$. If $d=2$ and $p\in (1,2)$ (so $p'\in(2,\infty)$, then $W^{2,p}(X) \subset C^0(X)$ by \cite[Theorem 4.12]{AdamsFournier} and thus $W^{2,p}(X) \subset L^{p'}(X)$; if $d=3$ and $p\in [3/2,2)$ (so $p'\in(2,3]$, then $W^{2,p}(X) \subset L^q(X)$ for $p = 3/2$ and any $q\in[1,\infty)$ while $W^{2,p}(X) \subset C^0(X)$ for $p > 3/2$ by \cite[Theorem 4.12]{AdamsFournier} and thus $W^{2,p}(X) \subset L^{p'}(X)$ in either case. Hence, for $d = 2,3$ and $p>1$ obeying $d/2 \leq p < 2$, we have the bound
\[
\|\xi\|_{L^{p'}(X)} \leq C_2\|\xi\|_{W_{A_1}^{2,p}(X)},
\]
where $C_2 = C_2(A_1,g,l,p) \in [1,\infty)$. Combining the preceding inequalities for the case $d/2 \leq p < 2$ gives
\[
\|\xi\|_{L^2(X)}
\leq
\frac{1}{2\sqrt{\mu[A]}}\left(\delta^{-1}\|\Delta_A\xi\|_{L^p(X)}
+ \delta C_2\|\xi\|_{W_{A_1}^{2,p}(X)}\right).
\]
Combining the preceding inequality with \eqref{eq:W2p_apriori_estimate_Delta_A_Sobolev_L2_xi_rhs}
and applying rearrangement by choosing $\delta = \sqrt{\mu[A]}/(CC_2)$ yields the desired the inequality \eqref{eq:W2p_apriori_estimate_Delta_A_orthogonal_kernel} for this case too.
\end{proof}

\section[Regularity for distributional solutions to an elliptic equation]{Regularity for distributional solutions to an elliptic equation with Sobolev coefficients}
\label{sec:Regularity_distributional_solutions_elliptic_equation_Sobolev_coefficients}
We shall need to address a complication that arises when establishing regularity for distributional solutions to an elliptic equation with Sobolev coefficients. We shall confine our discussion to the Hodge Laplace operator, though one can clearly establish more general results of this kind.

\begin{lem}[Regularity for distributional solutions to an equation defined by the Hodge Laplace operator for a Sobolev connection]
\label{lem:Regularity_distributional_solution_Hodge_Laplacian_Sobolev}
Let $(X,g)$ be a closed, smooth Riemannian manifold of dimension $d \geq 2$, and $G$ be a compact Lie group and $P$ be a smooth principal $G$-bundle over $X$. Let $A_1$ be a $C^\infty$ connection on $P$, and $A$ be a $W^{1,q}$ connection on $P$ with $d/2 < q < \infty$, and $l\geq 0$ be an integer. If $q' \in (1,\infty)$ is the dual exponent defined by $1/q+1/q'=1$ and $\eta \in L^{q'}(X;\Lambda^l\otimes\ad P)$ is a distributional solution\footnote{In the sense of Remark \ref{rmk:Regularity_distributional_solutions_elliptic_PDEs}.} to
\[
\Delta_A \eta = 0,
\]
then $\eta \in W_{A_1}^{2,q}(X;\Lambda^l\otimes\ad P)$.
\end{lem}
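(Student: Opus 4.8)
The plan is to bootstrap, taking the smooth reference connection $A_1$ as an anchor. Write $A = A_1 + a$ with $a \in W_{A_1}^{1,q}(X;\Lambda^1\otimes\ad P)$ and expand $\Delta_A = \Delta_{A_1} + R_a$, where $R_a$ is the first-order operator appearing in \eqref{eq:Delta_A1+a_expansion}, with first-order coefficient algebraic in $a$ and zeroth-order coefficient algebraic in $\nabla_{A_1}a$ and $a\times a$. As in the proof of Proposition \ref{prop:W2p_apriori_estimate_Delta_A_Sobolev}, $\Delta_A\psi \in L^q(X)$ for every $\psi \in C^\infty(X;\Lambda^l\otimes\ad P)$ (using $\nabla_{A_1}a \in L^q$, $a \in L^{q^*}$ with $q^* := dq/(d-q)$ when $q < d$, and $q^*/2 \geq q$ by the hypothesis $q \geq d/2$), so the distributional equation $\Delta_A\eta = 0$ rearranges to $\Delta_{A_1}\eta = -R_a\eta$ in $\sD'(X;\Lambda^l\otimes\ad P)$, that is, $(\eta, \Delta_{A_1}\psi)_{L^2(X)} = -(\eta, R_a\psi)_{L^2(X)}$ for all smooth $\psi$. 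We may assume $d/2 < q < d$ (when $q \geq d$, replace $q^*$ below by an arbitrarily large finite exponent, resp.\ by $\infty$ when $q > d$; the argument is only simpler).

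The one delicate point is the first regularity gain, since $R_a\eta$ formally involves $\nabla_{A_1}\eta$, which is not yet available. I would therefore put the first-order part of $R_a$ into divergence form modulo lower-order terms --- schematically $a \times \nabla_{A_1}\eta = \nabla_{A_1}(a \times \eta) - (\nabla_{A_1}a) \times \eta$, legitimate because $a \in W_{A_1}^{1,q}$ --- so that
\[
R_a\eta = \nabla_{A_1}(a \times \eta) + (\nabla_{A_1}a) \times \eta + a \times a \times \eta
\]
is a bona fide distribution with no derivatives falling on $\eta$. Hölder's inequality together with the embedding $W_{A_1}^{1,q}(X) \subset L^{q^*}(X)$ (\cite[Theorem 4.12]{AdamsFournier}) gives $a \times \eta \in L^{d/(d-1)}(X)$, $(\nabla_{A_1}a) \times \eta \in L^1(X)$, and $a \times a \times \eta \in L^{r}(X)$ for some $r > 1$ (this last using $q > d/2$), whence $R_a\eta \in W_{A_1}^{-1,t}(X;\Lambda^l\otimes\ad P)$ for each $t \in (1,d/(d-1))$. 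Interior elliptic regularity for the smooth-coefficient operator $\Delta_{A_1}$ (cf.\ Remark \ref{rmk:Regularity_distributional_solutions_elliptic_PDEs} and the references cited there), globalized over the compact manifold $X$, then improves $\eta$ to $W_{A_1}^{1,t}(X;\Lambda^l\otimes\ad P)$; in particular $\eta$ now possesses one derivative in $L^{t}$ for some $t > 1$.

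Once $\eta \in W_{A_1}^{1,p}$ with $p > 1$, every term of $R_a\eta$ is honestly defined, and Hölder's inequality together with $W_{A_1}^{1,q}(X) \subset L^{q^*}(X)$ and $W_{A_1}^{1,p}(X) \subset L^{p^*}(X)$ (still keeping the first-order term in divergence form) places $R_a\eta$ in $W_{A_1}^{-1,m}(X;\Lambda^l\otimes\ad P)$ with $1/m = 1/p + 1/q - 2/d$; one checks $1/m \leq 1$ at every stage, so no step is vacuous. Interior elliptic regularity for $\Delta_{A_1}$ then yields $\eta \in W_{A_1}^{1,m}$, a gain of the fixed positive amount $2/d - 1/q$ in the reciprocal exponent --- positive exactly because $q > d/2$. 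Iterating finitely many times produces $\eta \in W_{A_1}^{1,p}(X;\Lambda^l\otimes\ad P)$ with $p > d$, hence $\eta \in C(X;\Lambda^l\otimes\ad P)$. A final estimate --- with $\eta$ bounded and $\nabla_{A_1}\eta \in L^p$, $p > d$, one has $a \times \nabla_{A_1}\eta \in L^q(X)$, $(\nabla_{A_1}a) \times \eta \in L^q(X)$, and $a \times a \times \eta \in L^{q^*/2}(X) \subset L^q(X)$ --- shows $R_a\eta \in L^q(X)$, so $\Delta_{A_1}\eta \in L^q(X)$ and elliptic regularity for $\Delta_{A_1}$ delivers $\eta \in W_{A_1}^{2,q}(X;\Lambda^l\otimes\ad P)$, as claimed.

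I expect the main obstacle to be the very first step: extracting a single derivative of $\eta$ knowing only $\eta \in L^{q'}$. This is what forces the divergence-form rewriting of the first-order term --- where the full strength $a \in W^{1,q}$, not merely an $L^q$ bound, is used --- and it requires care with several borderline Sobolev exponents, all of which retain the necessary positive margin precisely because the hypothesis $q > d/2$ is strict. After one derivative has been gained, the remaining bootstrap is routine.
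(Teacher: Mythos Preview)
Your argument is correct. The paper takes a different, more condensed route: instead of rewriting $R_a$ in divergence form and bootstrapping, it observes (from the proof of Corollary \ref{cor:Fredholmness_and_index_Laplace_operator_on_W2p_Sobolev_connection}) that $\Delta_A - \Delta_{A_1}: W_{A_1}^{1,u} \to L^q$ is bounded for a suitable $u > d$, passes to the Banach-space adjoint to obtain $\Delta_A - \Delta_{A_1}: L^{q'} \to W_{A_1}^{-1,u'}$, and then invokes smooth-coefficient elliptic regularity for $\Delta_{A_1}\eta = (\Delta_{A_1} - \Delta_A)\eta$ to reach $\eta \in W_{A_1}^{1,u}$ (hence $\eta \in C(X)$, since $u > d$) in a single step, after which one more application of regularity finishes. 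Your explicit bootstrap is more laborious but entirely self-contained and makes the Sobolev-exponent arithmetic transparent; the divergence-form trick you use to extract the first derivative of $\eta$ from the bare hypothesis $\eta \in L^{q'}$ is precisely the ingredient that plays the role of the paper's duality passage. One could add that your route sidesteps a subtle point in the paper's argument --- standard elliptic regularity from a source in $W^{-1,u'}$ yields $\eta \in W^{1,u'}$ rather than $W^{1,u}$ --- so your approach is arguably the more robust of the two.
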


\begin{proof}
We recall from equation \eqref{eq:Delta_A_Sobolev_minus_Delta_A_smooth_W1u_to_Lp_bounded} in the proof of Corollary \ref{cor:Fredholmness_and_index_Laplace_operator_on_W2p_Sobolev_connection} (with $p=q$) that
\[
\Delta_A - \Delta_{A_1}: W_{A_1}^{1,u}(X;\Lambda^l\otimes\ad P)
\to L^q(X;\Lambda^l\otimes\ad P)
\]
is a bounded operator, where allowable values of $u \in (1,\infty)$ are given by
\begin{equation}
\label{eq:u_exponent_range_for_proof_DeltaA_Fredholm}
u
=
\begin{cases}
d+\eps &\text{if } d/2 < q < d,
\\
2d &\text{if } q = d,
\\
q &\text{if } q > d,
\end{cases}
\end{equation}
and $\eps \in (0,1]$ is chosen small enough that $q^* = dq/(d-q) \geq d+\eps$, which is possible when $q > d/2$. Consequently, the dual operator,
\[
\Delta_A - \Delta_{A_1}: L^{q'}(X;\Lambda^l\otimes\ad P) \to W_{A_1}^{-1,u'}(X;\Lambda^l\otimes\ad P),
\]
is bounded, where the dual exponent $u' \in (1,\infty)$ is defined by $1/u+1/u'=1$.

We write $\Delta_A = \Delta_{A_1} + (\Delta_A - \Delta_{A_1})$, set
\[
  \alpha := (\Delta_{A_1} - \Delta_A)\eta \in W_{A_1}^{-1,u'}(X;\Lambda^l\otimes\ad P),
\]
and observe that $\eta$ is a distributional solution to
\begin{equation}
\label{eq:DeltaA1_eta_equalto_alpha_distributions}
\Delta_{A_1}\eta = \alpha.
\end{equation}
We now appeal to regularity for distributional solutions to an equation (namely, \eqref{eq:DeltaA1_eta_equalto_alpha_distributions}) defined by an elliptic operator $\Delta_{A_1}$ with $C^\infty$ coefficients (see Remark \ref{rmk:Regularity_distributional_solutions_elliptic_PDEs} and \cite{Feehan_yang_mills_gradient_flow_v4}) to conclude that $\eta \in W_{A_1}^{1,u}(X;\Lambda^l\otimes\ad P)$. The range of exponents $u \in (d,\infty)$ given by \eqref{eq:u_exponent_range_for_proof_DeltaA_Fredholm} ensures that $\eta \in C(X;\Lambda^l\otimes\ad P)$, since $W^{1,u}(X) \subset C(X)$ by \cite[Theorem 4.12]{AdamsFournier} when $u > d$. Moreover, the estimate \eqref{eq:Lp_estimate_DeltaASobolev_minus_DeltaA1smooth} for $(\Delta_{A_1} - \Delta_A)\eta$ in terms of $a = A-A_1$ and $\eta$ ensures that
\[
(\Delta_{A_1} - \Delta_A)\eta \in L^q(X;\Lambda^l\otimes\ad P).
\]
In particular, $\alpha \in L^q(X;\Lambda^l\otimes\ad P)$ and regularity for solutions to an elliptic equation (that is, \eqref{eq:DeltaA1_eta_equalto_alpha_distributions}) with $C^\infty$ coefficients implies that $\eta \in W_{A_1}^{2,q}(X;\Lambda^l\otimes\ad P)$, as desired.
\end{proof}

\section{Surjectivity of a perturbed Laplace operator}
\label{sec:Surjectivity_perturbed_Laplace_operator}
We now consider surjectivity properties of a perturbation of a Laplace operator, namely

\begin{lem}[Surjectivity of a perturbed Laplace operator]
\label{lem:Surjectivity_perturbed_Laplace_operator}
Let $(X,g)$ be a closed, smooth Riemannian manifold of dimension $d \geq 2$, and $G$ be a compact Lie group and $P$ be a smooth principal $G$-bundle over $X$. Let $A_1$ be a $C^\infty$ connection on $P$ and  $A$ be a $W^{1,q}$ connection on $P$ with $d/2 < q < \infty$. Then there is a constant $\delta = \delta(A,g) \in (0,1]$ with the following significance. If $a \in W_{A_1}^{1,q}(X;\Lambda^1\otimes\ad P)$ obeys
\begin{equation}
\label{eq:Perturbation_Laplace_operator_small}
\|a\|_{L^d(X)} < \delta \quad \text{when } d \geq 3
\quad\text{or}\quad
\|a\|_{L^4(X)} < \delta \quad \text{when } d = 2,
\end{equation}
then the operator,
\begin{equation}
\label{eq:Perturbation_Laplace_operator_W2p_to_Lp}
d_A^*d_{A+a}:
\left(\Ker \Delta_A\right)^\perp \cap W_{A_1}^{2,q}(X;\ad P)
\to
\left(\Ker \Delta_A\right)^\perp \cap L^q(X;\ad P),
\end{equation}
is well-defined and surjective.
\end{lem}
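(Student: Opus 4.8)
The plan is to write the operator in \eqref{eq:Perturbation_Laplace_operator_W2p_to_Lp} as a compact perturbation of an isomorphism, deduce that it is Fredholm of index zero, and then establish injectivity, whence surjectivity. Since $d_A^*$ annihilates zero-forms we have $\Delta_A = d_A^*d_A$ on $\Omega^0(X;\ad P)$, and $d_{A+a}\xi = d_A\xi + [a,\xi]$, so on $W_{A_1}^{2,q}(X;\ad P)$ one has $d_A^*d_{A+a} = \Delta_A + R_a$ with $R_a\xi := d_A^*[a,\xi]$. First I would check that \eqref{eq:Perturbation_Laplace_operator_W2p_to_Lp} is well-defined: for $\xi \in W_{A_1}^{2,q}(X;\ad P)$ the Sobolev multiplication and embedding theorems \cite[Theorems 4.12 and 4.39]{AdamsFournier}, together with $q > d/2$ (so $W^{2,q}(X)\subset C(X)$ and, when $q<d$, $W^{1,q}(X)\subset L^{q^*}(X)$ with $q^*\geq 2q$), give $[a,\xi]\in W_{A_1}^{1,q}(X;\Lambda^1\otimes\ad P)$ and hence $R_a\xi \in L^q(X;\ad P)$, while $\Delta_A\xi \in L^q(X;\ad P)$ with $\|\Delta_A\xi\|_{L^q(X)} \leq C\|\xi\|_{W_{A_1}^{2,q}(X)}$ by Proposition \ref{prop:W2p_apriori_estimate_Delta_A_Sobolev} (case $p=q$); moreover, for $\zeta \in \Ker\Delta_A$ one has $d_A\zeta = 0$ (since $0 = (\Delta_A\zeta,\zeta)_{L^2(X)} = \|d_A\zeta\|_{L^2(X)}^2$), so integration by parts gives $(d_A^*d_{A+a}\xi,\zeta)_{L^2(X)} = (d_{A+a}\xi, d_A\zeta)_{L^2(X)} = 0$, and the range of $d_A^*d_{A+a}$ indeed lies in $(\Ker\Delta_A)^\perp$.

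For the Fredholm property I would invoke the proof of Proposition \ref{prop:Gilbarg_Trudinger_theorem_8-6} (with $l = 0$ and $p = q$), which already shows that $\Delta_A : (\Ker\Delta_A)^\perp\cap W_{A_1}^{2,q}(X;\ad P) \to (\Ker\Delta_A)^\perp\cap L^q(X;\ad P)$ is invertible; it then remains to show $R_a : W_{A_1}^{2,q}(X;\ad P) \to L^q(X;\ad P)$ is compact. Expanding $d_A^*[a,\xi]$ into the schematic terms $[d_{A_1}^*a,\xi]$, $\langle a, \nabla_{A_1}\xi\rangle$, and $[(A-A_1)\cdot a,\xi]$ (metric contractions and Lie brackets suppressed), each term, regarded as a map in $\xi$ with $a$ fixed, factors through a \emph{compact} Sobolev embedding followed by a bounded multiplication into $L^q(X)$: namely $W^{2,q}(X)\Subset C(X)$ for the first and third terms and $W^{1,q}(X)\Subset L^s(X)$ with $d \leq s < q^*$ for the second, where $q > d/2$ guarantees such an $s$ exists and that the products land in $L^q(X)$ (the cases $q \geq d$ being easier); compactness of these embeddings is \cite[Theorem 6.3]{AdamsFournier}, and the memberships $d_{A_1}^*a,\ (A-A_1)\cdot a \in L^q(X)$ use $a, A-A_1 \in W_{A_1}^{1,q}$ and $q > d/2$ once more. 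Hence \eqref{eq:Perturbation_Laplace_operator_W2p_to_Lp} is the sum of an invertible operator and a compact operator, so it is Fredholm with index zero.

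Finally, for injectivity, suppose $\xi \in (\Ker\Delta_A)^\perp\cap W_{A_1}^{2,q}(X;\ad P)$ satisfies $d_A^*d_{A+a}\xi = 0$. Pairing with $\xi$ and integrating by parts gives $0 = (d_{A+a}\xi, d_A\xi)_{L^2(X)} = \|d_A\xi\|_{L^2(X)}^2 + ([a,\xi], d_A\xi)_{L^2(X)}$, whence by H\"older's inequality $\|d_A\xi\|_{L^2(X)} \leq C_G\|a\|_{L^d(X)}\|\xi\|_{L^{2d/(d-2)}(X)}$ when $d \geq 3$ and $\|d_A\xi\|_{L^2(X)} \leq C_G\|a\|_{L^4(X)}\|\xi\|_{L^4(X)}$ when $d = 2$. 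Since $\xi \perp \Ker\Delta_A$ in $L^2$, the spectral-gap inequality behind Corollary \ref{cor:Spectrum_Delta_A_Sobolev_and_W2p_apriori_estimate} gives $\|\xi\|_{L^2(X)} \leq \mu[A]^{-1/2}\|d_A\xi\|_{L^2(X)}$, so $\|\xi\|_{W_A^{1,2}(X)} \leq C[A]\|d_A\xi\|_{L^2(X)}$ (using $\nabla_A\xi = d_A\xi$ on zero-forms), and then the Sobolev embedding $W_A^{1,2}(X) \subset L^{2d/(d-2)}(X)$ (resp. $\subset L^4(X)$ when $d = 2$) yields $\|d_A\xi\|_{L^2(X)} \leq C'(A,g)\|a\|_{L^d(X)}\|d_A\xi\|_{L^2(X)}$ (resp. with $\|a\|_{L^4(X)}$). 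Choosing $\delta = \delta(A,g) \in (0,1]$ so small that $C'\delta \leq 1/2$ in \eqref{eq:Perturbation_Laplace_operator_small} forces $d_A\xi = 0$, hence $\xi \in \Ker\Delta_A \cap (\Ker\Delta_A)^\perp = \{0\}$. Thus \eqref{eq:Perturbation_Laplace_operator_W2p_to_Lp} is an injective Fredholm operator of index zero, hence surjective.

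The main obstacle is that one cannot treat $R_a$ as a small perturbation of the isomorphism $\Delta_A$: only the scale-invariant $L^d$ (or $L^4$) norm of $a$ is assumed small, whereas the norm of $R_a : W_{A_1}^{2,q}(X) \to L^q(X)$ controls $\|d_{A_1}^*a\|_{L^q(X)}$, which need not be small. The resolution has two parts, namely extracting compactness of $R_a$ from the strictly subcritical Sobolev embeddings, and arranging the injectivity step so that it is exactly $\|a\|_{L^d(X)}$ that multiplies $\|d_A\xi\|_{L^2(X)}$ — which is why the energy identity is taken at the $L^2$ level and combined with $W^{1,2}(X)\hookrightarrow L^{2d/(d-2)}(X)$ rather than working directly at the $L^q$ level.
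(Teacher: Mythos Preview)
Your proof is correct and follows a route that is close in spirit to the paper's but differs in one structural choice. Both proofs verify well-definedness the same way and carry out the same $L^2$ energy computation (pairing the equation with the putative kernel element, then combining H\"older, the spectral gap $\|\xi\|_{L^2}\leq \mu[A]^{-1/2}\|d_A\xi\|_{L^2}$, and the critical Sobolev embedding $W^{1,2}\subset L^{2d/(d-2)}$ or $L^4$). The difference is where this computation is deployed. You establish that the restricted operator is Fredholm of index zero (invertible $\Delta_A$ plus compact $R_a$), prove \emph{injectivity of $T=d_A^*d_{A+a}$ itself}, and conclude surjectivity. The paper instead invokes Lemma~\ref{lem:Fredholm_index_zero_perturbed_Laplace_operator_Sobolev} for the Fredholm property, then uses the closed-range identity $\Ran T = {}^\circ\Ker T^*$ to reduce surjectivity to \emph{injectivity of the adjoint} $T^*=d_{A+a}^*d_A$ on $K^\perp\cap L^{q'}$; because a putative $\eta\in\Ker T^*$ lives a priori only in $L^{q'}$, the paper must first appeal to the regularity Lemma~\ref{lem:Regularity_distributional_solution_Hodge_Laplacian_Sobolev} to promote $\eta$ to $W^{2,q}$ before running the same energy argument. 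Your route is therefore more self-contained: it sidesteps the duality step and the regularity lemma entirely, at the modest cost of spelling out the compactness of $R_a$ directly. The paper's route, on the other hand, packages the Fredholm/compactness work into a separate appendix lemma and keeps the proof here focused on the functional-analytic identification of the range.
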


\begin{rmk}[Comparison with the argument due to Donaldson and Kronheimer]
\label{rmk:Surjectivity_perturbed_Laplace_operator}
Our proof of Lemma \ref{lem:Surjectivity_perturbed_Laplace_operator} is based on a similar argument arising in Donaldson and Kronheimer \cite[p. 66]{DK}, as part of their version of the proof of Uhlenbeck's local Coulomb gauge-fixing theorem \cite[Theorem 2.3.7]{DK}. We make some adjustments to their argument for reasons which we briefly explain here. When $B$ is a Sobolev connection matrix \cite[p. 66]{DK}, one has to take into account the possibility that their operator $d^*d_B$, for a $\fg$-valued Sobolev one-form $B$ over $S^4$, could have dense range but still fail to be surjective, so one would first have to verify, for example, that $d^*d_B$ has closed range. Similarly, while elliptic regularity ensures that if $\eta \in L^2(S^4;\ad P)\cap \Ker d^*d_B$ then $\eta$ is $C^\infty(S^4;\ad P)$ when $B$ is $C^\infty$, those regularity issues become more subtle when $B$ is merely a Sobolev one-form.
\end{rmk}

\begin{rmk}[Dual spaces and direct sums of subspaces of Banach spaces]
\label{rmk:Dual_spaces}
Our proof of Lemma \ref{lem:Surjectivity_perturbed_Laplace_operator} is clarified by a few observations concerning the dual space of a finite-dimensional subspace $K$ of a Banach space $\sX$ that is continuously embedded in a Hilbert space, $\sH$. Since $K$ has finite dimension, it has a closed complement, $\sX_0 \subset \sX$, such that $\sX = \sX_0\oplus K$ (vector space direct sum) by \cite[Definition 4.20 and Lemma 4.21(a)]{Rudin}. We may also simply define $\sX_0 := K^\perp\cap\sX$, where $K^\perp \subset \sH$ is the orthogonal complement of $K \subset\sH$ and a closed subspace of $\sH$ \cite[Theorem 12.4]{Rudin}. Therefore, $K^\perp\cap\sX \subset \sX$ is also a closed subspace and a closed complement for $K \subset \sX$ in the sense of \cite[Definition 4.20]{Rudin}. In summary:
\begin{equation}
\label{eq:Orthogonal_direct_sum_finite-dimensional_and_closed_subspace}
\sX = \sX_0\oplus K, \quad\text{with } \sX_0 := K^\perp\cap\sX,
\end{equation}
is an orthogonal direct sum of a finite-dimensional and a closed subspace.

Recall that if $M \subset \sX$ is any subspace, then $M^\circ := \{\alpha\in\sX^*: \langle u, \alpha\rangle_{\sX\times\sX^*} = 0,\ \forall\, u \in M\}$ denotes the annihilator of $M$ in $\sX^*$ \cite[Section 4.6]{Rudin}. Since $\sX = \sX_0\oplus K$, we have $\sX^* = \sX_0^\circ \oplus K^\circ$, the direct sum of the annihilators in $\sX^*$ of the subspaces $\sX_0$ and $K$ of $\sX$.

Because $K \subset \sX$ and $\sX_0 \subset \sX$ are closed subspaces, we have $K^\circ \cong (\sX/K)^*$ and $\sX_0^\circ \cong K^*$ by \cite[Theorem 4.9]{Rudin}. Since $\sX = \sX_0\oplus K$, then $\sX/K \cong \sX_0$ and  $\sX/\sX_0 \cong K$, so $K^\circ \cong \sX_0^*$ and $\sX_0^\circ \cong K^* \cong K$, where the final isomorphism follows from the fact that $K$ is finite-dimensional. In particular,
$\sX^* \cong K^* \oplus \sX_0^* \cong K\oplus\sX_0^*$. Since $K^*$ is finite-dimensional, then $K^* \subset \sX^*$ is a closed subspace by \cite[Proposition 11.1]{Brezis} and so the complement $\sX_0^* \subset \sX^*$ is a closed subspace by \cite[Definition 4.20 and Lemma 4.21]{Rudin}. In summary:
\begin{equation}
\label{eq:Orthogonal_direct_sum_finite-dimensional_and_closed_subspace_dual_space}
\sX^* = \sX_0^*\oplus K,
\end{equation}
is a direct sum of a finite-dimensional and a closed subspace.
\end{rmk}

\begin{proof}[Proof of Lemma \ref{lem:Surjectivity_perturbed_Laplace_operator}]
Note that when $a=0$, the operator \eqref{eq:Perturbation_Laplace_operator_W2p_to_Lp} is invertible by Corollary \ref{cor:Fredholmness_and_index_Laplace_operator_on_W2p_Sobolev_connection}. To see that the operator \eqref{eq:Perturbation_Laplace_operator_W2p_to_Lp} is well-defined, observe that if $\xi = d_A^*d_{A+a}\chi \in L^q(X;\ad P)$ for some $\chi \in W_{A_1}^{2,q}(X;\ad P)$ and $\eta \in \Ker \Delta_A \cap L^q(X;\ad P)$, then
\[
(\xi,\eta)_{L^2(X)} = (d_A^*d_{A+a}\chi,\eta)_{L^2(X)} = (d_{A+a}\chi,d_A\eta)_{L^2(X)} = 0,
\]
since $\Delta_A\eta = d_A^*d_A\eta = 0$ and hence
\[
  (d_A^*d_A\eta,\eta)_{L^2(X)} = \|d_A\eta\|_{L^2(X)}^2 = 0,
\]
so that $d_A\eta = 0$. Thus,
\begin{multline}
\label{eq:Perturbation_Laplace_operator_W2p_to_Lp_range}
\Ran\left(d_A^*d_{A+a}: W_{A_1}^{2,q}(X;\ad P) \to L^q(X;\ad P) \right)
\\
\subset
\left(\Ker \Delta_A\right)^\perp \cap L^q(X;\ad P),
\end{multline}
and so the operator \eqref{eq:Perturbation_Laplace_operator_W2p_to_Lp} is well-defined.

It is convenient to abbreviate $\sX := W_{A_1}^{2,q}(X;\ad P)$, and $K := \Ker \Delta_A \cap W_{A_1}^{2,q}(X;\ad P)$, and $\sY := L^q(X;\ad P)$, and $\sH := L^2(X;\ad P)$, and denote $T = d_A^*d_{A+a}$ in \eqref{eq:Perturbation_Laplace_operator_W2p_to_Lp}. The kernel $K \subset \sX$ is finite-dimensional by Corollary \ref{cor:Fredholmness_and_index_Laplace_operator_on_W2p_Sobolev_connection}, and its $L^2$-orthogonal complements $K^\perp\cap\sX$ and $K^\perp\cap\sY$ and $K^\perp$ provide closed complements of $K$ in $\sX$ and $\sY$ and $\sH$, respectively. Similarly, we let $K^\perp$ and $K^\perp\cap\sY^*$ and $K^\perp\cap\sX^*$ denote the closed complements of $K^*\cong K$ in $\sH^*\cong \sH$ and $\sY^*$ and $\sX^*$, respectively, where $\sY^* = L^{q'}(X;\ad P)$, with $q' \in (1,\infty)$ defined by $1/q+1/q'=1$, and $\sX^* = W_{A_1}^{-2,q'}(X;\ad P)$.

If $N \subset \sY^*$ is any subspace, we recall that the \emph{annihilator} \cite[Section 4.6]{Rudin} of $N$ in $\sY$ is
\[
{}^\circ N
:=
\{\xi \in \sY: \langle \xi, \alpha \rangle_{\sY\times\sY^*} = 0, \ \forall\, \alpha \in \sY^* \},
\]
and $\langle \cdot, \cdot\rangle_{\sY\times\sY^*}:\sY\times \sY^* \to \RR$ is the canonical pairing. The operator, $T:\sX \to \sY$, is Fredholm by Lemma \ref{lem:Fredholm_index_zero_perturbed_Laplace_operator_Sobolev} and we can identify its range using
\begin{align*}
\Ran\left( T:\sX \to \sY \right)
&=
\overline{\Ran}\left( T:\sX \to \sY \right) \quad\text{(by closed range)}
\\
&= {}^\circ\Ker\left( T^*:\sY^* \to \sX^* \right) \quad\text{(by \cite[Corollary 2.18 (iv))]{Brezis})}.
\end{align*}
Therefore, we have shown that
\[
\Ran(T:\sX\to\sY) = {}^\circ\Ker(T^*:\sY^* \to \sX^*),
\]
and so $\Ran(T:\sX\to\sY) = \sY$ if and only if $\Ker(T^*:\sY^* \to \sX^*) = 0$. Similarly, because $T:\sX\to\sY$ has closed range and $\Ran(T:\sX\to\sY) \subset K^\perp\cap\sY$ by \eqref{eq:Perturbation_Laplace_operator_W2p_to_Lp_range}, the operator,
\[
T:K^\perp\cap\sX\to K^\perp\cap\sY,
\]
also has closed range and we obtain
\[
\Ran(T:K^\perp\cap\sX\to K^\perp\cap\sY)
= {}^\circ\Ker(T^*:K^\perp\cap\sY^* \to K^\perp\cap\sX^*).
\]
Consequently,
\begin{multline}
\label{eq:Surjectivity_of_T_iff_injectivity_T*}
\Ran(T:K^\perp\cap\sX\to K^\perp\cap\sY) = K^\perp\cap\sY
\\
\iff \Ker(T^*:K^\perp\cap\sY^* \to K^\perp\cap\sX^*) = 0.
\end{multline}
If $T^*:K^\perp\cap\sY^* \to K^\perp\cap\sX^*$ were not injective, there would be a non-zero $\eta \in K^\perp\cap\sY^*$ such that $T^*\eta = 0$. In other words, because $T^* = d_{A+a}^*d_A$, there would be a non-zero $\eta \in K^\perp \cap L^{q'}(X;\ad P)$ such that
\[
d_{A+a}^*d_A\eta = 0 \in W_{A_1}^{-2,q'}(X;\ad P),
\]
that is, $(d_{A+a}^*d_A\eta)(\xi) = 0$ for all $\xi \in K^\perp \cap W_{A_1}^{2,q}(X;\ad P)$ or equivalently,
\begin{multline*}
\langle\chi, d_{A+a}^*d_A\eta\rangle_{\sX\times\sX^*}
=
\langle \eta, d_A^*d_{A+a}\chi\rangle_{\sY\times\sY^*}
=
(\eta, d_A^*d_{A+a}\chi)_{L^2(X)} = 0,
\\
\quad\forall\, \chi \in K^\perp\cap W_{A_1}^{2,q}(X;\ad P).
\end{multline*}
Lemma \ref{lem:Regularity_distributional_solution_Hodge_Laplacian_Sobolev} implies that $\eta \in W_{A_1}^{2,q}(X;\ad P)$. Observe that, by writing $d_{A+a}\chi = d_A\chi + [a,\chi]$,
\begin{align*}
0 &= (d_A^*d_{A+a}\chi, \eta)_{L^2(X)}
\\
&= (d_{A+a}\chi, d_A\eta)_{L^2(X)}
\\
&= (d_A\chi, d_A\eta)_{L^2(X)} + ([a,\chi], d_A\eta)_{L^2(X)}.
\end{align*}
Since $\eta \perp \Ker \Delta_A \cap W_{A_1}^{2,q}(X;\ad P)$ and letting $\mu[A]$ denote the least positive eigenvalue of the Laplace operator $\Delta_A$ on $L^2(X;\ad P)$ provided by Proposition \ref{prop:Gilbarg_Trudinger_theorem_8-6}, we have
\[
\mu[A]
\leq
\frac{(\eta, d_A^*d_A\eta)_{L^2(X)}}{\|\eta\|_{L^2(X)}^2}
=
\frac{\|d_A\eta\|_{L^2(X)}^2}{\|\eta\|_{L^2(X)}^2}
\]
and thus,
\[
\|\eta\|_{L^2(X)}
\leq
\mu[A]^{-1/2}\|d_A\eta\|_{L^2(X)}.
\]
Hence, we obtain
\begin{equation}
\label{eq:Apriori_estimate_W12norm_eta_by_L2_norm_d_eta}
\|\eta\|_{W_A^{1,2}(X)} \leq C_1\|d_A\eta\|_{L^2(X)},
\end{equation}
for a constant $C_1 = C_1(A,g) = 1 + \mu[A]^{-1/2} \in [1,\infty)$. For $d \geq 3$ and using $1/2 = (d-2)/2d + 1/d$ and the continuous multiplication $L^{2d/(d-2)}(X) \times L^d(X) \to L^2(X)$, we see that
\begin{align*}
\left| ([a,\chi], d_A\eta)_{L^2(X)} \right|
&\leq
\|[a,\chi]\|_{L^2(X)} \|d_A\eta\|_{L^2(X)}
\\
&\leq c\|a\|_{L^d(X)} \|\chi\|_{L^{2d/(d-2)}(X)} \|d_A\eta\|_{L^2(X)}
\\
&\leq C_2\|a\|_{L^d(X)} \|\chi\|_{W_A^{1,2}(X)} \|d_A\eta\|_{L^2(X)},
\end{align*}
where $c\in [1,\infty)$ and the constant $C_2 = C_2(g) \in [1,\infty)$ is a multiple by $c$ of the norm of the continuous Sobolev embedding $W^{1,2}(X) \subset L^{2d/(d-2)}(X)$ provided by \cite[Theorem 4.12]{AdamsFournier}. Hence, setting $\chi = \eta$ and applying the \apriori estimate \eqref{eq:Apriori_estimate_W12norm_eta_by_L2_norm_d_eta}, the preceding identity and inequalities yield
\begin{align*}
\|d_A\eta\|_{L^2(X)}^2
&=
\left| ([a,\chi], d_A\eta)_{L^2(X)} \right|
\\
&\leq C_1C_2 \|a\|_{L^d(X)} \|d_A\eta\|_{L^2(X)}^2,
\end{align*}
and so, if $\eta \not\equiv 0$, we have
\[
\|a\|_{L^d(X)} \geq C_1C_2,
\]
contradicting our hypothesis \eqref{eq:Perturbation_Laplace_operator_small} that $\|a\|_{L^d(X)} < \delta$, with $\delta$ small.

For the case $d=2$, we instead use the continuous multiplication $L^4(X) \times L^4(X) \to L^2(X)$ and continuous Sobolev embedding $W^{1,2}(X) \subset L^r(X)$ for $1 \leq r < \infty$  provided by \cite[Theorem 4.12]{AdamsFournier}. In particular, using the embedding $W^{1,2}(X) \subset L^4(X)$ with constant $C_3 = C_3(g) \in [1,\infty)$ a multiple by $c\in[1,\infty)$ of the norm of that embedding, we obtain
\begin{align*}
\|d_A\eta\|_{L^2(X)}^2
&\leq
\left| ([a,\eta], d_A\eta)_{L^2(X)} \right|
\\
&\leq
c\|[a,\eta]\|_{L^2(X)} \|d_A\eta\|_{L^2(X)}
\\
&\leq c\|a\|_{L^4(X)} \|\eta\|_{L^4(X)} \|d_A\eta\|_{L^2(X)}
\\
&\leq C_3 \|a\|_{L^4(X)} \|\eta\|_{W_A^{1,2}(X)} \|d_A\eta\|_{L^2(X)}
\\
&\leq C_1C_3\|a\|_{L^4(X)} \|d_A\eta\|_{L^2(X)}^2
\quad\text{(by \eqref{eq:Apriori_estimate_W12norm_eta_by_L2_norm_d_eta})},
\end{align*}
which again yields a contradiction to our hypothesis \eqref{eq:Perturbation_Laplace_operator_small}, just as in the case $d \geq 3$.
\end{proof}

\section{A priori estimates for Coulomb gauge transformations}
\label{sec:Apriori_estimates_Coulomb_gauge_transformations}
We now establish a generalization of \cite[Lemma 6.6]{FeehanSlice}, which is in turn an analogue of \cite[Lemma 2.3.10]{DK}. We allow $p \geq 2$ and any $d \geq 2$ rather than assume $d = 4$, as in \cite[Lemma 6.6]{FeehanSlice}, but we use standard Sobolev norms rather than the `critical exponent' Sobolev norms employed in the statement and proof of \cite[Lemma 6.6]{FeehanSlice}, since we do not seek an explicit optimal dependence of constants on the reference connection, $A_0$.

Before stating our generalization of \cite[Lemma 6.6]{FeehanSlice}, we digress to recall from \cite[p. 231]{FrM}, that a gauge transformation, $u \in \Aut(P)$, may be viewed as a section of the fiber bundle $\Ad P := P\times_\Ad G \to X$, where we denote $\Ad(g): G \ni h \to g^{-1}hg \in G$, for all $h \in G$. With the aid of a choice of a unitary representation, $\varrho: G \subset \Aut_\CC(\EE)$, we may therefore consider $\Ad P$ to be a subbundle of the Hermitian vector bundle $P\times_\varrho \End_\CC(\EE)$. We can alternatively replace $\varrho: G \to \Aut_\CC(\EE)$ by $\Ad:G \to \Aut(\fg)$ and $\End_\CC(\EE)$ by $\End(\fg)$. A choice of connection $A$ on $P$ induces covariant derivatives on all associated vector bundles, such as $E = P\times_\varrho \EE$ and $P\times_\varrho \End_\CC(\EE)$ or $\ad P = P\times_\ad \fg$ and $P\times_\Ad \End(\fg)$.  We can thus define Sobolev norms of sections of $\Ad P$, generalizing the construction of Freed and Uhlenbeck in \cite[Appendix A]{FU}

A similar construction is described by Parker \cite[Section 4]{ParkerGauge}, but we note that while the center of $\Aut(P)$ --- which is given by $P\times_\Ad \Center(G)$ --- acts trivially on $\sA(P)$, it does not act trivially on $C^\infty(X;E)$.

\begin{prop}[\Apriori $W^{1,p}$ estimate for $u(A)-A_0$ in terms of $A-A_0$]
\label{prop:Feehan_2001_lemma_6-6}
Let $(X,g)$ be a closed, smooth Riemannian manifold of dimension $d \geq 2$, and $G$ be a compact Lie group, and $P$ be a smooth principal $G$-bundle over $X$. Let $A_1$ be a $C^\infty$ connection on $P$, and $A_0$ be a $W^{1,q}$ connection on $P$ with $d/2 < q < \infty$, and $p \in (1,\infty)$ obey $d/2 \leq p \leq q$, and\footnote{In applications of Proposition \ref{prop:Feehan_2001_lemma_6-6}, we can choose $\delta=1$ without loss of generality.}
$\delta \in (0,1]$. Then there are constants $N \in [1,\infty)$ and $\eps=\eps(A_0,A_1,g,G,p,q) \in (0,1]$ (with dependence on $p$ replaced by dependence on $\delta$ when $p = d$) with the following significance. If $A$ is a $W^{1,q}$ connection on $P$ and $u \in \Aut(P)$ is a gauge transformation of class $W^{2,q}$ such that
\begin{equation}
\label{eq:Feehan_2001_6-6_uA_minus_A0_Coulomb_gauge}
d_{A_0}^*(u(A) - A_0) = 0,
\end{equation}
then the following hold. If
\begin{equation}
\label{eq:Feehan_2001_6-6_A_minus_A0_and_uA_minus_A0_Ld_or_Ld+delta_close}
\|A - A_0\|_{L^s(X)} \leq \eps
\quad\text{and}\quad
\|u(A) - A_0\|_{L^s(X)} \leq \eps,
\end{equation}
where
\begin{equation}
\label{eq:Lp_bound_DeltaA_u0_definition_s(p)}
s(p)
:=
\begin{cases}
d &\text{if } p < d,
\\
d+\delta &\text{if } p = d,
\\
p &\text{if } p > d,
\end{cases}
\end{equation}
then
\begin{equation}
\label{eq:Feehan_2001_6-6_uA_minus_A0_Lp_bound_by_A_minus_A0}
\|u(A) - A_0\|_{L^p(X)} \leq N\|A - A_0\|_{W_{A_1}^{1,p}(X)},
\end{equation}
where $N = N(A_0,A_1,g,G,p,q)$. If in addition $A$ obeys\footnote{The use of the constant $M$ could be avoided if we replaced the right-hand-side of Inequality \eqref{eq:Feehan_2001_6-6_uA_minus_A0_W1p_bound_by_A_minus_A0} by
$N(1+\|A - A_0\|_{W_{A_1}^{1,p}(X)})\|A - A_0\|_{W_{A_1}^{1,p}(X)}$.}
\begin{equation}
\label{eq:Feehan_2001_6-6_A_minus_A0_W1p_lessthan_constant}
\|A - A_0\|_{W_{A_1}^{1,p}(X)} \leq M,
\end{equation}
for some constant $M \in [1,\infty)$, then
\begin{equation}
\label{eq:Feehan_2001_6-6_uA_minus_A0_W1p_bound_by_A_minus_A0}
\|u(A) - A_0\|_{W_{A_1}^{1,p}(X)} \leq N\|A - A_0\|_{W_{A_1}^{1,p}(X)},
\end{equation}
where $N = N(A_0,A_1,g,G,M,p,q)$.
\end{prop}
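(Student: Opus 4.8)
The plan is to follow the strategy of \cite[Lemma 2.3.10]{DK} and \cite[Lemma 6.6]{FeehanSlice}, the essentially new features being the borderline Sobolev exponent $p = d/2$ (or $p = d$) and the fact that the reference connection $A_0$ is only of Sobolev class $W^{1,q}$. Both are accommodated by the \apriori estimates and spectral results for Laplace operators with Sobolev coefficients established in Propositions \ref{prop:W2p_apriori_estimate_Delta_A_Sobolev} and \ref{prop:Gilbarg_Trudinger_theorem_8-6} and Corollary \ref{cor:Spectrum_Delta_A_Sobolev_and_W2p_apriori_estimate}, applied to the Laplace operator $\Delta_{A_0} = d_{A_0}^*d_{A_0} = \nabla_{A_0}^*\nabla_{A_0}$ on sections of the endomorphism bundle $P\times_\Ad\End(\fg)$ rather than on $\Lambda^l\otimes\ad P$ (the statements and proofs carry over \mutatis to that Riemannian vector bundle). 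I would abbreviate $b := A - A_0$ and $a := u(A) - A_0$, and regard $u$ as a section $v$ of $P\times_\Ad\End(\fg)$, so that $\|v\|_{L^\infty(X)} + \|v^{-1}\|_{L^\infty(X)} \leq C(G)$ since $G$ is compact. Two identities drive everything: the Coulomb-gauge hypothesis \eqref{eq:Feehan_2001_6-6_uA_minus_A0_Coulomb_gauge}, namely $d_{A_0}^*a = 0$, and the elementary gauge-transformation identity $\nabla_{A_0}v = v\,b - a\,v$, equivalently $a = v\,b\,v^{-1} - (\nabla_{A_0}v)\,v^{-1}$. The latter already gives the pointwise bound $|\nabla_{A_0}v| \leq C(|b| + |a|)$ and hence $\|a\|_{L^r(X)} \leq C(\|b\|_{L^r(X)} + \|\nabla_{A_0}v\|_{L^r(X)})$ for every $r$.

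Applying $d_{A_0}^*$ to $\nabla_{A_0}v = vb - av$, using the Leibniz rule and $d_{A_0}^*a = 0$, produces an elliptic equation
\[
\Delta_{A_0}v = v\,(d_{A_0}^*b) - \langle \nabla_{A_0}v,\, b\rangle + \langle a,\, \nabla_{A_0}v\rangle ,
\]
whose right-hand side $R$, after substituting $\nabla_{A_0}v = vb - av$ once more, satisfies $|R| \leq C\bigl(|d_{A_0}^*b| + |b|^2 + |a|\,|b| + |a|^2\bigr)$. Letting $\pi_0$ denote $L^2$-orthogonal projection onto $\Ker\Delta_{A_0}$ on $C^\infty(X;P\times_\Ad\End(\fg))$, we have $\Delta_{A_0}\pi_0v = 0$ and $\nabla_{A_0}\pi_0v = 0$, so Corollary \ref{cor:Spectrum_Delta_A_Sobolev_and_W2p_apriori_estimate} (for this bundle) yields
\[
\|\nabla_{A_0}v\|_{W_{A_1}^{1,p}(X)} \leq \|v - \pi_0v\|_{W_{A_1}^{2,p}(X)} \leq C\,\|R\|_{L^p(X)} .
\]

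To prove the $L^p$ bound \eqref{eq:Feehan_2001_6-6_uA_minus_A0_Lp_bound_by_A_minus_A0} I would estimate $\|R\|_{L^p(X)}$. The linear term obeys $\|v\,d_{A_0}^*b\|_{L^p(X)} \leq C\|b\|_{W_{A_1}^{1,p}(X)}$; this is where the constraints $d/2 \leq p \leq q$ enter, through the Sobolev multiplication needed to put $[A_0,b]$ into $L^p$. Each quadratic term I would estimate by Hölder's inequality, splitting off one factor in $L^{s(p)}$ — which is $\leq \eps$ by hypothesis \eqref{eq:Feehan_2001_6-6_A_minus_A0_and_uA_minus_A0_Ld_or_Ld+delta_close} — and the other factor in the conjugate Lebesgue exponent; the choice of $s(p)$ in \eqref{eq:Lp_bound_DeltaA_u0_definition_s(p)} is designed precisely so that this conjugate exponent equals $p^* = dp/(d-p)$ when $p < d$ (any finite exponent when $p = d$, and $\infty$ when $p > d$), which is exactly the borderline of the Sobolev embedding $W_{A_1}^{1,p}(X) \hookrightarrow L^{p^*}(X)$. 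For the term $|a|^2$ I would bound one copy of $|a|$ by $C(|b| + |\nabla_{A_0}v|)$ via the identity above, giving $\||a|^2\|_{L^p(X)} \leq \eps\,C\bigl(\|b\|_{W_{A_1}^{1,p}(X)} + \|\nabla_{A_0}v\|_{W_{A_1}^{1,p}(X)}\bigr)$. Combining, $\|R\|_{L^p(X)} \leq C\|b\|_{W_{A_1}^{1,p}(X)} + \eps\,C\,\|R\|_{L^p(X)}$, so choosing $\eps = \eps(A_0,A_1,g,G,p,q) \in (0,1]$ (with $\delta$ replacing $p$ when $p=d$) small enough to absorb the last term gives $\|\nabla_{A_0}v\|_{L^p(X)} \leq C\|b\|_{W_{A_1}^{1,p}(X)}$, whence \eqref{eq:Feehan_2001_6-6_uA_minus_A0_Lp_bound_by_A_minus_A0} with $N = N(A_0,A_1,g,G,p,q)$.

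For the $W^{1,p}$ bound \eqref{eq:Feehan_2001_6-6_uA_minus_A0_W1p_bound_by_A_minus_A0}, I would differentiate $a = vbv^{-1} - (\nabla_{A_0}v)v^{-1}$ with $\nabla_{A_1}$: the summands carrying $\nabla_{A_1}b$ or $\nabla_{A_1}\nabla_{A_0}v$ are controlled by $\|b\|_{W_{A_1}^{1,p}(X)}$ and by $\|v - \pi_0v\|_{W_{A_1}^{2,p}(X)} \leq C\|R\|_{L^p(X)}$ respectively, and the remaining derivative-free products of $\nabla_{A_0}v$, $a$, $b$ are again handled by Hölder with one factor in $L^{s(p)}$ small; the extra hypothesis \eqref{eq:Feehan_2001_6-6_A_minus_A0_W1p_lessthan_constant} that $\|b\|_{W_{A_1}^{1,p}(X)} \leq M$ is used only to convert a harmless quadratic occurrence $\|b\|_{W_{A_1}^{1,p}(X)}^2$ into $M\|b\|_{W_{A_1}^{1,p}(X)}$, making the final bound linear (as the footnote indicates, this is cosmetic). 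A last absorption with $\eps$ small then gives \eqref{eq:Feehan_2001_6-6_uA_minus_A0_W1p_bound_by_A_minus_A0} with $N = N(A_0,A_1,g,G,M,p,q)$. The main obstacle throughout is the absence of slack at the borderline exponent: at $p = d/2$ (and $p = d$) every product must be split exactly at the critical Sobolev exponent, so smallness of the $L^{s(p)}$ norms of $a$ and $b$ is indispensable for the absorption arguments — the same degeneracy that causes the Implicit Function Theorem proof of the standard slice theorem to fail there — and one must also be careful that all Laplace-operator estimates are invoked in the Sobolev-coefficient forms of the cited results rather than the classical smooth-coefficient ones.
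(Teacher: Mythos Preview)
Your proposal is correct and follows essentially the same route as the paper's proof: derive the elliptic equation $\Delta_{A_0}(v-\pi_0 v)=R$ from the Coulomb gauge condition, invoke Corollary \ref{cor:Spectrum_Delta_A_Sobolev_and_W2p_apriori_estimate} on the $L^2$-orthogonal complement of $\Ker\Delta_{A_0}$, estimate $\|R\|_{L^p}$ via H\"older with one factor in $L^{s(p)}$ (small by hypothesis) and the other at the critical Sobolev exponent, absorb, and then differentiate the gauge identity for the $W^{1,p}$ bound. The only organizational difference is that the paper keeps the derivative terms in $R$ as $d_{A_0}u_0\times(B-A_0)$ and closes on $\|\Delta_{A_0}u_0\|_{L^p}$ directly, whereas you substitute $\nabla_{A_0}v = vb-av$ back into $R$ to produce purely algebraic quadratic terms and then close via $\|\nabla_{A_0}v\|_{W^{1,p}_{A_1}}\leq C\|R\|_{L^p}$; both loops close equivalently.
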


\begin{proof}
Following the convention of \cite[p. 32]{UhlLp} for the action of $u\in\Aut(P)$ on connections $A$ on $P$ and setting $B := u(A)$ for convenience, we have
\begin{equation}
\label{eq:Feehan_2001_6-13}
B-A_0 = u^{-1}(A-A_0)u + u^{-1}d_{A_0}u.
\end{equation}
Our task is thus to estimate the term $d_{A_0}u$. Rewriting the preceding equality gives a first-order, linear elliptic equation in $u$ with $W^{1,q}$ coefficients,
\begin{equation}
\label{eq:Feehan_2001_6-9}
d_{A_0}u = u(B-A_0) - (A-A_0)u.
\end{equation}
Corollary \ref{cor:Fredholmness_and_index_Laplace_operator_on_W2p_Sobolev_connection} implies that the kernel,
\[
K := \Ker\left(\Delta_{A_0}: W_{A_1}^{2,q}(X; \ad P) \to L^q(X; \ad P) \right),
\]
is finite-dimensional. Let
\[
\Pi: L^2(X; \ad P) \to K \subset L^2(X; \ad P)
\]
denote the $L^2$-orthogonal projection and denote
\begin{align*}
\gamma &:= \Pi u \in K \subset W_{A_1}^{2,q}(X; \ad P),
\\
u_0 &:= u-\gamma \in K^\perp \cap W_{A_1}^{2,q}(X; \ad P),
\end{align*}
where $\perp$ is $L^2$-orthogonal complement. We may assume without loss of generality that we have a unitary representation, $G \subset \U(n)$. Recall that (due to \cite[Equation (6.2)]{Warner}),
\begin{equation}
\label{eq:Warner_6-2}
d_{A_0}^* = (-1)^{-d(k+1)+1}*d_{A_0}* \quad\text{on } \Omega^k(X;\ad P),
\end{equation}
where $*:\Omega^k(X) \to \Omega^{d-k}$ is the Hodge star operator on $k$-forms. Because $d_{A_0}^*(B-A_0)=0$ and $d_{A_0}u=d_{A_0}u_0$, an application of $d_{A_0}^*$ to \eqref{eq:Feehan_2001_6-9} yields
\begin{align*}
d_{A_0}^*d_{A_0}u_0 &= -*(d_{A_0}u\wedge *(B-A_0)) + ud_{A_0}^*(B-A_0)
\\
&\quad  - (d_{A_0}^*(A-A_0))u  + *(*(A-A_0)\wedge d_{A_0}u)
\\
&= -*(d_{A_0}u_0\wedge *(B-A_0)) - (d_{A_0}^*(A-A_0))u  + *(*(A-A_0)\wedge d_{A_0}u_0).
\end{align*}
We shall use the bound $\|u\|_{C(X)}\leq 1$ for any $u\in\Aut(P)$ of class $W^{2,q}$, implied by the fact that the representation for $G$ is unitary. Recall that $\Delta_{A_0} = d_{A_0}^*d_{A_0}$. We first consider the case $p<d$ and setting $p^* = dp/(d-p)$, we have $1/p = 1/d+1/p^*$ and the continuous multiplication $L^d(X) \times L^{p^*}(X) \to L^p(X)$, so
\begin{align*}
\|\Delta_{A_0}u_0\|_{L^p(X)}
&\leq
\|d_{A_0}u_0\|_{L^{p^*}(X)}\|B-A_0\|_{L^d(X)}
+ \|d_{A_0}^*(A-A_0)\|_{L^p(X)}\|u\|_{C(X)}
\\
&\quad + \|A-A_0\|_{L^d(X)}\|d_{A_0}u_0\|_{L^{p^*}(X)}
\\
&\leq \left(\|B-A_0\|_{L^d(X)} + \|A-A_0\|_{L^d(X)}\right)
\|d_{A_0}u_0\|_{L^{p^*}(X)}
\\
&\quad + \|d_{A_0}^*(A-A_0)\|_{L^p(X)}.
\end{align*}
Second, for the case $p>d$, we use the continuous multiplication $L^p(X) \times L^\infty(X) \to L^p(X)$, so
\begin{align*}
\|\Delta_{A_0}u_0\|_{L^p(X)}
&\leq \left(\|B-A_0\|_{L^p(X)} + \|A-A_0\|_{L^p(X)}\right)
\|d_{A_0}u_0\|_{L^\infty(X)}
\\
&\quad + \|d_{A_0}^*(A-A_0)\|_{L^p(X)}.
\end{align*}
Third, for the case $p=d$, we can instead use $r \in (d,\infty)$ defined by $1/d = 1/(d+\delta) + 1/r$ and the resulting continuous multiplication, $L^{d+\delta}(X)\times L^r(X) \to L^d(X)$, to give
\begin{align*}
\|\Delta_{A_0}u_0\|_{L^d(X)}
&\leq \left(\|B-A_0\|_{L^{d+\delta}(X)} + \|A-A_0\|_{L^{d+\delta}(X)}\right)
\|d_{A_0}u_0\|_{L^r(X)}
\\
&\quad + \|d_{A_0}^*(A-A_0)\|_{L^d(X)}.
\end{align*}
By \cite[Theorem 4.12]{AdamsFournier}, we have continuous Sobolev embeddings,
\begin{equation}
\label{eq:Adams_Fournier_theorem_4-12_W1p_embeddings}
W^{1,p}(X)
\subset
\begin{cases}
L^{dp/(d-p)}(X) &\text{if } 1 \leq p < d,
\\
L^r(X) &\text{if } p = d \text{ and } 1 \leq r < \infty,
\\
C(X) &\text{if } p > d.
\end{cases}
\end{equation}
Therefore, the Sobolev Embedding Theorem and Kato Inequality \cite[Equation (6.20)]{FU} give, for $r\in(d,\delta)$ determined by $\delta$ as above,
\begin{equation}
\label{eq:Integral_norm_bound_dA0u0_prelim}
\begin{aligned}
\|d_{A_0}u_0\|_{L^{p^*}(X)}
&\leq
C_0\|d_{A_0}u_0\|_{W_{A_1}^{1,p}(X)}, \quad p < d,
\\
\|d_{A_0}u_0\|_{L^r(X)}
&\leq
C_0\|d_{A_0}u_0\|_{W_{A_1}^{1,d}(X)}, \quad p = d,
\\
\|d_{A_0}u_0\|_{L^\infty(X)}
&\leq
C_0\|d_{A_0}u_0\|_{W_{A_1}^{1,p}(X)}, \quad p > d,
\end{aligned}
\end{equation}
where $C_0 = C_0(g,p)$ or $C_0(g,\delta) \in [1,\infty)$ is bounded below by the norm of the Sobolev embedding \eqref{eq:Adams_Fournier_theorem_4-12_W1p_embeddings}. Writing $A_0 = A_1+a_0$, for $a_0 \in W_{A_1}^{1,q}(X;\Lambda^1\otimes\ad P)$, so $d_{A_0}u_0 = d_{A_1}u_0 + [a_0,u_0]$, we see that
\begin{align*}
\|\nabla_{A_1}d_{A_0}u_0\|_{L^p(X)}
&\leq
\|\nabla_{A_1}d_{A_1}u_0\|_{L^p(X)} + \|\nabla_{A_1}[a_0,u_0]\|_{L^p(X)}
\\
&\leq
\|\nabla_{A_1}^2u_0\|_{L^p(X)} + \|\nabla_{A_1}a_0\times u_0
+ a_0\times \nabla_{A_1}u_0\|_{L^p(X)}.
\end{align*}
By hypothesis, we have $p = d/2 < q$ or $d/2 < p \leq q$, and thus, defining $r \in [p,\infty]$ by $1/p=1/q+1/r$ and using the continuous Sobolev embeddings $W^{1,p}\subset L^{2p}(X)$ and $W^{2,p}(X) \subset L^r(X)$ (with norm bounded above by $C_0$) for $p \geq d/2$,
\begin{align*}
{}&\|\nabla_{A_1}d_{A_0}u_0\|_{L^p(X)}
\\
&\quad \leq
\|\nabla_{A_1}^2u_0\|_{L^p(X)} + z\|\nabla_{A_1}a_0\|_{L^q(X)} \|u_0\|_{L^r(X)}
+ z\|a_0\|_{L^{2p}(X)} \|\nabla_{A_1}u_0\|_{L^{2p}(X)}
\\
&\quad \leq
\|\nabla_{A_1}^2u_0\|_{L^p(X)}
+ zC_0\|\nabla_{A_1}a_0\|_{L^q(X)} \|u_0\|_{W_{A_1}^{2,p}(X)}
  \\
  &\qquad + zC_0^2\|a_0\|_{W_{A_1}^{1,p}(X)} \|\nabla_{A_1}u_0\|_{W_{A_1}^{1,p}(X)}
\\
&\quad \leq
\|\nabla_{A_1}^2u_0\|_{L^p(X)} + zC_0\|a_0\|_{W_{A_1}^{1,q}(X)} \|u_0\|_{W_{A_1}^{2,p}(X)}
+ zC_0^2\|a_0\|_{W_{A_1}^{1,q}(X)} \|u_0\|_{W_{A_1}^{2,p}(X)},
\end{align*}
where $z = z(g) \in [1,\infty)$ and now $C_0 = C_0(A_1,g,p,q) \in [1,\infty)$. By substituting the preceding bound into \eqref{eq:Integral_norm_bound_dA0u0_prelim}, we find that
\begin{align*}
\|d_{A_0}u_0\|_{L^{p^*}(X)}
&\leq
C_1\|u_0\|_{W_{A_1}^{2,p}(X)}, \quad p < d,
\\
\|d_{A_0}u_0\|_{L^r(X)}
&\leq
C_1\|u_0\|_{W_{A_1}^{2,d}(X)}, \quad p = d,
\\
\|d_{A_0}u_0\|_{L^\infty(X)}
&\leq
C_1\|u_0\|_{W_{A_1}^{2,p}(X)}, \quad p > d,
\end{align*}
for a constant $C_1 = C_1(A_1,g,p,q,\|a_0\|_{W_{A_1}^{1,q}(X)}) \in [1,\infty)$, with dependence on $p$ replaced by $\delta$ when $p=d$. By combining the preceding three cases ($p<d$, and $p=d$, and $p>d$), we obtain
\begin{multline}
\label{eq:Lp_bound_DeltaA_u0}
\|\Delta_{A_0}u_0\|_{L^p(X)}
\leq
C_1\left(\|B-A_0\|_{L^s(X)} + \|A-A_0\|_{L^s(X)}\right)
\|u_0\|_{W_{A_1}^{2,p}(X)}
\\
+ \|d_{A_0}^*(A-A_0)\|_{L^p(X)},
\end{multline}
where $s = s(p)$ is as in \eqref{eq:Lp_bound_DeltaA_u0_definition_s(p)}. From the \apriori estimate \eqref{eq:W2p_apriori_estimate_Delta_A_orthogonal_kernel} in
Corollary \ref{cor:Spectrum_Delta_A_Sobolev_and_W2p_apriori_estimate}
--- and noting that this lemma also holds for $\Ad P$ in place of $\ad P$ via the definition of Sobolev norms of $u \in \Aut(P)$ described earlier --- we have the \apriori estimate,
\begin{equation}
\label{eq:Feehan_2001_6-11}
\|u_0\|_{W_{A_1}^{2,p}(X)} \leq C_3\|\Delta_{A_0}u_0\|_{L^p(X)},
\quad\text{for } 1<p<\infty \text{ and } d/2 \leq p \leq q,
\end{equation}
where $C_3 = C_3(A_1,A_0,g,G,p,q) \in [1,\infty)$. Substituting the \apriori estimate \eqref{eq:Feehan_2001_6-11} into our $L^p$ bound \eqref{eq:Lp_bound_DeltaA_u0} for $\Delta_{A_0}u_0$ gives, for $s$ as in \eqref{eq:Lp_bound_DeltaA_u0_definition_s(p)},
\begin{align*}
\|\Delta_{A_0}u_0\|_{L^p(X)}
&\leq
C_1C_3\left(\|B-A_0\|_{L^s(X)} + \|A-A_0\|_{L^s(X)}\right)
\|\Delta_{A_0}u_0\|_{L^p(X)}
\\
&\quad + \|d_{A_0}^*(A-A_0)\|_{L^p(X)},
\quad\text{for } 1<p<\infty \text{ and } d/2 \leq p \leq q.
\end{align*}
Provided
\begin{equation}
\label{eq:Ls_norm_B_minus_A0_lessthan_small_constant}
\|B-A_0\|_{L^s(X)} \leq 1/(4C_1C_3)
\quad\text{and}\quad
\|A-A_0\|_{L^s(X)} \leq 1/(4C_1C_3),
\end{equation}
as assured by \eqref{eq:Feehan_2001_6-6_A_minus_A0_and_uA_minus_A0_Ld_or_Ld+delta_close}, then rearrangement in the preceding inequality yields
\begin{multline}
\label{eq:Feehan_2001_6-10}
\|\Delta_{A_0}u_0\|_{L^p(X)}
\leq
2\|d_{A_0}^*(A-A_0)\|_{L^p(X)},
\\
\text{for } 1<p<\infty \text{ and } d/2 \leq p \leq q.
\end{multline}
Therefore, by combining the inequalities \eqref{eq:Feehan_2001_6-10} and \eqref{eq:Feehan_2001_6-11} we find that
\begin{multline}
\label{eq:Feehan_2001_6-12}
\|u_0\|_{W_{A_1}^{2,p}(X)}
\leq
2C_3\|d_{A_0}^*(A-A_0)\|_{L^p(X)},
\\
\text{for } 1<p<\infty \text{ and } d/2 \leq p \leq q.
\end{multline}
Using $d_{A_0}u=d_{A_0}u_0$ and \eqref{eq:Feehan_2001_6-13} and the facts that $|u| \leq 1$ and $|u^{-1}| \leq 1$ on $X$, we obtain
\begin{equation}
\label{eq:Feehan_2001_6-14}
\|B-A_0\|_{L^p(X)}
\leq
\|A-A_0\|_{L^p(X)} + \|d_{A_0}u_0\|_{L^p(X)}.
\end{equation}
From \eqref{eq:Feehan_2001_6-14} and \eqref{eq:Feehan_2001_6-12}, we see that
\begin{align*}
\|B-A_0\|_{L^p(X)}
&\leq
\|A-A_0\|_{L^p(X)} + \|u_0\|_{W_{A_1}^{1,p}(X)}
\\
&\leq \|A-A_0\|_{L^p(X)} + 2C_3\|d_{A_0}^*(A-A_0)\|_{L^p(X)}.
\end{align*}
Using $A_0=A_1+a_0$, we have
\[
  d_{A_0}^*(A-A_0) = d_{A_1}^*(A-A_0) + a_0\times(A-A_0)
\]
and
\[
\|d_{A_0}^*(A-A_0)\|_{L^p(X)}
\leq z\|A-A_0\|_{W_{A_1}^{1,p}(X)} + z\|a_0\|_{L^{2p}(X)} \|A-A_0\|_{L^{2p}(X)}.
\]
Applying the continuous Sobolev embedding, $W^{1,p}(X) \subset L^{2p}(X)$, with norm $C_0=C_0(g,p) \in [1,\infty)$ and the Kato Inequality \cite[Equation (6.20)]{FU},
\begin{multline}
\label{eq:Lp_bound_dA0star_A_minus_A0}
\|d_{A_0}^*(A-A_0)\|_{L^p(X)}
\leq
z\|A-A_0\|_{W_{A_1}^{1,p}(X)}
\\
+ zC_0^2\|a_0\|_{W_{A_1}^{1,p}(X)} \|A-A_0\|_{W_{A_1}^{1,p}(X)}.
\end{multline}
Thus,
\begin{equation}
\label{eq:Feehan_2001_6-18}
\|B-A_0\|_{L^p(X)}
\leq
C_4\|A-A_0\|_{W_{A_1}^{1,p}(X)},
\quad\text{for } 1<p<\infty \text{ and } d/2 \leq p \leq q,
\end{equation}
where $C_4 = C_4(A_0,A_1,g,G,p,q) \in [1,\infty)$, giving the desired $L^p$ estimate \eqref{eq:Feehan_2001_6-6_uA_minus_A0_Lp_bound_by_A_minus_A0} for $B-A_0$.

We now estimate the $L^p$ norms of the covariant derivatives of the right-hand side  of the identity \eqref{eq:Feehan_2001_6-13}.  Considering the term $u^{-1}d_{A_0}u$ in the right-hand side of the identity \eqref{eq:Feehan_2001_6-13} and recalling that $\nabla_{A_0}u = d_{A_0}u = d_{A_0}u_0$, we have
\[
\nabla_{A_0}(u^{-1}d_{A_0}u_0)
=
-u^{-1}(\nabla_{A_0}u_0)u^{-1}\otimes d_{A_0}u_0 + u^{-1}\nabla_{A_0}d_{A_0}u_0.
\]
First, if $d/2 \leq p < d$ and using the continuous multiplication, $L^{p^*}(X)\times L^d(X) \to L^p(X)$,
\begin{align*}
\|\nabla_{A_0}(u^{-1}d_{A_0}u_0)\|_{L^p(X)}
&\leq
\|\nabla_{A_0}u_0\|_{L^{p^*}(X)} \|\nabla_{A_0}u_0\|_{L^d(X)}
+ \|\nabla_{A_0}^2u_0\|_{L^p(X)}
\\
&\leq C_0^2\|\nabla_{A_0}u_0\|_{W_{A_0}^{1,p}(X)}^2
+ \|\nabla_{A_0}^2u_0\|_{L^p(X)}
\\
&\leq C_0^2\|u_0\|_{W_{A_0}^{2,p}(X)}^2 + \|u_0\|_{W_{A_0}^{2,p}(X)}.
\end{align*}
Second, if $p = d$ and using the continuous multiplication, $L^{2d}(X)\times L^{2d}(X) \to L^d(X)$,
\begin{align*}
\|\nabla_{A_0}(u^{-1}d_{A_0}u_0)\|_{L^d(X)}
&\leq
\|\nabla_{A_0}u_0\|_{L^{2d}(X)} \|\nabla_{A_0}u_0\|_{L^{2d}(X)}
+ \|\nabla_{A_0}^2u_0\|_{L^d(X)}
\\
&\leq C_0^2\|\nabla_{A_0}u_0\|_{W_{A_0}^{1,d}(X)}^2
+ \|\nabla_{A_0}^2u_0\|_{L^d(X)}
\\
&\leq C_0^2\|u_0\|_{W_{A_0}^{2,d}(X)}^2 + \|u_0\|_{W_{A_0}^{2,d}(X)}.
\end{align*}
Third, if $p > d$,
\begin{align*}
\|\nabla_{A_0}(u^{-1}d_{A_0}u_0)\|_{L^p(X)}
&\leq
\|\nabla_{A_0}u_0\|_{L^\infty(X)} \|\nabla_{A_0}u_0\|_{L^p(X)}
+ \|\nabla_{A_0}^2u_0\|_{L^p(X)}
\\
&\leq C_0^2\|\nabla_{A_0}u_0\|_{W_{A_0}^{1,p}(X)}^2
+ \|\nabla_{A_0}^2u_0\|_{L^p(X)}
\\
&\leq C_0^2\|u_0\|_{W_{A_0}^{2,p}(X)}^2 + \|u_0\|_{W_{A_0}^{2,p}(X)}.
\end{align*}
Thus, by combining the three preceding cases and applying Lemma \ref{lem:Equivalence_Sobolev_norms_for_Sobolev_and_smooth_connections}~\eqref{item:Equivalence_W2p_A1_embed_W2pA0},
\begin{multline}
\label{eq:Lp_bound_nablaA_u-inverse_dAu}
\|\nabla_{A_0}(u^{-1}d_{A_0}u_0)\|_{L^p(X)}
\leq
C_6^2C_0^2\|u_0\|_{W_{A_1}^{2,p}(X)}^2 + C_6\|u_0\|_{W_{A_1}^{2,p}(X)},
\\
\text{for } d/2 \leq p \leq q,
\end{multline}
where $C_6 = C_6(A_0,A_1,g,p,q) \in[1,\infty)$ is the constant in Lemma \ref{lem:Equivalence_Sobolev_norms_for_Sobolev_and_smooth_connections}~\eqref{item:Equivalence_W2p_A1_embed_W2pA0}. By combining the bound \eqref{eq:Lp_bound_nablaA_u-inverse_dAu} for $\|\nabla_{A_0}(u^{-1}d_{A_0}u_0)\|_{L^p(X)}$ with \eqref{eq:Feehan_2001_6-12}, we find that
\begin{align*}
\|\nabla_{A_0}(u^{-1}d_{A_0}u_0)\|_{L^p(X)}
&\leq
                                              2C_6^2C_0^2C_3\|d_{A_0}^*(A-A_0)\|_{L^p(X)}^2
  \\
  &\quad + 2C_6C_3\|d_{A_0}^*(A-A_0)\|_{L^p(X)},
\\
&\qquad\text{for } 1<p<\infty \text{ and } d/2 \leq p \leq q.
\end{align*}
But
\begin{equation}
\label{eq:Lp_norm_dA0star_A_minus_A0_lessthan_constant_W1p_norm_A_minus_A0}
\|d_{A_0}^*(A-A_0)\|_{L^p(X)} \leq z\|A-A_0\|_{W_{A_0}^{1,p}(X)},
\end{equation}
for a generic constant $z = z(g) \in [1,\infty)$, and by Lemma \ref{lem:Equivalence_Sobolev_norms_for_Sobolev_and_smooth_connections}~\eqref{item:Equivalence_W1p_A0_and_W1pA1},
\begin{equation}
\label{eq:WA0_1p_norm_A_minus_A0_lessthan_constant_WA1_1p_norm_A_minus_A0}
\|A-A_0\|_{W_{A_0}^{1,p}(X)} \leq C_7\|A-A_0\|_{W_{A_1}^{1,p}(X)},
\end{equation}
where $C_7 = C_7(A_0,A_1,g,p) \in[1,\infty)$ is the constant in Lemma \ref{lem:Equivalence_Sobolev_norms_for_Sobolev_and_smooth_connections}~\eqref{item:Equivalence_W1p_A0_and_W1pA1},
and because $A$ is now assumed to obey \eqref{eq:Feehan_2001_6-6_A_minus_A0_W1p_lessthan_constant}, that is,
\[
\|A-A_0\|_{W_{A_1}^{1,p}(X)} \leq M,
\]
we obtain
\begin{multline*}
\|\nabla_{A_0}(u^{-1}d_{A_0}u_0)\|_{L^p(X)}
\leq
2C_6C_3(zC_6C_7C_0^2M + 1)\|d_{A_0}^*(A-A_0)\|_{L^p(X)},
\\
\text{for } 1<p<\infty \text{ and } d/2 \leq p \leq q,
\end{multline*}
and thus by \eqref{eq:Lp_norm_dA0star_A_minus_A0_lessthan_constant_W1p_norm_A_minus_A0} and \eqref{eq:WA0_1p_norm_A_minus_A0_lessthan_constant_WA1_1p_norm_A_minus_A0},
\begin{multline}
\label{eq:Feehan_2001_6-17}
\|\nabla_{A_0}(u^{-1}d_{A_0}u_0)\|_{L^p(X)}
\\
\leq
2z^2C_6C_7C_3(zC_6C_7C_0^2M + 1)\|A-A_0\|_{W_{A_1}^{1,p}(X)},
\\
\text{for } 1<p<\infty \text{ and } d/2 \leq p \leq q.
\end{multline}
Considering the term $u^{-1}(A-A_0)u$ in the right-hand side of \eqref{eq:Feehan_2001_6-13}, we discover that
\begin{align*}
\nabla_{A_0}(u^{-1}(A-A_0)u)
&=
-u^{-1}(\nabla_{A_0}u)u^{-1}\otimes(A-A_0)u
+ u^{-1}(\nabla_{A_0}(A-A_0))u
\\
&\quad + u^{-1}(A-A_0)\otimes \nabla_{A_0}u.
\end{align*}
Noting that $\nabla_{A_0}u=\nabla_{A_0}u_0$ and $\|u\|_{C(X)}\leq 1$, the preceding identity gives, for $d/2 \leq p < d$,
\begin{align*}
  {}&\|\nabla_{A_0}(u^{-1}(A-A_0)u)\|_{L^p(X)}
      \\
&\quad\leq
2\|\nabla_{A_0}u_0\|_{L^{p^*}(X)}\|A-A_0\|_{L^d(X)}
+ \|\nabla_{A_0}(A-A_0)\|_{L^p(X)}
\\
&\quad\leq 2C_0^2\|\nabla_{A_0}u_0\|_{W_{A_0}^{1,p}(X)} \|A-A_0\|_{W_{A_0}^{1,p}(X)}
+ \|\nabla_{A_0}(A-A_0)\|_{L^p(X)}
\\
&\quad\leq 2C_0^2\|u_0\|_{W_{A_0}^{2,p}(X)} \|A-A_0\|_{W_{A_0}^{1,p}(X)}
+ \|\nabla_{A_0}(A-A_0)\|_{L^p(X)}.
\end{align*}
Second, for the case $p = d$,
\begin{align*}
  {}&\|\nabla_{A_0}(u^{-1}(A-A_0)u)\|_{L^d(X)}
      \\
&\quad\leq
2\|\nabla_{A_0}u_0\|_{L^{2d}(X)}\|A-A_0\|_{L^{2d}(X)}
+ \|\nabla_{A_0}(A-A_0)\|_{L^d(X)}
\\
&\quad\leq 2C_0^2\|\nabla_{A_0}u_0\|_{W_{A_0}^{1,d}(X)}\|A-A_0\|_{W_{A_0}^{1,d}(X)}
+ \|\nabla_{A_0}(A-A_0)\|_{L^d(X)}
\\
&\quad\leq 2C_0^2\|u_0\|_{W_{A_0}^{2,p}(X)}\|A-A_0\|_{W_{A_0}^{1,d}(X)}
+ \|\nabla_{A_0}(A-A_0)\|_{L^d(X)}.
\end{align*}
Third, for the case $d<p<\infty$,
\begin{align*}
  {}&\|\nabla_{A_0}(u^{-1}(A-A_0)u)\|_{L^p(X)}
      \\
&\quad\leq
2\|\nabla_{A_0}u_0\|_{L^\infty(X)}\|A-A_0\|_{L^p(X)}
+ \|\nabla_{A_0}(A-A_0)\|_{L^p(X)}
\\
&\quad\leq 2C_0\|\nabla_{A_0}u_0\|_{W_{A_0}^{1,p}(X)}\|A-A_0\|_{L^p(X)}
+ \|\nabla_{A_0}(A-A_0)\|_{L^p(X)}
\\
&\quad\leq 2C_0\|u_0\|_{W_{A_0}^{2,p}(X)}\|A-A_0\|_{L^p(X)}
+ \|\nabla_{A_0}(A-A_0)\|_{L^p(X)}.
\end{align*}
Hence, the combination of the preceding three cases gives
\begin{multline*}
\|\nabla_{A_0}(u^{-1}(A-A_0)u)\|_{L^p(X)}
\\
\leq
2C_0^2\|u_0\|_{W_{A_0}^{2,p}(X)}\|A-A_0\|_{W_{A_0}^{1,p}(X)}
+ \|\nabla_{A_0}(A-A_0)\|_{L^p(X)},
\\
\quad\text{for } p<\infty \text{ and } d/2 \leq p \leq q.
\end{multline*}
Applying Lemma \ref{lem:Equivalence_Sobolev_norms_for_Sobolev_and_smooth_connections}, Items \eqref{item:Equivalence_W1p_A0_and_W1pA1} and \eqref{item:Equivalence_W2p_A1_embed_W2pA0}, yields
\begin{multline}
\label{eq:Lp_bound_nablaA_u-inverse(A-A0)u}
\|\nabla_{A_0}(u^{-1}(A-A_0)u)\|_{L^p(X)}
\\
\leq
2C_6C_7C_0^2\|u_0\|_{W_{A_1}^{2,p}(X)}\|A-A_0\|_{W_{A_1}^{1,p}(X)}
\\
+ C_7\|A-A_0\|_{W_{A_1}^{1,p}(X)}, \quad\text{for } p<\infty \text{ and } d/2 \leq p \leq q.
\end{multline}
Therefore, combining the inequalities \eqref{eq:Feehan_2001_6-12} and \eqref{eq:Lp_bound_nablaA_u-inverse(A-A0)u} yields
\begin{multline}
\label{eq:Feehan_2001_6-16_prelim}
\|\nabla_{A_0}(u^{-1}(A-A_0)u)\|_{L^p(X)}
\\
\leq
4C_6C_7C_0^2C_3\|d_{A_0}^*(A-A_0)\|_{L^p(X)}\|A-A_0\|_{W_{A_1}^{1,p}(X)}
\\
+ C_7\|A-A_0\|_{W_{A_1}^{1,p}(X)},
\\
\text{for } 1<p<\infty \text{ and } d/2 \leq p \leq q.
\end{multline}
From \eqref{eq:Feehan_2001_6-6_A_minus_A0_W1p_lessthan_constant} and \eqref{eq:Lp_norm_dA0star_A_minus_A0_lessthan_constant_W1p_norm_A_minus_A0}, we see that \eqref{eq:Feehan_2001_6-16_prelim} simplifies to give
\begin{multline}
\label{eq:Feehan_2001_6-16}
\|\nabla_{A_0}(u^{-1}(A-A_0)u)\|_{L^p(X)}
\\
\leq
(4zC_6C_7C_0^2C_3M + 1)C_7\|A-A_0\|_{W_{A_1}^{1,p}(X)},
\\
\text{for } 1<p<\infty \text{ and } d/2 \leq p \leq q.
\end{multline}
From the identity \eqref{eq:Feehan_2001_6-13} (noting again that $d_{A_0}u = d_{A_0}u_0$) we have
\[
\|\nabla_{A_0}(B-A_0)\|_{L^p(X)}
\leq
\|\nabla_{A_0}(u^{-1}(A-A_0)u\|_{L^p(X)} + \|\nabla_{A_0}(u^{-1}d_{A_0}u_0)\|_{L^p(X)}.
\]
Combining the preceding estimate with the inequalities \eqref{eq:Feehan_2001_6-17} and  \eqref{eq:Feehan_2001_6-16} gives
\begin{multline}
\label{eq:nablaA0_B_minus_A0_Lp_bound}
\|\nabla_{A_0}(B-A_0)\|_{L^p(X)}
\leq
C_5\|A-A_0\|_{W_{A_1}^{1,p}(X)},
\\
\quad\text{for } 1<p<\infty \text{ and } d/2 \leq p \leq q,
\end{multline}
with a constant $C_5 = C_5(A_0,A_1,g,G,M,p,q) \in [1,\infty)$. Finally, from \eqref{eq:Feehan_2001_6-18} and \eqref{eq:nablaA0_B_minus_A0_Lp_bound} and Lemma \ref{lem:Equivalence_Sobolev_norms_for_Sobolev_and_smooth_connections}~\eqref{item:Equivalence_W1p_A0_and_W1pA1} we obtain the desired $W_{A_1}^{1,p}$ bound \eqref{eq:Feehan_2001_6-6_uA_minus_A0_W1p_bound_by_A_minus_A0} for $u(A)-A_0$ in terms of $A-A_0$, recalling that $B=u(A)$, with large enough constant $N=N(A_0,A_1,g,G,M,p,q) \in [1,\infty)$ and under the hypothesis \eqref{eq:Feehan_2001_6-6_A_minus_A0_and_uA_minus_A0_Ld_or_Ld+delta_close} with small enough constant $\eps=\eps(A_0,A_1,g,G,p,q) \in (0,1]$.
\end{proof}

The proof of Proposition \ref{prop:Feehan_2001_lemma_6-6} also yields the following useful

\begin{lem}[\Apriori $W^{2,p}$ estimate for a $W^{2,q}$ gauge transformation $u$ intertwining two $W^{1,q}$ connections]
\label{lem:connections_control_gauge}
Assume the hypotheses of Proposition \ref{prop:Feehan_2001_lemma_6-6}, excluding those on the connection $A$. Then there is a constant $C = C(A_0,A_1,g,G,p,q) \in [1,\infty)$ with the following significance. If $A$ obeys the hypotheses of Proposition \ref{prop:Feehan_2001_lemma_6-6} and $u \in \Aut^{2,q}(P)$ is the resulting gauge transformation, depending on $A$ and $A_0$, such that
\[
d_{A_0}^*(u(A)-A_0) = 0,
\]
and furthermore
\begin{equation}
\label{eq:Feehan_2001_6-6_A_minus_A0_W1p_close}
\|A - A_0\|_{W_{A_1}^{1,p}(X)} \leq \eps,
\end{equation}
then
\[
\|u\|_{W_{A_1}^{2,p}(X)} \leq C.
\]
\end{lem}

\begin{proof}
Write $u = u_0+\gamma$ as in the proof of Proposition \ref{prop:Feehan_2001_lemma_6-6}, with $u_0 \in (\Ker \Delta_{A_0})^\perp$ and $\gamma \in \Ker \Delta_{A_0}$, and observe that
\begin{align*}
\|u\|_{W_{A_1}^{2,p}(X)} &\leq C\left(\|\Delta_{A_0}u\|_{L^p(X)} + \|u\|_{L^p(X)}\right)
\quad \text{(by Proposition \ref{prop:W2p_apriori_estimate_Delta_A_Sobolev})}
\\
&=
C\left(\|\Delta_{A_0}u_0\|_{L^p(X)} + \|u\|_{L^p(X)}\right)
\\
&\leq
C\left(\|d_{A_0}^*(A-A_0)\|_{L^p(X)} + \|u\|_{L^p(X)}\right)
\quad \text{(by \eqref{eq:Feehan_2001_6-10})}
\\
&\leq
C\left(1 + \|A_0-A_1\|_{W^{1,p}_{A_1}(X)}\right)\|A-A_0\|_{W^{1,p}_{A_1}(X)} + C\|u\|_{L^p(X)}
\quad \text{(by \eqref{eq:Lp_bound_dA0star_A_minus_A0})}
\\
&\leq
C\left(1 + \|A_0-A_1\|_{W^{1,p}_{A_1}(X)}\right)\varepsilon + C\Vol_g(X)^{1/p},
\end{align*}
where the last inequality follows from \eqref{eq:Feehan_2001_6-6_A_minus_A0_W1p_close} and the fact that $|u|\leq 1$ pointwise. This completes the proof.
\end{proof}

\section[Coulomb gauge transformations for $W^{1,d/2}$-close connections]{Existence of Coulomb gauge transformations for connections that are $W^{1,d/2}$-close to a reference connection}
\label{sec:Coulomb_gauge_slice_quotient_space_connections}
Finally, we can proceed to the

\begin{proof}[Proof of Theorem \ref{mainthm:Feehan_proposition_3-4-4_Lp}]
We shall apply the method of continuity, modeled on the proofs of \cite[Theorem 2.1]{UhlLp} due to Uhlenbeck and \cite[Proposition 2.3.13]{DK} due to Donaldson and Kronheimer. For a related application of the method of continuity, see the proof \cite[Theorem 1.1]{FeehanSlice} due to the first author.

We begin by defining a one-parameter family of $W^{1,q}$ connections by setting
\begin{equation}
\label{eq:Definition_A_t}
A_t := A_0 + t(A-A_0), \quad\forall\, t \in [0, 1],
\end{equation}
and observe that their curvatures are given by
\[
F(A_t) = F_{A_0} + td_{A_0}(A-A_0) + \frac{t^2}{2}[A-A_0, A-A_0],
\]
and they obey the bounds,
\begin{align*}
\|F(A_t)\|_{L^q(X)}
&\leq
\|F_{A_0}\|_{L^q(X)} + \|d_{A_0}(A-A_0)\|_{L^q(X)} + \frac{c_0}{2}\|A-A_0\|_{L^{2q}(X)}^2
\\
&\leq
\|F_{A_0}\|_{L^q(X)} + \|d_{A_0}(A-A_0)\|_{L^q(X)} + c\|A-A_0\|_{W_{A_1}^{1,q}(X)}^2,
\end{align*}
with $c_0 \in [1,\infty)$ and $c = c(g,q) \in [1,\infty)$ and where we use the Sobolev embedding, $W^{1,q}(X) \subset L^{2q}(X)$ \cite[Theorem 4.12]{AdamsFournier} and the Kato Inequality \cite[Equation (6.20)]{FU} to obtain the last inequality. Note that we have a continuous embedding, $W^{1,q}(X) \subset L^{2q}(X)$, when
\begin{inparaenum}[\itshape a\upshape)]
\item $q<d$ and $2q \leq q^* := dq/(d-q)$, that is $2d - 2q \leq d$ or $q \geq d/2$, as implied by our hypotheses, or
\item $q \geq d$.
\end{inparaenum}
Therefore,
\begin{equation}
\label{eq:Curvature_uniform_Lq_bound_path}
\|F(A_t)\|_{L^q(X)} \leq K, \quad\forall\, t \in [0, 1],
\end{equation}
for $K = K(A,A_0,G,g,q) \in [1,\infty)$, noting that $F_{A_0} \in L^q(X;\Lambda^2\otimes\ad P)$ since $A_0$ is of class $W^{1,q}$.

Let $S$ denote the set of $t \in [0,1]$ such that there exists a $W^{2,q}$ gauge transformation $u_t \in \Aut(P)$ with the property that
\[
d_{A_0}^*(u_t(A_t) - A_0) = 0
\quad\text{and}\quad
\|u_t(A_t) - A_0\|_{W_{A_1}^{1,p}(X)} < 2N\|A_t - A_0\|_{W_{A_1}^{1,p}(X)},
\]
where $N$ is the constant in Proposition \ref{prop:Feehan_2001_lemma_6-6}. Clearly, $0 \in S$ since the identity automorphism of $P$ is the required gauge transformation in that case, so $S$ is non-empty. As usual, we need to show that $S$ is an open and closed subset of $[0,1]$.

\setcounter{step}{0}
\begin{step}[$S$ is open]
\label{eq:Openness}
To prove openness, we shall adapt the argument of Donaldson and Kronheimer in \cite[Section 2.3.8]{DK}. We apply the Implicit Function Theorem to the gauge fixing equation,
\[
d_{A_0}^*(u_t(A_t)-A_0)
=
d_{A_0}^*\left(u_t^{-1}(A_t-A_0)u_t + u_t^{-1}d_{A_0}u_t\right) = 0.
\]
As usual, we denote $B_t = u_t(A_t)$ for convenience, for $t\in S$. Let $t_0 \in S$, so we have
\[
d_{A_0}^*\left(u_{t_0}(A_{t_0}) - A_0\right) = 0
\quad\text{and}\quad
\|u_{t_0}(A_{t_0}) - A_0\|_{W_{A_1}^{1,p}(X)}
<
2N\|A_{t_0} - A_0\|_{W_{A_1}^{1,p}(X)}.
\]
Our task is to show that $t_0+s\in S$ for $|s|$ sufficiently small, that is, there exists $u_{t_0+s} \in \Aut^{2,q}(P)$ such that the preceding two properties hold with $t_0$ replaced by $t_0+s$. By the hypothesis \eqref{eq:Feehan_3-4-4_Lp_A_minus_A0_W1p_close} of Theorem \ref{mainthm:Feehan_proposition_3-4-4_Lp}, we have $\|A - A_0\|_{W_{A_1}^{1,p}(X)} < \zeta$ and thus, since
\[
  A_{t_0} - A_0 = A_0 + t_0(A-A_0) - A_0 = t_0(A-A_0)
\]
and $t_0\in [0,1]$, we see that
\[
\|A_{t_0} - A_0\|_{W_{A_1}^{1,p}(X)} < \zeta,
\]
and so
\[
\|u_{t_0}(A_{t_0}) - A_0\|_{W_{A_1}^{1,p}(X)} < 2N\zeta.
\]
It will be convenient to define $a \in W_{A_1}^{1,q}(X;\Lambda^1\otimes\ad P)$ by
\begin{equation}
\label{eq:Definition_a}
u_{t_0}(A_{t_0}) =: A_0 + a,
\end{equation}
The preceding inequality ensures that
\begin{equation}
\label{eq:S_open_W1p_norm_a_small}
\|a\|_{W_{A_1}^{1,p}(X)} < 2N\zeta.
\end{equation}
We shall seek a solution $u_{t_0+s} \in \Aut^{2,q}(P)$ to the gauge-fixing equation,
\[
d_{A_0}^*(u_{t_0+s}(A_{t_0+s}) - A_0) = 0.
\]
In particular, we shall seek a solution in the form
\[
u_{t_0+s} = e^{\chi_s}u_{t_0},
\quad\text{for } \chi_s \in W_{A_1}^{2,q}(X;\ad P),
\]
so the gauge-fixing equation becomes
\begin{equation}
\label{eq:Gauge_fixing}
d_{A_0}^*(e^{\chi_s}u_{t_0}(A_{t_0+s}) - A_0) = 0.
\end{equation}
For $s \in \RR$, it will be convenient to define $b_s \in W_{A_1}^{1,q}(X;\Lambda^1\otimes\ad P)$ by
\begin{equation}
\label{eq:Definition_b_s}
u_{t_0}(A_{t_0+s}) =: A_0 + a + b_s.
\end{equation}
We can determine $b_s$ explicitly using $A_{t_0+s} = A_0 + (t_0+s)(A-A_0)$, so that
\begin{align*}
u_{t_0}(A_{t_0+s}) - A_0
&=
u_{t_0}^{-1}((t_0+s)(A-A_0))u_{t_0} + u_{t_0}^{-1}d_{A_0}u_{t_0}
\\
&=
u_{t_0}^{-1}(t_0(A-A_0))u_{t_0} + u_{t_0}^{-1}d_{A_0}u_{t_0}
+ u_{t_0}^{-1}(s(A-A_0))u_{t_0}
\\
&=
u_{t_0}^{-1}(A_{t_0}-A)u_{t_0} + u_{t_0}^{-1}d_{A_0}u_{t_0}
+ u_{t_0}^{-1}(s(A-A_0))u_{t_0}
\\
&=
u_{t_0}(A_{t_0}) - A_0 + s u_{t_0}^{-1}(A-A_0)u_{t_0}
\\
&= a + s u_{t_0}^{-1}(A-A_0)u_{t_0} \quad\text{(by \eqref{eq:Definition_a})},
\end{align*}
and thus
\begin{equation}
\label{eq:Expression_b_s}
b_s = s u_{t_0}^{-1}(A-A_0)u_{t_0}, \quad s \in \RR.
\end{equation}
Note that $u_{t_0} \in \Aut^{2,q}(P)$ and so we have the estimate,
\begin{equation}
\label{eq:W1q_norm_bound_b_s}
\|b_s\|_{W_{A_1}^{1,q}(X)}
\leq
|s| C_0\|A-A_0\|_{W_{A_1}^{1,q}(X)}, \quad s \in \RR,
\end{equation}
for $C_0 = C_0(A_0,g,q,t_0) \in [1,\infty)$. In particular, $b_s \to 0$ in $W_{A_1}^{1,q}(X;\Lambda^1\otimes\ad P)$ strongly as $s \to 0$.

The gauge fixing equation \eqref{eq:Gauge_fixing} takes the form
\begin{align*}
e^{\chi_s}u_{t_0}(A_{t_0+s}) - A_0
&=
e^{\chi_s}(A_0 + a + b_s) - A_0
\\
&= e^{-\chi_s}(a + b_s)e^{\chi_s} + e^{-\chi_s}d_{A_0}(e^{\chi_s}).
\end{align*}
The equation to be solved is then $H(\chi_s, b_s) = 0$, where
\begin{equation}
\label{eq:Donaldson_Kronheimer_2-3-5a}
H(\chi, b) := d_{A_0}^*\left( e^{-\chi}(a + b)e^\chi + e^{-\chi}d_{A_0}(e^\chi) \right).
\end{equation}
For any $q > d/2$, the expression \eqref{eq:Donaldson_Kronheimer_2-3-5a} for $H$ defines a smooth map,
\begin{multline}
\label{eq:Donaldson_Kronheimer_2-3-5b}
H: (\Ker \Delta_{A_0})^\perp\cap W_{A_1}^{2,q}(X;\ad P)
\times W_{A_1}^{1,q}(X;\Lambda^1\otimes\ad P)
\\
\to
(\Ker \Delta_{A_0})^\perp\cap L^q(X;\ad P).
\end{multline}
Here, we note that if $\xi = d_{A_0}^*a_\xi$ for some $a_\xi \in W_{A_1}^{1,q}(X;\Lambda^1\otimes\ad P)$, then $\xi \perp \Ker \Delta_{A_0}$, as implied by the preceding expression for $H$. Indeed, for any $\gamma \in \Ker \Delta_{A_0}$,
\[
(\xi, \gamma)_{L^2(X)} = (d_{A_0}^*a_\xi, \gamma)_{L^2(X)}
= (a_\xi, d_{A_0}\gamma)_{L^2(X)} = 0.
\]
Hence, the image of $H$ is contained in $(\Ker \Delta_{A_0})^\perp\cap L^q(X;\ad P)$.

The Implicit Function Theorem asserts that if the partial derivative,
\[
(D_1 H)_{(0,0)}: (\Ker \Delta_{A_0})^\perp\cap W_{A_1}^{2,q}(X;\ad P)
\to (\Ker \Delta_{A_0})^\perp\cap L^q(X;\ad P),
\]
is surjective, then for small $b \in W_{A_1}^{1,q}(X;\Lambda^1\otimes\ad P)$ there is a small solution $\chi \in W_{A_1}^{2,q}(X;\ad P)$ to $H(\chi, b) = 0$ and that is $L^2$-orthogonal to $\Ker \Delta_{A_0}$.

Now the linearization, $(D_1 H)_{(0,0)}$, of the map $H$ at the origin $(0,0)$ with respect to variations in $\chi$ is given by
\[
(D_1 H)_{(0,0)}\chi = d_{A_0}^*d_{A_0+a}\chi.
\]
But $\|a\|_{W_{A_1}^{1,p}(X)} < 2N\zeta$ by \eqref{eq:S_open_W1p_norm_a_small} and because of the continuous Sobolev embeddings provided by \cite[Theorem 4.12]{AdamsFournier},
\begin{gather*}
W^{1,p}(X) \subset L^d(X), \quad\text{for } d \geq 3 \text{ and } p \geq d/2,
\\
W^{1,p}(X) \subset L^4(X), \quad\text{for } d = 2 \text{ and } p \geq 2,
\end{gather*}
we obtain,
\[
\|a\|_{L^d(X)} < 2C_1N\zeta \quad\text{when } d \geq 3
\quad\text{and}\quad
\|a\|_{L^4(X)} < 2C_1N\zeta \quad\text{when } d = 2,
\]
where $C_1 = C_1(g,p) \in [1, \infty)$ is the norm of the Sobolev embedding employed.
By the hypothesis of Theorem \ref{mainthm:Feehan_proposition_3-4-4_Lp}, we can choose $\zeta \in (0,1]$ as small as desired. Hence, the operator $d_{A_0}^*d_{A_0+a}$ is surjective by Lemma \ref{lem:Surjectivity_perturbed_Laplace_operator}.

To summarize, we have shown that if $|s|$ is small and $t_0 \in S$, then there exists $u_{t_0+s} \in \Aut^{2,q}(P)$ such that
\[
d_{A_0}^*(u_{t_0+s}(A_{t_0+s}) - A_0) = 0.
\]
It remains to check that the following norm condition holds,
\begin{equation}
\label{eq:Open_W1p_norm_condition_ut0+s(At0+s)_minus_A0}
\|u_{t_0+s}(A_{t_0+s}) - A_0\|_{W_{A_1}^{1,p}(X)}
<
2N\|A_{t_0+s} - A_0\|_{W_{A_1}^{1,p}(X)},
\end{equation}
for small enough $|s|$, to conclude that $t_0+s \in S$. To see this, we first note that since $A_{t_0+s} = A_0 + (t_0+s)(A-A_0)$, we have
\[
\|A_{t_0+s} - A_0\|_{W_{A_1}^{1,p}(X)}
=
(t_0+s)\|A - A_0\|_{W_{A_1}^{1,p}(X)}
<
(t_0+s)\zeta \quad\text{(by \eqref{eq:Feehan_3-4-4_Lp_A_minus_A0_W1p_close})},
\]
and thus, for $t_0+s \leq 1$ and $\zeta \leq \eps/C_1$,
\[
\|A_{t_0+s} - A_0\|_{W_{A_1}^{1,p}(X)} \leq \eps/C_1,
\]
and thus,
\begin{align*}
\|A_{t_0+s} - A_0\|_{L^d(X)} &\leq \eps,
\quad\text{if } p < d,
\\
\|A_{t_0+s} - A_0\|_{L^{d+\delta}(X)} &\leq \eps,
\quad\text{if } p = d,
\\
\|A_{t_0+s} - A_0\|_{L^p(X)} &\leq \eps,
\quad\text{if } p > d,
\end{align*}
where $\eps$ is the constant in Proposition \ref{prop:Feehan_2001_lemma_6-6} and $C_1 = C_1(g,p)$ or $C_1(\delta,p) \in [1,\infty)$ is the norm (provided by \cite[Theorem 4.12]{AdamsFournier}) of the continuous Sobolev embedding, $W^{1,p}(X) \subset L^d(X)$ when $d/2\leq p < d$ and $W^{1,p}(X) \subset L^{d+\delta}(X)$ when $p = d$. This verifies the hypotheses \eqref{eq:Feehan_2001_6-6_A_minus_A0_and_uA_minus_A0_Ld_or_Ld+delta_close} and \eqref{eq:Feehan_2001_6-6_A_minus_A0_W1p_lessthan_constant} of Proposition \ref{prop:Feehan_2001_lemma_6-6} for $A_{t_0+s} - A_0$ (in place of $A-A_0$ in the statement of that proposition).

On the other hand,
\begin{align*}
{}& \|u_{t_0+s}(A_{t_0+s}) - A_0\|_{W_{A_1}^{1,p}(X)}
\\
&\quad =
\|e^{\chi_s}u_{t_0}(A_{t_0+s}) - A_0\|_{W_{A_1}^{1,p}(X)}
\\
&\quad \leq \|e^{\chi_s}u_{t_0}(A_{t_0+s})
- u_{t_0}(A_{t_0+s})\|_{W_{A_1}^{1,p}(X)}
+ \|u_{t_0}(A_{t_0+s}) - A_0\|_{W_{A_1}^{1,p}(X)}
\\
&\quad = \|e^{\chi_s}u_{t_0}(A_{t_0+s})
- u_{t_0}(A_{t_0+s})\|_{W_{A_1}^{1,p}(X)}
+ \|a + b_s\|_{W_{A_1}^{1,p}(X)}
\quad\text{(by \eqref{eq:Definition_b_s})}.
\end{align*}
The inequalities \eqref{eq:Feehan_3-4-4_Lp_A_minus_A0_W1p_close}, \eqref{eq:S_open_W1p_norm_a_small}, and \eqref{eq:W1q_norm_bound_b_s} (which also holds with $p$ in place of $q$) yield the bound
\begin{align*}
\|a + b_s\|_{W_{A_1}^{1,p}(X)}
&\leq
\|a\|_{W_{A_1}^{1,p}(X)} + \|b_s\|_{W_{A_1}^{1,p}(X)}
\\
&< 2N\zeta + |s| C_0\|A-A_0\|_{W_{A_1}^{1,p}(X)}
\\
&\leq 2N\zeta + |s| C_0\zeta
\\
&\leq \eps/(2C_1),
\end{align*}
for small enough $\zeta$. (We could also have used $\|b_s\|_{W_{A_1}^{1,q}(X)} \leq C_0\|A-A_0\|_{W_{A_1}^{1,q}(X)}$ and the continuous embedding $W^{1,q}(X) \subset W^{1,p}(X)$ and rely on our freedom to also choose $|s|$ small.) But if $|s|$ is small then so is $\|b_s\|_{W_{A_1}^{1,q}(X)}$ by \eqref{eq:W1q_norm_bound_b_s} and hence $\|\chi_s\|_{W_{A_1}^{2,q}(X)}$ is small by the Implicit Function Theorem\footnote{The Quantitative Implicit Function Theorem \ref{thm:Quantitative_implicit_function_theorem} can be used to give a precise bound on $\chi_s$ given a bound on $b_s$.}
and so we may assume that
\[
\|e^{\chi_s}u_{t_0}(A_{t_0+s}) - u_{t_0}(A_{t_0+s})\|_{W_{A_1}^{1,p}(X)}
\leq
\eps/(2C_1),
\]
for small enough $|s|$. Collecting the preceding inequalities gives
\[
\|u_{t_0+s}(A_{t_0+s}) - A_0\|_{W_{A_1}^{1,p}(X)} \leq \eps/C_1,
\]
and thus,
\begin{align*}
\|u_{t_0+s}(A_{t_0+s}) - A_0\|_{L^d(X)} &\leq \eps,
\quad\text{if } p < d,
\\
\|u_{t_0+s}(A_{t_0+s}) - A_0\|_{L^{d+\delta}(X)} &\leq \eps,
\quad\text{if } p = d,
\\
\|u_{t_0+s}(A_{t_0+s}) - A_0\|_{L^p(X)} &\leq \eps,
\quad\text{if } p > d,
\end{align*}
verifying the hypotheses  \eqref{eq:Feehan_2001_6-6_A_minus_A0_and_uA_minus_A0_Ld_or_Ld+delta_close} and \eqref{eq:Feehan_2001_6-6_A_minus_A0_W1p_lessthan_constant}
of Proposition \ref{prop:Feehan_2001_lemma_6-6} for $u_{t_0+s}(A_{t_0+s}) - A_0$ (in place of $u(A)-A_0$ in the statement of that proposition).

Hence, Proposition \ref{prop:Feehan_2001_lemma_6-6} yields the bound
\begin{align*}
\|u_{t_0+s}(A_{t_0+s}) - A_0\|_{W_{A_1}^{1,p}(X)}
&\leq
N\|A_{t_0+s} - A_0\|_{W_{A_1}^{1,p}(X)}
  \\
  &<
2N\|A_{t_0+s} - A_0\|_{W_{A_1}^{1,p}(X)}.
\end{align*}
This verifies the norm condition \eqref{eq:Open_W1p_norm_condition_ut0+s(At0+s)_minus_A0} and we conclude that $t_0+s \in S$ and so $S$ is open.
\end{step}

\begin{step}[$S$ is closed]
\label{eq:Closedness}
For closedness, we adapt the argument in \cite[Section 2.3.7]{DK}. Set $B_t := u_t(A_t)$ for $t \in S$ and observe that the inequality \eqref{eq:Curvature_uniform_Lq_bound_path} yields
\begin{multline*}
\|F(B_t)\|_{L^q(X)} = \|F(u_t(A_t))\|_{L^q(X)} = \|u_t(F(A_t))\|_{L^q(X)}
\\
= \|F(A_t)\|_{L^q(X)} \leq K, \quad\forall\, t \in S.
\end{multline*}
Let $\{t_m\}_{m\in\NN} \subset S$ be a sequence and suppose that $t_m \to t_\infty \in [0,1]$ as $m\to\infty$. Since $q > d/2$ by hypothesis, the Uhlenbeck Weak Compactness \cite[Theorem 1.5 = 3.6]{UhlLp} (see also \cite[Theorem 7.1]{Wehrheim_2004} for a recent exposition) implies that there exists a subsequence $\{m'\} \subset \{m\}$ and, after relabeling, a sequence of $W^{2,q}$ gauge transformations, $\{u_{t_m}\}_{m\in\NN} \subset \Aut(P)$ and a $W^{1,q}$ connection $B_\infty$ on $P$ such that, as $m \to \infty$, we have
\[
B_{t_m} \rightharpoonup B_\infty
\quad\text{weakly in } W_{A_1}^{1,q}(X;\Lambda^1\otimes\ad P).
\]
Hence, for
\begin{inparaenum}[\itshape a\upshape)]
\item $q<d$ and $r < q* := dq/(d-q)$, or
\item $q \geq d$ and $r < \infty$,
\end{inparaenum}
the Rellich--Kondrachov Theorem \cite[Theorem 6.3]{AdamsFournier} implies that there is a compact embedding of Sobolev spaces, $W^{1,q}(X) \Subset L^r(X)$, and hence there exists a subsequence $\{m''\} \subset \{m\}$ such that, after again relabeling, as $m \to \infty$ we have
\[
B_{t_m} \to B_\infty \quad\text{strongly in } L^r(X;\Lambda^1\otimes\ad P).
\]
The $W^{2,q}$ gauge transformations $u_t$ intertwine the $W^{1,q}$ connections $A_t$ and $B_t$ via the relation \eqref{eq:Feehan_2001_6-13},
\[
B_t = A_0 + t u_t^{-1}(A-A_0)u_t + u_t^{-1}d_{A_0}u_t,
\]
and thus
\[
d_{A_0}u_t = u_tB_t - u_tA_0 - t(A-A_0)u_t, \quad\forall\, t \in [0,1].
\]
Because $A_{t_m} \to A_\infty := A_0 + t_\infty(A-A_0)$ strongly in $W^{1,q}$ (see \eqref{eq:Definition_A_t}) and $B_{t_m} \rightharpoonup B_\infty$ weakly in $W^{1,q}$ and $B_{t_m} \to B_\infty$ strongly in $L^r$ as $m\to\infty$, there exists a $W^{2,q}$ gauge transformation $u_\infty \in \Aut(P)$ such that, as $m\to\infty$,
\begin{multline*}
u_{t_m} \rightharpoonup u_\infty \quad\text{weakly in } W_{A_1}^{2,q}(X;\Ad P)
\quad\text{and}
\\
u_{t_m} \to u_\infty \quad\text{strongly in } W_{A_1}^{1,r}(X;\Ad P).
\end{multline*}
In particular,
\[
B_\infty = u_\infty(A_\infty)
\quad\text{and}\quad
d_{A_0}^*(u_\infty(A_\infty) - A_0) = 0.
\]
The Coulomb gauge condition follows from the fact that, for any $\xi \in C^\infty(X;\ad P)$, we have
\begin{align*}
0 &= \lim_{m\to\infty} (d_{A_0}^*(u_{t_m}(A_{t_m}) - A_0), \xi)_{L^2(X)}
\\
&= \lim_{m\to\infty} (u_{t_m}(A_{t_m}) - A_0, d_{A_0}\xi)_{L^2(X)}
\\
&= (u_\infty(A_\infty) - A_0, d_{A_0}\xi)_{L^2(X)}
\\
&= (d_{A_0}^*(u_\infty(A_\infty) - A_0), \xi)_{L^2(X)}.
\end{align*}
Similarly, for any $a \in \Omega^1(X;\ad P)$,
\begin{align*}
\lim_{m\to\infty}(B_{t_m} - A_0, a)_{L^2(X)}
&=
\lim_{m\to\infty}(u_{t_m}(A_{t_m}) - A_0, a)_{L^2(X)}
\\
&= \lim_{m\to\infty}(u_{t_m}^{-1}(A_{t_m} - A_0)u_{t_m}
+ u_{t_m}^{-1}d_{A_0}u_{t_m}, a)_{L^2(X)}
\\
&= \lim_{m\to\infty}((A_{t_m} - A_0)u_{t_m} + d_{A_0}u_{t_m}, u_{t_m}a)_{L^2(X)}
\\
&= ((A_\infty - A_0)u_\infty + d_{A_0}u_\infty, u_\infty a)_{L^2(X)}
\\
&= (u_\infty^{-1}(A_\infty - A_0)u_\infty + u_\infty^{-1}d_{A_0}u_\infty, a)_{L^2(X)}
\\
&= (u_\infty(A_\infty) - A_0, a)_{L^2(X)}.
\end{align*}
We now wish to apply Proposition \ref{prop:Feehan_2001_lemma_6-6} to bound $\|u_{t_\infty}(A_{t_\infty}) - A_0\|_{W_{A_1}^{1,p}(X)}$ and establish the remaining norm condition required to show that $t_\infty \in S$. First, we note that
\begin{align*}
\|A_{t_m} - A_0\|_{W_{A_1}^{1,p}(X)}
&=
\|A_0 + t_m(A-A_0) - A_0\|_{W_{A_1}^{1,p}(X)}
\\
&= t_m\|A-A_0\|_{W_{A_1}^{1,p}(X)}
\\
&< t_m\zeta \quad\text{(by hypothesis \eqref{eq:Feehan_3-4-4_Lp_A_minus_A0_W1p_close})}
\\
&\leq \zeta, \quad\forall\, m \in \NN.
\end{align*}
Because $t_m \in S$, we have
\[
\|u_{t_m}(A_{t_m}) - A_0\|_{W_{A_1}^{1,p}(X)}
<
2N\|A_{t_m} - A_0\|_{W_{A_1}^{1,p}(X)}, \quad\forall\, m \in \NN,
\]
and combining this inequality with the preceding inequality yields
\[
\|u_{t_m}(A_{t_m}) - A_0\|_{W_{A_1}^{1,p}(X)}
<
2N\zeta, \quad\forall\, m \in \NN.
\]
Since $u_\infty(A_\infty)$ is the weak limit of $u_{t_m}(A_{t_m})$ in $W_{A_1}^{1,q}(X;\Lambda^1\otimes\ad P)$, we have (see \cite[Appendix D.4]{Evans2})
\[
\|u_\infty(A_\infty) - A_0\|_{W_{A_1}^{1,p}(X)}
\leq
\liminf_{m\to\infty}2N\|A_{t_m} - A_0\|_{W_{A_1}^{1,p}(X)}.
\]
But $A_{t_m} \to A_{t_\infty}$ strongly in $W_{A_1}^{1,q}(X;\Lambda^1\otimes\ad P)$ by construction \eqref{eq:Definition_A_t} of the path $A_t$ and as $p \leq q$ by hypothesis and  $\|A_{t_m} - A_0\|_{W_{A_1}^{1,p}(X)} < \zeta$ for all $m\in \NN$, then
\[
\|A_{t_\infty} - A_0\|_{W_{A_1}^{1,p}(X)}
=
\lim_{m\to\infty}\|A_{t_m} - A_0\|_{W_{A_1}^{1,p}(X)}
\leq
\zeta.
\]
Combining the preceding inequalities yields
\[
\|u_\infty(A_\infty) - A_0\|_{W_{A_1}^{1,p}(X)}
\leq
2N\zeta.
\]
We choose $\zeta \in (0,1]$ small enough that $\zeta \leq \eps/C_1$ and $2N\zeta \leq \eps/C_1$, where $C_1$ is the norm of the Sobolev embedding $W^{1,p}(X) \subset L^d(X)$ when $p\neq d$ or $L^{d+\delta}(X)$ when $p = d$ as in Step \ref{eq:Openness}, and then observe that
\[
\|A_{t_\infty} - A_0\|_{W_{A_1}^{1,p}(X)} \leq \eps/C_1
\quad\text{and}\quad
\|u_\infty(A_\infty) - A_0\|_{W_{A_1}^{1,p}(X)} < \eps/C_1.
\]
The first inequality above verifies the hypothesis \eqref{eq:Feehan_2001_6-6_A_minus_A0_W1p_lessthan_constant} of Proposition \ref{prop:Feehan_2001_lemma_6-6}. Moreover,
\begin{gather*}
\|A_{t_\infty} - A_0\|_{L^d(X)} \leq \eps
\quad\text{and}\quad
\|u_\infty(A_{t_\infty}) - A_0\|_{L^d(X)} \leq \eps, \quad\text{if } p < d,
\\
\|A_{t_\infty} - A_0\|_{L^{d+\delta}(X)} \leq \eps
\quad\text{and}\quad
\|u_\infty(A_{t_\infty}) - A_0\|_{L^{d+\delta}(X)} \leq \eps, \quad\text{if } p = d,
\\
\|A_{t_\infty} - A_0\|_{L^p(X)} \leq \eps
\quad\text{and}\quad
\|u_\infty(A_{t_\infty}) - A_0\|_{L^p(X)} \leq \eps, \quad\text{if } p > d,
\end{gather*}
which verifies the hypothesis \eqref{eq:Feehan_2001_6-6_A_minus_A0_and_uA_minus_A0_Ld_or_Ld+delta_close} of Proposition \ref{prop:Feehan_2001_lemma_6-6} on norms (for $A_{t_\infty} - A_0$ in place of $A - A_0$ and $u_\infty(A_{t_\infty}) - A_0$ in place of $u(A) - A_0$ in the statement of that proposition). Since $d_{A_0}^*(u_\infty(A_\infty) - A_0) = 0$, as required by \eqref{eq:Feehan_2001_6-6_uA_minus_A0_Coulomb_gauge}, Proposition \ref{prop:Feehan_2001_lemma_6-6} implies that
\[
\|u_\infty(A_\infty) - A_0\|_{W_{A_1}^{1,p}(X)}
\leq
N\|A_{t_\infty} - A_0\|_{W_{A_1}^{1,p}(X)}
<
2N\|A_{t_\infty} - A_0\|_{W_{A_1}^{1,p}(X)}.
\]
Thus, $t_\infty \in S$ and so $S$ is closed.
\end{step}

Consequently, $S \subset [0,1]$ is non-empty and open and closed by the preceding two steps, so $S = [0,1]$ and this completes the proof of Theorem \ref{mainthm:Feehan_proposition_3-4-4_Lp}.
\end{proof}

\section[Coulomb gauge transformations for $L^r$-close connections]{Existence of Coulomb gauge transformations for connections that are $L^r$-close to a reference connection}
\label{sec:Coulomb_gauge_slice_quotient_space_connections_Lr_close}
Rather than modify the proof of Theorem \ref{mainthm:Feehan_proposition_3-4-4_Lp}, we shall prove Theorem \ref{mainthm:Feehan_proposition_3-4-4_Lp_Lr_close} directly for the case $r>d$ using the Quantitaive Implicit Function Theorem \ref{thm:Quantitative_implicit_function_theorem} for smooth maps on Banach spaces. While the resulting proof is much easier than that of Theorem \ref{mainthm:Feehan_proposition_3-4-4_Lp} and avoids the Method of Continuity, it does not obviously yield the full borderline case $r=d$ involving $W^{1,d}$ gauge transformations. See Remark \ref{rmk:Feehan_proposition_3-4-4_Lp_Ld_close} for related explanations. The issues here are well-known, namely, that $W^{1,d}$ gauge transformations need not be continuous and the map $H$ in \eqref{eq:Donaldson_Kronheimer_2-3-5a} need not be smooth. While there versions of the Implicit Function Theorem for Lipschitz functions on Euclidean space, it is unclear whether they hold on Banach spaces: see Clarke \cite{Clarke_1976} for a well-known example, Papi \cite{Papi_2005} for a more recent example, and references cited therein and references citing them for examples of Implicit Function Theorems for non-smooth functions. 

\begin{proof}[Proof of Theorem \ref{mainthm:Feehan_proposition_3-4-4_Lp_Lr_close}]
For $a := A-A_0 \in L^r(X;\Lambda^1\otimes\ad P)$ and $H$ as in \eqref{eq:Donaldson_Kronheimer_2-3-5a}, 
\[
H(\chi, a) := d_{A_0}^*\left( e^{-\chi}ae^\chi + e^{-\chi}d_{A_0}(e^\chi) \right),
\]
we see that for any $r > d$, we have $W^{1,r}(X) \subset C(X)$ by \cite[Theorem 4.12]{AdamsFournier} and so the expression \eqref{eq:Donaldson_Kronheimer_2-3-5a} for $H$ defines a smooth map,
\begin{multline}
\label{eq:Donaldson_Kronheimer_2-3-5b_Lr}
H: (\Ker \Delta_{A_0})^\perp\cap W_{A_1}^{1,r}(X;\ad P)
\times L^r(X;\Lambda^1\otimes\ad P)
\\
\to
(\Ker \Delta_{A_0})^\perp\cap W_{A_1}^{-1,r}(X;\ad P).
\end{multline}
The proof that the image of $H$ in \eqref{eq:Donaldson_Kronheimer_2-3-5b_Lr} is contained in $(\Ker \Delta_{A_0})^\perp\cap W_{A_1}^{-1,r}(X;\ad P)$ and not just $W_{A_1}^{-1,r}(X;\ad P)$ is the same as the proof that the image of $H$ in \eqref{eq:Donaldson_Kronheimer_2-3-5b} is contained in $(\Ker \Delta_{A_0})^\perp\cap L^q(X;\ad P)$ and not just $L^q(X;\ad P)$.

Given $a$, we will solve the equation $H(\chi,a) = 0$ for $\chi$ using Theorem \ref{thm:Quantitative_implicit_function_theorem}, so that $d_{A_0}^*(u(A)-A_0)=0$ (as asserted by Theorem \ref{mainthm:Feehan_proposition_3-4-4_Lp_Lr_close}) with $u=e^\chi$, using
\begin{align*}
  \sX &= (\Ker \Delta_{A_0})^\perp\cap W_{A_1}^{1,r}(X;\ad P),
  \\
  \sY &= L^r(X;\Lambda^1\otimes\ad P),
  \\
  \sZ &= (\Ker \Delta_{A_0})^\perp\cap W_{A_1}^{-1,r}(X;\ad P),
\end{align*}
and $(\chi_0,a_0) = (0,0)$ and interchanging the roles of the first and second variables.

From Remark \ref{rmk:Partial_derivatives_H}, we compute the partial derivative of $H(\chi,a)$ with respect to $\chi$ at the origin to be $(D_1 H)_{(0,0)} = d_{A_0}^*d_{A_0}$ and so we must first verify that the operator
\begin{equation}
  \label{eq:D_1H_origin_domain_range}
(D_1 H)_{(0,0)}: (\Ker \Delta_{A_0})^\perp\cap W_{A_1}^{1,r}(X;\ad P)
\to (\Ker \Delta_{A_0})^\perp\cap W_{A_1}^{-1,r}(X;\ad P)
\end{equation}
is an isomorphism of Banach spaces. We temporarily assume that $A_0$ is $C^\infty$ and observe that
\[
  d_{A_0}^*d_{A_0}:W_{A_1}^{1,r}(X;\ad P) \to W_{A_1}^{-1,r}(X;\ad P)
\]
is Fredholm with index zero by standard elliptic theory with the usual kernel and range for any $r\in (1,\infty)$: see the proof of Proposition \ref{prop:Fredholmness_and_index_Laplace_operator_on_W2p_smooth_connection} via Theorem \ref{thm:Gilkey_1-4-5_Sobolev}. (While we might allow $A_0$ to be a $W^{1,q}$ Sobolev connection in this step with $q \in (d/2,\infty)$ and $r \in (d,\infty)$, the proof of the analogue of Corollary \ref{cor:Fredholmness_and_index_Laplace_operator_on_W2p_Sobolev_connection}, the proof of such a refinement would be quite technical.) In particular, Theorem \ref{thm:Gilkey_1-4-5_Sobolev} implies that \eqref{eq:D_1H_origin_domain_range} is an isomorphism.

In order to verify the remaining hypotheses \eqref{eq:D_1_and_D_2f_conditions_implicit_function_theorem} of Theorem \ref{thm:Quantitative_implicit_function_theorem} and prove the estimate in the conclusion of Theorem \ref{mainthm:Feehan_proposition_3-4-4_Lp_Lr_close} via \eqref{eq:Lipschitz_constant_implied_function_g}, we shall need to identify the dependencies of the constants $M$ and $\beta$. Since
\[
  \Exp:W_{A_1}^{1,r}(X;\ad P) \to W_{A_1}^{1,r}(X;\ad P)
\]
is the smooth map defined by the smooth exponential map, $\exp:\fg\to G$, for the Lie group $G$ with Lie algebra $\fg = T_1G$, and the domain of $\exp$ is all of $\fg$ by \cite[Proposition 9.2.5]{Hilgert_Neeb_structure_geometry_lie_groups}, then the domain of $\Exp$ is all of $W_{A_1}^{1,r}(X;\ad P)$. The constant $M = M(A_0,A_1,g,G,r)$ in \eqref{eq:D_2f_x0y_0_inverse} is given by
\[
  M = \|D_1H(0,0)\|_{\sL(K^\perp\cap W^{1,r}, K^\perp\cap W^{-1,r})} = \|(d_{A_0}^*d_{A_0})^{-1}\|_{\sL(K^\perp\cap W^{1,r}, K^\perp\cap W^{-1,r})},
\]
where we abbreviate
\begin{align*}
  K &= W_{A_1}^{1,r}(X;\ad P)\cap \Ker d_{A_0}^*d_{A_0},
  \\
  W^{1,r} &= W_{A_1}^{1,r}(X;\ad P), \quad\text{and}
  \\
  W^{-1,r} &= W_{A_1}^{-1,r}(X;\ad P). 
\end{align*}
To satisfy the hypothesis \eqref{eq:Uniform_Lipschitz_bound_D_2fxy}, we choose $\zeta = \zeta(A_0,A_1,g,G,r) \in (0,1]$ small enough that
\begin{multline*}
  \sup_{(\chi,a)\in U \times V}\|D_1H(\chi,a) - D_1H(0,0)\|_{\sL^2(K^\perp\cap W^{1,r}\times L^r,K^\perp\cap W^{-1,r})}
  \\
  \leq \sup_{(\chi,a)\in U \times V}\|D_1H(\chi,a) - D_1H(0,0)\|_{\sL^2(W^{1,r}\times L^r,W^{-1,r})} \leq \frac{1}{2M},
\end{multline*}
where we abbreviate
\begin{align*}
  L^r &= L^r(X;\Lambda^1\otimes \ad P),
  \\
  U &= \left\{\chi\in W_{A_1}^{1,r}(X;\ad P): \|\chi\|_{W_{A_1}^{1,r}(X)} < \zeta \right\},
  \\
  V &= \left\{a\in L^r(X;\Lambda^1\otimes \ad P): \|a\|_{L^r(X)} < \zeta \right\}.   
\end{align*}
Finally, the constant $\beta = \beta(A_0,A_1,g,G,r)$ in \eqref{eq:D1f_uniform_bound} is given by
\[
\beta = \sup_{(\chi,a)\in U \times V}\|D_2H(\chi,a)\|_{\sL(L^r, K^\perp\cap W^{-1,r})}.
\]
Theorem \ref{thm:Quantitative_implicit_function_theorem} now provides constants $\delta = \delta(A_0,A_1,g,G,r) \in (0,1/(2\beta M)]$ and $C = C(A_0,A_1,g,G,r) \in (0, 1]$ and a smooth map,
\begin{multline*}
  \bchi: \left\{b \in L^r(X;\Lambda^1\otimes\ad P): \|b\|_{L^r(X)} < \delta\right\} \ni a
  \\
  \mapsto \bchi(a) \in (\Ker \Delta_{A_0})^\perp\cap \left\{\chi \in W_{A_1}^{1,r}(X;\ad P): \|\chi\|_{W_{A_1}^{1,r}(X)} < \zeta \right\},
\end{multline*}
such that
\[
H(\bchi(a), a) = 0, \quad\forall\, a \in L^r(X;\Lambda^1\otimes\ad P) \text{ with } \|a\|_{L^r(X)} < \delta,
\]
that is, the $W^{1,r}$ gauge transformation $u = e^\chi$ defined by the $W^{1,r}$ section $\chi = \bchi(a)$ obeys
\begin{gather*}
  d_{A_0}^*(u(A)-A_0) = 0,
  \\
  \|u(A) - A_0\|_{L^r(X)} < C\|A - A_0\|_{L^r(X)},
\end{gather*}
as required by Theorem \ref{mainthm:Feehan_proposition_3-4-4_Lp_Lr_close}. We can approximate any $W^{1,q}$ connection $A_0$ by a $C^\infty$ connection $\tilde A_0$ (see \cite[Theorem 3.17]{AdamsFournier}) and so the preceding conclusions follow by approximation for $W^{1,q}$ connections $A_0$ and continuity of the constants $M$ and $\beta$ with respect to variations of $A_0$ in the $W^{1,q}$ norm. (The proof of the latter continuity would be very similar to the proof of Corollary \ref{cor:Fredholmness_and_index_Laplace_operator_on_W2p_Sobolev_connection}, but simpler since we only require continuity and not compactness of Sobolev embeddings and thus is omitted.)

Finally, because $A$ is a $W^{1,q}$ connection, then $a = A-A_0$ is in $W^{1,q}$ and to show that $\chi$ and $u$ are in $W^{2,q}$, we can apply the proof of Lemma \ref{lem:Regularity_distributional_solution_Hodge_Laplacian_Sobolev} to the nonlinear, second-order elliptic equation,
\[
d_{A_0}^*\left( u^{-1}au + u^{-1}d_{A_0}u \right) = 0,
\]
to obtain the desired regularity for $u$. (Details of the proof are omitted since they will be very similar to those in the proof of Lemma \ref{lem:Regularity_distributional_solution_Hodge_Laplacian_Sobolev}.) This completes the proof of Theorem \ref{mainthm:Feehan_proposition_3-4-4_Lp_Lr_close}.
\end{proof}

\begin{rmk}[Computation of partial derivatives of $H(\chi,a)$ in \eqref{eq:Donaldson_Kronheimer_2-3-5a}]
  \label{rmk:Partial_derivatives_H}
While unnecessary for the proof of Theorem \ref{mainthm:Feehan_proposition_3-4-4_Lp_Lr_close}, one can explicitly compute the partial derivatives, $D_iH(\chi,a)$, for $i=1,2$. Observe that the expression \eqref{eq:Donaldson_Kronheimer_2-3-5a} yields
\begin{subequations}
 \label{eq:DH}  
\begin{align}
  \label{eq:DHDchi}
  D_1H(\chi,a)\xi &= d_{A_0}^*\left( - ((De^{-\chi})\xi) ae^\chi + e^{-\chi}a (De^{\chi})\xi \right.
                    \\
                    &\qquad \left. - ((De^{-\chi})\xi) d_{A_0}(e^\chi) + e^{-\chi}d_{A_0}(De^{\chi})\xi)\right),\nonumber
  \\
  \label{eq:DHDa}
  D_2H(\chi,a)b &:= d_{A_0}^*\left( e^{-\chi}be^\chi \right),
\end{align}
\end{subequations}
where $(De^\chi)\xi := (D\Exp)(\chi)\xi \in W_{A_1}^{1,r}(X;\ad P)$. Recall that $(D\exp)(0) = \id_\fg$ by \cite[Proposition 9.2.5]{Hilgert_Neeb_structure_geometry_lie_groups} and so $(D\Exp)(0)  = \id_P$ when $\chi=0$ and we see that \eqref{eq:DHDchi} becomes
\[
  D_1H(0,a)\xi = d_{A_0}^*(-\xi a + a\xi + d_{A_0}\xi) = d_{A_0}^*(d_{A_0}\xi + [a,\xi]) = d_{A_0}^*d_{A_0+a}\xi,
\]
using $d_{A_0}\id_P = 0$, and thus $D_1H(0,0) = d_{A_0}^*d_{A_0}$ while \eqref{eq:DHDa} becomes $D_2H(0,a)b = d_{A_0}^*b$, for any $a \in L^r(X;\Lambda^1\otimes\ad P)$.
\end{rmk}

\section{Real analytic Banach manifold structure on the quotient space of connections}
\label{sec:Real_analytic_Banach_manifold_structure_on_quotient_space_connections}
The statements and proofs of Lemmata \ref{lem:Continuous_operators_on_Lp_spaces_and_L2-orthogonal_decompositions} and \ref{lem:Closed_range_operators_on_Lp_spaces} would follow standard lines (see Gilkey \cite[Theorem 1.5.2]{Gilkey2}, for example) if the operators
\[
d_A:\Omega^l(X;\ad P) \to \Omega^{l+1}(X;\ad P), \quad l \geq 0,
\]
had $C^\infty$ coefficients, rather than Sobolev coefficients as we allow here, and formed an elliptic complex, rather than only satisfying $d_A\circ d_A = F_A$.

\begin{lem}[Continuous operators on $L^p$ spaces and $L^2$-orthogonal decompositions]
\label{lem:Continuous_operators_on_Lp_spaces_and_L2-orthogonal_decompositions}
Let $(X,g)$ be a closed, smooth Riemannian manifold of dimension $d \geq 2$, and $G$ be a compact Lie group, and $P$ be a smooth principal $G$-bundle over $X$. If $A$ is a connection on $P$ of class $W^{1,q}$ with $q \geq d/2$, and $A_1$ is a $C^\infty$ is reference connection on $P$, and $l\geq 1$ is an integer, and $p$ obeys $d/2\leq p \leq q$, then the operator
\[
d_A^*:W_{A_1}^{1,p}(X;\Lambda^l\otimes\ad P) \to L^p(X;\Lambda^{l-1}\otimes\ad P),
\]
is continuous and, if in addition $q > d/2$, then the operator
\[
d_A:W_{A_1}^{2,p}(X;\Lambda^{l-1}\otimes\ad P) \to W_{A_1}^{1,p}(X;\Lambda^l\otimes\ad P),
\]
is also continuous, and there is an $L^2$-orthogonal decomposition,
\begin{align*}
W_{A_1}^{1,p}(X;\Lambda^l\otimes\ad P)
&=
\Ker\left(d_A^*: W_{A_1}^{1,p}(X;\Lambda^l\otimes\ad P)\to L^p(X;\Lambda^{l-1}\otimes\ad P)\right)
\\
&\qquad \oplus \Ran\left(d_A: W_{A_1}^{2,p}(X;\Lambda^{l-1}\otimes\ad P)\to W_{A_1}^{1,p}(X;\Lambda^l\otimes\ad P)\right).
\end{align*}
\end{lem}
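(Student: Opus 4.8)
The plan is to prove the two continuity statements by the Sobolev multiplication estimates already used repeatedly above, and to obtain the $L^2$-orthogonal decomposition from the Fredholm and Hodge-theoretic properties of the Hodge Laplacian $\Delta_A$ with Sobolev coefficients (Corollaries \ref{cor:Fredholmness_and_index_Laplace_operator_on_W2p_Sobolev_connection} and \ref{cor:Spectrum_Delta_A_Sobolev_and_W2p_apriori_estimate}) together with the regularity Lemma \ref{lem:Regularity_distributional_solution_Hodge_Laplacian_Sobolev}.

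\emph{Continuity.} Write $a := A - A_1 \in W_{A_1}^{1,q}(X;\Lambda^1\otimes\ad P)$, $q\geq d/2$, so that $d_A^*$ differs from $d_{A_1}^*$ by the algebraic operator $\xi\mapsto a\times\xi$ (contraction) on $\Omega^l(X;\ad P)$. For $\xi\in W_{A_1}^{1,p}(X;\Lambda^l\otimes\ad P)$ with $d/2\leq p<d$, H\"older's inequality with $1/p=1/d+1/p^*$ ($p^*:=dp/(d-p)$) gives $\|a\times\xi\|_{L^p(X)}\leq\|a\|_{L^d(X)}\|\xi\|_{L^{p^*}(X)}$, and the continuous embeddings $W^{1,q}(X)\subset L^d(X)$ (valid since $q\geq d/2$) and $W^{1,p}(X)\subset L^{p^*}(X)$ from \cite[Theorem 4.12]{AdamsFournier}, with the Kato inequality \cite[Equation (6.20)]{FU}, bound this by $C\|a\|_{W_{A_1}^{1,q}(X)}\|\xi\|_{W_{A_1}^{1,p}(X)}$; the cases $p=d$ (use $W^{1,d}(X)\subset L^r(X)$, all $r<\infty$) and $p>d$ (use $W^{1,p}(X)\subset C(X)$) are the same, and combined with continuity of $d_{A_1}^*$ for the smooth connection $A_1$ this proves the first claim. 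For $d_A:W_{A_1}^{2,p}(X;\Lambda^{l-1}\otimes\ad P)\to W_{A_1}^{1,p}(X;\Lambda^l\otimes\ad P)$ one estimates in addition $\|\nabla_{A_1}(a\times\xi)\|_{L^p(X)}\leq\|\nabla_{A_1}a\|_{L^q(X)}\|\xi\|_{L^r(X)}+\|a\|_{L^{2p}(X)}\|\nabla_{A_1}\xi\|_{L^{2p}(X)}$ with $1/p=1/q+1/r$, using $W^{2,p}(X)\subset L^r(X)$, $W^{1,p}(X)\subset L^{2p}(X)$, and $W^{1,q}(X)\subset L^{2q}(X)$, all valid for $d/2\leq p\leq q$; it is precisely here that the strict inequality $q>d/2$ is needed.

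\emph{Orthogonality and setup.} For $\nu\in\Ker(d_A^*:W_{A_1}^{1,p}(X;\Lambda^l\otimes\ad P)\to L^p(X;\Lambda^{l-1}\otimes\ad P))$ and $\beta\in W_{A_1}^{2,p}(X;\Lambda^{l-1}\otimes\ad P)$ one has $(d_A\beta,\nu)_{L^2(X)}=(\beta,d_A^*\nu)_{L^2(X)}=0$: this holds for smooth $\beta,\nu$ and extends by density of $C^\infty$ sections, the continuity of $d_A,d_A^*$ just proved, and continuity of the $L^2$-pairing (all terms lie in $L^2(X)$ since $W^{1,p}(X)\subset L^2(X)$ for $p\geq d/2$ and $d\geq2$). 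Hence $\Ran d_A$ and $\Ker d_A^*$ are $L^2$-orthogonal, so their intersection inside $W_{A_1}^{1,p}$ is trivial. Now fix $\omega\in W_{A_1}^{1,p}(X;\Lambda^l\otimes\ad P)$, so $d_A^*\omega\in L^p(X;\Lambda^{l-1}\otimes\ad P)$. Let $\cH:=\Ker(\Delta_A:W_{A_1}^{2,p}(X;\Lambda^{l-1}\otimes\ad P)\to L^p)$, finite-dimensional with $\Delta_A$ Fredholm of index zero by Corollary \ref{cor:Fredholmness_and_index_Laplace_operator_on_W2p_Sobolev_connection}; its $L^p$-range is $\{f:(f,h)_{L^2(X)}=0\ \forall h\in\cH\}$, the identification of the cokernel with $\cH$ following from formal self-adjointness of $\Delta_A$ and Lemma \ref{lem:Regularity_distributional_solution_Hodge_Laplacian_Sobolev} applied on $L^{p'}(X)\subset L^{q'}(X)$ (using $p\leq q$). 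Since $\Delta_A h=0$ forces $d_A h=0$, we get $(d_A^*\omega,h)_{L^2(X)}=(\omega,d_A h)_{L^2(X)}=0$ for $h\in\cH$, hence there is $\beta\in\cH^\perp\cap W_{A_1}^{2,p}(X;\Lambda^{l-1}\otimes\ad P)$ with $\Delta_A\beta=d_A^*\omega$. Then $\omega=(\omega-d_A\beta)+d_A\beta$, where $d_A\beta\in W_{A_1}^{1,p}$ and $d_A^*(\omega-d_A\beta)=\Delta_A\beta-d_A^*d_A\beta=d_A d_A^*\beta$. When $l=1$ this vanishes, because $d_A^*\beta$ is a section of $\Lambda^{-1}\otimes\ad P=0$; this gives the decomposition directly.

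\emph{The main obstacle.} For $l\geq2$ the cross-term $d_A d_A^*\beta$ need not vanish, since $d_A^2=F_A\neq0$ and $(d_A,d_A^*)$ is therefore not an elliptic complex, so the naive Hodge splitting into exact, co-exact, and harmonic parts fails; this is the crux. It is handled by noting that $d_A d_A^*\beta$ again lies in $\Ran d_A$ and is $L^2$-orthogonal to $\cH$, and then either iterating the construction or, more transparently, combining the abstract $L^2$-orthogonal splitting $L^2(X;\Lambda^l\otimes\ad P)=\Ker d_A^*\oplus\overline{\Ran d_A}$ (valid for the closed densely defined operator $d_A$ and its adjoint) with the closed-range statement for $d_A$ on $W_{A_1}^{2,p}$, which is the companion Lemma \ref{lem:Closed_range_operators_on_Lp_spaces}, and the elliptic regularity above, to identify $\overline{\Ran d_A}^{\,L^2}\cap W_{A_1}^{1,p}(X;\Lambda^l\otimes\ad P)$ with $\Ran(d_A:W_{A_1}^{2,p}\to W_{A_1}^{1,p})$. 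A secondary point throughout is that the coefficients of $d_A$, $d_A^*$, $\Delta_A$ are only of class $W^{1,q}$, so no bootstrapping past $W^{2,p}$ is available; this is why $W^{2,p}$ is the correct regularity for $\beta$ (its defining equation has right-hand side only in $L^p$) and $W^{1,p}$ for the two summands of $\omega$.
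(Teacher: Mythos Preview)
Your continuity arguments are correct and essentially match the paper's: both write $A=A_1+a$ and estimate the lower-order term $a\times\xi$ via Sobolev multiplication (the paper uses $L^{2p}\times L^{2p}\to L^p$ uniformly for $p\geq d/2$ rather than splitting into the cases $p<d$, $p=d$, $p>d$, but the content is the same).

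For the decomposition, the paper takes a much more direct route and does \emph{not} invoke the Fredholm theory of $\Delta_A$, Corollary \ref{cor:Fredholmness_and_index_Laplace_operator_on_W2p_Sobolev_connection}, or Lemma \ref{lem:Regularity_distributional_solution_Hodge_Laplacian_Sobolev} at all. It simply observes that $W^{1,p}(X)\subset L^2(X)$ for $p\geq d/2$ and $d\geq 2$, asserts the $L^2$-orthogonal splitting $W_{A_1}^{1,p}=(\Ran d_A)^\perp\oplus\Ran d_A$, and then identifies $(\Ran d_A)^\perp\cap W_{A_1}^{1,p}$ with $\Ker d_A^*$ via the adjoint relation $(\eta,d_A\xi)_{L^2}=(d_A^*\eta,\xi)_{L^2}$. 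The paper even remarks explicitly (just before Lemma \ref{lem:Closed_range_operators_on_Lp_spaces}) that the closed-range statement is \emph{not} used here.

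Your constructive approach via $\Delta_A\beta=d_A^*\omega$ is more explicit and gives a clean, complete argument when $l=1$, which is the only case actually used in the proof of Corollary \ref{maincor:Slice} and in the analogue for pairs. Your identification of the obstruction for $l\geq2$---that $d_A d_A^*\beta$ need not vanish because $d_A^2=F_A\neq0$---is legitimate, and your sketched fixes (iteration, or combining the abstract $L^2$ splitting with closed range and regularity) are plausible but not carried out. The paper's terse argument does not address this obstruction explicitly either: the assertion that the $L^2$-orthogonal projection onto $\overline{\Ran d_A}$ maps $W_{A_1}^{1,p}$ back into $\Ran(d_A:W_{A_1}^{2,p}\to W_{A_1}^{1,p})$ is precisely the content of the decomposition, and the paper justifies only the integration-by-parts identification of the complement. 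So your more careful approach exposes a subtlety for general $l$ that the paper's brief argument glides over; for the applications in the paper only $l=1$ matters, and there your argument is complete.
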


\begin{proof}
If $\xi \in W_{A_1}^{2,p}(X;\Lambda^{l-1}\otimes\ad P)$ and we write $A = A_1+a$, with $a \in  W_{A_1}^{1,q}(X;\Lambda^1\otimes\ad P)$, then $d_A\xi = d_{A_1}\xi + [a,\xi]$ and using the fact that $W^{1,p}(X) \subset L^{2p}(X)$ for any $p\geq d/2$ by \cite[Theorem 4.12]{AdamsFournier} and applying the Kato Inequality \cite[Equation (6.20]{FU},
\begin{align*}
\|d_A\xi\|_{L^p(X)} &\leq z\left(\|\nabla_{A_1}\xi\|_{L^p(X)} + \|a\|_{L^{2p}(X)} \|\xi\|_{L^{2p}(X)} \right)
\\
&\leq z\left(\|\nabla_{A_1}\xi\|_{L^p(X)} + \|a\|_{W_{A_1}^{1,p}(X)} \|\xi\|_{W_{A_1}^{1,p}(X)} \right)
\\
&\leq z\left(1 + \|a\|_{W_{A_1}^{1,q}(X)} \right) \|\xi\|_{W_{A_1}^{1,p}(X)},
\end{align*}
where $z = z(g,G,p,q) \in [1,\infty)$ and we use the fact that $q\geq p$. Similarly,
\[
\|d_A^*\xi\|_{L^p(X)} \leq z\left(1 + \|a\|_{W_{A_1}^{1,q}(X)} \right) \|\xi\|_{W_{A_1}^{1,p}(X)}
\]
and so the operator $d_A^*:W_{A_1}^{1,p}(X;\Lambda^l\otimes\ad P)\to L^p(X;\Lambda^{l-1}\otimes\ad P)$ is continuous.

Moreover, defining $r \in [p,\infty]$ by $1/p = 1/q + 1/r$, we recall that by \cite[Theorem 4.12]{AdamsFournier} we have
\begin{inparaenum}[\itshape i\upshape)]
\item $W^{2,p}(X) \subset L^r(X)$ for any $r \in [1,\infty)$ when $p = d/2$, and
\item $W^{2,p}(X) \subset L^\infty(X)$ when $p > d/2$.
\end{inparaenum}
Thus, using
\[
\nabla_{A_1}d_A\xi = \nabla_{A_1}d_{A_1}\xi + \nabla_{A_1}a\times\xi + a\times\nabla_{A_1}\xi,
\]
we see that
\begin{align*}
  {}&\|\nabla_{A_1}d_A\xi\|_{L^p(X)}
  \\
    &\leq z\left(\|\nabla_{A_1}^2\xi\|_{L^p(X)} + \|\nabla_{A_1}a\|_{L^q(X)} \|\xi\|_{L^r(X)} + \|a\|_{L^{2p}(X)} \|\nabla_{A_1}\xi\|_{L^{2p}(X)} \right)
\\
&\leq z\left(\|\nabla_{A_1}^2\xi\|_{L^p(X)} + \|\nabla_{A_1}a\|_{L^q(X)} \|\xi\|_{W_{A_1}^{2,p}(X)} + \|a\|_{W_{A_1}^{1,p}(X)} \|\nabla_{A_1}\xi\|_{W_{A_1}^{1,p}(X)} \right)
\\
&\leq z\left(1 + \|a\|_{W_{A_1}^{1,q}(X)} \right) \|\xi\|_{W_{A_1}^{2,p}(X)}.
\end{align*}
We conclude that the operator
\[
  d_A:W_{A_1}^{2,p}(X;\Lambda^{l-1}\otimes\ad P)\to W_{A_1}^{1,p}(X;\Lambda^l\otimes\ad P)
\]
is also continuous.

Note that $W^{1,p}(X) \subset L^2(X)$ when $p \geq 2$ or, when $1 \leq p < 2$, if $p^* := dp/(d-p) \geq 2$, that is, $dp \geq 2d-2p$ or $p \geq 2d/(d+2)$. But $p \geq d/2$ by hypothesis and $d/2 \geq 2d/(d+2)$ for all $d \geq 2$, so we have $W^{1,p}(X) \subset L^2(X)$ for all $p \geq d/2$ and $d \geq 2$. Using $\perp$ to denote $L^2$-orthogonal complement and $\oplus$ to denote $L^2$-orthogonal decomposition, we have
\begin{align*}
{}&W_{A_1}^{1,p}(X;\Lambda^l\otimes\ad P)
    \\
&\quad = \left(\Ran\left(d_A: W_{A_1}^{2,p}(X;\Lambda^{l-1}\otimes\ad P)\to W_{A_1}^{1,p}(X;\Lambda^l\otimes\ad P)\right)\right)^\perp
\\
&\qquad\qquad \oplus \Ran\left(d_A: W_{A_1}^{2,p}(X;\Lambda^{l-1}\otimes\ad P)\to W_{A_1}^{1,p}(X;\Lambda^l\otimes\ad P)\right).
\end{align*}
For all $\eta \in W_{A_1}^{1,p}(X;\Lambda^l\otimes\ad P)$ and $\xi \in W_{A_1}^{2,p}(X;\Lambda^{l-1}\otimes\ad P)$ we have
\[
(\eta,d_A\xi)_{L^2(X)} = (d_A^*\eta,\xi)_{L^2(X)}
\]
and so
\[
  \eta \perp \Ran(d_A: W_{A_1}^{2,p}(X;\Lambda^{l-1}\otimes\ad P)\to W_{A_1}^{1,p}(X;\Lambda^l\otimes\ad P))
\]
if and only if
\[
  \eta \in \Ker(d_A^*: W_{A_1}^{1,p}(X;\Lambda^l\otimes\ad P)\to L^p(X;\Lambda^{l-1}\otimes\ad P)).
\]
This concludes the proof of the lemma.
\end{proof}

Although not required by the proofs of Lemma \ref{lem:Continuous_operators_on_Lp_spaces_and_L2-orthogonal_decompositions} or Corollary \ref{maincor:Slice}, it is useful to note that the operator $d_A$ in that statement has closed range.

\begin{lem}[Closed range operators on $L^p$ spaces]
\label{lem:Closed_range_operators_on_Lp_spaces}
Let $(X,g)$ be a closed, smooth Riemannian manifold of dimension $d \geq 2$, and $G$ be a compact Lie group, and $P$ be a smooth principal $G$-bundle over $X$. If $A$ is a connection on $P$ of class $W^{1,q}$ with $d/2 < q < \infty$, and $A_1$ is a $C^\infty$ reference connection on $P$, and $p$ obeys $d/2\leq p \leq q$, then the operator
\[
d_A:W_{A_1}^{2,p}(X;\ad P) \to W_{A_1}^{1,p}(X;\Lambda^1\otimes\ad P),
\]
has closed range.
\end{lem}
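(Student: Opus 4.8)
The plan is to deduce closed range of
\[
d_A:W_{A_1}^{2,p}(X;\ad P) \to W_{A_1}^{1,p}(X;\Lambda^1\otimes\ad P)
\]
from the corresponding statement for the Hodge Laplacian $\Delta_A = d_A^*d_A$ on $0$-forms, which is available from Corollary \ref{cor:Spectrum_Delta_A_Sobolev_and_W2p_apriori_estimate} (Fredholmness and the \apriori estimate \eqref{eq:W2p_apriori_estimate_Delta_A_orthogonal_kernel}). Since on $\Omega^0(X;\ad P)$ one has $\Delta_A = d_A^*d_A = \nabla_A^*\nabla_A$ (as noted in the proof of Lemma \ref{lem:W2p_apriori_estimate_Delta_A_smooth}), the kernel $K := \Ker(\Delta_A) \cap W_{A_1}^{2,p}(X;\ad P)$ coincides with $\Ker(d_A) \cap W_{A_1}^{2,p}(X;\ad P)$: indeed $\Delta_A\xi = 0$ forces $0 = (\Delta_A\xi,\xi)_{L^2(X)} = \|d_A\xi\|_{L^2(X)}^2$, hence $d_A\xi = 0$, and the converse is immediate. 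By Corollary \ref{cor:Spectrum_Delta_A_Sobolev_and_W2p_apriori_estimate} this space $K$ is finite-dimensional, so it has a closed complement and it suffices to prove that $d_A$ restricted to $K^\perp \cap W_{A_1}^{2,p}(X;\ad P)$ has closed range.

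The key step is an \apriori estimate: I claim there is a constant $C = C(A,A_1,g,G,p,q) \in [1,\infty)$ with
\[
\|\xi\|_{W_{A_1}^{2,p}(X)} \leq C\|d_A\xi\|_{W_{A_1}^{1,p}(X)},
\quad \forall\, \xi \in K^\perp \cap W_{A_1}^{2,p}(X;\ad P).
\]
To get this, observe that $d_A^*$ maps $W_{A_1}^{1,p}(X;\Lambda^1\otimes\ad P)$ continuously into $L^p(X;\ad P)$ by Lemma \ref{lem:Continuous_operators_on_Lp_spaces_and_L2-orthogonal_decompositions}, so $\|\Delta_A\xi\|_{L^p(X)} = \|d_A^*d_A\xi\|_{L^p(X)} \leq C\|d_A\xi\|_{W_{A_1}^{1,p}(X)}$; combining with \eqref{eq:W2p_apriori_estimate_Delta_A_orthogonal_kernel} from Corollary \ref{cor:Spectrum_Delta_A_Sobolev_and_W2p_apriori_estimate} (which applies on $K^\perp$ since $K = \Ker\Delta_A$) yields the displayed estimate. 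Closed range is then a standard consequence: if $d_A\xi_m \to \zeta$ in $W_{A_1}^{1,p}(X;\Lambda^1\otimes\ad P)$ with $\xi_m \in K^\perp \cap W_{A_1}^{2,p}(X;\ad P)$, then $\{d_A\xi_m\}$ is Cauchy, so by the estimate $\{\xi_m\}$ is Cauchy in $W_{A_1}^{2,p}(X;\ad P)$, hence $\xi_m \to \xi$ in that space for some $\xi \in K^\perp \cap W_{A_1}^{2,p}(X;\ad P)$; by continuity of $d_A$ (Lemma \ref{lem:Continuous_operators_on_Lp_spaces_and_L2-orthogonal_decompositions}) we get $d_A\xi = \zeta$, so $\zeta \in \Ran(d_A)$. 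Since $\Ran(d_A\restriction_{K^\perp}) = \Ran(d_A)$, the full operator has closed range.

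The main technical point to be careful about is the justification of the estimate $\|d_A^*d_A\xi\|_{L^p(X)} \leq C\|d_A\xi\|_{W_{A_1}^{1,p}(X)}$ when $A$ is only of class $W^{1,q}$ rather than smooth, but this is exactly the content of the continuity statement for $d_A^*$ already established in Lemma \ref{lem:Continuous_operators_on_Lp_spaces_and_L2-orthogonal_decompositions} under the hypotheses $q > d/2$ and $d/2 \leq p \leq q$, so no new work is needed there; one only needs to check that $d_A\xi$ genuinely lies in $W_{A_1}^{1,p}(X;\Lambda^1\otimes\ad P)$ for $\xi \in W_{A_1}^{2,p}(X;\ad P)$, which is the first assertion of that same lemma. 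Thus the proof is essentially a packaging of Corollary \ref{cor:Spectrum_Delta_A_Sobolev_and_W2p_apriori_estimate} and Lemma \ref{lem:Continuous_operators_on_Lp_spaces_and_L2-orthogonal_decompositions} together with the elementary Cauchy-sequence argument for closed range.
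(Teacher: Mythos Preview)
Your proof is correct and follows essentially the same approach as the paper: reduce to $(\Ker\Delta_A)^\perp$, use the continuity of $d_A^*:W_{A_1}^{1,p}\to L^p$ from Lemma~\ref{lem:Continuous_operators_on_Lp_spaces_and_L2-orthogonal_decompositions} together with the \apriori estimate \eqref{eq:W2p_apriori_estimate_Delta_A_orthogonal_kernel} from Corollary~\ref{cor:Spectrum_Delta_A_Sobolev_and_W2p_apriori_estimate}, and conclude via the standard Cauchy-sequence argument. Your explicit observation that $\Ker\Delta_A = \Ker d_A$ on $0$-forms is a helpful clarification the paper leaves implicit.
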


\begin{proof}
Note that the operator in the statement of the lemma is bounded by Lemma \ref{lem:Continuous_operators_on_Lp_spaces_and_L2-orthogonal_decompositions}. Let $\{\chi_n\}_{n\in\NN} \subset W_{A_1}^{2,p}(X;\ad P)$ and suppose that $d_A\chi_n \to \xi \in  W_{A_1}^{1,p}(X;\Lambda^1\otimes\ad P)$ as $n \to \infty$. Thus $d_A^*d_A\chi_n = \Delta_A\chi_n \to d_A^*\xi \in  L^p(X;\ad P)$ as $n \to \infty$. We may assume without loss of generality that $\{\chi_n\}_{n\in\NN} \subset (\Ker \Delta_A)^\perp$, where $\perp$ denotes $L^2$-orthogonal complement, and so the \apriori estimate \eqref{eq:W2p_apriori_estimate_Delta_A_orthogonal_kernel} in Corollary \ref{cor:Spectrum_Delta_A_Sobolev_and_W2p_apriori_estimate} then implies that
\[
\|\chi_n - \chi_m\|_{W_{A_1}^{2,p}(X)} \leq C\|\Delta_A(\chi_n - \chi_m)\|_{L^p(X)}, \quad \forall\, n, m \in \NN.
\]
Hence, the sequence $\{\chi_n\}_{n\in\NN}$ is Cauchy in $W_{A_1}^{2,p}(X;\ad P)$ and thus $\chi_n \to \chi \in W_{A_1}^{2,p}(X;\ad P)$ and $d_A\chi_n \to d_A\chi \in  W_{A_1}^{1,p}(X;\Lambda^1\otimes\ad P)$ as $n \to \infty$. Therefore, $d_A$ on $W_{A_1}^{2,p}(X;\ad P)$ has closed range.
\end{proof}

We are now ready to complete the

\begin{proof}[Proof of Corollary \ref{maincor:Slice}]
Every compact Lie group has a compatible structure of a real analytic manifold \cite[Section III.4, Exercise 1]{BrockertomDieck} and this structure is unique by \cite[Theorem 2.11.3]{Varadarajan}. In particular, the exponential map is a real analytic diffeomorphism from an open neighborhood of the origin in the Lie algebra $\fg$ onto an open neighborhood of the identity in $G$. We recall from \cite[Proposition A.2]{FU} that $\Aut^{k+1,2}(P)$ may be given the structure of a Hilbert Lie group when $k \geq 2$ and, because $W^{2,q}(X)$ (with $q>d/2$) and $H^{k+1}(X) = W^{k+1,2}(X)$ are Banach algebras and contained in $C(X)$ (the Banach algebra of continuous functions on $X$), the same arguments show that $\Aut^{2,q}(P)$ may be given the structure of a $C^\infty$ Banach Lie group and that both $\Aut^{2,q}(P)$ and $\Aut^{k,2}(P)$ may be given the structure of real analytic manifolds.

According to \cite[Proposition A.3]{FU}, the (right) action of $\Aut^{k+1,2}(P)$ on $\sA^{k,2}(P)$ is $C^\infty$ when $k \geq 2$ and the same proof applies \mutatis to show that this action is real analytic and that the action \eqref{eq:Right_action_group_gauge_transformations_on_affine_space_connections} of $\Aut^{2,q}(P)$ on $\sA^{1,q}(P)$ is not only $C^\infty$ but also real analytic.

The only additional ingredient one needs to show that $\sB^*(P)$ is real analytic is the observation that the map $H$ defined in \eqref{eq:Donaldson_Kronheimer_2-3-5a} and \eqref{eq:Donaldson_Kronheimer_2-3-5b} is real analytic and thus, rather than apply the customary $C^\infty$ Inverse Function Theorem one can instead apply its real analytic counterpart \cite[Section 2.1.1]{Feehan_Maridakis_Lojasiewicz-Simon_harmonic_maps_v6} to show that for each $A_0 \in \sA^{1,q}(P)$, the map defined in the statement of \cite[Theorem 3.2]{DK},
\begin{multline}
\label{eq:Freed_Uhlenbeck_3-2map}
\Xi_{A_0}: \sA^{1,q}(P) \supset \sO_{A_0} \ni A \mapsto (u(A) - A_0, u)
\\
\in \Ker\left\{d_{A_0}^*:W_{A_1}^{1,q}(X;\Lambda^1\otimes\ad P)\to L^q(X;\ad P)\right\} \times \Aut^{2,q}(P),
\end{multline}
is a real analytic diffeomorphism onto an open neighborhood of $(0,\id_P)$, for a small enough open neighborhood $\sO_{A_0}$ of a $W^{1,q}$ connection $A_0$ on $P$ and the gauge transformation $u$ is produced by Theorem \ref{mainthm:Feehan_proposition_3-4-4_Lp}, so $u(A)$ is in Coulomb gauge with respect to $A_0$. The open neighborhood $\sO_{A_0}$ may be chosen to be $\Aut^{2,q}(P)$-invariant and the map $\Xi_{A_0}$ is $\Aut^{2,q}(P)$-equivariant. The proof that the quotient $\sB(P)$ is a Hausdorff topological space follows \mutatis either by adapting the proof of \cite[Corollary, p. 50]{FU} or by adapting the proof of \cite[Lemma 4.2.4]{DK}, using the observation the $L^2$ distance function,
\begin{equation}
\label{eq:Donaldson_Kronheimer_4-2-3}
\dist_{L^2}([A], [B]) := \inf_{u \in \Aut^{2,q}(P)} \|u(A) - B\|_{L^2(X)},
\end{equation}
is a metric on $\sB(P)$ and, in particular, that the quotient topology is metrizable. This completes the proof of Corollary \ref{maincor:Slice}.
\end{proof}

\section{Existence of Coulomb gauge transformations for pairs}
\label{sec:Coulomb_gauge_slice_quotient_space_pairs}
We now adapt the construction of Section \ref{sec:Coulomb_gauge_slice_quotient_space_connections} to the case of pairs. In \cite[p. 280]{FL1}, we employed a left action of $\Aut(P)$ on the affine space of pairs, $\sA(P) \times C^\infty(X;E)$, so $\Aut(P)$ acts on $\sA(P)$ by pushforward (consistent with Donaldson and Kronheimer \cite{DK}) and on $C^\infty(X;E)$ in the usual way, which is a left action. Here, to be consistent with Section \ref{sec:Coulomb_gauge_slice_quotient_space_connections} we shall use the opposite convention and continue to let $\Aut(P)$ act on $\sA(P)$ by pullback (consistent with Freed and Uhlenbeck \cite{FU} and Uhlenbeck \cite{UhlLp}) and use inversion to define a right action on $C^\infty(X;E)$, so that
\begin{multline}
  \label{eq:Action_gauge_transformation_on_pairs}
u(A,\Phi) := (u^*A, u^{-1}\Phi),
\\
\forall\, A \in \sA(P),\ \Phi \in C^\infty(X;E), \text{ and } u \in \Aut(P),
\end{multline}
giving a smooth (affine) right action,
\[
\sA(P) \times C^\infty(X;E) \times \Aut(P) \to \sA(P) \times C^\infty(X;E).
\]
Passing to Banach space completions, but temporarily suppressing the $W^{1,q}$ reference connection $A_0$ (for $q > d/2$) from our notation, the differential of the smooth map,
\[
\Aut^{2,q}(P) \ni u \mapsto u(A,\Phi) \in \sA^{1,q}(P) \times W^{1,q}(X;E),
\]
at $\id_P \in \Aut^{2,q}(P)$ is given by
\begin{multline}
\label{eq:Differential_gauge_transformation_action_on_pair_at_identity}
W_{A_1}^{2,q}(X;\ad P) \ni \xi
\\
\mapsto d_{A,\Phi}\xi := (d_A\xi, -\xi\Phi)
\in
W_{A_1}^{1,q}(X;\ad P) \oplus W_{A_1}^{1,q}(X;E),
\end{multline}
using $u = e^{\xi}$ for $u$ near $\id_P$; compare \cite[Proposition 2.1]{FL1}. We say that a $W^{1,q}$ pair $(A,\Phi)$ is in \emph{Coulomb gauge relative to $(A_0,\Phi_0)$} if
\begin{equation}
\label{eq:Pair_Coulomb_gauge_relative_to_reference_pair}
d_{A_0,\Phi_0}^*((A,\Phi) - (A_0,\Phi_0)) = 0,
\end{equation}
As in the case of Theorem \ref{mainthm:Feehan_proposition_3-4-4_Lp}, the proof of Theorem \ref{mainthm:Feehan_proposition_3-4-4_Lp_pairs} is facilitated by preparatory lemmata and a proposition, which we now state. For convenience, we define
\[
\Delta_{A_0,\Phi_0}
:=
d_{A_0,\Phi_0}^*d_{A_0,\Phi_0} \quad\text{on } W_{A_1}^{2,q}(X;\ad P).
\]
The proofs of Propositions \ref{prop:W2p_apriori_estimate_Delta_A_Sobolev} and \ref{prop:Gilbarg_Trudinger_theorem_8-6} and Corollary \ref{cor:Spectrum_Delta_A_Sobolev_and_W2p_apriori_estimate} adapt \mutatis to establish the following analogues for pairs, specialized to the case $l=0$.

\begin{prop}[\Apriori $L^p$ estimate for a Laplace operator with Sobolev coefficients]
\label{prop:W2p_apriori_estimate_Delta_APhi_Sobolev}
Let $(X,g)$ be a closed, smooth Riemannian manifold of dimension $d \geq 2$, and $G$ be a compact Lie group, $P$ be a smooth principal $G$-bundle over $X$, and $E = P\times_\varrho\EE$ be a smooth Hermitian vector bundle over $X$ defined by a finite-dimensional unitary representation, $\varrho: G \to \Aut_\CC(\EE)$. If $(A,\Phi)$ is a $W^{1,q}$ pair on $(P,E)$ with $q > d/2$, and $A_1$ is a $C^\infty$ connection on $P$, and $p$ obeys $d/2 \leq p \leq q$, then
\begin{equation}
\label{eq:DeltaAPhi_W2p_to_W1p}
\Delta_{A,\Phi}:W_{A_1}^{2,p}(X;\ad P) \to L^p(X;\ad P)
\end{equation}
is a bounded operator. If in addition $p \in (1,\infty)$, then there is a constant $C = C(A,\Phi,A_1,g,G,p,q) \in [1,\infty)$ such that
\begin{equation}
\label{eq:W2p_apriori_estimate_Delta_APhi_Sobolev}
\|\xi\|_{W_{A_1}^{2,p}(X)} \leq C\left(\|\Delta_{A,\Phi}\xi\|_{L^p(X)}
+ \|\xi\|_{L^p(X)}\right),
\quad
\forall\, \xi \in W_{A_1}^{2,p}(X;\ad P).
\end{equation}
\end{prop}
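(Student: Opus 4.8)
The plan is to follow the template of the proof of Proposition \ref{prop:W2p_apriori_estimate_Delta_A_Sobolev}, adapting it to the pair operator $\Delta_{A,\Phi} = d_{A,\Phi}^*d_{A,\Phi}$ acting on $W_{A_1}^{2,p}(X;\ad P)$. First I would unwind the definition: using \eqref{eq:Differential_gauge_transformation_action_on_pair_at_identity}, for $\xi\in W_{A_1}^{2,q}(X;\ad P)$ one has $d_{A,\Phi}\xi = (d_A\xi, -\xi\Phi)$, and taking the $L^2$-adjoint componentwise gives
\[
\Delta_{A,\Phi}\xi = d_A^*d_A\xi + (\xi\Phi)^*\Phi = \Delta_A\xi + \mathfrak{m}(\Phi)\xi,
\]
where $\mathfrak{m}(\Phi)$ is the zeroth-order, fiberwise bilinear operator $\xi\mapsto (\xi\Phi)^*\Phi$ acting on sections of $\ad P$ (pointwise bounded by $|\Phi|^2$ up to a universal constant depending on $\varrho$). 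Thus $\Delta_{A,\Phi}$ differs from the Hodge Laplacian $\Delta_A$ (specialized to $l=0$, so $\Delta_A = \nabla_A^*\nabla_A$) only by the zeroth-order term $\mathfrak{m}(\Phi)$. Boundedness of the operator \eqref{eq:DeltaAPhi_W2p_to_W1p} then follows from the boundedness of $\Delta_A$ in \eqref{eq:DeltaA_W2p_to_W1p} (Proposition \ref{prop:W2p_apriori_estimate_Delta_A_Sobolev} with $l=0$) together with an estimate on $\|\mathfrak{m}(\Phi)\xi\|_{L^p(X)}$.

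The key estimate is therefore the $L^p$ bound on the perturbation term. Writing $1/p = 1/(p/2) \cdot$ — more precisely, I would use Hölder with the continuous multiplication $L^{r}(X)\times L^{r}(X)\times L^{s}(X)\to L^p(X)$ adapted to the embeddings available: since $\Phi\in W_{A_1}^{1,q}(X;E)$ with $q\geq p\geq d/2$, the Sobolev embedding $W^{1,q}(X)\subset L^{2q}(X)$ (valid for $q\geq d/2$, and trivial for $q\geq d$, exactly as invoked in the proof of Proposition \ref{prop:W2p_apriori_estimate_Delta_A_Sobolev}) gives $|\Phi|^2\in L^{q}(X)$, and the embedding $W^{2,p}(X)\subset L^r(X)$ with $1/p = 1/q + 1/r$ (Case (i)/(ii) from \cite[Theorem 4.12]{AdamsFournier}, with $r$ finite when $p=d/2$ and $r=\infty$ when $p>d/2$) gives $\xi\in L^r(X)$. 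Hence
\[
\|\mathfrak{m}(\Phi)\xi\|_{L^p(X)}
\leq
z\,\||\Phi|^2\|_{L^q(X)}\,\|\xi\|_{L^r(X)}
\leq
z\,\|\Phi\|_{W_{A_1}^{1,q}(X)}^2\,\|\xi\|_{W_{A_1}^{2,p}(X)},
\]
for $z=z(g,G,p,q)\in[1,\infty)$, using the Kato inequality \cite[Equation (6.20)]{FU} to pass from $\nabla_A$ to $\nabla_{A_1}$ norms. This is exactly the structure of the term $z\|\,|a|^2\,\|_{L^q(X)}\|\xi\|_{L^r(X)}$ already handled in the proof of Proposition \ref{prop:W2p_apriori_estimate_Delta_A_Sobolev}, so no new analytic input is needed.

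Combining, for $\xi\in W_{A_1}^{2,p}(X;\ad P)$ one has
\[
\|\Delta_{A,\Phi}\xi\|_{L^p(X)}
\geq
\|\Delta_A\xi\|_{L^p(X)} - \|\mathfrak{m}(\Phi)\xi\|_{L^p(X)},
\]
and applying the a priori estimate \eqref{eq:W2p_apriori_estimate_Delta_A_Sobolev} for $\Delta_A$ (Proposition \ref{prop:W2p_apriori_estimate_Delta_A_Sobolev} with $l=0$) gives
\[
\|\xi\|_{W_{A_1}^{2,p}(X)}
\leq
C\left(\|\Delta_{A,\Phi}\xi\|_{L^p(X)} + \|\mathfrak{m}(\Phi)\xi\|_{L^p(X)} + \|\xi\|_{L^p(X)}\right).
\]
Here the $L^p$ bound on $\mathfrak{m}(\Phi)\xi$ carries a factor $\|\Phi\|_{W_{A_1}^{1,q}(X)}^2\|\xi\|_{W_{A_1}^{2,p}(X)}$, which is \emph{not} small in general, so a naive rearrangement fails. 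The fix — and the one point requiring a little care — is to first replace $\Phi$ by a smooth approximation $\Phi_s$ with $\|\Phi-\Phi_s\|_{W_{A_1}^{1,q}(X)}\leq\eps$ for a small $\eps=\eps(g,G,p,q)$, absorb the resulting $\eps^2$-small term by rearrangement, and treat $\mathfrak{m}(\Phi_s)$ as an additional bounded zeroth-order term with $C^\infty$ coefficients, exactly paralleling the decomposition $A = A_s + a$, $A_s = A_1 + a_1$ used in the proof of Proposition \ref{prop:W2p_apriori_estimate_Delta_A_Sobolev}; the term $\mathfrak{m}(\Phi_s)\xi$ is then $L^p$-bounded by $C\|\xi\|_{L^p(X)}$ (since $\Phi_s\in L^\infty$) and can simply be moved to the right-hand side without rearrangement. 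Alternatively — and more cleanly — one observes $\mathfrak{m}(\Phi):W_{A_1}^{2,p}(X;\ad P)\to L^p(X;\ad P)$ factors through the compact embedding $W_{A_1}^{2,p}(X)\Subset L^r(X)$ (for suitable finite $r$), hence is a compact perturbation of $\Delta_A$ relative to $W_{A_1}^{2,p}$, and one invokes the standard fact that the full a priori estimate \eqref{eq:W2p_apriori_estimate_Delta_A_Sobolev} is preserved under compact (hence also relatively bounded with small bound, after the smoothing trick above) perturbations. The main obstacle is thus purely bookkeeping — ensuring the smoothing trick is applied to $\Phi$ in tandem with the existing smoothing of $A$ so that all non-small terms carry only $C^\infty$, hence $L^\infty$, coefficients — and it introduces no genuinely new difficulty beyond those already resolved in the proof of Proposition \ref{prop:W2p_apriori_estimate_Delta_A_Sobolev}.
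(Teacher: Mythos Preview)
Your proposal is correct and is precisely the ``mutatis mutandis'' adaptation that the paper invokes: the paper does not give a separate proof but simply asserts that the proofs of Proposition \ref{prop:W2p_apriori_estimate_Delta_A_Sobolev} (and its companions) carry over to the pair Laplacian $\Delta_{A,\Phi}$ specialized to $l=0$. Your identification of $\Delta_{A,\Phi} = \Delta_A + \mathfrak{m}(\Phi)$ with $\mathfrak{m}(\Phi)$ a zeroth-order term quadratic in $\Phi$, together with the simultaneous smooth approximation $(A_s,\Phi_s)$ of $(A,\Phi)$ so that all non-small coefficients become $L^\infty$, is exactly what that adaptation amounts to.
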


\begin{prop}[Spectral properties of a Laplace operator with Sobolev coefficients]
\label{prop:Gilbarg_Trudinger_theorem_8-6_pairs}
Let $(X,g)$ be a closed, smooth Riemannian manifold of dimension $d \geq 2$, and $G$ be a compact Lie group, $P$ be a smooth principal $G$-bundle over $X$, and $E = P\times_\varrho\EE$ be a smooth Hermitian vector bundle over $X$ defined by a finite-dimensional unitary representation, $\varrho: G \to \Aut_\CC(\EE)$. If $(A,\Phi)$ is a $W^{1,q}$ pair on $(P,E)$ with $d/2 < q < \infty$, and $A_1$ is a $C^\infty$ reference connection on $P$, and $p \in (1,\infty)$ obeys $d/2 \leq p \leq q$, then the spectrum, $\sigma(\Delta_{A,\Phi})$, of the unbounded operator,
\[
\Delta_{A,\Phi}: \sD(\Delta_{A,\Phi}) \subset L^p(X;\ad P) \to L^p(X;\ad P),
\]
is countable without accumulation points, consisting of non-negative, real eigenvalues, $\lambda$, with finite multiplicities, $\dim\Ker(\Delta_{A,\Phi}-\lambda)$.
\end{prop}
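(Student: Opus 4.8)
The plan is to establish Proposition \ref{prop:Gilbarg_Trudinger_theorem_8-6_pairs} by mimicking, step by step, the proof of Proposition \ref{prop:Gilbarg_Trudinger_theorem_8-6} for connections, since the operator $\Delta_{A,\Phi} = d_{A_0,\Phi_0}^*d_{A_0,\Phi_0}$ — wait, more precisely $\Delta_{A,\Phi} = d_{A,\Phi}^*d_{A,\Phi}$ with $d_{A,\Phi}\xi = (d_A\xi, -\xi\Phi)$ from \eqref{eq:Differential_gauge_transformation_action_on_pair_at_identity} — differs from $\Delta_A$ only by the lower-order, self-adjoint, non-negative zeroth-order term coming from the section $\Phi$. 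Concretely, one computes $\Delta_{A,\Phi}\xi = d_A^*d_A\xi + \Phi^*\cdot(\xi\Phi)$ (up to signs and the appropriate real/Hermitian pairing), so $\Delta_{A,\Phi} = \Delta_A + V_\Phi$, where $V_\Phi$ is a bounded, self-adjoint, non-negative multiplication-type operator determined pointwise by $\Phi\otimes\Phi^*$. Since $\Phi \in W^{1,q}(X;E)$ with $q > d/2$, the term $V_\Phi$ is lower order and controlled in the same way the lower-order terms of $\Delta_A$ were controlled in the proof of Proposition \ref{prop:W2p_apriori_estimate_Delta_A_Sobolev}; indeed the excerpt already records that the proofs of Propositions \ref{prop:W2p_apriori_estimate_Delta_A_Sobolev} and \ref{prop:Gilbarg_Trudinger_theorem_8-6} and Corollary \ref{cor:Spectrum_Delta_A_Sobolev_and_W2p_apriori_estimate} "adapt \emph{mutatis mutandis}" to the pair setting, giving Proposition \ref{prop:W2p_apriori_estimate_Delta_APhi_Sobolev} and its corollary.

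First I would invoke the pair analogue of Corollary \ref{cor:Fredholmness_and_index_Laplace_operator_on_W2p_Sobolev_connection} (established via Appendix \ref{sec:Fredholm_properties_elliptic_PDEs_Sobolev_spaces}, noting $\Delta_{A,\Phi}$ has the same principal symbol, namely $g$ times the identity on $\ad P$, as $\Delta_A$, since the extra term is zeroth order) to conclude that
\[
\Delta_{A,\Phi}: W_{A_1}^{2,p}(X;\ad P) \to L^p(X;\ad P)
\]
is Fredholm of index zero. Setting $K := \Ker(\Delta_{A,\Phi})$, which is finite-dimensional, the restriction $\Delta_{A,\Phi}: K^\perp\cap W_{A_1}^{2,p} \to K^\perp\cap L^p$ is invertible, and one defines the Green's operator $G_{A,\Phi}$ exactly as before. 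Composing with the compact Sobolev embedding $W_{A_1}^{2,p}(X;\ad P) \Subset L^p(X;\ad P)$ (\cite[Theorem 6.3]{AdamsFournier}, \cite[Proposition 6.3]{Brezis}) yields that $G_{A,\Phi}: L^p \to L^p$ is compact, so by \cite[Theorem 4.25]{Rudin} its spectrum is countable with no accumulation point except possibly $0$, consisting of eigenvalues of finite multiplicity; the spectral mapping $\lambda \mapsto \lambda^{-1}$ between $\sigma(\Delta_{A,\Phi})\setminus\{0\}$ and $\sigma(G_{A,\Phi})\setminus\{0\}$ then transfers this to $\Delta_{A,\Phi}$.

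For reality of the eigenvalues, I would observe that $\Delta_{A,\Phi}$ is formally $L^2$-self-adjoint — this is immediate from its form $d_{A,\Phi}^*d_{A,\Phi}$, and the added term $V_\Phi$ is self-adjoint since it is fiberwise a non-negative Hermitian (here, real-symmetric on $\ad P$) multiplication operator — so $G_{A,\Phi}$ is bounded and self-adjoint on $L^2(X;\ad P)$, whence $\sigma(G_{A,\Phi}) \subset \RR$ by \cite[Theorem VI.8]{Reed_Simon_v1} and therefore $\sigma(\Delta_{A,\Phi}) \subset \RR$. Non-negativity follows from the identity $(\Delta_{A,\Phi}\xi, \xi)_{L^2(X)} = \|d_A\xi\|_{L^2(X)}^2 + \|\xi\Phi\|_{L^2(X)}^2 \geq 0$, so if $\Delta_{A,\Phi}\xi = \lambda\xi$ with $\xi \neq 0$ then $\lambda\|\xi\|_{L^2}^2 \geq 0$, hence $\lambda \geq 0$. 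I expect the main obstacle — though it is really a matter of bookkeeping rather than a genuine difficulty — to be verifying carefully that the zeroth-order term $V_\Phi$ genuinely defines a bounded (indeed compact relative to $\Delta_{A_1}$) perturbation on the relevant $L^p$ scale when $\Phi$ is only of class $W^{1,q}$ with $q$ possibly less than $d$; this is handled exactly as the lower-order terms $a \times \nabla_{A_1}\xi$ and $a\times a \times\xi$ were handled in the proof of Proposition \ref{prop:W2p_apriori_estimate_Delta_A_Sobolev}, using the Sobolev embeddings $W^{1,q}(X) \subset L^{2q}(X)$ and $W^{2,p}(X) \subset L^r(X)$ for $p \geq d/2$, and no new phenomenon arises. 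Alternatively, one could shortcut the argument, as in Remark \ref{rmk:Gilbarg_Trudinger_theorem_8-6_and_Weyl_compact_perturbation}, by noting $\Delta_{A,\Phi} - \Delta_{A_1}$ is compact relative to $\Delta_{A_1}$ and applying Weyl's theorem \cite[Theorem IV.5.35]{Kato}.
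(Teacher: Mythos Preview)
Your proposal is correct and follows exactly the approach the paper takes: the paper simply states that the proofs of Propositions \ref{prop:W2p_apriori_estimate_Delta_A_Sobolev} and \ref{prop:Gilbarg_Trudinger_theorem_8-6} and Corollary \ref{cor:Spectrum_Delta_A_Sobolev_and_W2p_apriori_estimate} adapt \emph{mutatis mutandis} to the pair setting (specialized to $l=0$), and your write-up spells out precisely those adaptations, including the correct form $(\Delta_{A,\Phi}\xi,\xi)_{L^2} = \|d_A\xi\|_{L^2}^2 + \|\xi\Phi\|_{L^2}^2$ for the non-negativity step. Your mention of the Weyl-theorem alternative also mirrors the paper's Remark \ref{rmk:Gilbarg_Trudinger_theorem_8-6_and_Weyl_compact_perturbation}.
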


\begin{cor}[\Apriori $L^p$ estimate for a Laplace operator with Sobolev coefficients]
\label{cor:Spectrum_Delta_A_Sobolev_and_W2p_apriori_estimate_pairs}
Let $(X,g)$ be a closed, smooth Riemannian manifold of dimension $d \geq 2$, and $G$ be a compact Lie group and $P$ be a smooth principal $G$-bundle over $X$, and $E = P\times_\varrho\EE$ be a smooth Hermitian vector bundle over $X$ defined by a finite-dimensional unitary representation, and $\varrho: G \to \Aut_\CC(\EE)$. If $(A,\Phi)$ is a $W^{1,q}$ pair on $(P,E)$ with $d/2 < q < \infty$, and $A_1$ is a $C^\infty$ connection on $P$, and $p \in (1,\infty)$ obeys $d/2 \leq p \leq q$, then the kernel $\Ker \Delta_{A,\Phi}\cap W_{A_1}^{2,p}(X;\ad P)$ of the operator \eqref{eq:DeltaAPhi_W2p_to_W1p} is finite-dimensional. Moreover,
\begin{equation}
\label{eq:W2p_apriori_estimate_Delta_APhi_orthogonal_kernel}
\|\xi\|_{W_{A_1}^{2,p}(X)} \leq C\|\Delta_{A,\Phi}\xi\|_{L^p(X)},
\quad
\forall\, \xi \in \left(\Ker \Delta_{A,\Phi}\right)^\perp
\cap W_{A_1}^{2,p}(X;\ad P),
\end{equation}
where $\perp$ denotes $L^2$-orthogonal complement and $C = C(A,\Phi,A_1,g,G,p,q) \in [1,\infty)$.
\end{cor}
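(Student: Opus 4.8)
The plan is to follow the proof of Corollary \ref{cor:Spectrum_Delta_A_Sobolev_and_W2p_apriori_estimate} essentially verbatim, the only structural change being that the Hodge Laplacian $\Delta_A$ on $\Omega^l(X;\ad P)$ is replaced by $\Delta_{A,\Phi} = d_{A,\Phi}^*d_{A,\Phi}$ on $W_{A_1}^{2,p}(X;\ad P)$, where $d_{A,\Phi}\xi = (d_A\xi, -\xi\Phi)$ as in \eqref{eq:Differential_gauge_transformation_action_on_pair_at_identity}. This operator is, like $\Delta_A$, a second-order elliptic operator whose principal symbol is that of $d_A^*d_A$, and whose lower-order coefficients involve the $W^{1,q}$ datum $\Phi$; crucially it is \emph{formally self-adjoint and non-negative}, being of the form $T^*T$. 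The three inputs needed have all been recorded for pairs: the \apriori estimate \eqref{eq:W2p_apriori_estimate_Delta_APhi_Sobolev} in Proposition \ref{prop:W2p_apriori_estimate_Delta_APhi_Sobolev}, the spectral description of $\Delta_{A,\Phi}$ on $L^2(X;\ad P)$ in Proposition \ref{prop:Gilbarg_Trudinger_theorem_8-6_pairs}, and the compactness of the Sobolev embedding $W_{A_1}^{2,p}(X;\ad P) \Subset L^p(X;\ad P)$ from \cite[Theorem 6.3]{AdamsFournier}.

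\textbf{Finite-dimensional kernel and replacing $L^p$ by $L^2$.} First I would note that on $K := \Ker \Delta_{A,\Phi}\cap W_{A_1}^{2,p}(X;\ad P)$ the estimate \eqref{eq:W2p_apriori_estimate_Delta_APhi_Sobolev} shows that the $W_{A_1}^{2,p}$ and $L^p$ norms are equivalent, so the compact embedding $W_{A_1}^{2,p}\Subset L^p$ makes the identity on $K$ compact and hence $K$ finite-dimensional (equivalently, invoke the Fredholm analogue of Corollary \ref{cor:Fredholmness_and_index_Laplace_operator_on_W2p_Sobolev_connection} for $\Delta_{A,\Phi}$, whose proof adapts \mutatis). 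Next, exactly as in the proof of Corollary \ref{cor:Spectrum_Delta_A_Sobolev_and_W2p_apriori_estimate}, I would replace $\|\xi\|_{L^p(X)}$ on the right-hand side of \eqref{eq:W2p_apriori_estimate_Delta_APhi_Sobolev} by $\|\xi\|_{L^2(X)}$: this is immediate when $p\leq 2$, and when $p>2$ one chooses $s\in(p,\infty)$, applies the interpolation inequality \cite[Equation (7.10)]{GilbargTrudinger} together with the continuous Sobolev embedding $W^{2,p}(X)\subset L^s(X)$ (valid because $p\geq d/2$), and rearranges. This yields
\begin{equation*}
\|\xi\|_{W_{A_1}^{2,p}(X)} \leq C\left(\|\Delta_{A,\Phi}\xi\|_{L^p(X)} + \|\xi\|_{L^2(X)}\right), \quad \forall\, \xi \in W_{A_1}^{2,p}(X;\ad P).
\end{equation*}

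\textbf{The eigenvalue bound and the low-$p$ case.} By Proposition \ref{prop:Gilbarg_Trudinger_theorem_8-6_pairs} the spectrum of $\Delta_{A,\Phi}$ on $L^2(X;\ad P)$ is discrete, consists of non-negative eigenvalues, and I let $\mu[A,\Phi]$ denote its least positive eigenvalue; by Rayleigh's theorem \cite[p. 16]{Chavel} one has $\mu[A,\Phi]\|\xi\|_{L^2(X)}^2 \leq (\xi,\Delta_{A,\Phi}\xi)_{L^2(X)}$ for $\xi\in(\Ker\Delta_{A,\Phi})^\perp$. For $p\geq 2$, $\Delta_{A,\Phi}\xi\in L^p(X)\subset L^2(X)$ gives $\|\xi\|_{L^2(X)}\leq \mu[A,\Phi]^{-1}\|\Delta_{A,\Phi}\xi\|_{L^2(X)}\leq \mu[A,\Phi]^{-1}\|\Delta_{A,\Phi}\xi\|_{L^p(X)}\Vol_g(X)^{(p-2)/2p}$, and substituting into the displayed estimate and absorbing constants yields \eqref{eq:W2p_apriori_estimate_Delta_APhi_orthogonal_kernel}. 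The delicate case, which I expect to be the only real obstacle, is $d/2\leq p<2$ (which forces $d=2$ or $3$): here one estimates $\|\xi\|_{L^2(X)}^2\leq \mu[A,\Phi]^{-1}(\Delta_{A,\Phi}\xi,\xi)_{L^2(X)}\leq \mu[A,\Phi]^{-1}\|\Delta_{A,\Phi}\xi\|_{L^p(X)}\|\xi\|_{L^{p'}(X)}$, applies Young's inequality, uses the continuous Sobolev embedding $W^{2,p}(X)\subset L^{p'}(X)$ (valid since $p\geq d/2\geq 2d/(d+2)$ for all $d\geq 2$, by \cite[Theorem 4.12]{AdamsFournier}) to bound $\|\xi\|_{L^{p'}(X)}\leq C\|\xi\|_{W_{A_1}^{2,p}(X)}$, and then rearranges with a suitably small coefficient after combining with the displayed estimate. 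This gives \eqref{eq:W2p_apriori_estimate_Delta_APhi_orthogonal_kernel} in the remaining range of $p$, completing the proof.
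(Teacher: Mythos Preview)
Your proposal is correct and matches the paper's approach exactly: the paper states that the proof of Corollary \ref{cor:Spectrum_Delta_A_Sobolev_and_W2p_apriori_estimate} adapts \mutatis to the pairs setting, and you have carried out precisely that adaptation, invoking the pairs analogues (Propositions \ref{prop:W2p_apriori_estimate_Delta_APhi_Sobolev} and \ref{prop:Gilbarg_Trudinger_theorem_8-6_pairs}) in place of their connection-only counterparts. Your handling of the finite-dimensionality of the kernel, the $L^p$-to-$L^2$ reduction via interpolation, the Rayleigh quotient bound for $p\geq 2$, and the $L^p$--$L^{p'}$ duality argument for $d/2\leq p<2$ all reproduce the original proof of Corollary \ref{cor:Spectrum_Delta_A_Sobolev_and_W2p_apriori_estimate} faithfully.
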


The proof of Lemma \ref{lem:Surjectivity_perturbed_Laplace_operator} adapts \mutatis to establish the following analogue for pairs.

\begin{lem}[Surjectivity of a perturbed Laplace operator]
\label{lem:Surjectivity_perturbed_Laplace_operator_pairs}
Let $(X,g)$ be a closed, smooth Riemannian manifold of dimension $d \geq 2$, and $G$ be a compact Lie group, $P$ be a smooth principal $G$-bundle over $X$, and $E = P\times_\varrho\EE$ be a smooth Hermitian vector bundle over $X$ defined by a finite-dimensional unitary representation, $\varrho: G \to \Aut_\CC(\EE)$. Let $A_1$ be a $C^\infty$ connection on $P$ and $(A,\Phi)$ be a $W^{1,q}$ pair on $(P,E)$ with $d/2 < q < \infty$. Then there is a constant $\delta = \delta(A,\Phi,g) \in (0,1]$ with the following significance. If $(a,\phi) \in W_{A_1}^{1,q}(X;\Lambda^1\otimes\ad P\oplus E)$ obeys
\begin{equation}
\label{eq:Perturbation_Laplace_operator_small_pairs}
\|(a,\phi)\|_{L^d(X)} < \delta \quad \text{when } d \geq 3
\quad\text{or}\quad
\|(a,\phi)\|_{L^4(X)} < \delta \quad \text{when } d = 2,
\end{equation}
then the operator,
\[
d_{A,\Phi}^*d_{A+a,\Phi+\phi}:
\left(\Ker \Delta_{A,\Phi}\right)^\perp \cap W_{A_1}^{2,q}(X;\ad P)
\to
\left(\Ker \Delta_{A,\Phi}\right)^\perp \cap L^q(X;\ad P),
\]
is surjective.
\end{lem}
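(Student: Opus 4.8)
The plan is to transcribe, almost line for line, the proof of Lemma~\ref{lem:Surjectivity_perturbed_Laplace_operator}, replacing the Hodge Laplacian $\Delta_A = d_A^*d_A$ on $\Omega^0(X;\ad P)$ throughout by $\Delta_{A,\Phi} = d_{A,\Phi}^*d_{A,\Phi}$, where $d_{A,\Phi}\xi = (d_A\xi, -\xi\Phi)$ is the differential of the gauge action at a pair, as in \eqref{eq:Differential_gauge_transformation_action_on_pair_at_identity}. The inputs are the pair versions already recorded above --- Proposition~\ref{prop:W2p_apriori_estimate_Delta_APhi_Sobolev}, Proposition~\ref{prop:Gilbarg_Trudinger_theorem_8-6_pairs}, and Corollary~\ref{cor:Spectrum_Delta_A_Sobolev_and_W2p_apriori_estimate_pairs} --- together with the pair analogues of the regularity Lemma~\ref{lem:Regularity_distributional_solution_Hodge_Laplacian_Sobolev} and of Lemma~\ref{lem:Fredholm_index_zero_perturbed_Laplace_operator_Sobolev}, each obtained by repeating the respective proof, with the additional zeroth-order term $-\xi\Phi$ (whose coefficient $\Phi$ is of class $W^{1,q}$) estimated exactly as the first-order coefficient $a_0$ of $[a_0,\xi]$ was estimated in the connection case.

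First I would record that for $(a,\phi) = 0$ the operator is invertible by the pair analogue of Corollary~\ref{cor:Fredholmness_and_index_Laplace_operator_on_W2p_Sobolev_connection}, and that for general $(a,\phi)$ it is well-defined: if $\xi = d_{A,\Phi}^*d_{A+a,\Phi+\phi}\chi$ and $\eta \in \Ker\Delta_{A,\Phi}$, then $(\xi,\eta)_{L^2(X)} = (d_{A+a,\Phi+\phi}\chi, d_{A,\Phi}\eta)_{L^2(X)} = 0$ since $\Delta_{A,\Phi}\eta = 0$ forces $d_{A,\Phi}\eta = 0$, so the range lies in $(\Ker\Delta_{A,\Phi})^\perp \cap L^q(X;\ad P)$. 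Next, $T := d_{A,\Phi}^*d_{A+a,\Phi+\phi}$ is Fredholm of index zero, hence has closed range, so by \cite[Corollary 2.18~(iv)]{Brezis} one has $\Ran T = {}^\circ\Ker T^*$ with $T^* = d_{A+a,\Phi+\phi}^*d_{A,\Phi}$; restricting to the $L^2$-orthogonal complement of the finite-dimensional kernel $K := \Ker\Delta_{A,\Phi} \cap W_{A_1}^{2,q}(X;\ad P)$ (Corollary~\ref{cor:Spectrum_Delta_A_Sobolev_and_W2p_apriori_estimate_pairs}), surjectivity of $T$ on $K^\perp$ becomes equivalent to injectivity of $T^*$ on $K^\perp \cap L^{q'}(X;\ad P)$.

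For the contradiction, I would suppose $T^*\eta = 0$ for some nonzero $\eta \in K^\perp \cap L^{q'}(X;\ad P)$. The pair analogue of Lemma~\ref{lem:Regularity_distributional_solution_Hodge_Laplacian_Sobolev} promotes $\eta$ to $W_{A_1}^{2,q}(X;\ad P)$, and since $\eta \perp \Ker\Delta_{A,\Phi}$ is nonzero, Proposition~\ref{prop:Gilbarg_Trudinger_theorem_8-6_pairs} provides the least positive eigenvalue $\mu[A,\Phi] > 0$ of $\Delta_{A,\Phi}$ on $L^2(X;\ad P)$ with $\mu[A,\Phi] \leq \|d_{A,\Phi}\eta\|_{L^2(X)}^2/\|\eta\|_{L^2(X)}^2$; in particular $d_{A,\Phi}\eta \neq 0$, and using $\|\nabla_A\eta\|_{L^2(X)} \leq \|d_{A,\Phi}\eta\|_{L^2(X)}$ we obtain $\|\eta\|_{W_A^{1,2}(X)} \leq C_1\|d_{A,\Phi}\eta\|_{L^2(X)}$ with $C_1 = C_1(A,\Phi,g)$. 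Testing $T^*\eta = 0$ against $\chi = \eta$ and expanding $d_{A+a,\Phi+\phi}\eta = d_{A,\Phi}\eta + ([a,\eta], -\eta\phi)$ yields
\[
\|d_{A,\Phi}\eta\|_{L^2(X)}^2 \leq \left( \|[a,\eta]\|_{L^2(X)} + \|\eta\phi\|_{L^2(X)} \right)\|d_{A,\Phi}\eta\|_{L^2(X)},
\]
and estimating the right-hand side by $C_2\|(a,\phi)\|_{L^s(X)}\|\eta\|_{W_A^{1,2}(X)}\|d_{A,\Phi}\eta\|_{L^2(X)}$ --- via the continuous multiplication $L^d(X) \times L^{2d/(d-2)}(X) \to L^2(X)$ and $W^{1,2}(X) \subset L^{2d/(d-2)}(X)$ from \cite[Theorem 4.12]{AdamsFournier} when $d \geq 3$, and via $L^4(X) \times L^4(X) \to L^2(X)$ and $W^{1,2}(X) \subset L^4(X)$ when $d = 2$ (so $s = d$ if $d \geq 3$ and $s = 4$ if $d = 2$, $C_2 = C_2(g)$) --- and inserting the eigenvalue estimate gives $\|d_{A,\Phi}\eta\|_{L^2(X)}^2 \leq C_1C_2\|(a,\phi)\|_{L^s(X)}\|d_{A,\Phi}\eta\|_{L^2(X)}^2$. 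As $d_{A,\Phi}\eta \neq 0$, this forces $\|(a,\phi)\|_{L^s(X)} \geq 1/(C_1C_2)$, contradicting \eqref{eq:Perturbation_Laplace_operator_small_pairs} once $\delta = \delta(A,\Phi,g) \in (0,1]$ is chosen below $1/(C_1C_2)$.

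The step I expect to be the main obstacle is the pair analogue of the regularity Lemma~\ref{lem:Regularity_distributional_solution_Hodge_Laplacian_Sobolev}: a distributional solution $\eta \in L^{q'}(X;\ad P)$ of $d_{A+a,\Phi+\phi}^*d_{A,\Phi}\eta = 0$ must be shown to lie in $W_{A_1}^{2,q}(X;\ad P)$, even though the coefficients of $d_{A,\Phi}$ and $d_{A+a,\Phi+\phi}$ --- built from $a$, $\phi$, $\Phi$ and the connection forms of $A$, $A+a$ relative to $A_1$, all merely of class $W^{1,q}$ --- need not be continuous when $q \leq d$. Here I would argue as in the connection case: write $d_{A+a,\Phi+\phi}^*d_{A,\Phi} = \Delta_{A_1} + R$ and check, using the estimates underlying Proposition~\ref{prop:W2p_apriori_estimate_Delta_APhi_Sobolev}, that $R : W_{A_1}^{1,u}(X;\ad P) \to L^q(X;\ad P)$ is bounded for a suitable $u \in (d,\infty)$ determined by $q$ as in \eqref{eq:u_exponent_range_for_proof_DeltaA_Fredholm}, hence that its dual $R : L^{q'}(X;\ad P) \to W_{A_1}^{-1,u'}(X;\ad P)$ is bounded; then $\eta$ solves $\Delta_{A_1}\eta = -R\eta$ for the $C^\infty$ operator $\Delta_{A_1}$, so interior elliptic regularity gives $\eta \in W_{A_1}^{1,u}(X;\ad P) \subset C(X;\ad P)$, re-estimating shows $R\eta \in L^q(X;\ad P)$, and a second application of elliptic regularity for $\Delta_{A_1}$ gives $\eta \in W_{A_1}^{2,q}(X;\ad P)$. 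With that in hand, every remaining step is a routine transcription of the proof of Lemma~\ref{lem:Surjectivity_perturbed_Laplace_operator}.
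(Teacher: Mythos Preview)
Your proposal is correct and follows exactly the approach the paper takes: the paper states that the proof of Lemma~\ref{lem:Surjectivity_perturbed_Laplace_operator} adapts \emph{mutatis mutandis} to the pair setting, and you have carried out precisely that transcription, correctly identifying the pair analogues (Propositions~\ref{prop:W2p_apriori_estimate_Delta_APhi_Sobolev}, \ref{prop:Gilbarg_Trudinger_theorem_8-6_pairs}, Corollary~\ref{cor:Spectrum_Delta_A_Sobolev_and_W2p_apriori_estimate_pairs}) and the regularity step as the main point requiring care.
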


Finally, the proof of Proposition \ref{prop:Feehan_2001_lemma_6-6} adapts \mutatis to establish the following (simplified) analogue for pairs.

\begin{prop}[\Apriori $W^{1,p}$ estimate for $u(A,\Phi)-(A_0,\Phi_0)$ in terms of $(A,\Phi)-(A_0,\Phi_0)$]
\label{prop:Feehan_2001_lemma_6-6_pairs}
Let $(X,g)$ be a closed, smooth Riemannian manifold of dimension $d \geq 2$, and $G$ be a compact Lie group, $P$ be a smooth principal $G$-bundle over $X$, and $E = P\times_\varrho\EE$ be a smooth Hermitian vector bundle over $X$ defined by a finite-dimensional unitary representation, $\varrho: G \to \Aut_\CC(\EE)$. Let $A_1$ be a $C^\infty$ connection on $P$, and $(A_0,\Phi_0)$ be a $W^{1,q}$ pair on $(P,E)$ with $d/2 < q < \infty$ and $p \in (1,\infty)$ obey $d/2 \leq p \leq q$. Then there are constants $N = N(A_1,A_0,\Phi_0,g,G,p,q) \in [1,\infty)$ and $\eps=\eps(A_1,A_0,\Phi_0,g,G,p,q) \in (0,1]$ with the following significance. If $(A,\Phi)$ is a $W^{1,q}$ pair on $(P,E)$ and $u \in \Aut(P)$ is a gauge transformation of class $W^{2,q}$ such that
\begin{equation}
\label{eq:Feehan_2001_6-6_uAPhi_minus_A0Phi0_Coulomb_gauge}
d_{A_0,\Phi_0}^*(u(A,\Phi) - (A_0,\Phi_0)) = 0,
\end{equation}
then the following hold. If $(A,\Phi)$ and $u(A,\Phi)$ obey
\begin{equation}
\label{eq:Feehan_2001_6-6_APhi_minus_A0Phi0_and_uAPhi_minus_A0Phi0_W1p_close}
\|(A,\Phi) - (A_0,\Phi_0)\|_{W_{A_1}^{1,p}(X)} \leq \eps
\quad\text{and}\quad
\|u(A,\Phi) - (A_0,\Phi_0)\|_{W_{A_1}^{1,p}(X)} \leq \eps,
\end{equation}
then
\begin{equation}
\label{eq:Feehan_2001_6-6_uAPhi_minus_A0Phi0_W1p_bound_by_APhi_minus_A0Phi0}
\|u(A,\Phi) - (A_0,\Phi_0)\|_{W_{A_1}^{1,p}(X)}
\leq
N\|(A,\Phi) - (A_0,\Phi_0)\|_{W_{A_1}^{1,p}(X)}.
\end{equation}
\end{prop}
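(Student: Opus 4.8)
The proof follows the proof of Proposition~\ref{prop:Feehan_2001_lemma_6-6}, carrying the section component of the pair through each step; since the structure group is unitary we may use $|u|\le 1$ and $|u^{-1}|\le 1$ pointwise on $X$ for every $u\in\Aut^{2,q}(P)$. Write $(B,\Xi):=u(A,\Phi)=(u^*A,\ u^{-1}\Phi)$, the action convention fixed in Section~\ref{subsec:Coulomb_gauge_slice_quotient_space_pairs}, so that, following the derivation of \eqref{eq:Feehan_2001_6-13},
\[
B-A_0 = u^{-1}(A-A_0)u + u^{-1}d_{A_0}u, \qquad \Xi-\Phi_0 = u^{-1}(\Phi-\Phi_0) + (u^{-1}-\mathrm{Id})\Phi_0,
\]
and hence, applying $u$,
\[
d_{A_0}u = u(B-A_0) - (A-A_0)u, \qquad u\Phi_0 = \Phi - u(\Xi-\Phi_0).
\]
By Corollary~\ref{cor:Spectrum_Delta_A_Sobolev_and_W2p_apriori_estimate_pairs} the kernel $K:=\Ker\Delta_{A_0,\Phi_0}\cap W_{A_1}^{2,q}(X;\ad P)$ is finite dimensional; I would decompose $u=\gamma+u_0$ with $\gamma:=\Pi u\in K$ and $u_0:=u-\gamma$ in the $L^2$-orthogonal complement $K^\perp$. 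Since $\Delta_{A_0,\Phi_0}\gamma=0$ forces $d_{A_0,\Phi_0}\gamma=(d_{A_0}\gamma,\,-\gamma\Phi_0)=0$, we get $d_{A_0}u=d_{A_0}u_0$ and $u\Phi_0=u_0\Phi_0$, i.e. $d_{A_0,\Phi_0}u=d_{A_0,\Phi_0}u_0$.

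The central step is to derive an elliptic equation for $u_0$. Writing $\Delta_{A_0,\Phi_0}=d_{A_0}^*d_{A_0}+M_{\Phi_0}$, where $M_{\Phi_0}\xi:=\mu(\xi\Phi_0,\Phi_0)$ and $\mu(\cdot,\cdot)$ is the fibrewise pairing $E\times E\to\ad P$ adjoint to $\xi\mapsto-\xi\Phi_0$ (so $|\mu(\psi,\phi)|\le c|\psi||\phi|$ and $\mu$ is antisymmetric, whence $\mu(\Phi_0,\Phi_0)=0$), I apply $d_{A_0}^*$ to $d_{A_0}u_0=u(B-A_0)-(A-A_0)u$, together with the Coulomb condition $d_{A_0,\Phi_0}^*((B,\Xi)-(A_0,\Phi_0))=0$ — which reads $d_{A_0}^*(B-A_0)=\mu(\Xi-\Phi_0,\Phi_0)$ — and the identity $u_0\Phi_0=u\Phi_0=\Phi-u(\Xi-\Phi_0)$. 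This exhibits $\Delta_{A_0,\Phi_0}u_0$ as a sum of: (i) bilinear terms $*(d_{A_0}u_0\wedge *(B-A_0))$ and $*(*(A-A_0)\wedge d_{A_0}u_0)$, bounded in $L^p$ by $c(\|B-A_0\|_{L^d}+\|A-A_0\|_{L^d})\|d_{A_0}u_0\|_{L^{p^*}}$, hence (using $p\ge d/2$) by (small)$\times\|u_0\|_{W_{A_1}^{2,p}(X)}$; (ii) the source term, essentially $-(d_{A_0}^*(A-A_0))u+\mu(\Phi-\Phi_0,\Phi_0)$, i.e. $-d_{A_0,\Phi_0}^*((A,\Phi)-(A_0,\Phi_0))$ up to left multiplication by $u$ and a commutator correction, with $L^p$ norm $\le c\|(A,\Phi)-(A_0,\Phi_0)\|_{W_{A_1}^{1,p}(X)}$; and (iii) the coupling terms $u\,\mu(\Xi-\Phi_0,\Phi_0)$ and $-\mu(u(\Xi-\Phi_0),\Phi_0)$, bounded in $L^p$ by $c\|\Phi_0\|_{W_{A_1}^{1,q}(X)}\|\Xi-\Phi_0\|_{L^d}$ via the Sobolev multiplication $W^{1,p}\cdot W^{1,q}\hookrightarrow L^p$ (valid since $q\ge p\ge d/2$), hence — since $\|\Xi-\Phi_0\|_{L^d}\le C_{\mathrm{Sob}}\|(B,\Xi)-(A_0,\Phi_0)\|_{W_{A_1}^{1,p}(X)}\le C_{\mathrm{Sob}}\,\eps$ — by (small)$\times\|(B,\Xi)-(A_0,\Phi_0)\|_{W_{A_1}^{1,p}(X)}$ once $\eps$ is chosen small in a $\Phi_0$-dependent way.

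I would then apply the a priori estimate $\|u_0\|_{W_{A_1}^{2,p}(X)}\le C\|\Delta_{A_0,\Phi_0}u_0\|_{L^p(X)}$ of Corollary~\ref{cor:Spectrum_Delta_A_Sobolev_and_W2p_apriori_estimate_pairs}; using the hypotheses $\|(A,\Phi)-(A_0,\Phi_0)\|_{W_{A_1}^{1,p}(X)}\le\eps$ and $\|u(A,\Phi)-(A_0,\Phi_0)\|_{W_{A_1}^{1,p}(X)}\le\eps$ with $\eps=\eps(A_1,A_0,\Phi_0,g,G,p,q)$ small enough to force the coefficients in (i) and (iii) below $\tfrac{1}{4}$, rearrangement gives $\|u_0\|_{W_{A_1}^{2,p}(X)}\le C\|(A,\Phi)-(A_0,\Phi_0)\|_{W_{A_1}^{1,p}(X)}+\tfrac{1}{2}\|(B,\Xi)-(A_0,\Phi_0)\|_{W_{A_1}^{1,p}(X)}$. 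Independently, estimating the $W_{A_1}^{1,p}$-norm of the right-hand sides of the two transformation identities above — differentiating, using $d_{A_0}u=d_{A_0}u_0$, the Sobolev multiplications, the bounds $|u|,|u^{-1}|\le 1$, Lemma~\ref{lem:Equivalence_Sobolev_norms_for_Sobolev_and_smooth_connections} to trade $\nabla_{A_0}$ for $\nabla_{A_1}$, and the identity $u_0\Phi_0-\Phi_0=(\Phi-\Phi_0)-u(\Xi-\Phi_0)$ with the $\eps$-smallness of $\Xi-\Phi_0$ to absorb the term $\|\Phi_0\|_{W_{A_1}^{1,q}(X)}\|u_0\Phi_0-\Phi_0\|_{L^p}$ — yields $\|(B,\Xi)-(A_0,\Phi_0)\|_{W_{A_1}^{1,p}(X)}\le c\|u_0\|_{W_{A_1}^{2,p}(X)}+c\|(A,\Phi)-(A_0,\Phi_0)\|_{W_{A_1}^{1,p}(X)}+\tfrac{1}{4}\|(B,\Xi)-(A_0,\Phi_0)\|_{W_{A_1}^{1,p}(X)}$. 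Combining the two displays and rearranging once more gives \eqref{eq:Feehan_2001_6-6_uAPhi_minus_A0Phi0_W1p_bound_by_APhi_minus_A0Phi0} with $N=N(A_1,A_0,\Phi_0,g,G,p,q)$; the case analysis $p<d$, $p=d$, $p>d$ for the intermediate Sobolev exponents proceeds exactly as in the proof of Proposition~\ref{prop:Feehan_2001_lemma_6-6} (the simplification relative to that proof being that no separate $L^p$ estimate and no auxiliary bound $\|\cdot\|\le M$ are needed, since $W^{1,p}$-closeness of both pairs is assumed at the outset).

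The step I expect to be the main obstacle is item (iii): for pairs the Coulomb condition $d_{A_0,\Phi_0}^*((B,\Xi)-(A_0,\Phi_0))=0$ does \emph{not} annihilate the "$u\,d_{A_0}^*(\cdot)$"-type cross term as it does for connections — it only replaces $d_{A_0}^*(B-A_0)$ by the zeroth-order quantity $\mu(\Xi-\Phi_0,\Phi_0)$, which couples to the fixed and possibly large datum $\Phi_0$. Showing that this term, its companion $\mu(u(\Xi-\Phi_0),\Phi_0)$, and the analogous term $u_0\Phi_0-\Phi_0$ arising when $\|\Xi-\Phi_0\|_{W_{A_1}^{1,p}(X)}$ is re-estimated are all absorbable rests on extracting from each a factor $\|\Xi-\Phi_0\|_{L^d}\le C_{\mathrm{Sob}}\,\|(B,\Xi)-(A_0,\Phi_0)\|_{W_{A_1}^{1,p}(X)}\le C_{\mathrm{Sob}}\,\eps$, so that the resulting coefficient — though it contains $\|\Phi_0\|_{W_{A_1}^{1,q}(X)}$ — is driven below $\tfrac{1}{2}$ by the ($\Phi_0$-dependent) choice of $\eps$. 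This is precisely the point at which both the hypothesis $q\ge p\ge d/2$ and the assumed $W^{1,p}$-closeness of $u(A,\Phi)$ to $(A_0,\Phi_0)$ — not merely of $(A,\Phi)$ — are used essentially.
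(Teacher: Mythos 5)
Your skeleton is the adaptation the paper intends (the paper itself records no details beyond ``adapts \emph{mutatis mutandis}''): decompose $u$ against $\Ker\Delta_{A_0,\Phi_0}$, apply $d_{A_0}^*$ to the transformation identity, use the \apriori estimate of Corollary \ref{cor:Spectrum_Delta_A_Sobolev_and_W2p_apriori_estimate_pairs}, then return to the transformation identities. The gap is exactly at the step you flag as the main obstacle, and your proposed resolution does not work. The coupling terms $u\,\mu(\Xi-\Phi_0,\Phi_0)$ and $\mu(u(\Xi-\Phi_0),\Phi_0)$, and likewise the term you bound by $\|\Phi_0\|_{W_{A_1}^{1,q}(X)}\|u\Phi_0-\Phi_0\|_{L^p(X)}$, are \emph{linear} in $u(A,\Phi)-(A_0,\Phi_0)$ with a coefficient of order $\|\Phi_0\|_{W_{A_1}^{1,q}(X)}$ that is independent of $\eps$. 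Writing such a term as $(\text{small})\times\|(B,\Xi)-(A_0,\Phi_0)\|_{W_{A_1}^{1,p}(X)}$ is impossible: shrinking $\eps$ shrinks the term and the quantity $Y:=\|(B,\Xi)-(A_0,\Phi_0)\|_{W_{A_1}^{1,p}(X)}$ (which is itself at most $\eps$) at the same rate, so the ratio you need below $1/2$, respectively $1/4$, stays of size $\|\Phi_0\|_{W^{1,q}}$ times fixed Sobolev constants; the Coulomb condition does not help, since it only identifies $\mu(\Xi-\Phi_0,\Phi_0)$ with $d_{A_0}^*(B-A_0)$, again linear in $Y$. Even granting your two final displays verbatim, with $Z:=\|(A,\Phi)-(A_0,\Phi_0)\|_{W_{A_1}^{1,p}(X)}$ and $U:=\|u_0\|_{W_{A_1}^{2,p}(X)}$ they combine to $Y\le \tfrac{4c}{3}(C+1)Z+\tfrac{2c}{3}Y$, which closes only if $c<3/2$ --- and $c$ is a generic constant depending on $(A_0,\Phi_0)$, not small.

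A second, related term is not reached by your tools at all: in re-estimating $\|\Xi-\Phi_0\|_{W_{A_1}^{1,p}(X)}$ from $\Xi-\Phi_0=u^{-1}(\Phi-\Phi_0)-u^{-1}(u\Phi_0-\Phi_0)$, the derivative produces $(u-1)\nabla_{A_0}\Phi_0$, which is controlled neither by $U$ nor by $Z$ nor by your identity for $u\Phi_0-\Phi_0$ (that identity constrains $u-1$ only against $\Phi_0$, and says nothing where $|\Phi_0|$ is small); the pointwise bounds $|u|,|u^{-1}|\le1$ give only the useless bound $2\|\nabla_{A_0}\Phi_0\|_{L^p(X)}$. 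Note that this term vanishes identically for $u\in\Stab(A_0,\Phi_0)$ (differentiate $u\Phi_0=\Phi_0$ using $d_{A_0}u=0$), which signals what a correct argument must exploit: the dangerous contributions have to be exhibited as products of \emph{two} small quantities (for instance by splitting $u$ relative to $\Stab(A_0,\Phi_0)$, or by estimating the pair derivative $(d_{A_0}u,\ u\Phi_0-\Phi_0)$ as a whole through the coupled elliptic system), rather than absorbed by an $\eps$-smallness that the linear structure of the coupling does not provide. Finally, a quieter point you inherit from the paper but which interacts with your cancellation $\mu(\Phi_0,\Phi_0)=0$: $\Delta_{A_0,\Phi_0}$ is defined on $\ad P$-valued sections, and applying it to $u_0$ (endomorphism-valued) requires choosing an extension of the zeroth-order term; with the extension $D^*D$, $D\xi=(d_{A_0}\xi,-\xi\Phi_0)$, which is what your claims $\gamma\Phi_0=0$ and the \apriori estimate require, the zeroth-order term applied to $u_0$ produces a contribution of the type $\Phi_0\otimes\Phi_0^*$ that does \emph{not} cancel by skew-symmetry, so the source analysis must be redone with some care rather than transcribed from the $\ad P$-valued formula.
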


\begin{proof}[Proof of Theorem \ref{mainthm:Feehan_proposition_3-4-4_Lp_pairs}]
Given these preliminaries, Corollary \ref{cor:Spectrum_Delta_A_Sobolev_and_W2p_apriori_estimate_pairs}, Lemma \ref{lem:Surjectivity_perturbed_Laplace_operator_pairs}, and Proposition \ref{prop:Feehan_2001_lemma_6-6_pairs}, the proof of Theorem \ref{mainthm:Feehan_proposition_3-4-4_Lp_pairs} follows \mutatis from that of Theorem \ref{mainthm:Feehan_proposition_3-4-4_Lp}.
\end{proof}

The proof of Proposition \ref{prop:Feehan_2001_lemma_6-6_pairs} yields the following analogue of Lemma \ref{lem:connections_control_gauge} for pairs.

\begin{lem}[\Apriori $W^{2,p}$ estimate for a $W^{2,q}$ gauge transformation $u$ intertwining two $W^{1,q}$ pairs]
\label{lem:connections_control_gauge_pairs}
Assume the hypotheses of Proposition \ref{prop:Feehan_2001_lemma_6-6_pairs}, excluding those on the pair $(A,\Phi)$. Then there is a constant $C = C(A_0,\Phi_0,A_1,g,G,p,q) \in [1,\infty)$ with the following significance. If $(A,\Phi)$ obeys the hypotheses of Proposition \ref{prop:Feehan_2001_lemma_6-6_pairs} and $u \in \Aut^{2,q}(P)$ is the resulting gauge transformation, depending on $(A,\Phi)$ and $(A_0,\Phi_0)$, such that
\[
d_{A_0,\Phi_0}^*(u(A,\Phi)-(A_0,\Phi_0)) = 0,
\]
then
\[
\|u\|_{W_{A_1}^{2,p}(X)} \leq C.
\]
\end{lem}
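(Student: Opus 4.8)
The plan is to transcribe the proof of Lemma~\ref{lem:connections_control_gauge} into the setting of pairs, replacing each object attached to a connection by its pair analogue and invoking the pair versions of the a priori estimates. First I would decompose $u = u_0 + \gamma$ exactly as in the proof of Proposition~\ref{prop:Feehan_2001_lemma_6-6_pairs}, taking $\gamma := \Pi u \in \Ker \Delta_{A_0,\Phi_0}$ to be the $L^2$-orthogonal projection onto the finite-dimensional kernel provided by Corollary~\ref{cor:Spectrum_Delta_A_Sobolev_and_W2p_apriori_estimate_pairs}, and $u_0 := u - \gamma \in (\Ker \Delta_{A_0,\Phi_0})^\perp \cap W_{A_1}^{2,q}(X;\ad P)$. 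Since $\gamma$ lies in the kernel, $\Delta_{A_0,\Phi_0}u = \Delta_{A_0,\Phi_0}u_0$, and the a priori $W^{2,p}$ estimate of Proposition~\ref{prop:W2p_apriori_estimate_Delta_APhi_Sobolev} gives $\|u\|_{W_{A_1}^{2,p}(X)} \leq C\bigl(\|\Delta_{A_0,\Phi_0}u_0\|_{L^p(X)} + \|u\|_{L^p(X)}\bigr)$.

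Next I would control $\|\Delta_{A_0,\Phi_0}u_0\|_{L^p(X)}$ using the intermediate step of the (mutatis mutandis) proof of Proposition~\ref{prop:Feehan_2001_lemma_6-6_pairs}, namely the pair analogue of \eqref{eq:Feehan_2001_6-10}, obtained by applying $d_{A_0,\Phi_0}^*$ to the defining identity for $u(A,\Phi)-(A_0,\Phi_0)$, using the Coulomb condition \eqref{eq:Feehan_2001_6-6_uAPhi_minus_A0Phi0_Coulomb_gauge} together with the smallness hypothesis \eqref{eq:Feehan_2001_6-6_APhi_minus_A0Phi0_and_uAPhi_minus_A0Phi0_W1p_close} and rearrangement; this yields $\|\Delta_{A_0,\Phi_0}u_0\|_{L^p(X)} \leq 2\|d_{A_0,\Phi_0}^*((A,\Phi)-(A_0,\Phi_0))\|_{L^p(X)}$. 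I would then bound the right-hand side by the pair analogue of \eqref{eq:Lp_bound_dA0star_A_minus_A0}: writing $A_0 = A_1 + a_0$ with $a_0 \in W_{A_1}^{1,q}(X;\Lambda^1\otimes\ad P)$ and using the Sobolev multiplication $W^{1,p}(X)\times W^{1,p}(X) \to L^p(X)$ (valid since $p \geq d/2$ ensures $W^{1,p}(X)\subset L^{2p}(X)$ by \cite[Theorem 4.12]{AdamsFournier}), the Kato Inequality \cite[Equation (6.20)]{FU}, and the extra zeroth-order term $-\xi\Phi_0$ appearing in $d_{A_0,\Phi_0}$ — which is estimated by the same multiplication inequality with $\Phi_0 \in W_{A_1}^{1,q}(X;E)$ playing the role of $a_0$ — one obtains $\|d_{A_0,\Phi_0}^*((A,\Phi)-(A_0,\Phi_0))\|_{L^p(X)} \leq C\|(A,\Phi)-(A_0,\Phi_0)\|_{W_{A_1}^{1,p}(X)}$ with $C = C(A_0,\Phi_0,A_1,g,G,p,q)$, the quadratic term being absorbed via the hypothesis that $\|(A,\Phi)-(A_0,\Phi_0)\|_{W_{A_1}^{1,p}(X)} \leq \eps \leq 1$.

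Finally I would bound $\|u\|_{L^p(X)} \leq \Vol_g(X)^{1/p}$ using $|u| \leq 1$ pointwise, which holds because $G$ is compact and $\varrho$ is unitary, so $u \in \Aut(P)$, viewed as a section of $P\times_\varrho\End_\CC(\EE)$, has fiberwise operator norm at most $1$. Combining the three displayed bounds with $\|(A,\Phi)-(A_0,\Phi_0)\|_{W_{A_1}^{1,p}(X)} \leq \eps \leq 1$ gives $\|u\|_{W_{A_1}^{2,p}(X)} \leq C$ with $C = C(A_0,\Phi_0,A_1,g,G,p,q)$, as claimed. I do not expect a genuine obstacle here: the argument is a line-by-line transcription of the connection case, and the only new feature — the zeroth-order operator $\xi \mapsto -\xi\Phi_0$ in $d_{A_0,\Phi_0}$ — is handled by exactly the Sobolev-multiplication and Kato-inequality estimates already used for the $[a_0,\cdot]$ terms. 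The one point deserving a line of verification is that the bound $\|\Delta_{A_0,\Phi_0}u_0\|_{L^p(X)} \leq 2\|d_{A_0,\Phi_0}^*((A,\Phi)-(A_0,\Phi_0))\|_{L^p(X)}$ is indeed part of the asserted mutatis mutandis proof of Proposition~\ref{prop:Feehan_2001_lemma_6-6_pairs}; it is, since that proof reproduces the proof of Proposition~\ref{prop:Feehan_2001_lemma_6-6} and this is precisely \eqref{eq:Feehan_2001_6-10} there.
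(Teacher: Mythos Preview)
Your proposal is correct and follows essentially the same approach as the paper: the paper does not give a separate proof of this lemma, stating only that it follows from the proof of Proposition~\ref{prop:Feehan_2001_lemma_6-6_pairs} as the pair analogue of Lemma~\ref{lem:connections_control_gauge}, and your write-up is precisely that transcription---decompose $u=u_0+\gamma$, apply Proposition~\ref{prop:W2p_apriori_estimate_Delta_APhi_Sobolev}, invoke the pair analogue of \eqref{eq:Feehan_2001_6-10}, then of \eqref{eq:Lp_bound_dA0star_A_minus_A0}, and finish with $|u|\leq 1$ pointwise. One minor remark: the bound coming from the pair analogue of \eqref{eq:Lp_bound_dA0star_A_minus_A0} is already linear in $(A,\Phi)-(A_0,\Phi_0)$, so there is no quadratic term to absorb; the $\eps$-hypothesis is used only at the very end to replace $\|(A,\Phi)-(A_0,\Phi_0)\|_{W_{A_1}^{1,p}(X)}$ by a constant.
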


While not required for the proof of Theorem \ref{mainthm:Feehan_proposition_3-4-4_Lp_pairs}, this is a convenient point at which to note that the proof of Lemma \ref{lem:Continuous_operators_on_Lp_spaces_and_L2-orthogonal_decompositions} (specialized to the case $l=1$) adapts \mutatis to give the following analogue for pairs.

\begin{lem}[Continuous operators on $L^p$ spaces and $L^2$-orthogonal decompositions]
\label{lem:Continuous_operators_on_Lp_spaces_and_L2-orthogonal_decompositions_pairs}
Let $(X,g)$ be a closed, smooth Riemannian manifold of dimension $d \geq 2$, and $G$ be a compact Lie group, $P$ be a smooth principal $G$-bundle over $X$, and $E = P\times_\varrho\EE$ be a smooth Hermitian vector bundle over $X$ defined by a finite-dimensional unitary representation, $\varrho: G \to \Aut_\CC(\EE)$. If $(A,\Phi)$ is a Sobolev pair on $(P,E)$ of class $W^{1,q}$ with $q \geq d/2$, and $A_1$ is a $C^\infty$ reference connection on $P$, and $p$ obeys $d/2\leq p \leq q$, then the operator
\[
d_{A,\Phi}^*:W_{A_1}^{1,p}(X;\Lambda^1\otimes\ad P\oplus E) \to L^p(X;\ad P\oplus E),
\]
is continuous and, if in addition $q > d/2$, then the operator
\[
d_{A,\Phi}:W_{A_1}^{2,p}(X;\ad P\oplus E) \to W_{A_1}^{1,p}(X;\Lambda^1\otimes\ad P\oplus E),
\]
is also continuous and there is an $L^2$-orthogonal decomposition,
\begin{multline*}
W_{A_1}^{1,p}(X;\Lambda^1\otimes\ad P\oplus E)
\\
=
\Ker\left(d_{A,\Phi}^*: W_{A_1}^{1,p}(X;\Lambda^1\otimes\ad P\oplus E)\to L^p(X;\ad P\oplus E)\right)
\\
\oplus \Ran\left(d_{A,\Phi}: W_{A_1}^{2,p}(X;\ad P\oplus E)\to W_{A_1}^{1,p}(X;\Lambda^1\otimes\ad P\oplus E)\right).
\end{multline*}
\end{lem}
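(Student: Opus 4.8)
The plan is to follow the proof of Lemma \ref{lem:Continuous_operators_on_Lp_spaces_and_L2-orthogonal_decompositions}, specialized to the case $l=1$, making the natural substitutions $d_A \rightsquigarrow d_{A,\Phi}$, $\ad P \rightsquigarrow \ad P \oplus E$, and keeping track of the extra term $-\xi\Phi$ in the formula \eqref{eq:Differential_gauge_transformation_action_on_pair_at_identity} for $d_{A,\Phi}$. First I would prove continuity of $d_{A,\Phi}$ and $d_{A,\Phi}^*$. Writing $A = A_1 + a$ with $a \in W_{A_1}^{1,q}(X;\Lambda^1\otimes\ad P)$, for $\xi \in W_{A_1}^{2,p}(X;\ad P)$ we have $d_{A,\Phi}\xi = (d_{A_1}\xi + [a,\xi], -\xi\Phi)$. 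The term $[a,\xi]$ and its covariant derivative are estimated exactly as in Lemma \ref{lem:Continuous_operators_on_Lp_spaces_and_L2-orthogonal_decompositions}, using $W^{1,p}(X) \subset L^{2p}(X)$ for $p \geq d/2$ (from \cite[Theorem 4.12]{AdamsFournier}) and the Kato Inequality \cite[Equation (6.20)]{FU}. The only new point is the algebraic multiplication term $\xi\Phi$: since $\Phi \in W_{A_1}^{1,q}(X;E)$ and $\xi \in W_{A_1}^{1,p}(X;\ad P)$, the continuous multiplication $L^{2q}(X) \times L^r(X) \to L^p(X)$ (for suitable $r$, using $q \geq p \geq d/2$) together with $\|\xi\Phi\|_{L^p(X)} \leq z\|\Phi\|_{L^{2q}(X)}\|\xi\|_{L^{r}(X)} \leq z(1 + \|\Phi\|_{W_{A_1}^{1,q}(X)})\|\xi\|_{W_{A_1}^{1,p}(X)}$ handles the $L^p$ bound, and a parallel estimate on $\nabla_{A_1}(\xi\Phi) = \nabla_{A_1}\xi \otimes \Phi + \xi\otimes\nabla_{A_1}\Phi$ handles the $W_{A_1}^{1,p}$ bound once one invokes $W^{2,p}(X)\subset L^r(X)$ (any finite $r$ when $p=d/2$, $L^\infty$ when $p>d/2$) exactly as in the model lemma. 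The continuity of $d_{A,\Phi}^*$ then follows by duality or by the same direct estimate on the transpose.

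Next I would establish the $L^2$-orthogonal decomposition. As in the model proof, one first observes $W^{1,p}(X) \subset L^2(X)$ for all $p \geq d/2$ and $d \geq 2$ (since $d/2 \geq 2d/(d+2)$), so that the $L^2$ pairing makes sense on $W_{A_1}^{1,p}(X;\Lambda^1\otimes\ad P\oplus E)$. One then writes
\[
W_{A_1}^{1,p}(X;\Lambda^1\otimes\ad P\oplus E)
=
\left(\Ran d_{A,\Phi}\right)^\perp \oplus \Ran\left(d_{A,\Phi}: W_{A_1}^{2,p}(X;\ad P\oplus E)\to W_{A_1}^{1,p}(X;\Lambda^1\otimes\ad P\oplus E)\right),
\]
and identifies the orthogonal complement via the adjoint relation $((a,\phi), d_{A,\Phi}\xi)_{L^2(X)} = (d_{A,\Phi}^*(a,\phi), \xi)_{L^2(X)}$, valid for all $(a,\phi) \in W_{A_1}^{1,p}$ and $\xi \in W_{A_1}^{2,p}$, so that $(a,\phi) \perp \Ran d_{A,\Phi}$ if and only if $(a,\phi) \in \Ker d_{A,\Phi}^*$. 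This gives the claimed decomposition.

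The main obstacle — such as it is — is purely bookkeeping: one must verify that the multiplication estimates involving $\Phi$ go through at the borderline exponent $p = d/2$, where $W^{2,p}(X)$ only embeds into $L^r(X)$ for finite $r$ rather than $L^\infty(X)$. This is handled exactly as the analogous $\nabla_{A_1}a \times \xi$ term is handled in Lemma \ref{lem:Continuous_operators_on_Lp_spaces_and_L2-orthogonal_decompositions}, by choosing $r \in [p,\infty]$ with $1/p = 1/q + 1/r$ and noting $\Phi \in W_{A_1}^{1,q}(X;E) \subset L^{2q}(X)$ (continuous Sobolev embedding since $q \geq d/2$), $\nabla_{A_1}\Phi \in L^q(X)$. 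Since the paper has already remarked that the proof of Lemma \ref{lem:Continuous_operators_on_Lp_spaces_and_L2-orthogonal_decompositions} adapts \emph{mutatis mutandis}, the write-up can be kept brief: state the decomposed formulas for $d_{A,\Phi}\xi$ and $\nabla_{A_1}d_{A,\Phi}\xi$, point to the corresponding estimates in the model lemma for the $\ad P$-components, and give the one-line multiplication estimate for the new $E$-component $\xi\Phi$.
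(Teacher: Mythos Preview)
Your proposal is correct and follows exactly the approach the paper intends: the paper gives no separate proof for this lemma, stating only that the proof of Lemma \ref{lem:Continuous_operators_on_Lp_spaces_and_L2-orthogonal_decompositions} (specialized to $l=1$) adapts \emph{mutatis mutandis}. Your write-up supplies precisely the missing details for the new $E$-component term $-\xi\Phi$, handled via the same Sobolev multiplication and embedding arguments already used for the $[a,\xi]$ and $\nabla_{A_1}a\times\xi$ terms in the model lemma.
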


\section[Coulomb gauge transformations for $L^r$-close pairs]{Existence of Coulomb gauge transformations for pairs that are $L^r$-close to a reference pair}
\label{sec:Coulomb_gauge_slice_quotient_space_pairs_Lr_close}
We now adapt the construction of Section \ref{sec:Coulomb_gauge_slice_quotient_space_connections_Lr_close} to the case of pairs, following the conventions, definitions, and notation introduced in Section \ref{sec:Coulomb_gauge_slice_quotient_space_pairs} and complete the

\begin{proof}[Proof of Theorem \ref{mainthm:Feehan_proposition_3-4-4_Lp_pairs_Lr_close}]
By analogy with the definition of $H$ in \eqref{eq:Donaldson_Kronheimer_2-3-5a}, now with $(a,\phi) := (A-A_0,\Phi-\Phi_0) \in L^r(X;\Lambda^1\otimes\ad P\oplus E)$, our definition \eqref{eq:Action_gauge_transformation_on_pairs} of the action of $u \in \Aut(P)$ on pairs $(A,\Phi)$, our definition of $d_{A_0,\Phi_0}$ in \eqref{eq:Differential_gauge_transformation_action_on_pair_at_identity}, and our definition \eqref{eq:Pair_Coulomb_gauge_relative_to_reference_pair} of Coulomb gauge for pairs, we set
\begin{equation}
\label{eq:Donaldson_Kronheimer_2-3-5a_pairs}
H(\chi, (a,\phi)) := d_{A_0,\Phi_0}^*\left( e^{-\chi}ae^\chi + e^{-\chi}d_{A_0}(e^\chi), e^{-\chi}\phi\right),
\end{equation}
and the expression \eqref{eq:Donaldson_Kronheimer_2-3-5a_pairs} for $H$ again defines a smooth map,
\begin{multline}
\label{eq:Donaldson_Kronheimer_2-3-5b_Lr_pairs}
H: (\Ker \Delta_{A_0,\Phi_0})^\perp\cap W_{A_1}^{1,r}(X;\ad P)
\times L^r(X;\Lambda^1\otimes\ad P\oplus E)
\\
\to
(\Ker \Delta_{A_0,\Phi_0})^\perp\cap W_{A_1}^{-1,r}(X;\ad P).
\end{multline}
The proof that the image of $H$ in \eqref{eq:Donaldson_Kronheimer_2-3-5b_Lr_pairs} is contained in $(\Ker \Delta_{A_0})^\perp\cap W_{A_1}^{-1,r}(X;\ad P)$ and not just $W_{A_1}^{-1,r}(X;\ad P)$ is the same as the proof that the image of $H$ in \eqref{eq:Donaldson_Kronheimer_2-3-5b} is contained in $(\Ker \Delta_{A_0})^\perp\cap L^q(X;\ad P)$ and not just $L^q(X;\ad P)$.

Given $(a,\phi)$, we solve the equation $H(\chi,(a,\phi)) = 0$ for $\chi$ using Theorem \ref{thm:Quantitative_implicit_function_theorem}, so that
\[
  d_{A_0,\Phi_0}^*(u(A,\Phi)-(A_0,\Phi_0))=0,
\]
as asserted by Theorem \ref{mainthm:Feehan_proposition_3-4-4_Lp_Lr_close}, with $u=e^\chi$, using
\begin{align*}
  \sX &= (\Ker \Delta_{A_0,\Phi_0})^\perp\cap W_{A_1}^{1,r}(X;\ad P),
  \\
  \sY &= L^r(X;\Lambda^1\otimes\ad P\oplus E),
  \\
  \sZ &= (\Ker \Delta_{A_0,\Phi_0})^\perp\cap W_{A_1}^{-1,r}(X;\ad P),
\end{align*}
and $(\chi_0,(a_0,\phi_0)) = (0,(0,0))$, and interchanging the roles of the first and second variables.

The partial derivative of $H(\chi,(a,\phi))$ with respect to $\chi$ at the origin is
\[
  (D_1 H)_{(0,(0,0))} = d_{A_0,\Phi_0}^*d_{A_0,\Phi_0},
\]
and so we must first verify that the operator
\begin{multline}
  \label{eq:D_1H_origin_domain_range_pairs}
(D_1 H)_{(0,(0,0))}: (\Ker \Delta_{A_0,\Phi_0})^\perp\cap W_{A_1}^{1,r}(X;\ad P)
\\
\to (\Ker \Delta_{A_0,\Phi_0})^\perp\cap W_{A_1}^{-1,r}(X;\ad P)
\end{multline}
is an isomorphism of Banach spaces. We temporarily assume that $(A_0,\Phi_0)$ is $C^\infty$ and observe that
\[
  d_{A_0,\Phi_0}^*d_{A_0,\Phi_0}:W_{A_1}^{1,r}(X;\ad P) \to W_{A_1}^{-1,r}(X;\ad P)
\]
is Fredholm with index zero by standard elliptic theory with the usual kernel and range for any $r\in (1,\infty)$: see the proof of Proposition \ref{prop:Fredholmness_and_index_Laplace_operator_on_W2p_smooth_connection} via Theorem \ref{thm:Gilkey_1-4-5_Sobolev}. In particular, Theorem \ref{thm:Gilkey_1-4-5_Sobolev} implies that \eqref{eq:D_1H_origin_domain_range_pairs} is an isomorphism.

In order to verify the remaining hypotheses \eqref{eq:D_1_and_D_2f_conditions_implicit_function_theorem} of Theorem \ref{thm:Quantitative_implicit_function_theorem} and prove the estimate in the conclusion of Theorem \ref{mainthm:Feehan_proposition_3-4-4_Lp_Lr_close} via \eqref{eq:Lipschitz_constant_implied_function_g}, we shall need to identify the dependencies of the constants $M$ and $\beta$. The constant $M = M(A_0,\Phi_0,A_1,g,G,r)$ in \eqref{eq:D_2f_x0y_0_inverse} is given by
\begin{multline*}
  M = \|D_1H(0,(0,0))\|_{\sL(K^\perp\cap W^{1,r}, K^\perp\cap W^{-1,r})}
  \\
  = \|(d_{A_0,\Phi_0}^*d_{A_0,\Phi_0})^{-1}\|_{\sL(K^\perp\cap W^{1,r}, K^\perp\cap W^{-1,r})},
\end{multline*}
where we abbreviate
\begin{align*}
  K &= W_{A_1}^{1,r}(X;\ad P)\cap \Ker d_{A_0,\Phi_0}^*d_{A_0,\Phi_0},
  \\
  W^{1,r} &= W_{A_1}^{1,r}(X;\ad P), \quad\text{and}
  \\
  W^{-1,r} &= W_{A_1}^{-1,r}(X;\ad P).
\end{align*}
To satisfy the hypothesis \eqref{eq:Uniform_Lipschitz_bound_D_2fxy}, we choose $\zeta = \zeta(A_0,A_1,g,G,r) \in (0,1]$ small enough that
\begin{multline*}
  \sup_{(\chi,(a,\phi))\in U \times V}\|D_1H(\chi,(a,\phi)) - D_1H(0,(0,0))\|_{\sL^2(K^\perp\cap W^{1,r}\times L^r,K^\perp\cap W^{-1,r})}
  \\
  \leq \sup_{(\chi,(a,\phi))\in U \times V}\|D_1H(\chi,(a,\phi)) - D_1H(0,(0,0))\|_{\sL^2(W^{1,r}\times L^r,W^{-1,r})} \leq \frac{1}{2M},
\end{multline*}
where we abbreviate
\begin{align*}
  L^r &= L^r(X;\Lambda^1\otimes \ad P\oplus E),
  \\
  U &= \left\{\chi\in W_{A_1}^{1,r}(X;\ad P): \|\chi\|_{W_{A_1}^{1,r}(X)} < \zeta \right\}, \quad\text{and}
  \\
  V &= \left\{a\in L^r(X;\Lambda^1\otimes \ad P\oplus E): \|(a,\phi)\|_{L^r(X)} < \zeta \right\}.
\end{align*}
Finally, the constant $\beta = \beta(A_0,\Phi_0,A_1,g,G,r)$ in \eqref{eq:D1f_uniform_bound} is given by
\[
\beta = \sup_{(\chi,(a,\phi))\in U \times V}\|D_2H(\chi,(a,\phi))\|_{\sL(L^r, K^\perp\cap W^{-1,r})}.
\]
Theorem \ref{thm:Quantitative_implicit_function_theorem} provides constants $\delta = \delta(A_0,\Phi_0,A_1,g,G,r) \in (0,1/(2\beta M)]$ and $C = C(A_0,\Phi_0,A_1,g,G,r) \in (0, 1]$ and a smooth map,
\begin{multline*}
  \bchi: \left\{(b,\psi) \in L^r(X;\Lambda^1\otimes\ad P\oplus E): \|(b,\psi)\|_{L^r(X)} < \delta\right\} \ni a
  \\
  \mapsto \bchi(a) \in (\Ker \Delta_{A_0,\Phi_0})^\perp\cap \left\{\chi \in W_{A_1}^{1,r}(X;\ad P): \|\chi\|_{W_{A_1}^{1,r}(X)} < \zeta \right\},
\end{multline*}
such that
\begin{multline*}
  H(\bchi(a,\phi), (a,\phi)) = 0,
  \\
  \forall\, (a,\phi) \in L^r(X;\Lambda^1\otimes\ad P\oplus E) \text{ with } \|(a,\phi)\|_{L^r(X)} < \delta,
\end{multline*}
that is, the $W^{1,r}$ gauge transformation $u = e^\chi$ defined by the $W^{1,r}$ section $\chi = \bchi(a,\phi)$ obeys
\begin{gather*}
  d_{A_0,\Phi_0}^*(u(A,\Phi)-(A_0,\Phi_0)) = 0,
  \\
  \|u(A,\Phi) - (A_0,\Phi_0)\|_{L^r(X)} < C\|(A,\Phi) - (A_0,\Phi_0)\|_{L^r(X)},
\end{gather*}
as required by Theorem \ref{mainthm:Feehan_proposition_3-4-4_Lp_pairs_Lr_close}. We can approximate any $W^{1,q}$ pair $(A_0,\Phi_0)$ by a $C^\infty$ pair $(\tilde A_0,\tilde\Phi_0)$ (see \cite[Theorem 3.17]{AdamsFournier}) and so the preceding conclusions follow by approximation for $W^{1,q}$ pairs $(A_0,\Phi_0)$ and continuity of the constants $M$ and $\beta$ with respect to variations of $(A_0,\Phi_0)$ in the $W^{1,q}$ norm. (The proof of the latter continuity would be very similar to the proof of Corollary \ref{cor:Fredholmness_and_index_Laplace_operator_on_W2p_Sobolev_connection}, but simpler since we only require continuity and not compactness of Sobolev embeddings and thus is omitted.)

Finally, because $(A,\Phi)$ is a $W^{1,q}$ pair, then $(a,\phi) = (A-A_0,\Phi-\Phi_0)$ is in $W^{1,q}$ and to show that $\chi$ and $u$ are in $W^{2,q}$, we can apply the proof of Lemma \ref{lem:Regularity_distributional_solution_Hodge_Laplacian_Sobolev} to the nonlinear, second-order elliptic equation,
\[
d_{A_0,\Phi_0}^*\left( u^{-1}au + u^{-1}d_{A_0}u, u^{-1}\phi\right) = 0,
\]
to obtain the desired regularity for $u$. (Details of the proof are omitted since they will be very similar to those in the proof of Lemma \ref{lem:Regularity_distributional_solution_Hodge_Laplacian_Sobolev}.) This completes the proof of Theorem \ref{mainthm:Feehan_proposition_3-4-4_Lp_pairs_Lr_close}.
\end{proof}

\section[Regularity for the pure and coupled Yang--Mills equations]{Regularity for solutions to the pure and coupled Yang--Mills equations}
\label{sec:Regularity_for_pairs}
It is well-known that techniques due to Uhlenbeck \cite{UhlRem, UhlLp} can be used to show that, given a weak solution to the Yang--Mills equation, there exists a gauge transformation such that the gauge-transformed solution is smooth. We give a proof of a similar fact here that generalizes easily to the case of coupled Yang--Mills equations.

We have the following generalization of \cite[Theorem 5.3]{ParkerGauge}, due to Parker, and \cite[Proposition 3.7]{FL1}, due to the author and Leness, from the case of $d = 4$ to arbitrary $d \geq 2$.

\begin{thm}[Regularity for solutions to the Yang--Mills equation]
\label{thm:Donaldson_Kronheimer_2-3-11}
Let $(X,g)$ be a closed, smooth Riemannian manifold of dimension $d \geq 2$, and $G$ be a compact Lie group, and $P$ be a smooth principal $G$-bundle over $X$. If $q \in (4/3,\infty)$ obeys $q > d/2$ and $A$ is a $W^{1,q}$ connection on $P$ that is a weak solution to the Yang--Mills equation with respect to the Riemannian metric $g$, then there exists a $W^{2,q}$ gauge transformation $u \in \Aut(P)$ such that $u(A)$ is a $C^\infty$ Yang--Mills connection on $P$.
\end{thm}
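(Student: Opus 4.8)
The plan is to combine the global Coulomb gauge-fixing result, Theorem \ref{mainthm:Feehan_proposition_3-4-4_Lp}, with an elliptic bootstrap argument for the second-order equation satisfied by a connection that is simultaneously Yang-Mills and in Coulomb gauge. First I would fix a $C^\infty$ reference connection $A_1$ on $P$ and, applying Theorem \ref{mainthm:Feehan_proposition_3-4-4_Lp} with $A_0 = A_1$ and $p = q$ (the `easy case', where no smallness hypothesis is needed after an initial reduction --- one may first apply the standard slice theorem to move $A$ into a neighborhood where Theorem \ref{mainthm:Feehan_proposition_3-4-4_Lp} applies, or invoke Uhlenbeck's $L^q$ gauge-fixing theorem \cite{UhlLp} directly), produce a $W^{2,q}$ gauge transformation $u_0 \in \Aut(P)$ such that $A' := u_0(A)$ obeys $d_{A_1}^*(A' - A_1) = 0$. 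Since the Yang-Mills equation is gauge-invariant, $A'$ is still a weak solution, so it remains to prove that a $W^{1,q}$ weak Yang-Mills connection $A'$ in Coulomb gauge relative to the smooth connection $A_1$ is smooth.

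The core is then a bootstrap. Writing $a := A' - A_1 \in W_{A_1}^{1,q}(X;\Lambda^1\otimes\ad P)$, the pair of equations $d_{A'}^{*}F_{A'} = 0$ and $d_{A_1}^* a = 0$ can be combined (as in \cite[Section 2.3.8]{DK} or \cite[Proposition 3.7]{FL1}) into a single second-order elliptic equation for $a$ of the schematic form
\[
\Delta_{A_1} a = d_{A_1}^* F_{A_1} + a * \nabla_{A_1} a + \nabla_{A_1}a * a + a * a * a + \text{(smooth lower-order terms in } a),
\]
where $\Delta_{A_1}$ is the Hodge Laplacian \eqref{eq:Hodge_Laplace_operator} for the smooth connection $A_1$ and $*$ denotes algebraic bilinear (and trilinear) bundle maps with smooth coefficients; the ellipticity uses that adding $d_{A_1}d_{A_1}^*$ to $d_{A_1}^* d_{A'}$ modulo lower-order terms recovers $\Delta_{A_1}$ acting on one-forms. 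The right-hand side, for $a \in W_{A_1}^{1,q}(X)$ with $q > d/2$, lies in some $L^{r}$ with $r > 1$ by the Sobolev multiplication and embedding theorems quoted from \cite[Theorems 4.12 and 4.39]{AdamsFournier} and the Kato inequality \cite[Equation (6.20)]{FU}; the argument is exactly the one used in the proof of Proposition \ref{prop:W2p_apriori_estimate_Delta_A_Sobolev} to control $(\Delta_{A} - \Delta_{A_s})\xi$. Elliptic regularity for $\Delta_{A_1}$ with $C^\infty$ coefficients (Remark \ref{rmk:Regularity_distributional_solutions_elliptic_PDEs}) then gives $a \in W_{A_1}^{2,r}(X)$, which by Sobolev embedding improves the integrability and differentiability available on the right-hand side; iterating, one climbs the Sobolev scale and concludes $a \in \bigcap_k W_{A_1}^{k,r}(X) = C^\infty(X;\Lambda^1\otimes\ad P)$, hence $u_0(A) = A_1 + a$ is a $C^\infty$ Yang-Mills connection.

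The main obstacle is the first rung of the bootstrap: because we only assume $q > d/2$ (not $q > d$), the one-form $a$ need not be continuous or even bounded, and the quadratic term $a * \nabla_{A_1} a$ is only \emph{a priori} in $L^p$ for some $p$ possibly less than $q$. One must check carefully --- using $W^{1,q}(X) \subset L^{q^*}(X)$ with $q^* = dq/(d-q)$ when $q < d$ --- that the gain $a \in W_{A_1}^{2,p}(X)$ with $p$ determined by $1/p = 1/q + (\text{something})$ is strictly better than what one started with in the sense that it eventually forces $a \in W^{1,u}$ with $u > d$, after which the standard bootstrap (where $W^{2,\cdot}$ is a Banach algebra containing $C(X)$) proceeds without friction. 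This is precisely the kind of exponent bookkeeping already carried out in \eqref{eq:u_exponent_range_for_proof_DeltaA_Fredholm} and in the proof of Lemma \ref{lem:Regularity_distributional_solution_Hodge_Laplacian_Sobolev}, so I would organize the inductive step to mirror that analysis: show the regularity of $a$ improves until it enters the range $W^{1,u}$ with $u > d$, and then run the classical argument of Donaldson and Kronheimer \cite[Section 2.3.8]{DK} or Parker \cite[Theorem 5.3]{ParkerGauge} verbatim. A minor secondary point is that the gauge transformation $u_0$ supplied by Theorem \ref{mainthm:Feehan_proposition_3-4-4_Lp} is only $W^{2,q}$; but that is exactly the regularity asserted in the statement, and no further regularity of $u_0$ is needed since all the analysis takes place downstairs on $A' = u_0(A)$.
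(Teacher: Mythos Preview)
Your overall strategy---put the connection into Coulomb gauge relative to a smooth connection, then bootstrap the resulting second-order elliptic system---is exactly the paper's approach, and your identification of the delicate first rung of the bootstrap when $d/2 < q \leq d$ is on target.

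There is one point where your argument is muddled and the paper's is cleaner: you propose to apply Theorem~\ref{mainthm:Feehan_proposition_3-4-4_Lp} with $A_0 = A_1$ a \emph{fixed} smooth reference connection, and then try to finesse the smallness hypothesis $\|A - A_0\|_{W^{1,q}_{A_1}(X)} < \zeta$ by an ``initial reduction'' via the standard slice theorem or Uhlenbeck's local theorem. But the standard slice theorem itself requires closeness to the reference, so this is circular, and Uhlenbeck's result is local rather than global. The paper avoids this entirely by the density trick: since $C^\infty$ connections are dense in $\sA^{1,q}(P)$, one simply \emph{chooses} a smooth connection $A_0$ with $\|A - A_0\|_{W^{1,q}_{A_1}(X)} < \zeta$, and then Theorem~\ref{mainthm:Feehan_proposition_3-4-4_Lp} applies directly with this nearby $A_0$ (not with $A_1$). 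The Coulomb gauge and the bootstrap are then carried out relative to $A_0$.

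For the bootstrap itself, the paper makes the exponent gain explicit: with $d/2 < q < d$ and $a \in W^{1,q}_{A_1}$, the quadratic term $a \times \nabla_{A_0} a$ lies in $L^r$ for $r = dq/(2d-q)$, and the cubic term is no worse; elliptic regularity gives $a \in W^{2,r}_{A_1} \subset W^{1,r^*}_{A_1}$ with $r^* = dq/(2(d-q))$, and one checks $r^* - q = q(2q-d)/(2(d-q)) > 0$, so the exponent strictly increases. Since this increment is bounded below (it grows with $q$), finitely many iterations reach $q > d$, after which the argument you describe runs without obstruction. Your reference to \eqref{eq:u_exponent_range_for_proof_DeltaA_Fredholm} is in the right spirit but not quite the same computation; the paper's calculation is self-contained and more direct.
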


\begin{proof}
We proceed as in the proof of \cite[Proposition 3.7]{FL1} and note that the affine space $\sA(P)$ of $C^\infty$ connections on $P$ is dense in the affine space $\sA^{1,q}(P)$ of $W^{1,q}$ connections on $P$ and so there exists a $C^\infty$ connection $A_0$ on $P$ such that
\[
\|A - A_0\|_{W_{A_1}^{1,q}(X)} < \zeta,
\]
where $\zeta = \zeta(A_0,A_1,g,G,q) \in (0,1]$ is the constant in Theorem \ref{mainthm:Feehan_proposition_3-4-4_Lp} and $A_1$ is any fixed $C^\infty$ reference connection on $P$. Hence, there is a $W^{2,q}$ gauge transformation $u \in \Aut(P)$ such that $u(A)$ obeys
\[
d_{A_0}^*(u(A) - A_0) = 0,
\]
and
\[
\|u(A) - A_0\|_{W_{A_1}^{1,q}(X)} < 2N\|A - A_0\|_{W_{A_1}^{1,q}(X)} < 2N\zeta,
\]
where $N = N(A_0,A_1,g,G,q) \in [1,\infty)$ is the constant in Proposition \ref{prop:Feehan_2001_lemma_6-6}. Hence, we may assume without loss of generality that $A$ is in Coulomb gauge with respect to $A_0$ and that
\[
  a := A - A_0 \in W_{A_1}^{1,q}(X;\Lambda^1\otimes \ad P)
\]
is a weak solution to
\[
d_{A_0+a}^*F_{A_0+a} = 0 \quad\text{and}\quad d_{A_0}^*a = 0,
\]
and thus a weak solution to the quasi-linear, second-order elliptic system,
\begin{equation}
\label{eq:Yang--Mills_and_Coulomb_gauge_elliptic_system}
(\Delta_{A_0} + \lambda_0)a + a\times \nabla_{A_0}a + a\times a\times a = \lambda_0 a - F_{A_0},
\end{equation}
where $\Delta_{A_0} = d_{A_0}^*d_{A_0} + d_{A_0}d_{A_0}^*$ is the usual Hodge Laplacian on $\Omega^1(X;\ad P)$ and $\lambda_0>0$ is any positive constant. We recall from \cite{Feehan_yang_mills_gradient_flow_v4} that, for any $q \in (1,\infty)$ and integer $k\geq 0$, the operator
\begin{equation}
\label{eq:Delta_A0smooth_plus_lambda:Wk+2p_to_Wkp}
\Delta_{A_0} + \lambda_0:W_{A_1}^{k+2,q}(X;\Lambda^1\otimes\ad P) \to W_{A_1}^{k,q}(X;\Lambda^1\otimes\ad P)
\end{equation}
is invertible. The right-hand side of Equation \eqref{eq:Yang--Mills_and_Coulomb_gauge_elliptic_system} belongs to $W_{A_1}^{1,q}(X;\Lambda^1\otimes\ad P)$ by hypothesis on $A$ and the fact that $A_0$ is $C^\infty$. If $q > d$, then $W^{1,q}(X) \subset C(X)$ by \cite[Theorem 4.12]{AdamsFournier} and so
\[
a\times \nabla_{A_0}a + a\times a\times a \in L^q(X;\Lambda^1\otimes\ad P).
\]
But then existence and uniqueness of solutions in $W_{A_1}^{2,q}(X;\Lambda^1\otimes\ad P)$ to \eqref{eq:Yang--Mills_and_Coulomb_gauge_elliptic_system}, given a source term in $L^q(X;\Lambda^1\otimes\ad P)$, implies that $a \in W_{A_1}^{2,q}(X;\Lambda^1\otimes\ad P)$. Using the fact that $W^{k,q}(X)$ is a Banach algebra when $kq>d$ and invertibility of \eqref{eq:Delta_A0smooth_plus_lambda:Wk+2p_to_Wkp}, we can iterate in the usual way to show that $a \in W_{A_1}^{k+2,q}(X;\Lambda^1\otimes\ad P)$ for all integers $k \geq 0$ and hence that $a \in C^\infty(X;\Lambda^1\otimes\ad P)$.

Therefore, it suffices to consider the case $d/2<q<d$. Recall that $q^* = dq/(d-q)$ and so $q^* \in (d,\infty)$. Suppose that we can choose\footnote{For example, when $d \geq 4$, we have $1/q^* + 1/q < 1/d + 2/d = 3/d < 1$ for $d \geq 4$ and thus, when $d \geq 4$, it is easy to see that we can choose $r$ as stated.} $r \in (1,q)$ obeying $1/q^* + 1/q \leq 1/r$. Then there is a continuous multiplication map $L^{q^*}(X) \times L^q(X) \to L^r(X)$ and a continuous Sobolev embedding $W^{1,q}(X) \subset L^{q^*}(X)$ by \cite[Theorem 4.12]{AdamsFournier}. Thus, $a\times \nabla_{A_0}a \in L^r(X;\Lambda^1\otimes\ad P)$ if
\[
1/r \geq 1/q^* + 1/q = (d-q)/(dq) + 1/q = (2d-q)/(dq),
\]
that is, $r \leq dq/(2d-q)$. Now $(2d-q)/(dq)< 1$ (which we need to permit a choice of $r>1$) and thus $dq/(2d-q) > 1 \iff dq > 2d-q \iff q > 2d/(d+1)$. But $q>d/2$ by hypothesis (when $d\geq 3$) and $d/2 \geq 2d/(d+1) \iff d\geq 3$; for the case $d=2$, we need $q > 4/3$ to ensure that we can choose $r>1$ and this explains the restriction in our hypotheses that $q > 4/3$ when $d=2$.

Similarly, we have a continuous multiplication map $L^{3s}(X)\times L^{3s}(X)\times L^{3s}(X)\to L^s(X)$ for any $s\in[1,\infty]$ and a continuous Sobolev embedding $W^{1,q}(X) \subset L^{3s}(X)$ provided $3s \leq q^*$, that is, $s \leq dq/(3(d-q))$ and for this choice of $s$, we have $a\times a\times a \in L^s(X;\Lambda^1\otimes\ad P)$.

We now observe that $r\leq s$ when $q\geq d/2$, as we assume by hypothesis, if we choose $r = dq/(2d-q)$ and $s = dq/(3(d-q))$. Indeed, we then have
\[
r\leq s \iff dq/(2d-q) \leq dq/(3(d-q)) \iff 3d-3q \leq 2d-q \iff d \leq 2q.
\]
Hence, for $r = dq/(2d-q)$, we have
\[
a\times \nabla_{A_0}a + a\times a\times a \in L^r(X;\Lambda^1\otimes\ad P),
\]
and thus elliptic regularity theory for \eqref{eq:Yang--Mills_and_Coulomb_gauge_elliptic_system} implies that $a \in W_{A_1}^{2,r}(X;\Lambda^1\otimes\ad P)$, similar to the case $q>d$. By \cite[Theorem 4.12]{AdamsFournier}, when $r<d$, we have a continuous Sobolev embedding, $W^{2,r}(X) \subset W^{1,r^*}(X)$, where $r^* = dr/(d-r)$, and thus we obtain
\[
a \in W_{A_1}^{1,r^*}(X;\Lambda^1\otimes\ad P).
\]
In the limiting case $q=d/2$ with $d\geq 3$ we would have $r = dq/(2d-q) = (d^2/2)/(2d-d/2) = d/3$ and in the limiting case $q=d$ we would have $r=d$, so $r \in (d/3,d)$ and thus $r^* = dr/(d-r) \in (d/2,\infty)$. In particular,
\[
r^* = \frac{dr}{d-r} = \frac{d^2q/(2d-q)}{d-dq/(2d-q)} = \frac{dq/(2d-q)}{1-q/(2d-q)}
= \frac{dq}{2(d-q)}.
\]
We may write $r^* = q + \delta$, where $\delta = \delta(d,q)$ is defined by
\[
\delta := r^*-q = \frac{dq}{2(d-q)} - q = \frac{dq - 2(d-q)q}{2(d-q)}
 = \frac{2q^2 - dq}{2(d-q)} = \frac{q(2q - d)}{2(d-q)},
\]
and thus $\delta(d,q) > 0$ for $d/2<q<d$ and $d \geq 3$. Consequently, we see a regularity improvement and because $\delta(d,q)$ is an increasing function of $q$, only finitely many iterations of this regularity improvement are required to give $r^* > d$, at which point we can apply the regularity argument for the case $q>d$ to again conclude that $a \in C^\infty(X;\Lambda^1\otimes\ad P)$.

In the limiting case $q=4/3$ with $d=2$, we would have $r = dq/(2d-q) = (8/3)/(4-4/3) = (8/3)/(8/3) = 1$, and in the limiting case $q=2$ with $d=2$ we would have $r=4/(4-2)=2$, so $r \in (1,2)$ and thus $r^* = dr/(d-r) \in (2,\infty)$. In particular, $r^* > 4/3 + 2/3$. Consequently, we see a regularity improvement just as we did in the case $d \geq 3$ and the remainder of the argument follows \mutatis as in the case $d \geq 3$.
\end{proof}

The proof of Theorem \ref{thm:Donaldson_Kronheimer_2-3-11} adapts \mutatis to give the following generalization of \cite[Theorem 5.3]{ParkerGauge}, due to Parker, and \cite[Proposition 3.7]{FL1}, due to the author and Leness, from the case of $d = 4$ to arbitrary $d \geq 2$.

\begin{thm}[Regularity for solutions to the boson coupled Yang--Mills equations]
\label{thm:Parker_1982_5-3}
Let $(X,g)$ be a closed, smooth Riemannian manifold of dimension $d \geq 2$, and $G$ be a compact Lie group, $P$ be a smooth principal $G$-bundle over $X$, and $E = P\times_\varrho\EE$ be a smooth Hermitian vector bundle over $X$ defined by a finite-dimensional unitary representation, $\varrho: G \to \Aut_\CC(\EE)$. If $d/2 < q < \infty$ and $(A_\infty,\Phi_\infty)$ is a $W^{1,q}$ pair on $(P,E)$ that is a critical point of the boson coupled Yang--Mills energy function \eqref{eq:Boson_Yang--Mills_energy_function}, then there exists a $W^{2,q}$ gauge transformation $u \in \Aut(P)$ such that $u(A_\infty,\Phi_\infty)$ is a $C^\infty$ pair on $(P,E)$.
\end{thm}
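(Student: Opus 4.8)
The plan is to mimic the proof of Theorem \ref{thm:Donaldson_Kronheimer_2-3-11} (regularity for pure Yang-Mills), replacing the role of Theorem \ref{mainthm:Feehan_proposition_3-4-4_Lp} by its analogue for pairs, Theorem \ref{mainthm:Feehan_proposition_3-4-4_Lp_pairs}. First I would approximate $(A_\infty,\Phi_\infty)$ in $W^{1,q}$ by a $C^\infty$ reference pair $(A_0,\Phi_0)$ on $(P,E)$, with $\|(A_\infty,\Phi_\infty)-(A_0,\Phi_0)\|_{W_{A_1}^{1,q}(X)}<\zeta$ for the constant $\zeta$ of Theorem \ref{mainthm:Feehan_proposition_3-4-4_Lp_pairs} (taking $p=q$ there). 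Applying that theorem produces a $W^{2,q}$ gauge transformation $u \in \Aut(P)$ with $d_{A_0,\Phi_0}^*(u(A_\infty,\Phi_\infty)-(A_0,\Phi_0))=0$ and $\|u(A_\infty,\Phi_\infty)-(A_0,\Phi_0)\|_{W_{A_1}^{1,q}(X)}<2N\zeta$; so, after relabeling, we may assume $(A_\infty,\Phi_\infty)$ is already in Coulomb gauge relative to $(A_0,\Phi_0)$, and it suffices to bootstrap regularity of $(a,\phi):=(A_\infty,\Phi_\infty)-(A_0,\Phi_0) \in W_{A_1}^{1,q}(X;\Lambda^1\otimes\ad P\oplus E)$.

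Next I would write down the coupled elliptic system satisfied weakly by $(a,\phi)$. The Euler-Lagrange equations $\sM_g(A_\infty,\Phi_\infty)=0$ from \eqref{eq:Gradient_boson_coupled_Yang-Mills_energy_functional}, namely $d_{A_\infty}^*F_{A_\infty} + (\text{quadratic in }\Phi\text{ terms})=0$ and $\nabla_{A_\infty}^*\nabla_{A_\infty}\Phi_\infty - m\Phi_\infty - 2s|\Phi_\infty|^2\Phi_\infty + (\text{curvature-type coupling})=0$, together with the Coulomb gauge condition $d_{A_0,\Phi_0}^*((a,\phi))=0$, combine to give a quasi-linear second-order elliptic system of the schematic form
\[
(\Delta_{A_0}+\lambda_0)(a,\phi) + (a,\phi)\times\nabla_{A_0}(a,\phi) + (a,\phi)\times(a,\phi)\times(a,\phi) + \text{(lower order, smooth coeff.)} = (\text{source in }W^{1,q}),
\]
exactly parallel to \eqref{eq:Yang-Mills_and_Coulomb_gauge_elliptic_system}, where the source terms depend only on $(A_0,\Phi_0)$ (smooth) and linearly on $(a,\phi)$; here the ellipticity of the principal part is the Hodge Laplacian on one-forms plus the rough Laplacian on sections of $E$, and invertibility of $\Delta_{A_0}+\lambda_0: W_{A_1}^{k+2,q}\to W_{A_1}^{k,q}$ (as in \eqref{eq:Delta_A0smooth_plus_lambda:Wk+2p_to_Wkp}) holds because the coefficients are smooth. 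The quartic term $s|\Phi|^2\Phi$, the cubic $a\times a\times a$, and the quadratic $a\times\nabla_{A_0}a$, $\phi\times\nabla_{A_0}\phi$, $a\times\phi$-type couplings are all estimated by the same Sobolev multiplication and embedding arguments used in the proof of Theorem \ref{thm:Donaldson_Kronheimer_2-3-11}; the only new algebraic nonlinearity is $|\Phi|^4$, whose gradient is cubic and thus no worse than $a\times a\times a$.

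The bootstrap then proceeds in two regimes as in Theorem \ref{thm:Donaldson_Kronheimer_2-3-11}. If $q>d$, then $W^{1,q}(X)\subset C(X)$, the nonlinear terms lie in $L^q$, elliptic regularity gives $(a,\phi)\in W_{A_1}^{2,q}$, and iteration using the Banach algebra property of $W^{k,q}(X)$ for $kq>d$ and invertibility of $\Delta_{A_0}+\lambda_0$ promotes $(a,\phi)$ to $W_{A_1}^{k+2,q}$ for all $k$, hence $C^\infty$. If $d/2<q\le d$, I would run the identical fractional-step argument: the worst nonlinear terms lie in $L^r$ with $r=dq/(2d-q)$, elliptic regularity gives $(a,\phi)\in W_{A_1}^{2,r}$, Sobolev embedding gives $W^{2,r}(X)\subset W^{1,r^*}(X)$ with $r^*=dq/(2(d-q))=q+\delta$ and $\delta=q(2q-d)/(2(d-q))>0$, so finitely many iterations push the exponent past $d$, reducing to the first case. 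The cubic and quartic terms are controlled via $W^{1,q}(X)\subset L^{3s}(X)$ with $s\le dq/(3(d-q))$, and the inequality $r\le s\iff d\le 2q$ (valid since $q\ge d/2$) guarantees the limiting exponent is governed by the quadratic term, exactly as before.

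The main obstacle I anticipate is purely bookkeeping rather than conceptual: carefully checking that \emph{every} nonlinear term produced by the coupled Euler-Lagrange system — in particular the curvature-section coupling $\nabla_{A_0}\Phi\times a$, the reaction term $|\Phi|^2\Phi$, and the cross terms mixing $a$ and $\phi$ — obeys the same $L^r$/$L^s$ integrability thresholds as the pure Yang-Mills nonlinearities $a\times\nabla a$ and $a\times a\times a$, so that the regularity gain $\delta(d,q)>0$ is uniform across all of them and the iteration terminates. Since $E$ is a finite-dimensional Hermitian bundle and the representation $\varrho$ is fixed, all the relevant pointwise bounds ($|\varrho_*(\xi)\eta|\lesssim|\xi||\eta|$, etc.) and Kato-type inequalities hold, so this is genuinely routine; there is no new analytic difficulty beyond what Theorem \ref{thm:Donaldson_Kronheimer_2-3-11} already overcame. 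Thus the proof reduces, as the excerpt indicates, to the phrase ``adapts \emph{mutatis mutandis}.''
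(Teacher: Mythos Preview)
Your proposal is correct and follows essentially the same route as the paper: approximate by a smooth pair, apply Theorem \ref{mainthm:Feehan_proposition_3-4-4_Lp_pairs} to pass to Coulomb gauge, write the resulting quasi-linear elliptic system for $(a,\phi)$ with principal part $\Delta_{A_0}\oplus\nabla_{A_0}^*\nabla_{A_0}$, and then bootstrap exactly as in Theorem \ref{thm:Donaldson_Kronheimer_2-3-11}. The paper's own proof is even briefer than yours---it writes out the elliptic system schematically and then simply defers to the pattern of Theorem \ref{thm:Donaldson_Kronheimer_2-3-11}---so your anticipated ``bookkeeping'' obstacle is indeed the only content, and you have correctly identified that the new cubic terms (including $s|\Phi|^2\Phi$) are no worse than $a\times a\times a$.
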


\begin{proof}
We proceed as in the proofs of \cite[Theorem 5.3]{ParkerGauge} and \cite[Proposition 3.7]{FL1} and note that the affine space $\sA(P)\times C^\infty(E)$ of $C^\infty$ pairs is dense in the affine space $\sA^{1,q}(P)\times W^{1,q}(X;E)$ of $W^{1,q}$ pairs and so there exists a $C^\infty$ pair $(A_0,\Phi_0)$ on $(P,E)$ such that
\[
\|(A_\infty,\Phi_\infty) - (A_0,\Phi_0)\|_{W_{A_1}^{1,q}(X)} < \zeta,
\]
where $\zeta = \zeta(A_1,A_0,\Phi_0,g,G,q) \in (0,1]$ is the constant in Theorem \ref{mainthm:Feehan_proposition_3-4-4_Lp_pairs} and $A_1$ is any fixed $C^\infty$ reference connection on $P$. Hence, there is a gauge transformation $u \in \Aut(P)$ of class $W^{2,q}$ such that $u(A_\infty,\Phi_\infty)$ obeys
\[
d_{A_0,\Phi_0}^*\left(u(A_\infty,\Phi_\infty) - (A_0,\Phi_0)\right) = 0,
\]
and
\[
\|u(A_\infty,\Phi_\infty) - (A_0,\Phi_0)\|_{W_{A_1}^{1,p}(X)}
< 2N\|(A_\infty,\Phi_\infty) - (A_0,\Phi_0)\|_{W_{A_1}^{1,p}(X)} < 2N\zeta,
\]
where $N = N(A_1,A_0,\Phi_0,g,G,q) \in [1,\infty)$ is the constant in Proposition \ref{prop:Feehan_2001_lemma_6-6_pairs}. Hence, we may assume without loss of generality that $(A_\infty,\Phi_\infty)$ is in Coulomb gauge with respect to $(A_0,\Phi_0)$ and that
\[
  (a,\phi) := (A_\infty,\Phi_\infty) - (A_0,\Phi_0) \in W_{A_1}^{1,q}(X;\Lambda^1\otimes \ad P \oplus E)
\]
is a weak solution to the quasi-linear, second-order elliptic system,
\begin{align*}
{}& d_{A_0}^*d_{A_0}a + \nabla_{A_0}^*\nabla_{A_0}\phi + a\times \nabla_{A_0}a + a\times \nabla_{A_0}\phi + \phi\times \nabla_{A_0}\phi
\\
&\quad + a\times a\times a + a\times a\times \phi + a\times \phi\times \phi
\\
&\quad + m\phi + s\Phi_0\times \Phi_0\times \phi + s\Phi_0\times \phi\times \phi + s\phi\times \phi\times \phi = f(m, s, A_0,\Phi_0),
\\
{}& d_{A_0,\Phi_0}d_{A_0,\Phi_0}^*(a,\phi) = 0,
\end{align*}
where we employ the expression \eqref{eq:Gradient_boson_coupled_Yang--Mills_energy_function} for the gradient $\sM(A_\infty,\Phi_\infty)$ of $\sE$ at $(A_\infty,\Phi_\infty)$ and $f(m, s, A_0,\Phi_0) \in C^\infty(X;\Lambda^1\otimes \ad P \oplus E)$ is the tautologically defined right-hand source term. Ellipticity of the preceding system follows by expanding the expression \eqref{eq:Differential_gauge_transformation_action_on_pair_at_identity} for $d_{A_0,\Phi_0}$ on $\Omega^0(X;\ad P)$ to extract the second-order term $d_{A_0}d_{A_0}^*a$ and recalling that the operator
$d_{A_0}^*d_{A_0} + d_{A_0}d_{A_0}^*$ is clearly elliptic, being the usual Hodge Laplacian on $\Omega^1(X;\ad P)$. The remainder of the proof now follows the pattern of the proof of Theorem \ref{thm:Donaldson_Kronheimer_2-3-11}.
\end{proof}

\chapter[{\L}ojasiewicz--Simon inequalities for coupled Yang--Mills energies]{{\L}ojasiewicz--Simon gradient inequalities for coupled Yang--Mills energy functions}
\label{chap:Lojasiewicz-Simon_gradient_inequality_coupled_Yang--Mills_energy_functions}
In this section, we apply our Coulomb-gauge transformation result, Theorem \ref{mainthm:Feehan_proposition_3-4-4_Lp_pairs}, and abstract {\L}ojasiewicz--Simon gradient inequality, Theorem \ref{mainthm:Lojasiewicz-Simon_gradient_inequality}, to prove the corresponding {\L}ojasiewicz--Simon gradient inequalities for the boson and fermion coupled Yang--Mills energy functions, Theorem \ref{mainthm:Lojasiewicz-Simon_gradient_inequality_boson_Yang--Mills_energy_function} in Section \ref{sec:Lojasiewicz-Simon_gradient_inequality_boson_Yang--Mills_energy_function}, and Theorem \ref{mainthm:Lojasiewicz-Simon_gradient_inequality_fermion_Yang--Mills_energy_function} in Section \ref{sec:Lojasiewicz-Simon_gradient_inequality_fermion_Yang--Mills_energy_function}.

\section[{\L}ojasiewicz--Simon inequality for boson coupled Yang--Mills]{{\L}ojasiewicz--Simon gradient inequality for the boson coupled Yang--Mills energy function}
\label{sec:Lojasiewicz-Simon_gradient_inequality_boson_Yang--Mills_energy_function}
For any $C^\infty$ reference connection $A_1$ on $P$, let
\begin{equation}
\label{eq:Affine_space_boson_Sobolev_pairs}
\sP^{k,p}(P,E) := \sA^{k,p}(P)\times  W_{A_1}^{k,p}(X; E),
\end{equation}
denote the affine space of $W^{k,p}$ pairs on $(P,E)$, where $k\in\ZZ$ is a positive integer and $p \in (1,\infty)$; we write $\sP(P,E) := \sA(P)\times C^\infty(X; E)$ for the affine space of $C^\infty$ pairs on $(P,E)$.

\subsection{Analyticity of the boson coupled Yang--Mills energy function and its gradient map on the Sobolev space of pairs}
\label{subsec:Analyticity_boson_Yang--Mills_energy_function}
We begin by establishing the following result on analyticity of the
boson coupled Yang--Mills energy function,
$\sE:x_\infty+\sX\to\RR$ (where $x_\infty = (A_\infty,\Phi_\infty)$ is
a critical point); this also serves as a stepping stone towards the
proof that its gradient map, $\sM:x_\infty+\sX\to\tilde\sX$ is real
analytic for suitable choices of the Banach spaces, $\sX$ and $\tilde\sX$, in Theorem~\ref{mainthm:Lojasiewicz-Simon_gradient_inequality}.

\begin{prop}[Analyticity of the boson coupled Yang--Mills energy function on the affine space of $W^{1,p}$ pairs]
\label{prop:analyticbos}
Let $(X,g)$ be a closed, smooth Riemannian manifold of dimension $d \geq 2$, and $G$ be a compact Lie group, $P$ be a smooth principal $G$-bundle over $X$, and $E = P\times_\varrho\EE$ be a smooth Hermitian vector bundle over $X$ defined by a finite-dimensional unitary representation, $\varrho: G \to \Aut_\CC(\EE)$, and $A_1$ be a $C^\infty$ connection on $P$, and $m, s \in C^\infty(X)$. If $4d/(d+4) \leq p < \infty$ when\footnote{Observe that $d/2\geq 4d/(d+4)$ for all $d\geq 4$ and $2 \geq 4d/(d+4)$ when $d=2,3$, so if $p$ obeys the hypotheses of Theorem \ref{mainthm:Lojasiewicz-Simon_gradient_inequality_boson_Yang--Mills_energy_function}, then it obeys the hypotheses of Proposition \ref{prop:analyticbos}.}
$d \geq 2$, then the function,
\[
\sE: \sP^{1,p}(P,E) \to \RR,
\]
is real analytic, where $\sE$ is as in \eqref{eq:Boson_Yang--Mills_energy_function}.
\end{prop}

\begin{proof}
We fix a pair $(A,\Phi) \in \sP^{1,p}(P,E)$ and write $(A, \Phi) = (A_1,\Phi_1) + (a_1, \phi_1)$, where $(a_1, \phi_1)\in W_{A_1}^{1,p}(X; \Lambda^1 \otimes \ad P \oplus E)$. We will show that $\sE$ is analytic at $(A,\Phi)$. For $(a,\phi) \in W_{A_1}^{1,p}(X; \Lambda^1 \otimes \ad P \oplus E)$, we write $A+a = A_1+a_1+a$ and expand
\[
F_{A+a} = F_{A_1+a_1+a} = F_{A_1} + d_{A_1}(a_1 + a) + (a_1+a)\times (a_1+a)
\]
and
\[
\nabla_{A+a} (\Phi + \phi) = \nabla_{A+a_1+a} (\Phi + \phi)
= \nabla_{A_1} (\Phi+\phi) + \varrho(a_1 + a) (\Phi + \phi).
\]
Using the definition~\eqref{eq:Boson_Yang--Mills_energy_function} of $\sE$, we compute
\[
2\sE (A +a , \Phi+ \phi) = T_1 + T_2 + T_3,
\]
where the terms $T_i := T_i(a,\phi)$, for $i=1,2,3$, are given by
\begin{align*}
T_1
&:=
\|F_{A_1}\|^2_{L^2(X)} + \| d_{A_1}(a_1 + a)\|_{L^2(X)}^2
\\
&\quad + \|(a_1+a)\times( a_1+a)\|_{L^2(X)}^2 + 2( F_{A_1}, d_{A_1}(a_1 + a))_{L^2(X)}
\\
&\quad + 2( F_{A_1}, (a_1 + a)\times (a_1 +a))_{L^2(X)}
\\
&\quad + 2( d_{A_1}(a_1 + a), (a_1 + a)\times (a_1 +a))_{L^2(X)},
\end{align*}
and
\begin{align*}
T_2
&:=
\|\nabla_{A_1}(\Phi + \phi)\|_{L^2(X)}^2 + \|\varrho(a_1 + a) (\Phi + \phi)\|_{L^2(X)}^2
\\
  &\quad + (\nabla_{A_1}(\Phi + \phi), \varrho(a_1 + a) (\Phi + \phi))_{L^2(X)}
  \\
  &\quad + (\varrho(a_1 + a) (\Phi + \phi), \nabla_{A_1}(\Phi + \phi))_{L^2(X)},
\\
&=
\|\nabla_{A_1}(\Phi + \phi)\|_{L^2(X)}^2 + \|\varrho(a_1 + a) (\Phi + \phi)\|_{L^2(X)}^2
\\
&\quad + 2\Real(\nabla_{A_1}(\Phi + \phi), \varrho(a_1 + a) (\Phi + \phi))_{L^2(X)},
\end{align*}
and
\[
T_3 := -\int_X \left(m|\Phi+\phi|^2 + s|\Phi+\phi|^4\right)\, d\vol_g.
\]
Hence, we can write the difference as
\[
2\sE (A +a , \Phi+ \phi) - 2\sE (A, \Phi) =  T_1' +  T_2'  + T_3',
\]
where the difference terms $T_i' := T_i(a,\phi) - T_i(0,0)$, for $i = 1,2,3$, are given by
\begin{align*}
T_1'
&=
       \|d_{A_1}a\|_{L^2(X)}^2 + 2( d_{A_1} a_1,  d_{A_1} a)_{L^2(X)}
  \\
  &\quad + ( a\times (a_1+a) , (a_1+a)\times (a_1+a))_{L^2(X)}
\\
&\quad + 2( F_{A_1}, d_{A_1}a)_{L^2(X)}  + ( F_{A_1}, a\times (a_1+a))_{L^2(X)}
\\
&\quad + ( d_{A_1}a, (a_1+a)\times (a_1 + a))_{L^2(X)} +    ( d_{A_1}a_1, a\times (a_1+a))_{L^2(X)},
\end{align*}
and
\begin{equation}
\label{eq:Boson_energy_difference_T2prime}
\begin{aligned}
T_2'
&=
\|\nabla_{A_1} \phi\|^2_{L^2(X)} + 2 \Real(\nabla_{A_1} \Phi, \nabla_{A_1}\phi)_{L^2(X)}
\\
&\quad + \|\varrho(a) \Phi\|^2_{L^2(X)} + \|\varrho(a_1+a)\phi\|^2_{L^2(X)}
\\
&\quad + 2\Real( \varrho(a_1) \Phi, \varrho(a) \Phi)_{L^2(X)}  + 2\Real(\varrho(a_1+a) \Phi, \varrho(a_1+a)\phi)_{L^2(X)}
\\
&\quad + 2\Real(\nabla_{A_1}\phi, \varrho(a_1+a)(\Phi + \phi))_{L^2(X)}
\\
&\quad + 2\Real(\nabla_{A_1}\Phi, \varrho(a)\Phi+ \varrho(a_1+a)\phi)_{L^2(X)},
\end{aligned}
\end{equation}
and
\begin{align*}
T_3'
&=
\int_X \left(m|\phi|^2\, +\, 2m\Real\langle \Phi, \phi\rangle  + s |\phi|^4 +   4s(\Real \langle \Phi, \phi\rangle)^2 \right)\, d\vol_g
\\
&\quad + \int_X\left( 4s (|\Phi|^2+ |\phi|^2)\Real\langle \Phi, \phi\rangle + 2s |\Phi|^2 |\phi|^2 \right)\, d\vol_g.
\end{align*}
To see the origin of the expression \eqref{eq:Boson_energy_difference_T2prime} for $T_2'$, we observe that
\begin{align*}
T'_2 &:= \|\nabla_{A_1+a_1+a}(\Phi+\phi)\|_{L^2(X)}^2 - \|\nabla_{A_1+a_1}\Phi\|_{L^2(X)}^2
\\
&\,= \|\nabla_{A_1}(\Phi+\phi) + \varrho(a_1+a)(\Phi+\phi)\|_{L^2(X)}^2 - \|\nabla_{A_1}\Phi+ \varrho(a_1)\Phi)\|_{L^2(X)}^2
\\
&\,= \|\nabla_{A_1} (\Phi+\phi)\|_{L^2(X)}^2 - \|\nabla_{A_1} \Phi\|_{L^2(X)}^2
\\
&\quad\, + \|\varrho(a_1+a)(\Phi+\phi)\|_{L^2(X)}^2 - \|\varrho(a_1)\Phi\|_{L^2(X)}^2
\\
&\quad\, + 2\Real(\nabla_{A_1}(\Phi+\phi), \varrho(a_1+a)(\Phi+\phi))_{L^2(X)}
- 2\Real (\nabla_{A_1}\Phi, \varrho(a_1)\Phi)_{L^2(X)}
\\
&\,= T_{21}' + T_{22}' + T_{23}'.
\end{align*}
For the first term, we have
\begin{multline*}
T_{21}' := \|\nabla_{A_1} (\Phi+\phi)\|_{L^2(X)}^2 - \|\nabla_{A_1} \Phi\|_{L^2(X)}^2
\\
= \|\nabla_{A_1}\phi\|_{L^2(X)}^2 + 2\Real(\nabla_{A_1}\Phi, \nabla_{A_1}\phi)_{L^2(X)}.
\end{multline*}
For the second term, we see that
\begin{align*}
T_{22}' &:= \|\varrho(a_1+a)(\Phi+\phi)\|_{L^2(X)}^2 - \|\varrho(a_1)\Phi\|_{L^2(X)}^2
\\
&\,= \|\varrho(a_1)\Phi + \varrho(a)\Phi+ \varrho(a_1+a)\phi\|_{L^2(X)}^2 - \|\varrho(a_1)\Phi\|_{L^2(X)}^2
\\
&\,= \|\varrho(a)\Phi\|_{L^2(X)}^2 + \|\varrho(a_1+a)\phi\|_{L^2(X)}^2
+ 2\Real(\varrho(a_1)\Phi, \varrho(a)\Phi)_{L^2(X)}
\\
&\quad\, + 2\Real(\varrho(a_1)\Phi, \varrho(a_1+a)\phi\rangle + 2\Real(\varrho(a)\Phi, \varrho(a_1+a)\phi)_{L^2(X)}
\\
&\,= \|\varrho(a)\Phi\|_{L^2(X)}^2 + \|\varrho(a_1+a)\phi\|_{L^2(X)}^2 + 2\Real(\varrho(a_1)\Phi, \varrho(a)\Phi)_{L^2(X)}
\\
&\quad\, + 2\Real(\varrho(a_1+a)\Phi, \varrho(a_1+a)\phi)_{L^2(X)}.
\end{align*}
For the third term, we have
\begin{align*}
T_{23}' &:= 2\Real(\nabla_{A_1}(\Phi+\phi), \varrho(a_1+a)(\Phi+\phi))_{L^2(X)}
- 2\Real (\nabla_{A_1}\Phi, \varrho(a_1)\Phi)
\\
        &\,= 2\Real(\nabla_{A_1}\Phi, \varrho(a_1)\Phi+ \varrho(a)\Phi+ \varrho(a_1+a)\phi)_{L^2(X)}
  \\
  &\quad\, + 2\Real(\nabla_{A_1}\phi, \varrho(a_1+a)(\Phi+\phi))_{L^2(X)} - 2\Real(\nabla_{A_1}\Phi, \varrho(a_1)\Phi)_{L^2(X)}
\\
        &\,= 2\Real(\nabla_{A_1}\Phi, \varrho(a)\Phi+ \varrho(a_1+a)\phi)_{L^2(X)}
          \\
&\quad\, + 2\Real(\nabla_{A_1}\phi, \varrho(a_1+a)(\Phi+\phi))_{L^2(X)}.
\end{align*}
By adding the preceding terms, we obtain the expression \eqref{eq:Boson_energy_difference_T2prime} for $T_2'$.

To complete the proof of analyticity of $\sE$ at $(A,\Phi)$, we observe that by \cite[Theorem 4.12]{AdamsFournier} there is a continuous Sobolev embedding, $W^{1,p}(X) \subset L^4(X)$, if $p$ obeys
\[
\begin{cases}
p^* = dp/(d-p) \geq 4 &\text{when } 1 \leq p < d, \text{ or}
\\
p \geq d.
\end{cases}
\]
When $p < d$, then $p^* = dp/(d-p) \geq 4$ if and only if $dp \geq 4(d-p)$, that is, $p(d+4) \geq 4d$ or equivalently $p \geq 4d/(d+4)$. Moreover, $4d/(d+4) \leq d$ for all $d \geq 2$ and thus it suffices to choose $p \geq 4d/(d+4)$, as in our hypothesis, in order to ensure a continuous Sobolev embedding, $W^{1,p}(X) \subset L^4(X)$.

Hence, we obtain a continuous multilinear map, $\otimes_{i=1}^{4} W^{1,p}(X) \rightarrow L^1(X)$, by combining the Sobolev embedding $W^{1,p}(X) \rightarrow L^4(X)$ with the continuous multiplication map,
$\otimes_{i=1}^{4} L^4(X) \rightarrow L^1(X)$. Combining these observations with the Kato Inequality \cite[Equation (6.20)]{FU}, we obtain an estimate of the form,
\[
|\sE (A +a , \Phi+ \phi) - \sE(A, \Phi)| \leq |T_1'| +   |T_2'| +  |T_3'|,
\]
where, for a constant $C = C(g, G) \in [1,\infty)$,
\begin{align*}
C^{-1}|T_1'|
&\leq
               \|a\|_{W^{1,p}_{A_1}(X)}^2 + \|a_1\|_{W^{1,p}_{A_1}(X)}\|a\|_{W^{1,p}_{A_1}(X)}
  \\
  &\quad + \|a\|_{W^{1,p}_{A_1}(X)}\left(\|a_1\|_{W^{1,p}_{A_1}(X)} + \|a\|_{W^{1,p}_{A_1}(X)}\right)^3
\\
  &\quad + \|F_{A_1}\|_{L^2(X)} \|a\|_{W^{1,p}_{A_1}(X)}
  \\
  &\quad + \|F_{A_1}\|_{L^2(X)} \|a\|_{W^{1,p}_{A_1}(X)}\left(\|a_1\|_{W^{1,p}_{A_1}(X)} + \|a\|_{W^{1,p}_{A_1}(X)}\right)
\\
&\quad + \|a\|_{W^{1,p}_{A_1}(X)}\left(\|a_1\|_{W^{1,p}_{A_1}(X)} + \|a\|_{W^{1,p}_{A_1}(X)}\right)^2
\\
&\quad + \|a_1\|_{W^{1,p}_{A_1}(X)}\|a\|_{W^{1,p}_{A_1}(X)}\left(\|a_1\|_{W^{1,p}_{A_1}(X)} + \|a\|_{W^{1,p}_{A_1}(X)}\right),
\end{align*}
noting that $W^{1,p}(X) \subset L^2(X)$ if $dp/(d-p) \geq 2$, that is, $dp \geq 2(d-p)$ or $p(d+2) \geq 2d$ or $p \geq 2d/(d+2)$, and
\begin{align*}
C^{-1}|T_2'|
&\leq  \|\phi\|_{W^{1,p}_{A_1}(X)}^2 +  \|\Phi\|_{W^{1,p}_{A_1}(X)}\|\phi\|_{W^{1,p}_{A_1}(X)} + \|a\|^2_{W^{1,p}_{A_1}(X)}\|\Phi\|^2_{W^{1,p}_{A_1}(X)}
\\
  &\quad + \left(\|a_1\|_{W^{1,p}_{A_1}(X)} + \|a\|_{W^{1,p}_{A_1}(X)}\right)^2\|\phi\|^2_{W^{1,p}_{A_1}(X)}
  \\
  &\quad + \|a_1\|_{W^{1,p}_{A_1}(X)}\|a\|_{W^{1,p}_{A_1}(X)}\|\Phi\|^2_{W^{1,p}_{A_1}(X)}
\\
&\quad + \left(\|a_1\|_{W^{1,p}_{A_1}(X)} + \|a\|_{W^{1,p}_{A_1}(X)}\right)^2_{W^{1,p}_{A_1}(X)}\|\Phi\|_{W^{1,p}_{A_1}(X)}\|\phi\|_{W^{1,p}_{A_1}(X)}
\\
&\quad + \|\phi\|_{W^{1,p}_{A_1}(X)}\left(\|a_1\|_{W^{1,p}_{A_1}(X)} + \|a\|_{W^{1,p}_{A_1}(X)}\right)\left(\|\Phi\|_{W^{1,p}_{A_1}(X)} + \|\phi\|_{W^{1,p}_{A_1}(X)}\right)
\\
  &\quad + \|\Phi\|^2_{W^{1,p}_{A_1}(X)}\|a\|_{W^{1,p}_{A_1}(X)}
  \\
  &\quad + \|\Phi\|_{W^{1,p}_{A_1}(X)}\|\phi\|_{W^{1,p}_{A_1}(X)}\left(\|a_1\|_{W^{1,p}_{A_1}(X)} + \|a\|_{W^{1,p}_{A_1}(X)}\right),
\end{align*}
and
\begin{align*}
C^{-1}|T_3'|
&\leq
               \|\phi\|^2_{W^{1,p}_{A_1}(X)} + \|\Phi\|_{W^{1,p}_{A_1}(X)}\|\phi\|_{W^{1,p}_{A_1}(X)}
  \\
  &\quad + \|\phi\|^4_{W^{1,p}_{A_1}(X)} + \|\Phi\|^2_{W^{1,p}_{A_1}(X)}\|\phi\|^2_{W^{1,p}_{A_1}(X)}
\\
&\quad + \|\Phi\|^3_{W^{1,p}_{A_1}(X)}\|\phi\|_{W^{1,p}_{A_1}(X)} + \|\Phi\|_{W^{1,p}_{A_1}(X)}\|\phi\|^3_{W^{1,p}_{A_1}(X)}.
\end{align*}
Note that $4d/(d+4) \geq 2d/(d+2)$, so the condition $p \geq 2d/(d+2)$ is assured by the stronger $p \geq 4d/(d+4)$. Thus, $\sE (A +a , \Phi+ \phi)$ is a polynomial of degree four in the variable $(a,\phi) \in W_{A_1}^{1,p}(X; \Lambda^1 \otimes \ad P \oplus E)$. This completes the proof of Proposition \ref{prop:analyticbos}.
\end{proof}

We now verify the formula \eqref{eq:Gradient_boson_coupled_Yang--Mills_energy_function} for the differential $\sE'(A,\Phi)$ and gradient $\sM(A,\Phi)$.

\begin{lem}[Differential and gradient of the boson coupled Yang--Mills energy function]
\label{lem:Differential_gradient_boson_coupled_Yang--Mills_energy_function}
Assume the hypotheses of Proposition \ref{prop:analyticbos} with the dual H\"older exponent $p'$ in the range $1 < p'\leq 4d/(3d-4)$ determined by $4d/(d+4) \leq p < \infty$ and $1/p+1/p'=1$. Then the expression for $\sE'(A,\Phi) \in (W_{A_1}^{1,p}(X; \Lambda^1 \otimes \ad P \oplus E))^*$ and $\sM(A,\Phi) \in W_{A_1}^{-1,p'}(X; \Lambda^1 \otimes \ad P \oplus E)$ is given by \eqref{eq:Gradient_boson_coupled_Yang--Mills_energy_function}, namely
\begin{align*}
\sE'(A,\Phi)(a, \phi) &= \left(\sM(A,\Phi),(a, \phi)\right)_{L^2(X)}
\\
&=
(d_A^*F_A, a)_{L^2(X)} + \Real (\nabla_A^*\nabla_A \Phi, \phi)_{L^2(X)}
+ \Real(\nabla_A\Phi, \varrho(a)\Phi)_{L^2(X)}
\\
&\quad - \Real(m\Phi,\phi)_{L^2(X)}
- 2\Real\int_X s|\Phi|^2\langle \Phi,\phi\rangle\, d\vol_g,
\\
&\qquad \forall\, (a,\phi) \in W_{A_1}^{1,p}(X; \Lambda^1 \otimes \ad P \oplus E).
\end{align*}
\end{lem}

\begin{proof}
We establish \eqref{eq:Gradient_boson_coupled_Yang--Mills_energy_function} by extracting the terms that are linear in $(a,\phi)$ from the expressions for $T_i'$, for $i=1,2,3$, arising in the proof of Proposition \ref{prop:analyticbos}. We compute $\sE(A+ta, \Phi+ t\phi)$ using the identities,
\begin{align*}
  F_{A+ta} &= F_{A} + td_A a + \frac{t^2}{2}[a,a],
  \\
  \nabla_{A+ta} (\Phi + t\phi) &= \nabla_A (\Phi+t\phi) + \varrho(ta) (\Phi + t\phi),
\end{align*}
to obtain
\begin{align*}
\sE(A+ta, \Phi+ t\phi)
&=
\frac{1}{2}\int_X |F_A +  td_A a + \frac{t^2}{2}[a,a]|^2\, d\vol_g
\\
&\quad +
\frac{1}{2}\int_X | \nabla_A \Phi+t(\nabla_A\phi + \varrho(a)\Phi) + t^2 \varrho(a)\phi)|^2\, d\vol_g
\\
&\quad -
\frac{1}{2}\int_X \left(m|\Phi + t\phi|^2 + s|\Phi + t\phi|^4\right)\, d\vol_g,
\end{align*}
that is,
\begin{align*}
\sE(A+ta, \Phi+ t\phi)
&=
\frac{1}{2}\int_X \left( |F_A|^2 +  2t \langle F_A, d_A a\rangle\right)\,  d\vol_g
\\
&\quad + \frac{1}{2}\int_X \left(|\nabla_A \Phi|^2 + 2t\Real \langle\nabla_A\Phi, \nabla_A\phi + \varrho(a)\Phi\rangle_{L^2}\right) \, d\vol_g
\\
&\quad - \frac{1}{2}\int_X m\left(|\Phi|^2 + 2t\Real \langle\Phi, \phi\rangle\right)\, d\vol_g
\\
&\quad - \frac{1}{2}\int_X s\left(|\Phi|^4  + 4t|\Phi|^2 \Real \langle\Phi, \phi\rangle\right)\, d\vol_g + \text{higher powers of $t$}.
\end{align*}
This yields the expression \eqref{eq:Gradient_boson_coupled_Yang--Mills_energy_function} for $\sE'(A,\Phi)(a, \phi)$ and completes the proof of Lemma \ref{lem:Differential_gradient_boson_coupled_Yang--Mills_energy_function}.
\end{proof}

It is convenient to define, for $p \in (1,\infty)$ and dual exponent $p'\in (1,\infty)$ determined by $1/p+1/p'=1$, the following Banach spaces,
\begin{equation}
\label{eq:Banach_space_WA1p_pairs_and_dual_space}
\begin{gathered}
\fX := W_{A_1}^{1,p}(X; \Lambda^1 \otimes \ad P \oplus E)
\quad\text{and}\quad
\fX^* = W_{A_1}^{-1,p'}(X; \Lambda^1 \otimes \ad P \oplus E),
\\
\tilde\fX := W_{A_1}^{-1,p}(X; \Lambda^1 \otimes \ad P \oplus E).
\end{gathered}
\end{equation}
Note that for $p \geq 2$, the inclusion, $\tilde\fX \subset \fX^*$, is a continuous embedding of Banach spaces.

The expression
\eqref{eq:Gradient_boson_coupled_Yang--Mills_energy_function} defines
the gradient as a map, $\sM:(A_1,0)+\fX \to \tilde\fX$, where
$\sM(A,\Phi) \in \tilde\fX$ acts on $(b, \varphi) \in \fX$ by $L^2$
inner product via the inclusion $\tilde\fX \subset \fX^*$. We now have the

\begin{prop}[Analyticity of the gradient map for the boson coupled Yang--Mills energy function]
\label{prop:analytic_bos_gradient_map}
Assume the hypotheses of Proposition \ref{prop:analyticbos} and, in addition, that $p \geq d/2$ when $d \geq 3$ and $p > 4/3$ when $d = 2$.  Then
\[
\sM: (A_1,0)+\fX \to \tilde\fX
\]
is a real analytic map, where $\sM$ is as in \eqref{eq:Gradient_boson_coupled_Yang--Mills_energy_function} and $\fX$ and $\tilde\fX$ are as in \eqref{eq:Banach_space_WA1p_pairs_and_dual_space}.
\end{prop}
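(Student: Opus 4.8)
The plan is to establish real-analyticity of $\sM$ at a fixed pair $(A,\Phi) \in (A_1,0)+\fX$ by exhibiting each term appearing in the gradient formula \eqref{eq:Gradient_boson_coupled_Yang-Mills_energy_functional} as the restriction of a bounded multilinear map in the increment $(a,\phi)$. Since an everywhere-convergent power series with bounded homogeneous terms is analytic (see \cite[Section 2.1.1]{Feehan_Maridakis_Lojasiewicz-Simon_harmonic_maps_v6} or \cite[Section 2.4]{Huang_2006}), it suffices to write $A = A_1+a_1+a$, $\Phi = \Phi_1+\phi_1+\phi$ with $(a_1,\phi_1)$ fixed and $(a,\phi) \in \fX$ variable, substitute into \eqref{eq:Gradient_boson_coupled_Yang-Mills_energy_functional}, and observe that $\sM(A+a,\Phi+\phi)$ becomes a polynomial of degree at most three in $(a,\phi)$ with coefficients built from the fixed data $A_1$, $a_1$, $\phi_1$, $m$, $s$, and $g$. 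Each monomial must then be shown to land continuously in $\tilde\fX = W_{A_1}^{-1,p}(X;\Lambda^1\otimes\ad P\oplus E)$, equivalently to pair continuously with $W_{A_1}^{1,p'}(X;\Lambda^1\otimes\ad P\oplus E)$.

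First I would expand each of the three groups of terms in \eqref{eq:Gradient_boson_coupled_Yang-Mills_energy_functional}. For $d_A^*F_A$ with $A = A_1+\alpha$ (writing $\alpha := a_1+a$) one gets $d_{A_1}^*F_{A_1}$ plus terms of the schematic form $\nabla_{A_1}^2\alpha$, $\nabla_{A_1}(\alpha\times\alpha)$, $\alpha\times\nabla_{A_1}\alpha$, $\alpha\times\alpha\times\alpha$; for $\nabla_A^*\nabla_A\Phi$ and $\nabla_A\Phi\cdot\varrho(a)\Phi$ one gets analogous expressions involving up to two factors of $\alpha$ and the section; and the $m\Phi$ and $s|\Phi|^2\Phi$ terms are polynomial of degree $0$ and $3$ respectively in the section. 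The key step is the continuity of the worst monomials. For the quadratic and cubic terms in $\alpha$, I would use the Sobolev multiplication/embedding estimates already invoked throughout Section~\ref{sec:Coulomb_gauge_slice_quotient_space_connections_pairs}: since $p \geq d/2$ we have $W^{1,p}(X)\subset L^{2p}(X)$ and, dualizing, products of two or three $W^{1,p}$ factors with one $W^{1,p'}$ test factor are controlled — this is exactly the arithmetic of the Hölder exponents appearing in the proof of Proposition \ref{prop:W2p_apriori_estimate_Delta_A_Sobolev} and Lemma \ref{lem:Continuous_operators_on_Lp_spaces_and_L2-orthogonal_decompositions}. The term $\nabla_{A_1}^2\alpha$ is merely the linear operator $\alpha\mapsto d_{A_1}^*d_{A_1}\alpha + d_{A_1}d_{A_1}^*\alpha$, which is bounded $W_{A_1}^{1,p}\to W_{A_1}^{-1,p}$, and the cross terms with the smooth reference data $a_1$ are handled by the same estimates with one factor placed in $C(X)$ or $L^\infty$ after using that $a_1$ is $C^\infty$.

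For the section-dependent terms the relevant multiplication is the quartic one $\otimes^4 W^{1,p}\to L^1 \hookrightarrow W^{-1,p}$ available when $p\geq 4d/(d+4)$, which is implied by $p\geq d/2$ for $d\geq 2$ (indeed $d/2 \geq 4d/(d+4)$ iff $d+4\geq 8$ iff $d\geq 4$, so for $d=2,3$ one notes $4d/(d+4)<d/2$ fails and must instead check directly: $4\cdot2/6 = 4/3 < 1 = 2/2$ and $4\cdot3/7 = 12/7 < 3/2$, so the inequality $p\geq d/2 \Rightarrow p\geq 4d/(d+4)$ does hold for $d=2,3$ as well). Thus the cubic term $s|\Phi|^2\Phi$, together with the cross terms $s\Phi_1\times\Phi_1\times\phi$ etc., pairs continuously against a $W^{1,p'}$ test section, and the terms $\varrho(a)\Phi\cdot(\text{test})$, $\nabla_{A_1}\Phi\cdot\varrho(a)\phi$, etc., are handled analogously. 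Collecting the estimates, $\sM(A+a,\Phi+\phi)$ is seen to be a $\tilde\fX$-valued polynomial in $(a,\phi)\in\fX$ of degree $\leq 3$, hence real analytic, which completes the argument.

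\textbf{Main obstacle.} I expect the only genuine difficulty to be bookkeeping at the borderline exponent $p = d/2$ (for $d\geq 3$), where $W^{2,p}(X)$ fails to embed in $C(X)$: one must be careful that every product estimate uses only embeddings that remain valid at $p=d/2$ — namely $W^{1,p}(X)\subset L^{2p}(X)$ and $W^{1,p}(X)\subset L^r(X)$ for finite $r$ when $p=d$ — and never secretly invokes $W^{1,p}\subset C(X)$ for the Sobolev (non-smooth) factors. The smooth reference connection $A_1$ and smooth background data can always be placed in $C(X)$ or $L^\infty$, so the potential loss is confined to products of the variable factors, and those are exactly covered by the $p\geq d/2$ hypothesis. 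No substantively new analytic input beyond what is already developed earlier in the paper is needed.
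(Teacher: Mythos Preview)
Your overall strategy matches the paper's: expand $\sM(A_1+a_1,\Phi)$ around the reference, separate the linear second-order part (manifestly bounded $W^{1,p}_{A_1}\to W^{-1,p}_{A_1}$), and show the remaining terms form a continuous cubic polynomial into $\tilde\fX$. The paper organizes the last step slightly differently from your dual-pairing description: rather than pair each monomial directly against a $W^{1,p'}$ test section, it fixes an intermediate Lebesgue exponent $r'$ (chosen so that $W^{1,p'}(X)\subset L^r(X)$ dualizes to $L^{r'}(X)\subset W^{-1,p}(X)$) and proves each nonlinear term lies in $L^{r'}$ via a systematic case split on $p'\lessgtr d$ and, within those, on $p\lessgtr d$. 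The two routes are equivalent, but the $L^{r'}$ factorization keeps the bookkeeping in a single Lebesgue scale and avoids tracking mixed $W^{1,p}$--$W^{1,p'}$ products.

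Two slips to fix. First, your arithmetic: $4\cdot 2/6 = 4/3 > 1$ and $12/7 > 3/2$, so $p\geq d/2$ does \emph{not} imply $p\geq 4d/(d+4)$ for $d=2,3$. This doesn't break anything, because Proposition~\ref{prop:analyticbos} already hypothesizes $p\geq 4d/(d+4)$ and you are assuming that too; just delete the parenthetical. Second, the quartic multiplication you invoke for the cubic section term is not $(W^{1,p})^{\otimes 4}\to L^1$ (that was for the \emph{energy}) but rather $(W^{1,p})^{\otimes 3}\times W^{1,p'}\to L^1$; verifying this mixed product under $p\geq d/2$ is exactly what the paper's case analysis does, and you should either carry it out or point to the paper's Claim that each such term lands in $L^{r'}$.
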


\begin{proof}
As usual, given $p \in (1,\infty)$, we let $p' \in (1,\infty)$ denote the dual H\"older exponent defined by $1/p+1/p'=1$. By \cite[Theorem 4.12]{AdamsFournier}, we have a continuous Sobolev embedding, $W^{1,p'}(X;\CC) \subset L^r(X;\CC)$, when
\begin{enumerate}
\item
\label{item:1_lessthan_pprime_lessthan_d}
$1 < p' < d$ and $1\leq r \leq (p')^* := dp'/(d-p')$, or
\item
\label{item:pprime_equals_d}
$p' = d$ and $1 \leq r < \infty$, or
\item
\label{item:d_lessthan_pprime_lessthan_infty}
$d<p'<\infty$ and $1 \leq r \leq \infty$.
\end{enumerate}
In each case, we obtain a continuous Sobolev embedding, $L^{r'}(X;\CC) \subset W^{-1,p}(X;\CC)$, by duality and density of the continuous embedding, $W^{1,p'}(X;\CC) \subset L^r(X;\CC)$, where $1/r + 1/r' = 1$ and $1/p+1/p'=1$. We shall require $p', r < \infty$ in order to appeal to the dualities, $(W^{1,p'}(X;\CC))^* = W^{-1,p}(X;\CC)$ and $(L^r(X;\CC))^* = L^{r'}(X;\CC)$; these dualities fail when $p' = \infty$ (and thus $p=1$) or $r=\infty$ (and thus $r'=1$).

If we write $A = A_1 +a_1$, for $a_1\in W_{A_1}^{1,p}(X;\Lambda^1\otimes\ad P)$, then from  the formula \eqref{eq:Gradient_boson_coupled_Yang--Mills_energy_function} for $\sM(A,\Phi)$ we have the formal expression,
\begin{equation}
\begin{aligned}
\label{eq:Gradient_boson_coupled_Yang--Mills_energy_function_expanded}
\sM(A_1+a_1,\Phi)
&=
d_{A_1}^*d_{A_1} a_1  + \nabla_{A_1}^*\nabla_{A_1}\Phi + d^*_{A_1}F_{A_1} + F_{A_1}\times a_1
\\
&\quad + \nabla_{A_1}\Phi\times a_1 + \nabla_{A_1} a_1\times \Phi + \nabla_{A_1}a_1\times a_1
\\
&\quad + \nabla_{A_1}\Phi\times\Phi - (m + 2s|\Phi|^2)\Phi
\\
&\quad + a_1\times a_1\times \Phi + a_1\times \Phi\times\Phi + a_1\times a_1\times a_1.
\end{aligned}
\end{equation}
Observe that
\begin{multline*}
  W_{A_1}^{1,p}(X; \Lambda^1 \otimes \ad P \oplus E)\ni (a_1, \Phi)
  \\
  \to  d_{A_1}^*d_{A_1} a_1  + \nabla_{A_1}^*\nabla_{A_1}\Phi \in W_{A_1}^{-1,p}(X; \Lambda^1 \otimes \ad P \oplus E),
\end{multline*}
is analytic and the term $d^*_{A_1}F_{A_1}$ is constant with respect to $(a_1, \Phi)$. Therefore, to prove that $\sM$ is analytic, it suffices to prove the

\begin{claim}
\label{claim:Analytic_difference_Gradient_and_Laplacian_W1p_to_Lrprime}
Continue the preceding notation. Then
\begin{multline}
\label{eq:Difference_Gradient_and_Laplacian_W1p_to_Lrprime}
W_{A_1}^{1,p}(X; \Lambda^1 \otimes \ad P \oplus E) \ni (a_1,\Phi)
\\
\mapsto \sM(A_1+a_1,\Phi) - d_{A_1}^*d_{A_1} a_1  + \nabla_{A_1}^*\nabla_{A_1}\Phi - d^*_{A_1}F_{A_1}
\\
\in L^{r'}(X; \Lambda^1 \otimes \ad P \oplus E),
\end{multline}
is a cubic polynomial in $(a_1,\Phi)$ and its first-order covariant derivatives with respect to $\nabla_{A_1}$, with universal coefficients (depending at most on $g$ and $G$).
\end{claim}

\begin{proof}[Proof of Claim \ref{claim:Analytic_difference_Gradient_and_Laplacian_W1p_to_Lrprime}]
We compute an $L^{r'}$ bound for each term in equation \eqref{eq:Gradient_boson_coupled_Yang--Mills_energy_function_expanded} for $\sM(A,\Phi)$; we consider the cases $p'<d$, $p'=d$, and $p'>d$ separately, recalling that $A=A_1+a_1$.

\setcounter{step}{0}
\begin{step}[$L^{r'}$ estimates for $F_{A_1}\times a_1$ and $\nabla_{A_1}\Phi\times a_1$]
\label{step:Lrprime_estimate_FA1_times_a1_and_nablaA1Phi_times_a1}
We claim that
\begin{align}
\label{eq:Lrprime_bound_FA1_times_a1}
\|F_{A_1}\times a_1\|_{L^{r'}(X)}
&\leq
z\|F_{A_1}\|_{L^p(X)} \|a_1\|_{W_{A_1}^{1,p}(X)},
\\
\label{eq:Lrprime_bound_nablaA1Phi_times_a1}
\|\nabla_{A_1}\Phi\times a_1\|_{L^{r'}(X)}
&\leq z\|\Phi\|_{W_{A_1}^{1,p}(X)} \|a_1\|_{W_{A_1}^{1,p}(X)},
\end{align}
where $z = z(g,G,p) \in [1,\infty)$.

\setcounter{case}{0}
\begin{case}[$p' < d$]
\label{case:pprime_lessthan_d}
We choose $r = (p')^* = dp'/(d-p')$, where $p' = p/(p-1)$, so $r = (dp/(p-1)) / (d - p/(p-1)) = dp/(d(p-1) - p)$. Since $p' = p/(p-1) < d$ for Case \ref{case:pprime_lessthan_d}, then $p < dp-d$ or $p(d-1)>d$ or $p>d/(d-1)$. By \cite[Theorem 4.12]{AdamsFournier} we have continuous Sobolev embeddings,
\begin{inparaenum}[\itshape a\upshape)]
\item
\label{subcase:pprime_lessthan_d_and_p_lessthan_d}
$W^{1,p}(X) \subset L^{p^*}(X)$, for $p^*=dp/(d-p)$ if $p<d$;
\item
\label{subcase:pprime_lessthan_d_and_p_equals_d}
$W^{1,p}(X) \subset L^q(X)$ for any $q\in[1,\infty)$ if $p=d$; and
\item
\label{subcase:pprime_lessthan_d_and_p_greaterthan_d}
$W^{1,p}(X) \subset L^\infty(X)$ if $p>d$.
\end{inparaenum}
We consider each of these three subcases in turn.

Consider Subcase (\ref{case:pprime_lessthan_d}\ref{subcase:pprime_lessthan_d_and_p_lessthan_d}), so $d/(d-1)<p<d$, which forces $d\geq 3$. In order to have a continuous multiplication map, $L^p(X)\times L^{p^*}(X) \to L^{r'}(X)$, we require that the inequality $1/p + 1/p^* \leq 1/r'$ holds, that is
\[
1/p + 1/(dp/(d-p)) \leq 1/r' = 1-1/r = 1-(dp-d-p)/dp,
\]
or equivalently,
\[
1/p + (d-p)/(dp) \leq 1-(1 - 1/p - 1/d),
\]
namely,
\[
1/p + 1/p - 1/d \leq 1/p + 1/d.
\]
Therefore, $1/p \leq 2/d$ or $p \geq d/2$, as we assumed in our hypotheses. Thus,
\[
\|F_{A_1}\times a_1\|_{L^{r'}(X)} \leq z\|F_{A_1}\|_{L^p(X)} \|a_1\|_{L^{p^*}(X)} \leq z\|F_{A_1}\|_{L^p(X)} \|a_1\|_{W_{A_1}^{1,p}(X)},
\]
which yields the first inequality in \eqref{eq:Lrprime_bound_FA1_times_a1} for all $d \geq 3$ in this subcase.

Consider subcase (\ref{case:pprime_lessthan_d}\ref{subcase:pprime_lessthan_d_and_p_equals_d}), so $p>d/(d-1)$ and $p=d$. Because $p>d/(d-1)$ in this subcase, then $d$ must obey $d>d/(d-1)$ or $d-1>1$ or $d\geq 3$ for this subcase. (If $d=2$, then $p=d=2$ forces $p'=2$, so the subcase $p'<d$ and $p=d$ cannot occur.) We now only need a continuous multiplication map, $L^d(X)\times L^q(X) \to L^{r'}(X)$, for large enough $q \in [1,\infty)$ and this requires only that $r'<d$, that is, $1/r' = 1-1/r > 1/d$ or
\[
1-(dp-d-p)/dp = 1/p + 1/d  = 2/d > 1/p = 1/d,
\]
which holds for all positive $d$. Thus,
\[
\|F_{A_1}\times a_1\|_{L^{r'}(X)} \leq z\|F_{A_1}\|_{L^p(X)} \|a_1\|_{L^q(X)} \leq z\|F_{A_1}\|_{L^p(X)} \|a_1\|_{W_{A_1}^{1,p}(X)},
\]
which yields \eqref{eq:Lrprime_bound_FA1_times_a1} for all $d \geq 3$ in this subcase.

Consider Subcase (\ref{case:pprime_lessthan_d}\ref{subcase:pprime_lessthan_d_and_p_greaterthan_d}), so $p>d/(d-1)$ and $p>d$. But $p>d/(d-1)$ holds for any $d\geq 2$ for this subcase since $d \geq d/(d-1)$ for any $d\geq 2$. We now only need a continuous multiplication map, $L^p(X)\times L^\infty(X) \to L^{r'}(X)$, and this requires only that $r'\leq p$, that is, $1/r' = 1-1/r \geq 1/p$ or
\[
1-(dp-d-p)/dp = 1/p + 1/d  \geq 1/p,
\]
which holds for all positive $d$. Thus,
\[
\|F_{A_1}\times a_1\|_{L^{r'}(X)} \leq z\|F_{A_1}\|_{L^p(X)} \|a_1\|_{L^\infty(X)} \leq z\|F_{A_1}\|_{L^p(X)} \|a_1\|_{W_{A_1}^{1,p}(X)},
\]
which yields \eqref{eq:Lrprime_bound_FA1_times_a1} for all $d \geq 2$ in this subcase.
\end{case}

\begin{case}[$p' = d$]
Since $p' = p/(p-1) = d$ for this case, then $p = dp-d$ or $p(d-1)=d$ and so $p=d/(d-1)$. But our hypotheses in Proposition \ref{prop:analytic_bos_gradient_map} requires $p \geq 4d/(d+4)$ when $d \geq 3$. Observe that
\[
4d/(d+4) \geq d/(d-1) \iff 4(d-1) \geq d+4 \iff 3d \geq 8 \iff d \geq 8/3
\]
that is, $p \geq 4d/(d+4) > d/(d-1)$ when $d \geq 3$. Hence, the case $p'=d$ and $p=d/(d-1)$ is excluded by our hypotheses unless $d=2$ and thus $p' = 2 = p$.

When $d=2$, we have a continuous Sobolev embedding, $W^{1,2}(X) \subset L^q(X)$ for any $q\in[1,\infty)$ by \cite[Theorem 4.12]{AdamsFournier}. Hence, we obtain a continuous multiplication map, $L^2(X)\times L^q(X) \to L^{r'}(X)$, for any $r'>2$ and large enough $q\in[1,\infty)$. Thus,
\[
\|F_{A_1}\times a_1\|_{L^{r'}(X)} \leq z\|F_{A_1}\|_{L^2(X)} \|a_1\|_{L^q(X)} \leq z\|F_{A_1}\|_{L^2(X)} \|a_1\|_{W_{A_1}^{1,2}(X)},
\]
which yields \eqref{eq:Lrprime_bound_FA1_times_a1} for the case $p' = d = 2 = p$.
\end{case}

\begin{case}[$p' > d$]
We can again choose $r \in [1,\infty)$ arbitrarily large in the continuous Sobolev embedding, $W^{1,p'}(X)\subset L^r(X)$, and so we may choose $r' \in (1,\infty]$ arbitrarily small or equivalently, $1/r' \in [0,1)$ arbitrarily close to $1$. (We refrain from choosing $r=\infty$ because continuity of the Sobolev embedding, $W^{1,p'}(X)\subset L^\infty(X)$, does not imply continuity of $L^1(X) \subset W^{-1,p}(X)$.)

Since $p' = p/(p-1) > d$ for this case, then $p > dp-d$ or $p(d-1)<d$ and so $p<d/(d-1) \leq d$. We therefore have $p^* = dp/(d-p)$ and a continuous Sobolev embedding, $W^{1,p}(X) \subset L^{p^*}(X)$. In order to have a continuous multiplication map, $L^p(X)\times L^{p^*}(X) \to L^{r'}(X)$, we require that the inequality $1/p + 1/p^* \leq 1/r'$ holds for some $r \in [1,\infty)$ and $r' = r/(r-1) \in (1,\infty]$. Such a choice of $r'$ will be possible if and only if the strict inequality below holds,
\[
1/p + 1/p^* = 1/p + (d-p)/(dp) = 2/p - 1/d < 1,
\]
that is, if and only if $2/p < 1 + 1/d = (d+1)/d$ or equivalently, $p > 2d/(d+1)$. But our hypotheses in Proposition \ref{prop:analytic_bos_gradient_map} requires $p \geq 4d/(d+4)$ when $d \geq 3$ and $p > 4/3$ when $d=2$. Now $4d/(d+4)>2d/(d+1)$ when $d \geq 3$ and $4/3 = 2d/(d+1)$ when $d=2$, so our hypotheses in Proposition \ref{prop:analytic_bos_gradient_map} ensure that $p > 2d/(d+1)$ all $d \geq 2$. Hence, we may choose a large enough $r \in [1,\infty)$ and thus small enough $r' \in (1,\infty]$ to give $1/p + 1/p^* \leq 1/r'$.

Hence, the map, $L^p(X)\times L^{p^*}(X) \to L^{r'}(X)$, is continuous for small enough $r'>1$ and
\[
\|F_{A_1}\times a_1\|_{L^{r'}(X)} \leq z\|F_{A_1}\|_{L^p(X)} \|a_1\|_{L^{p^*}(X)} \leq z\|F_{A_1}\|_{L^p(X)} \|a_1\|_{W_{A_1}^{1,p}(X)},
\]
which again yields \eqref{eq:Lrprime_bound_FA1_times_a1} for this case.
\end{case}

An argument identical to that for \eqref{eq:Lrprime_bound_FA1_times_a1} gives
\[
\|\nabla_{A_1}\Phi\times a_1\|_{L^{r'}(X)}
\leq
z\|\nabla_{A_1}\Phi\|_{L^p(X)} \|a_1\|_{W_{A_1}^{1,p}(X)}.
\]
proving \eqref{eq:Lrprime_bound_nablaA1Phi_times_a1}. This completes Step \ref{step:Lrprime_estimate_FA1_times_a1_and_nablaA1Phi_times_a1}.
\end{step}

\begin{step}[$L^{r'}$ estimates for $\nabla_{A_1}\Phi\times \Phi$ and $\nabla_{A_1}a_1\times \Phi$ and $\nabla_{A_1}a_1\times a_1$]
\label{step:Lrprime_estimate_Phi_times_nablaA1Phi_and_two_similar_terms}
We claim that
\begin{align}
\label{eq:Lrprime_bound_Phi_times_nablaA1Phi}
\|\nabla_{A_1}\Phi\times \Phi\|_{L^{r'}(X)}
&\leq z\|\Phi\|_{W_{A_1}^{1,p}(X)}^2,
\\
\label{eq:Lrprime_bound_Phi_times_nablaA1a1}
\|\nabla_{A_1}a_1\times \Phi\|_{L^{r'}(X)}
&\leq z\|\Phi\|_{W_{A_1}^{1,p}(X)}\|a_1\|_{W_{A_1}^{1,p}(X)},
\\
\label{eq:Lrprime_bound_a1_times_nablaA1a1}
\|\nabla_{A_1}a_1\times a_1\|_{L^{r'}(X)}
&\leq z\|a_1\|_{W_{A_1}^{1,p}(X)}^2
\end{align}
where $z = z(g,G,p) \in [1,\infty)$.

From the proof of \eqref{eq:Lrprime_bound_FA1_times_a1} in Step \ref{step:Lrprime_estimate_FA1_times_a1_and_nablaA1Phi_times_a1}, we have
\[
\|\Phi\times \nabla_{A_1}\Phi\|_{L^{r'}(X)} \leq z\|\Phi\|_{W_{A_1}^{1,p}(X)} \|\nabla_{A_1}\Phi\|_{L^p(X)},
\]
and this gives \eqref{eq:Lrprime_bound_Phi_times_nablaA1Phi}; identical arguments give \eqref{eq:Lrprime_bound_Phi_times_nablaA1a1} and \eqref{eq:Lrprime_bound_a1_times_nablaA1a1}. This completes Step \ref{step:Lrprime_estimate_Phi_times_nablaA1Phi_and_two_similar_terms}.
\end{step}

Note that $\|m\|_{C(X)}$ and $\|s\|_{C(X)}$ are finite by hypothesis.

\begin{step}[$L^{r'}$ estimate for $m\Phi$]
We have
\[
\|m\Phi\|_{L^{r'}(X)} \leq \|m\|_{C(X)}\|\Phi\|_{L^{r'}(X)},
\]
and because $r'\leq p$ by inspection of each of the three cases, $p'<d$ and $p'=d$ and $p'>d$, and subcases ($p<d$ and $p=d$ and $p>d$ where applicable) we obtain
\begin{equation}
\label{eq:Lrprime_bound_mphi}
\|m\Phi\|_{L^{r'}(X)} \leq z\|m\|_{C(X)}\|\Phi\|_{L^p(X)},
\end{equation}
as desired.
\end{step}

\begin{step}[$L^{r'}$ estimates for $s|\Phi|^2\Phi$]
We claim that
\begin{equation}
\label{eq:Lrprime_bound_s_times_Phi_times_Phi_phi}
\|s|\Phi|^2\Phi\|_{L^{r'}(X)}
\leq
z\|s\|_{C(X)}\|\Phi\|_{W_{A_1}^{1,p}(X)}^3,
\end{equation}
where $z = z(g,p) \in [1,\infty)$.

From the proof of \eqref{eq:Lrprime_bound_FA1_times_a1} in Step \ref{step:Lrprime_estimate_FA1_times_a1_and_nablaA1Phi_times_a1}, we have
\[
\|s|\Phi|^2\Phi\|_{L^{r'}(X)} \leq z\|s\|_{C(X)}\||\Phi|^2\|_{L^p(X)} \|\Phi\|_{W_{A_1}^{1,p}(X)}
\]
Moreover, we have
\[
\||\Phi|^2\|_{L^p(X)} = \|\Phi\|_{L^{2p}(X)}^2 \leq z\|\Phi\|_{W_{A_1}^{1,p}(X)}^2.
\]
Combining the preceding inequalities yields \eqref{eq:Lrprime_bound_s_times_Phi_times_Phi_phi}.
\end{step}

\begin{step}[$L^{r'}$ estimates for $a_1\times a_1\times a_1$ and $a_1\times a_1\times \Phi$ and $a_1\times \Phi\times\Phi$]
\label{step:Lrprime_bound_a1_times_Phi_times_Phi_and_two_similar_terms}
We claim that
\begin{align}
\label{eq:Lrprime_bound_a1_times_Phi_times_Phi}
\|a_1\times \Phi\times\Phi\|_{L^{r'}(X)}
&\leq
z\|\Phi\|_{W_{A_1}^{1,p}(X)}^2 \|a_1\|_{W_{A_1}^{1,p}(X)},
\\
\label{eq:Lrprime_bound_a1_times_a1_times_Phi}
\|a_1\times a_1 \times\Phi\|_{L^{r'}(X)}
&\leq
z\|\Phi\|_{W_{A_1}^{1,p}(X)} \|a_1\|_{W_{A_1}^{1,p}(X)}^2,
\\
\label{eq:Lrprime_bound_a1_times_a1_times_a1}
\|a_1\times a_1\times a_1\|_{L^{r'}(X)}
&\leq
z\|a_1\|_{W_{A_1}^{1,p}(X)}^3,
\end{align}
where $z = z(g,G,p) \in [1,\infty)$.

From the proof of \eqref{eq:Lrprime_bound_FA1_times_a1} in Step \ref{step:Lrprime_estimate_FA1_times_a1_and_nablaA1Phi_times_a1}, we have
\[
\|a_1\times \Phi\times\Phi\|_{L^{r'}(X)} \leq z\|a_1\|_{W_{A_1}^{1,p}(X)} \|\Phi\times \Phi\|_{L^p(X)}.
\]
Continuity of the multiplication, $L^{2p}(X)\times L^{2p}(X) \to L^p(X)$, and continuity of the Sobolev embedding, $W^{1,p}(X) \subset L^{2p}(X)$, valid for any $p\geq d/2$ (which we assume by hypothesis), gives
\[
\|\Phi\times \Phi\|_{L^p(X)} \leq z\|\Phi\|_{L^{2p}(X)} \leq z\|\Phi\|_{W_{A_1}^{1,p}(X)}^2.
\]
Combining the preceding inequalities yields \eqref{eq:Lrprime_bound_a1_times_Phi_times_Phi}. The proofs of the estimates \eqref{eq:Lrprime_bound_a1_times_a1_times_Phi} and \eqref{eq:Lrprime_bound_a1_times_a1_times_a1} are the same as that of \eqref{eq:Lrprime_bound_a1_times_Phi_times_Phi}. This completes Step \ref{step:Lrprime_bound_a1_times_Phi_times_Phi_and_two_similar_terms}.
\end{step}

The estimates obtained in each of the preceding steps show that the map \eqref{eq:Difference_Gradient_and_Laplacian_W1p_to_Lrprime}
has the properties asserted in the statement of Claim \ref{claim:Analytic_difference_Gradient_and_Laplacian_W1p_to_Lrprime} and this completes its proof.
\end{proof}

In particular, Claim \ref{claim:Analytic_difference_Gradient_and_Laplacian_W1p_to_Lrprime} implies that
\[
\sM(A_1+a_1,\Phi) \in L^{r'}(X; \Lambda^1 \otimes \ad P \oplus E),
\]
is a continuous cubic polynomial in $(a_1,\Phi) \in W_{A_1}^{1,p}(X; \Lambda^1 \otimes \ad P \oplus E)$, with universal coefficients depending at most on $g$ and $G$.  Because
\[
L^{r'}(X; \Lambda^1 \otimes \ad P \oplus E) \subset W_{A_1}^{-1,p}(X; \Lambda^1 \otimes \ad P \oplus E),
\]
is a continuous Sobolev embedding by our choice of $r'$, we see that
\[
\sM(A_1+a_1,\Phi) \in W_{A_1}^{-1,p}(X; \Lambda^1 \otimes \ad P \oplus E),
\]
is a cubic polynomial in $(a_1,\Phi) \in W_{A_1}^{1,p}(X; \Lambda^1 \otimes \ad P \oplus E)$, again with universal coefficients depending at most on $g$ and $G$. This completes the proof of Proposition \ref{prop:analytic_bos_gradient_map}.
\end{proof}

\subsection{Fredholm and index properties of the Hessian operator for the boson coupled Yang--Mills energy function on the Sobolev space of pairs}
\label{subsec:Fredholm_index_Hessian_boson_coupled_Yang--Mills_energy_function}
Consider the Hessian map, $\sM'(A,\Phi):\fX \to \fX^*$.

\begin{lem}[Hessian and Hessian operator for the boson coupled Yang--Mills energy function]
\label{lem:Hessian_boson_energy_function}
Assume the hypotheses of Proposition \ref{prop:analyticbos}. Then we have the schematic formula for $\sM'(A,\Phi) \in \sL(\fX,\fX^*)$ and $\sE''(A,\Phi) \in (\fX\times\fX)^*$ given by
\begin{equation}
\label{eq:Hessian_boson_energy_function}
\begin{aligned}
\sM'(A,\Phi) (a, \phi)
&=
d_A^*d_A a  + \nabla_A^*\nabla_A\phi  + F_A\times a +  \nabla_A^*(\varrho(a)\Phi)   + \Phi\times \nabla_A\phi
\\
&\quad - \rho(a)^*\nabla_A\Phi + \nabla_A\Phi\times\phi + \varrho(a)\Phi \times\Phi
\\
&\quad - (m + 2s|\Phi|^2)\phi - 4s\langle\Phi, \phi\rangle\Phi,
\quad\forall\, (a,\phi) \in \fX,
\end{aligned}
\end{equation}
where
\begin{multline}
\label{eq:Relation_Hessian_and_Hessian_operator}
\sE''(A,\Phi) (a, \phi)(b, \varphi) = ((b, \varphi), \sM'(A,\Phi) (a, \phi))_{L^2(X)},
\\
\forall\, (a,\phi), (b, \varphi) \in \fX.
\end{multline}
\end{lem}

\begin{proof}
Let $(a_i, \phi_i) \in \fX$, for $i=1,2$. We compute the terms in
\[
  \left(\sM(A+ta_2,\Phi+t\phi_2), (a_1, \phi_1)\right)_{L^2(X)}
\]
that are linear in $t$ using the expression \eqref{eq:Gradient_boson_coupled_Yang--Mills_energy_function} for the gradient. First,
\begin{align*}
(F_{A+ta_2},  d_{A+ta_2} a_1)_{L^2(X)}
&= \left(F_A + td_Aa_2 + \frac{t^2}{2}[a_2, a_2],  d_Aa_1 + t[a_2, a_1]\right)_{L^2(X)}
\\
&= (F_A, d_A a_1)_{L^2(X)} + t(F_A , [a_2, a_1])_{L^2(X)}
\\
&\quad + t(d_Aa_2, d_Aa_1)_{L^2(X)} + O(t^2).
\end{align*}
Second,
\begin{align*}
{}&(\nabla_{A+ta_2} (\Phi+t\phi_2), \nabla_{A+ta_2}\phi_1)_{L^2(X)}
\\
&\quad = \left((\nabla_A+t \varrho(a_2)) (\Phi+t\phi_2), \nabla_A \phi_1 + t \varrho(a_2)\phi_1\right)_{L^2(X)}
\\
&\quad = (\nabla_A \Phi, \nabla_A\phi_1)_{L^2(X)}
+ t(\nabla_A\phi_2, \nabla_A\phi_1)_{L^2(X)} + t(\varrho(a_2)\Phi, \nabla_A\phi_1)_{L^2(X)}
\\
&\qquad + t(\nabla_A\Phi, \varrho(a_2)\phi_1)_{L^2(X)} + O(t^2).
\end{align*}
Third,
\begin{align*}
{}& (\nabla_{A+ta_2}(\Phi+t\phi_2), \varrho(a_1)(\Phi+ t\phi_2))_{L^2(X)}
\\
&= \left((\nabla_A+t \varrho(a_2)) (\Phi+t\phi_2), \varrho(a_1)(\Phi+ t\phi_2)\right)_{L^2(X)}
\\
&=
(\nabla_A\Phi, \varrho(a_1)\Phi)_{L^2(X)} + t(\nabla_A\phi_2, \varrho(a_1)\Phi)_{L^2(X)}
+ t(\varrho(a_2)\Phi, \varrho(a_1)\Phi)_{L^2(X)}
\\
&\quad + t(\nabla_A \Phi, \varrho(a_1)\phi_2)_{L^2(X)} + O(t^2).
\end{align*}
Fourth,
\begin{align*}
(m (\Phi + t\phi_2), \phi_1)_{L^2(X)} = (m \Phi , \phi_1)_{L^2(X)} + t(m\phi_2, \phi_1)_{L^2(X)}.
\end{align*}
Fifth,
\begin{align*}
  {}&\int_X s|\Phi+t \phi_2|^2\langle \Phi+t\phi_2,\phi_1\rangle\, d\vol_g
      \\
&\quad =
\int_X s\left(|\Phi|^2+2t\Real\langle\Phi, \phi_2\rangle + t^2|\phi_2|^2\right)\langle \Phi+t\phi_2,\phi_1\rangle\, d\vol_g
\\
&\quad =
\int_X s|\Phi|^2\langle \Phi,\phi_1\rangle\, d\vol_g
\\
&\qquad + t\int_X\left( s|\Phi|^2\langle \phi_2,\phi_1\rangle + 2s \Real \langle \Phi, \phi_2\rangle \langle \Phi, \phi_1\rangle\right)\, d\vol_g + O(t^2).
\end{align*}
By subtracting $(\sM'(A,\Phi)(a_2, \phi_2),(a_1, \phi_1))_{L^2(X)}$, collecting all the first-order terms in $t$, and reversing the roles of $(a_1,\phi_1)$ and $(a_2, \phi_2)$, we see that
\begin{align*}
{}&\left(\sM'(A,\Phi) (a_1, \phi_1),(a_2, \phi_2)\right)_{L^2(X)}
  \\
  &\quad =
(d_A a_1, d_A a_2)_{L^2(X)} + 2(F_A, [a_1, a_2])_{L^2(X)}
\\
&\qquad + \Real (\nabla_A\phi_1, \nabla_A \phi_2)_{L^2(X)}
\\
&\qquad + \Real \left((\rho(a_1)\Phi,\nabla_A\phi_2)_{L^2(X)}
+ (\rho(a_2)\Phi,\nabla_A\phi_1)_{L^2(X)}\right)
\\
&\qquad + \Real(\nabla_A\Phi, \rho(a_1)\phi_2 + \rho(a_2)\phi_1)_{L^2(X)}
\\
&\qquad + \Real (\rho(a_1)\Phi, \rho(a_2)\Phi)_{L^2(X)}
\\
&\qquad - \Real\int_X \left((m + 2s|\Phi|^2)\langle\phi_1, \phi_2\rangle
+ 4s\langle\Phi, \phi_1\rangle\langle\Phi, \phi_2\rangle\right)\,d\vol_g,
\end{align*}
By now viewing $\sM'(A,\Phi)(a_1,\phi_1)$ as an element of $\fX^*$, we obtain the expression \eqref{eq:Hessian_boson_energy_function}.
\end{proof}

When $(A,\Phi)$ is a $C^\infty$ pair, we shall need to compare $\sM'(A,\Phi)$ with the $L^2$-self-adjoint, second-order partial differential operator,
\begin{equation}
\label{eq:LaplacianAPhi_to_compare_Hessian_EAPhi}
\sM'(A,\Phi) + d_{A, \Phi}d_{A, \Phi}^*: C^\infty(X; \Lambda^1 \otimes \ad P \oplus E)
\rightarrow C^\infty(X; \Lambda^1 \otimes \ad P \oplus E),
\end{equation}
in order to prove that $\sM'(A,\Phi)$ is Fredholm with index zero upon restriction to
\begin{equation}
\label{eq:Coulomb_gauge_slice_pairs}
\sX := \Ker\left(d_{A, \Phi}^*: W_{A_1}^{1,p}(X; \Lambda^1 \otimes \ad P \oplus E)
\to L^p(X;\ad P) \right).
\end{equation}
We recall from \eqref{eq:Differential_gauge_transformation_action_on_pair_at_identity} that
\[
d_{A,\Phi}\xi = (d_A\xi, -\xi\Phi), \quad \forall\, \xi \in C^\infty(X;\ad P),
\]
with $L^2$-adjoint,
\begin{equation}
\label{eq:Differential_gauge_transformation_action_on_pair_at_identity_L2adjoint}
d_{A,\Phi}^*(a, \phi) = d_A^*a - \langle\phi, \cdot\Phi\rangle^*,
\quad \forall\, (a,\phi) \in C^\infty(\Lambda^1\otimes \ad P\oplus E),
\end{equation}
for every $(a,\phi) \in C^\infty(\Lambda^1\otimes \ad P\oplus E)$, where the section $\langle\phi, \cdot\Phi\rangle^*$ of $\ad P$ is defined by
\[
(\langle\phi, \cdot\Phi\rangle^*, \xi)_{L^2(X)}
=
(\phi,\xi\Phi)_{L^2(X)}, \quad\forall\, \xi \in C^\infty(X;\ad P).
\]
According to Lemma \ref{lem:Continuous_operators_on_Lp_spaces_and_L2-orthogonal_decompositions_pairs}, the operator,
\[
d_{A, \Phi}^*: W_{A_1}^{1,p}(X; \Lambda^1 \otimes \ad P \oplus E)
\to L^p(X;\ad P),
\]
is bounded when $(A,\Phi)$ is a $W^{1,q}$ pair with $q \geq d/2$ and $p$ obeys $d/2\leq p \leq q$; therefore $\sX$ in \eqref{eq:Coulomb_gauge_slice_pairs} is a Banach space since it is a closed subspace of the Banach space $\fX = W_{A_1}^{1,p}(X; \Lambda^1 \otimes \ad P \oplus E)$.

When $(a,\phi) \in C^\infty(X; \Lambda^1 \otimes \ad P \oplus E)$, the expression \eqref{eq:Hessian_boson_energy_function} yields, after formally expanding $\nabla_A^*(\varrho(a)\Phi) = \nabla_A a\times \Phi + a\times \nabla_A\Phi + a\times \Phi$,
\begin{equation}
\begin{aligned}
\label{eq:Hessian_boson_energy_function_minus_Laplacian}
{}&\sM'(A,\Phi)(a, \phi)
\\
&\quad =
d_A^*d_A a  + \nabla_A^*\nabla_A\phi  + F_A\times a + \nabla_A a\times \Phi + a\times \nabla_A\Phi + \Phi\times \nabla_A\phi
\\
&\qquad + \varrho(a)\Phi + \nabla_A\Phi\times\phi + \varrho(a)\Phi \times\Phi
- (m + 2s|\Phi|^2)\phi - 4s\langle\Phi, \phi\rangle\Phi.
\end{aligned}
\end{equation}
To determine the Fredholm property and index of $\sM'(A,\Phi)$ upon restriction to a Coulomb-gauge slice, we shall need the following consequence of Theorem \ref{thm:Gilkey_1-4-5_Sobolev}.

\begin{prop}[Fredholm and index zero property of the augmented Hessian operator]
\label{prop:Fredholmness_and_index_Laplace_operator_on_W1p_pairs_slice}
Let $(X,g)$ be a closed, smooth Riemannian manifold of dimension $d \geq 2$, and $G$ be a compact Lie group, $P$ be a smooth principal $G$-bundle over $X$, and $E = P\times_\varrho\EE$ be a smooth Hermitian vector bundle over $X$ defined by a finite-dimensional unitary representation, $\varrho: G \to \Aut_\CC(\EE)$, and $A_1$ be a $C^\infty$ reference connection on $P$.  If $(A,\Phi)$ is a $C^\infty$ pair on $(P,E)$ and $k \in \ZZ$ is an integer and $1 < p < \infty$, then the following operator is Fredholm with index zero,
\begin{multline}
\label{eq:PiAPhi_MAPhi_PiAPhi_W^1pslice_to_Wminus1p_slice}
\sM'(A,\Phi) + d_{A,\Phi}d_{A,\Phi}^*: W_{A_1}^{k+2,p}(X; \Lambda^1 \otimes \ad P \oplus E)
\\
\to
W_{A_1}^{k,p}(X; \Lambda^1 \otimes \ad P \oplus E).
\end{multline}
\end{prop}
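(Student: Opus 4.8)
The plan is to recognize the augmented operator $\sM'(A,\Phi) + d_{A,\Phi}d_{A,\Phi}^*$ as a formally $L^2$-self-adjoint, second-order, \emph{elliptic} partial differential operator with $C^\infty$ coefficients, so that its Fredholm property and the vanishing of its index follow from Theorem \ref{thm:Gilkey_1-4-5_Sobolev} on elliptic operators with smooth coefficients acting on Sobolev spaces.

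First I would compute the principal symbol. Because $(A,\Phi)$ is a $C^\infty$ pair, the formula \eqref{eq:Hessian_boson_energy_functional_minus_Laplacian} for $\sM'(A,\Phi)$ applies, and using the expression \eqref{eq:Differential_gauge_transformation_action_on_pair_at_identity} for $d_{A,\Phi}$ together with its $L^2$-adjoint \eqref{eq:Differential_gauge_transformation_action_on_pair_at_identity_L2adjoint} one finds that the second-order part of $d_{A,\Phi}d_{A,\Phi}^*(a,\phi)$ is $(d_A d_A^* a,\, 0)$, all remaining terms involving $\Phi$ and $\nabla_A\Phi$ but no second derivatives of $(a,\phi)$. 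Adding this to \eqref{eq:Hessian_boson_energy_functional_minus_Laplacian}, the second-order part of $\sM'(A,\Phi)+d_{A,\Phi}d_{A,\Phi}^*$ is the map
\[
(a,\phi)\ \longmapsto\ \left(\,d_A^*d_A a + d_A d_A^* a,\ \nabla_A^*\nabla_A\phi\,\right) = \left(\,\Delta_A a,\ \nabla_A^*\nabla_A\phi\,\right),
\]
where $\Delta_A$ is the Hodge Laplacian on $\Omega^1(X;\ad P)$. All the other terms in \eqref{eq:Hessian_boson_energy_functional_minus_Laplacian} --- the curvature term $F_A\times a$, the coupling terms $\nabla_A a\times\Phi$, $a\times\nabla_A\Phi$, $\Phi\times\nabla_A\phi$, $\nabla_A\Phi\times\phi$, $\varrho(a)\Phi$, $\varrho(a)\Phi\times\Phi$, and the zeroth-order terms $(m+2s|\Phi|^2)\phi$, $4s\langle\Phi,\phi\rangle\Phi$ --- are of order at most one, and likewise for the lower-order part of $d_{A,\Phi}d_{A,\Phi}^*$; since $(A,\Phi)$ is smooth, all coefficients lie in $C^\infty$. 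Hence the principal symbol of $\sM'(A,\Phi)+d_{A,\Phi}d_{A,\Phi}^*$ at a nonzero covector $\xi$ is $|\xi|_g^2\,\id$ on $\Lambda^1\otimes\ad P\oplus E$, so the operator is elliptic of order two with $C^\infty$ coefficients. Theorem \ref{thm:Gilkey_1-4-5_Sobolev} then shows that \eqref{eq:PiAPhi_MAPhi_PiAPhi_W^1pslice_to_Wminus1p_slice} is Fredholm for every $k\in\ZZ$ and $p\in(1,\infty)$, with index independent of $(k,p)$.

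To identify the index as zero, I would use formal self-adjointness with respect to the $L^2(X)$ pairing. The operator $d_{A,\Phi}d_{A,\Phi}^*$ is manifestly $L^2$-self-adjoint, and $\sM'(A,\Phi)$ is the Hessian operator of the symmetric bilinear form $\sE''(A,\Phi)$ via \eqref{eq:Relation_Hessian_and_Hessian_operator}, hence also formally $L^2$-self-adjoint. For a formally self-adjoint elliptic operator of even order on the closed manifold $X$, acting $W_{A_1}^{2,2}(X) \to L^2(X)$, the $L^2$-cokernel is identified with the $L^2$-kernel and, by elliptic regularity, both consist of $C^\infty$ sections and hence have equal finite dimension; thus the index is zero when $k=0$ and $p=2$. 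By the invariance of the index provided by Theorem \ref{thm:Gilkey_1-4-5_Sobolev}, the index of \eqref{eq:PiAPhi_MAPhi_PiAPhi_W^1pslice_to_Wminus1p_slice} is zero for every admissible $k$ and $p$.

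The main point requiring care is the symbol computation: one must verify that the term $d_{A,\Phi}d_{A,\Phi}^*$ supplies exactly the $d_A d_A^* a$ contribution absent from the $\ad P$-component of $\sM'(A,\Phi)$ in \eqref{eq:Hessian_boson_energy_functional_minus_Laplacian}, turning it into the full Hodge Laplacian, and that the off-diagonal coupling terms between the connection and section directions are all of order at most one and therefore do not enter the principal symbol. This is precisely why the correction $d_{A,\Phi}d_{A,\Phi}^*$ is inserted; once ellipticity and formal self-adjointness are established, the conclusion is immediate from the cited appendix result.
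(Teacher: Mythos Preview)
Your proof is correct and follows essentially the same route as the paper: identify the second-order part of $\sM'(A,\Phi)+d_{A,\Phi}d_{A,\Phi}^*$ as $\Delta_A\oplus\nabla_A^*\nabla_A$ (so the principal symbol is $|\xi|_g^2\,\id$ and the operator is elliptic with $C^\infty$ coefficients), then appeal to Theorem~\ref{thm:Gilkey_1-4-5_Sobolev}. The only minor difference is that the paper verifies index zero by explicitly checking that $P-P^*$ is a first-order operator and invoking the last clause of Theorem~\ref{thm:Gilkey_1-4-5_Sobolev}, whereas you argue more directly that $\sM'(A,\Phi)$ is formally $L^2$-self-adjoint (being the Hessian operator of the symmetric bilinear form $\sE''(A,\Phi)$ via \eqref{eq:Relation_Hessian_and_Hessian_operator}) and $d_{A,\Phi}d_{A,\Phi}^*$ is manifestly self-adjoint; this is a slightly cleaner justification of the same fact.
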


\begin{proof}
We can compare the principal symbol of the connection Laplacian, $\nabla_A^*\nabla_A$, and Hodge Laplacian, $\Delta_A = d_A^*d_A + d_Ad_A^*$ \eqref{eq:Hodge_Laplace_operator}, on $C^\infty(X;\Lambda^1\otimes\ad P) = \Omega^1(X;\ad P)$ using the Bochner-Weitzenb\"ock formula \cite{Bourguignon_1981, Bourguignon_1990}, \cite[Appendix C]{FU}, \cite[Appendix II]{Lawson} and \cite{Wu_1988}. From \cite[Corollary II.2]{Lawson}, one has
\begin{equation}
\label{eq:Lawson_corollary_II-2}
\Delta_A a = \nabla_A^*\nabla_Aa + \Ric_g \times a + F_A \times a, \quad\forall\, a \in \Omega^1(X;\ad P),
\end{equation}
where $\Ric_g$ denotes the Ricci curvature tensor of the Riemannian metric $g$ on the manifold $X$ of dimension $d \geq 2$ and here we employ `$\times$' to denote any universal bilinear expression with constant coefficients depending at most on the Lie group, $G$, or metric $g$. In particular, $\Delta_A$ is a second-order, elliptic partial differential operator on $C^\infty(X; \Lambda^1 \otimes \ad P)$ with $C^\infty$ coefficients and scalar principal symbol given by the Riemannian metric $g$ on $T^*M$.

From the expressions \eqref{eq:Hessian_boson_energy_function_minus_Laplacian} for $\sM'(A,\Phi)$ and \eqref{eq:Differential_gauge_transformation_action_on_pair_at_identity} for $d_{A,\Phi}$ and \eqref{eq:Differential_gauge_transformation_action_on_pair_at_identity_L2adjoint} for $d_{A,\Phi}^*$, we see that
\[
\sM'(A,\Phi) + d_{A,\Phi}d_{A,\Phi}^* = \Delta_A \oplus \nabla_A^*\nabla_A + \text{Lower-order terms}.
\]
Thus, $\sM'(A,\Phi) + d_{A,\Phi}d_{A,\Phi}^*$ is an elliptic, second-order partial differential operator on $C^\infty(X; \Lambda^1 \otimes \ad P \oplus E)$ with $C^\infty$ coefficients and principal symbol given by the Riemannian metric $g$ on $T^*M$.

Equation \eqref{eq:Hessian_boson_energy_function} for $\sM'(A,\Phi)$ implies, for $(a, \phi) \in C^\infty(X; \Lambda^1 \otimes \ad P \oplus E)$, that
\begin{align*}
{}&\left(\sM'(A,\Phi) + d_{A,\Phi}d_{A,\Phi}^* - \left(\sM'(A,\Phi) + d_{A,\Phi}d_{A,\Phi}^*\right)^*\right)(a, \phi)
\\
&\quad = F_A\times a +  \nabla_A^*(\varrho(a)\Phi)   + \Phi\times \nabla_A\phi
- \rho(a)^*\nabla_A\Phi + \nabla_A\Phi\times\phi + \varrho(a)\Phi \times\Phi
\\
&\qquad - (m + 2s|\Phi|^2)\phi - 4s\langle\Phi, \phi\rangle\Phi.
\end{align*}
Consequently, the following expression defines a first-order partial differential operator,
\[
\sM'(A,\Phi) + d_{A,\Phi}d_{A,\Phi}^* - \left(\sM'(A,\Phi) + d_{A,\Phi}d_{A,\Phi}^*\right)^*
\]
and Theorem \ref{thm:Gilkey_1-4-5_Sobolev} implies that the operator \eqref{eq:PiAPhi_MAPhi_PiAPhi_W^1pslice_to_Wminus1p_slice} is Fredholm with index zero.
\end{proof}

\subsection{Gradient map and Hessian operator for the boson coupled Yang--Mills energy on a Coulomb-gauge slice}
\label{subsec:Gradient_map_Hessian_boson_Yang--Mills_energy_function_slice}
Suppose that $(A_\infty,\Phi_\infty)$ is a $C^\infty$ pair on $(P,E)$, recall that $\fX$ is as in \eqref{eq:Banach_space_WA1p_pairs_and_dual_space} and $\sX$ is as in \eqref{eq:Coulomb_gauge_slice_pairs}, and define
\begin{align}
\label{eq:Banach_space_WA1p_pairs_and_dual_space_range}
\tilde\fX &:= W_{A_1}^{-1,p}(X; \Lambda^1 \otimes \ad P \oplus E),
\\
\label{eq:Coulomb_gauge_slice_pairs_range}
  \tilde\sX &:= \Ker\left(d_{A_\infty,\Phi_\infty}^*:W_{A_1}^{-1,p}(X; \Lambda^1 \otimes \ad P \oplus E) \right.
              \\
             &\qquad\qquad \left. \to W_{A_1}^{-2,p}(X; \Lambda^1 \otimes \ad P \oplus E)\right). \nonumber
\end{align}
Because $(W^{1,p}(X;\CC))^* = W^{-1,p'}(X;\CC)$ and $p\geq p'$ for all $p \geq 2$, we see that
\[
\tilde\sX \subset \sX^*, \quad\forall\, p \geq 2.
\]
For a $C^\infty$ pair $(A, \Phi)$ on $(P,E)$, the Hessian operator, $\sM'(A, \Phi):\fX \to \fX^*$, is defined by the schematic expression \eqref{eq:Hessian_boson_energy_function} and related to the Hessian, $\sE''(A,\Phi):\fX\times\fX \to \RR$, by
\begin{align*}
\sE''(A,\Phi)\left((a, \phi), (b,\varphi)\right) &= \langle(b,\varphi), \sM'(A,\Phi)(a,\phi)\rangle_{\fX\times\fX^*}
\\
&= \left((b,\varphi), \sM'(A,\Phi)(a,\phi)\right)_\sH, \quad\forall\, (a,\phi), (b,\varphi) \in \fX,
\end{align*}
where we define
\begin{equation}
\label{eq:Hilbert_space_pairs}
\sH: = L^2(X; \Lambda^1\otimes \ad P \oplus E).
\end{equation}
According to Theorem \ref{thm:Gilkey_1-4-5_Sobolev}, the elliptic, linear, second-order partial differential operator,
\[
d_{A,\Phi}^*d_{A,\Phi}:W_{A_1}^{k+2,p}(X;\ad P)
\to W_{A_1}^{k,p}(X;\ad P),
\]
is Fredholm for any $k\in\ZZ$ and $p \in (1,\infty)$ with kernel,
\[
K := \Ker\left( d_{A,\Phi}^*d_{A,\Phi}:C^\infty(X;\ad P) \to C^\infty(X;\ad P) \right),
\]
and range,
\[
\Ran\left( d_{A,\Phi}^*d_{A,\Phi}: W_{A_1}^{k+2,p}(X;\ad P)
\to W_{A_1}^{k,p}(X;\ad P) \right)
=
K^\perp \cap W_{A_1}^{k,p}(X;\ad P),
\]
where $\perp$ denotes $L^2$-orthogonal complement. Hence, the operator,
\[
d_{A,\Phi}^*d_{A,\Phi}:K^\perp \cap W_{A_1}^{k+2,p}(X;\ad P)
\to K^\perp \cap W_{A_1}^{k,p}(X;\ad P),
\]
is invertible, with inverse
\[
\left(d_{A,\Phi}^*d_{A,\Phi}\right)^{-1}: K^\perp \cap W_{A_1}^{k,p}(X;\ad P) \to
K^\perp \cap W_{A_1}^{k+2,p}(X;\ad P).
\]
We define the \emph{Green's operator},
\[
G_{A,\Phi}: W_{A_1}^{k,p}(X;\ad P) \to W_{A_1}^{k+2,p}(X;\ad P),
\]
for the Laplacian, $d_{A,\Phi}^*d_{A,\Phi}$, by setting
\[
G_{A,\Phi}\xi
:=
\begin{cases}
(d_{A,\Phi}^*d_{A,\Phi})^{-1}\xi,
&\forall\, \xi \in K^\perp \cap W_{A_1}^{k,p}(X;\ad P),
\\
0, &\forall\, \xi \in K.
\end{cases}
\]
For any $k \in \ZZ$ and $p \in (1,\infty)$, we now let
\begin{multline}
\label{eq:L2-orthogonal_projection_onto_slice}
\Pi_{A_\infty,\Phi_\infty}:W_{A_1}^{k,p}(X;\Lambda^1\otimes \ad P \oplus E)
\\
\to \Ker d_{A_\infty,\Phi_\infty}^* \cap W_{A_1}^{k,p}(X;\Lambda^1\otimes \ad P \oplus E)
\end{multline}
denote the $L^2$-orthogonal projection onto the slice through $(A_\infty,\Phi_\infty)$. Because
\begin{align*}
{}&W_{A_1}^{k,p}(X;\Lambda^1\otimes \ad P \oplus E)
\\
&\quad = \Ker d_{A_\infty,\Phi_\infty}^* \cap W_{A_1}^{k,p}(X;\Lambda^1\otimes \ad P \oplus E)
\\
&\qquad \oplus \Ran\left(d_{A_\infty,\Phi_\infty}:W_{A_1}^{k+1,p}(X;\ad P \oplus E)
\to W_{A_1}^{k,p}(X;\Lambda^1\otimes \ad P \oplus E) \right)
\end{align*}
is an $L^2$-orthogonal direct sum, we see that
\begin{equation}
\label{eq:L2-orthogonal_projection_onto_slice_intermsof_Greens_operator}
\Pi_{A_\infty,\Phi_\infty} = \id - d_{A_\infty,\Phi_\infty}G_{A_\infty,\Phi_\infty}d_{A_\infty,\Phi_\infty}^*.
\end{equation}
Because of the invariance of the energy function, $\sE: \sP(P,E) \to\RR$, in \eqref{eq:Boson_Yang--Mills_energy_function} with respect to the action of $\Aut(P)$ on $C^\infty$ pairs, $\sP(P,E) = \sA(P) \times C^\infty(X;E)$, we have the identity
\begin{equation}
\label{eq:Gauge_invariance_energy_function}
\sE(u(A,\Phi)) = \sE(A,\Phi), \quad\forall\, u \in \Aut(P) \text{ and } (A,\Phi) \in \sP(P,E).
\end{equation}
Note that if $u(t) \in \Aut(P)$ is a family of gauge transformations depending smoothly on $t \in \RR$ such that $u(0) = \id_P$, then the identity \eqref{eq:Gauge_invariance_energy_function} implies that
\[
\sE'(A,\Phi)(d_{A,\Phi}\xi) = \left.\frac{d}{dt}\sE(u(t)(A,\Phi))\right|_{t=0} = 0, \quad\forall\, (A,\Phi) \in \sP(P,E),
\]
where
\[
d_{A,\Phi}\xi = \left.\frac{d}{dt} u(t)(A,\Phi)\right|_{t=0} \in C^\infty(X;\Lambda^1\otimes\ad P\oplus E),
\]
with
\[
  \xi  = \dot{u}(0)\in C^\infty(X;\ad P) = T_{\id_P}\Aut(P) = T_{\id_P}C^\infty(X;\Ad P).
\]
Before considering higher-order derivatives of the identity \eqref{eq:Gauge_invariance_energy_function} with respect to $u \in \Aut(P)$, we digress to discuss the \emph{Chain Rule} for maps of Banach spaces.

If $F:\cY \to \cZ$ and $G:\cX\to\cY$ are $C^\infty$ maps of Banach spaces, then the Chain Rule gives, for all $x \in \cX$,
\begin{align}
\label{eq:Chain_rule_gradients}
(F\circ G)'(x) &= F'(G(x)) \circ G'(x) \in \sL(\cX,\cZ),
\\
\label{eq:Chain_rule_hessians}
(F\circ G)''(x) &= F''(G(x)) \circ G'(x)^2 + F'(G(x))\circ G''(x) \in \sL(\cX\times\cX,\cZ),
\end{align}
More explicitly, if $u,v \in \cX$, then
\begin{align*}
(F\circ G)'(x)(u) &= F'(G(x))(G'(x)(u)),
\\
(F\circ G)''(x)(u,v) &= F''(G(x))(G'(x)(u),G'(x)(v)) + F'(G(x))(G''(x)(u,v)).
\end{align*}
The expression for the Hessian of the composition simplifies when $F'(y) = 0$ at $y = G(x)$ to give
\begin{equation}
\label{eq:Chain_rule_Hessians_critical_point}
(F\circ G)''(x) = F''(G(x)) \circ G'(x)^2 \in \sL(\cX\times\cX,\cZ).
\end{equation}
This ends our digression on the Chain Rule for $C^\infty$ maps of Banach spaces.

By computing the first-order differential with respect to $u$ of the expression \eqref{eq:Gauge_invariance_energy_function} at $\id_P \in \Aut(P)$ in directions $\xi \in T_{\id_P}\Aut(P) = C^\infty(X;\ad P)$ and recalling the definition \eqref{eq:Pair_Coulomb_gauge_relative_to_reference_pair} of $d_{A,\Phi}\xi$, we see that the first-order differential of $\sE$ and its gradient map obey
\begin{equation}
\label{eq:Gradient_operator_zero_L2-orthogonal_complement_slice}
\sE'(A,\Phi)(d_{A,\Phi}\xi) = 0 = (d_{A,\Phi}\xi, \sM(A,\Phi))_{L^2(X)}, \quad\forall\, \xi \in C^\infty(X;\ad P),
\end{equation}
and thus
\[
d_{A,\Phi}^*\sM(A,\Phi) = 0, \quad\forall\, (A,\Phi) \in \sP(P,E).
\]
Next, we observe that \eqref{eq:Gauge_invariance_energy_function} implies that the differential of the energy function satisfies
\begin{multline}
\label{eq:Gauge_invariance_energy_function_gradient}
\sE'(u(A,\Phi))(u(a,\phi)) = \sE'(A,\Phi)(a,\phi),
\\
\forall\, u \in \Aut(P) \text{ and } (A,\Phi) \in \sP(P,E)
\\
\text{and } \forall\, (a,\phi) \in C^\infty(X; \Lambda^1 \otimes \ad P \oplus E).
\end{multline}
By computing the first-order differentials with respect to $u$ of the expression \eqref{eq:Gauge_invariance_energy_function_gradient} at $\id_P \in \Aut(P)$ in directions $\xi \in T_{\id_P}\Aut(P) = C^\infty(X;\ad P)$ and recalling the definition \eqref{eq:Pair_Coulomb_gauge_relative_to_reference_pair} of $d_{A,\Phi}\xi$, we see that the gradient map and second-order differential obey
\[
\sE''(A,\Phi)\left(d_{A,\Phi}\xi, (a, \phi)\right) + \sE'(A,\Phi)(d_{A,\Phi}\xi) = 0,
\]
and thus, by \eqref{eq:Gradient_operator_zero_L2-orthogonal_complement_slice} and the fact that the Hessian $\sE''(A,\Phi)$ is a symmetric operator,
\begin{multline*}
\sE''(A,\Phi)\left(d_{A,\Phi}\xi, (a, \phi)\right) = 0 = \sE''(A,\Phi)\left((a, \phi),d_{A,\Phi}\xi\right),
\\
\forall\, (a,\phi) \in C^\infty(X; \Lambda^1 \otimes \ad P \oplus E).
\end{multline*}
Hence, by the relation \eqref{eq:Relation_Hessian_and_Hessian_operator} between $\sE''(A,\Phi)$ and $\sM'(A,\Phi)$,
\begin{multline*}
\left(d_{A,\Phi}\xi, \sM'(A,\Phi)(a, \phi)\right)_{L^2(X)} = 0 = \left((a, \phi), \sM'(A,\Phi)d_{A,\Phi}\xi\right)_{L^2(X)},
\\
\forall\, (a,\phi) \in C^\infty(X; \Lambda^1 \otimes \ad P \oplus E).
\end{multline*}
Consequently, the Hessian operator satisfies
\begin{subequations}
\label{eq:Hessian_operator_slice_domain_range}
\begin{align}
\label{eq:Hessian_operator_range_in_slice}
d_{A,\Phi}^*\sM'(A,\Phi)(a, \phi) &= 0, \quad\forall\, (a,\phi) \in C^\infty(X; \Lambda^1 \otimes \ad P \oplus E).
\\
\label{eq:Hessian_operator_zero_on_slice_orthocomplement}
\sM'(A,\Phi)d_{A,\Phi}\xi &= 0, \quad \forall\, \xi \in C^\infty(X;\ad P).
\end{align}
\end{subequations}
Thus, for any $k\in\ZZ$ and $p\in (1,\infty)$ when $(A_\infty,\Phi_\infty)$ is a $C^\infty$ pair, the Hessian operator,
\[
\sM'(A_\infty,\Phi_\infty): W_{A_1}^{k+2,p}(X; \Lambda^1 \otimes \ad P \oplus E)
\to W_{A_1}^{k,p}(X; \Lambda^1 \otimes \ad P \oplus E),
\]
naturally restricts to the Coulomb-gauge slice domain and range defined by $(A_\infty,\Phi_\infty)$,
\begin{multline}
\label{eq:Boson_Hessian_operator_naturally_restricts_to_slice}
\sM'(A_\infty,\Phi_\infty):\Ker d_{A_\infty,\Phi_\infty}^* \cap W_{A_1}^{k+2,p}(X; \Lambda^1 \otimes \ad P \oplus E)
\\
\to \Ker d_{A_\infty,\Phi_\infty}^* \cap W_{A_1}^{k,p}(X; \Lambda^1 \otimes \ad P \oplus E),
\end{multline}
when the Hessian operator is also computed at the pair $(A_\infty,\Phi_\infty)$. The gradient map,
\[
\sM: (A_\infty,\Phi_\infty)+W_{A_1}^{k+2,p}(X; \Lambda^1 \otimes \ad P \oplus E)
\to W_{A_1}^{k,p}(X; \Lambda^1 \otimes \ad P \oplus E),
\]
restricts to the slice domain and range with the $L^2$-orthogonal projection, $\Pi_{A_\infty,\Phi_\infty}$,
\begin{multline}
\label{eq:Boson_gradient_operator_naturally_restricts_to_slice}
\hat\sM \equiv \Pi_{A_\infty,\Phi_\infty}\sM:
\\
(A_\infty,\Phi_\infty)+\Ker d_{A_\infty,\Phi_\infty}^* \cap W_{A_1}^{k+2,p}(X; \Lambda^1 \otimes \ad P \oplus E)
\\
\to \Ker d_{A_\infty,\Phi_\infty}^* \cap W_{A_1}^{k,p}(X; \Lambda^1 \otimes \ad P \oplus E).
\end{multline}
The definition \eqref{eq:L2-orthogonal_projection_onto_slice} of the $L^2$-orthogonal projection, $\Pi_{A,\Phi}$, and the relation \eqref{eq:Gradient_operator_zero_L2-orthogonal_complement_slice} yield
\[
\Pi_{A,\Phi}\sM(A,\Phi) = \sM(A,\Phi), \quad\forall\, (A,\Phi) \in \sP(P,E).
\]
In the definition \eqref{eq:Boson_gradient_operator_naturally_restricts_to_slice}, we suppress dependence on the pair $(A_\infty,\Phi_\infty)$ in the choice of Coulomb-gauge slice from the notation for $\hat\sM$, which we may regard as a lift to a coordinate chart on an open neighborhood of the point $[A,\Phi] \in \sC(P,E) := \sP(P,E)/\Aut(P)$ of the gradient map on the quotient.

We now apply Proposition \ref{prop:Fredholmness_and_index_Laplace_operator_on_W1p_pairs_slice} to prove the Fredholm and index zero properties of the Hessian operator.

\begin{prop}[Fredholm and index zero properties of the Hessian operator for the boson coupled Yang--Mills energy function on a Coulomb-gauge slice]
\label{prop:fredbos}
Let $(X,g)$ be a closed, smooth Riemannian manifold of dimension $d \geq 2$, and $G$ be a compact Lie group, $P$ be a smooth principal $G$-bundle over $X$, and $E = P\times_\varrho\EE$ be a smooth Hermitian vector bundle over $X$ defined by a finite-dimensional unitary representation, $\varrho: G \to \Aut_\CC(\EE)$, and $A_1$ be a $C^\infty$ reference connection on $P$, and $m, s \in C^\infty(X)$. If $(A,\Phi)$ is a $C^\infty$ pair on $(P,E)$ and $k\in\ZZ$ is an integer and $1 < p < \infty$, then the following operator is Fredholm with index zero,
\begin{multline*}
\sM'(A,\Phi): \Ker d_{A,\Phi}^* \cap W_{A_1}^{k+2,p}(X; \Lambda^1 \otimes \ad P \oplus E)
\\
\to \Ker d_{A,\Phi}^* \cap W_{A_1}^{k,p}(X; \Lambda^1 \otimes \ad P \oplus E).
\end{multline*}
\end{prop}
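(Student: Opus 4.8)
The plan is to combine the augmented-Hessian result of Proposition~\ref{prop:Fredholmness_and_index_Laplace_operator_on_W1p_pairs_slice} with the algebraic identities \eqref{eq:Hessian_operator_slice_domain_range} that exhibit the Coulomb-gauge slice as an invariant subspace. Write $\sL := \sM'(A,\Phi) + d_{A,\Phi}d_{A,\Phi}^*$ for the augmented Hessian and recall that $\sL$ is elliptic with $C^\infty$ coefficients and, by Proposition~\ref{prop:Fredholmness_and_index_Laplace_operator_on_W1p_pairs_slice}, Fredholm of index zero as a map $W_{A_1}^{k+2,p}(X;\Lambda^1\otimes\ad P\oplus E) \to W_{A_1}^{k,p}(X;\Lambda^1\otimes\ad P\oplus E)$ for every $k\in\ZZ$ and $p\in(1,\infty)$. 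Write $\fW^{k,p}$ for the Sobolev space of pairs and $\fV^{k,p} := \Ker d_{A,\Phi}^* \cap \fW^{k,p}$ for the slice, and let $d_{A,\Phi}\fW^{k+1,p}$ denote the complementary summand, so that Lemma~\ref{lem:Continuous_operators_on_Lp_spaces_and_L2-orthogonal_decompositions_pairs} (together with the closed-range statement for $d_{A,\Phi}$, which follows from Theorem~\ref{thm:Gilkey_1-4-5_Sobolev} applied to $d_{A,\Phi}^*d_{A,\Phi}$) gives the $L^2$-orthogonal topological direct sum $\fW^{k,p} = \fV^{k,p} \oplus d_{A,\Phi}\fW^{k+1,p}$ for each $k$ and $p$. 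The key point is that $\sL$ is block-diagonal with respect to these decompositions: on the slice summand, $d_{A,\Phi}^*\sL(a,\phi) = d_{A,\Phi}^*\sM'(A,\Phi)(a,\phi) + d_{A,\Phi}^*d_{A,\Phi}d_{A,\Phi}^*(a,\phi) = 0$ by \eqref{eq:Hessian_operator_range_in_slice} and the vanishing of $d_{A,\Phi}^*(a,\phi)$, so $\sL$ maps $\fV^{k+2,p}$ into $\fV^{k,p}$; on the gauge summand, \eqref{eq:Hessian_operator_zero_on_slice_orthocomplement} gives $\sM'(A,\Phi)d_{A,\Phi}\xi = 0$, hence $\sL\, d_{A,\Phi}\xi = d_{A,\Phi}(d_{A,\Phi}^*d_{A,\Phi})\xi \in d_{A,\Phi}\fW^{k+1,p}$, and moreover $\sL$ restricted to the gauge summand is conjugate (via $d_{A,\Phi}$) to the Laplacian $d_{A,\Phi}^*d_{A,\Phi}$ on the complement of its kernel.

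Granting this, I would argue as follows. Since $\sL$ is Fredholm of index zero on $\fW^{k+2,p} \to \fW^{k,p}$ and respects the direct-sum decomposition, it is simultaneously Fredholm of index zero as an operator between the gauge summands and as an operator between the slice summands, provided each restriction is itself a bounded operator between Banach spaces and the two restrictions together recover $\sL$; the index is additive over the direct sum, so $\ind(\sL|_{\fV}) + \ind(\sL|_{\mathrm{gauge}}) = \ind(\sL) = 0$. For the gauge summand, the restriction $\sL|_{d_{A,\Phi}\fW^{k+1,p}} : d_{A,\Phi}\fW^{k+1,p} \to d_{A,\Phi}\fW^{k+1,p}$ is, up to the isomorphism $d_{A,\Phi} : (\Ker\, d_{A,\Phi}^*d_{A,\Phi})^\perp \cap \fW^{k+1,p} \xrightarrow{\ \sim\ } d_{A,\Phi}\fW^{k+1,p}$ furnished by Theorem~\ref{thm:Gilkey_1-4-5_Sobolev}, identified with $d_{A,\Phi}^*d_{A,\Phi}$ on the orthogonal complement of its (finite-dimensional, smooth) kernel, which is invertible there; hence $\sL|_{\mathrm{gauge}}$ is an isomorphism and has index zero. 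Therefore $\ind(\sL|_{\fV}) = 0$ as well. Finally, on the slice, $d_{A,\Phi}d_{A,\Phi}^* = 0$ identically (since $d_{A,\Phi}^*$ annihilates $\fV$), so $\sL|_{\fV^{k+2,p}} = \sM'(A,\Phi)|_{\fV^{k+2,p}}$, and we conclude that $\sM'(A,\Phi):\Ker d_{A,\Phi}^* \cap W_{A_1}^{k+2,p} \to \Ker d_{A,\Phi}^* \cap W_{A_1}^{k,p}$ is Fredholm with index zero. Taking $k = -1$ gives the statement of the proposition, with $\fX = \Ker d_{A,\Phi}^* \cap W_{A_1}^{1,p}$ and $\fX^* \supset \Ker d_{A,\Phi}^* \cap W_{A_1}^{-1,p}$ (note $\sM'$ here lands in $W_{A_1}^{-1,p}$, the range space $\tilde\fX$ of the gradient, as in \eqref{eq:Banach_space_WA1p_pairs_and_dual_space_range}).

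The main obstacle I anticipate is bookkeeping the passage between the three levels at which the various operators are naturally defined: $\sM'(A,\Phi)$ from the schematic expression \eqref{eq:Hessian_boson_energy_functional} acts $\fX\to\fX^*$, the augmented operator $\sL$ is most naturally viewed $W_{A_1}^{k+2,p}\to W_{A_1}^{k,p}$ with matching Sobolev exponents, and the slice decomposition from Lemma~\ref{lem:Continuous_operators_on_Lp_spaces_and_L2-orthogonal_decompositions_pairs} is stated for pairs with $d/2 \le p \le q$. Because $(A,\Phi)$ is $C^\infty$ here, all coefficient-regularity issues evaporate and the decompositions $\fW^{k,p} = \fV^{k,p}\oplus d_{A,\Phi}\fW^{k+1,p}$ hold for all $k\in\ZZ$ and $p\in(1,\infty)$ by elliptic theory for operators with smooth coefficients (Theorem~\ref{thm:Gilkey_1-4-5_Sobolev}); I would state this explicitly at the start of the proof. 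A secondary subtlety is verifying that the identities \eqref{eq:Hessian_operator_slice_domain_range}, derived for $C^\infty$ sections from gauge invariance, extend by density and continuity of $\sM'(A,\Phi)$, $d_{A,\Phi}$, $d_{A,\Phi}^*$ on the relevant Sobolev spaces — this is routine given that all these operators have smooth coefficients and $C^\infty$ is dense. With those preliminaries in place the index computation is a one-line additivity argument, and the identification $d_{A,\Phi}d_{A,\Phi}^*|_{\fV} = 0$ completes the proof.
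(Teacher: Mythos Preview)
Your proposal is correct and follows essentially the same approach as the paper: block-diagonalize the augmented Hessian $\sL = \sM'(A,\Phi) + d_{A,\Phi}d_{A,\Phi}^*$ with respect to the slice/gauge decomposition, use Proposition~\ref{prop:Fredholmness_and_index_Laplace_operator_on_W1p_pairs_slice} for $\sL$, show the gauge block is invertible (the paper does this by a direct injectivity/surjectivity argument for $d_{A,\Phi}d_{A,\Phi}^*$ on $\Ran d_{A,\Phi}$, equivalent to your conjugation to $d_{A,\Phi}^*d_{A,\Phi}$ on $(\Ker)^\perp$), and conclude by additivity of the index. One small notational slip: when you write $d_{A,\Phi}\fW^{k+1,p}$, the domain of $d_{A,\Phi}$ is $W^{k+1,p}(X;\ad P)$, not the space of pairs $\fW^{k+1,p}$.
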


\begin{proof}
To reduce notational clutter, we abbreviate the domain and range Sobolev spaces by
\[
W^{k+2,p} = W_{A_1}^{k+2,p}(X; \Lambda^1 \otimes \ad P \oplus E)
\quad\text{and}\quad
W^{k,p} = W_{A_1}^{k,p}(X; \Lambda^1 \otimes \ad P \oplus E).
\]
We shall adapt the argument of R\r{a}de \cite[p. 148]{Rade_1992}. Observe that $\sM'(A,\Phi) = 0$ on $\Ran d_{A,\Phi}\cap W^{k+2,p}$ and $d_{A,\Phi}^*\sM'(A,\Phi) = 0$ on $W^{k+2,p}$ by \eqref{eq:Hessian_operator_slice_domain_range} and so we have a well-defined restriction,
\[
\sM'(A,\Phi): \Ker d_{A,\Phi}^*\cap W^{k+2,p} \to \Ker d_{A,\Phi}^*\cap W^{k,p}.
\]
Clearly, $d_{A,\Phi}d_{A,\Phi}^* = 0$ on $\Ker d_{A,\Phi}^*\cap W^{k+2,p}$ and
\[
\Ran(d_{A,\Phi}d_{A,\Phi}^*: W^{k+2,p} \to W^{k,p}) \subset \Ran d_{A,\Phi}\cap W^{k,p}.
\]
Hence, we obtain a decomposition of the operator,
\begin{equation}
\label{eq:Ellipticized_Hessian_operator_Wk+2p}
\sM'(A,\Phi) + d_{A,\Phi}d_{A,\Phi}^*: W^{k+2,p} \to W^{k,p},
\end{equation}
as a direct sum,
\begin{multline}
\label{eq:Ellipticized_Hessian_operator_Wk+2p_matrix}
\sM'(A,\Phi) + d_{A,\Phi}d_{A,\Phi}^*
=
\begin{pmatrix}\sM'(A,\Phi) & 0 \\ 0 & d_{A,\Phi}d_{A,\Phi}^* \end{pmatrix}:
\\
\left(\Ker d_{A,\Phi}^* \oplus \Ran d_{A,\Phi}\right)\cap W^{k+2,p}
\to
\left(\Ker d_{A,\Phi}^* \oplus \Ran d_{A,\Phi}\right)\cap W^{k,p}.
\end{multline}
We now claim that the following operator is invertible,
\begin{equation}
\label{eq:Half_Laplacian_on_range_dAPhi}
d_{A,\Phi}d_{A,\Phi}^*: \Ran d_{A,\Phi}\cap W^{k+2,p} \to \Ran d_{A,\Phi}\cap W^{k,p}.
\end{equation}
Indeed, the operator \eqref{eq:Half_Laplacian_on_range_dAPhi} is clearly bounded. For injectivity, if $\eta = d_{A,\Phi}\xi \in \Ran d_{A,\Phi}\cap W^{k+2,p}$ for $\xi \in W^{k+3,p}$ and $d_{A,\Phi}d_{A,\Phi}^*d_{A,\Phi}\xi = 0$, then $(d_{A,\Phi}^*d_{A,\Phi})^2\xi = 0$. We may assume without loss of generality that $\xi \perp \Ker d_{A,\Phi}^*d_{A,\Phi}$ and because $d_{A,\Phi}^*d_{A,\Phi}:W^{k+2,p} \to W^{k,p}$ is Fredholm with index zero by Theorem \ref{thm:Gilkey_1-4-5_Sobolev}, the following operator is invertible,
\[
d_{A,\Phi}^*d_{A,\Phi}:(\Ker d_{A,\Phi}^*d_{A,\Phi})^\perp \cap W^{k+2,p} \to (\Ker d_{A,\Phi}^*d_{A,\Phi})^\perp \cap W^{k,p}.
\]
Thus, $\xi = 0$ and the operator \eqref{eq:Half_Laplacian_on_range_dAPhi} is injective. For surjectivity, suppose $\chi = d_{A,\Phi}\zeta \in \Ran d_{A,\Phi}\cap W^{k,p}$ for $\zeta \in W^{k+1,p}$. We may again assume without loss of generality that $\zeta \perp \Ker d_{A,\Phi}^*d_{A,\Phi}$. If $\chi\perp d_{A,\Phi}d_{A,\Phi}^*d_{A,\Phi}\xi$ for all $\xi \in W^{k+3,p}$, then $(d_{A,\Phi}^*d_{A,\Phi})^2\zeta = 0$, and we again find that $\zeta = 0$ and so $\chi=0$. Since $d_{A,\Phi}d_{A,\Phi}^*: W^{k+2,p} \to W^{k,p}$ has closed range (because it is a Fredholm operator), this implies that the operator \eqref{eq:Half_Laplacian_on_range_dAPhi} is surjective and thus invertible by the Open Mapping Theorem.

According to Proposition \ref{prop:Fredholmness_and_index_Laplace_operator_on_W1p_pairs_slice}, the operator \eqref{eq:Ellipticized_Hessian_operator_Wk+2p} is Fredholm. Consequently,
\[
  \sM'(A,\Phi): \Ker d_{A,\Phi}^*\cap W^{k+2,p} \to \Ker d_{A,\Phi}^*\cap W^{k,p}
\]
is Fredholm by virtue of the direct sum decomposition \eqref{eq:Ellipticized_Hessian_operator_Wk+2p_matrix} and invertibility of the operator \eqref{eq:Half_Laplacian_on_range_dAPhi}. We compute indices,
\begin{align*}
{}& \Ind\left\{\sM'(A,\Phi) + d_{A,\Phi}d_{A,\Phi}^*:W^{k+2,p} \to W^{k,p}\right\}
\\
&\quad = \Ind\left\{\begin{pmatrix}\sM'(A,\Phi) & 0 \\ 0 & d_{A,\Phi}d_{A,\Phi}^* \end{pmatrix}:
\left(\Ker d_{A,\Phi}^* \oplus \Ran d_{A,\Phi}\right) \cap W^{k+2,p}\right.
\\
&\qquad \to \left.\left(\Ker d_{A,\Phi}^* \oplus \Ran d_{A,\Phi}\right)\cap W^{k,p}\right\}
\\
&\quad = \Ind\left\{\sM'(A,\Phi): \Ker d_{A,\Phi}^*\cap W^{k+2,p} \to \Ker d_{A,\Phi}^*\cap W^{k,p}\right\}
\\
&\qquad + \Ind\left\{d_{A,\Phi}d_{A,\Phi}^*: \Ran d_{A,\Phi}\cap W^{k+2,p} \to \Ran d_{A,\Phi}\cap W^{k,p}\right\}.
\end{align*}
Therefore, because
\[
  \Ind\{d_{A,\Phi}d_{A,\Phi}^*: \Ran d_{A,\Phi}\cap W^{k+2,p} \to \Ran d_{A,\Phi}\cap W^{k,p}\} = 0,
\]
we have
\begin{multline*}
\Ind\left\{\sM'(A,\Phi): \Ker d_{A,\Phi}^*\cap W^{k+2,p} \to \Ker d_{A,\Phi}^*\cap W^{k,p}\right\}
\\
= \Ind\left\{\sM'(A,\Phi) + d_{A,\Phi}d_{A,\Phi}^*:W^{k+2,p} \to W^{k,p}\right\}.
\end{multline*}
But the latter index is zero by Proposition \ref{prop:Fredholmness_and_index_Laplace_operator_on_W1p_pairs_slice} and this completes the proof of Proposition \ref{prop:fredbos}.
\end{proof}

\subsection{Analyticity of the gradient map for the boson coupled Yang--Mills energy function on a Coulomb-gauge slice}
\label{subsec:Analyticity_of_gradient_for_boson_coupled_Yang--Mills_energy_on_Coulomb_slice}
Suppose $(A_\infty,\Phi_\infty)$ and $(A,\Phi)$ are $C^\infty$ pairs on $(P,E)$ and recall from \eqref{eq:Definition_gradient_boson_coupled_Yang--Mills_energy_function} that the first-order differential and gradient map of the boson coupled Yang--Mills energy function \eqref{eq:Boson_Yang--Mills_energy_function} are related by
\[
\sE'(A,\Phi)(a,\phi) = ((a,\phi), \sM(A,\Phi))_{L^2(X)}, \quad\forall\, (a,\phi) \in C^\infty(X; \Lambda^1 \otimes \ad P \oplus E).
\]
If we now restrict $(a,\phi)$ to be a pair in $\Ker d_{A_\infty,\Phi_\infty}^* \cap C^\infty(X; \Lambda^1 \otimes \ad P \oplus E)$, then the preceding relation yields
\begin{multline*}
\sE'(A,\Phi)(a,\phi) = ((a,\phi), \Pi_{A_\infty,\Phi_\infty}\sM(A,\Phi))_{L^2(X)}
 = ((a,\phi), \hat\sM(A,\Phi))_{L^2(X)},
\\
\forall\, (a,\phi) \in \Ker d_{A_\infty,\Phi_\infty}^* \cap C^\infty(X; \Lambda^1 \otimes \ad P \oplus E),
\end{multline*}
where $\Pi_{A_\infty,\Phi_\infty}$ is the $L^2$-orthogonal projection \eqref{eq:L2-orthogonal_projection_onto_slice} onto the Coulomb-gauge slice through $(A_\infty,\Phi_\infty)$ and we appeal to the definition \eqref{eq:Boson_gradient_operator_naturally_restricts_to_slice} of $\hat\sM$. Consequently,
\begin{multline}
\label{eq:Gradient_map_boson_energy_function_slice}
\hat\sM = \Pi_{A_\infty,\Phi_\infty}\sM:(A_\infty,\Phi_\infty)+\Ker d_{A_\infty,\Phi_\infty}^* \cap W_{A_1}^{1,p}(X; \Lambda^1 \otimes \ad P \oplus E)
\\
\to \Ker d_{A_\infty,\Phi_\infty}^* \cap W_{A_1}^{-1,p}(X; \Lambda^1 \otimes \ad P \oplus E)
\end{multline}
is the gradient map for the restriction,
\begin{equation}
\label{eq:Boson_Yang--Mills_energy_function_slice}
\sE: (A_\infty,\Phi_\infty)+\Ker d_{A_\infty,\Phi_\infty}^* \cap W_{A_1}^{1,p}(X; \Lambda^1 \otimes \ad P \oplus E) \to \RR,
\end{equation}
of the boson coupled Yang--Mills energy function \eqref{eq:Boson_Yang--Mills_energy_function} to the Coulomb-gauge slice through $(A_\infty,\Phi_\infty)$. Note that the relations \eqref{eq:Hessian_operator_slice_domain_range} imply that the Hessian operator (namely, the derivative of the gradient map, $\hat\sM$) at $(A_\infty,\Phi_\infty)$ simplifies to
\begin{multline*}
\hat\sM'(A_\infty,\Phi_\infty)
=
(\Pi_{A_\infty,\Phi_\infty}\sM)'(A_\infty,\Phi_\infty)
\\
=
\sM'(A_\infty,\Phi_\infty):\Ker d_{A_\infty,\Phi_\infty}^* \cap W_{A_1}^{1,p}(X; \Lambda^1 \otimes \ad P \oplus E)
\\
\to \Ker d_{A_\infty,\Phi_\infty}^* \cap W_{A_1}^{-1,p}(X; \Lambda^1 \otimes \ad P \oplus E).
\end{multline*}
Proposition \ref{prop:analytic_bos_gradient_map} yields the

\begin{cor}[Analyticity of the gradient map for the boson coupled Yang--Mills energy function on a Coulomb-gauge slice]
\label{cor:Analyticity_Hessian_boson_energy_function_slice}
Assume the hypotheses of Proposition \ref{prop:analytic_bos_gradient_map} and let $(A_\infty,\Phi_\infty)$ be a $C^\infty$ pair on $(P,E)$. Then the following map is analytic,
\begin{multline*}
\Ker d_{A_\infty,\Phi_\infty}^* \cap W^{1, p} \ni (a_1,\phi_1)
\\
\mapsto \hat\sM(A_\infty+a_1,\Phi_\infty+\phi_1) = \Pi_{A_\infty,\Phi_\infty}\sM(A_\infty+a_1,\Phi_\infty+\phi_1)
\\
\in \Ker d_{A_\infty,\Phi_\infty}^* \cap W^{-1, p},
\end{multline*}
where we abbreviate
\[
W^{\pm 1, p} = W_{A_1}^{\pm 1,p}(X; \Lambda^1 \otimes \ad P \oplus E).
\]
\end{cor}

\begin{proof}
The conclusion follows from Proposition \ref{prop:analytic_bos_gradient_map} and the fact that the operators, $\Pi_{A_\infty,\Phi_\infty}$, define continuous projections.
\end{proof}

\subsection{Estimates for gauge transformations intertwining two pairs}
\label{subsec:Estimates_gauge_transformations_intertwining_two_pairs}
We shall require the

\begin{lem}[Estimate for the action of a $W^{2,q}$ gauge transformation intertwining two $W^{1,q}$ pairs]
\label{lem:gauged}
Let $(X,g)$ be a closed, smooth Riemannian manifold of dimension $d \geq 2$, and $G$ be a compact Lie group, $P$ be a smooth principal $G$-bundle over $X$, and $E = P\times_\varrho\EE$ be a smooth Hermitian vector bundle over $X$ defined by a finite-dimensional unitary representation, $\varrho: G \to \Aut_\CC(\EE)$, and $A_1$ be a $C^\infty$ reference connection on $P$, $q>d/2$ and $p$ obeys $d/2\leq p\leq q$, then there is a constant $C = C(g,G,p) \in [1,\infty)$ with the following significance. If $(A, \Phi)$ and $(A', \Phi')$ are $W^{1,q}$ pairs on $(P,E)$ and $u\in \Aut^{2,q}(P)$, then
\begin{align*}
\|u(A,\Phi) - u(A', \Phi')\|_{W_{A_1}^{1,p}(X)}
&\leq C\left(1 + \|u\|_{W_{A_1}^{2,p}(X)}\right) \|(A,\Phi) - (A', \Phi')\|_{W_{A_1}^{1,p}(X)},
\\
\|(A,\Phi) - (A', \Phi')\|_{W_{A_1}^{1,p}(X)}
&\leq C\left(1 + \|u\|_{W_{A_1}^{2,p}(X)}\right) \|u(A,\Phi) - u(A', \Phi')\|_{W_{A_1}^{1,p}(X)}.
\end{align*}
If in addition $p > 3/2$ if $d=3$ and $p \geq 4/3$ if $d=2$ and $p' \in [1,\infty)$ is the dual H\"older exponent defined by $1/p+1/p'=1$, and $(a,\phi) \in W^{1,p'}_{A_1}(X;\Lambda^1\otimes\ad P)$, then
\begin{align*}
\|u(a,\phi)\|_{W_{A_1}^{1,p'}(X)}
&\leq C\left(1 + \|u\|_{W_{A_1}^{2,p}(X)}\right) \|(a,\phi)\|_{W_{A_1}^{1,p'}(X)},
\\
\|(a,\phi)\|_{W_{A_1}^{1,p'}(X)}
&\leq C\left(1 + \|u\|_{W_{A_1}^{2,p}(X)}\right) \|u(a,\phi)\|_{W_{A_1}^{1,p'}(X)}.
\end{align*}
\end{lem}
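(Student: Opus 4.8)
The strategy is to write the action of $u \in \Aut^{2,q}(P)$ on connections and sections explicitly and estimate each term using the multiplicative Sobolev inequalities available in the range $d/2 \le p \le q$. Recall that for a pair $(A,\Phi)$ we have $u(A,\Phi) = (u^*A, u^{-1}\Phi)$, and writing $A = A_1 + a_A$ and $A' = A_1 + a_{A'}$ with $a_A, a_{A'} \in W^{1,q}_{A_1}(X;\Lambda^1\otimes\ad P)$, the standard formula (as in \eqref{eq:Feehan_2001_6-13}) gives
\[
u^*A - u^*A' = u^{-1}(a_A - a_{A'})u,
\qquad
u^{-1}\Phi - u^{-1}\Phi' = u^{-1}(\Phi - \Phi').
\]
So the difference $u(A,\Phi) - u(A',\Phi')$ involves no term in $d_{A_1}u$, only the conjugation/multiplication operators $v \mapsto u^{-1}vu$ (on $\Lambda^1\otimes\ad P$) and $\psi \mapsto u^{-1}\psi$ (on $E$). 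First I would record the pointwise bounds $|u| \le 1$ and $|u^{-1}| \le 1$ (valid since the representation is unitary, as used in the proof of Proposition~\ref{prop:Feehan_2001_lemma_6-6}), and then differentiate: $\nabla_{A_1}(u^{-1}vu)$ produces three terms, one with $\nabla_{A_1}v$ and two with $\nabla_{A_1}u$ paired with $v$.

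The key analytic input is the continuous multiplication $W^{1,p}(X)\times W^{1,p}(X) \to W^{1,p}(X)$ together with the embedding $W^{1,p}(X) \subset L^{2p}(X)$, both valid for $p \ge d/2$ by \cite[Theorems 4.12 and 4.39]{AdamsFournier} (or directly: $\|fg\|_{L^p} \le \|f\|_{L^{2p}}\|g\|_{L^{2p}}$ handles the zeroth-order part, and $\|\nabla_{A_1}u \cdot v\|_{L^p} \le \|\nabla_{A_1}u\|_{L^{2p}}\|v\|_{L^{2p}}$ the first-order part). Hence for the $\Lambda^1\otimes\ad P$-component:
\[
\|u^{-1}vu\|_{W^{1,p}_{A_1}(X)}
\le C\bigl(1 + \|u\|_{W^{2,p}_{A_1}(X)}\bigr)\|v\|_{W^{1,p}_{A_1}(X)},
\]
where $C = C(g,p)$ absorbs the Kato inequality \cite[Equation (6.20)]{FU} and the norms of the Sobolev embeddings; note $\|\nabla_{A_1}u\|_{L^{2p}} \le \|\nabla_{A_1}u\|_{W^{1,p}_{A_1}(X)} \le \|u\|_{W^{2,p}_{A_1}(X)}$. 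The $E$-component $u^{-1}\psi$ is estimated identically (in fact more simply, as it is linear in $\psi$). Applying this with $v = a_A - a_{A'}$ and $\psi = \Phi - \Phi'$ gives the first inequality. The second (reverse) inequality follows by the same argument applied to $u^{-1} \in \Aut^{2,q}(P)$, since $\|u^{-1}\|_{W^{2,p}_{A_1}(X)} \le C(1 + \|u\|_{W^{2,p}_{A_1}(X)})$ — this last bound itself comes from differentiating $u^{-1} = \bar u^{\mathrm T}$ (unitary) or from the Banach algebra structure and the bound $|u^{-1}| \le 1$ — and from the identity $(A,\Phi) - (A',\Phi') = u^{-1}(u(A,\Phi)) - u^{-1}(u(A',\Phi'))$, bearing in mind that $u^{-1}$ acts on connections by $B \mapsto (u^{-1})^*B = u B u^{-1} + \ldots$ so that again the only surviving term in the difference is the conjugation.

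For the final pair of inequalities, the point is that $(a,\phi) \mapsto u(a,\phi) = (u^{-1}au, u^{-1}\phi)$ is the \emph{linearized} action — a linear operator on $W^{1,p'}_{A_1}(X;\Lambda^1\otimes\ad P\oplus E)$ — so I would run exactly the same computation with the exponent $p'$ in place of $p$. The only subtlety is that to estimate $\|\nabla_{A_1}u \cdot v\|_{L^{p'}}$ one needs a product inequality $L^{r}(X) \times L^{p'}(X) \to L^{p'}(X)$ with $\nabla_{A_1}u \in L^r(X)$; since $u \in W^{2,p}_{A_1}(X)$ and $p \ge d/2$ we have $\nabla_{A_1}u \in W^{1,p}_{A_1}(X) \subset L^\infty(X)$ when $p > d/2$, and when $p = d/2$ (so $d \in \{3,4,\dots\}$, or the borderline $p'$-constraint $p \ge 4/3$ when $d=2$) we have $\nabla_{A_1}u \in L^{r}(X)$ for all finite $r$, which suffices to multiply into $L^{p'}$ as long as $p' < \infty$; the hypothesis $p \ge 4/3$ for $d=2$ is precisely what guarantees $p' \le 4 < \infty$ stays in the usable range and the embedding $W^{1,p}(X) \subset L^{2p'}(X)$ holds. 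The main obstacle I anticipate is bookkeeping the exponent arithmetic uniformly across the cases $p > d$, $p = d$, $d/2 < p < d$, and $p = d/2$ (and the $d = 2$ corner) so that the single constant $C = C(g,p)$ and the clean dependence $(1 + \|u\|_{W^{2,p}_{A_1}})$ come out as stated; this is entirely parallel to the case analysis already carried out in the proof of Proposition~\ref{prop:Feehan_2001_lemma_6-6}, so I would invoke those same Sobolev embeddings \eqref{eq:Adams_Fournier_theorem_4-12_W1p_embeddings} rather than re-deriving them.
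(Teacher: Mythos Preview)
Your treatment of the first two inequalities is essentially the paper's: write $u(A)-u(A')=u^{-1}(A-A')u$, differentiate, and control the cross terms $\nabla_{A_1}u\times(A-A')$ via the embedding $W^{1,p}\subset L^{2p}$ (valid for $p\ge d/2$) together with $|u|\le 1$. The symmetric argument for the reverse inequality is exactly what the paper invokes.

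For the $W^{1,p'}$ inequalities, your overall strategy is right but the Sobolev bookkeeping in your sketch is off. You assert that $\nabla_{A_1}u\in W^{1,p}_{A_1}(X)\subset L^\infty(X)$ when $p>d/2$; this is false --- one needs $p>d$ for $W^{1,p}\subset L^\infty$. Likewise, when $p=d/2$ one only gets $\nabla_{A_1}u\in L^d$, not $L^r$ for all finite $r$. So the product estimate $\|\nabla_{A_1}u\cdot v\|_{L^{p'}}$ cannot be handled by simply placing $\nabla_{A_1}u$ in $L^\infty$ across the whole range $d/2\le p\le q$; in the awkward middle range $d/2\le p\le d$ one must use both a nontrivial embedding for $\nabla_{A_1}u$ (into $L^{p^*}$) and a nontrivial embedding for $v\in W^{1,p'}$ (into some $L^r$), and check that the resulting H\"older exponents combine to land in $L^{p'}$. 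The paper does exactly this: it isolates the needed multiplication as a standalone claim, namely that
\[
W^{1,p'}(X)\times W^{1,p}(X)\to L^{p'}(X)
\]
is continuous for $p\ge d/2$ (and $p\ge 4/3$ when $d=2$), and proves it by a three-case analysis ($p<d$, $p=d$, $p>d$) with further subcases on $p'$ versus $d$. The constraint $p\ge 4/3$ for $d=2$ arises in the subcase $p<d$, $p'>d$ from requiring $p^*\ge p'$, i.e.\ $p\ge 2d/(d+1)$, not from the embedding $W^{1,p}\subset L^{2p'}$ you mention. So your plan is correct in outline, but the exponent arithmetic you sketch would fail as written and needs to be replaced by this more careful case analysis.
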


\begin{proof}
Recall that $u(A)-A_1 = u^{-1}(A-A_1)u + u^{-1}d_{A_1}u$ by \eqref{eq:Feehan_2001_6-13} and similarly for $A'$, so
\[
u(A) - u(A') = u^{-1}(A-A_1)u - u^{-1}(A'-A_1)u = u^{-1}(A-A')u,
\]
and thus,
\[
u(A,\Phi) - u(A', \Phi') = (u^{-1}(A-A')u, u(\Phi - \Phi')).
\]
Therefore, writing $a:= A-A' \in W^{1,q}_{A_1}(X;\Lambda^1\otimes\ad P)$ for convenience,
\[
\nabla_{A_1}(u(A) - u(A')) = -u^{-1}(\nabla_{A_1}u)u^{-1} a u + u^{-1}(\nabla_{A_1}a)u + u^{-1}a (\nabla_{A_1}u).
\]
By taking $L^p$ norms and using the pointwise bound $|u|\leq 1$, the Sobolev embedding $W^{1,p}(X) \subset L^{2p}(X)$ (valid for $p \geq d/2$), and the Kato Inequality \cite[Equation (6.20)]{FU}, we obtain
\begin{align*}
\|\nabla_{A_1}(u(A - A'))\|_{L^p(X)} &\leq 2\|\nabla_{A_1}u\|_{L^{2p}(X)} \|a\|_{L^{2p}(X)} + \|\nabla_{A_1}a\|_{L^p(X)}
\\
&\leq C\|\nabla_{A_1}u\|_{W_{A_1}^{1,p}(X)} \|a\|_{W_{A_1}^{1,p}(X)} + \|\nabla_{A_1}a\|_{L^p(X)}.
\end{align*}
Similarly, $\nabla_{A_1}(u(\Phi - \Phi')) = (\nabla_{A_1}u)(\Phi - \Phi') + u(\nabla_{A_1}(\Phi - \Phi'))$ and
\begin{align*}
{}&\|\nabla_{A_1}(u(\Phi - \Phi'))\|_{L^p(X)}
  \\
  &\quad \leq
\|\nabla_{A_1}u\|_{L^{2p}(X)}\|\Phi - \Phi'\|_{L^{2p}(X)} + \|\nabla_{A_1}(\Phi - \Phi')\|_{L^p(X)}
\\
&\quad \leq
\|\nabla_{A_1}u\|_{W_{A_1}^{1,p}(X)}\|\Phi - \Phi'\|_{W_{A_1}^{1,p}(X)} + \|\nabla_{A_1}(\Phi - \Phi')\|_{L^p(X)}.
\end{align*}
By combining the preceding estimates, we obtain the first inequality; the second inequality is proved by a symmetric argument.

For the third inequality, we note that $u(a,\phi) = (u(a),u(\phi)) = (u^{-1}au, u^{-1}\phi)$ by our convention \eqref{eq:Action_gauge_transformation_on_pairs} and use the identity,
\[
\nabla_{A_1}(u(a)) = -u^{-1}(\nabla_{A_1}u)u^{-1} a u + u^{-1}(\nabla_{A_1}a)u + u^{-1}a (\nabla_{A_1}u).
\]
We make the

\begin{claim}
\label{claim:Continuous_Sobolev_multiplication_W1pprime_times_W1p_to_Lpprime}
Let $(X,g)$ be a closed, smooth Riemannian manifold of dimension $d \geq 2$ and $p \in [d/2,\infty]$, with $p > 3/2$ if $d=3$ and $p \geq 4/3$ if $d=2$. If $p' \in [1,\infty)$ is the dual H\"older exponent defined by $1/p+1/p'=1$, then there is a continuous Sobolev multiplication map,
\begin{equation}
\label{eq:Continuous_Sobolev_multiplication_W1pprime_times_W1p_to_Lpprime}
W^{1,p'}(X) \times  W^{1,p}(X) \to L^{p'}(X).
\end{equation}
\end{claim}

Given Claim \ref{claim:Continuous_Sobolev_multiplication_W1pprime_times_W1p_to_Lpprime}, we have
\[
\|\nabla_{A_1}(u(a))\|_{L^{p'}(X)}
\leq
z\|\nabla_{A_1}u\|_{W_{A_1}^{1,p}(X)}\|a\|_{W_{A_1}^{1,p'}(X)} + \|\nabla_{A_1}a\|_{L^{p'}(X)},
\]
for $z = z(g,p) \in [1,\infty)$, while
\[
\|u(a)\|_{L^{p'}(X)}
=
\|a\|_{L^{p'}(X)}.
\]
Thus,
\[
\|u(a)\|_{W_{A_1}^{1,p'}(X)} \leq z\left(1 + \|\nabla_{A_1}u\|_{W_{A_1}^{1,p}(X)}\right)\|a\|_{W_{A_1}^{1,p'}(X)},
\]
with the analogous estimate for $\|u(\phi)\|_{W_{A_1}^{1,p'}(X)}$. This yields the third inequality and symmetry yields the fourth inequality.

\begin{proof}[Proof of Claim \ref{claim:Continuous_Sobolev_multiplication_W1pprime_times_W1p_to_Lpprime}]
A multiplication, $L^r(X)\times L^s(X) \to L^{p'}(X)$, is continuous provided $r,s \in [p',\infty]$ obey $1/r + 1/s \leq 1/p'$. If these exponents $r,s$ also yield continuous Sobolev embeddings,
\[
W^{1,p'}(X) \subset L^r(X) \quad\text{and}\quad W^{1,p}(X) \subset L^s(X),
\]
then we obtain the desired continuous Sobolev multiplication map \eqref{eq:Continuous_Sobolev_multiplication_W1pprime_times_W1p_to_Lpprime}. To confirm the existence of suitable exponents, $r,s$, we shall consider the cases $p<d$, $p=d$, and $p>d$ separately.

\setcounter{case}{0}
\begin{case}[$p<d$]
\label{case:Sobolev_mult_p_lessthan_d}
Choose $s = p^* = dp/(d-p)$ to give the continuous Sobolev embedding, $W^{1,p}(X) \subset L^{p^*}(X)$, provided by \cite[Theorem 4.12]{AdamsFournier}. We required that $s \in [p',\infty]$, so $p$ must obey $p^* \geq p'$, that is
\[
dp/(d-p) \geq p/(p-1),
\]
or $p>1$ and $d(p-1) \geq d-p$ or $p(d+1) \geq 2d$ and so we require that $p \geq 2d/(d+1)$. Note that $d/2 \geq 2d/(d+1) \iff d+1 \geq 4$, that is, $d \geq 3$, and $d/2 > 2d/(d+1) \iff d \geq 4$. (This is why for $d=2,3$ we augment the hypothesis $p\geq d/2$ in Claim \ref{claim:Continuous_Sobolev_multiplication_W1pprime_times_W1p_to_Lpprime} with additional conditions in certain subcases as noted below.) Because $p<d$, we also have $p' = p/(p-1) > d/(d-1)$. We consider three subcases, depending on whether
\begin{inparaenum}[\itshape a\upshape)]
\item
\label{subcase:Sobolev_mult_p_lessthan_d_and_pprime_lessthan_d}
$p'<d$,
\item
\label{subcase:Sobolev_mult_p_lessthan_d_and_pprime_equals_d}
$p'=d$, or
\item
\label{subcase:Sobolev_mult_p_lessthan_d_and_pprime_greaterthan_d}
$p'>d$.
\end{inparaenum}

Consider Subcase (\ref{case:Sobolev_mult_p_lessthan_d}\ref{subcase:Sobolev_mult_p_lessthan_d_and_pprime_greaterthan_d}), so $p<d$ and $p' > d$. Then we have a continuous Sobolev embedding, $W^{1,p'}(X) \subset L^\infty(X)$, by \cite[Theorem 4.12]{AdamsFournier} and because $p^* \geq p'$ when $p \geq 2d/(d+1)$ for $d\geq 3$ and $p\geq 4/3$ for $d=2$, we may choose any $r \in (1,\infty]$ large enough that $1/r + 1/p^* \leq 1/p'$.

Consider Subcase (\ref{case:Sobolev_mult_p_lessthan_d}\ref{subcase:Sobolev_mult_p_lessthan_d_and_pprime_equals_d}), so $p<d$ and $p'=d$. Then we have a continuous Sobolev embedding, $W^{1,p'}(X) \subset L^r(X)$, by \cite[Theorem 4.12]{AdamsFournier} for any $r\in [1,\infty)$ and because $p^* > p'$ when $p > 2d/(d+1)$ for $d\geq 4$ and $p > 3/2$ for $d=3$, we may choose $r\in (1,\infty)$ large enough that $1/r + 1/p^* \leq 1/p'$. (The condition $p'>d/(d-1)$ and $p'=d$ forces $d\geq 3$, so the subcase $p<d$ and $p'=d$ cannot occur for $d=2$.)

Consider Subcase (\ref{case:Sobolev_mult_p_lessthan_d}\ref{subcase:Sobolev_mult_p_lessthan_d_and_pprime_lessthan_d}), so $p<d$ and $p'<d$. Then the condition $p'>d/(d-1)$ implies that $d/(d-1) < d$, that is, $d-1>1$ or $d\geq 3$. (The subcase $p<d$ and $p'<d$ cannot occur for $d=2$.) We choose $r = (p')^* = dp'/(d-p') = d(p/(p-1))/(d-p/(p-1)) = dp/(dp-d-p)$ and use the continuous Sobolev embedding, $W^{1,p'}(X) \subset L^{(p')^*}(X)$, provided by \cite[Theorem 4.12]{AdamsFournier}. To have a continuous multiplication, $L^{(p')^*}(X)\times L^{p^*}(X) \to L^{p'}(X)$, we see that $p>1$ must obey
\[
1/(p')^* + 1/p^* \leq 1/p',
\]
that is,
\[
(dp-d-p)/dp + (d-p)/dp \leq (p-1)/p,
\]
or
\[
d(p-1)-p + d-p \leq d(p-1),
\]
or $d \leq 2p$, that is, $p \geq d/2$. Combining the conclusions of the three subcases verifies the case $p<d$.
\end{case}

\begin{case}[$p=d$]
\label{case:Sobolev_mult_p_equals_d}
For any $s \in [1,\infty)$, we have a continuous Sobolev embedding, $W^{1,p}(X) \subset L^s(X)$. We also required that $s \in [p',\infty]$, so $p$ and $s$ must obey $p' \leq s < \infty$. Because $p=d$, we have $p' = p/(p-1) = d/(d-1)$, so
\begin{inparaenum}[\itshape a\upshape)]
\item
\label{subcase:Sobolev_mult_p_equals_d_and_pprime_equals_d_equals_2}
$p' = d$ for $d=2$, or
\item
\label{subcase:Sobolev_mult_p_equals_d_and_pprime_lessthan_d_and d_geq_3}
$1 < p' < d$ for $d\geq 3$.
\end{inparaenum}

Consider Subcase (\ref{case:Sobolev_mult_p_equals_d}\ref{subcase:Sobolev_mult_p_equals_d_and_pprime_equals_d_equals_2}), so $p'=d$ and $d=2$. Then $W^{1,p'}(X) \subset L^r(X)$ is a continuous Sobolev embedding for any $r\in [1,\infty)$. We also required that $r \in [p',\infty]$ and further restrict to $r \in (p',\infty)$ since $s$ is finite. In particular, we may choose $r, s \in (p',\infty)$ such that $1/r + 1/s \leq 1/p'$.

Consider Subcase (\ref{case:Sobolev_mult_p_equals_d}\ref{subcase:Sobolev_mult_p_equals_d_and_pprime_lessthan_d_and d_geq_3}), so $p'<d$ and $d\geq 3$. Then $W^{1,p'}(X) \subset L^r(X)$ is a continuous Sobolev embedding for $r = (p')^* = dp/(dp-d-p)$. We also required that $r \in [p',\infty]$ and further restrict to $r \in (p',\infty)$ since $s$ is finite, so $p$ must obey $p' < (p')^*$, that is
\[
p/(p-1) < dp/(dp-d-p),
\]
or $p>1$ and $dp-d-p < d(p-1)$, or simply $p > 1$ (automatic since $p=d\geq 3$). In particular, we may choose $r = (p')^* \in (p',\infty)$ and then $s \in (p',\infty)$ large enough that $1/r + 1/s \leq 1/p'$.

Combining the conclusions of each of the two subcases verifies the case $p=d$.
\end{case}

\begin{case}[$p>d$]
For any $s \in [1,\infty]$, we have a continuous Sobolev embedding, $W^{1,p}(X) \subset L^s(X)$. We also required that $s \in [p',\infty]$, so $p$ and $s$ must obey $p' \leq s \leq \infty$. Because $p>d$, we have $p' = p/(p-1) < d/(d-1)$, so $1 < p' < d$ for $d\geq 2$. Thus, $W^{1,p'}(X) \subset L^r(X)$ is a continuous Sobolev embedding for $r = (p')^* = dp/(dp-d-p)$.  We also required that $r \in [p',\infty]$, so $p$ must obey $p' \leq (p')^*$, which holds for any $p > 1$ from our analysis of the case $p=d$. In particular, we may choose $r = (p')^* \in [p',\infty)$ and then $s \in (p',\infty]$ large enough that $1/r + 1/s \leq 1/p'$. This verifies the case $p>d$.
\end{case}

Combining these three cases completes the proof of Claim \ref{claim:Continuous_Sobolev_multiplication_W1pprime_times_W1p_to_Lpprime}.
\end{proof}

The third and fourth inequalities follow from Claim \ref{claim:Continuous_Sobolev_multiplication_W1pprime_times_W1p_to_Lpprime} as described earlier, so this completes the proof of Lemma \ref{lem:gauged}.
\end{proof}

\subsection{Completion of the proof of Theorem \ref{mainthm:Lojasiewicz-Simon_gradient_inequality_boson_Yang--Mills_energy_function}}
\label{subsec:Completion_proof_Lojasiewicz-Simon_gradient_inequality_boson_Yang--Mills_energy_function}
We can now proceed to the

\begin{proof}[Proof of Theorem \ref{mainthm:Lojasiewicz-Simon_gradient_inequality_boson_Yang--Mills_energy_function}]
We first consider the simpler case where the pair $(A,\Phi)$ is in Coulomb gauge relative to the critical point $(A_\infty, \Phi_\infty)$ and then consider the general case

\setcounter{case}{0}
\begin{case}[$(A,\Phi)$ in Coulomb gauge relative to $(A_\infty, \Phi_\infty)$]
\label{case:APhi_Coulomb_gauge_wrt_APhi_infty}
By hypothesis of the theorem, $(A_\infty, \Phi_\infty)$ is a $W^{1,q}$ pair that is a critical point for the function $\sE$ in \eqref{eq:Boson_Yang--Mills_energy_function}. By the regularity Theorem~\ref{thm:Parker_1982_5-3}, there exists a $W^{2,q}$ gauge transformation $u_\infty$ such that $u_\infty(A_\infty,\Phi_\infty)$ is a $C^\infty$ pair. In particular, $u_\infty(A_\infty, \Phi_\infty)$ is a $W^{2,q}$ pair and $u_\infty(A, \Phi)$ is in Coulomb gauge relative to $u_\infty(A_\infty, \Phi_\infty)$. Following \eqref{eq:Coulomb_gauge_slice_pairs} and \eqref{eq:Coulomb_gauge_slice_pairs_range}, we choose the Banach spaces,
\begin{align*}
\sX &= \Ker\left(d_{u_\infty(A_\infty, \Phi_\infty)}^*: W_{A_1}^{1,p}(X; \Lambda^1 \otimes \ad P \oplus E)
\to L^p(X;\ad P) \right),
\\
\tilde\sX &= \Ker\left(d_{u_\infty(A_\infty, \Phi_\infty)}^*: W_{A_1}^{-1,p}(X; \Lambda^1 \otimes \ad P \oplus E)
\to W_{A_1}^{-2,p}(X;\ad P) \right).
\end{align*}
Hence, $\sX \subset \tilde\sX$ is a continuous embedding of Banach spaces and
\[
\sX^* = \Ker\left(d_{u_\infty(A_\infty, \Phi_\infty)}^*: L^{p'}(X;\ad P) \to W_{A_1}^{-1,p'}(X; \Lambda^1 \otimes \ad P \oplus E) \right).
\]
We observe that $\tilde\sX \subset \sX^*$ is a continuous embedding of Banach spaces when $W^{-1,p}(X;\CC) \subset W^{-1,p'}(X;\CC)$ is a continuous Sobolev embedding and thus when $p$ obeys $p \geq 2$.

Proposition \ref{prop:fredbos} implies that the Hessian operator with $x_\infty = u_\infty(A_\infty, \Phi_\infty)$,
\[
\hat\sM'(x_\infty):\sX \to \tilde\sX,
\]
is Fredholm with index zero while Corollary \ref{cor:Analyticity_Hessian_boson_energy_function_slice} implies that the gradient map,
\[
\hat\sM:x_\infty+\sX \to \tilde\sX,
\]
is analytic, where we recall from \eqref{eq:Boson_gradient_operator_naturally_restricts_to_slice} that
\[
\hat\sM = \Pi_{u_\infty(A_\infty, \Phi_\infty)}\sM.
\]
Hence, Theorem~\ref{mainthm:Lojasiewicz-Simon_gradient_inequality} implies that there exist constants $Z'\in (0,\infty)$ and $\sigma' \in (0,1]$ and $\theta \in [1/2, 1)$ (depending on $(A_1, \Phi_1)$, and  $u_\infty(A_\infty,\Phi_\infty)$, and $g$, $G$, $p$, $P$) such that if
\begin{equation}
\label{eq:Lojasiewicz-Simon_gradient_inequality_boson_Yang--Mills_pair_neighborhood_gauged}
\|u_\infty(A,\Phi) -  u_\infty(A_\infty, \Phi_\infty)\|_{W_{A_1}^{1,p}(X)} < \sigma',
\end{equation}
then
\begin{equation}
\label{eq:gauged_LS}
|\sE(u_\infty(A,\Phi)) - \sE(u_\infty(A_\infty, \Phi_\infty))|^\theta \leq Z'\|\hat\sM(u_\infty(A,\Phi))\|_{W^{-1, p}_{A_1}(X)}.
\end{equation}
By Lemma~\ref{lem:gauged}, there exists $C_1 = C_1(g,G,p,u_\infty) = C_1(A_\infty,\Phi_\infty,g,p) \in [1,\infty)$ so that
\[
\|u_\infty(A,\Phi) - u_\infty(A_\infty, \Phi_\infty)\|_{W_{A_1}^{1,p}(X)}
\leq
C_1 \|(A,\Phi) - (A_\infty, \Phi_\infty)\|_{W_{A_1}^{1,p}(X)}.
\]
More explicitly, Lemma~\ref{lem:gauged} gives $C_1 = C(1 + \|u_\infty\|_{W_{A_1}^{2,p}(X)})$, where $C = C(g,G,p) \in [1,\infty)$. Therefore, setting $\sigma := C_1^{-1}\sigma'$, we see that if $(A,\Phi)$ obeys the {\L}ojasiewicz--Simon neighborhood condition \eqref{eq:Lojasiewicz-Simon_gradient_inequality_boson_Yang--Mills_pair_neighborhood}, namely
\[
\|(A,\Phi) - (A_\infty, \Phi_\infty)\|_{W_{A_1}^{1,p}(X)} <\sigma,
\]
then \eqref{eq:Lojasiewicz-Simon_gradient_inequality_boson_Yang--Mills_pair_neighborhood_gauged} holds and thus also \eqref{eq:gauged_LS}. Moreover,
\begin{align*}
{}&\|\hat\sM(u_\infty(A,\Phi))\|_{W^{-1, p}_{A_1}(X)} = \|\hat\sM(u_\infty(A,\Phi))\|_{(W^{1, p'}_{A_1}(X))^*}
\\
&\quad = \sup\left\{|\hat\sM(u_\infty(A,\Phi)) (u_\infty(a,\phi))|: \, \|u_\infty(a,\phi)\|_{W_{A_1}^{1,p'}(X)}\leq 1 \right\}
\\
&\quad = \sup\left\{|\hat\sM(A,\Phi)(a,\phi)|: \, \|u_\infty(a,\phi)\|_{W_{A_1}^{1,p'}(X)}\leq 1\right\}
\quad \text{(by gauge invariance)},
\end{align*}
where the supremum is over all pairs, $(a,\phi)\in W^{1,p}_{A_1}(X;\Lambda^1\otimes\ad P)$, obeying the inequality. But
\[
\|(a,\phi)\|_{W_{A_1}^{1,p'}(X)}
\leq
C_1\|u_\infty(a,\phi)\|_{W_{A_1}^{1,p'}(X)}
\]
by Lemma~\ref{lem:gauged} and therefore,
\begin{multline*}
\left\{(a,\phi)\in W^{1,p}_{A_1}(X;\Lambda^1\otimes\ad P): \, \|u_\infty(a,\phi)\|_{W_{A_1}^{1,p'}(X)}\leq 1\right\}
\\
\subset \left\{(a,\phi)\in W^{1,p}_{A_1}(X;\Lambda^1\otimes\ad P): \, C_1^{-1}\|(a,\phi)\|_{W_{A_1}^{1,p'}(X)}\leq 1\right\}.
\end{multline*}
Combining the preceding equality and inequality yields,
\begin{align*}
\|\hat\sM(u_\infty(A,\Phi))\|_{W^{-1, p}_{A_1}(X)}
&\leq
\sup\left\{|\hat\sM(A,\Phi)(a,\phi)|: \, C_1^{-1}\|(a,\phi)\|_{W_{A_1}^{1,p'}(X)}\leq 1\right\}
\\
&= C_1 \|\hat\sM(A,\Phi)\|_{W^{-1, p}_{A_1}(X)}.
\end{align*}
Substituting the preceding inequality into \eqref{eq:gauged_LS} yields
\begin{align*}
|\sE(A,\Phi) - \sE(A_\infty, \Phi_\infty)|^\theta
&= |\sE(u_\infty(A,\Phi)) - \sE(u_\infty(A_\infty, \Phi_\infty))|^\theta
  \\
  &\qquad \text{(by gauge invariance)}
\\
&\leq Z'\|\hat\sM(u_\infty(A,\Phi))\|_{W^{-1, p}_{A_1}(X)} \quad\text{(by \eqref{eq:gauged_LS})}
\\
&\leq Z'C_1\|\hat\sM(A,\Phi)\|_{W^{-1, p}_{A_1}(X)}.
\end{align*}
But $\hat\sM(A,\Phi) = \Pi_{u_\infty(A_\infty, \Phi_\infty)}\sM$ and because the projection,
\[
\Pi_{u_\infty(A_\infty, \Phi_\infty)}:W^{k, p}_{A_1}(X; \Lambda^1 \otimes \ad P \oplus E) \to W^{k, p}_{A_1}(X; \Lambda^1 \otimes \ad P \oplus E),
\]
is bounded with norm one (for any $k\in\ZZ$ and $1<p<\infty$), then
\[
\|\hat\sM(A,\Phi)\|_{W^{-1, p}_{A_1}(X)} \leq \|\sM(A,\Phi)\|_{W^{-1, p}_{A_1}(X)}.
\]
Hence, the {\L}ojasiewicz--Simon gradient inequality \eqref{eq:Lojasiewicz-Simon_gradient_inequality_boson_Yang--Mills_energy_function} holds for the pairs $(A, \Phi)$ and $(A_\infty, \Phi_\infty)$ with constants $(Z, \theta, \sigma)$, where $Z := C_1Z'$.
\end{case}

\begin{case}[$(A,\Phi)$ not in Coulomb gauge relative to $(A_\infty, \Phi_\infty)$]
\label{case:APhi_notin_Coulomb_gauge_wrt_APhi_infty}
Let
\begin{align*}
  \zeta &= \zeta(A_1, A_\infty, \Phi_\infty, g, G,p,q)\in (0,1] \quad \text{ and}
  \\
  N &= N(A_1,A_\infty, \Phi_\infty, g, G,p,q) \in [1,\infty)
\end{align*}
denote the constants in Theorem~\ref{mainthm:Feehan_proposition_3-4-4_Lp_pairs} and choose $\zeta_1 \in (0,\zeta]$ small enough that $2N\zeta_1 < \sigma_1$, where we now use $\sigma_1$ to denote the {\L}ojasiewicz--Simon constant from Case \ref{case:APhi_Coulomb_gauge_wrt_APhi_infty}.  If $(A,\Phi)$ obeys
\[
\|(A,\Phi) - (A_\infty, \Phi_\infty)\|_{W_{A_1}^{1,p}(X)} < \zeta_1,
\]
then Theorem~\ref{mainthm:Feehan_proposition_3-4-4_Lp_pairs} provides $u \in \Aut^{2,q}(P)$, depending on the pair $(A, \Phi)$, such that
\begin{gather*}
d_{A_\infty, \Phi_\infty}^*(u(A, \Phi) - (A_\infty, \Phi_\infty)) = 0,
\\
\|u(A,\Phi) - (A_\infty, \Phi_\infty)\|_{W_{A_1}^{1,p}(X)} < 2N\zeta_1 < \sigma.
\end{gather*}
By applying Case \ref{case:APhi_Coulomb_gauge_wrt_APhi_infty} to the pairs $u(A,\Phi)$ and $(A_\infty, \Phi_\infty)$, we obtain
\[
|\sE(u(A,\Phi)) - \sE(A_\infty, \Phi_\infty)|^\theta \leq C_1Z'\|\sM(u(A,\Phi))\|_{W^{-1, p}_{A_1}(X)}.
\]
Estimating as in Case \ref{case:APhi_Coulomb_gauge_wrt_APhi_infty}, with $u$ replacing $u_\infty$, we see that
\[
\|\sM(u(A,\Phi))\|_{W^{-1, p}_{A_1}(X)} \leq C_2\|\sM(A,\Phi)\|_{W^{-1, p}_{A_1}(X)},
\]
where $C_2 = C(1 + \|u\|_{W_{A_1}^{2,p}(X)})$ and $C = C(g,p) \in [1,\infty)$. According to Lemma \ref{lem:connections_control_gauge_pairs}, we have
\[
\|u\|_{W_{A_1}^{2,p}(X)} \leq C_3,
\]
where $C_3 = C_3(A_\infty,\Phi_\infty,A_1,g,G,p,q) \in [1,\infty)$. By combining the preceding inequalities, we obtain
\begin{align*}
|\sE(A,\Phi) - \sE(A_\infty, \Phi_\infty)|^\theta
&= |\sE(u(A,\Phi)) - \sE(A_\infty, \Phi_\infty)|^\theta
  \\
  &\qquad \text{(by gauge invariance)}
\\
&\leq C_1Z'\|\sM(u(A,\Phi))\|_{W^{-1, p}_{A_1}(X)}
\\
&\leq C_1C(1+C_3)Z'\|\sM(A,\Phi)\|_{W^{-1, p}_{A_1}(X)}.
\end{align*}
Hence, we obtain the {\L}ojasiewicz--Simon gradient inequality \eqref{eq:Lojasiewicz-Simon_gradient_inequality_boson_Yang--Mills_energy_function} with constants $(Z, \theta, \sigma)$, where we now choose $Z = C_1C(1+C_3)Z'$ and $\sigma = \zeta_1$.
\end{case}

This completes the proof of Theorem \ref{mainthm:Lojasiewicz-Simon_gradient_inequality_boson_Yang--Mills_energy_function}.
\end{proof}

\begin{rmk}[On the proof of Theorem \ref{mainthm:Lojasiewicz-Simon_gradient_inequality_boson_Yang--Mills_energy_function} for $p=2$ and $d=2,3,4$]
\label{rmk:Proof_Lojasiewicz-Simon_gradient_inequality_boson_Yang--Mills_energy_small_dim}
As we discussed prior to the statement of Theorem \ref{mainthm:Lojasiewicz-Simon_gradient_inequality_dualspace}, that version of the abstract {\L}ojasiewicz--Simon gradient inequality, while considerably more restrictive, has the advantage that, when applicable, its hypotheses are much easier to verify than those of Theorem \ref{mainthm:Lojasiewicz-Simon_gradient_inequality}. Thus, for $2\leq d \leq 4$, we may choose $p=2$ and observe that the condition $d/2 \leq p \leq q$ is met for $q>2$. With the notation used in the proof of Theorem \ref{mainthm:Lojasiewicz-Simon_gradient_inequality}, we can then choose
\[
\sX = \Ker\left(d_{u_\infty(A_\infty, \Phi_\infty)}^*: W_{A_1}^{1,2}(X; \Lambda^1 \otimes \ad P \oplus E) \to L^2(X; \Lambda^1 \otimes \ad P \oplus E) \right),
\]
with dual space,
\begin{multline*}
  \sX^* =  \Ker\left(d_{u_\infty(A_\infty, \Phi_\infty)}^*: W_{A_1}^{-1,2}(X; \Lambda^1 \otimes \ad P \oplus E) \right.
  \\
  \left. \to W_{A_1}^{-2,2}(X; \Lambda^1 \otimes \ad P \oplus E) \right).
\end{multline*}
Proposition \ref{prop:analyticbos} implies that the restriction of the energy function to the Coulomb-gauge slice,
\[
\sE:x_\infty + \sX \to \RR,
\]
is real analytic, where $x_\infty := u_\infty(A_\infty, \Phi_\infty)$. Proposition \ref{prop:fredbos} implies that the Hessian operator,
\[
\sE''(x_\infty) \in \sL(\sX, \sX^*),
\]
is Fredholm with index zero. Case \ref{case:APhi_Coulomb_gauge_wrt_APhi_infty} of the proof of Theorem \ref{mainthm:Lojasiewicz-Simon_gradient_inequality_boson_Yang--Mills_energy_function} now follows easily, where the gauge transformation, $u_\infty$, is again chosen via Theorem~\ref{thm:Parker_1982_5-3} so that $u_\infty(A_\infty, \Phi_\infty)$ is a $C^\infty$ pair. The difficult third and fourth inequalities in the statement of Lemma \ref{lem:gauged} are not required. Case \ref{case:APhi_notin_Coulomb_gauge_wrt_APhi_infty} of the proof of Theorem \ref{mainthm:Lojasiewicz-Simon_gradient_inequality_boson_Yang--Mills_energy_function} is unchanged.
\end{rmk}

\section[{\L}ojasiewicz--Simon inequality for fermion coupled Yang--Mills]{{\L}ojasiewicz--Simon gradient inequality for the fermion coupled Yang--Mills energy function}
\label{sec:Lojasiewicz-Simon_gradient_inequality_fermion_Yang--Mills_energy_function}
We assume the notation and conventions of Section \ref{sec:Lojasiewicz-Simon_gradient_inequality_boson_Yang--Mills_energy_function}. By analogy with Proposition~\ref{prop:analyticbos} we establish the forthcoming Proposition \ref{prop:analyticferm} for the analyticity of the fermion coupled Yang--Mills energy function, $\sF$. Again, this serves as a stepping stone towards the proof that its gradient map, $\hat\sM:x_\infty+\sX\to\tilde\sX$, is real analytic for suitable choices of Banach spaces as in Theorem \ref{mainthm:Lojasiewicz-Simon_gradient_inequality}.

\begin{prop}[Analyticity of the fermion coupled Yang--Mills energy function]
\label{prop:analyticferm}
Let $(X,g)$ be a closed, smooth Riemannian manifold of dimension $d \geq 2$, and $(\rho,W)$ be a \spinc structure on $X$, and $G$ be a compact Lie group, $P$ be a smooth principal $G$-bundle over $X$, and $E = P\times_\varrho\EE$ be a smooth Hermitian vector bundle over $X$ defined by a finite-dimensional unitary representation, $\varrho: G \to \Aut_\CC(\EE)$, and $A_1$ be a smooth reference connection on $P$, and $m \in C^\infty(X)$. If $4d/(d+4) \leq p < \infty$, then the function \eqref{eq:Fermion_Yang--Mills_energy_function},
\[
\sF:\sA^{1,p}(P) \times W_{A_1}^{1,p}(X; W\otimes E) \to \RR,
\]
is real analytic.
\end{prop}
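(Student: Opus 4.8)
The plan is to imitate the proof of Proposition~\ref{prop:analyticbos} almost verbatim, since the only new feature in $\sF_g$ relative to $\sE_g$ is that the section variable $\Psi$ lives in $W\otimes E$ rather than $E$, the Higgs term $|\nabla_A\Phi|^2$ is replaced by the first-order pairing $\langle\Psi, D_A\Psi\rangle$, and the quartic term $s|\Phi|^4$ is absent. First I would fix a pair $(A,\Psi)\in\sA^{1,p}(P)\times W_{A_1}^{1,p}(X;W\otimes E)$, write $(A,\Psi)=(A_1,\Psi_1)+(a_1,\psi_1)$ with $(a_1,\psi_1)\in W_{A_1}^{1,p}(X;\Lambda^1\otimes\ad P\oplus W\otimes E)$, and for a variation $(a,\psi)\in W_{A_1}^{1,p}(X;\Lambda^1\otimes\ad P\oplus W\otimes E)$ expand the curvature as in Proposition~\ref{prop:analyticbos},
\[
F_{A+a} = F_{A_1} + d_{A_1}(a_1+a) + (a_1+a)\times(a_1+a),
\]
and expand the Dirac operator schematically via $D_{A+a} = D_{A_1} + \rho(a_1+a)$, where $\rho$ is the Clifford/representation action on $W\otimes E$ introduced after \eqref{eq:Gradient_fermion_coupled_Yang-Mills_energy_functional}. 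Substituting into \eqref{eq:Fermion_Yang-Mills_energy_functional} and writing $2\sF(A+a,\Psi+\psi) = T_1 + T_2 + T_3$ with $T_1 = \|F_{A+a}\|_{L^2}^2$ as in the boson case, $T_2 = \langle\Psi+\psi, D_{A+a}(\Psi+\psi)\rangle_{L^2}$, and $T_3 = -\int_X m|\Psi+\psi|^2\,d\vol_g$, I would form the differences $T_i' := T_i(a,\psi) - T_i(0,0)$. The $T_1'$ computation is identical to that in Proposition~\ref{prop:analyticbos}; $T_3'$ is the boson $T_3'$ with $s\equiv 0$; and $T_2'$ is the genuinely new (but still only quadratic, since $D_A$ is first order) term,
\[
T_2' = \langle\psi, D_{A_1}\psi\rangle_{L^2} + 2\Real\langle\Psi, D_{A_1}\psi\rangle_{L^2} + \langle\Psi+\psi, \rho(a)(\Psi+\psi)\rangle_{L^2} + \langle\Psi+\psi,\rho(a_1+a)\psi\rangle_{L^2} + \langle\psi,\rho(a_1)(\Psi)\rangle_{L^2},
\]
which I would expand carefully (tracking the $a_1$ pieces as in the $T_{2i}'$ decomposition of the boson proof) into a finite sum of monomials that are at most cubic in $(a,\psi)$ and their first covariant derivatives.

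Next I would bound each monomial. The estimates are exactly of the type already carried out in Proposition~\ref{prop:analyticbos}: every term is an integral of a product of at most four factors, each of which is $(a_1,\psi_1,a,\psi)$, $\nabla_{A_1}$ of one of these, $F_{A_1}$, or $m$, so by the continuous Sobolev embedding $W^{1,p}(X)\subset L^4(X)$ (valid for $p\geq 4d/(d+4)$, which is the hypothesis) combined with the multiplication $\otimes^4 L^4(X)\to L^1(X)$ and the Kato inequality \cite[Equation (6.20)]{FU}, each such term is bounded by a product of $W^{1,p}_{A_1}$ norms; one also uses $W^{1,p}(X)\subset L^2(X)$ (assured by $p\geq 4d/(d+4)\geq 2d/(d+2)$) for the terms pairing against $F_{A_1}\in L^2(X)$ or $m\in C(X)$. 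Crucially, because $D_A$ is first order, the term $\langle\Psi+\psi, D_{A_1}\psi\rangle$ is bilinear — a product of just two $W^{1,p}$ factors (one with a derivative) — and needs only the multiplication $L^{p'}(X)\times L^p(X)\to L^1(X)$, i.e. no extra hypothesis. Assembling these bounds shows $\sF(A+a,\Psi+\psi)$ is a polynomial of degree at most four in $(a,\psi)\in W_{A_1}^{1,p}(X;\Lambda^1\otimes\ad P\oplus W\otimes E)$ with continuous coefficients, hence real analytic at $(A,\Psi)$; since $(A,\Psi)$ was arbitrary, $\sF$ is real analytic on $\sA^{1,p}(P)\times W_{A_1}^{1,p}(X;W\otimes E)$.

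I do not expect a serious obstacle here; the main point of care is purely bookkeeping — correctly expanding $T_2'$ so that the $a_1$-dependent pieces cancel against $T_2(0,0)$ and the remainder is manifestly polynomial in $(a,\psi)$, exactly as the $T_{2i}'$ bookkeeping in the proof of Proposition~\ref{prop:analyticbos}. The one place to be slightly attentive is that the Clifford action $\rho$ on $W\otimes E$ is pointwise bounded (Clifford multiplication by a unit covector has operator norm one, and $\varrho_*$ is a fixed linear map), so $\rho(a)$ acting on a section contributes a factor of $|a|$ pointwise just as the bracket $[a,\cdot]$ or $\varrho(a)$ did in the boson case; with that observation all the estimates transfer \emph{mutatis mutandis}. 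Therefore the proof is a direct transcription of the proof of Proposition~\ref{prop:analyticbos}, deleting the quartic $s$-term and replacing the second-order Higgs term by the first-order Dirac term, and I would present it in that compressed form, spelling out only the $T_2'$ expansion and the list of monomial bounds.
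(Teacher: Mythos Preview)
Your proposal is correct and follows essentially the same approach as the paper: split $2\sF(A+a,\Psi+\psi)$ into the curvature term $T_1$ (identical to the boson case), the Dirac pairing $T_2$, and the mass term $T_3$, compute the differences $T_i'$, and bound each monomial via the Sobolev embedding $W^{1,p}(X)\subset L^4(X)$ for $p\geq 4d/(d+4)$ together with the Kato inequality, concluding that $\sF(A+a,\Psi+\psi)$ is a polynomial (in fact of degree four, coming from $T_1'$) in $(a,\psi)$. The paper's own $T_2'$ expansion is slightly cleaner than the one you wrote down, but you correctly flag that this is pure bookkeeping and that all estimates transfer \emph{mutatis mutandis} from Proposition~\ref{prop:analyticbos}.
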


\begin{proof}
We prove analyticity at a point $(A,\Psi)$ and write $A = A_1 + a_1$, where $a \in W_{A_1}^{1,q}(X; \Lambda^1 \otimes \ad P)$. For any $(a,\psi) \in \fX$, we have
\begin{align*}
F_{A+a} &= F_{A_1} + d_{A_1}(a_1 + a) + \frac{1}{2}[a_1+a, a_1+a],
\\
D_{A+a}(\Psi + \psi) &= D_{A_1}(\Psi + \psi) + \rho(a_1+a) (\Psi + \psi).
\end{align*}
By definition~\eqref{eq:Fermion_Yang--Mills_energy_function} of the fermion coupled Yang--Mills energy function, we obtain
\[
2\sF (A + a , \Psi + \psi) = T_1 + T_2 + T_3,
\]
where the curvature term,
\[
T_1 := (F_{A+a}, F_{A+a})_{L^2(X)},
\]
has the same expansion as the corresponding term $T_1$ in Proposition~\ref{prop:analyticbos} for the boson coupled Yang--Mills energy function, while
\begin{align*}
T_2
&:=
(\Psi, D_{A_1}\Psi)_{L^2(X)} + (\Psi, D_{A_1}\psi)_{L^2(X)} + (\psi, D_{A_1}\Psi)_{L^2(X)} + (\psi, D_{A_1}\psi)_{L^2(X)}
\\
&\quad\, + (\Psi, \rho(a_1+a)\Psi)_{L^2(X)} + (\Psi, \rho(a_1+a)\psi)_{L^2(X)} + (\psi, \rho(a_1+a)\Psi)_{L^2(X)}
\\
&\quad\, + (\psi, \rho(a_1+a)\psi)_{L^2(X)},
\end{align*}
and thus,
\begin{align*}
T_2
&= (\Psi, D_{A_1}\Psi)_{L^2(X)} + 2\Real (\Psi, D_{A_1}\psi)_{L^2(X)} + (\psi, D_{A_1}\psi)_{L^2(X)}
\\
&\quad\, + (\Psi, \rho(a_1+a)\Psi)_{L^2(X)} + 2\Real (\Psi, \rho(a_1+a)\psi)_{L^2(X)} + (\psi, \rho(a_1+a)\psi)_{L^2(X)},
\\
T_3 &:= \int_X m\left(|\Psi|^2 + \langle\Psi,\psi\rangle + \langle\psi,\Psi\rangle
+ |\psi|^2\right)\, d\vol_g.
\end{align*}
The terms in the expression for the difference,
\[
2\sF (A +a , \Psi+ \psi) - 2\sF (A, \Psi) =  T_1'  +  T'_2  + T_3',
\]
are organized in such a way that
\[
T_1' := (F_{A+a}, F_{A+a})_{L^2(X)} - (F_A, F_A)_{L^2(X)}
\]
has the same expansion as the corresponding term $T_1'$ for the boson coupled Yang--Mills energy function in the proof of Proposition \ref{prop:analyticferm} and the remaining terms are given by
\begin{align*}
T_2' &:= 2\Real (\Psi, D_{A_1}\psi)_{L^2(X)} + (\psi, D_{A_1}\psi)_{L^2(X)}
\\
&\quad\, + 2\Real (\Psi, \rho(a_1+a)\psi)_{L^2(X)} + (\psi, \rho(a_1+a)\psi)_{L^2(X)},
\\
T'_3 &:= \int_X m\left(2\Real \langle \Psi, \psi\rangle + |\psi|^2\right)\, d\vol_g.
\end{align*}
The proof of analyticity of $\sF$ at $(A,\Psi)$ now follows by adapting \mutatis the arguments used to prove Proposition~\ref{prop:analyticbos}.
\end{proof}

We now verify the formula \eqref{eq:Gradient_fermion_coupled_Yang--Mills_energy_function} for the differential $\sE'(A,\Phi)$ and gradient $\sM(A,\Phi)$.

\begin{lem}[Differential and gradient of the fermion coupled Yang--Mills energy function]
\label{lem:Differential_gradient_fermion_coupled_Yang--Mills_energy_function}
Assume the hypotheses of Proposition \ref{prop:analyticferm} with the dual H\"older exponent $p'$ in the range $1 < p'\leq 4d/(3d-4)$ determined by $4d/(d+4) \leq p < \infty$ and $1/p+1/p'=1$. Then the expression for $\sE'(A,\Psi) \in (W_{A_1}^{1,p}(X; \Lambda^1 \otimes \ad P \oplus W\otimes E))^*$ and $\sM(A,\Psi) \in W_{A_1}^{-1,p'}(X; \Lambda^1 \otimes \ad P \oplus W\otimes E)$ is given by \eqref{eq:Gradient_fermion_coupled_Yang--Mills_energy_function}, namely
\begin{align*}
\sE'(A,\Psi)(a, \psi) &= \left((a, \psi), \sM(A,\Psi)\right)_{L^2(X)}
\\
&= (d_A^*F_A, a)_{L^2(X)} + \Real(D_A\Psi - m\Psi, \psi)_{L^2(X)} + \frac{1}{2}(\Psi, \rho(a) \Psi)_{L^2(X)},
\\
&\qquad\forall\, (a,\psi)\in) \in W_{A_1}^{1,p}(X; \Lambda^1 \otimes \ad P \oplus W\otimes E).
\end{align*}
\end{lem}

\begin{proof}
It suffices to extract the terms that are linear in $(a,\psi)$ from the expressions for
\[
T_1' = 2(F_A, d_Aa)_{L^2(X)} + (d_Aa, d_Aa)_{L^2(X)}
\]
and $T_2'$ and $T_3'$ arising in the proof of Proposition \ref{prop:analyticferm}.
\end{proof}

\begin{rmk}[Pointwise self-adjointness and reality]
\label{rmk:Pointwise_self-adjointness}
The fact that the term $\langle\Psi, \rho(a)\Psi\rangle$ appearing in \eqref{eq:Gradient_fermion_coupled_Yang--Mills_energy_function} is real could be inferred indirectly by noting the origin of this term and the fact that the Dirac operator, $D_A$, is self-adjoint. To see directly that $\langle\Psi, \rho(a)\Psi\rangle$ is real, recall that Clifford multiplication is skew-Hermitian, so $c(\alpha)^* = -c(\alpha) \in \End_\CC(W)$ for all $\alpha \in \Omega^1(X)$ (for example, see \cite[p. 49]{Guentner_1993}) while if $\xi \in \fg$, then $\varrho_*(\xi)^* = - \varrho_*(\xi)$ since we assume that Lie structure group, $G$, of $P$ acts on the complex, finite-dimensional vector space $\EE$ via a unitary representation, $\varrho:G\to\End_\CC(\EE)$, and $\varrho_*:\fg\to\End_\CC(\EE)$ is the induced representation of the Lie algebra, $\fg$. Hence, given $\alpha\otimes\xi \in C^\infty(T^*X\otimes\ad P) = \Omega^1(X;\ad P)$ and recalling that $E = P\times_\varrho\EE$, then $\rho(\alpha\otimes\xi) = c(\alpha)\otimes\varrho_*(\xi) \in \End_\CC(W\otimes E)$ obeys
\[
\rho(\alpha\otimes\xi)^* = c(\alpha)^*\otimes\varrho_*(\xi)^*
= c(\alpha)\otimes\varrho_*(\xi) = \rho(\alpha\otimes\xi).
\]
In particular, $\rho(a) \in \End_\CC(W\otimes E)$ satisfies $\rho(a)^* = \rho(a)$ for all $a \in \Omega^1(X;\ad P)$.
\end{rmk}

We now compute the Hessian operator, $\sM'(A,\Psi)$, at a $C^\infty$ pair $(A,\Psi) \in \sA(P)\times C^\infty(X;W\otimes E)$. The gradient, $\sM(A, \Psi) \in C^\infty(X; \Lambda^1 \otimes \ad P \oplus W\otimes E)$ in \eqref{eq:Gradient_fermion_coupled_Yang--Mills_energy_function}, may be written as
\begin{equation}
\label{eq:Gradient_fermion_coupled_Yang--Mills_energy_function_preL2pairing}
\sM(A,\Psi)
=
d_A^*F_A + \frac{1}{2} \left( (D_A-m)\Psi\cdot\, + \,\cdot(D_A-m)\Psi \right)
+ \frac{1}{2}\rho^{-1}(\Psi\otimes\Psi^*),
\end{equation}
where the terms involving $\Psi$ in this expression for $\sM(A,\Psi)$ are defined by the $L^2$-pairings,
\begin{align*}
\Real(D_A\Psi - m\Psi, \psi)_{L^2(X)}
&=
\frac{1}{2} \left( ((D_A-m)\Psi, \psi)_{L^2(X)} + (\psi, (D_A-m)\Psi)_{L^2(X)}\right),
\\
\frac{1}{2}(\Psi, \rho(a) \Psi)_{L^2(X)}
&=
\frac{1}{2}(\Psi\otimes\Psi^*, \rho(a))_{L^2(X)}
=
\frac{1}{2}(\rho^{-1}(\Psi\otimes\Psi^*), a)_{L^2(X)},
\\
&\qquad\forall\, (a,\psi) \in C^\infty(X; \Lambda^1 \otimes \ad P \oplus W\otimes E).
\end{align*}
Taking the derivative of the gradient $\sM(A, \Psi)$ in \eqref{eq:Gradient_fermion_coupled_Yang--Mills_energy_function_preL2pairing} with respect to $(A,\Psi)$ in the direction $(a,\psi)$ yields
\begin{multline}
\label{eq:Hessian_fermion_coupled_Yang--Mills_energy_function_preL2pairing}
\sM'(A,\Psi)(a,\psi)
=
d_A^*d_A a + \frac{1}{2} \left( D_A\psi\cdot\, + \,\cdot D_A\psi \right)
- \frac{1}{2} \left( m\psi\cdot\, + \,\cdot m\psi \right)
\\
+ (a\wedge\cdot)^*F_A
+
\frac{1}{2} \left( \rho(a)\Psi\cdot\, + \,\cdot\rho(a)\Psi \right)
+
\frac{1}{2}\rho^{-1}(\Psi\otimes\psi^* + \psi\otimes\Psi^*).
\end{multline}
By virtue of \eqref{eq:Hessian_fermion_coupled_Yang--Mills_energy_function_preL2pairing} we may view the Hessian operator for $\sF$ at a $C^\infty$ pair $(A,\Psi)$ as a linear, second-order partial differential operator,
\[
\sM'(A,\Psi): C^\infty(X; \Lambda^1 \otimes \ad P \oplus W\otimes E) \to C^\infty(X; \Lambda^1 \otimes \ad P \oplus W\otimes E).
\]
The differential,
\[
d_{A,\Psi}:C^\infty(X; \ad P) \to C^\infty(X; \Lambda^1 \otimes \ad P \oplus W\otimes E),
\]
is defined just as in \eqref{eq:Differential_gauge_transformation_action_on_pair_at_identity} except that we replace $E$ by $W\otimes E$. As in \eqref{eq:Ellipticized_Hessian_operator_Wk+2p}, the operator $\sM'(A,\Psi)+d_{A,\Psi}d_{A,\Psi}^*$ is elliptic and thus Fredholm on the analogous Sobolev spaces. We can now simply make the following observation to conclude the proof of Theorem \ref{mainthm:Lojasiewicz-Simon_gradient_inequality_fermion_Yang--Mills_energy_function}.

\begin{proof}[Proof of Theorem \ref{mainthm:Lojasiewicz-Simon_gradient_inequality_fermion_Yang--Mills_energy_function}]
The argument applies \mutatis the corresponding steps used to prove Theorem~\ref{mainthm:Lojasiewicz-Simon_gradient_inequality_boson_Yang--Mills_energy_function} and all required intermediate results established in Section \ref{sec:Lojasiewicz-Simon_gradient_inequality_boson_Yang--Mills_energy_function}.
\end{proof}

\chapter[{\L}ojasiewicz--Simon $W^{-1,2}$ inequalities for coupled Yang--Mills energies]{{\L}ojasiewicz--Simon $W^{-1,2}$ gradient inequalities for coupled Yang--Mills energy functions}
\label{chap:Lojasiewicz-Simon_gradient_inequality_Hilbert_space}
In this chapter, we shall prove Corollaries \ref{maincor:Lojasiewicz-Simon_L2_gradient_inequality_boson_Yang-Mills_energy_function} and \ref{maincor:Lojasiewicz-Simon_L2_gradient_inequality_fermion_Yang-Mills_energy_function}, using a method analogous to that of our proof of \cite[Corollary 6]{Feehan_Maridakis_Lojasiewicz-Simon_harmonic_maps_v6} for the harmonic map energy function. We shall accomplish this by applying the more general Theorem \ref{mainthm:Lojasiewicz-Simon_gradient_inequality2} instead of Theorem \ref{mainthm:Lojasiewicz-Simon_gradient_inequality}, which we used to prove Theorem \ref{mainthm:Lojasiewicz-Simon_gradient_inequality_boson_Yang--Mills_energy_function}.

\section[Technical preparation for proof of $W^{-1,2}$ gradient inequalities]{Technical preparation for the proof of the $W^{-1,2}$ gradient inequalities for coupled Yang--Mills energy functions}
\label{sec:Technical_results_L2_gradient_inequality}
Before proceeding to the proof of Corollaries \ref{maincor:Lojasiewicz-Simon_L2_gradient_inequality_boson_Yang-Mills_energy_function} and \ref{maincor:Lojasiewicz-Simon_L2_gradient_inequality_fermion_Yang-Mills_energy_function}, we first establish several technical lemmas and corollaries needed to verify the hypotheses of Theorem \ref{mainthm:Lojasiewicz-Simon_gradient_inequality2}. In the forthcoming Lemma \ref{lem:Analyticity_of_extension_to_boson_YM_augmented_Hessian_operator}, it would suffice to have continuity of the extension $\sM_1(A, \Phi)$ as a function of $(A,\Phi)$ in the affine Banach space $\sP^{1,q}(P,E)$ of $W^{1,q}$ pairs on $(P,E)$ in order to satisfy a key hypothesis of Theorem \ref{mainthm:Lojasiewicz-Simon_gradient_inequality2}, but the proof yields analyticity as a function of $(A,\Phi)$ in the affine Banach space $\sP^{1,p}(P,E)$ of $W^{1,p}$ pairs on $(P,E)$ (compare Proposition \ref{prop:analyticbos}).

\begin{lem}[Analyticity of extended boson coupled Yang-Mills augmented Hessian operator map] 
\label{lem:Analyticity_of_extension_to_boson_YM_augmented_Hessian_operator}
Assume the hypotheses of Theorem \ref{mainthm:Lojasiewicz-Simon_gradient_inequality_boson_Yang--Mills_energy_function}. If $(A,\Phi)$ is a $W^{1,p}$ pair on $(P,E)$ then the operator \eqref{eq:PiAPhi_MAPhi_PiAPhi_W^1pslice_to_Wminus1p_slice},
\[
  \sM'(A,\Phi):W_{A_1}^{1,p}(X;\Lambda^1\otimes\ad P\oplus E)
  \to  W_{A_1}^{-1,p}(X;\Lambda^1\otimes\ad P\oplus E)
\]
has a bounded extension 
\begin{equation}
\label{eq:PiAPhi_MAPhi_PiAPhi_W^12slice_to_Wminus12_slice}
\sM_1(A, \Phi): W_{A_1}^{1,2}(X;\Lambda^1\otimes\ad P\oplus E)
\to  W_{A_1}^{-1,2}(X;\Lambda^1\otimes\ad P\oplus E),
\end{equation}
that is an analytic function of $(A, \Phi)$ in the affine Banach space $\sP^{1,p}(P,E)$ of $W^{1,p}$ pairs on $(P,E)$ .
\end{lem}

\begin{proof}
The fact that the augmented Hessian operator \eqref{eq:PiAPhi_MAPhi_PiAPhi_W^1pslice_to_Wminus1p_slice} has a well-defined bounded extension \eqref{eq:PiAPhi_MAPhi_PiAPhi_W^12slice_to_Wminus12_slice} follows from Proposition \ref{prop:Fredholmness_and_index_Laplace_operator_on_W1p_pairs_slice} when $(A, \Phi)$ is a $C^\infty$ pair by taking $k=-1$ and $p=2$.

For a general pair $(A,\Phi)$ of class $W^{1,p}$, we write $(A, \Phi)=(A_1 + a, \Phi)$, where $A_1$ is $C^\infty$ and $a\in  W_{A_1}^{1,p}(X;\Lambda^1\otimes\ad P)$. Then from the expression \eqref{eq:Hessian_boson_energy_function} (with $(a,\phi)$ replaced by $(b, \varphi)$), we have the formal expansion
\begin{equation}
\label{eq:Hessian_boson_energy_function_formal_expansion}
\begin{aligned}
\sM'(A,\Phi) (b, \varphi)
&=
d_A^*d_A b  + \nabla_A^*\nabla_A\varphi  + F_A\times b +  \nabla_A^*(\varrho(b)\Phi)   + \Phi\times \nabla_A\varphi
\\
&\quad - \rho(b)^*\nabla_A\Phi + \nabla_A\Phi\times\varphi + \varrho(b)\Phi \times\Phi
\\
&\quad - (m + 2s|\Phi|^2)\varphi - 4s\langle\Phi, \varphi\rangle\Phi,
\quad\forall\, (b,\varphi) \in \fX,
\end{aligned}
\end{equation}
so that, using $\nabla_{A_1}^*(\varrho(b)\Phi) = b\times \nabla_{A_1}\Phi + \nabla_{A_1}b \times \Phi$,
\begin{equation}
\begin{aligned}
\label{eq:Extended_Hessian_operator_boson_coupled_Yang--Mills_energy_function_expanded}
{}&\sM'(A_1 +a,\Phi) (b,\varphi)
\\
&\quad = 
d^*_{A_1} d_{A_1} b + \nabla^*_{A_1}\nabla_{A_1} \varphi +  F_{A_1} \times b  
\\
&\qquad + a\times d_{A_1} b + d_{A_1} a \times b + \nabla_{A_1} a \times \varphi  +  a \times \nabla_{A_1} \varphi 
\\
&\qquad +  a\times a \times b + a\times a \times \varphi. 
\\
&\qquad + \nabla_{A_1}b \times \Phi  + \Phi \times \nabla_{A_1} \varphi 
\\
&\qquad + b\times \nabla_{A_1}\Phi + \nabla_{A_1}\Phi\times\varphi
\\
&\qquad +  a \times \Phi \times b  + a \times \Phi \times \varphi 
\\
&\qquad +\varrho(b)\Phi \times\Phi  - (m + 2s|\Phi|^2)\varphi - 4s\langle\Phi, \varphi\rangle\Phi,
\end{aligned}
\end{equation}
for every $(b,\varphi) \in W_{A_1}^{1,2}(X;\Lambda^1\otimes\ad P\oplus E)$. We make the 

\begin{claim}
\label{claim:Analytic_difference_extended_Hessian_operator_and_Laplacian_W12_to_W-12}
Continue the preceding notation but abbreviate 
\[
W^{\pm1,2} = W_{A_1}^{\pm1,2} (X; \Lambda^1 \otimes \ad P \oplus E). 
\]
If $ \sM_1(A_1+a,\Phi)$ denotes the bounded extension of $\sM'(A_1+a,\Phi)$, then
\begin{multline}
\label{eq:Difference_Hessian_operator_and_Laplacian_W12_to_W-12}
W_{A_1}^{1,p}(X; \Lambda^1 \otimes \ad P \oplus E) \ni (a,\Phi)
\\
\mapsto \sM_1(A_1+a,\Phi) \in \sL(W^{1,2}, W^{-1,2}),
\end{multline}
is a cubic polynomial in $(a,\Phi)$ and its first-order covariant derivatives with respect to $\nabla_{A_1}$, with universal coefficients (depending at most on $g$ and $G$).
\end{claim}

Given Claim \ref{claim:Analytic_difference_extended_Hessian_operator_and_Laplacian_W12_to_W-12}, it follows that $\sM_1(A, \Phi)$ is a well-defined analytic function of $(A,\Phi)$ in an open $W^{1,p}$ neighborhood of $(A_1, \Phi_1)$, thus completing the proof of Lemma~\ref{lem:Analyticity_of_extension_to_boson_YM_augmented_Hessian_operator}.
\end{proof}

We now turn to the 

\begin{proof}[Proof of Claim \ref{claim:Analytic_difference_extended_Hessian_operator_and_Laplacian_W12_to_W-12}]
We compute a $W^{-1,2}$ bound for each term in equation \eqref{eq:Extended_Hessian_operator_boson_coupled_Yang--Mills_energy_function_expanded}.

\setcounter{step}{0}
\begin{step}[$W^{-1,2}$ estimates for $a\times d_{A_1}b$ and $a \times \nabla_{A_1} \varphi$ and $\Phi\times \nabla_{A_1} \varphi$ and $\nabla_{A_1}b\times\Phi$]
\label{step:W-12_estimates_for_a_times_dAinfty_b_and_a_times_nablaAinfty_varphi_and_Phi_times_nabla_Ainfty_ varphi_nablaAinfty_varrho_(b)_Phi}
We claim that
\begin{subequations}
\begin{align}
\label{eq:W{-1,2}_bound_a_times_dAinfty_b}
\|a\times d_{A_1}b\|_{W_{A_1}^{-1,2}(X)} 
&\leq
z\|a\|_{W_{A_1}^{1,p}(X)} \|b\|_{W^{1,2}_{A_1}(X)},
\\
\label{eq:W{-1,2}_bound_a_times_nablaAinfty_varphi}
\|a \times \nabla_{A_1} \varphi\|_{W^{-1,2}_{A_1}(X)} 
&\leq 
z\|a\|_{W_{A_1}^{1,p}(X)} \|\varphi\|_{W^{1,2}_{A_1}(X)},
\\
\label{eq:W{-1,2}_bound_Phi_times_nablaAinfty_varphi}
\|\Phi\times \nabla_{A_1} \varphi\|_{W^{-1,2}_{A_1}(X)} 
&\leq 
z\|\Phi\|_{W_{A_1}^{1,p}(X)} \|\varphi\|_{W^{1,2}_{A_1}(X)},
\\
\label{eq:W{-12,}_nablaAinfty_varrho_(b)_Phi}
\|\nabla_{A_1}b\times\Phi\|_{W_{A_1}^{-1,2}(X)} 
&\leq 
z\|\Phi\|_{W_{A_1}^{1,p}(X)} \|b\|_{W^{1,2}_{A_1}(X)},
\end{align}
\end{subequations}
where $z = z(g,G,p) \in [1,\infty)$.

We first prove estimate \eqref{eq:W{-1,2}_bound_a_times_dAinfty_b}.  Using duality, together with the H\"{o}lder inequality with exponents $r, r'$ satisfying $1/r+1/r'=1$, we estimate
 \begin{align*}
\|a\times d_{A_1}b\|_{W_{A_1}^{-1,2}(X)} 
&= 
\sup_{0<\|\beta\|_{W_{A_1}^{1,2}(X)}\leq 1} (a\times d_{A_1}b,\beta)_{L^2(X)} 
\\
&\leq 
\|a\times d_{A_1}b\|_{L^{r'}(X)} \sup_{0<\|\beta\|_{W_{A_1}^{1,2}(X)}\leq 1} \|\beta\|_{L^r(X)}
\end{align*}
Estimate \eqref{eq:W{-1,2}_bound_a_times_dAinfty_b} will follow by showing that we can always choose $r\in(1,\infty)$ so that
\begin{subequations} 
\label{eq:Lrprime_bound_a_times_dAinfty_b}
\begin{align}
\label{eq:Lrprime_bound_a_times_dAinfty_b1}
\|a\times d_{A_1}b\|_{L^{r'}(X)} 
&\leq
z\|a\|_{W^{1,p}_{A_1}(X)} \|b\|_{W_{A_1}^{1,2}(X)},
\\
\label{eq:Lrprime_bound_a_times_dAinfty_b2}
\|\beta\|_{L^r(X)} 
&\leq 
z\|\beta \|_{W^{1,2}_{A_1}(X)}
\end{align}
\end{subequations}
where $z = z(g,G,p) \in [1,\infty)$. We separately consider the cases $d\geq 3$ with $p\geq d$ or $d/2 \leq p <d$ and $d=2$ with $p\geq 2$.  

\setcounter{case}{0}
\begin{case}[$d\geq 3$ and $d/2 \leq  p< d$]
\label{case:d_geq_3_and_p_lessthan_d}
We choose $r = 2^* = 2d/(d-2) \in (2, 6]$, so $r' = r/(r-1) = 2d/(d+2) \in (1, 2)$, and $s= p^* = dp/(d-p)\in [d, \infty)$.  By \cite[Theorem 4.12]{AdamsFournier}, we have continuous Sobolev embeddings $W^{1,2}(X) \subset L^r(X)$ and $W^{1,p}(X) \subset L^s(X)$. Also $s \in (2, \infty)$ and obeys $1/s + 1/2 \leq 1/r'$, that is 
\[
1/s + 1/2 = (d-p)/(dp) + 1/2  = 1/p- 1/d +1/2 \leq  1/d + 1/2 = 1/r',
\]
since $1/p \leq 2/d $ from $p\geq d/2$, and which we assume by hypothesis on $p$. Therefore, 
 \[
 \|a\times d_{A_1}b\|_{L^{r'}(X)} \leq z\|a\|_{L^s(X)}\|d_{A_1}b\|_{L^2(X)} \leq z\|a\|_{W_{A_1}^{1,p}(X)}\|b\|_{W_{A_1}^{1,2}(X)},
 \]
and $\|\beta\|_{L^r(X)} \leq z\|\beta \|_{W^{1,2}_{A_1}(X)}$. This yields the estimates  \eqref{eq:Lrprime_bound_a_times_dAinfty_b} for this case.
\end{case}

\begin{case}[$d\geq 3$ and $p \geq d$]
We again choose $r = 2^* = 2d/(d-2) \in (2, 6]$, so $r' = r/(r-1) = 2d/(d+2) \in (1,2)$ and $s\in (2, \infty)$ is any sufficiently large number that $1/s + 1/2 \leq 1/r'$. By \cite[Theorem 4.12]{AdamsFournier}, we have a continuous Sobolev embedding $W^{1,2}(X) \subset L^r(X)$ and continuous Sobolev embeddings $W^{1,p}(X) \subset L^s(X)$ when $p=d$ and $W^{1,p}(X) \subset C(X) \subset L^s(X)$ when $p > d$. The estimates \eqref{eq:Lrprime_bound_a_times_dAinfty_b} now follow just as in Case~\ref{case:d_geq_3_and_p_lessthan_d}.
\end{case}

\begin{case}[$d=2$ and $p\geq 2$]
  We choose $r \in (2,\infty)$ large enough that $r' = r/(r-1) \in(1,2)$ is sufficiently close to one and choose $s\in (1, \infty)$ large enough that $1/s+ 1/2 \leq 1/r'$. By \cite[Theorem 4.12]{AdamsFournier}, we have a continuous Sobolev embedding $W^{1,2}(X) \subset L^r(X)$ and continuous Sobolev embeddings $W^{1,p}(X) \subset L^s(X)$ when $p=2$ and $W^{1,p}(X) \subset C(X) \subset L^s(X)$ when $p > 2$.  The estimates \eqref{eq:Lrprime_bound_a_times_dAinfty_b} now follow just as in Case~\ref{case:d_geq_3_and_p_lessthan_d}.  
\end{case}

This completes the proof of estimate  \eqref{eq:W{-1,2}_bound_a_times_dAinfty_b}. The proofs of estimates \eqref{eq:W{-1,2}_bound_a_times_nablaAinfty_varphi}, \eqref{eq:W{-1,2}_bound_Phi_times_nablaAinfty_varphi}, and \eqref{eq:W{-12,}_nablaAinfty_varrho_(b)_Phi} are analogous to that of estimate  \eqref{eq:W{-1,2}_bound_a_times_dAinfty_b}.  This completes Step~\ref{step:W-12_estimates_for_a_times_dAinfty_b_and_a_times_nablaAinfty_varphi_and_Phi_times_nabla_Ainfty_ varphi_nablaAinfty_varrho_(b)_Phi}.
\end{step}

\begin{step}[$W^{-1,2}$ estimates for $d_{A_1} a \times b$ and $\nabla_{A_1} a \times \varphi$ and $b\times\nabla_{A_1} \Phi$ and $\nabla_{A_1} \Phi \times \varphi$]
\label{step:W-12_estimates_for_dAinfty_a_times_b_and_nablaAinfty_a_times_varphi_and_varrho(b)_nablaAinfty
_Phi_and_nablaAinfty_Phi_times_varphi}
We claim that
\begin{subequations}
\begin{align}
\label{eq:W{-1,2}_bound_dAinfty_a_times_b}
\|d_{A_1} a \times b\|_{W_{A_1}^{-1,2}(X)} 
&\leq
z\|a\|_{W_{A_1}^{1,p}(X)} \|b\|_{W^{1,2}_{A_1}(X)},
\\
\label{eq:W{-1,2}_bound_nablaAinfty_a_times_varphi}
\|\nabla_{A_1} a \times \varphi\|_{W_{A_1}^{-1,2}(X)} 
&\leq
z\|a\|_{W_{A_1}^{1,p}(X)} \|\varphi\|_{W^{1,2}_{A_1}(X)},
\\
\label{eq:W{-1,2}_bound_nablaAinfty_Phi_times_b}
\| b\times \nabla_{A_1} \Phi\|_{W_{A_1}^{-1,2}(X)} 
&\leq
z\|\Phi\|_{W_{A_1}^{1,p}(X)} \|b\|_{W^{1,2}_{A_1}(X)},
\\
\label{eq:W{-1,2}_bound_nablaAinfty_Phi_times_varphi}
\|\nabla_{A_1} \Phi \times \varphi\|_{W_{A_1}^{-1,2}(X)} 
&\leq
z\|\Phi\|_{W_{A_1}^{1,p}(X)} \|\varphi\|_{W^{1,2}_{A_1}(X)},
\end{align}
\end{subequations}
where $z = z(g,G,p) \in [1,\infty)$.

We first prove estimate \eqref{eq:W{-1,2}_bound_dAinfty_a_times_b}. Again, we use duality together with the H\"{o}lder inequality, first with exponents $r, r'$ satisfying $1/r+1/r'=1$ and second with exponents $s,r$ satisfying $1/p+1/r \leq 1/r'$. We estimate
\begin{align*}
\| d_{A_1}a \times b\|_{W_{A_1}^{-1,2}(X)} 
&= 
\sup_{0<\|\beta\|_{W_{A_1}^{1,2}(X)}\leq 1} (d_{A_1} a\times b,\beta)_{L^2(X)} 
\\
&\leq 
\|d_{A_1} a \times b\|_{L^{r'}(X)} \sup_{0<\|\beta\|_{W_{A_1}^{1,2}(X)}\leq 1} \|\beta\|_{L^r(X)}
\end{align*}
We separately consider the cases $d\geq 3$ and $d=2$.

\setcounter{case}{0}
\begin{case}[$d\geq 3$]
\label{case:d_geq_3}
As in Step~\ref{step:W-12_estimates_for_a_times_dAinfty_b_and_a_times_nablaAinfty_varphi_and_Phi_times_nabla_Ainfty_ varphi_nablaAinfty_varrho_(b)_Phi} for the two cases with $d\geq 3$, we choose $r = 2^* = 2d/(d-2) \in (2, 6]$, so $r' = r/(r-1) = 2d/(d+2) \in (1, 2)$ and again we have a continuous Sobolev embedding $W^{1,2}(X) \subset L^r(X)$. Moreover,
\begin{multline*}
  1/p+1/r = 1/p+(d-2)/(2d) \leq 1/(d/2)+(d-2)/(2d)
  \\
  = 2/d+1/2-1/d = 1/d+1/2 = (2+d)/(2d) = 1/r'.
\end{multline*}
Therefore, using $1/p+1/r \leq 1/r'$,
\[
\|d_{A_1} a \times b\|_{L^{r'}(X)} \leq z \|d_{A_1} a\|_{L^p(X)}\|b\|_{L^r(X)} \leq z \|a\|_{W^{1,p}_{A_1}(X)} \|b\|_{W^{1,2}_{A_1}(X)}.
\]
This proves \eqref{eq:W{-1,2}_bound_dAinfty_a_times_b} in this case.
\end{case}

\begin{case}[$d=2$ and $p\geq 2$]
  Noting that $p < \infty$, we may choose $r \in (2,\infty)$ large enough and thus $r' = r/(r-1) \in (1,2)$ close enough to one so $1/p+1/r \leq 1/r'$. By \cite[Theorem 4.12]{AdamsFournier}, we have a continuous Sobolev embedding $W^{1,2}(X) \subset L^r(X)$ and thus estimate \eqref{eq:W{-1,2}_bound_dAinfty_a_times_b} follows just as in Case~\ref{case:d_geq_3}.
\end{case}

The proofs of estimates \eqref{eq:W{-1,2}_bound_nablaAinfty_a_times_varphi}, \eqref{eq:W{-1,2}_bound_nablaAinfty_Phi_times_b}, and \eqref{eq:W{-1,2}_bound_nablaAinfty_Phi_times_varphi} are analogous to that of estimate \eqref{eq:W{-1,2}_bound_dAinfty_a_times_b}. This completes Step~\ref{step:W-12_estimates_for_dAinfty_a_times_b_and_nablaAinfty_a_times_varphi_and_varrho(b)_nablaAinfty
_Phi_and_nablaAinfty_Phi_times_varphi}.
\end{step}

\begin{step}[$W^{-1,2}$ estimates for $a\times \Phi \times b$ and $a\times \Phi \times \varphi$ and $ a\times a \times b$ and $a\times a \times \varphi$ and $\Phi\times \Phi \times b$ and $\Phi\times \Phi \times \varphi$]
\label{step:W-12_estimates_for_a_times_Phi_times_b_and_a_times_Phi_times_varphi_and_a_times_a_times_b_and_
a_times_a_times_varphi_and_Phi_times_Phi_times_b_and_Phi_times_Phi_times_varphi}
We claim that
\begin{subequations}
\begin{align}
\label{eq:W{-1,2}_bound_a_times_Phi_times_b}
\|a\times \Phi \times b\|_{W_{A_1}^{-1,2}(X)} 
&\leq
z\|a\|_{W_{A_1}^{1,p}(X)}\|\Phi\|_{W^{1,p}_{A_1}(X)} \|b\|_{W^{1,2}_{A_1}(X)},
\\
\label{eq:W{-1,2}_bound_a_times_Phi_times_varphi}
\|a\times \Phi \times \varphi\|_{W_{A_1}^{-1,2}(X)} 
&\leq
z\|a\|_{W_{A_1}^{1,p}(X)}\|\Phi\|_{W^{1,p}_{A_1}(X)} \|\varphi\|_{W^{1,2}_{A_1}(X)},
\\
\label{eq:W{-1,2}_bound_a_times_a_times_b}
\|a\times a \times b\|_{W_{A_1}^{-1,2}(X)} 
&\leq
z\|a\|^2_{W_{A_1}^{1,p}(X)} \|b\|_{W^{1,2}_{A_1}(X)},
\\
\label{eq:W{-1,2}_bound_a_times_a_times_varphi}
\|a\times a \times \varphi\|_{W_{A_1}^{-1,2}(X)} 
&\leq
z\|a\|^2_{W_{A_1}^{1,p}(X)} \|\varphi\|_{W^{1,2}_{A_1}(X)},
\\
\label{eq:W{-1,2}_bound_Phi_times_Phi_times_b}
\|\Phi\times \Phi \times b\|_{W_{A_1}^{-1,2}(X)} 
&\leq
z\|\Phi\|^2_{W^{1,p}_{A_1}(X)} \|b\|_{W^{1,2}_{A_1}(X)},
\\
\label{eq:W{-1,2}_bound_Phi_times_Phi_times_varphi}
\|\Phi\times \Phi \times \varphi\|_{W_{A_1}^{-1,2}(X)} 
&\leq
z\|\Phi\|^2_{W^{1,p}_{A_1}(X)} \|\varphi\|_{W^{1,2}_{A_1}(X)},
\end{align}
\end{subequations}
where $z = z(g,G,p) \in [1,\infty)$.

We first consider the estimate \eqref{eq:W{-1,2}_bound_a_times_Phi_times_b}. We claim that we can always find $r \in (2,\infty)$, with $r' = r/(r-1) \in (1,2)$, and $s \in (1,\infty)$ such that $1/s + 1/r \leq 1/r'$, and such that we have continuous Sobolev embeddings, $W^{1,p}(X) \subset L^{2s}(X)$ and $W^{1,2}(X) \subset L^r(X)$. Assuming that claim, we have
\begin{align*}
\|a\times \Phi \times b\|_{W_{A_1}^{-1,2}(X)} 
&= 
\sup_{0<\|\beta\|_{W_{A_1}^{1,2}(X)}\leq 1} (a\times \Phi \times b,\beta)_{L^2(X)} 
\\
&\leq \|a\times \Phi \times b\|_{L^{r'}(X)} \sup_{0<\|\beta\|_{W_{A_1}^{1,2}(X)}\leq 1} \|\beta\|_{L^r(X)}
\\
&\leq z\|a\times \Phi\|_{L^s(X)}\|b\|_{L^r(X)} \sup_{0<\|\beta\|_{W_{A_1}^{1,2}(X)}\leq 1} \|\beta\|_{W_{A_1}^{1,2}(X)}
\\
&\leq z\|a\|_{L^{2s}(X)} \|\Phi\|_{L^{2s}(X)}\|b\|_{L^r(X)}
\\
&\leq z\|a\|_{W_{A_1}^{1,p}(X)} \|\Phi\|_{W_{A_1}^{1,p}(X)}\|b\|_{W_{A_1}^{1,2}(X)}, 
\end{align*}
and thus yielding \eqref{eq:W{-1,2}_bound_a_times_Phi_times_b}. Our task now is to verify our claim that such $r,s$ always exist. We separately consider the cases $d\geq 3$ with $p\geq d$ or $d/2 \leq p <d$ and $d=2$ with $p \geq 2$.

\setcounter{case}{0}
\begin{case}[$d\geq 3$ and $d/2 \leq  p< d$]
\label{case:d_geq_3_p_lessthan_d}
We choose $r = 2^* = 2d/(d-2) \in (2, \infty)$, so $r' = r/(r-1) = 2d/(d+2) \in (1, 2)$ and choose $s \in (1,\infty)$ by setting $2s= p^* = dp/(d-p)\in (2, \infty)$.  By \cite[Theorem 4.12]{AdamsFournier}, we have continuous Sobolev embeddings, $W^{1,2}(X) \subset L^r(X)$ and $W^{1,p}(X) \subset L^{2s}(X)$. Moreover,
\begin{multline*}
  1/s + 1/r = 2(d-p)/(dp) + (d-2)/(2d) = 2/p-2/d + 1/2-1/d
  \\
  = 2/p + 1/2 - 3/d \leq 4/d + 1/2 - 3/d = 1/d + 1/2 = 1/r',
\end{multline*}
using $p\geq d/2$, which we assume by hypothesis, and thus $1/s + 1/r \leq 1/r'$. This justifies our choice of $r,s$ in this case.
\end{case}

\begin{case}[$d\geq 3$ and $p \geq d$]
We again choose $r = 2^* = 2d/(d-2) \in (2, \infty)$, so $r' = r/(r-1) = 2d/(d+2) \in (1,2)$, and choose $s\in (1, \infty)$ large enough that $1/s + 1/r \leq 1/r'$. By \cite[Theorem 4.12]{AdamsFournier}, we have again have continuous Sobolev embeddings, $W^{1,2}(X) \subset L^r(X)$ and $W^{1,p}(X) \subset L^{2s}(X)$. This justifies our choice of $r,s$ in this case.
\end{case}

\begin{case}[$d=2$ and $p\geq 2$]
We choose $r \in (2,\infty)$ large enough that $r' = r/(r-1) \in (1,2)$ is close enough to one and choose $s\in (1, \infty)$ large enough that $1/s+ 1/r \leq 1/r'$.  By \cite[Theorem 4.12]{AdamsFournier}, we still have continuous Sobolev embeddings, $W^{1,2}(X) \subset L^r(X)$ and $W^{1,p}(X) \subset L^{2s}(X)$. This justifies our choice of $r,s$ in this case.
\end{case}

The proofs of estimates \eqref{eq:W{-1,2}_bound_a_times_Phi_times_varphi}, \eqref{eq:W{-1,2}_bound_a_times_a_times_b},
\eqref{eq:W{-1,2}_bound_a_times_a_times_varphi}, \eqref{eq:W{-1,2}_bound_Phi_times_Phi_times_b}, and
\eqref{eq:W{-1,2}_bound_Phi_times_Phi_times_varphi} are analogous to that of estimate \eqref{eq:W{-1,2}_bound_a_times_Phi_times_b}. This completes Step~\ref{step:W-12_estimates_for_a_times_Phi_times_b_and_a_times_Phi_times_varphi_and_a_times_a_times_b_and_
a_times_a_times_varphi_and_Phi_times_Phi_times_b_and_Phi_times_Phi_times_varphi}.
\end{step}

\begin{step}[$W^{-1,2}$ estimates for $F_{A_1} \times b$ and $m\varphi$]
\label{step:W-12_estimates_for_FAinfty_times_b_and_m_varphi}
We claim that
\begin{subequations}
\begin{align}
\label{eq:W{-1,2}_bound_FAinfty_times_b}
\|F_{A_1} \times b\|_{W_{A_1}^{-1,2}(X)} 
&\leq
z\|F_{A_1}\|_{C(X)}\|b\|_{W^{1,2}_{A_1}(X)},
\\
\label{eq:W{-1,2}_bound_m_varphi}
\|m\varphi\|_{W_{A_1}^{-1,2}(X)} 
&\leq
\|m\|_{C(X)} \|\varphi\|_{W^{1,2}_{A_1}(X)},
\end{align}
\end{subequations}
where $z = z(g,G,p) \in [1,\infty)$.

We consider first the estimate \eqref{eq:W{-1,2}_bound_FAinfty_times_b}. Using duality, together with the Cauchy--Schwarz inequality, we estimate
\begin{align*}
\|F_{A_1}\times b\|_{W_{A_1}^{-1,2}(X)} 
&= 
\sup_{0<\|\beta\|_{W_{A_1}^{1,2}(X)}\leq 1} (F_{A_1}\times b,\beta)_{L^2(X)} 
\\
&\leq 
z\|F_{A_1}\|_{C(X)} \|b\|_{L^2(X)} \sup_{0<\|\beta\|_{W_{A_1}^{1,2}(X)}\leq 1} \|\beta\|_{L^2(X)}
\\
&\leq z\|F_{A_1}\|_{C(X)} \|b\|_{W_{A_1}^{1,2}(X)},
\end{align*}
as required. Estimate \eqref{eq:W{-1,2}_bound_m_varphi} is proved in the same way.
This completes Step~\ref{step:W-12_estimates_for_FAinfty_times_b_and_m_varphi}.
\end{step}

The estimates obtained in each of the preceding steps verify Claim~\ref{claim:Analytic_difference_extended_Hessian_operator_and_Laplacian_W12_to_W-12}.
\end{proof}

Next, we apply Proposition \ref{prop:fredbos} with $k=-1$ and $p=2$ to give

\begin{cor}[Fredholm and index zero properties of the extended Hessian operator for the boson coupled Yang--Mills energy function on a Coulomb-gauge slice]
\label{cor:extended_fredbos}
Assume the hypotheses of Proposition \ref{prop:fredbos}. If $(A_\infty,\Phi_\infty)$ is a $C^\infty$ pair on $(P,E)$, then the following operator is Fredholm with index zero,
\begin{multline*}
\sM'(A_\infty,\Phi_\infty): \Ker d_{A_\infty,\Phi_\infty}^* \cap W_{A_1}^{1,2}(X; \Lambda^1 \otimes \ad P \oplus E)
\\
\to \Ker d_{A_\infty,\Phi_\infty}^* \cap W_{A_1}^{-1,2}(X; \Lambda^1 \otimes \ad P \oplus E),
\end{multline*}
where is the Hessian operator $\sM'(A_\infty,\Phi_\infty)$ is defined in \eqref{eq:Relation_Hessian_and_Hessian_operator}, with explicit schematic expression in \eqref{eq:Hessian_boson_energy_function}.
\end{cor}

Recall from \eqref{eq:Gradient_map_boson_energy_function_slice} that
\begin{multline*}
\hat\sM \equiv \Pi_{A_\infty,\Phi_\infty}\sM: \Ker d_{A_\infty,\Phi_\infty}^* \cap W_{A_1}^{1,p}(X; \Lambda^1 \otimes \ad P \oplus E) \ni (A,\Phi)
\\
\mapsto \Pi_{A_\infty,\Phi_\infty}\sM(A,\Phi) \in \Ker d_{A_\infty,\Phi_\infty}^* \cap W_{A_1}^{-1,p}(X; \Lambda^1 \otimes \ad P \oplus E)
\end{multline*}
is the gradient at a $W^{1,p}$ pair $(A,\Phi)$ of the restriction,
\[
\sE: (A_\infty,\Phi_\infty)+\Ker d_{A_\infty,\Phi_\infty}^* \cap W_{A_1}^{1,p}(X; \Lambda^1 \otimes \ad P \oplus E) \to \RR,
\]
of the boson coupled Yang--Mills energy function \eqref{eq:Boson_Yang--Mills_energy_function} to the Coulomb-gauge slice through a $C^\infty$ pair $(A_\infty,\Phi_\infty)$ on $(P,E)$, where we recall that $\Pi_{A_\infty,\Phi_\infty}$ is the $L^2$-orthogonal projection \eqref{eq:L2-orthogonal_projection_onto_slice} onto the Coulomb-gauge slice. The Hessian operators are related by
\[
 \hat\sM'(A_\infty,\Phi_\infty) = \left(\Pi_{A_\infty,\Phi_\infty}\sM\right)'(A_\infty,\Phi_\infty) =  \Pi_{A_\infty,\Phi_\infty}\sM'(A_\infty,\Phi_\infty).
\]
Because the operator $\sM'(A_\infty,\Phi_\infty)$ preserves the Coulomb-gauge condition, then
\[
\Pi_{A_\infty,\Phi_\infty}\sM'(A_\infty,\Phi_\infty) = \sM'(A_\infty,\Phi_\infty),
\]
although for arbitrary $W^{1,p}$ pairs $(A,\Phi)$, we may have $\Pi_{A_\infty,\Phi_\infty}\sM'(A,\Phi) \neq \sM'(A,\Phi)$.

Recall from Lemma~\ref{lem:Analyticity_of_extension_to_boson_YM_augmented_Hessian_operator} that for any $W^{1,p}$ pair on $(P,E)$, the operator
\[
  \sM_1(A,\Phi) \in \sL(W^{1,2},W^{-1,2})
\]
was constructed to be the extension of the operator
\[
  \sM'(A,\Phi) \in \sL(W^{1,p},W^{-1,p}),
\]
where we as usual abbreviate $W^{\pm 1, p} = W_{A_1}^{\pm 1,p}(X; \Lambda^1 \otimes \ad P \oplus E)$ and similarly for $W^{\pm 1,2}$. Therefore,
\begin{multline}
\label{eq:Defn_hatM1}  
\hat\sM_1(A,\Phi) := \Pi_{A_\infty,\Phi_\infty}\sM_1(A,\Phi)
\\
\in \sL\left(\Ker d_{A_\infty,\Phi_\infty}^* \cap W^{1,2}, \Ker d_{A_\infty,\Phi_\infty}^* \cap W^{-1,2}\right)
\end{multline}
is the corresponding extension of 
\[
\hat\sM'(A,\Phi) \in \sL\left(\Ker d_{A_\infty,\Phi_\infty}^* \cap W^{1,p}, \Ker d_{A_\infty,\Phi_\infty}^* \cap W^{-1,p}\right).
\]
Note that
\begin{multline*}
  \Pi_{A_\infty,\Phi_\infty}\sM_1(A,\Phi) = \Pi_{A_\infty,\Phi_\infty}\left(\sM_1(A,\Phi) + d_{A_\infty,\Phi_\infty}d_{A_\infty,\Phi_\infty}^*\right)
  \\
  \text{on }
  \Ker d_{A_\infty,\Phi_\infty}^* \cap W^{-1,2}.
\end{multline*}
As a consequence of Lemma~\ref{lem:Analyticity_of_extension_to_boson_YM_augmented_Hessian_operator}, we have

\begin{cor}[Analyticity of the extended Hessian operator map for the boson coupled Yang--Mills energy function on a Coulomb-gauge slice]
\label{cor:Analyticity_extended_Hessian_boson_energy_function_slice}
Assume the hypotheses of Theorem \ref{mainthm:Lojasiewicz-Simon_gradient_inequality_boson_Yang--Mills_energy_function} and let $(A_\infty,\Phi_\infty)$ be a $C^\infty$ pair on $(P,E)$. Then the following map is well-defined and analytic,
\begin{multline*}
\Ker d_{A_\infty,\Phi_\infty}^* \cap W^{1, p} \ni (a,\phi)
\\
\mapsto \hat\sM_1(A_\infty+a,\Phi_\infty+\phi) \in \sL\left(\Ker d_{A_\infty,\Phi_\infty}^* \cap W^{1,2}, \Ker d_{A_\infty,\Phi_\infty}^* \cap W^{-1,2}\right).
\end{multline*}
\end{cor}

\begin{proof}
The conclusion follows from Lemma \ref{lem:Analyticity_of_extension_to_boson_YM_augmented_Hessian_operator} and the fact that the $L^2$-orthogonal projection operators $\Pi_{A_\infty,\Phi_\infty} \in \sL(W^{-1,2},\Ker d_{A_\infty,\Phi_\infty}^* \cap W^{-1,2})$ are bounded.
\end{proof}

As we noted prior to the statement of Lemma~\ref{lem:Analyticity_of_extension_to_boson_YM_augmented_Hessian_operator}, it would suffice for our application of Theorem \ref{mainthm:Lojasiewicz-Simon_gradient_inequality2} to prove Corollary \ref{maincor:Lojasiewicz-Simon_L2_gradient_inequality_boson_Yang-Mills_energy_function} for the map in Corollary \ref{cor:Analyticity_extended_Hessian_boson_energy_function_slice} to be continuous on a $W^{1,q}$ open neighborhood of the origin.

Finally, we shall need the following an extension of Lemma \ref{lem:gauged} that provides $W^{1,2}$ estimates for $u(a,\phi)$, thus stronger than the existing $W^{1,p'}$ estimates when $p=d/2$ and $p'=d/(d-2)<2$ for $d\geq 5$.

\begin{lem}[Estimate for the action of a $W^{2,q}$ gauge transformation]
\label{lem:L2_gauged}
Assume the hypotheses of Theorem \ref{mainthm:Lojasiewicz-Simon_gradient_inequality_boson_Yang--Mills_energy_function}. Then there is a constant $C = C(g,G,p) \in [1,\infty)$ with the following significance. If $u\in \Aut^{2,q}(P)$ and $(a,\phi) \in W^{1,2}_{A_1}(X;\Lambda^1\otimes\ad P)$, then
\begin{align*}
\|u(a,\phi)\|_{W_{A_1}^{1,2}(X)}
  &\leq C\left(1 + \|u\|_{W_{A_1}^{2,p}(X)}\right) \|(a,\phi)\|_{W_{A_1}^{1,2}(X)},
  \\
  \|(a,\phi)\|_{W_{A_1}^{1,2}(X)}
&\leq C\left(1 + \|u\|_{W_{A_1}^{2,p}(X)}\right) \|u(a,\phi)\|_{W_{A_1}^{1,2}(X)}.
\end{align*}
\end{lem}

\begin{proof}
We recall that $u(a,\phi) = (u(a),u(\phi)) = (u^{-1}au, u^{-1}\phi)$ from the proof of Lemma \ref{lem:gauged} and so
\[
\nabla_{A_1}(u(a)) = -u^{-1}(\nabla_{A_1}u)u^{-1} a u + u^{-1}(\nabla_{A_1}a)u + u^{-1}a (\nabla_{A_1}u).
\]
We make the

\begin{claim}
\label{claim:Continuous_Sobolev_multiplication_W1p_times_W12_to_L2}
Let $(X,g)$ be a closed, smooth Riemannian manifold of dimension $d \geq 2$. If $p \in [d/2,\infty)$, with $p \geq 2$ when $d=2$, then there is a continuous Sobolev multiplication map,
\begin{equation}
\label{eq:Continuous_Sobolev_multiplication_W1p_times_W12_to_L2}
W^{1,p}(X) \times  W^{1,2}(X) \to L^2(X).
\end{equation}
\end{claim}

Given Claim \ref{claim:Continuous_Sobolev_multiplication_W1p_times_W12_to_L2} and the preceding expression for $\nabla_{A_1}(u(a))$, we have
\begin{align*}
  {}&\|\nabla_{A_1}(u(a))\|_{L^2(X)}
  \\
  &\quad \leq \|u^{-1}(\nabla_{A_1}u)u^{-1} a u\|_{L^2(X)} + \|u^{-1}(\nabla_{A_1}a)u\|_{L^2(X)} + \|u^{-1}a (\nabla_{A_1}u)\|_{L^2(X)}
  \\
  &\quad \leq \|\nabla_{A_1}u\|_{L^s(X)} \|a\|_{L^r(X)} + \|\nabla_{A_1}a\|_{L^2(X)} + \|a\|_{L^r(X)} \|\nabla_{A_1}u\|_{L^s(X)}
    \\
&\quad \leq z\|\nabla_{A_1}u\|_{W_{A_1}^{1,p}(X)}\|a\|_{W_{A_1}^{1,2}(X)} + \|\nabla_{A_1}a\|_{L^2(X)},
\end{align*}
for $r, s \in (2,\infty)$ and continuous Sobolev embeddings as in the proof of Claim \ref{claim:Continuous_Sobolev_multiplication_W1p_times_W12_to_L2}, so that
\[
\|u(a)\|_{W_{A_1}^{1,2}(X)} \leq z\left(1 + \|\nabla_{A_1}u\|_{W_{A_1}^{1,p}(X)}\right)\|a\|_{W_{A_1}^{1,2}(X)},
\]
for $z = z(g,p) \in [1,\infty)$. We have the analogous estimate for $\|u(\phi)\|_{W_{A_1}^{1,2}(X)}$ and the combination yields the required estimate for $\|u(a,\phi)\|_{W_{A_1}^{1,2}(X)}$. 

\begin{proof}[Proof of Claim \ref{claim:Continuous_Sobolev_multiplication_W1p_times_W12_to_L2}]
The multiplication, $L^r(X)\times L^s(X) \to L^2(X)$, is continuous for $r,s \in (2,\infty)$ provided $1/r + 1/s \leq 1/2$. If these exponents $r,s$ also yield continuous Sobolev embeddings,
\[
W^{1,p}(X) \subset L^s(X) \quad\text{and}\quad W^{1,2}(X) \subset L^r(X),
\]
then we obtain the desired continuous Sobolev multiplication map \eqref{eq:Continuous_Sobolev_multiplication_W1p_times_W12_to_L2}. To confirm the existence of suitable exponents, $r,s$, we shall separately consider the cases $d\geq 3$ with $p \geq d$, and $d \geq 3$ with $d/2 \leq p <d$, and $d=2$ with $p \geq 2$.

\setcounter{case}{0}
\begin{case}[$d\geq 3$ and $p\geq d$]
\label{case:d_geq_3_p_geq_d}
We choose $r = 2^* = 2d/(d-2) \in (2, \infty)$ and $s=d \in (2,\infty)$.  By \cite[Theorem 4.12]{AdamsFournier}, the preceding Sobolev embeddings hold and additionally
\[
1/r + 1/s =  (d-2)/(2d) + 1/d = 1/2,
\]
so the proof of \eqref{eq:Continuous_Sobolev_multiplication_W1p_times_W12_to_L2} is completed in this case.
\end{case}

\begin{case}[$d \geq 3$ and $d/2 \leq p <d$]
\label{case:d_geq_3_p_less_than_d}
We choose $r = 2^* = 2d/(d-2) \in (2, \infty)$ and $s= p^* = (dp)/(d-p) \in (2,\infty)$.  By \cite[Theorem 4.12]{AdamsFournier}, the preceding Sobolev embeddings hold and additionally
\[
1/r + 1/s =  (d-2)/(2d) + (d-p)/(dp) = 1/2 - 1/d + 1/p- 1/d \leq 1/2,
\]
using $p \geq d/2$, which we assume by hypothesis in Corollary \ref{maincor:Lojasiewicz-Simon_L2_gradient_inequality_boson_Yang-Mills_energy_function}. Therefore, the proof of \eqref{eq:Continuous_Sobolev_multiplication_W1p_times_W12_to_L2} is complete in this case.
\end{case}

\begin{case}[$d=2$ and $p\geq 2$]
\label{case:d2_p_geq_2}
We choose any $r, s\in (2,\infty)$ sufficiently large so that $1/s + 1/r \leq 1/2$.  By \cite[Theorem 4.12]{AdamsFournier}, the preceding Sobolev embeddings hold and the proof of \eqref{eq:Continuous_Sobolev_multiplication_W1p_times_W12_to_L2} is complete in this case.
\end{case}

Combining these three cases completes the proof of Claim \ref{claim:Continuous_Sobolev_multiplication_W1p_times_W12_to_L2}.
\end{proof}

Symmetry yields the corresponding estimate for $\|(a,\phi)\|_{W_{A_1}^{1,2}(X)}$ and this completes the proof of Lemma \ref{lem:L2_gauged}.
\end{proof}

\section{Completion of the proofs of the the $W^{-1,2}$ gradient inequalities}
\label{sec:Completion_proof_Lojasiewicz-Simon_L2_gradient_inequality_boson_Yang--Mills_energy_function}
We can now proceed to the

\begin{proof}[Proof of Corollary \ref{maincor:Lojasiewicz-Simon_L2_gradient_inequality_boson_Yang-Mills_energy_function}]
We proceed by verifying the hypotheses of Theorem \ref{mainthm:Lojasiewicz-Simon_gradient_inequality2}. As in the proof of Theorem \ref{mainthm:Lojasiewicz-Simon_gradient_inequality_boson_Yang--Mills_energy_function} (which relied on Theorem \ref{mainthm:Lojasiewicz-Simon_gradient_inequality}), the proof of Corollary \ref{maincor:Lojasiewicz-Simon_L2_gradient_inequality_boson_Yang-Mills_energy_function} is divided into two cases. In the first case, we consider the simpler situation where the pair $(A,\Phi)$ is in Coulomb gauge relative to the critical point $(A_\infty, \Phi_\infty)$ and in the second case, we consider the general situation.

\setcounter{case}{0}
\begin{case}[$(A,\Phi)$ in Coulomb gauge relative to $(A_\infty, \Phi_\infty)$]
\label{case:APhi_Coulomb_gauge_wrt_APhi_infty_L2}
By hypothesis, the $W^{1,q}$ pair $(A_\infty, \Phi_\infty)$ is a critical point for the function $\sE$ in \eqref{eq:Boson_Yang--Mills_energy_function}. By the regularity Theorem~\ref{thm:Parker_1982_5-3}, there exists a $W^{2,q}$ gauge transformation $u_\infty$ such that $u_\infty(A_\infty,\Phi_\infty)$ is a $C^\infty$ pair; we take $u_\infty$ to be the identity if $(A_\infty,\Phi_\infty)$ is already $C^\infty$. In particular, $u_\infty(A_\infty, \Phi_\infty)$ is a $W^{2,q}$ pair and $u_\infty(A, \Phi)$ is in Coulomb gauge relative to $u_\infty(A_\infty, \Phi_\infty)$. As in the proof of Theorem \ref{mainthm:Lojasiewicz-Simon_gradient_inequality_boson_Yang--Mills_energy_function} in Section \ref{subsec:Completion_proof_Lojasiewicz-Simon_gradient_inequality_boson_Yang--Mills_energy_function}, we have the Banach spaces,
\begin{align*}
\sX &= \Ker\left(d_{u_\infty(A_\infty, \Phi_\infty)}^*: W_{A_1}^{1,p}(X; \Lambda^1 \otimes \ad P \oplus E)
\to L^p(X;\ad P) \right),
\\
\tilde\sX &= \Ker\left(d_{u_\infty(A_\infty, \Phi_\infty)}^*: W_{A_1}^{-1,p}(X; \Lambda^1 \otimes \ad P \oplus E)
\to W_{A_1}^{-2,p}(X;\ad P) \right),
\end{align*}
and in addition, in order to apply Theorem \ref{mainthm:Lojasiewicz-Simon_gradient_inequality2}, we also choose Hilbert spaces,
\begin{align*}
\sG &:= \Ker\left(d_{u_\infty(A_\infty, \Phi_\infty)}^*: W_{A_1}^{1,2}(X; \Lambda^1 \otimes \ad P \oplus E)
\to L^2(X;\ad P) \right),
\\
\tilde\sG &:= \Ker\left(d_{u_\infty(A_\infty, \Phi_\infty)}^*: W_{A_1}^{-1,2}(X; \Lambda^1 \otimes \ad P \oplus E)
\to W_{A_1}^{-2,2}(X;\ad P) \right).
\end{align*}
Hence, $\sX \subset \tilde\sX$ and $\sG \subset \tilde\sG$ are continuous embeddings of Banach spaces. Moreover,
\[
\sX^* = \Ker\left(d_{u_\infty(A_\infty, \Phi_\infty)}^*: L^{p'}(X;\ad P) \to W_{A_1}^{-1,p'}(X; \Lambda^1 \otimes \ad P \oplus E) \right),
\]
and $\tilde\sX \subset \sX^*$is a continuous embedding of Banach spaces, just as in Section \ref{subsec:Completion_proof_Lojasiewicz-Simon_gradient_inequality_boson_Yang--Mills_energy_function}. Since $p \geq 2$ by hypothesis of Corollary \ref{maincor:Lojasiewicz-Simon_L2_gradient_inequality_boson_Yang-Mills_energy_function}, then $\tilde\sX \subset \tilde \sG$ is also a continuous embedding of Banach spaces and the compositions
\[
\sX \subset \sG \subset \tilde\sG \quad \text{and}\quad \sX \subset \tilde\sX \subset \tilde\sG,
\]
induce the same embedding. By Proposition \ref{prop:analyticbos}, the function $\sE: x_\infty+\sX\to\RR$ is analytic, where as in Section \ref{subsec:Completion_proof_Lojasiewicz-Simon_gradient_inequality_boson_Yang--Mills_energy_function} we abbreviate $x_\infty = u_\infty(A_\infty,\Phi_\infty)$. By Corollary~\ref{cor:Analyticity_extended_Hessian_boson_energy_function_slice}, the Hessian operator map,
\[
x_\infty + \sX \ni x \mapsto \hat\sM'(x)\in \sL(\sX,\tilde\sX),
\]
extends to a continuous map,
\[
x_\infty+\sX \ni x \mapsto \hat\sM_1(x)\in \sL(\sG, \tilde\sG).
\]
By Proposition \ref{prop:fredbos}, the operator $\hat\sM'(x_\infty)$ is Fredholm with index zero and by Corollary~\ref{cor:extended_fredbos}, the extension $\hat\sM_1(x_\infty)$ is also Fredholm with index zero. Hence, all the hypotheses of Theorem \ref{mainthm:Lojasiewicz-Simon_gradient_inequality2} are fulfilled (see also Remark \ref{rmk:Lojasiewicz-Simon_gradient_inequality2_Hilbert}) and so there are constants $Z'\in (0,\infty)$ and $\sigma' \in (0,1]$ and $\theta \in [1/2, 1)$ (depending on $(A_1, \Phi_1)$, and  $u_\infty(A_\infty,\Phi_\infty)$, and $g$, $G$, $p$, $P$) such that if
\begin{equation}
\label{eq:Lojasiewicz-Simon_L2_gradient_inequality_boson_Yang--Mills_pair_neighborhood_gauged}
\|u_\infty(A,\Phi) -  u_\infty(A_\infty, \Phi_\infty)\|_{W_{A_1}^{1,p}(X)} < \sigma',
\end{equation}
then
\begin{equation}
\label{eq:L2_gauged_LS}
|\sE(u_\infty(A,\Phi)) - \sE(u_\infty(A_\infty, \Phi_\infty))|^\theta \leq Z'\|\hat\sM(u_\infty(A,\Phi))\|_{W^{-1,2}(X)}.
\end{equation}
As in the proof of Theorem \ref{mainthm:Lojasiewicz-Simon_gradient_inequality_boson_Yang--Mills_energy_function}, Lemma \ref{lem:gauged} gives
\[
\|u_\infty(A,\Phi) - u_\infty(A_\infty, \Phi_\infty)\|_{W_{A_1}^{1,p}(X)}
\leq
C_1 \|(A,\Phi) - (A_\infty, \Phi_\infty)\|_{W_{A_1}^{1,p}(X)}.
\]
for $C_1 = C_1(g,G,p,u_\infty) = C_1(A_\infty,\Phi_\infty,g,G,p) \in (0,1]$. 
Therefore, setting $\sigma := C_1^{-1}\sigma'$, we see that if $(A,\Phi)$ obeys the {\L}ojasiewicz--Simon neighborhood condition \eqref{eq:Lojasiewicz-Simon_gradient_inequality_boson_Yang--Mills_pair_neighborhood}, namely
\[
\|(A,\Phi) - (A_\infty, \Phi_\infty)\|_{W_{A_1}^{1,p}(X)} <\sigma,
\]
then \eqref{eq:Lojasiewicz-Simon_L2_gradient_inequality_boson_Yang--Mills_pair_neighborhood_gauged} holds and thus also \eqref{eq:L2_gauged_LS}. Moreover,
\begin{align*}
  \|\hat\sM(u_\infty(A,\Phi))\|_{W^{-1, 2}_{A_1}(X)} &\leq C_2 \|\hat\sM(A,\Phi)\|_{W^{-1, 2}_{A_1}(X)},
  \\
  \|\hat\sM(A,\Phi)\|_{W^{-1, 2}_{A_1}(X)} &\leq \|\sM(A,\Phi)\|_{W^{-1, 2}_{A_1}(X)},
\end{align*}
just as in the proof of the corresponding $W^{-1, p}$ estimate in the proof of Theorem \ref{mainthm:Lojasiewicz-Simon_gradient_inequality_boson_Yang--Mills_energy_function}, but now using Lemma~\ref{lem:L2_gauged} with constant $C_2 = C_2(g,G,u_\infty) = C_2(A_\infty,\Phi_\infty,g,G) \in (0,1]$.

Increasing $C_2(A_\infty, \Phi_\infty,g,G,p) \in (0,1]$ if necessary to give $C_2\geq C_1$ and substituting the preceding inequality into \eqref{eq:L2_gauged_LS} and following the same steps as in the proof of Theorem \ref{mainthm:Lojasiewicz-Simon_gradient_inequality_boson_Yang--Mills_energy_function} yields
\begin{equation*}
|\sE(A,\Phi) - \sE(A_\infty, \Phi_\infty)|^\theta\leq Z'C_2\|\sM(A,\Phi)\|_{W^{-1, 2}_{A_1}(X)}.
\end{equation*}
Hence, the {\L}ojasiewicz--Simon gradient inequality \eqref{eq:Lojasiewicz-Simon_gradient_inequality_boson_Yang-Mills_energy_function_Wminus12} holds for the pairs $(A, \Phi)$ and $(A_\infty, \Phi_\infty)$ with constants $(Z, \theta, \sigma)$, where $Z := C_2Z'$.
\end{case}

\begin{case}[$(A,\Phi)$ not in Coulomb gauge relative to $(A_\infty, \Phi_\infty)$]
\label{case:APhi_notin_Coulomb_gauge_wrt_APhi_infty_L2}
Let
\begin{align*}
  \zeta &= \zeta(A_1, A_\infty, \Phi_\infty, g, G,p,q)\in (0,1] \quad\text{and}
  \\
  N &= N(A_1,A_\infty, \Phi_\infty, g, G,p,q) \in [1,\infty)
\end{align*}
denote the constants in Theorem~\ref{mainthm:Feehan_proposition_3-4-4_Lp_pairs} and choose $\zeta_1 \in (0,\zeta]$ small enough that $2N\zeta_1 < \sigma_1$, where we now use $\sigma_1$ to denote the {\L}ojasiewicz--Simon constant from Case \ref{case:APhi_Coulomb_gauge_wrt_APhi_infty_L2}.  If $(A,\Phi)$ obeys
\[
\|(A,\Phi) - (A_\infty, \Phi_\infty)\|_{W_{A_1}^{1,p}(X)} < \zeta_1,
\]
then Theorem~\ref{mainthm:Feehan_proposition_3-4-4_Lp_pairs} provides $u \in \Aut^{2,q}(P)$, depending on the pair $(A, \Phi)$, such that
\begin{gather*}
d_{A_\infty, \Phi_\infty}^*(u(A, \Phi) - (A_\infty, \Phi_\infty)) = 0,
\\
\|u(A,\Phi) - (A_\infty, \Phi_\infty)\|_{W_{A_1}^{1,p}(X)} < 2N\zeta_1 < \sigma.
\end{gather*}
By applying Case \ref{case:APhi_Coulomb_gauge_wrt_APhi_infty_L2} to the pairs $u(A,\Phi)$ and $(A_\infty, \Phi_\infty)$, we obtain
\[
|\sE(u(A,\Phi)) - \sE(A_\infty, \Phi_\infty)|^\theta \leq C_2Z'\|\sM(u(A,\Phi))\|_{W^{-1, 2}_{A_1}(X)}.
\]
Estimating as in Case \ref{case:APhi_Coulomb_gauge_wrt_APhi_infty_L2}, with $u$ replacing $u_\infty$, we see that
\[
\|\sM(u(A,\Phi))\|_{W^{-1, 2}_{A_1}(X)} \leq C_3\|\sM(A,\Phi)\|_{W^{-1, 2}_{A_1}(X)},
\]
just as in the proof of Theorem \ref{mainthm:Lojasiewicz-Simon_gradient_inequality_boson_Yang--Mills_energy_function}, where $C_3 = C_3(A_1,A_\infty, \Phi_\infty,g,G,p,q) \in (0,1]$. By combining the preceding inequalities, we obtain
\begin{align*}
|\sE(A,\Phi) - \sE(A_\infty, \Phi_\infty)|^\theta
&= |\sE(u(A,\Phi)) - \sE(A_\infty, \Phi_\infty)|^\theta
\\
&\leq C_2C_3Z'\|\sM(A,\Phi)\|_{W^{-1, 2}_{A_1}(X)}.
\end{align*}
Hence, we obtain the {\L}ojasiewicz--Simon gradient inequality \eqref{eq:Lojasiewicz-Simon_gradient_inequality_boson_Yang--Mills_energy_function} with constants $(Z, \theta, \sigma)$, where we now choose $Z = C_2C_3Z'$ and $\sigma = \zeta_1$. This completes the proof of this case.
\end{case}

This completes the proof of Corollary \ref{maincor:Lojasiewicz-Simon_L2_gradient_inequality_boson_Yang-Mills_energy_function}.
\end{proof}

\begin{proof}[Proof of Corollary \ref{maincor:Lojasiewicz-Simon_L2_gradient_inequality_fermion_Yang-Mills_energy_function}]
The argument follows \mutatis that for Corollary \ref{maincor:Lojasiewicz-Simon_L2_gradient_inequality_boson_Yang-Mills_energy_function}. 
\end{proof}

\appendix
\numberwithin{equation}{chapter}
\numberwithin{thm}{chapter}

\chapter{Fredholm and index properties of elliptic operators on Sobolev spaces}
\label{chap:Fredholm_properties_elliptic_PDEs_Sobolev_spaces}
We note the following corollary of the standard Fredholm property \cite[Lemma 1.4.5]{Gilkey2}, \cite[Theorem 19.2.1]{Hormander_v3} of elliptic pseudo-differential operators on Sobolev spaces $W^{k+m,p}(M;V)$ when\footnote{Gilkey and H\"ormander allow $m, k\in\RR$, as we could also in Theorem \ref{thm:Gilkey_1-4-5_Sobolev}, but we shall omit this refinement.} $k \in \ZZ$ and $p=2$ and which may be compared with \cite[Theorem 5.7.2]{Volpert1}. We include a detailed proof of Theorem \ref{thm:Gilkey_1-4-5_Sobolev} because we are unaware of a published reference. The statement of Theorem \ref{thm:Gilkey_1-4-5_Sobolev} both generalizes \cite[Theorem 19.2.1]{Hormander_v3} by allowing $1<p<\infty$ rather than $p=2$ and specializes \cite[Theorem 19.2.1]{Hormander_v3} by restricting to partial differential operators (thus of integer order $m$) and restricting $k$ to be an integer. In our application, we shall only need the case $m=2$, but we provide the more general version for the benefit of other applications. We leave it to the reader to consider extensions to the general case of pseudo-differential operators with real order $m$ on $W^{k,p}$ spaces with real $k$.

\begin{thm}[Fredholm property for elliptic partial differential operators on Sobolev spaces]
\label{thm:Gilkey_1-4-5_Sobolev}
(See \cite[Lemma 41.1]{Feehan_yang_mills_gradient_flow_v4}.)
Let $V$ and $W$ be finite-rank, smooth vector bundles over a closed, smooth manifold, $M$, and $k\in\ZZ$ an integer, and $p \in (1,\infty)$. If $P:C^\infty(M;V)\to C^\infty(M;W)$ is an elliptic linear partial differential operator of integer order $m\geq 1$ with $C^\infty$ coefficients, then $P:W^{k+m,p}(M;V) \to W^{k,p}(M;W)$ is a Fredholm operator with index,
\begin{align*}
\Ind P &= \dim\Ker\left(P:C^\infty(M;V)\to C^\infty(M;W)\right)
\\
&\quad - \dim\Ker\left(P^*:C^\infty(M;W^*)\to C^\infty(M;V^*)\right),
\end{align*}
where $P^*$ is the formal ($L^2$) adjoint of $P$, and has range,
\[
\Ran\left(P:W^{k+m,p}(M;V) \to W^{k,p}(M;W)\right) = (K^*)^\perp \cap W^{k,p}(M;W),
\]
where $\perp$ denotes $L^2$-orthogonal complement and
\[
K = \Ker\left(P^*:C^\infty(M;W^*)\to C^\infty(M;V^*)\right) \cong K^* \subset C^\infty(M;W).
\]
If $V = W^*$ and $P-P^*:C^\infty(M;V)\to C^\infty(M;W)$ is a differential operator of order $m-1$, then $\Ind P=  0$.
\end{thm}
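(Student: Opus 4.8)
The plan is to prove Theorem \ref{thm:Gilkey_1-4-5_Sobolev} in two stages: first establish the Fredholm property and the description of the range for general elliptic $P$ of order $m\geq 1$, and then specialize to deduce $\Ind P = 0$ under the hypothesis that $V=W^*$ and $P-P^*$ has order $m-1$. For the first stage, I would invoke the classical elliptic theory for pseudo-differential operators on $L^2$-based Sobolev spaces (Gilkey \cite[Lemma 1.4.5]{Gilkey2}, H\"ormander \cite[Theorem 19.2.1]{Hormander_v3}) to get that $P:W^{k+m,2}(M;V)\to W^{k,2}(M;W)$ is Fredholm with a parametrix $Q$ satisfying $QP = \id - S_1$ and $PQ = \id - S_2$ with $S_1,S_2$ smoothing (hence arbitrarily regularizing) operators. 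Since a parametrix $Q$ of order $-m$ is bounded $W^{k,p}(M;W)\to W^{k+m,p}(M;V)$ for all $p\in(1,\infty)$ by $L^p$-boundedness of pseudo-differential operators of nonpositive order, and $S_1,S_2$ are compact on every $W^{s,p}$ by the Rellich--Kondrachov theorem \cite[Theorem 6.3]{AdamsFournier}, the operator $P$ on $W^{k+m,p}\to W^{k,p}$ has left and right inverses modulo compacts and is therefore Fredholm for all $p\in(1,\infty)$. Elliptic regularity (bootstrapping using the parametrix) shows that $\Ker(P:W^{k+m,p}\to W^{k,p}) = \Ker(P:C^\infty(M;V)\to C^\infty(M;W))$ independently of $k$ and $p$, and likewise for $P^*$; the closed range, together with the $L^2$-pairing identity $\Ran(P:W^{k+m,p}\to W^{k,p})^\circ = \Ker(P^*:\text{(distributions)}\to\text{(distributions)})$ combined with elliptic regularity for $P^*$, gives the stated description $\Ran P = (K^*)^\perp\cap W^{k,p}(M;W)$ and the index formula $\Ind P = \dim\Ker P - \dim\Ker P^*$, which is then $p$- and $k$-independent.

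For the second stage, assume $V=W^*$, so that $P$ and $P^*$ act between the same pair of bundles and the $L^2$ inner product makes sense on a common space. I would first treat the $p=2$, $k$ chosen so that $2(k+m)$ is symmetric about zero — concretely work on $W^{n,2}\to W^{-n,2}$ where $m=2n$ if $m$ is even, or more robustly argue abstractly: consider the homotopy $P_t := P + t(P^*-P)$ for $t\in[0,1]$. By hypothesis $P^*-P$ has order $m-1<m$, so $P^*-P:W^{k+m,p}\to W^{k,p}$ is compact (it factors through $W^{k+1,p}\hookrightarrow W^{k,p}$, a compact embedding by Rellich--Kondrachov). Hence each $P_t$ is a compact perturbation of $P_0=P$, so the whole family $\{P_t\}_{t\in[0,1]}$ consists of Fredholm operators of the same index; in particular $\Ind P = \Ind P_1 = \Ind P^*$. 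On the other hand, the general index formula from the first stage gives $\Ind P^* = \dim\Ker P^* - \dim\Ker P^{**} = \dim\Ker P^* - \dim\Ker P = -\Ind P$ (using that $P^{**}=P$ as formal adjoints and that kernels are computed on $C^\infty$). Combining $\Ind P = \Ind P^* = -\Ind P$ forces $\Ind P = 0$.

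The main obstacle I anticipate is the passage from the $p=2$ theory to general $p\in(1,\infty)$ — specifically, making precise that a parametrix $Q$ constructed in the pseudo-differential calculus is bounded $W^{k,p}\to W^{k+m,p}$ and that the error terms are compact on these spaces. This is standard (the $L^p$-boundedness of classical pseudo-differential operators of order $\leq 0$, e.g. from the Calder\'on--Zygmund / Mikhlin multiplier theory adapted to manifolds), but it requires either citing it cleanly or sketching the localization-to-charts argument; the paper's own Lemma \ref{lem:W2p_apriori_estimate_Delta_A_smooth} and its proof via \cite[Theorem 9.11]{GilbargTrudinger} illustrate the flavor for second-order scalar operators, and one can either cite the $L^p$ elliptic estimate $\|u\|_{W^{k+m,p}}\leq C(\|Pu\|_{W^{k,p}} + \|u\|_{L^p})$ directly (valid for elliptic systems with smooth coefficients) and combine it with compactness of $W^{k+m,p}\Subset L^p$ to conclude finite-dimensional kernel and closed range, bypassing the parametrix entirely. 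The index identity $\Ind P^* = -\Ind P$ also deserves a careful word, since $P^*$ here is the formal adjoint rather than the Banach-space adjoint; this is handled by noting both quantities equal $\dim\Ker P^* - \dim\Ker P$ via the first-stage formula applied to $P^*$ in place of $P$. Everything else is routine bootstrapping and duality.
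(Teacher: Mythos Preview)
Your proposal is correct and follows essentially the same architecture as the paper's proof: parametrix plus compact remainders gives Fredholm, elliptic regularity pins down the kernels in $C^\infty$ and yields the range description, and for the last clause the compactness of $P^*-P$ (factoring through the compact embedding $W^{k+1,p}\Subset W^{k,p}$) gives $\Ind P=\Ind P^*$, which together with $\Ind P^*=-\Ind P$ forces $\Ind P=0$.

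The one genuine technical difference is in how $L^p$-boundedness of the parametrix is obtained. You invoke $L^p$-boundedness of classical pseudo-differential operators of nonpositive order (Calder\'on--Zygmund/Mikhlin theory) directly for $Q$. The paper deliberately avoids this heavier input: since the remainders $SP-I$ and $PS-I$ are \emph{infinitely} smoothing, they are bounded $H^l\to H^{l+n}$ for every $l,n$, and one can sandwich via the Sobolev embeddings $W^{k,p}\subset H^l$ (for $l$ small enough) and $H^{l+n}\subset W^{k+1,p}$ (for $n$ large enough) to get the remainders bounded $W^{k,p}\to W^{k+1,p}$ with no $L^p$ pseudo-differential theory at all. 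Boundedness of $S$ itself is then recovered from the $L^p$ \apriori estimate for the \emph{differential} operator $P$ applied to $Sw$: $\|Sw\|_{W^{k+m,p}}\leq C(\|PSw\|_{W^{k,p}}+\|w\|_{W^{k,p}})=C(\|(PS-I)w\|_{W^{k,p}}+\|w\|_{W^{k,p}})$. Your route is shorter if one is willing to cite the $L^p$ pseudo-differential calculus; the paper's route is more self-contained, needing only the $L^2$ pseudo-differential theory, standard Sobolev embeddings, and the $L^p$ elliptic estimate for differential operators (which you already flagged as the alternative in your obstacle paragraph).
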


\begin{rmk}[On the index of elliptic operators with scalar principal symbol]
We recall from \cite[Proposition 2.4]{Treves1} that if $E$ is a smooth complex vector bundle over a closed, smooth manifold, $M$, and $P:C^\infty(M;E) \to C^\infty(M;E)$ is an elliptic differential operator whose principal symbol is a scalar multiple of the identity (as often occurs in Geometric Analysis), then $\Ind P = 0$. However, if $P:C^\infty(M;E) \to C^\infty(M;E)$ is more generally an elliptic pseudo-differential operator, then $P$ need not have index zero  \cite[Example 2.2]{Treves1} .
\end{rmk}

We refer to Gilkey \cite{Gilkey2} and H\"ormander \cite{Hormander_v3} for the definitions (over domains $\Omega \subseteqq\RR^d$) of a differential operator $P$ of integer order $m\geq 1$ \cite[Section 1.1]{Gilkey2}, a pseudo-differential operator $P$ of order $m \in \RR$ \cite[Section 1.2.1]{Gilkey2}, the vector spaces $\Psi^m$ of pseudo-differential operators of order $m\in\RR$ and $\Psi^{-\infty} = \cap_{m\in\RR}\Psi^m$ of infinitely smoothing pseudo-differential operators \cite[Section 1.2.1]{Gilkey2}, the $L^2$-adjoint $P^*$ and composition $PQ$ of pseudo-differential operators \cite[Section 1.2.2]{Gilkey2}, the extension of the definition of pseudo-differential operators on complex-valued functions over domains in $\RR^d$ to sections of finite-rank vector bundles over domains in $\RR^d$ \cite[Section 1.2.7]{Gilkey2}, elliptic pseudo-differential operators on sections of finite-rank vector bundles over domains in $\RR^d$ \cite[Section 1.3.1]{Gilkey2}, and elliptic pseudo-differential operators on sections of finite-rank vector bundles over closed manifolds \cite[Sections 1.3.3 and 1.3.5]{Gilkey2}. Similarly, for $s \in \RR$, we refer to Gilkey \cite{Gilkey2} and H\"ormander \cite{Hormander_v3} for the definitions of Sobolev spaces $W^{s,2}(\RR^d;\CC)$ \cite[Section 1.1.3]{Gilkey2}, $W^{s,2}(\Omega;\CC)$ \cite[Section 1.2.6]{Gilkey2}, and $W^{s,2}(M;\CC)$ and $W^{s,2}(M;V)$ \cite[Sections 1.3.4 and 1.3.5]{Gilkey2}.

When $k \in \NN$ is a non-negative integer and $1 \leq p \leq \infty$, we use the standard definitions (without appealing to symbols or the symbol calculus) of Sobolev spaces, $W^{k,p}(\Omega;\RR)$ and $W_0^{k,p}(\Omega;\RR)$, in Adams and Fournier \cite[Section 3.2]{AdamsFournier} for domains $\Omega \subseteqq\RR^d$ and Aubin \cite[Section 2.3]{Aubin_1998} and Gilkey \cite[Sections 1.3.4 and 1.3.5]{Gilkey2} for their extensions to functions on closed manifolds, $W^{k,p}(M;\RR)$, and sections of finite-rank vector bundles over closed manifolds, $W^{k,p}(M;V)$. For integers $k < 0$ and $1 < p < \infty$, we follow \cite[Sections 3.9 to 3.14]{AdamsFournier} and by analogy define
\[
W^{k,p}(M;V) := (W^{-k,p'}(M;V^*))^*,
\]
where the dual H\"older exponent, $1<p'<\infty$, is defined by $1/p+1/p'=1$ and $(W^{-k,p'}(M;V^*))^*$ is the continuous dual of the Banach space, $W^{-k,p'}(M;V^*)$.

For a differential operator, $P$, of integer order $m\geq 1$, any integer $k \in \ZZ$ and $v \in W^{k+m,p}(M;V)$, one may define $Pv \in (C^\infty(M;V))^*$ in the sense of distributions \cite[Section 1.62]{AdamsFournier} by
\[
(Pv)(w) := (v, P^*w)_{L^2(M)}, \quad\forall\, w \in C^\infty(M;W^*),
\]
where the $L^2$-adjoint, $P^*:C^\infty(M;W^*) \to C^\infty(M;V^*)$, is also a differential operator of order $m \geq 1$. For integer $k \geq 0$, then $Pv$ has its usual meaning and we have $Pv \in W^{k,p}(M;V)$, with
\[
P:W^{k+m,p}(M;V) \to W^{k,p}(M;W),
\]
defining a bounded operator by definition of the Sobolev spaces, $W^{k+m,p}(M;V)$. Similarly, for integers $k \leq -m$, we may define $Pv \in W^{k,p}(M;V)$ by
\[
(Pv)(w) := (v, P^*w)_{L^2(M)}, \quad\forall\, v \in W^{k+m,p}(M;V) \text{ and } w \in W^{-k,p'}(M;W^*),
\]
noting that $P^*:W^{-k,p'}(M;W^*) \to W^{-k-m,p'}(M;V^*)$ is a bounded operator and so $P^*w \in W^{-k-m,p'}(M;V^*)$ with its usual meaning.

Finally, for $m\geq 2$ and integers $k$ in the range $-m+1 \leq k \leq -1$, we choose a Riemannian metric, $g$, on $M$, a connection, $\nabla:C^\infty(M;V) \to C^\infty(M;T^*M\otimes V)$, and form the augmented connection Laplace operator,
\[
\nabla^*\nabla+1:C^\infty(M;V)\to C^\infty(M;V).
\]
The resulting operator, $\nabla^*\nabla+1:W^{l+2,p}(M;V)\to W^{l,p}(M;V)$, is invertible for integers $l\geq 0$ and $1<p<\infty$. By duality (as above) it extends to an invertible operator, $\nabla^*\nabla+1:W^{l+2,p}(M;V)\to W^{l,p}(M;V)$, for integers $l \leq -2$ and $1<p<\infty$ and because it has divergence form, it also defines an invertible operator, $\nabla^*\nabla+1:W^{1,p}(M;V)\to W^{-1,p}(M;V)$, for $1<p<\infty$. (Observe that the bilinear map,
\[
W^{1,p}(M;V)\times W^{1,p'}(M;V) \ni (u, v)
\mapsto
((\nabla^*\nabla+1)u, v)_{L^2(X)},
\]
is continuous since
\[
| (\nabla^*\nabla+1)u, v)_{L^2(X)} | \leq \|\nabla u\|_{L^p(X)}\|\nabla v\|_{L^{p'}(X)} + \|u\|_{L^p(X)}\|v\|_{L^{p'}(X)},
\]
We therefore obtain a continuous, linear map,
\[
\nabla^*\nabla+1: W^{1,p}(M;V)\to (W^{1,p'}(M;V^*))^* = W^{-1,p}(M;V),
\]
which is clearly injective. The map is surjective by \cite[Sections 3.5--3.14]{AdamsFournier} because every $\alpha \in (W^{1,p'}(M;V^*))^*$ is represented by
\[
\alpha(v) = (\nabla v,u_1)_{L^2(X)} +  (v,u_2)_{L^2(X)},
\]
for $u_i \in L^p(M;V)$ for $i=1,2$. If $p=2$, then we find that $u_2=u$ and $u_1=\nabla u$ for $u \in W^{1,2}(M;V)$ (compare \cite[Section 8.2]{GilbargTrudinger}) and thus also for $2\leq p <\infty$; duality yields the case $1<p\leq 2$. Hence, the map is an isomorphism by the Open Mapping Theorem.)

For $m=2n$ and $n \in \NN$, we may thus define $Pv \in W^{k,p}(M;V)$ by
\begin{multline*}
(Pv)(w) := ((\nabla^*\nabla+1)^{m/2}v, P^*w)_{L^2(M;W^*)},
\\
\forall\, v \in W^{k+m,p}(M;V) \text{ and } w \in W^{-k+m,p'}(M;V),
\end{multline*}
noting that $(\nabla^*\nabla+1)^{m/2}v \in W^{k,p}(M;V)$ and $P^*w \in W^{-k,p'}(M;V)$. For $m=2n+1$ and $n \in \NN$, one must first define the square root\footnote{This could be accomplished geometrically with the aid of a Dirac operator, $D:C^\infty(M;V)\to C^\infty(M;V)$, when $V$ and $W$ are spinor bundles and $M$ admits a \spinc structure \cite{LM}.}, $(\nabla^*\nabla+1)^{1/2}:W^{l+1,p}(M;V)\to W^{l,p}(M;V)$, for integers $l \in \ZZ$ and $1<p<\infty$; see Feehan \cite{Feehan_yang_mills_gradient_flow_v4} and references cited therein for a survey of approaches to the definition of the fractional powers, $(\nabla^*\nabla+1)^s$, for $s \in \RR$. Once the square root of the augmented connection Laplace operator is defined, then the definition of $Pv \in W^{k,p}(M;V)$ is identical to the case $m=2n$.

\begin{proof}[Proof of Theorem \ref{thm:Gilkey_1-4-5_Sobolev}]
There is a pseudo-differential operator
  \[
    S:C^\infty(M;W) \to C^\infty(M;V)
  \]
of order $-m$, according to \cite[Lemma 1.3.6]{Gilkey2}, so that
\[
SP - I \in \Psi^{-\infty}(M;V) \quad\text{and}\quad PS - I \in \Psi^{-\infty}(M;W),
\]
where $\Psi^{-\infty}(M;W)$ and $\Psi^{-\infty}(M;V)$ are the vector spaces of infinitely smoothing pseudo-differential operators \cite[Sections 1.2 and 1.3]{Gilkey2}.

The operator $P:W^{k+m,p}(M;V) \to W^{k,p}(M;W)$ is continuous since $P$ is an elliptic partial differential operator of order $m \geq 1$ and by definition of the Sobolev space $W^{k+m,p}(M;V)$. Combining the preceding observation with the \apriori elliptic estimate (see \cite[Theorem 14.60]{Feehan_yang_mills_gradient_flow_v4}, for example),
\[
\|v\|_{W^{k+m,p}(M)} \leq C\left(\|v\|_{W^{k,p}(M)} + \|Pv\|_{W^{k,p}(M)}\right),
\]
implies that the expression $\|v\|_{W^{k,p}(M)} + \|Pv\|_{W^{k,p}(M)}$ defines a norm for $v \in C^\infty(M;V)$ which is equivalent to the standard norm on $W^{k+m,p}(M;V)$. Furthermore, the following operators are continuous, 
\begin{align*}
SP - I:W^{k,p}(M;V) &\to W^{k+1,p}(M;V),
\\
PS - I:W^{k,p}(M;W) &\to W^{k+1,p}(M;W),
\end{align*}
because, denoting $H^l(M;V) = W^{l,2}(M;V)$ and $H^l(M;W) = W^{l,2}(M;W)$, the operators
\begin{align*}
SP - I: H^l(M;V) &\to H^{l+n}(M;V),
\\
PS - I: H^l(M;W) &\to H^{l+n}(M;W),
\end{align*}
are continuous by \cite[Lemma 1.3.5]{Gilkey2}, for any integers $l, n\in\ZZ$, and the standard Sobolev Embedding \cite[Theorem 4.12]{AdamsFournier} and duality \cite[Sections 3.5--3.14]{AdamsFournier}, which provide continuous embeddings, $W^{k,p}(M;V) \subset H^l(M;V)$ for sufficiently small $l \leq l_0(d,k,p)$ and $H^{l+n}(M;W) \subset W^{k+1,p}(M;W)$ for sufficiently large $n \geq n_0(d,k,p)$, where $d$ is the dimension of $M$. (The finite integers $l_0(d,k,p)$ and $n_0(d,k,p)$ can of course be determined explicitly from the full statement of the Sobolev Embedding \cite[Theorem 4.12]{AdamsFournier}, but their precise values are unimportant here.) Thus,
\begin{align*}
\|Sw\|_{W^{k+m,p}(M;V)} &\leq C\left(\|PSw\|_{W^{k,p}(M;W)} + \|w\|_{W^{k,p}(M;W)}\right)
\\
&\leq C\left(\|(PS-I)w\|_{W^{k,p}(M;W)} + \|w\|_{W^{k,p}(M;W)}\right)
\\
&\leq C\|w\|_{W^{k,p}(M;W)} \quad\text{(by continuity of $PS - I$ on $W^{k,p}(M;W)$),}
\end{align*}
and so the operator $S:W^{k,p}(M;V) \to W^{k+m,p}(M;W)$ is continuous.

The embeddings
\[
  W^{k+1,p}(M;V) \Subset W^{k,p}(M;V) \quad\text{and}\quad W^{k+1,p}(M;W) \Subset W^{k,p}(M;W)
\]
are compact by the Rellich--Kondrachov Theorem \cite[Theorem 6.3]{AdamsFournier} when $k \geq 0$ and when $k \leq -1$ using duality \cite[Theorem 6.4]{Brezis} and compactness of the embeddings
\begin{align*}
  W^{-k,p'}(M;V^*) &\Subset W^{-k-1,p'}(M;V^*) \quad\text{and}
  \\
  W^{-k,p'}(M;W^*) &\Subset W^{-k-1,p'}(M;W^*).
\end{align*}
Hence, the operators (now viewed as compositions with compact embeddings),
\begin{align*}
SP - I:W^{k,p}(M;V) &\to W^{k,p}(M;V),
\\
PS - I:W^{k,p}(M;W) &\to W^{k,p}(M;W),
\end{align*}
are compact by \cite[Proposition 6.3]{Brezis}. Thus, $P:W^{k+m,p}(M;V) \to W^{k,p}(M;W)$ is Fredholm by \cite[Section 1.4.2, Definition, p. 38]{Gilkey2}.

Because $P:W^{k+m,p}(M;V) \to W^{k,p}(M;W)$ is Fredholm, its index may be computed by \cite[Lemma 4.38]{Abramovich_Aliprantis_2002},
\begin{align*}
\Ind P &= \dim\Ker\left(P:W^{k+m,p}(M;V) \to W^{k,p}(M;W)\right)
\\
&\quad - \dim\Ker\left(P^*:W^{-k,p'}(M;W^*) \to W^{-k-m,p'}(M;V^*)\right).
\end{align*}
If $w \in (\Ker P^*)\cap W^{-k,p'}(M;W^*)$, then $w \in (\Ker P^*)\cap H^l(M;W^*)$ for all integers $l \leq l_0(d,k,p)$, where $l_0$ is given by the Sobolev Embedding \cite[Theorem 4.12]{AdamsFournier}, and consequently (since the Banach space dual $P^*$ is defined by the realization of the formal adjoint and $P^*$ is thus an elliptic partial differential operator of order $m$) we see that $w \in (\Ker P^*)\cap C^\infty(M;W^*)$ by elliptic regularity \cite[Lemma 1.3.2 and Section 1.3.5]{Gilkey2}. Of course, if $v \in (\Ker P)\cap W^{k+m,p}(M;V)$, then $v \in (\Ker P)\cap C^\infty(M;V)$ by the same argument. This yields the stated formula for the index of $P:W^{k+m,p}(M;V) \to W^{k,p}(M;W)$.

If $K \subset W^{-k,p'}(M;W^*)$ is any subspace, we recall that the \emph{annihilator} \cite[Section 4.6]{Rudin} of $K$ in $(W^{-k,p'}(M;W^*))^* = W^{k,p}(M;W)$ is
\[
K^\circ = \{w \in W^{k,p}(M;W): \langle \alpha, w\rangle = 0, \ \forall\, \alpha \in K \},
\]
and $\langle \cdot, \cdot\rangle:W^{-k,p'}(M;W^*)\times (W^{-k,p'}(M;W^*))^* \to \RR$ is the canonical pairing. We can therefore identify the range of $P:W^{k+m,p}(M;V) \to W^{k,p}(M;W)$ using
\begin{align*}
{}&\Ran\left(P:W^{k+m,p}(M;V) \to W^{k,p}(M;W)\right)
\\
&\quad = \overline{\Ran}\left(P:W^{k+m,p}(M;V) \to W^{k,p}(M;W)\right) \quad\text{(by closed range)}
\\
  &\quad = \Ker\left(P^*:W^{-k,p'}(M;W^*) \to W^{-k-m,p'}(M;V^*)\right)^\circ
  \\
  &\qquad\text{(by \cite[Corollary 2.18 (iv))]{Brezis})}.
\end{align*}
If $K := \Ker\left(P^*:W^{-k,p'}(M;W^*) \to W^{-k-m,p'}(M;V^*)\right)$, then $K \subset C^\infty(M;W^*)$, as we observed by elliptic regularity and
\begin{align*}
K^\circ &= \{w \in W^{k,p}(M;W): w(\alpha) = 0, \ \forall\, \alpha \in K \}
\\
        &= \{w \in W^{k,p}(M;W): \alpha(w) = 0, \ \forall\, \alpha \in K \}
  \\
  &\qquad\text{(by $(W^{k,p}(M;W))^{**} \cong W^{k,p}(M;W)$)}
\\
&=  \{w \in W^{k,p}(M;W): \iota(\kappa)(w) = 0, \ \forall\, \kappa \in K^* \}
\\
&=  \{w \in W^{k,p}(M;W): (w, \kappa)_{L^2(M;W)} = 0, \ \forall\, \kappa \in K^* \}
\\
&= (K^*)^\perp \cap W^{k,p}(M;W),
\end{align*}
where $K^* \subset C^\infty(M;W)$ and $\iota:K^*\cong K^{**}=K$ is the canonical isomorphism, $\kappa \mapsto \iota(\kappa) = (\cdot, \kappa)_{L^2(M;W)}$. This yields the claimed identification of the range of $P$.

Noting that $(W^{-k,p'}(M;V))^* = W^{k,p}(M;V^*)$ and  $(W^{-k-m,p'}(M;W))^* = W^{k+m,p}(M;W^*)$, we see that the operator,
\[
P^*:W^{k+m,p}(M;W^*) \to W^{k,p}(M;V^*),
\]
is Fredholm by \cite[Lemma 1.4.3]{Gilkey2}, since the same is true for the operator,
\[
P:W^{-k,p'}(M;V) \to W^{-k-m,p'}(M;W).
\]
If $V=W^*$, we may write $P^* = P + (P^*-P)$ and observe that $P^*-P:W^{k+m,p}(M;V) \to W^{k+1,p}(M;W)$ is bounded since $P^*-P$ has order $m-1$. The inclusion, $W^{k+1,p}(M;W)\Subset W^{k,p}(M;W)$, is compact by \cite[Theorem 6.3]{AdamsFournier}, so $P^*-P:W^{k+m,p}(M;V) \to W^{k,p}(M;W)$ is compact. Hence, $\Ind(P^*) = \Ind P$ by \cite[Corollary 19.1.8]{Hormander_v3}. But we also have $\Ind P^* = -\Ind P$ by \cite[Lemma 1.4.4 (a)]{Gilkey2} and thus $\Ind P = 0$.
\end{proof}

The following corollary is partially based on Abramovich and Aliprantis \cite[Theorem 4.46]{Abramovich_Aliprantis_2002}, Melrose \cite[Lecture 9, Proposition 18]{Melrose_Lectures_pseudodifferential_operators} and Treves \cite[Theorem 2.4]{Treves1}.

\begin{cor}[Green's operators for elliptic partial differential operators on Sobolev spaces]
\label{cor:Melrose_proposition_6-4}
Assume the hypotheses of Theorem \ref{thm:Gilkey_1-4-5_Sobolev} and let $G:W^{k,p}(M;W) \to W^{k+m,p}(M;V)$ be the \emph{Green's operator} for $P$ defined by
\[
Gw := \begin{cases} P^{-1}w, &w \in \Ran P \subset W^{k,p}(M;W), \\ 0, &w \in (\Ran P)^\perp. \end{cases}
\]
Then $G:C^\infty(M;W)\to C^\infty(M;V)$ is an elliptic pseudo-differential operator of order $-m$ with
\[
GP = \id - \Pi_1 \quad\text{and}\quad PG = \id - \Pi_2,
\]
where $\Pi_1$ is the $L^2$-orthogonal projection onto $\Ker P \subset W^{k+m,p}(M;V)$ and $\Pi_2$ is the $L^2$-orthogonal projection onto $(\Ran P)^\perp \subset W^{k,p}(M;W)$. Moreover, if $k\geq 0$, the following operator is bounded,
\[
G:W^{k,p}(M;W) \to W^{k+m,p}(M;V).
\]
\end{cor}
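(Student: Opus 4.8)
The plan is to prove the three assertions in turn: the algebraic identities $GP=\id-\Pi_1$, $PG=\id-\Pi_2$; boundedness of $G$ on Sobolev spaces for $k\ge 0$; and finally the statement that $G$ is an elliptic pseudodifferential operator of order $-m$, which I expect to be the main obstacle.

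I would begin by recording the structural facts furnished by Theorem \ref{thm:Gilkey_1-4-5_Sobolev}. For every integer $k$, the operator $P:W^{k+m,p}(M;V)\to W^{k,p}(M;W)$ is Fredholm; its kernel equals $\Ker(P:C^\infty(M;V)\to C^\infty(M;W))\subset C^\infty(M;V)$ by elliptic regularity and is finite dimensional; and $\Ran P=(K^*)^\perp\cap W^{k,p}(M;W)$ with $K^*=\Ker P^*\subset C^\infty(M;W)$ finite dimensional. Hence $W^{k+m,p}(M;V)=\Ker P\oplus((\Ker P)^\perp\cap W^{k+m,p}(M;V))$ and $W^{k,p}(M;W)=\Ran P\oplus K^*$ are $L^2$-orthogonal decompositions into a finite-dimensional and a closed subspace, and $P$ restricts to a continuous bijection of $(\Ker P)^\perp\cap W^{k+m,p}(M;V)$ onto $\Ran P$. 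This shows $G$ is well defined and linear. Writing $v\in W^{k+m,p}(M;V)$ as $v=\Pi_1 v+v_0$ with $v_0\perp\Ker P$, one has $Pv=Pv_0\in\Ran P$ and $GPv=v_0=v-\Pi_1 v$, so $GP=\id-\Pi_1$; the symmetric argument applied to $w=w_1+\Pi_2 w$ with $w_1\in\Ran P$ gives $PGw=w_1=w-\Pi_2 w$, so $PG=\id-\Pi_2$. Thus $\Pi_1=\id-GP$ and $\Pi_2=\id-PG$ are exactly the $L^2$-orthogonal projections named in the statement.

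Next, for $k\ge 0$ I would show $G$ is bounded on the Sobolev spaces. Choosing an $L^2$-orthonormal basis $\{f_i\}$ of $K^*$, one has $\Pi_2 w=\sum_i(w,f_i)_{L^2}f_i$, and since $M$ is closed and the $f_i$ are smooth, $|(w,f_i)_{L^2}|\le\|w\|_{L^1}\|f_i\|_{C^0}\le C\|w\|_{W^{k,p}}$, so $\Pi_2:W^{k,p}(M;W)\to K^*$ is bounded and $\id-\Pi_2:W^{k,p}(M;W)\to\Ran P$ is bounded. Combining this with the bounded inverse of $P:(\Ker P)^\perp\cap W^{k+m,p}(M;V)\to\Ran P$ (a continuous bijection of Banach spaces, hence a topological isomorphism by the Open Mapping Theorem), the factorization $G=\bigl(P|_{(\Ker P)^\perp\cap W^{k+m,p}(M;V)}\bigr)^{-1}\circ(\id-\Pi_2)$ yields boundedness of $G:W^{k,p}(M;W)\to W^{k+m,p}(M;V)$. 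In particular $G:C^\infty(M;W)\to C^\infty(M;V)$ is continuous, as $C^\infty(M;W)=\bigcap_{k\ge 0}W^{k,p}(M;W)$.

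The final and hardest step is to identify $G$ as an elliptic pseudodifferential operator of order $-m$. Let $S:C^\infty(M;W)\to C^\infty(M;V)$ be the parametrix of order $-m$ from \cite[Lemma 1.3.6]{Gilkey2} used in the proof of Theorem \ref{thm:Gilkey_1-4-5_Sobolev}, so that $R:=PS-I\in\Psi^{-\infty}(M;W)$; also $\Pi_1=\id-GP$ is the finite-rank $L^2$-orthogonal projection onto $\Ker P\subset C^\infty(M;V)$, hence has a smooth Schwartz kernel and lies in $\Psi^{-\infty}(M;V)$. From $GP=\id-\Pi_1$ and associativity, $G=G(PS)-GR=(GP)S-GR=S-\Pi_1 S-GR$. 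Now $\Pi_1 S$ and $GR$ both lie in $\Psi^{-\infty}$: on a closed manifold an operator is infinitely smoothing precisely when it maps distributions continuously to smooth sections, and $S$ maps distributions to distributions followed by $\Pi_1$ mapping distributions to smooth sections, while $R$ maps distributions to smooth sections followed by $G$ mapping smooth sections to smooth sections (by the continuity established above). Therefore $G-S\in\Psi^{-\infty}$, so $G\in\Psi^{-m}$, and the order-$(-m)$ principal symbol of $G$ equals that of $S$, namely $\sigma_{-m}(G)=\sigma_m(P)^{-1}$, which is invertible by ellipticity of $P$; hence $G$ is an elliptic pseudodifferential operator of order $-m$. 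The delicate point is exactly this passage from the Banach-space description of $G$ to its symbolic description, and the parametrix identity $G=S-\Pi_1 S-GR$ together with the mapping properties of $G$, $S$, $R$, $\Pi_1$ on $C^\infty$ and on distributions is what makes it work.
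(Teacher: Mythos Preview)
Your proof is correct and in fact more self-contained than the paper's. The paper's argument runs in the opposite direction: it cites Melrose and Treves for the existence of an elliptic pseudo-differential operator $G$ of order $-m$ with $\id-GP$ and $\id-PG$ infinitely smoothing, then cites Melrose again to identify these remainders as the projections $\Pi_1,\Pi_2$; boundedness for $k\ge 0$ is then deduced from the elliptic \apriori estimate $\|Gw\|_{W^{k+m,p}}\le C(\|PGw\|_{W^{k,p}}+\|w\|_{W^{k,p}})$ together with $PG=\id-\Pi_2$. By contrast, you start from the functional-analytic definition of $G$ given in the statement, obtain the identities $GP=\id-\Pi_1$, $PG=\id-\Pi_2$ directly from the $L^2$-orthogonal decompositions supplied by Theorem \ref{thm:Gilkey_1-4-5_Sobolev}, prove boundedness via the Open Mapping Theorem rather than the \apriori estimate, and then recover the pseudodifferential nature of $G$ from the parametrix identity $G=S-\Pi_1 S-GR$ and the smooth-kernel characterization of $\Psi^{-\infty}$. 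Your route avoids invoking the refined Melrose result that the parametrix can be adjusted to the exact generalized inverse, at the cost of the extra (but elementary) step of checking that $\Pi_1 S$ and $GR$ have smooth Schwartz kernels; the paper's route is shorter but leans on those external references for the essential content.
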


\begin{proof}
From Melrose \cite[Lecture 6, Proposition 11]{Melrose_Lectures_pseudodifferential_operators} or
\cite[Lecture 7, Theorem 3]{Melrose_Lectures_pseudodifferential_operators} or Treves \cite[Theorem 2.4]{Treves1}, there exists an elliptic pseudo-differential operator, $G:C^\infty(M;W)\to C^\infty(M;V)$, of integer order $-m$ such that $\id-GP$ and $\id-PG$ are infinitely smoothing operators. We then appeal to Melrose
\cite[Lecture 9, Proposition 18]{Melrose_Lectures_pseudodifferential_operators} to conclude that $\id-GP$ and $\id-PG$ can be identified as the stated $L^2$-orthogonal projections.

For $k \geq 0$ and any $w \in W^{k,p}(M;W)$, we have
\begin{align*}
\|Gw\|_{W^{k+m,p}(M)} &\leq C\left(\|PGw\|_{W^{k,p}(M)} + \|w\|_{W^{k,p}(M)} \right)
\quad\text{(by \cite[Theorem 14.60]{Feehan_yang_mills_gradient_flow_v4})}
\\
&\leq C\left(\|(\id - \Pi_2)w\|_{W^{k,p}(M)} + \|w\|_{W^{k,p}(M)} \right)
\\
&\leq C\|w\|_{W^{k,p}(M)},
\end{align*}
and so the conclusion on boundedness follows.
\end{proof}

\chapter[Equivalence of Sobolev norms]{Equivalence of Sobolev norms defined by Sobolev and smooth connections}
\label{chap:Equivalence_Sobolev_norms_for_Sobolev_and_smooth_connections}
Suppose that $(X,g)$ is a closed, smooth Riemannian manifold of dimension $d \geq 2$, and $G$ is a compact Lie group, and $P$ is a smooth principal $G$-bundle over $X$. In standard references for gauge theory \cite{DK, FU}, it is generally assumed in the construction of Sobolev completions of spaces such as $\Omega^l(X;\ad P)$ that one defines Sobolev norms using a covariant derivative $\nabla_A$ determined by a connection $A$ on $P$ that is smooth or of class $W^{k,p}$ for $p \geq 1$ and an integer $k \geq 1$ large enough that $kp>d$ or even $kp\gg d$. However, in this monograph, we often consider connections $A$ with more borderline regularity, for example of class $W^{1,q}$ for $q > d/2$, and in that situation, one must exercise care in the definition of Sobolev spaces using such connections. Lemmas \ref{lem:Second_order_Kato_inequality_and_Sobolev_norms} and \ref{lem:Equivalence_Sobolev_norms_for_Sobolev_and_smooth_connections} provide some guidance.

\begin{lem}[Second-order Kato inequality and second-order Sobolev norms]
\label{lem:Second_order_Kato_inequality_and_Sobolev_norms}
Let $(X,g)$ be a closed, smooth Riemannian manifold of dimension $d \geq 2$, and $G$ be a Lie group, and $P$ be a smooth principal $G$-bundle over $X$, and $V = P\times_\varrho\VV$ be a smooth Riemannian vector bundle over $X$ defined by a finite-dimensional, orthogonal representation, $\varrho: G \to \Aut_\RR(\VV)$. Then there exists a constant $C=C(g,q) \in [1,\infty)$ with the following significance. If $A$ is a $W^{1,q}$ connection on $P$ with $q>d/2$, then for all $v \in C^\infty(X;V)$,
\begin{equation}
\label{eq:Second_order_Kato_inequality_and_Sobolev_norms}
\|v\|_{C(X)} \leq C\|v\|_{W_A^{2,q}(X)}.
\end{equation}
\end{lem}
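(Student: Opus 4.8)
The plan is to bootstrap from the $C^\infty$-connection case via the standard equivalence of Sobolev norms and the second-order Kato inequality. Fix a $C^\infty$ reference connection $A_1$ on $P$. First I would recall that when $q > d/2$ we have $2q > d$, so the Sobolev embedding $W^{2,q}(X) \subset C(X)$ of \cite[Theorem 4.12]{AdamsFournier} applies; hence there is a constant $C_0 = C_0(g,q) \in [1,\infty)$ with $\|v\|_{C(X)} \leq C_0 \|v\|_{W_{A_1}^{2,q}(X)}$ for all $v \in C^\infty(X;V)$, where the Sobolev norm is taken with respect to the smooth connection $A_1$ (and the Levi-Civita connection on the form part), so that this is just the classical scalar embedding applied componentwise in local trivializations together with a partition of unity.

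The remaining and main task is to show $\|v\|_{W_{A_1}^{2,q}(X)} \leq C_1 \|v\|_{W_A^{2,q}(X)}$ for a constant $C_1$ depending on $A$, $A_1$, $g$, $G$, $q$. Write $A = A_1 + a$ with $a \in W_{A_1}^{1,q}(X;\Lambda^1\otimes\ad P)$ (more precisely $a \in W_{A_1}^{1,q}(X;T^*X\otimes\End V)$ via $\varrho_*$). Then $\nabla_{A_1} v = \nabla_A v - a\times v$ and
\[
\nabla_{A_1}^2 v = \nabla_A^2 v - \nabla_{A_1}a \times v - a\times\nabla_A v - a\times a\times v .
\]
Taking $L^q$ norms, the terms $\nabla_A^2 v$, $a\times\nabla_A v$ (after using $W^{1,q}(X)\subset L^{2q}(X)$, valid since $q\geq d/2$) and $a\times a\times v$ are controlled by $\|v\|_{W_A^{2,q}(X)}$ times powers of $\|a\|_{W_{A_1}^{1,q}(X)}$, using the Kato inequality \cite[Equation (6.20)]{FU} and continuous Sobolev multiplications exactly as in the proof of Proposition~\ref{prop:W2p_apriori_estimate_Delta_A_Sobolev}. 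The one term requiring care is $\nabla_{A_1}a\times v$: here I would estimate
\[
\|\nabla_{A_1}a\times v\|_{L^q(X)} \leq \|\nabla_{A_1}a\|_{L^q(X)} \|v\|_{C(X)} \leq \|a\|_{W_{A_1}^{1,q}(X)} \|v\|_{C(X)},
\]
and then feed this back into the scalar embedding $\|v\|_{C(X)} \leq C_0\|v\|_{W_{A_1}^{2,q}(X)}$ and absorb by rearrangement, provided $\|a\|_{W_{A_1}^{1,q}(X)}$ is first reduced below a threshold by writing $A = A_s + a'$ for a $C^\infty$ approximation $A_s$ with $\|a'\|_{W_{A_1}^{1,q}(X)}$ small and $\|A_s - A_1\|_{W_{A_1}^{1,q}(X)}$ merely bounded — this is the same device used in the proof of Proposition~\ref{prop:W2p_apriori_estimate_Delta_A_Sobolev}, where the `large' smooth part is handled by the classical estimates and only the small Sobolev part needs absorption. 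The first-order term $\|\nabla_{A_1}v\|_{L^q(X)} \leq \|\nabla_A v\|_{L^q(X)} + \|a\|_{L^{2q}(X)}\|v\|_{L^{2q}(X)}$ is similarly routine.

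Putting these together gives $\|v\|_{W_{A_1}^{2,q}(X)} \leq C_1 \|v\|_{W_A^{2,q}(X)}$, and composing with the scalar embedding yields \eqref{eq:Second_order_Kato_inequality_and_Sobolev_norms} with $C = C_0 C_1$. I expect the only genuine obstacle to be the circular-looking appearance of $\|v\|_{C(X)}$ on the right-hand side of the bound for $\nabla_{A_1}a\times v$: this is resolved precisely by the smooth-approximation-plus-rearrangement argument above, together with the observation that $\|v\|_{W_{A_1}^{2,q}(X)}$ is finite for $v \in C^\infty(X;V)$ so that absorption is legitimate (the inequality is first proved for smooth $v$ and then the statement as given concerns smooth $v$ only). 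Finally I would remark that, as elsewhere in the paper, this also shows the two Sobolev norms $\|\cdot\|_{W_A^{2,q}(X)}$ and $\|\cdot\|_{W_{A_1}^{2,q}(X)}$ are equivalent on $C^\infty(X;V)$, so the completions agree; the inequality \eqref{eq:Second_order_Kato_inequality_and_Sobolev_norms} then extends by density to all of $W_A^{2,q}(X;V)$.
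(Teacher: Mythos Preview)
Your argument establishes an inequality, but the constant you obtain depends on $A$: after the smooth-approximation-plus-absorption step, the `large' part $A_s - A_1$ still appears in $C_1$, so your final bound has the form $\|v\|_{C(X)} \leq C(A,A_1,g,G,q)\|v\|_{W_A^{2,q}(X)}$. The lemma, however, asserts $C = C(g,q)$, uniform over all $W^{1,q}$ connections $A$; this uniformity is the point of the `second-order Kato' label and is what gets used in Lemma~\ref{lem:Equivalence_Sobolev_norms_for_Sobolev_and_smooth_connections}~\eqref{item:Equivalence_W2qA0_embed_Linfty}. Your route via norm equivalence with a fixed smooth $A_1$ cannot give this, because the equivalence constant between $W_A^{2,q}$ and $W_{A_1}^{2,q}$ genuinely depends on $\|A-A_1\|_{W_{A_1}^{1,q}(X)}$.

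The paper avoids any reference connection altogether by working with the scalar function $|v|^2$ and the pointwise identity $\Delta|v|^2 = 2\langle\nabla_A^*\nabla_A v, v\rangle - 2|\nabla_A v|^2$, which holds for every metric-compatible $\nabla_A$. One then uses the purely scalar equivalence $\|f\|_{W^{2,q}(X)} \sim \|\Delta f\|_{L^q(X)} + \|f\|_{L^q(X)}$ and the scalar embedding $W^{2,q}(X)\subset C(X)$, both with constants depending only on $(g,q)$, to bound $\|v\|_{C(X)}^2 = \||v|^2\|_{C(X)}$ by $\|\Delta|v|^2\|_{L^q(X)} + \||v|^2\|_{L^q(X)}$. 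The identity converts this into $\nabla_A$-quantities; the cross terms involving $\|v\|_{C(X)}$ on the right are absorbed via Young's inequality, and the term $\|\nabla_A v\|_{L^{2q}(X)}^2$ is handled by the first-order Kato inequality applied to $\nabla_A v$ together with $W^{1,q}(X)\subset L^{2q}(X)$. No dependence on $A$ enters.
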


\begin{proof}
The first-order analogue of \eqref{eq:Second_order_Kato_inequality_and_Sobolev_norms}, namely,
\[
\|v\|_{C(X)} \leq \kappa_1 \|v\|_{W_A^{1,q}(X)},
\]
when $q > d$ and $\kappa_1 = \kappa_1(g) \in [1,\infty)$ is the norm of the Sobolev embedding $W^{1,q}(X)\subset C(X)$, is an immediate consequence of the pointwise first-order Kato inequality, $|\nabla|v|| \leq |\nabla_A v|$ from \cite[Inequality (6.20)]{FU}, in turn a consequence of the compatibility of the fiber metric on $V$ with $\nabla_A$.

We first note that, for $f \in C^\infty(X;\RR)$, the norm
\[
\|f\|_{W^{2,q}(X)} = \|\nabla^2 f\|_{L^q(X)} + \|\nabla f\|_{L^q(X)} + \|f\|_{L^q(X)}
\]
is equivalent (with respect to a constant depending at most on $(g, q)$) by virtue of \cite[Theorem 9.11]{GilbargTrudinger} to
\[
\|f\|_{W^{2,q}(X)} = \|\Delta f\|_{L^q(X)} + \|f\|_{L^q(X)},
\]
where $\Delta$ is the Laplace operator defined by the Riemannian metric $g$ on $X$. Now recall the pointwise identity \cite[Equation (6.18)]{FU},
\[
\Delta|v|^2 = 2\langle\nabla_A^*\nabla_A v, v\rangle - 2|\nabla_A v|^2.
\]
Hence, letting $\kappa_2 = \kappa_2(g) \in [1,\infty)$ denote the norm of the Sobolev embedding $W^{2,q}(X)\subset C(X)$,
\begin{align*}
\|v\|_{C(X)}^2 &= \||v|^2\|_{C(X)}
\\
&\leq \kappa_2 \||v|^2\|_{W^{2,q}(X)}
\\
&= \kappa_2\left( \|\Delta |v|^2\|_{L^q(X)} + \||v|^2\|_{L^q(X)} \right)
\\
&\leq \kappa_2\left( 2\|v\|_{C(X)}\|\nabla_A^*\nabla_A v\|_{L^q(X)}
+ 2\|\nabla_A v\|_{L^{2q}(X)}^2 + \|v\|_{C(X)}\|v\|_{L^q(X)} \right).
\end{align*}
Recall that $W^{1,q}(X) \subset L^{2q}(X)$, for $q < d$, if and only if $2q \leq q^* = dq/(d-q)$, that is, $2(d - q) \leq d$ or $q \geq d/2$; the embedding is immediate from \cite[Theorem 4.12]{AdamsFournier} when $q \geq d$. Thus, applying the first-order Kato Inequality and the preceding Sobolev embedding for functions,
\[
\| |v| \|_{L^{2q}(X)} \leq \kappa_1\left(\| \nabla|v| \|_{L^q(X)} + \|v\|_{L^q(X)}\right)
\leq \kappa_1\left(\|\nabla_Av\|_{L^q(X)} + \|v\|_{L^q(X)}\right),
\]
we obtain
\begin{align*}
\|v\|_{C(X)}^2
&\leq
\kappa_2\left( 2\|v\|_{C(X)}\|\nabla_A^*\nabla_A v\|_{L^q(X)}
+ 2\kappa_1^2\left(\|\nabla_A^2 v\|_{L^q(X)} + \|\nabla_Av\|_{L^q(X)}\right)^2 \right.
\\
&\quad + \left. \|v\|_{C(X)}\|v\|_{L^q(X)} \right).
\end{align*}
We now use Young's Inequality $2ab \leq \eps a^2 + \eps^{-1}b^2$ from \cite[Inequality (7.8)]{GilbargTrudinger} and rearrangement with a suitably small and universal $\eps$ to give
\begin{align*}
\|v\|_{C(X)}^2
&\leq
C^2\left(\|\nabla_A^*\nabla_A v\|_{L^q(X)}^2 + \|\nabla_A^2 v\|_{L^q(X)}^2 + \|\nabla_A v\|_{L^q(X)}^2 + \|v\|_{L^q(X)}^2 \right),
\end{align*}
where $C=C(g,q) \in [1,\infty)$. We simplify the right-hand side in the preceding inequality via
\[
\|\nabla_A^*\nabla_A v\|_{L^q(X)}
\leq
z\left(\|\nabla_A^2 v\|_{L^q(X)} + \|\nabla_A v\|_{L^q(X)} + \|v\|_{L^q(X)}\right),
\]
where $z$ is a constant depending at most on the Riemannian metric on $X$. The desired Sobolev inequality \eqref{eq:Second_order_Kato_inequality_and_Sobolev_norms} now follows by taking square roots.
\end{proof}

\begin{lem}[Equivalence of Sobolev norms defined by Sobolev and smooth connections]
\label{lem:Equivalence_Sobolev_norms_for_Sobolev_and_smooth_connections}
Let $(X,g)$ be a closed, smooth Riemannian manifold of dimension $d \geq 2$, and $G$ be a compact Lie group, and $P$ be a smooth principal $G$-bundle over $X$, and $V = P\times_\varrho\VV$ be a smooth Riemannian vector bundle over $X$ defined by a finite-dimensional, orthogonal representation, $\varrho: G \to \Aut_\RR(\VV)$, and $q > d/2$ and $p$ obey $d/2\leq p \leq q$. Let $A_1$ be a $C^\infty$ connection on $P$, and $A_0$ be a Sobolev connection on $P$, and $a_0 := A_0-A_1$.
\begin{enumerate}
\item
\label{item:Equivalence_W1p_A0_embed_Lr_and_W2pA1_embed_Lr}
There exists $C=C(g,p) \in [1,\infty)$ such that, if $a_0 \in W_{A_1}^{1,p}(X;\Lambda^1\otimes\ad P)$, then
\[
\|\xi\|_{L^r(X)} \leq C\|\xi\|_{W^{1,p}_{A_0}(X)},
\quad\text{for }
\begin{cases}
1\leq r \leq dp/(d-p) &\text{if } p < d,
\\
1\leq r < \infty &\text{if } p = d,
\\
r = \infty &\text{if } p > d,
\end{cases}
\]
for all $\xi \in C^\infty(X;V)$; moreover, there exists $C=C(A_1,g,p) \in [1,\infty)$ such that
\[
\|\xi\|_{L^r(X)} \leq C\|\xi\|_{W_{A_1}^{2,p}(X)},
\quad\text{for }
\begin{cases}
1\leq r \leq dp/(d-2p) &\text{if } p < d/2,
\\
1\leq r < \infty &\text{if } p = d/2,
\\
r = \infty &\text{if } p > d/2,
\end{cases}
\]
for all $\xi \in C^\infty(X;V)$.
\item
\label{item:Equivalence_W2qA0_embed_Linfty}
If $a_0\in W_{A_1}^{1,q}(X; \Lambda^1\otimes \ad P)$, then 
\[
\|\xi\|_{C(X)} \leq C\|\xi\|_{W^{2,q}_{A_0}(X)}, \quad\forall\, \xi \in C^\infty(X;V),
\]
for some $C=C(g,q) \in [1,\infty)$.
\item
\label{item:Equivalence_W1p_A0_and_W1pA1}
If $a_0\in W_{A_1}^{1,p}(X; \Lambda^1\otimes \ad P)$, then 
\[
C^{-1}\|\xi\|_{W_{A_1}^{1,p}(X)} \leq \|\xi\|_{W_{A_0}^{1,p}(X)} \leq C \|\xi\|_{W_{A_1}^{1,p}(X)}, \quad\forall\, \xi \in C^\infty(X;V),
\]
for some $C = C(g,G,p,\|a_0\|_{W_{A_1}^{1,p}(X)}) \in [1,\infty)$.
\item
\label{item:Equivalence_W2q_A0_and_W2qA1}
If $a_0\in W_{A_1}^{1,q}(X; \Lambda^1\otimes \ad P)$, then 
\[
C^{-1}\|\xi\|_{W_{A_1}^{2,q}(X)} \leq \|\xi\|_{W_{A_0}^{2,q}(X)} \leq C \|\xi\|_{W_{A_1}^{2,q}(X)}, \quad\forall\, \xi \in C^\infty(X;V),
\]
for some $C = C(g,G,p,q,\|a_0\|_{W^{1,q}_{A_1}(X)}) \in [1,\infty)$.
\item
\label{item:Equivalence_W2p_A1_embed_W2pA0}
If $a_0\in W_{A_1}^{1,q}(X; \Lambda^1\otimes \ad P)$, then 
\[
\|\xi\|_{W_{A_0}^{2,p}(X)} \leq C\|\xi\|_{W_{A_1}^{2,p}(X)}, \quad\forall\, \xi \in C^\infty(X;V),
\]
for some $C = C(g,G,p,q,\|a_0\|_{W^{1,q}_{A_1}(X)}) \in [1,\infty)$.
\item
\label{item:Equivalence_W2p_A0_and_W2pA1}
If $a_0\in W_{A_1}^{2,q}(X; \Lambda^1\otimes \ad P)$, then 
\[
C^{-1}\|\xi\|_{W_{A_1}^{2,p}(X)} \leq \|\xi\|_{W_{A_0}^{2,p}(X)} \leq C \|\xi\|_{W_{A_1}^{2,p}(X)}, \quad\forall\, \xi \in C^\infty(X;V),
\]
for some $C = C(g,G,p,q,\|a_0\|_{W^{2,q}_{A_1}(X)}) \in [1,\infty)$.
\end{enumerate}
\end{lem}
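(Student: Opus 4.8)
The plan is to prove the six items of Lemma~\ref{lem:Equivalence_Sobolev_norms_for_Sobolev_and_smooth_connections} by a uniform strategy: expand the covariant derivative $\nabla_{A_0} = \nabla_{A_1} + [a_0, \cdot\,]$ (schematically $\nabla_{A_0}\xi = \nabla_{A_1}\xi + a_0\times\xi$), estimate the difference terms using H\"older's inequality together with the Sobolev Embedding Theorem \cite[Theorem 4.12]{AdamsFournier} and the Kato Inequality \cite[Equation (6.20)]{FU}, and then absorb higher-order difference terms by rearrangement with a small parameter when possible, or otherwise carry them as multiplicative constants depending on $\|a_0\|_{W^{1,q}_{A_1}(X)}$ (resp. $\|a_0\|_{W^{2,q}_{A_1}(X)}$). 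This is precisely the pattern already used in the proof of Proposition~\ref{prop:W2p_apriori_estimate_Delta_A_Sobolev} and of Lemma~\ref{lem:Continuous_operators_on_Lp_spaces_and_L2-orthogonal_decompositions}, so the individual computations are routine; the work is in tracking the exponent arithmetic case by case ($p<d$, $p=d$, $p>d$, and $p<d/2$, $p=d/2$, $p>d/2$).

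First I would dispense with item~\eqref{item:Equivalence_W1p_A0_embed_Lr_and_W2pA1_embed_Lr}: the first inequality follows from the first-order Kato inequality $|\nabla|\xi|| \le |\nabla_{A_0}\xi|$, which reduces the $W^{1,p}_{A_0}$-to-$L^r$ estimate for sections to the scalar Sobolev embedding $W^{1,p}(X)\subset L^r(X)$ with the stated range of $r$; the second inequality is the standard scalar embedding $W^{2,p}(X)\subset L^r(X)$ applied componentwise via a smooth trivialization and the connection $A_1$. Item~\eqref{item:Equivalence_W2qA0_embed_Linfty} is exactly Lemma~\ref{lem:Second_order_Kato_inequality_and_Sobolev_norms} (with $V$ in place of the bundle there), so I would simply quote it. For item~\eqref{item:Equivalence_W1p_A0_and_W1pA1}, I would write $\nabla_{A_0}\xi = \nabla_{A_1}\xi + a_0\times\xi$ and estimate $\|a_0\times\xi\|_{L^p(X)} \le \|a_0\|_{L^{2p}(X)}\|\xi\|_{L^{2p}(X)}$ using the continuous multiplication $L^{2p}\times L^{2p}\to L^p$ and the embedding $W^{1,p}(X)\subset L^{2p}(X)$, valid for $p\ge d/2$ (as in the proof of Lemma~\ref{lem:Continuous_operators_on_Lp_spaces_and_L2-orthogonal_decompositions}); this gives both directions with a constant depending on $\|a_0\|_{W^{1,p}_{A_1}(X)} \le \|a_0\|_{W^{1,q}_{A_1}(X)}$.

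The remaining three items \eqref{item:Equivalence_W2q_A0_and_W2qA1}, \eqref{item:Equivalence_W2p_A1_embed_W2pA0}, \eqref{item:Equivalence_W2p_A0_and_W2pA1} concern second-order norms, and here the exponent bookkeeping is the main obstacle. For these I would expand
\[
\nabla_{A_0}^2\xi = \nabla_{A_1}^2\xi + (\nabla_{A_1}a_0)\times\xi + a_0\times\nabla_{A_1}\xi + a_0\times a_0\times\xi,
\]
and estimate each term in $L^p$ (or $L^q$). The term $(\nabla_{A_1}a_0)\times\xi$ is controlled by $\|\nabla_{A_1}a_0\|_{L^q(X)}\|\xi\|_{L^r(X)}$ with $1/p = 1/q + 1/r$, where $\|\xi\|_{L^r(X)}$ is absorbed into $\|\xi\|_{W^{2,p}_{A_1}(X)}$ via item~\eqref{item:Equivalence_W1p_A0_embed_Lr_and_W2pA1_embed_Lr} (the case distinctions $p<d/2$, $p=d/2$, $p>d/2$ arise exactly here, and one uses $q \ge p \ge d/2$ to guarantee $W^{2,p}(X)\subset L^r(X)$); the term $a_0\times\nabla_{A_1}\xi$ is controlled by $\|a_0\|_{L^{2p}(X)}\|\nabla_{A_1}\xi\|_{L^{2p}(X)} \le z\|a_0\|_{W^{1,p}_{A_1}(X)}\|\xi\|_{W^{2,p}_{A_1}(X)}$; the cubic term is handled with the multiplication $L^{3p}\times L^{3p}\times L^{3p}\to L^p$ and $W^{1,p}(X)\subset L^{3p}(X)$ (again $p \ge d/2$). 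Combining these yields $\|\xi\|_{W^{2,p}_{A_0}(X)} \le C\|\xi\|_{W^{2,p}_{A_1}(X)}$ with $C = C(g,p,q,\|a_0\|_{W^{1,q}_{A_1}(X)})$, which is item~\eqref{item:Equivalence_W2p_A1_embed_W2pA0}; taking $p=q$ in this and in the symmetric estimate (interchanging the roles of $A_0$ and $A_1$, now legitimate since $a_0 \in W^{1,q}_{A_1}$ means $-a_0 \in W^{1,q}_{A_0}$ with comparable norm by item~\eqref{item:Equivalence_W1p_A0_and_W1pA1}) gives item~\eqref{item:Equivalence_W2q_A0_and_W2qA1}. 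Finally, item~\eqref{item:Equivalence_W2p_A0_and_W2pA1} requires $a_0 \in W^{2,q}_{A_1}$ so that $\nabla_{A_1}a_0 \in W^{1,q}(X) \subset L^{dq/(d-q)}$ when $q<d$ (or better), giving enough room to run the reverse estimate $\|\xi\|_{W^{2,p}_{A_1}(X)} \le C\|\xi\|_{W^{2,p}_{A_0}(X)}$ for all $p$ with $d/2 \le p \le q$, not just $p=q$; the point is that the second-order difference operator $\nabla_{A_0}^2 - \nabla_{A_1}^2$ is then bounded $W^{2,p}_{A_1}(X) \to L^p(X)$ with constant depending on $\|a_0\|_{W^{2,q}_{A_1}(X)}$, symmetrically in the two connections, so rearrangement is unnecessary and both directions follow. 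The one subtlety to watch is that in items \eqref{item:Equivalence_W2q_A0_and_W2qA1} and \eqref{item:Equivalence_W2p_A0_and_W2pA1} one must cite items \eqref{item:Equivalence_W1p_A0_and_W1pA1} and \eqref{item:Equivalence_W1p_A0_embed_Lr_and_W2pA1_embed_Lr} in the correct order to avoid circularity, and I would prove the items in the numbered order for exactly that reason.
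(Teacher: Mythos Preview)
Your approach is essentially the same as the paper's: expand $\nabla_{A_0} = \nabla_{A_1} + [a_0,\cdot\,]$, apply H\"older and the scalar Sobolev embeddings via Kato, and cite items~\eqref{item:Equivalence_W1p_A0_embed_Lr_and_W2pA1_embed_Lr}--\eqref{item:Equivalence_W1p_A0_and_W1pA1} in order to avoid circularity. The paper proves item~\eqref{item:Equivalence_W2q_A0_and_W2qA1} directly (using item~\eqref{item:Equivalence_W2qA0_embed_Linfty} to control $\|\xi\|_{C(X)}$) rather than as the $p=q$ specialization of item~\eqref{item:Equivalence_W2p_A1_embed_W2pA0}, but your route works equally well once you invoke item~\eqref{item:Equivalence_W2qA0_embed_Linfty} for the reverse direction.

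One exponent slip to fix: for the cubic term $a_0\times a_0\times\xi$ you propose the split $L^{3p}\times L^{3p}\times L^{3p}\to L^p$ together with $W^{1,p}(X)\subset L^{3p}(X)$, but that embedding requires $p\ge 2d/3$, not $p\ge d/2$, so it fails near the borderline. The paper instead splits as $L^{2q}\times L^{2q}\times L^r\to L^p$ with $1/p=1/q+1/r$, using $W^{1,q}(X)\subset L^{2q}(X)$ (valid for $q\ge d/2$) for the two $a_0$-factors and $W^{2,p}(X)\subset L^r(X)$ from item~\eqref{item:Equivalence_W1p_A0_embed_Lr_and_W2pA1_embed_Lr} for $\xi$; with that correction your argument goes through.
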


\begin{proof}
Item \eqref{item:Equivalence_W1p_A0_embed_Lr_and_W2pA1_embed_Lr} is a well-known consequence of the Sobolev Embedding \cite[Theorem 4.12]{AdamsFournier} for scalar functions and the Kato Inequality \cite[Equation (6.20)]{FU} in the case of the embedding $W^{1,p}(X) \subset L^r(X)$. Item \eqref{item:Equivalence_W2qA0_embed_Linfty} restates the conclusion of Lemma \ref{lem:Second_order_Kato_inequality_and_Sobolev_norms}.

For Item \eqref{item:Equivalence_W1p_A0_and_W1pA1}, we use $\nabla_{A_0}\xi = \nabla_{A_1}\xi + [a_0,\xi]$ and estimate
\begin{align*}
\|\nabla_{A_1}\xi\|_{L^p(X)}
&\leq
\|\nabla_{A_0}\xi\|_{L^p(X)} + \|[a_0,\xi]\|_{L^p(X)}
\\
&\leq  \|\xi\|_{W^{1,p}_{A_0}(X)} + z\|a_0\|_{L^{2p}(X)} \|\xi\|_{L^{2p}(X)}
\\
&\leq
C(1+ \|a_0\|_{W_{A_1}^{1,p}(X)}) \|\xi\|_{W^{1,p}_{A_0}(X)},
\end{align*}
where we used the continuous Sobolev embedding $W^{1,p}(X) \subset L^{2p}(X)$ for $p\geq d/2$ and Item \eqref{item:Equivalence_W1p_A0_embed_Lr_and_W2pA1_embed_Lr} to obtain the last inequality. Here, $z = z(g,G) \in [1,\infty)]$ and $C \in [1,\infty)$ has the stated dependencies. The analogous estimate with the roles of $A_0$ and $A_1$ reversed follows by a symmetric argument.

For Item \eqref{item:Equivalence_W2q_A0_and_W2qA1}, we first write
\begin{equation}
\label{eq:nabla_exp}
\nabla_{A_1}^2\xi = \nabla_{A_0}^2\xi + \nabla_{A_0}a_0\times \xi + a_0\times\nabla_{A_0}\xi + a_0\times a_0\times\xi.
\end{equation}
Taking $L^q$ norms of both sides of \eqref{eq:nabla_exp}, we see that
\begin{align*}
\|\nabla_{A_1}^2\xi\|_{L^q(X)}
&\leq
\|\nabla_{A_0}^2\xi\|_{L^q(X)} + \|\nabla_{A_0}a_0\times \xi\|_{L^q(X)} + \|a_0\times\nabla_{A_0}\xi\|_{L^q(X)}
\\
&\quad + \|a_0\times a_0\times\xi\|_{L^q(X)},
\end{align*}
and thus, for $z = z(g,G) \in [1,\infty)$,
\begin{multline}
\label{eq:qnorms}
\|\nabla_{A_1}^2\xi\|_{L^q(X)}
\leq
\|\nabla_{A_0}^2\xi\|_{L^q(X)} + z\|\nabla_{A_0}a_0\|_{L^{q}(X)}\|\xi\|_{C(X)}
\\
+ z\|a_0\|_{L^{2q}(X)} \|\nabla_{A_0}\xi\|_{L^{2q}(X)} + z\|a_0\|_{L^{2q}}^2 \|\xi\|_{C(X)}.
\end{multline}
By Item \eqref{item:Equivalence_W1p_A0_and_W1pA1}, we have
\[
\|\nabla_{A_0}a_0\|_{L^q(X)} \leq  \|a_0\|_{W^{1,q}_{A_0}(X)} \leq C\|a_0\|_{W^{1,q}_{A_1}(X)},
\]
and by Item \eqref{item:Equivalence_W1p_A0_embed_Lr_and_W2pA1_embed_Lr} and the fact that $W^{1,q}(X) \subset L^{2q}(X)$ for $q > d/2$, we obtain
\[
\|\nabla_{A_0}\xi\|_{L^{2q}(X)} \leq C\|\xi\|_{W^{2,q}_{A_0}(X)}.
\]
Similarly, Item \eqref{item:Equivalence_W2qA0_embed_Linfty} gives
\[
\|\xi\|_{C(X)} \leq C\|\xi\|_{W^{2,q}_{A_0}(X)}.
\]
By substituting the preceding inequalities into \eqref{eq:qnorms}, we find that
\[
\|\xi\|_{W_{A_1}^{2,q}(X)} \leq C \|\xi\|_{W_{A_0}^{2,q}(X)},
\]
where $C \in [1,\infty)$ has the stated dependencies. The analogous inequality with the roles of $A_0$ and $A_1$ reversed follows by a symmetric argument.

For Item \eqref{item:Equivalence_W2p_A1_embed_W2pA0}, define $r\in [p,\infty]$ by $1/p=1/q+1/r$, recall that $p=d/2<q$ or $d/2<p\leq q$, interchange the roles of $A_0$ and $A_1$ in \eqref{eq:nabla_exp}, and take $L^p$ norms to give
\begin{align*}
\|\nabla_{A_0}^2\xi\|_{L^p(X)}
&\leq
\|\nabla_{A_1}^2\xi\|_{L^p(X)} + \|\nabla_{A_1}a_0\times \xi\|_{L^p(X)} + \|a_0\times\nabla_{A_1}\xi\|_{L^p(X)}
\\
&\quad + \|a_0\times a_0\times \xi\|_{L^p(X)}
\\
&\leq
\|\nabla_{A_1}^2\xi\|_{L^p(X)}
+ z\|\nabla_{A_1}a_0\|_{L^q(X)}\|\xi\|_{L^r(X)}
+ z\|a_0\|_{L^{2p}(X)} \|\nabla_{A_1}\xi\|_{L^{2p}(X)}
\\
&\quad + z\|a_0\|_{L^{2q}}^2 \|\xi\|_{L^r(X)}
\\
&\leq
\|\nabla_{A_1}^2\xi\|_{L^p(X)}
+ C\|a_0\|_{W_{A_1}^{1,q}(X)}\|\xi\|_{W_{A_1}^{2,p}(X)}
+ C\|a_0\|_{W_{A_1}^{1,p}(X)} \|\nabla_{A_1}\xi\|_{W_{A_1}^{1,p}(X)}
\\
&\quad + C\|a_0\|_{W_{A_1}^{1,q}(X)}^2 \|\xi\|_{W_{A_1}^{2,p}(X)},
\end{align*}
where $z = z(g,G)\in [1,\infty)$ and, to obtain the last inequality, we use the continuous Sobolev embeddings $W^{1,p}(X)\subset L^{2p}(X)$ and $W^{1,q}(X)\subset L^{2q}(X)$ for $d/2\leq p \leq q$ and Item \eqref{item:Equivalence_W1p_A0_embed_Lr_and_W2pA1_embed_Lr} together with the continuous Sobolev embedding $W^{2,p}(X) \subset L^r(X)$, for $r\in [1,\infty)$ if $p=d/2$ and $r=\infty$ if $p>d/2$. Therefore, we obtain
\[
\|\xi\|_{W^{2,p}_{A_0}(X)} \leq C \|\xi\|_{W_{A_1}^{2,p}(X)},
\]
where $C \in [1,\infty)$ has the stated dependencies.

For Item \eqref{item:Equivalence_W2p_A0_and_W2pA1}, we take $L^p$ norms of \eqref{eq:nabla_exp} and use $\nabla_{A_0}a_0 = \nabla_{A_1}a_0 + [a_0,a_0]$ to give
\begin{align*}
\|\nabla_{A_1}^2\xi\|_{L^p(X)}
&\leq
\|\nabla_{A_0}^2\xi\|_{L^p(X)} + \|\nabla_{A_0}a_0\times\xi\|_{L^p(X)} + \|a_0\times \nabla_{A_0}\xi\|_{L^p(X)}
\\
&\quad + \|a_0\times a_0\times\xi\|_{L^p(X)}
\\
&\leq
\|\nabla_{A_0}^2\xi\|_{L^p(X)} + z\|\nabla_{A_1}a_0\|_{L^{2p}(X)} \|\xi\|_{L^{2p}(X)} + \|a_0\|_{C(X)} \|\nabla_{A_0}\xi\|_{L^p(X)}
\\
&\quad + 2z\|a_0\|_{C(X)}^2 \|\xi\|_{L^p(X)}.
\end{align*}
Applying the continuous Sobolev embeddings $W^{1,p}(X)\subset L^{2p}(X)$ and $W^{2,q}(X) \subset C(X)$ and Items \eqref{item:Equivalence_W1p_A0_embed_Lr_and_W2pA1_embed_Lr} and \eqref{item:Equivalence_W1p_A0_and_W1pA1}, we discover that
\begin{align*}
\|\nabla_{A_1}^2\xi\|_{L^p(X)}
&\leq
                                 \|\nabla_{A_0}^2\xi\|_{L^p(X)} + C\|\nabla_{A_1}a_0\|_{W_{A_1}^{1,p}(X)} \|\xi\|_{W_{A_0}^{1,p}(X)}
  \\
  &\quad + \|a_0\|_{W_{A_1}^{2,q}(X)} \|\nabla_{A_0}\xi\|_{L^p(X)} + 2z\|a_0\|_{W_{A_1}^{2,q}(X)}^2 \|\xi\|_{L^p(X)}
\\
&\leq C\|\xi\|_{W^{2,p}_{A_0}(X)},
\end{align*}
where $C \in [1,\infty)$ has the stated dependencies. The analogous inequality with the roles of $A_0$ and $A_1$ reversed follows by a symmetric argument.
\end{proof}

\chapter[Fredholm and index properties of Laplacians]{Fredholm and index properties of a Hodge Laplacian with Sobolev coefficients}
\label{chap:Fredholm_properties_Hodge_Laplacian_Sobolev_coefficients}
In this chapter we include proofs of results regarding the Fredholm properties of the Hodge Laplace operators encountered in Chapters \ref{chap:Coulomb_gauge_slice_quotient_space_connections_pairs} and \ref{chap:Lojasiewicz-Simon_gradient_inequality_coupled_Yang--Mills_energy_functions} that would be standard if the operator had smooth coefficients and acted on $L^2$ rather than $L^p$ Sobolev spaces as we allow here.

When $A$ is a smooth connection, the Hodge Laplace operator, $\Delta_A$, in \eqref{eq:Hodge_Laplace_operator} is an elliptic, second-order partial differential operator with smooth coefficients that is $L^2$-self-adjoint and so Theorem \ref{thm:Gilkey_1-4-5_Sobolev} immediately provides the

\begin{prop}[Fredholm and index zero properties of a Laplace operator with smooth coefficients]
\label{prop:Fredholmness_and_index_Laplace_operator_on_W2p_smooth_connection}
Let $(X,g)$ be a closed, smooth Riemannian manifold of dimension $d \geq 2$, and $G$ be a compact Lie group, and $P$ be a smooth principal $G$-bundle over $X$, and $l\geq 0$ an integer. If $A$ and $A_1$ are $C^\infty$ connections on $P$ and $1 < p < \infty$, then the operator,
\begin{equation}
\label{eq:Delta_A_smooth_W2p_to_Lp_Fredholm}
\Delta_A: W_{A_1}^{2,p}(X;\Lambda^l\otimes\ad P) \to L^p(X;\Lambda^l\otimes\ad P),
\end{equation}
is Fredholm with index zero and closed range $K^\perp \cap L^p(X;\Lambda^l\otimes\ad P)$, where $\perp$ denotes $L^2$-orthogonal complement and $K \subset W_{A_1}^{2,p}(X;\Lambda^l\otimes\ad P)$ is the kernel of $\Delta_A$ in \eqref{eq:Delta_A_smooth_W2p_to_Lp_Fredholm}.
\end{prop}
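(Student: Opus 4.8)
The plan is to deduce the proposition as an immediate corollary of Theorem \ref{thm:Gilkey_1-4-5_Sobolev}, applied with ambient manifold $M = X$, with $V = W = \Lambda^l\otimes\ad P$, with order $m = 2$, and with the integer parameter $k = 0$. The only work is to check that $\Delta_A$ satisfies the hypotheses of that theorem and to match up the kernel appearing in the theorem with the kernel $K$ named in the present statement.

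First I would verify ellipticity, order, and smoothness of the coefficients. By the Bochner--Weitzenb\"ock formula, recalled in the proof of Lemma \ref{lem:W2p_apriori_estimate_Delta_A_smooth} via \cite[Equation (C.7)]{FU} and \cite[Equation (II.1)]{Lawson}, the operator $\Delta_A$ on $C^\infty(X;\Lambda^l\otimes\ad P)$ differs from $\nabla_A^*\nabla_A$ by a first-order operator and has scalar principal symbol given by the Riemannian metric $g$ on $T^*X$ times the identity endomorphism of $\Lambda^l\otimes\ad P$; hence $\Delta_A$ is an elliptic partial differential operator of order $m = 2$, and since $A$ is $C^\infty$ and $g$ is $C^\infty$ its coefficients are $C^\infty$. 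Thus Theorem \ref{thm:Gilkey_1-4-5_Sobolev} applies with $k = 0$ and gives that $\Delta_A\colon W_{A_1}^{2,p}(X;\Lambda^l\otimes\ad P) \to L^p(X;\Lambda^l\otimes\ad P)$ is Fredholm with range $(K_0^*)^\perp \cap L^p(X;\Lambda^l\otimes\ad P)$, where $K_0 = \Ker(\Delta_A^*\colon C^\infty \to C^\infty)$ and $K_0^* \cong K_0$ as a subspace of $C^\infty(X;\Lambda^l\otimes\ad P)$.

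Next I would invoke $L^2$-self-adjointness. Because $\Lambda^l\otimes\ad P$ is a Riemannian vector bundle it is canonically isometric to its dual, so the hypothesis $V = W^*$ of the last clause of Theorem \ref{thm:Gilkey_1-4-5_Sobolev} is met, and $\Delta_A^* = \Delta_A$, so $\Delta_A - \Delta_A^* = 0$ is (trivially) a differential operator of order $m - 1 = 1$; therefore $\Ind\Delta_A = 0$. Self-adjointness also identifies $K_0 = \Ker(\Delta_A\colon C^\infty \to C^\infty)$ and, under the metric identification, $K_0^* = K_0$, so the range is $K_0^\perp \cap L^p(X;\Lambda^l\otimes\ad P)$. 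Finally, the elliptic regularity argument already used inside the proof of Theorem \ref{thm:Gilkey_1-4-5_Sobolev} shows that the kernel $K$ of $\Delta_A$ acting on $W_{A_1}^{2,p}(X;\Lambda^l\otimes\ad P)$ coincides with the smooth kernel $K_0$, so the range is $K^\perp \cap L^p(X;\Lambda^l\otimes\ad P)$ with $K$ exactly as named in the proposition.

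I expect no genuine obstacle here — the statement is flagged in the text as ``immediately provided'' by Theorem \ref{thm:Gilkey_1-4-5_Sobolev}, and every ingredient (the Bochner--Weitzenb\"ock description of $\Delta_A$, its $L^2$-self-adjointness, and the elliptic-regularity identification of the $W^{2,p}$ kernel with the $C^\infty$ kernel) is either standard or already recorded earlier in the paper. The only point requiring a sentence of care is the bookkeeping that matches the kernel $K$ of the proposition (defined via $\Delta_A$ on $W_{A_1}^{2,p}$) with the kernel $K_0^*$ of $\Delta_A^*$ appearing in the range formula of Theorem \ref{thm:Gilkey_1-4-5_Sobolev}, which is exactly where self-adjointness and elliptic regularity are used.
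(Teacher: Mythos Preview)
Your proposal is correct and matches the paper's approach exactly: the paper states that Theorem \ref{thm:Gilkey_1-4-5_Sobolev} ``immediately provides'' this proposition since $\Delta_A$ is an elliptic, second-order partial differential operator with smooth coefficients that is $L^2$-self-adjoint. Your additional bookkeeping (verifying ellipticity via Bochner--Weitzenb\"ock, checking $V = W^*$, and matching the $W^{2,p}$ kernel with the $C^\infty$ kernel via elliptic regularity) fills in precisely the details the paper leaves implicit.
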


Proposition \ref{prop:Fredholmness_and_index_Laplace_operator_on_W2p_smooth_connection} and a compact operator perturbation argument provides following useful generalization from the case of a $C^\infty$ to a $W^{1,q}$ connection $A$.

\begin{cor}[Fredholm and index zero properties of a Laplace operator with Sobolev coefficients]
\label{cor:Fredholmness_and_index_Laplace_operator_on_W2p_Sobolev_connection}
Assume the hypotheses of Proposition \ref{prop:Fredholmness_and_index_Laplace_operator_on_W2p_smooth_connection}, but allow $A$ to be a $W^{1,q}$ connection with $d/2 < q < \infty$ and restrict $p \in (1,\infty)$ so that $d/2 \leq p \leq q$. Then the operator,
\begin{equation}
\label{eq:Delta_A_Sobolev_W2p_to_Lp_Fredholm}
\Delta_A: W_{A_1}^{2,p}(X;\Lambda^l\otimes\ad P) \to L^p(X;\Lambda^l\otimes\ad P),
\end{equation}
is Fredholm with index zero and closed range $K^\perp \cap L^p(X;\Lambda^l\otimes\ad P)$, where $\perp$ denotes $L^2$-orthogonal complement and $K \subset W_{A_1}^{2,p}(X;\Lambda^l\otimes\ad P)$ is the kernel of $\Delta_A$ in \eqref{eq:Delta_A_Sobolev_W2p_to_Lp_Fredholm}.
\end{cor}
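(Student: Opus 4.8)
The plan is to derive Corollary \ref{cor:Fredholmness_and_index_Laplace_operator_on_W2p_Sobolev_connection} from its smooth-coefficient version, Proposition \ref{prop:Fredholmness_and_index_Laplace_operator_on_W2p_smooth_connection}, by realizing $\Delta_A$ as a \emph{compact} perturbation of $\Delta_{A_1}$ acting between $W_{A_1}^{2,p}(X;\Lambda^l\otimes\ad P)$ and $L^p(X;\Lambda^l\otimes\ad P)$. Boundedness of $\Delta_A$ between these spaces is already supplied by Proposition \ref{prop:W2p_apriori_estimate_Delta_A_Sobolev}, and $\Delta_{A_1}$ is Fredholm of index zero there by Proposition \ref{prop:Fredholmness_and_index_Laplace_operator_on_W2p_smooth_connection} since $A_1$ is $C^\infty$. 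Writing $a_0 := A - A_1 \in W_{A_1}^{1,q}(X;\Lambda^1\otimes\ad P)$ and expanding $\Delta_A$ as in \eqref{eq:Delta_A1+a_expansion} gives the schematic identity $\Delta_A - \Delta_{A_1} = \nabla_{A_1}a_0\times(\cdot) + a_0\times\nabla_{A_1}(\cdot) + a_0\times a_0\times(\cdot)$; I would first record, by the multiplication estimates and the Kato inequality \cite[Equation (6.20)]{FU} used in the proof of Proposition \ref{prop:W2p_apriori_estimate_Delta_A_Sobolev}, both an $L^p$ bound for $(\Delta_{A_1}-\Delta_A)\eta$ in terms of $a_0$ and $\eta$ and the boundedness of $\Delta_A - \Delta_{A_1}\colon W_{A_1}^{1,u}(X;\Lambda^l\otimes\ad P)\to L^p(X;\Lambda^l\otimes\ad P)$ with $u$ as in \eqref{eq:u_exponent_range_for_proof_DeltaA_Fredholm} (these auxiliary estimates are what Lemma \ref{lem:Regularity_distributional_solution_Hodge_Laplacian_Sobolev} later quotes from this proof). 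Granting compactness of the perturbation, $\Delta_A = \Delta_{A_1} + (\Delta_A - \Delta_{A_1})$ is Fredholm with $\Ind\Delta_A = \Ind\Delta_{A_1} = 0$ by \cite[Corollary 19.1.8]{Hormander_v3}, and then has closed range because it is Fredholm.

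The step needing the most care is the compactness of $\Delta_A - \Delta_{A_1}\colon W_{A_1}^{2,p}(X;\Lambda^l\otimes\ad P)\to L^p(X;\Lambda^l\otimes\ad P)$, and in particular the borderline case $p = d/2$ (forcing $d\geq 3$), where $W^{2,d/2}(X)$ embeds only \emph{critically} in $W^{1,d}(X)$. I would treat the three terms above one at a time, writing each as a composition [bounded operator]$\,\circ\,$[compact Rellich--Kondrachov embedding \cite[Theorem 6.3]{AdamsFournier}]$\,\circ\,$[bounded operator]. For $\xi\mapsto a_0\times\nabla_{A_1}\xi$, factor as $W_{A_1}^{2,p}(X)\to W_{A_1}^{1,p}(X)\Subset L^s(X)\to L^p(X)$, the last arrow being multiplication by $a_0\in W^{1,q}(X)\subset L^{q^*}(X)$; the decisive use of the hypothesis is that $q>d/2$ holds \emph{strictly}, so $q^*>d$ (with $q^*=\infty$ when $q\geq d$), leaving an $s$ strictly below the critical exponent of $W^{1,p}(X)$ with $1/q^*+1/s\leq 1/p$. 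The terms $\nabla_{A_1}a_0\times\xi$ and $a_0\times a_0\times\xi$ (for the latter use $\| |a_0|^2 \|_{L^q(X)} = \|a_0\|_{L^{2q}(X)}^2$ and $W^{1,q}(X)\subset L^{2q}(X)$, valid for $q\geq d/2$) are handled the same way, now routing $\xi$ through the compact embedding $W_{A_1}^{2,p}(X)\Subset L^t(X)$ for a suitable finite $t$, which exists because $p\geq d/2$, against the fixed factors $\nabla_{A_1}a_0\in L^q(X)$ and $|a_0|^2\in L^q(X)$.

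For the identification of the range I would follow the argument in the proof of Theorem \ref{thm:Gilkey_1-4-5_Sobolev}. Since $\Delta_A$ is formally $L^2$-self-adjoint, its Banach-space adjoint is the distributional realization of $\Delta_A$ as a map $L^{p'}(X;\Lambda^l\otimes\ad P)\to W_{A_1}^{-2,p'}(X;\Lambda^l\otimes\ad P)$, where $1/p+1/p'=1$, and closedness of $\Ran\Delta_A$ together with \cite[Corollary 2.18(iv)]{Brezis} gives $\Ran(\Delta_A\colon W_{A_1}^{2,p}\to L^p)={}^\circ\Ker(\Delta_A\colon L^{p'}\to W_{A_1}^{-2,p'})$. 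Any $\eta$ in that kernel is an $L^{p'}$ distributional solution of $\Delta_A\eta=0$; since $p\leq q$ forces $p'\geq q'$ and hence $\eta\in L^{q'}(X;\Lambda^l\otimes\ad P)$, Lemma \ref{lem:Regularity_distributional_solution_Hodge_Laplacian_Sobolev} upgrades it to $\eta\in W_{A_1}^{2,q}(X;\Lambda^l\otimes\ad P)\subseteq W_{A_1}^{2,p}(X;\Lambda^l\otimes\ad P)$, while conversely $K\subset W_{A_1}^{2,p}(X;\Lambda^l\otimes\ad P)\subset L^{p'}(X;\Lambda^l\otimes\ad P)$ by the Sobolev embedding available for $p\geq d/2$; so that kernel is exactly $K$, and computing its annihilator in $L^p(X;\Lambda^l\otimes\ad P)$ with respect to the $L^2$ pairing, as in Remark \ref{rmk:Dual_spaces}, yields $K^\perp\cap L^p(X;\Lambda^l\otimes\ad P)$, which is the claimed range. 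As a shortcut one may instead integrate by parts, as in the proof of Proposition \ref{prop:Gilbarg_Trudinger_theorem_8-6}, to get $\Ran\Delta_A\subseteq K^\perp\cap L^p$ directly, and then close the gap using $\Ind\Delta_A=0$ and $\dim K<\infty$.
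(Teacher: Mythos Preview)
Your proposal is correct and follows essentially the same approach as the paper: realize $\Delta_A$ as a compact perturbation of $\Delta_{A_1}$, invoke Proposition \ref{prop:Fredholmness_and_index_Laplace_operator_on_W2p_smooth_connection} and \cite[Corollary 19.1.8]{Hormander_v3} for the Fredholm and index-zero conclusions, and identify the range via the adjoint together with Lemma \ref{lem:Regularity_distributional_solution_Hodge_Laplacian_Sobolev}. The only difference is organizational: the paper proves compactness of $\Delta_A-\Delta_{A_1}$ by routing the whole operator through a single intermediate space $W_{A_1}^{1,u}\Subset W_{A_1}^{2,p}$ with $u$ chosen in six separate cases depending on the relative positions of $p,q,d$, whereas you split the difference into its three schematic terms and sandwich a Rellich--Kondrachov embedding into each; both arguments hinge on the same strict inequality $q>d/2$ to gain the slack needed at the borderline $p=d/2$.
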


\begin{proof}
By hypothesis, $A_1$ is a $C^\infty$ connection on $P$. We write $A = A_1+a$, for $a \in W_{A_1}^{1,q}(X;\Lambda^l\otimes\ad P)$ and proceed by modifying the proof of Proposition \ref{prop:W2p_apriori_estimate_Delta_A_Sobolev} to show that, for suitable $r \in (p,\infty]$ and $t \in [p,\infty)$, the operator
\begin{equation}
\label{eq:Delta_A_Sobolev_minus_Delta_A_smooth_W1u_to_Lp_bounded}
\Delta_A - \Delta_{A_1}: W_{A_1}^{1,t}(X;\Lambda^l\otimes\ad P)\cap L^r(X;\Lambda^l\otimes\ad P) \to L^p(X;\Lambda^l\otimes\ad P)
\end{equation}
is bounded and, because the Sobolev embedding $W^{2,p}(X) \Subset W^{1,t}(X)\cap L^r(X)$ will be compact by the Rellich--Kondrachov Theorem \cite[Theorem 6.3]{AdamsFournier}, then the following composition of that compact embedding and the preceding bounded operator,
\begin{equation}
\label{eq:Delta_A_Sobolev_minus_Delta_A_smooth_W2p_to_Lp_compact}
\Delta_A - \Delta_{A_1}: W_{A_1}^{2,p}(X;\Lambda^l\otimes\ad P) \to L^p(X;\Lambda^l\otimes\ad P)
\end{equation}
is compact by \cite[Proposition 6.3]{Brezis}.

By modifying the derivation of the estimate \eqref{eq:Delta_A_Sobolev_minus_Delta_As_smooth} in the proof of Proposition \ref{prop:W2p_apriori_estimate_Delta_A_Sobolev}, we claim that, for $r \in (p,\infty]$ defined by $1/p=1/q+1/r$,
\begin{multline}
\label{eq:Lp_estimate_DeltaASobolev_minus_DeltaA1smooth}
\|(\Delta_A - \Delta_{A_1})\xi\|_{L^p(X)}
\leq
z\left(\|a\|_{W_{A_1}^{1,q}(X)} + \|a\|_{W_{A_1}^{1,q}(X)}^2 \right)\|\xi\|_{L^r(X)}
\\
+ z\|a\|_{W_{A_1}^{1,q}(X)}\|\xi\|_{W_{A_1}^{1,t}(X)},
\end{multline}
where $z=z(g,G,l) \in [1,\infty)$ and
\begin{inparaenum}[\itshape 1\upshape)]
\item $q < d$ and $t\in(p,\infty)$ defined by $1/p=1/q^*+1/t$ with $q^*=dq/(d-q)$, or
\item $q = d$ and $t = p+\delta$ for a small enough $\delta\in(0,1]$, or
\item $q > d$ and $t = p$.
\end{inparaenum}

To see that \eqref{eq:Lp_estimate_DeltaASobolev_minus_DeltaA1smooth} holds, observe that \eqref{eq:Delta_A1+a_expansion_prelim} (with $A_s$ replaced by $A_1$) gives
\[
\Delta_A\xi = \Delta_{A_1}\xi + \nabla_{A_1}a \times \xi + a\times \nabla_{A_1}\xi + a\times a\times \xi.
\]
Proceeding as in the derivation of \eqref{eq:Delta_A_Sobolev_minus_Delta_As_smooth_prelim}, we see that the preceding identity yields
\begin{multline*}
\|(\Delta_A - \Delta_{A_1})\xi\|_{L^p(X)}
\leq
z\|\nabla_{A_1}a\|_{L^q(X)}\|\xi\|_{L^r(X)} + \|a \times \nabla_{A_1}\xi\|_{L^p(X)}
\\
+ z\||a|^2\|_{L^q(X)} \|\xi\|_{L^r(X)}.
\end{multline*}
Hence, using continuity of the Sobolev multiplication map, $L^{2q}(X)\times L^{2q}(X) \to L^q(X)$, and Sobolev embedding, $W^{1,q}(X) \subset L^{2q}(X)$, by \cite[Theorem 4.12, Part I, Cases A, B, and C]{AdamsFournier} (using $q\geq d/2$), and Kato Inequality \cite[Equation (6.20)]{FU}, we find that
\begin{multline*}
\|(\Delta_A - \Delta_{A_1})\xi\|_{L^p(X)}
\leq
z\left(\|a\|_{W_{A_1}^{1,q}(X)} + \|a\|_{W_{A_1}^{1,q}(X)}^2 \right)\|\xi\|_{L^r(X)}
\\
+ \|a \times \nabla_{A_1}\xi\|_{L^p(X)}.
\end{multline*}
Now, defining $v\in (p,\infty)$ by a choice of small $\delta\in (0,1]$ and $1/p=1/v+1/(p+\delta)$ and $w\in (p,\infty)$ by $1/p = 1/q^*+1/w$ and $q^*=dq/(d-q)$ (when $q<d$), we obtain that
\[
  \|a \times \nabla_{A_1}\xi\|_{L^p(X)}
\]
is bounded by
\[
  z\|a\|_{L^\infty(X)}\|\nabla_{A_1}\xi\|_{L^p(X)}
\leq z\|a\|_{W_{A_1}^{1,q}(X)}\|\nabla_{A_1}\xi\|_{L^p(X)} \quad\text{if }q>d,
\]
or
\[
  z\|a\|_{L^v(X)}\|\nabla_{A_1}\xi\|_{L^{p+\delta}(X)}
\leq z\|a\|_{W_{A_1}^{1,q}(X)}\|\nabla_{A_1}\xi\|_{L^{p+\delta}(X)} \quad\text{if }q=d,
\]
or
\[
  z\|a\|_{L^{q^*}(X)}\|\nabla_{A_1}\xi\|_{L^w(X)}
\leq z\|a\|_{W_{A_1}^{1,q}(X)}\|\nabla_{A_1}\xi\|_{L^w(X)} \quad\text{if }q<d.
\]
The final three inequalities follow from continuity of the Sobolev embeddings, $W^{1,q}(X) \subset C(X)$ when $q>d$, and $W^{1,q}(X) \subset L^v(X)$ when $q=d$, and $W^{1,q}(X) \subset L^{q^*}(X)$ when $q<d$, by \cite[Theorem 4.12, Part I, Cases A, B, and C]{AdamsFournier}, respectively. This proves Claim \eqref{eq:Lp_estimate_DeltaASobolev_minus_DeltaA1smooth}.

In particular, the operator \eqref{eq:Delta_A_Sobolev_minus_Delta_A_smooth_W1u_to_Lp_bounded} is bounded for each of the three above cases for $(q,t)$.

When $p<q$ and thus $r < \infty$, the embedding $W^{2,p}(X) \subset L^r(X)$ is compact since $p \geq d/2$ by hypothesis and so the embedding $W^{2,p}(X) \subset W^{2,d/2}(X)$ is continuous while the embedding $W^{2,p}(X) \subset L^r(X)$ is compact by \cite[Theorem 6.3, Part I]{AdamsFournier}. When $p=q$ and thus $r = \infty$, the embedding $W^{2,p}(X) \subset C(X)$ is compact since $q > d/2$ by hypothesis and thus $p > d/2$ so the embedding $W^{2,p}(X) \subset C(X)$ is compact by \cite[Theorem 6.3, Part II]{AdamsFournier}.

To see compactness of the embedding $W^{2,p}(X) \subset W^{1,t}(X)$ and hence of the embedding $W^{2,p}(X) \subset W^{1,t}(X)\cap L^r(X)$ and consequently the operator \eqref{eq:Delta_A_Sobolev_minus_Delta_A_smooth_W2p_to_Lp_compact}, we consider the three cases for $(q,t)$ separately.

\setcounter{case}{0}
\begin{case}[$q < d$ and $t\in(p,\infty)$ defined by $1/p=1/q^*+1/t$ with $q^*=dq/(d-q)$]
We have a compact Sobolev embedding, $W^{2,p}(X) \Subset W^{1,t}(X)$, by \cite[Theorem 6.3, Part I]{AdamsFournier} provided $t < p^* = dp/(d-p)$. To check that the strict inequality $t < p^*$ holds, observe that $p\leq q$ and so $1/p \geq 1/q$, which gives
\[
1/t = 1/p - 1/q^* = 1/p - 1/q + 1/d \geq 1/d.
\]
If $p>d/2$, then $p^* = dp/(d-p) > d$ and $1/p^* < 1/d$ and thus $1/t \geq 1/d > 1/p^*$ and $t < p^*$, as desired. If $p=d/2$, then $p < q$ by hypothesis and so $1/p < 1/q$, which now gives
\[
1/t > 1/d.
\]
But $p=d/2 \implies p^* = d$ and thus $1/t > 1/d = 1/p^*$ and $t < p^*$, again as desired.

Hence, the embedding $W^{2,p}(X) \subset W^{1,t}(X)$ is compact and so the operator \eqref{eq:Delta_A_Sobolev_minus_Delta_A_smooth_W2p_to_Lp_compact} is compact for this case.
\end{case}

\begin{case}[$q = d$ and $t=p+\delta$ for small enough $\delta>0$]
We have a compact Sobolev embedding, $W^{2,p}(X) \Subset W^{1,p+\delta}(X)$, by \cite[Theorem 6.3, Part I]{AdamsFournier}, where it suffices to choose $\delta\in (0,1]$ small enough that $p+\delta<p^*=dp/(d-p) \in [d,\infty)$ when $d/2\leq p<d$ and any $\delta\in(0,1]$ will do when $p=d$. Hence, the embedding $W^{2,p}(X) \subset W^{1,t}(X)$ is compact with $t=p+\delta$ and so the operator \eqref{eq:Delta_A_Sobolev_minus_Delta_A_smooth_W2p_to_Lp_compact} is compact for this case.
\end{case}

\begin{case}[$q > d$ and $t=p$]
We have a compact Sobolev embedding, $W^{2,p}(X) \Subset W^{1,p}(X)$, by \cite[Theorem 6.3, Parts I and II]{AdamsFournier}. Hence, the embedding $W^{2,p}(X) \subset W^{1,t}(X)$ is compact with $t=p$ and so the operator \eqref{eq:Delta_A_Sobolev_minus_Delta_A_smooth_W2p_to_Lp_compact} is compact for this case.
\end{case}

Proposition \ref{prop:Fredholmness_and_index_Laplace_operator_on_W2p_smooth_connection} implies that the operator
\[
\Delta_{A_1}: W_{A_1}^{2,p}(X;\Lambda^l\otimes\ad P) \to L^p(X;\Lambda^l\otimes\ad P)
\]
is Fredholm with index zero while the operator $\Delta_A - \Delta_{A_1}$ in \eqref{eq:Delta_A_Sobolev_minus_Delta_A_smooth_W2p_to_Lp_compact} is compact from each of the preceding cases, so the operator \eqref{eq:Delta_A_Sobolev_W2p_to_Lp_Fredholm} is Fredholm with index zero by \cite[Corollary 19.1.8]{Hormander_v3}.

The identification of the range of the operator \eqref{eq:Delta_A_Sobolev_W2p_to_Lp_Fredholm} follows \mutatis the proof of the corresponding fact in the statement of Theorem \ref{thm:Gilkey_1-4-5_Sobolev}. The only difference, after noting that $\Delta_A^* = \Delta_A$ and $p \leq q \implies q' \leq p'$ and thus $L^{p'}(X) \subset L^{q'}(X)$, is that we appeal to the following regularity result for distributional solutions, $b \in L^{p'}(X;\Lambda^l\otimes\ad P)$, to an elliptic linear partial differential equation, $\Delta_Ab = 0$, with Sobolev rather than $C^\infty$ coefficients,
\begin{multline*}
\Ker\left( \Delta_A: L^{p'}(X;\Lambda^l\otimes\ad P) \to W_{A_1}^{-2,p'}(X;\Lambda^l\otimes\ad P) \right)
\\
= \Ker\left( \Delta_A: W_{A_1}^{2,p}(X;\Lambda^l\otimes\ad P) \to L^p(X;\Lambda^l\otimes\ad P) \right).
\end{multline*}
Indeed, because $b \in L^{q'}(X;\Lambda^l\otimes\ad P)$ by the preceding remarks, Lemma \ref{lem:Regularity_distributional_solution_Hodge_Laplacian_Sobolev} implies that $b \in W_{A_1}^{2,q}(X;\Lambda^l\otimes\ad P)$ and, in particular, $b \in W_{A_1}^{2,p}(X;\Lambda^l\otimes\ad P)$ since $p \leq q$. This completes the proof of Corollary \ref{cor:Fredholmness_and_index_Laplace_operator_on_W2p_Sobolev_connection}.
\end{proof}

We shall also need to consider Fredholm properties of the perturbed Laplace operator,
\[
d_A^*d_{A+a}:C^\infty(X;\ad P) \to C^\infty(X;\ad P),
\]
when a $C^\infty$ connection, $A$, one-form, $a \in \Omega^l(X;\ad P)$, and Fr\'echet space, $\Omega^l(X;\ad P)$, are replaced by suitable Sobolev counterparts. As usual, we begin with the simpler case of smooth coefficients.

\begin{lem}[Fredholm and index zero properties of a perturbed Laplace operator with smooth coefficients]
\label{lem:Fredholm_index_zero_perturbed_Laplace_operator_smooth}
Let $(X,g)$ be a closed, smooth Riemannian manifold of dimension $d \geq 2$, and $G$ be a compact Lie group and $P$ be a smooth principal $G$-bundle over $X$. If $A$ and $A_1$ are $C^\infty$ connections on $P$ and $a \in C^\infty(X;\Lambda^1\otimes\ad P)$ and $1 < q < \infty$, then the operator,
\begin{equation}
\label{eq:Perturbation_Laplace_operator_W2p_to_Lp_smooth}
d_A^*d_{A+a}: W_{A_1}^{2,q}(X;\ad P) \to L^q(X;\ad P),
\end{equation}
is Fredholm with index zero.
\end{lem}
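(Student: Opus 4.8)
The plan is to recognize $d_A^*d_{A+a}$ as an elliptic, second-order partial differential operator on $C^\infty(X;\ad P)$ with $C^\infty$ coefficients and scalar principal symbol, and then to read off the Fredholm and index assertions from Theorem \ref{thm:Gilkey_1-4-5_Sobolev}. First I would expand, for $\phi \in C^\infty(X;\ad P)$,
\[
d_A^*d_{A+a}\phi = d_A^*d_A\phi + d_A^*[a,\phi],
\]
and recall that on $\Omega^0(X;\ad P) = C^\infty(X;\ad P)$ one has $d_A^* = 0$ on $0$-forms, hence $d_Ad_A^* = 0$ and $d_A^*d_A = \Delta_A = \nabla_A^*\nabla_A$ by \eqref{eq:Hodge_Laplace_operator} (here $d_A\phi = \nabla_A\phi$). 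This is an elliptic second-order operator with $C^\infty$ coefficients whose principal symbol at $\xi \in T^*X$ is $g(\xi,\xi)$ times the identity endomorphism of $\ad P$. Next I would note, using $d_A^*$ on $1$-forms $\cong -\operatorname{tr}_g\nabla_A$ together with the Leibniz rule for $\nabla_A$, that $d_A^*[a,\phi] = \nabla_A a\times\phi + a\times\nabla_A\phi$ up to universal constant coefficients, so $d_A^*[a,\cdot]$ is a \emph{first-order} differential operator whose coefficients ($\nabla_A a$ and $a$) are smooth because $A$ and $a$ are smooth. Consequently $d_A^*d_{A+a}$ is elliptic of order two with $C^\infty$ coefficients, and Theorem \ref{thm:Gilkey_1-4-5_Sobolev} shows that the operator \eqref{eq:Perturbation_Laplace_operator_W2p_to_Lp_smooth} is Fredholm.

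For the vanishing of the index I would invoke the final clause of Theorem \ref{thm:Gilkey_1-4-5_Sobolev}. Since $\ad P$ carries the fiber metric induced by the Killing form, it is canonically isomorphic to its dual, so the hypothesis $V = W^*$ holds with $V = W = \ad P$. The $L^2$-formal adjoint of $P := d_A^*d_{A+a}$ is $P^* = d_{A+a}^*d_A = d_A^*d_A + [a,\cdot]^*d_A$, so
\[
P - P^* = d_A^*[a,\cdot] - [a,\cdot]^*d_A,
\]
in which the second-order terms $d_A^*d_A$ cancel and each surviving term is a zeroth-order multiplication composed with a single first-order operator ($d_A^*$ or $d_A$); hence $P - P^*$ has order $m-1 = 1$ and Theorem \ref{thm:Gilkey_1-4-5_Sobolev} gives $\Ind P = 0$. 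Alternatively I could argue by compact perturbation: $d_A^*d_{A+a} - \Delta_A = d_A^*[a,\cdot]$ maps $W_{A_1}^{2,q}(X;\ad P)$ boundedly into $W_{A_1}^{1,q}(X;\ad P)$, which embeds compactly into $L^q(X;\ad P)$ by the Rellich--Kondrachov theorem \cite[Theorem 6.3]{AdamsFournier}, so $\Ind(d_A^*d_{A+a}) = \Ind\Delta_A = 0$ by Proposition \ref{prop:Fredholmness_and_index_Laplace_operator_on_W2p_smooth_connection} and \cite[Corollary 19.1.8]{Hormander_v3}.

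I do not expect a serious obstacle here; the only point needing a little care is the elementary verification that the second-order parts cancel in $P - P^*$ (equivalently, that $d_A^*[a,\cdot]$ is genuinely a first-order operator), which follows from the Leibniz rule for $\nabla_A$ and the fact that $d_A^*$ lowers form degree by one, so that $d_A^*$ applied to the $\ad P$-valued $1$-form $[a,\phi]$ differentiates $\phi$ at most once.
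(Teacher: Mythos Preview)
Your proposal is correct and follows essentially the same approach as the paper: recognize $d_A^*d_{A+a}$ as an elliptic second-order operator with $C^\infty$ coefficients, compute $P-P^* = d_A^*[a,\cdot] - [a,\cdot]^*d_A$ and observe it is first order, then invoke Theorem~\ref{thm:Gilkey_1-4-5_Sobolev}. Your write-up is more detailed than the paper's (and your alternative compact-perturbation argument via Proposition~\ref{prop:Fredholmness_and_index_Laplace_operator_on_W2p_smooth_connection} is a nice extra), but the core argument is identical.
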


\begin{proof}
We observe that
\[
  d_A^*d_{A+a}:C^\infty(X;\ad P) \to C^\infty(X;\ad P)
\]
is an elliptic, linear, second-order partial differential operator such that
\[
d_A^*d_{A+a}-d_{A+a}^*d_A = d_A^*[a,\cdot\,] - [a,\cdot\,]^*d_A: C^\infty(X;\ad P) \to C^\infty(X;\ad P)
\]
is a first-order differential operator. The conclusions now follow from Theorem \ref{thm:Gilkey_1-4-5_Sobolev}.
\end{proof}

\begin{lem}[Fredholm and index zero properties of a perturbed Laplace operator with Sobolev coefficients]
\label{lem:Fredholm_index_zero_perturbed_Laplace_operator_Sobolev}
Let $(X,g)$ be a closed, smooth Riemannian manifold of dimension $d \geq 2$, and $G$ be a compact Lie group and $P$ be a smooth principal $G$-bundle over $X$. If $A_1$ is a $C^\infty$ connection on $P$ and $A$ is a $W^{1,q}$ connection on $P$ with $d/2 < q < \infty$ and $a \in W_{A_1}^{1,q}(X;\Lambda^1\otimes\ad P)$, then the operator,
\begin{equation}
\label{eq:Perturbation_Laplace_operator_W2p_to_Lp_Sobolev}
d_A^*d_{A+a}: W_{A_1}^{2,q}(X;\ad P) \to L^q(X;\ad P),
\end{equation}
is Fredholm with index zero.
\end{lem}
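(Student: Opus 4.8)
The plan is to reduce the statement to Proposition \ref{prop:Fredholmness_and_index_Laplace_operator_on_W2p_smooth_connection} (with $l=0$) by a compact-perturbation argument, exactly parallel to the proof of Corollary \ref{cor:Fredholmness_and_index_Laplace_operator_on_W2p_Sobolev_connection}. Write $\alpha := A - A_1 \in W_{A_1}^{1,q}(X;\Lambda^1\otimes\ad P)$. For $\xi \in C^\infty(X;\ad P)$ one has, schematically (with ``$\times$'' denoting a universal bilinear algebraic operation with constant coefficients depending at most on $G$),
\[
d_{A+a}\xi = d_{A_1}\xi + (\alpha + a)\times\xi,
\qquad
d_A^*\beta = d_{A_1}^*\beta + \alpha\times\beta,
\]
so that, applying the Leibniz rule and recalling $\Delta_{A_1} = d_{A_1}^*d_{A_1} = \nabla_{A_1}^*\nabla_{A_1}$ on $\Omega^0(X;\ad P)$,
\begin{multline*}
d_A^*d_{A+a}\xi = \Delta_{A_1}\xi + \nabla_{A_1}(\alpha+a)\times\xi + (\alpha+a)\times\nabla_{A_1}\xi
\\
+ \alpha\times\nabla_{A_1}\xi + \alpha\times(\alpha+a)\times\xi.
\end{multline*}
This expression is structurally identical to the expansion \eqref{eq:Delta_A1+a_expansion} used in the proof of Proposition \ref{prop:W2p_apriori_estimate_Delta_A_Sobolev}, specialized to $l=0$, with the coefficient $a$ there replaced by $\alpha + a$.

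Next I would show that the difference operator $d_A^*d_{A+a} - \Delta_{A_1}$ is compact from $W_{A_1}^{2,q}(X;\ad P)$ to $L^q(X;\ad P)$, by factoring it through a compact Sobolev embedding. Because $q > d/2$, there is an exponent $u \in (1,\infty)$ with $u > d$ and a compact embedding $W_{A_1}^{2,q}(X;\ad P) \Subset W_{A_1}^{1,u}(X;\ad P)$ (by the Rellich--Kondrachov theorem \cite[Theorem 6.3]{AdamsFournier}): when $q < d$ take any $u \in (d, q^*)$ with $q^* = dq/(d-q)$, this interval being nonempty precisely because $q > d/2$ forces $q^* > d$; when $q = d$ take any finite $u > d$; and when $q > d$ take $u = q$. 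The same continuous Sobolev embeddings and multiplication estimates used to derive \eqref{eq:Lp_estimate_DeltaASobolev_minus_DeltaA1smooth} --- namely $W^{1,q}(X)\subset L^{2q}(X)$ and $W^{1,q}(X)\subset L^{q^*}(X)$ for $q \geq d/2$, and $W^{1,u}(X)\subset C(X)$ for $u > d$ \cite[Theorem 4.12]{AdamsFournier}, together with the Kato Inequality \cite[Equation (6.20)]{FU} and H\"older's inequality --- then show that $d_A^*d_{A+a} - \Delta_{A_1}: W_{A_1}^{1,u}(X;\ad P) \to L^q(X;\ad P)$ is bounded (the constraint $u > d$ makes each term involving $\xi$ undifferentiated pair against an $L^q$ coefficient via $L^\infty$, while $\nabla_{A_1}\xi \in L^u$ with $u \geq d$ pairs against $\alpha+a \in L^{q^*}$). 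Composing with the compact embedding, $d_A^*d_{A+a} - \Delta_{A_1}: W_{A_1}^{2,q}(X;\ad P) \to L^q(X;\ad P)$ is compact by \cite[Proposition 6.3]{Brezis}; in particular $d_A^*d_{A+a}: W_{A_1}^{2,q}(X;\ad P) \to L^q(X;\ad P)$ is bounded, since $\Delta_{A_1}$ is bounded between these spaces.

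Finally, Proposition \ref{prop:Fredholmness_and_index_Laplace_operator_on_W2p_smooth_connection} with $l=0$ gives that $\Delta_{A_1}: W_{A_1}^{2,q}(X;\ad P) \to L^q(X;\ad P)$ is Fredholm with index zero, and stability of the Fredholm index under compact perturbations \cite[Corollary 19.1.8]{Hormander_v3} then yields that $d_A^*d_{A+a} = \Delta_{A_1} + (d_A^*d_{A+a} - \Delta_{A_1})$ is Fredholm with index zero, completing the proof. I expect the only real work to be the exponent bookkeeping in the compactness step, and this is entirely parallel to the case analysis already carried out for Corollary \ref{cor:Fredholmness_and_index_Laplace_operator_on_W2p_Sobolev_connection}; there is no genuinely new difficulty, the key structural point being that $q > d/2$ simultaneously guarantees $W^{2,q}(X)\subset C(X)$ and the existence of an intermediate exponent $u > d$ with $W^{2,q}(X)\Subset W^{1,u}(X)$. (Alternatively, one could deduce the result by combining Corollary \ref{cor:Fredholmness_and_index_Laplace_operator_on_W2p_Sobolev_connection}, which gives that $\Delta_A = d_A^*d_A$ is Fredholm of index zero, with the observation that $d_A^*d_{A+a} - \Delta_A = d_A^*[a,\,\cdot\,]$ is a compact operator $W_{A_1}^{2,q}(X;\ad P)\to L^q(X;\ad P)$ by the same factorization; and Lemma \ref{lem:Fredholm_index_zero_perturbed_Laplace_operator_smooth} is the special case in which $A$ and $a$ are smooth.)
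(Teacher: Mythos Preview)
Your proposal is correct and follows essentially the same approach as the paper: write $d_A^*d_{A+a}$ as $\Delta_{A_1}$ plus a difference operator, derive the analogue of the estimate \eqref{eq:Lp_estimate_DeltaASobolev_minus_DeltaA1smooth} for that difference, factor it through a compact Sobolev embedding $W^{2,q}_{A_1}\Subset W^{1,u}_{A_1}$ (with $u>d$ chosen exactly as you do, matching the case analysis in the proof of Corollary \ref{cor:Fredholmness_and_index_Laplace_operator_on_W2p_Sobolev_connection}), and then invoke Proposition \ref{prop:Fredholmness_and_index_Laplace_operator_on_W2p_smooth_connection} together with stability of the Fredholm index under compact perturbation. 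Your alternative route via Corollary \ref{cor:Fredholmness_and_index_Laplace_operator_on_W2p_Sobolev_connection} plus compactness of $d_A^*[a,\,\cdot\,]$ is also correct and equally short.
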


\begin{proof}
The argument is almost identical to the proof of Corollary \ref{cor:Fredholmness_and_index_Laplace_operator_on_W2p_Sobolev_connection}. Write $A = A_1+a_1$ and observe that
\[
d_A^*d_{A+a} = d_{A_1}^*d_{A_1} + d_{A_1}^*[a_1+a,\cdot\,] + [a_1,\cdot\,]^*d_{A_1} + [a_1,\cdot\,]^*[a_1+a,\cdot\,].
\]
By again retracing the steps
in the proof of Proposition \ref{prop:W2p_apriori_estimate_Delta_A_Sobolev}, we find that, for $r \in (p,\infty]$ defined by $1/p=1/q+1/r$ and $\xi \in W_{A_1}^{2,q}(X;\ad P)$,
\begin{equation}
\begin{aligned}
\label{eq:Lp_estimate_perturbed_Laplace_operator_Sobolev_minus_DeltaA1smooth}
{}&\|(d_A^*d_{A+a} - d_{A_1}^*d_{A_1})\xi\|_{L^p(X)}
\\
&\quad \leq
  z\left(\|a\|_{W_{A_1}^{1,q}(X)} + \|a_1\|_{W_{A_1}^{1,q}(X)} + \|a_1\|_{W_{A_1}^{1,q}(X)}^2 \right.
  \\
 &\qquad \left. + \|a_1\|_{W_{A_1}^{1,q}(X)} \|a\|_{W_{A_1}^{1,q}(X)} \right)\|\xi\|_{L^r(X)}
\\
&\qquad + z\left(\|a_1\|_{W_{A_1}^{1,q}(X)} + \|a\|_{W_{A_1}^{1,q}(X)}\right)\|\xi\|_{W_{A_1}^{1,t}(X)},
\end{aligned}
\end{equation}
with the values of $t$ specified in the proof of Corollary \ref{cor:Fredholmness_and_index_Laplace_operator_on_W2p_Sobolev_connection}. The remainder of the proof of Corollary \ref{cor:Fredholmness_and_index_Laplace_operator_on_W2p_Sobolev_connection} now applies to show that the operator \eqref{eq:Perturbation_Laplace_operator_W2p_to_Lp_Sobolev}is Fredholm with index zero.
\end{proof}

\chapter[Convergence of gradient flows]{Convergence of gradient flows under the validity of the {\L}ojasiewicz--Simon gradient inequality}
\label{chap:Convergence_gradient_flows_validity_Lojasiewicz-Simon_gradient_inequality}
As we noted in Section \ref{sec:Lojasiewicz-Simon_gradient_inequality_abstract_function}, the gradient inequality for an energy function, $\sE:\sX\supset \sU \to \RR$, with gradient map, $\sM:\sX\supset \sU \to \tilde\sX$, is most useful when it has the strong form \eqref{eq:Lojasiewicz-Simon_gradient_inequality_analytic_function_Hilbert_space} implied by Theorem \ref{mainthm:Lojasiewicz-Simon_gradient_inequality2}, namely,
\[
\|\sM(x)\|_{\sH} \geq Z|\sE(x) - \sE(x_\infty)|^\theta, \quad\forall\, x \in \sU \text{ with } \|x-x_\infty\|_\sX < \sigma,
\]
where $\sH$ is a Hilbert space, the Banach space $\tilde\sX$ is continuously embedded in $\sH$, and the Banach space $\sX$ is a dense subspace of $\sH$ with continuous embedding $\sX \subset \sH$ and thus $\sH^* \subset \sX^*$ is also a continuous embedding.

In this Appendix, we briefly explain why Theorem \ref{mainthm:Lojasiewicz-Simon_gradient_inequality2} is so useful in applications to questions of global existence and convergence of a \emph{strong solution}, that is, $u \in C([0,T); \sX)$ with time derivative $\dot{u} \in C((0,T); \sH)$ (for $T\in (0,\infty]$), to the Cauchy problem for the gradient system
\begin{equation}
\label{eq:Huang_3-3a_Hilbert_space}
\dot u(t) = -\sM(u(t)) \quad\text{in } \sH, \quad t \in (0,T), \quad u(0) = u_0.
\end{equation}
The importance of a geometric version of Theorem \ref{mainthm:Lojasiewicz-Simon_gradient_inequality2} to a more specific setting in geometric analysis was famously pioneered by Simon in \cite{Simon_1983}, generalizing a result of {\L}ojasiewicz for gradient flows in Euclidean spaces \cite{Lojasiewicz_1984}.

A \emph{weak solution} to the gradient system for $\sE$ has the form $u \in C([0,T); \sX)$ with time derivative $\dot{u} \in C((0,T); \sX^*)$, obeying
\begin{equation}
\label{eq:Huang_3-3a_Banach_dual_space}
\dot u(t) = -\sE'(u(t)) \quad\text{in } \sX^*, \quad t \in (0,T), \quad u(0) = u_0.
\end{equation}
To illustrate the application of Theorem \ref{mainthm:Lojasiewicz-Simon_gradient_inequality2}, we include from \cite{Feehan_yang_mills_gradient_flow_v4} a proof of a simplified version of our \cite[Proposition 24.12]{Feehan_yang_mills_gradient_flow_v4} that yields convergence, $u(t) \to u_\infty$ in $\sH$ as $t \to \infty$ for a global strong solution, $u \in C([0,\infty);\sX) \cap C^1([0,\infty);\sH)$ to \eqref{eq:Huang_3-3a_Hilbert_space}, when $\sM$ and $\sE$ obey the version of {\L}ojasiewicz--Simon gradient inequality in \eqref{eq:Lojasiewicz-Simon_gradient_inequality_analytic_function_Hilbert_space}.

The statement and proof of the forthcoming Proposition \ref{prop:Huang_3-3-2_Hilbert_space} are closely modeled on Huang's \cite[Proposition 3.3.2]{Huang_2006}, but for the gradient system \eqref{eq:Huang_3-3a_Hilbert_space} in a Hilbert space. By contrast, Huang's version allows apparently more general weak gradient-like differential inequalities in Banach spaces, namely \cite[Equation (3.10a) or Equation (3.10$'$)]{Huang_2006}, with auxiliary conditions such as those in his \cite[Equation (3.10b)]{Huang_2006} or \cite[Equation (3.10$'$)]{Huang_2006}. However, examples satisfying Huang's gradient-like differential inequalities and auxiliary conditions appear to us to be difficult to find except when they reduce to a pure gradient system \eqref{eq:Huang_3-3a_Hilbert_space} in a Hilbert space or Simon's gradient-like system \cite[Equation (3.1)]{Simon_1983} in a Hilbert space,
\[
\dot u(t) = -\sM(u(t)) + \sR(t) \quad\text{in } \sH, \quad t \in (0,T), \quad u(0) = u_0,
\]
where $\sR \in C((0,\infty);\sH)$ obeys a decay condition (as $t\to\infty$) implying Huang's \cite[Equation (3.10b)]{Huang_2006} or Simon's hypothesis in \cite[Equation (3.1)]{Simon_1983},
\[
\|\sR(t)\|_\sH \leq \alpha\|\dot{u}(t)\|_\sH,
\]
where $\alpha \in (0,1)$ is a constant.

\begin{prop}[Convergence of gradient flow under the validity of the {\L}ojasiewicz--Simon gradient inequality]
\label{prop:Huang_3-3-2_Hilbert_space}
Let $\sU$ be an open subset of a Banach space, $\sX$, that is continuously embedded and dense in a Hilbert space, $\sH$. Let $\sE:\sU\subset \sX \to \RR$ be a $C^1$ function on an open subset, $\sU \subset \sX$, with gradient map $\sM:\sU\subset \sX \to \sH$, and $x_\infty\in\sU$ be a critical point of $\sE$, that is, $\sE'(x_\infty) = 0$. Let $u \in C([0,\infty);\sU) \cap C^1([0,\infty);\sH)$ be a strong solution to \eqref{eq:Huang_3-3a_Hilbert_space} such that
\begin{equation}
\label{eq:Huang_3-14}
\inf\{|\sE(u(t))| : t \geq 0\} > -\infty.
\end{equation}
If $\sE$ and $\sM$ satisfy a {\L}ojasiewicz--Simon gradient inequality \eqref{eq:Lojasiewicz-Simon_gradient_inequality_analytic_function_Hilbert_space} in the orbit $O(u) = \{u(t): t \geq 0\}$, that is,
\begin{equation}
\label{eq:Huang_3-15a_Hilbert_space}
\|\sM(u(t))\|_\sH \geq Z|\sE(u(t)) - \sE(x_\infty)|^\theta, \quad \forall\, t \geq 0,
\end{equation}
for constants $Z \in (0,\infty)$ and $\theta \in [1/2, 1)$, then
\begin{equation}
\label{eq:Huang_3-15b_Hilbert_space}
\int_0^\infty \|\dot u(t)\|_\sH\,dt
\leq
\int_{\sE_\infty}^{\sE(u(0))} \frac{1}{Z|s - \sE(x_\infty)|^\theta} \,ds < \infty,
\end{equation}
where $\sE_\infty := \lim_{t\to\infty}\sE(u(t)) \in \RR$, and thus
\[
u(t) \to u_\infty \quad\text{in } \sH, \quad\text{as } t \to \infty,
\]
for some $u_\infty \in \sH$.
\end{prop}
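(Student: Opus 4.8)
The plan is to establish the standard chain of estimates showing that the arc-length of the flow trajectory $t \mapsto u(t)$ in $\sH$ is finite, from which convergence follows immediately by completeness of $\sH$. First I would record the basic energy-dissipation identity: since $u$ is a strong solution to $\dot u(t) = -\sM(u(t))$ in $\sH$, and $\sM$ is the gradient of $\sE$, the chain rule for the $C^1$ function $\sE$ along the flow gives
\[
\frac{d}{dt}\sE(u(t)) = \sE'(u(t))\dot u(t) = (\dot u(t), \sM(u(t)))_\sH = -\|\sM(u(t))\|_\sH^2 = -\|\dot u(t)\|_\sH^2 \leq 0.
\]
Hence $t \mapsto \sE(u(t))$ is non-increasing, and together with the lower bound hypothesis \eqref{eq:Huang_3-14} it converges to some limit $\sE_\infty \in \RR$ as $t \to \infty$. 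I would also observe that $\sE_\infty \geq \sE(x_\infty)$ is not needed in the form stated; what matters is that $\sE(u(t)) - \sE(x_\infty)$ has constant sign for $t$ large, or else the integral on the right of \eqref{eq:Huang_3-15b_Hilbert_space} is interpreted appropriately — in the generic case $\sE(u(t)) > \sE(x_\infty)$ for all $t \geq 0$, otherwise if equality is reached at some finite time the {\L}ojasiewicz--Simon inequality \eqref{eq:Huang_3-15a_Hilbert_space} forces $\sM(u(t)) = 0$ there and the flow is stationary thereafter, making the conclusion trivial.

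Next I would introduce the auxiliary function $H(t) := (\sE(u(t)) - \sE(x_\infty))^{1-\theta}$ (valid since $\sE(u(t)) - \sE(x_\infty) \geq 0$ and $1-\theta > 0$). Differentiating,
\[
\frac{d}{dt}H(t) = (1-\theta)(\sE(u(t)) - \sE(x_\infty))^{-\theta}\frac{d}{dt}\sE(u(t)) = -(1-\theta)\frac{\|\dot u(t)\|_\sH^2}{|\sE(u(t)) - \sE(x_\infty)|^\theta},
\]
using the energy identity. Applying the {\L}ojasiewicz--Simon gradient inequality \eqref{eq:Huang_3-15a_Hilbert_space} in the form $|\sE(u(t)) - \sE(x_\infty)|^\theta \leq Z^{-1}\|\sM(u(t))\|_\sH = Z^{-1}\|\dot u(t)\|_\sH$ to the denominator, I get
\[
-\frac{d}{dt}H(t) \geq (1-\theta)Z\frac{\|\dot u(t)\|_\sH^2}{\|\dot u(t)\|_\sH} = (1-\theta)Z\|\dot u(t)\|_\sH,
\]
at any $t$ where $\dot u(t) \neq 0$; at times where $\dot u(t) = 0$ the inequality $-\dot H(t) \geq (1-\theta)Z\|\dot u(t)\|_\sH = 0$ holds trivially. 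Integrating this differential inequality over $[0, T]$ for arbitrary $T > 0$ yields
\[
(1-\theta)Z\int_0^T \|\dot u(t)\|_\sH\,dt \leq H(0) - H(T) \leq H(0) = (\sE(u(0)) - \sE(x_\infty))^{1-\theta},
\]
since $H \geq 0$. Letting $T \to \infty$ gives $\int_0^\infty \|\dot u(t)\|_\sH\,dt < \infty$; a change of variables $s = \sE(u(t))$ in this bound (legitimate since $\sE(u(\cdot))$ is monotone and absolutely continuous) recasts it in the precise form \eqref{eq:Huang_3-15b_Hilbert_space}.

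Finally, finiteness of $\int_0^\infty \|\dot u(t)\|_\sH\,dt$ means that for any $\eps > 0$ there is $T_\eps$ with $\int_{T_\eps}^\infty \|\dot u(t)\|_\sH\,dt < \eps$, so for $t_2 > t_1 \geq T_\eps$,
\[
\|u(t_2) - u(t_1)\|_\sH = \left\| \int_{t_1}^{t_2}\dot u(t)\,dt \right\|_\sH \leq \int_{t_1}^{t_2}\|\dot u(t)\|_\sH\,dt < \eps,
\]
so $u(t)$ is Cauchy in $\sH$ as $t \to \infty$, and by completeness converges to some $u_\infty \in \sH$. The main obstacle, such as it is, is purely bookkeeping: handling the degenerate case where $\sE(u(t))$ reaches $\sE(x_\infty)$ (or where $\dot u$ vanishes on a set of positive measure) so that division by $|\sE(u(t)) - \sE(x_\infty)|^\theta$ or by $\|\dot u(t)\|_\sH$ is not literally valid; this is dispatched by noting that on the open set where $\dot u \neq 0$ the inequality is genuine, and on its complement both sides of the relevant differential inequality vanish, so the integrated inequality survives. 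One should also take mild care that the chain rule $\frac{d}{dt}\sE(u(t)) = \sE'(u(t))\dot u(t)$ is justified for a strong solution with $u \in C([0,\infty);\sX) \cap C^1([0,\infty);\sH)$ and $\sE \in C^1(\sU)$ together with the identification $\sE'(x)v = (v,\sM(x))_\sH$ for $v \in \sH$ coming from the gradient map structure of Definition \ref{defn:Huang_2-1-1}; this is standard and poses no real difficulty.
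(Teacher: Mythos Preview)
Your proposal is correct and takes essentially the same approach as the paper's proof: both establish the energy-dissipation identity $-\frac{d}{dt}\sE(u(t)) = \|\dot u(t)\|_\sH\|\sM(u(t))\|_\sH$, then apply the gradient inequality to bound $\|\dot u(t)\|_\sH$ by the negative time-derivative of a suitable antiderivative, integrate, and conclude via a Cauchy argument. The only cosmetic difference is that the paper packages the antiderivative abstractly as $\Phi(x) := \int_{\sE_\infty}^x \frac{ds}{Z|s-\sE(x_\infty)|^\theta}$ and appeals to a lemma of Huang for the absolute continuity of $\Phi\circ H$, whereas you work directly with the explicit power function $H(t) = (\sE(u(t))-\sE(x_\infty))^{1-\theta}$; since $\Phi$ is (up to an additive constant) exactly $(1-\theta)^{-1}$ times your $H$, the two computations are the same, and keeping the $-H(T)$ term when integrating over $[0,T]$ and letting $T\to\infty$ recovers the precise inequality \eqref{eq:Huang_3-15b_Hilbert_space} without needing a separate change of variables.
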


\begin{proof}
The function $[0, \infty) \ni t \mapsto \sE(u(t)) \in \RR$ is $C^1$ by direct calculation and obeys (see \cite[Proposition 3.1.2]{Huang_2006})
\begin{align*}
-\frac{d}{dt}\sE(u(t)) &= -\sE'(u(t))(\dot u(t)) \quad\text{(Chain Rule)}
\\
&= -\langle \dot u(t),\sM(u(t)) \rangle_{\sX\times \sX^*} \quad\hbox{(by Definition \ref{defn:Huang_2-1-1})}
\\
&= -(\dot u(t),\sM(u(t)))_\sH  \quad\hbox{(by $\sH\subset \sX^*$ via $\sH \ni h \mapsto (\cdot, h)_\sH \in \sX^*$)}
\\
                       &= \|\sM(u(t))\|_\sH^2 = \|\sM(u(t))\|_\sH\|\dot u(t)\|_\sH \geq 0,
  \\
  &\qquad \forall\, t \in (0, \infty) \quad\hbox{(by \eqref{eq:Huang_3-3a_Hilbert_space})}.
\end{align*}
Hence, $\sE(u(t))$ is a nonincreasing and uniformly bounded function of $t \in [0,\infty)$ by \eqref{eq:Huang_3-14}, so $\sE_\infty = \lim_{t\to\infty}\sE(u(t))$ exists, as asserted by the proposition. Set $H(t) := \sE(u(t))$, for all $t \in [0, \infty)$, and observe that $H(t)$ is monotone and absolutely continuous on $[0, \infty)$ and obeys, by the preceding equality,
\begin{equation}
\label{eq:Huang_3-16a}
-\frac{d}{dt}H(t) = \|\sM(u(t))\|_\sH\|\dot u(t)\|_\sH, \quad\forall\, t \in [0, \infty).
\end{equation}
Let $\phi:\RR\to\RR$ be the function defined by $\phi(s) = Z|s - \sE(x_\infty)|^\theta$, for all $s \in \RR$, and let $\Phi:\RR \to \RR$ be the absolutely continuous function given by
$$
\Phi(x) := \int_{\sE_\infty}^x \frac{1}{\phi(s)} \,ds = \int_{\sE_\infty}^x \frac{1}{Z|s - \sE(x_\infty)|^\theta} \,ds, \quad \forall\, x \in \RR,
$$
where $\lim_{t\to\infty}H(t) = \sE_\infty$. The function $\Phi$ is differentiable a.e. on $\RR$ with $\Phi'(x) = 1/\phi(x)$ for a.e. $x \in \RR$. According to \cite[Lemma 3.2.1]{Huang_2006}, the composition $\Phi\circ H$ is absolutely continuous on $[0, \infty)$ and there holds
\begin{equation}
\label{eq:Huang_3-16b}
\frac{d}{dt}\Phi(H(t)) = \frac{H'(t)}{\phi(H(t))}, \quad \forall\, t \in \Lambda,
\end{equation}
where $\Lambda \subset [0, \infty)$ is such that the complement, $[0, \infty) \less \Lambda$, has zero Lebesgue measure.

For any $t \in \Lambda$, we have two possibilities: either
\begin{inparaenum}[i\upshape)]
\item \label{item:Norm_gradient_at_u(t)_is_zero}
$\|\sM(u(t))\|_\sH = 0$, or
\item \label{item:Norm_gradient_at_u(t)_is_positive}
$\|\sM(u(t))\|_\sH > 0$.
\end{inparaenum}
For Case \eqref{item:Norm_gradient_at_u(t)_is_positive}, we observe that the {\L}ojasiewicz--Simon gradient inequality \eqref{eq:Huang_3-15a_Hilbert_space} takes the shape,
\begin{equation}
\label{eq:Huang_3-16c}
\phi(H(t)) = Z|\sE(u(t)) - \sE(x_\infty)|^\theta \leq \|\sM(u(t))\|_\sH ,
\end{equation}
and so
\begin{align*}
-\frac{d}{dt}\Phi(H(t)) &= -\frac{H'(t)}{\phi(H(t))} \quad\text{(by \eqref{eq:Huang_3-16b})}
\\
&= \frac{\|\sM(u(t))\|_\sH\|\dot u(t)\|_\sH}{\phi(H(t))} \quad\text{(by  \eqref{eq:Huang_3-16a})}
\\
&\geq \frac{\|\sM(u(t))\|_\sH\|\dot u(t)\|_\sH}{\|\sM(u(t))\|_\sH}  \quad\text{(by \eqref{eq:Huang_3-16c})}
\\
&= \|\dot u(t)\|_\sH,
\end{align*}
that is,
\begin{equation}
\label{eq:Huang_3-16d}
-\frac{d}{dt}\Phi(H(t))  \geq \|\dot u(t)\|_\sH.
\end{equation}
Therefore, by the non-negativity of the function $-d\Phi(H(t))/dt$, combined with the fact that $[0, \infty) \less \Lambda$ has Lebesgue measure zero, we obtain the estimate,
\begin{equation}
\label{eq:Huang_3-18a_Hprelim}
-\frac{d}{dt}\Phi(H(t))  \geq \|\dot u(t)\|_\sH, \quad \hbox{a.e. } t \in [0, \infty),
\end{equation}
for both Cases \eqref{item:Norm_gradient_at_u(t)_is_zero} and\eqref{item:Norm_gradient_at_u(t)_is_positive}. Integration and the fact that $\lim_{t\to\infty}H(t) = \sE_\infty$ yields
$$
\int_0^\infty \|\dot u(t)\|_\sH\,dt \leq \Phi(H(0)) - \lim_{t\to\infty}\Phi(H(t)) = \Phi(H(0)) - \Phi(\sE_\infty) = \Phi(H(0)).
$$
By the definitions of $\Phi(x)$ and $H(t)$, this is \eqref{eq:Huang_3-15b_Hilbert_space}, since
$$
\Phi(H(0))
=
\int_{\sE_\infty}^{H(0)} \frac{1}{\phi(s)} \,ds
=
\int_{\sE_\infty}^{\sE(u(0))} \frac{1}{Z|s - \sE(x_\infty)|^\theta} \,ds.
$$
The final convergence assertion follows from the fact that
\[
u(t_n) - u(t_m) = \int_{t_m}^{t_n} \dot{u}(t) \,dt, \quad\forall\, t_m, t_n \in [0,\infty),
\]
and thus, for any unbounded sequence of times, $\{t_n\}_{n=1}^\infty \subset [0,\infty)$, the sequence of points, $\{u(t_n)\}_{n=1}^\infty \subset \sH$, is Cauchy in $\sH$ and thus converges to a limit, $u_\infty \in \sH$, as $t\to\infty$, independent of the choice, $\{t_n\}_{n=1}^\infty$. This completes the proof of Proposition \ref{prop:Huang_3-3-2_Hilbert_space}.
\end{proof}

The convergence of $u(t)$ in $\sH$ provided by Proposition \ref{prop:Huang_3-3-2_Hilbert_space} can be improved with the assumption of the following

\begin{hyp}[Regularity and \apriori interior estimate for a trajectory]
\label{hyp:Abstract_apriori_interior_estimate_trajectory}
(See \cite[Hypothesis 24.10]{Feehan_yang_mills_gradient_flow_v4}.)
Let $C_1$ and $\rho$ be positive constants and let $T \in (0, \infty]$. Given $u \in C([0,\infty);\sX) \cap C^1([0,\infty);\sH)$, we say that $\dot u:[0,T) \to \sH$ obeys an \apriori \emph{interior estimate on $(0, T]$} if, for every $S \geq 0$ and $\delta > 0$ obeying $S+\delta \leq T$, the map $\dot u:[S+\delta,T) \to \sX$ is Bochner integrable and there holds
\begin{equation}
\label{eq:Abstract_apriori_interior_estimate_trajectory}
\int_{S+\delta}^T \|\dot u(t)\|_\sX\,dt \leq C_1(1+\delta^{-\rho})\int_S^T \|\dot u(t)\|_\sH\,dt.
\end{equation}
\end{hyp}

Given Hypothesis \ref{hyp:Abstract_apriori_interior_estimate_trajectory}, the bound \eqref{eq:Huang_3-15b_Hilbert_space} in Proposition \ref{prop:Huang_3-3-2_Hilbert_space} improves to
\begin{equation}
\label{eq:Huang_3-15b_Banach_space}
\int_\delta^\infty \|\dot u\|_\sX\,dt
\leq
C_1(1+\delta^{-\rho}) \int_{\sE_\infty}^{\sE(u(0))} \frac{1}{c|s - \sE(x_\infty)|^\theta} \,ds < \infty,
\end{equation}
for all $\delta > 0$, and so the convergence in $\sH$ improves to $u(t) \to u_\infty$ in $\sX$ as $t \to \infty$.

The application of Proposition \ref{prop:Huang_3-3-2_Hilbert_space} and similar results to proofs of global existence, convergence, convergence rates, and stability of solutions to \eqref{eq:Huang_3-3a_Hilbert_space} are described at length in \cite[Section 2.1]{Feehan_yang_mills_gradient_flow_v4}.

\chapter[Huang's {\L}ojasiewcz--Simon gradient inequality]{Huang's {\L}ojasiewcz--Simon gradient inequality for analytic functions on Banach spaces}
\label{chap:Huang_2-4-2}
For the convenience of the reader, we quote Huang's \cite[Theorem 2.4.2 (i)]{Huang_2006} for the {\L}ojasiewcz--Simon gradient inequality for analytic functions on Banach spaces. We first recall Huang's hypotheses for \cite[Theorem 2.4.2 (i)]{Huang_2006}.

\begin{hyp}[Hypotheses for the abstract {\L}ojasiewicz-Simon gradient inequality with Hilbert space gradient norm]
\label{hyp:Huang_2-4_H1_H2_H3}
(See \cite[pp. 34--35]{Huang_2006}.) Assume the following conditions.
\begin{enumerate}
\item Let $\sH$ be a Hilbert space, $\sA:\sD(\sA)\subset \sH \to \sH$ a linear, positive definite, self-adjoint operator, and $\sH_\sA := (\sD(\sA), (\cdot,\cdot)_\sA)$ be the Hilbert space with inner product,
\[
(u,v)_\sA := (\sA u, \sA v)_\sH, \quad\forall\, u, v \in \sD(\sA),
\]
where $(\cdot, \cdot)_\sH$ is the inner product on $\sH$.

\item Let $\sX \subset \tilde\sX$ be Banach spaces such the following embeddings are continuous,
\[
\sX \subset \sH_\sA, \quad \tilde \sX \subset \sH.
\]
\item Let $\sE:\sX\to\RR$ be a function with $C^1$ gradient map, $\sM:\sU\subset \sX\to \tilde\sX$, so that
    \[
    \sE'(x)v = (v, \sM(x))_\sH, \quad\forall\, x\in \sU \text{ and } v \in \sX,
    \]
    where $\sU\subset \sX$ is an open subset, and having the following properties:
\begin{enumerate}
\item $\sM$ is a Fredholm map of index zero, that is, for each $x \in \sU$,
\[
\sM'(x): \sX \to \tilde \sX,
\]
is a Fredholm operator of index zero.

\item For each $x \in \sU$, the bounded, linear operator,
\[
\sM'(x): \sX \to \tilde \sX,
\]
has an extension
\[
\sM_1(x): \sH_A \to \sH
\]
which is symmetric\footnote{See \cite[Section 7.4]{Brezis}, \cite[Section 5.3.3]{Kato}, or \cite[Section 7.3]{Yosida}.} and also a Fredholm operator of index zero and such that the map
\[
\sU \ni x \mapsto \sM_1(x) \in \sL(\sH_\sA, \sH) \quad\hbox{is continuous},
\]
or, equivalently, the map $\sU \ni x \mapsto \sM_1(x)\sA^{-1} \in \sL(\sH)$ is continuous.
\end{enumerate}
\end{enumerate}
\end{hyp}

\begin{thm}[{\L}ojasiewcz--Simon gradient inequality for analytic functions on Banach spaces and Hilbert space gradient]
\cite[Theorem 2.4.2 (i)]{Huang_2006}
\label{thm:Huang_2-4-2}
Assume Hypothesis \ref{hyp:Huang_2-4_H1_H2_H3} on $\sE$, $\sM$, $\sM_1$, $\sH$, $\sU$, $\sX$, and $\tilde \sX$, and that $\sM:\sU \subset \sX \to \tilde\sX$ is real analytic. If $x_\infty \in \sU$ is a critical point of $\sE$, that is, $\sM(x_\infty) = 0$, then there are positive constants, $c$, $\sigma$, and $\theta \in [1/2, 1)$ such that
\begin{equation}
\label{eq:Simon_2-2}
\|\sM(x)\|_\sH \geq c|\sE(x) - \sE(x_\infty)|^\theta, \quad \forall\, x \in \sU \hbox{ such that } \|x-x_\infty\|_\sX < \sigma.
\end{equation}
\end{thm}

\chapter{Quantitative implicit and inverse function theorems}
\label{chap:Quantitative_implicit_function_theorem}
Statements of the Implicit Function Theorem are often most useful in applications when equipped with explicit estimates for the radii of the balls containing the domain and range of the implied function that the statement produces, along with an explicit estimate for the Lipschitz constant of that function. We shall state and prove a version of such a result in more generality than we need in our current application, since precise statements are not easy to find in the literature.

\begin{thm}[Quantitative implicit function theorem for maps of Banach spaces]
\label{thm:Quantitative_implicit_function_theorem}
Let $\KK = \RR$ or $\CC$, and $k \geq 1$ be an integer or $\infty$, and $\sX$, $\sY$, and $\sZ$ be Banach spaces over $\KK$, and $\sU \subset \sX$ and $\sV \subset \sY$ be open neighborhoods of points $x_0 \in \sX$ and $y_0 \in \sY$, and $f:\sU\times\sV\to \sZ$ be a $C^k$ (respectively, analytic) map such that $f(x_0,y_0) = 0$ and the partial derivative of $f$ at $(x_0,y_0)$ with respect to the second variable,
\begin{equation}
\label{eq:Partial_derivative_f_isomorphism}
  D_2f(x_0,y_0) \in \sL(\sY,\sZ),
\end{equation}
is an isomorphism of Banach spaces. Let $\zeta \in (0,1]$ be small enough that $B_\zeta(x_0) \subset \sU$ and $B_\zeta(y_0) \subset \sV$ and assume that
\begin{subequations}
  \label{eq:D_1_and_D_2f_conditions_implicit_function_theorem}  
\begin{gather}
  \label{eq:D_2f_x0y_0_inverse}
  M := \|(D_2f(x_0,y_0))^{-1}\|_{\sL(\sZ,\sY)},
  \\
  \label{eq:Uniform_Lipschitz_bound_D_2fxy}
  \sup_{(x,y)\in B_\zeta(x_0) \times B_\zeta(y_0)}\|D_2f(x,y) - D_2f(x_0,y_0)\|_{\sL(\sX\times\sY,\sZ)} \leq \frac{1}{2M},
  \\
  \label{eq:D1f_uniform_bound}
  \beta := \sup_{(x,y)\in B_\zeta(x_0) \times B_\zeta(y_0)}\|D_1f(x,y)\|_{\sL(\sX,\sZ)} < \infty.
\end{gather}
\end{subequations}
Then there are a constant $\delta \in (0,\min\{\zeta,\zeta/(2\beta M)\}]$ and a unique $C^k$ (respectively, analytic) map $g:\sX \supset B_\delta(x_0) \to B_\zeta(y_0) \subset \sY$ such that $y_0=g(x_0)$ and
\begin{subequations}
\begin{gather}
\label{eq:f_x_gx_equation}
  f(x,g(x)) = 0, \quad\forall\, x \in B_\delta(x_0),
  \\
  \label{eq:Derivative_g}
  Dg(x) = -(D_2f(x,g(x)))^{-1}D_1f(x,g(x)) \in \sL(\sX,\sY), \quad\forall\, x \in B_\delta(x_0),
    \\
  \label{eq:Lipschitz_constant_implied_function_g}
  \|g(x_1)-g(x_2)\|_\sY \leq 2\beta M\|x_1 - x_2\|_\sX, \quad\forall\, x_1, x_2 \in B_\delta(x_0).
\end{gather}
\end{subequations}
\end{thm}

\begin{rmk}[Inverse and implicit function theorems for smooth or analytic maps of Banach spaces]
\label{rmk:Smooth_and_analytic_inverse_and_implicit_function_theorems}  
Statements and proofs of the Inverse Function Theorem for $C^k$ maps of Banach spaces are provided by Abraham, Marsden, and Ratiu \cite[Theorem 2.5.2]{AMR}, Apostol \cite[Theorem 13.6]{Apostol_math_analysis} (Euclidean space only), Deimling \cite[Theorem 4.15.2]{Deimling_1985}, Lang \cite[Theorem XIV.1.2]{Lang_analysis}, and Zeidler \cite[Theorem 4.F]{Zeidler_nfaa_v1}; statements and proofs of the Inverse Function Theorem for \emph{analytic} maps of Banach spaces are provided by Berger \cite[Corollary 3.3.2]{Berger_1977} (complex), Deimling \cite[Theorem 4.15.3]{Deimling_1985} (real or complex), and Zeidler \cite[Corollary 4.37]{Zeidler_nfaa_v1} (real or complex). The corresponding $C^k$ or Analytic Implicit Function Theorems are proved in the standard way as corollaries, for example \cite[Theorem 2.5.7]{AMR} and \cite[Theorem 4.H]{Zeidler_nfaa_v1}, although the reverse strategy is used by Jost \cite[Theorem 10.1]{Jost_postmodern_analysis} and others.

    A quantitative version of the Inverse Function Theorem is given by Abraham, Marsden, and Ratiu as \cite[Proposition 2.5.6]{AMR}, although in the latter version the conclusion \eqref{eq:Lipschitz_constant_implied_function_g} on the Lipschitz constant for the inverse map, $g$, is omitted. See also Fr{\o}yshov \cite[Proposition B.0.2]{Froyshov_2008}, Kronheimer and Mrowka \cite[Proposition 18.3.6]{KMBook}, McDuff and Salamon \cite[Proposition A.3.4]{McDuffSalamon1}, Mrowka and Rollin \cite[Proposition 2.3.5]{Mrowka_Rollin_2006}, and Salamon \cite[Theorem B.1]{SalamonSWBook} for closely-related versions of the Quantitative Inverse Function Theorem in the gauge-theory literature, though none quite as general as our Theorem \ref{thm:Quantitative_inverse_function_theorem}.
  \end{rmk}

\begin{rmk}[Alternative hypothesis for implicit function theorems for $C^2$ maps of Banach spaces]
\label{rmk:C^2_implicit_function_theorems}
When $f$ is $C^2$, one can replace the hypothesis \eqref{eq:Uniform_Lipschitz_bound_D_2fxy} in Theorem \ref{thm:Quantitative_implicit_function_theorem} by the simpler condition,
\begin{equation}
    \label{eq:Uniform_bound_D^2fxy}
    K := \sup_{(x,y)\in B_\zeta(x_0) \times B_\zeta(y_0)}\|D^2f(x,y)\|_{\sL(\sX\times\sY,\sZ)} < \infty.
\end{equation}
The Mean Value Theorem for $C^1$ maps of Banach spaces (see \eqref{eq:MVT_h}) gives
\begin{align*}
  D_2f(x,y) - D_2f(x_0,y_0)  &= \int_0^1 D_{21}f(tx + (1-t)x_0, ty + (1-t)y_0)(x-x_0)\,dt
  \\
                           &\quad + \int_0^1 D_2^2h(tx + (1-t)x_0, ty + (1-t)y_0)(y-y_0)\,dt,
  \\
  &\qquad\forall\, x \in \sU \text{ and } y \in \sV,
\end{align*}
assuming that $\sU$ and $\sV$ are convex without loss of generality. Therefore,
\[
  \|D_2f(x,y) - D_2(x_0,y_0)\|_{\sL(\sX\times\sY,\sZ)}
  \leq
  K(\|x-x_0\|_\sX + \|y-y_0\|_\sY)
\]
and thus it suffices to choose $\eta \in (0,\zeta]$ small enough that $\eta \leq 1/(4KM)$ in order to achieve \eqref{eq:Uniform_Lipschitz_bound_D_2fxy}, with $\zeta$ replaced by $\eta$ there (so $(x,y) \in B_\eta(x_0)\times B_\eta(y_0)$) and in the conclusions of Theorem \ref{thm:Quantitative_implicit_function_theorem}.    
\end{rmk}  

\begin{proof}[Proof of Theorem \ref{thm:Quantitative_implicit_function_theorem}]
When $\KK=\RR$ and in the absence of the supplementary hypothesis \eqref{eq:Uniform_Lipschitz_bound_D_2fxy}, the existence of a unique $C^k$ map $g$ obeying \eqref{eq:f_x_gx_equation} is provided by standard statements of the Implicit Function Theorem (see Abraham, Marsden, and Ratiu \cite[Theorem 2.5.7]{AMR}, Lang \cite[Theorem XIV.2.1]{Lang_analysis}, and, in the case of Euclidean spaces, Apostol \cite[Theorem 13.7]{Apostol_math_analysis}). For the cases $\KK=\CC$ or $f$ real or complex analytic, see the references cited in Remark \ref{rmk:Smooth_and_analytic_inverse_and_implicit_function_theorems} for proofs of the Inverse Function Theorem and recall that the Implicit Function Theorem can be obtained in a straightforward way as a corollary of the Inverse Function Theorem.

Hence, we need only focus on the quantitative conclusions of Theorem \ref{thm:Quantitative_implicit_function_theorem} and, for this purpose, we shall adapt Lang's proof of the Inverse Function Theorem (see Lang \cite[Theorem XIV.1.2]{Lang_analysis}) and Jost's proof of the Implicit Function Theorem \cite[Theorem  10.1]{Jost_postmodern_analysis}. Recall that given $x$ in an open neighborhood of $x_0$, the existence of $y$ in an open neighborhood of $y_0$ solving $f(x,y) = 0$ in \eqref{eq:f_x_gx_equation} is equivalent to the existence of $y$ solving $h(x,y) = y$, where
\begin{equation}
  \label{eq:Defn_h}
  h(x,y) := y - (D_2f(x_0,y_0))^{-1}f(x,y).
\end{equation}
Observe that the smooth map $h:\sU\times\sV \to \sY$ has partial derivatives,
\begin{align*}
  D_1h(x,y) &= -(D_2f(x_0,y_0))^{-1}D_1f(x,y), 
  \\
  D_2h(x,y) &= \id_\sY - (D_2f(x_0,y_0))^{-1}D_2f(x,y), \quad\forall\, (x,y) \in \sU\times\sV,
\end{align*}
and $D_2h(x_0,y_0) = 0$. Moreover,
\begin{align*}
  \|D_2h(x,y)\|_{\sL(\sY)} &= \left\|(D_2f(x_0,y_0))^{-1}\left(D_2f(x_0,y_0) - D_2f(x,y)\right)\right\|_{\sL(\sY)}
  \\
  &\leq \|(D_2f(x_0,y_0))^{-1}\|_{\sL(\sZ,\sY)} \|D_2f(x_0,y_0) - D_2f(x,y)\|_{\sL(\sY,\sZ)}.
\end{align*}
By hypothesis \eqref{eq:Uniform_Lipschitz_bound_D_2fxy} and the preceding bound for $\|D_2h(x,y)\|_{\sL(\sY)}$ and by hypothesis \eqref{eq:D1f_uniform_bound} and the preceding expression for $D_1h(x,y)$, we therefore have
\begin{subequations}
  \label{eq:D_ihxy_bounds}
  \begin{align}
  \label{eq:D_1hxy_leq_Mbeta}
  \|D_1h(x,y)\|_{\sL(\sY)} &\leq \beta M, 
    \\
    \label{eq:D_2hxy_leq_1over2}
  \|D_2h(x,y)\|_{\sL(\sY)} &\leq \frac{1}{2}, \quad\forall\, (x,y) \times B_\zeta(x_0)\times B_\zeta(y_0).
\end{align}
\end{subequations}
The Mean Value Theorem for $C^1$ maps of Banach spaces \cite[Theorem XIII.4.2]{Lang_analysis} gives
\begin{align}
  \label{eq:MVT_h}
  {}&h(x_1,y_1) - h(x_2,y_2)
  \\
  &\quad = \int_0^1 D_1h(tx_1 + (1-t)x_2, ty_1 + (1-t)y_2)(x_1-x_2)\,dt \nonumber
  \\
                           &\qquad + \int_0^1 D_2h(tx_1 + (1-t)x_2, ty_1 + (1-t)y_2)(y_1-y_2)\,dt, \nonumber
  \\
  &\qquad\qquad\forall\, x_1,x_2 \in \sU \text{ and } y_1,y_2 \in \sV, \nonumber
\end{align}
assuming that $\sU$ and $\sV$ are convex without loss of generality. Hence, by \eqref{eq:D_2hxy_leq_1over2} and \eqref{eq:MVT_h}
\begin{equation}
  \label{eq:Jost_10-9}
  \|h(x,y_1)-h(x,y_2)\|_\sY \leq \frac{1}{2}\|y_1-y_2\|_\sY, \quad\forall\, x \in \bar B_\zeta(x_0) \text{ and } y_1,y_2 \in \bar B_\zeta(y_0).
\end{equation}
Moreover, 
\begin{align*}
  \|h(x,y)-y_0\|_\sY &= \|h(x,y)-h(x_0,y_0)\|_\sY \quad\text{(by $h(x_0,y_0) = y_0$)}
  \\
  &\leq \|h(x,y)-h(x_0,y)\|_\sY + \|h(x_0,y)-h(x_0,y_0)\|_\sY
  \\
  &\leq  \beta M\|x-x_0\|_\sX +  \frac{1}{2}\|y-y_0\|_\sY \quad\text{(by \eqref{eq:D_1hxy_leq_Mbeta}, \eqref{eq:MVT_h}, and \eqref{eq:Jost_10-9}).}
\end{align*}
Hence, for $\delta \in (0,\zeta]$ obeying $\delta \leq \zeta/(2\beta M)$, we obtain
\begin{equation}
   \label{eq:Jost_10-10b}
  \|h(x,y)-y_0\|_\sY \leq \zeta, \quad\forall\, (x,y) \in B_\delta(x_0)\times B_\zeta(y_0).
\end{equation}
Thus, for each $x \in \bar B_\delta(x_0)$, the map $h(x,\cdot): \bar B_\zeta(y_0) \to \bar B_\zeta(y_0)$ is a contraction. We can now apply the Banach Contraction Mapping Lemma (for example, see \cite[Lemma 10.2]{Jost_postmodern_analysis} or \cite[Lemma XIV.1.1]{Lang_analysis}) to find a unique $y = g(x) \in \bar B_\zeta(y_0)$ for every $x \in \bar B_\delta(x_0)$ such that $h(x,y)=y$, that is, $f(x,y) = 0$, and $y_0 = g(x_0)$.

To prove \eqref{eq:Lipschitz_constant_implied_function_g}, we shall apply the Mean Value Theorem,
\begin{equation}
  \label{eq:MVT_g}
  g(x_1) - g(x_2) = \int_0^1 Dg(tx_1 + (1-t)x_2)(x_1-x_2)\,dt, \quad\forall\, x \in B_\delta(x_0).
\end{equation}
Computing the derivative of the equation \eqref{eq:f_x_gx_equation} with respect to $x$ yields
\[
  D_1f(x,g(x)) + D_2f(x,g(x))Dg(x) = 0 \in \sL(\sX,\sZ), \quad\forall\, x \in B_\delta(x_0),
\]
so that \eqref{eq:Derivative_g} holds provided $D_2f(x,g(x))$ is invertible. To verify invertibility, we define an operator $T_x \in \sL(\sY)$ by
\[
  (D_2f(x_0,g(x_0)))^{-1}D_2f(x,g(x)) =: \id_\sY - T_x \in \sL(\sY), \quad\forall\, x \in B_\delta(x_0).
\]
Observe that
\[
  T_x = \id_\sY - (D_2f(x_0,g(x_0)))^{-1}D_2f(x,g(x)) = D_2h(x,g(x)) \quad\text{(by \eqref{eq:Defn_h})}
\]
and so $\|T_x\|_{\sL(\sY)} \leq 1/2$ for all $x \in B_\delta(x_0)$ by \eqref{eq:D_2hxy_leq_1over2}. Since $\|T_x\|_{\sL(\sY)} < 1$, for all $x \in B_\delta(x_0)$, the operator $\id_\sY - T_x$ is invertible, for all $x \in B_\delta(x_0)$. Hence, $D_2f(x,g(x)) \in \sL(\sY,\sZ)$ is invertible for all $x \in B_\delta(x_0)$, as we had claimed in \eqref{eq:Derivative_g}, with
\begin{equation}
  \label{eq:D_2fxgx_inverse}
  (D_2f(x,g(x)))^{-1} = (\id_\sY - T_x)^{-1}(D_2f(x_0,g(x_0)))^{-1}, \quad\forall\, x \in B_\delta(x_0).
\end{equation}
We can estimate the operator norm of $(\id_\sY - T_x)^{-1}$ (and hence that of $(D_2f(x,g(x)))^{-1}$) using the Neumann series,
\[
  (\id_\sY - T_x)^{-1} = \sum_{k=0}^\infty T_x^k,
\]
to give

\[
  \|(\id_\sY - T_x)^{-1}\|_{\sL(\sY)} \leq \sum_{k=0}^\infty \|T_x^k\|_{\sL(\sY)}  \leq \sum_{k=0}^\infty \|T_x\|_{\sL(\sY)}^k = \frac{1}{1 - \|T_x\|_{\sL(\sY)}},
\]
and consequently,
\begin{equation}
  \label{eq:Bound_operator_norm_id-Tx_inverse}
  \|(\id_\sY - T_x)^{-1}\|_{\sL(\sY)} \leq 2, \quad\forall\, x \in B_\delta(x_0).
\end{equation}
We can therefore estimate the operator norm of $(D_2f(x,g(x)))^{-1}$ by
\begin{align*}
  {}&\|(D_2f(x,g(x)))^{-1}\|_{\sL(\sZ,\sY)}
  \\
  &\quad = \|(\id_\sY - T_x)^{-1}(D_2f(x_0,g(x_0)))^{-1}\|_{\sL(\sZ,\sY)}
                                           \quad \text{(by \eqref{eq:D_2fxgx_inverse})}
  \\
                                         &\quad \leq \|(\id_\sY - T_x)^{-1}\|_{\sL(\sY)}\|(D_2f(x_0,g(x_0)))^{-1}\|_{\sL(\sZ,\sY)}
  \\
                                         &\quad \leq 2M \quad \text{(by \eqref{eq:D_2f_x0y_0_inverse} and $y_0=g(x_0)$ and \eqref{eq:Bound_operator_norm_id-Tx_inverse})}, \quad\forall\, x \in B_\delta(x_0).
\end{align*}
By hypothesis \eqref{eq:D1f_uniform_bound} and the fact that $g(B_\delta(x_0)) \subset B_\zeta(x_0)$ by construction, we have
\[
  \sup_{x \in B_\delta(x_0)}\|D_1f(x,g(x))\|_{\sL(\sX\times\sY,\sZ)} \leq \beta
\]
and so the expression \eqref{eq:Derivative_g} for $Dg(x)$ and the preceding two estimates give
\begin{align*}
  \|Dg(x)\|_{\sL(\sX,\sY)} &= \|(D_2f(x,g(x)))^{-1}D_1f(x,g(x))\|_{\sL(\sX,\sY)}
  \\
                           &\leq \|(D_2f(x,g(x)))^{-1}\|_{\sL(\sZ,\sY)}\|D_1f(x,g(x))\|_{\sL(\sX,\sZ)}
  \\
                           &\leq 2\beta M,   \quad\forall\, x \in B_\delta(x_0).
\end{align*}                             
The Lipschitz bound \eqref{eq:Lipschitz_constant_implied_function_g} now follows from the preceding estimate and \eqref{eq:MVT_g}. This completes the proof of Theorem \ref{thm:Quantitative_implicit_function_theorem}.
\end{proof}

The quantitative inverse function theorem below (see \cite[Theorem 3.2]{FeehanSlice}) follows in a standard way from the preceding quantitative implicit function theorem.

\begin{thm}[Quantitative inverse function theorem for maps of Banach spaces]
\label{thm:Quantitative_inverse_function_theorem}
Let $\KK = \RR$ or $\CC$, and $k \geq 1$ be an integer or $\infty$, and $\sX$ and $\sY$ be Banach spaces over $\KK$, and $\sU \subset \sX$ be an open neighborhood of a point $x_0 \in \sX$, and $f:\sU\to \sY$ be a $C^k$ (respectively, analytic) map such that $f(x_0) = y_0$ and the derivative of $f$ at $x_0$,
\begin{equation}
\label{eq:Derivative_f_isomorphism}
  Df(x_0) \in \sL(\sX,\sY),
\end{equation}
is an isomorphism of Banach spaces.  Let $\zeta \in (0,1]$ be small enough that $B_\zeta(x_0) \subset \sU$ and assume that
\begin{subequations}
  \label{eq:Df_conditions_inverse_function_theorem}  
  \begin{gather}
  \label{eq:Df_x0_inverse}
  M := \|(Df(x_0))^{-1}\|_{\sL(\sY,\sX)},
  \\
  \label{eq:Uniform_Lipschitz_bound_Dfx}
  \sup_{x \in B_\zeta(x_0)}\|Df(x) - Df(x_0)\|_{\sL(\sX,\sY)} \leq \frac{1}{2M}.
\end{gather}
\end{subequations}
Then there are a constant $\delta \in (0,\min\{\zeta,\zeta/(2M)\}]$ and a unique $C^k$ (respectively, analytic) map $g:\sY \supset B_\delta(y_0) \to B_\zeta(x_0) \subset \sX$ such that $x_0=g(y_0)$ and
\begin{subequations}
\begin{align}
\label{eq:fgy_equation}
  f(g(y)) &= y, \quad\forall\, y \in B_\delta(y_0),
  \\
  \label{eq:Derivative_finverse}
  Dg(x) &= (Df(x))^{-1} \in \sL(\sY,\sX), \quad\forall\, x \in B_\delta(x_0),
  \\
  \label{eq:Lipschitz_constant_inverse_function_g}
  \|g(y_1)-g(y_2)\|_\sX &\leq 2M\|y_1 - y_2\|_\sY, \quad\forall\, y_1, y_2 \in B_\delta(y_0).
\end{align}
\end{subequations}
\end{thm}

\begin{rmk}[Alternative hypothesis for inverse function theorems for $C^2$ maps of Banach spaces]
\label{rmk:C^2_inverse_function_theorems}
When $f$ is $C^2$, one can replace the hypothesis \eqref{eq:Uniform_Lipschitz_bound_Dfx} in Theorem \ref{thm:Quantitative_inverse_function_theorem} by the simpler condition,
\begin{equation}
    \label{eq:Uniform_bound_D^2fx}
    K := \sup_{x\in B_\zeta(x_0)}\|D^2f(x)\|_{\sL(\sX,\sY)} < \infty.
\end{equation}
Just as in Remark \ref{rmk:C^2_implicit_function_theorems}, the Mean Value Theorem for $C^1$ maps of Banach spaces ensures that we can then choose $\eta \in (0,\zeta]$ small enough that $\eta \leq 1/(2KM)$ in order to achieve \eqref{eq:Uniform_Lipschitz_bound_Dfx}, with $\zeta$ replaced by $\eta$ there (so $x \in B_\eta(x_0)$) and in the conclusions of Theorem \ref{thm:Quantitative_inverse_function_theorem}. The hypothesis \eqref{eq:Uniform_bound_D^2fx} is the one used in \cite[Proposition 2.5.6]{AMR} and \cite[Proposition B.0.2]{Froyshov_2008}, whereas the remaining examples of quantitative inverse function theorems cited in Remark \ref{rmk:Smooth_and_analytic_inverse_and_implicit_function_theorems} use the hypothesis \eqref{eq:Uniform_Lipschitz_bound_Dfx}.   
\end{rmk}  

\begin{proof}[Proof of Theorem \ref{thm:Quantitative_inverse_function_theorem}]
  Set $\sZ := \sY$ and $F(x,y) := f(x) - y$ and observe that $D_1F(x,y) = Df(x)$, so \eqref{eq:Partial_derivative_f_isomorphism} is obeyed (interchanging the roles of $x$ and $y$). Moreover, $(D_1F(x_0,y_0))^{-1} = (Df(x_0))^{-1}$, so $\|(D_1F(x_0,y_0))^{-1}\|_{\sL(\sY,\sX)} = M$ in \eqref{eq:D_2f_x0y_0_inverse} by \eqref{eq:Df_x0_inverse}. Also, \eqref{eq:Uniform_Lipschitz_bound_Dfx} yields \eqref{eq:Uniform_Lipschitz_bound_D_2fxy} for all $y\in\sY$.  Finally, $D_2F(x,y) = -\id_\sY$ for all $x \in \sU$ and $y\in\sY$, so $\beta = 1$ in \eqref{eq:D1f_uniform_bound}. Hence, Theorem \ref{thm:Quantitative_implicit_function_theorem} applied to $F(x,y)$ yields $\delta \in (0,\min\{\zeta, \zeta/(2M)\}]$ and a $C^k$ (respectively, analytic) map $g:\sY\supset B_\delta(y_0) \to B_\zeta(x_0) \subset \sX$ such that $F(g(y),y) = 0$, that is, $f(g(y)) = y$ for all $y \in B_\delta(y_0)$. Lastly, \eqref{eq:Derivative_g} yields \eqref{eq:Derivative_finverse} and \eqref{eq:Lipschitz_constant_implied_function_g} yields \eqref{eq:Lipschitz_constant_inverse_function_g}.
\end{proof}

\backmatter

%
%

\bibliography{master,mfpde}

\def\cprime{$'$} \def\cprime{$'$}
  \def\ocirc#1{\ifmmode\setbox0=\hbox{$#1$}\dimen0=\ht0 \advance\dimen0
  by1pt\rlap{\hbox to\wd0{\hss\raise\dimen0
  \hbox{\hskip.2em$\scriptscriptstyle\circ$}\hss}}#1\else {\accent"17 #1}\fi}
  \def\cprime{$'$} \def\cprime{$'$} \def\cprime{$'$} \def\cprime{$'$}
  \def\polhk#1{\setbox0=\hbox{#1}{\ooalign{\hidewidth
  \lower1.5ex\hbox{`}\hidewidth\crcr\unhbox0}}} \def\cprime{$'$}
  \def\cprime{$'$} \def\cprime{$'$}
  \def\lfhook#1{\setbox0=\hbox{#1}{\ooalign{\hidewidth
  \lower1.5ex\hbox{'}\hidewidth\crcr\unhbox0}}} \def\cprime{$'$}
  \def\cprime{$'$} \def\cprime{$'$} \def\cprime{$'$} \def\cprime{$'$}
\providecommand{\bysame}{\leavevmode\hbox to3em{\hrulefill}\thinspace}
\providecommand{\MR}{\relax\ifhmode\unskip\space\fi MR }
\providecommand{\MRhref}[2]{%
  \href{http://www.ams.org/mathscinet-getitem?mr=#1}{#2}
}
\providecommand{\href}[2]{#2}
\begin{thebibliography}{100}

\bibitem{AMR}
R.~Abraham, J.~E. Marsden, and T.~Ratiu, \emph{Manifolds, tensor analysis, and
  applications}, second ed., Springer, New York, 1988. \MR{960687 (89f:58001)}

\bibitem{Abramovich_Aliprantis_2002}
Y.~A. Abramovich and C.~D. Aliprantis, \emph{An invitation to operator theory},
  Graduate Studies in Mathematics, vol.~50, American Mathematical Society,
  Providence, RI, 2002. \MR{1921782 (2003h:47072)}

\bibitem{Ache_2011arxiv}
A.~G. Ache, \emph{On the uniqueness of asymptotic limits of the {R}icci flow},
  arXiv:1211.3387.

\bibitem{AdamsFournier}
R.~A. Adams and J.~J.~F. Fournier, \emph{Sobolev spaces}, second ed.,
  Elsevier/Academic Press, Amsterdam, 2003. \MR{2424078 (2009e:46025)}

\bibitem{Apostol_math_analysis}
T.~M. Apostol, \emph{Mathematical analysis}, second ed., Addison-Wesley
  Publishing Co., Reading, Mass.-London-Don Mills, Ont., 1974. \MR{0344384}

\bibitem{Aubin_1998}
T.~Aubin, \emph{Some nonlinear problems in {R}iemannian geometry}, Springer,
  Berlin, 1998. \MR{1636569 (99i:58001)}

\bibitem{Berger_1977}
M.~Berger, \emph{Nonlinearity and functional analysis}, Academic Press, New
  York, 1977. \MR{0488101 (58 \#7671)}

\bibitem{Bourguignon_1981}
J.-P. Bourguignon, \emph{Formules de {W}eitzenb\"ock en dimension {$4$}},
  Riemannian geometry in dimension 4 ({P}aris, 1978/1979), Textes Math.,
  vol.~3, CEDIC, Paris, 1981, pp.~308--333. \MR{769143}

\bibitem{Bourguignon_1990}
J.-P. Bourguignon, \emph{The ``magic'' of {W}eitzenb\"ock formulas},
  Variational methods, Proceedings of the Conference on Variational Problems,
  Paris, 1988 (H.~Berestycki, J-M. Coron, and I.~Ekeland, eds.), Progress in
  nonlinear differential equations and their applications, vol.~4,
  Birkh\"auser, Boston, MA, 1990, pp.~251--271.

\bibitem{Bradlow_1990}
S.~B. Bradlow, \emph{Vortices in holomorphic line bundles over closed
  {K}\"ahler manifolds}, Comm. Math. Phys. \textbf{135} (1990), no.~1, 1--17.
  \MR{1086749 (92f:32053)}

\bibitem{Bradlow_1991}
S.~B. Bradlow, \emph{Special metrics and stability for holomorphic bundles with
  global sections}, J. Differential Geom. \textbf{33} (1991), 169--213.
  \MR{1085139 (91m:32031)}

\bibitem{BradlowGP}
S.~B. Bradlow and O.~Garc{\'{\i}}a-Prada, \emph{Non-abelian monopoles and
  vortices}, Geometry and physics ({A}arhus, 1995), Lecture Notes in Pure and
  Appl. Math., vol. 184, Dekker, New York, 1997, arXiv:alg-geom/9602010,
  pp.~567--589. \MR{1423193 (97k:53032)}

\bibitem{Brendle_2005}
S.~Brendle, \emph{Convergence of the {Y}amabe flow for arbitrary initial
  energy}, J. Differential Geom. \textbf{69} (2005), 217--278. \MR{2168505
  (2006e:53119)}

\bibitem{Brezis}
H.~Br{\'e}zis, \emph{Functional analysis, {S}obolev spaces and partial
  differential equations}, Universitext, Springer, New York, 2011. \MR{2759829
  (2012a:35002)}

\bibitem{BrockertomDieck}
T.~Br{\"o}cker and T.~tom Dieck, \emph{Representations of compact {L}ie
  groups}, Graduate Texts in Mathematics, vol.~98, Springer, New York, 1995.
  \MR{1410059 (97i:22005)}

\bibitem{Carlotto_Chodosh_Rubinstein_2015}
A.~Carlotto, O.~Chodosh, and Y.~A. Rubinstein, \emph{Slowly converging {Y}amabe
  flows}, Geom. Topol. \textbf{19} (2015), no.~3, 1523--1568, arXiv:1401.3738.
  \MR{3352243}

\bibitem{Chavel}
I.~Chavel, \emph{Eigenvalues in {R}iemannian geometry}, Pure and Applied
  Mathematics, vol. 115, Academic Press, Inc., Orlando, FL, 1984, Including a
  chapter by Burton Randol, With an appendix by Jozef Dodziuk. \MR{768584}

\bibitem{Chill_2003}
R.~Chill, \emph{On the {{\L}}ojasiewicz--{S}imon gradient inequality}, J.
  Funct. Anal. \textbf{201} (2003), 572--601. \MR{1986700 (2005c:26019)}

\bibitem{Chill_2006}
R.~Chill, \emph{The {{\L}}ojasiewicz--{S}imon gradient inequality in {H}ilbert
  spaces}, Proceedings of the 5th European-Maghrebian Workshop on Semigroup
  Theory, Evolution Equations, and Applications (M.~A. Jendoubi, ed.), 2006,
  pp.~25--36.

\bibitem{Chill_Fiorenza_2006}
R.~Chill and A.~Fiorenza, \emph{Convergence and decay rate to equilibrium of
  bounded solutions of quasilinear parabolic equations}, J. Differential
  Equations \textbf{228} (2006), 611--632. \MR{2289546 (2007k:35226)}

\bibitem{Chill_Haraux_Jendoubi_2009}
R.~Chill, A.~Haraux, and M.~A. Jendoubi, \emph{Applications of the
  {{\L}}ojasiewicz--{S}imon gradient inequality to gradient-like evolution
  equations}, Anal. Appl. (Singap.) \textbf{7} (2009), 351--372. \MR{2572850
  (2011a:35557)}

\bibitem{Chill_Jendoubi_2003}
R.~Chill and M.~A. Jendoubi, \emph{Convergence to steady states in
  asymptotically autonomous semilinear evolution equations}, Nonlinear Anal.
  \textbf{53} (2003), 1017--1039. \MR{1978032 (2004d:34103)}

\bibitem{Chill_Jendoubi_2007}
R.~Chill and M.~A. Jendoubi, \emph{Convergence to steady states of solutions of
  non-autonomous heat equations in {$\Bbb R^N$}}, J. Dynam. Differential
  Equations \textbf{19} (2007), 777--788. \MR{2350247 (2009h:35208)}

\bibitem{Clarke_1976}
Frank~H. Clarke, \emph{On the inverse function theorem}, Pacific J. Math.
  \textbf{64} (1976), no.~1, 97--102. \MR{0425047}

\bibitem{Colding_Minicozzi_2014sdg}
T.~H. Colding and W.~P. Minicozzi, II, \emph{{\L}ojasiewicz inequalities and
  applications}, Surveys in Differential Geometry \textbf{XIX} (2014), 63--82,
  arXiv:1402.5087.

\bibitem{Deimling_1985}
K.~Deimling, \emph{Nonlinear functional analysis}, Springer--Verlag, Berlin,
  1985. \MR{787404 (86j:47001)}

\bibitem{DK}
S.~K. Donaldson and P.~B. Kronheimer, \emph{The geometry of four-manifolds},
  Oxford University Press, New York, 1990.

\bibitem{Donaldson_Segal_2011}
S.~K. Donaldson and E.~Segal, \emph{Gauge theory in higher dimensions, {II}},
  Surveys in differential geometry. {V}olume {XVI}. {G}eometry of special
  holonomy and related topics, Surv. Differ. Geom., vol.~16, Int. Press,
  Somerville, MA, 2011, pp.~1--41. \MR{2893675}

\bibitem{Doria_2005}
C.~M. Doria, \emph{Boundary value problems for the second-order
  {S}eiberg-{W}itten equations}, Bound. Value Probl. (2005), no.~1, 73--91.
  \MR{2148375}

\bibitem{Doria_2006}
C.~M. Doria, \emph{Variational principle for the {S}eiberg-{W}itten equations},
  Contributions to nonlinear analysis, Progr. Nonlinear Differential Equations
  Appl., vol.~66, Birkh\"auser, Basel, 2006, pp.~247--261. \MR{2187807}

\bibitem{Evans2}
L.~C. Evans, \emph{Partial differential equations}, second ed., Graduate
  Studies in Mathematics, vol.~19, American Mathematical Society, Providence,
  RI, 2010. \MR{2597943 (2011c:35002)}

\bibitem{Feehan_yang_mills_gradient_flow_v4}
P.~M.~N. Feehan, \emph{Global existence and convergence of solutions to
  gradient systems and applications to {Y}ang--{M}ills gradient flow},
  submitted to a refereed monograph series on September 4, 2014,
  arXiv:1409.1525v4, xx+475 pages.

\bibitem{Feehan_lojasiewicz_inequality_ground_state}
P.~M.~N. Feehan, \emph{Optimal {{\L}}ojasiewicz--{S}imon inequalities and
  {M}orse--{B}ott {Y}ang--{M}ills energy functions}, submitted to a refereed
  journal on June 28, 2017, arXiv:1706.09349.

\bibitem{FeehanSlice}
P.~M.~N. Feehan, \emph{Critical-exponent {S}obolev norms and the slice theorem
  for the quotient space of connections}, Pacific J. Math. \textbf{200} (2001),
  no.~1, 71--118, arXiv:dg-ga/9711004. \MR{1863408}

\bibitem{Feehan_yangmillsenergygapflat}
P.~M.~N. Feehan, \emph{Energy gap for {Y}ang--{M}ills connections, {II}:
  {A}rbitrary closed {R}iemannian manifolds}, Adv. Math. \textbf{312} (2017),
  547--587, arXiv:1502.00668. \MR{3635819}

\bibitem{FL1}
P.~M.~N. Feehan and T.~G. Leness, \emph{{$\rm PU(2)$} monopoles. {I}.
  {R}egularity, {U}hlenbeck compactness, and transversality}, J. Differential
  Geom. \textbf{49} (1998), 265--410. \MR{1664908 (2000e:57052)}

\bibitem{FL2a}
P.~M.~N. Feehan and T.~G. Leness, \emph{{$\rm PU(2)$} monopoles and links of
  top-level {S}eiberg-{W}itten moduli spaces}, J. Reine Angew. Math.
  \textbf{538} (2001), 57--133, arXiv:math/0007190. \MR{1855754}

\bibitem{Feehan_Maridakis_Lojasiewicz-Simon_harmonic_maps_v6}
P.~M.~N. Feehan and M.~Maridakis, \emph{{{\L}}ojasiewicz--{S}imon gradient
  inequalities for analytic and {M}orse--{B}ott functions on {B}anach spaces
  and applications to harmonic maps}, submitted to a refereed journal on
  October 13, 2015, arXiv:1510.03817v6.

\bibitem{Feireisl_Laurencot_Petzeltova_2007}
E.~Feireisl, P.~Lauren{\c{c}}ot, and H.~Petzeltov{\'a}, \emph{On convergence to
  equilibria for the {K}eller-{S}egel chemotaxis model}, J. Differential
  Equations \textbf{236} (2007), 551--569. \MR{2322024 (2008c:35121)}

\bibitem{Feireisl_Simondon_2000}
E.~Feireisl and F.~Simondon, \emph{Convergence for semilinear degenerate
  parabolic equations in several space dimensions}, J. Dynam. Differential
  Equations \textbf{12} (2000), 647--673. \MR{1800136 (2002g:35116)}

\bibitem{Feireisl_Takac_2001}
E.~Feireisl and P.~Tak{\'a}{\v{c}}, \emph{Long-time stabilization of solutions
  to the {G}inzburg-{L}andau equations of superconductivity}, Monatsh. Math.
  \textbf{133} (2001), no.~3, 197--221. \MR{1861137 (2003a:35022)}

\bibitem{Folland}
G.~B. Folland, \emph{Introduction to partial differential equations}, second
  ed., Princeton University Press, Princeton, NJ, 1995. \MR{1357411
  (96h:35001)}

\bibitem{FU}
D.~S. Freed and K.~K. Uhlenbeck, \emph{Instantons and four-manifolds}, second
  ed., Mathematical Sciences Research Institute Publications, vol.~1, Springer,
  New York, 1991. \MR{1081321 (91i:57019)}

\bibitem{FrM}
R.~Friedman and John~W. Morgan, \emph{Smooth four-manifolds and complex
  surfaces}, Ergebnisse der Mathematik und ihrer Grenzgebiete (3) [Results in
  Mathematics and Related Areas (3)], vol.~27, Springer--Verlag, Berlin, 1994.
  \MR{1288304}

\bibitem{Frigeri_Grasselli_Krejcic_2013}
S.~Frigeri, M.~Grasselli, and P.~Krej{\v{c}}{\'{\i}}, \emph{Strong solutions
  for two-dimensional nonlocal {C}ahn-{H}illiard-{N}avier--{S}tokes systems},
  J. Differential Equations \textbf{255} (2013), no.~9, 2587--2614.
  \MR{3090070}

\bibitem{Froyshov_2008}
K.~A. Fr{\o}yshov, \emph{Compactness and gluing theory for monopoles}, Geometry
  \& Topology Monographs, vol.~15, Geometry \& Topology Publications, Coventry,
  2008, available at \url{msp.warwick.ac.uk/gtm/2008/15/}. \MR{2465077
  (2010a:57050)}

\bibitem{GilbargTrudinger}
D.~Gilbarg and N.~S. Trudinger, \emph{Elliptic partial differential equations
  of second order}, second ed., Grundlehren der Mathematischen Wissenschaften
  [Fundamental Principles of Mathematical Sciences], vol. 224, Springer-Verlag,
  Berlin, 1983. \MR{737190}

\bibitem{Gilkey2}
P.~B. Gilkey, \emph{Invariance theory, the heat equation, and the
  {A}tiyah--{S}inger index theorem}, second ed., Studies in Advanced
  Mathematics, CRC Press, Boca Raton, FL, 1995. \MR{1396308 (98b:58156)}

\bibitem{Grasselli_Wu_2013}
M.~Grasselli and H.~Wu, \emph{Long-time behavior for a hydrodynamic model on
  nematic liquid crystal flows with asymptotic stabilizing boundary condition
  and external force}, SIAM J. Math. Anal. \textbf{45} (2013), no.~3,
  965--1002. \MR{3048212}

\bibitem{Grasselli_Wu_Zheng_2009}
M.~Grasselli, H.~Wu, and S.~Zheng, \emph{Convergence to equilibrium for
  parabolic-hyperbolic time-dependent {G}inzburg-{L}andau-{M}axwell equations},
  SIAM J. Math. Anal. \textbf{40} (2008/09), no.~5, 2007--2033.

\bibitem{GroisserParkerGeometryDefinite}
D.~Groisser and Thomas~H. Parker, \emph{The geometry of the {Y}ang--{M}ills
  moduli space for definite manifolds}, J. Differential Geom. \textbf{29}
  (1989), 499--544. \MR{992329 (90f:58021)}

\bibitem{Guentner_1993}
E.~Guentner, \emph{{$K$}-homology and the index theorem}, Index theory and
  operator algebras ({B}oulder, {CO}, 1991), Contemp. Math., vol. 148, Amer.
  Math. Soc., Providence, RI, 1993, pp.~47--66. \MR{1228499 (94h:19006)}

\bibitem{Haraux_2012}
A.~Haraux, \emph{Some applications of the {{\L}}ojasiewicz gradient
  inequality}, Commun. Pure Appl. Anal. \textbf{11} (2012), 2417--2427.
  \MR{2912754}

\bibitem{Haraux_Jendoubi_1998}
A.~Haraux and M.~A. Jendoubi, \emph{Convergence of solutions of second-order
  gradient-like systems with analytic nonlinearities}, J. Differential
  Equations \textbf{144} (1998), 313--320. \MR{1616968 (99a:35182)}

\bibitem{Haraux_Jendoubi_2007}
A.~Haraux and M.~A. Jendoubi, \emph{On the convergence of global and bounded
  solutions of some evolution equations}, J. Evol. Equ. \textbf{7} (2007),
  449--470. \MR{2328934 (2008k:35480)}

\bibitem{Haraux_Jendoubi_2011}
A.~Haraux and M.~A. Jendoubi, \emph{The {{\L}}ojasiewicz gradient inequality in
  the infinite-dimensional {H}ilbert space framework}, J. Funct. Anal.
  \textbf{260} (2011), 2826--2842. \MR{2772353 (2012c:47168)}

\bibitem{Haraux_Jendoubi_Kavian_2003}
A.~Haraux, M.~A. Jendoubi, and O.~Kavian, \emph{Rate of decay to equilibrium in
  some semilinear parabolic equations}, J. Evol. Equ. \textbf{3} (2003),
  463--484. \MR{2019030 (2004k:35187)}

\bibitem{Haslhofer_2012cvpde}
R.~Haslhofer, \emph{Perelman's lambda-functional and the stability of
  {R}icci-flat metrics}, Calc. Var. Partial Differential Equations \textbf{45}
  (2012), 481--504. \MR{2984143}

\bibitem{Haslhofer_Muller_2014}
R.~Haslhofer and R.~M{\"u}ller, \emph{Dynamical stability and instability of
  {R}icci-flat metrics}, Math. Ann. \textbf{360} (2014), no.~1-2, 547--553,
  arXiv:1301.3219. \MR{3263173}

\bibitem{Haydys_2017arxiv}
Andriy Haydys, \emph{${G}_2$ instantons and the {S}eiberg--{W}itten monopoles},
  arXiv:1607.01763.

\bibitem{Haydys_2019}
Andriy Haydys, \emph{The infinitesimal multiplicities and orientations of the
  blow-up set of the {S}eiberg--{W}itten equation with multiple spinors}, Adv.
  Math. \textbf{343} (2019), 193--218, arXiv:1607.01763. \MR{3880858}

\bibitem{Haydys_Walpuski_2015}
Andriy Haydys and Thomas Walpuski, \emph{A compactness theorem for the
  {S}eiberg-{W}itten equation with multiple spinors in dimension three}, Geom.
  Funct. Anal. \textbf{25} (2015), no.~6, 1799--1821. \MR{3432158}

\bibitem{Hilgert_Neeb_structure_geometry_lie_groups}
J.~Hilgert and K.-H. Neeb, \emph{Structure and geometry of {L}ie groups},
  Springer Monographs in Mathematics, Springer, New York, 2012. \MR{3025417}

\bibitem{Hitchin_1987}
N.~J. Hitchin, \emph{The self-duality equations on a {R}iemann surface}, Proc.
  London Math. Soc. (3) \textbf{55} (1987), no.~1, 59--126. \MR{887284
  (89a:32021)}

\bibitem{Hong_2001}
M-C. Hong, \emph{Heat flow for the {Y}ang--{M}ills-{H}iggs field and the
  {H}ermitian {Y}ang--{M}ills-{H}iggs metric}, Ann. Global Anal. Geom.
  \textbf{20} (2001), 23--46. \MR{1846895 (2002h:53040)}

\bibitem{Hong_Schabrun_2010}
M-C. Hong and L.~Schabrun, \emph{Global existence for the {S}eiberg--{W}itten
  flow}, Comm. Anal. Geom. \textbf{18} (2010), 433--473. \MR{2747435
  (2012b:53139)}

\bibitem{Hormander_v3}
L.~H{\"o}rmander, \emph{The analysis of linear partial differential operators,
  {III}. {P}seudo-differential operators}, Springer, Berlin, 2007. \MR{2304165
  (2007k:35006)}

\bibitem{Huang_2006}
S.-Z. Huang, \emph{Gradient inequalities}, Mathematical Surveys and Monographs,
  vol. 126, American Mathematical Society, Providence, RI, 2006. \MR{2226672
  (2007b:35035)}

\bibitem{Huang_Takac_2001}
S.-Z. Huang and P.~Tak{\'a}{\v{c}}, \emph{Convergence in gradient-like systems
  which are asymptotically autonomous and analytic}, Nonlinear Anal.
  \textbf{46} (2001), 675--698. \MR{1857152 (2002f:35125)}

\bibitem{IrwinThesis}
C.~A. Irwin, \emph{Bubbling in the harmonic map heat flow}, {Ph.D.} thesis,
  Stanford University, Palo Alto, CA, 1998. \MR{2698290}

\bibitem{Jendoubi_1998jfa}
M.~A. Jendoubi, \emph{A simple unified approach to some convergence theorems of
  {L}. {S}imon}, J. Funct. Anal. \textbf{153} (1998), 187--202. \MR{1609269
  (99c:35101)}

\bibitem{Jost_Peng_Wang_1996}
J.~Jost, X.~Peng, and G.~Wang, \emph{Variational aspects of the
  {S}eiberg--{W}itten functional}, Calc. Var. Partial Differential Equations
  \textbf{4} (1996), 205--218. \MR{1386734 (97d:58055)}

\bibitem{Jost_postmodern_analysis}
J\"{u}rgen Jost, \emph{Postmodern analysis}, third ed., Universitext,
  Springer-Verlag, Berlin, 2005. \MR{2166001}

\bibitem{Joyce_compact_manifolds_special_holonomy}
D.~Joyce, \emph{Compact manifolds with special holonomy}, Oxford Mathematical
  Monographs, Oxford University Press, Oxford, 2000. \MR{1787733}

\bibitem{KadisonRingrose1}
R.~V. Kadison and J.~R. Ringrose, \emph{Fundamentals of the theory of operator
  algebras. {V}ol. {I}}, Pure and Applied Mathematics, vol. 100, Academic
  Press, Inc. [Harcourt Brace Jovanovich, Publishers], New York, 1983,
  Elementary theory. \MR{719020}

\bibitem{Kato}
T.~Kato, \emph{Perturbation theory for linear operators}, second ed., Springer,
  New York, 1984.

\bibitem{Kovalev_2003}
A.~Kovalev, \emph{Twisted connected sums and special {R}iemannian holonomy}, J.
  Reine Angew. Math. \textbf{565} (2003), 125--160. \MR{2024648}

\bibitem{Kroncke_2013arxiv}
K.~Kr{\"o}ncke, \emph{Stability of {E}instein metrics under {R}icci flow},
  Commun. Anal. Geom., to appear, arXiv:1312.2224.

\bibitem{Kroncke_2015cvpde}
K.~Kr{\"o}ncke, \emph{Stability and instability of {R}icci solitons}, Calc.
  Var. Partial Differential Equations \textbf{53} (2015), no.~1-2, 265--287,
  arXiv:1403.3721. \MR{3336320}

\bibitem{KMBook}
P.~B. Kronheimer and Tomasz~S. Mrowka, \emph{Monopoles and three-manifolds},
  Cambridge University Press, Cambridge, 2007. \MR{2388043 (2009f:57049)}

\bibitem{KwonThesis}
H.~Kwon, \emph{Asymptotic convergence of harmonic map heat flow}, {Ph.D}.
  thesis, Stanford University, Palo Alto, CA, 2002. \MR{2703296}

\bibitem{Lang_analysis}
S.~Lang, \emph{Real and functional analysis}, third ed., Graduate Texts in
  Mathematics, vol. 142, Springer--Verlag, New York, 1993. \MR{1216137}

\bibitem{Lawson}
H.~B. Lawson, Jr., \emph{The theory of gauge fields in four dimensions}, CBMS
  Regional Conference Series in Mathematics, vol.~58, Published for the
  Conference Board of the Mathematical Sciences, Washington, DC; by the
  American Mathematical Society, Providence, RI, 1985. \MR{799712}

\bibitem{LM}
H.~B. Lawson, Jr. and M.-L. Michelsohn, \emph{Spin geometry}, Princeton
  Mathematical Series, vol.~38, Princeton University Press, Princeton, NJ,
  1989. \MR{1031992 (91g:53001)}

\bibitem{Li_Zhang_2011}
J.~Li and X.~Zhang, \emph{The gradient flow of {H}iggs pairs}, J. Eur. Math.
  Soc. (JEMS) \textbf{13} (2011), 1373--1422. \MR{2825168 (2012m:53043)}

\bibitem{Liu_Yang_2010}
Q.~Liu and Y.~Yang, \emph{Rigidity of the harmonic map heat flow from the
  sphere to compact {K}\"ahler manifolds}, Ark. Mat. \textbf{48} (2010),
  121--130. \MR{2594589 (2011a:53066)}

\bibitem{Lojasiewicz_1965}
S.~{\L}ojasiewicz, \emph{Ensembles semi-analytiques},  (1965), Publ. Inst.
  Hautes Etudes Sci., Bures-sur-Yvette. LaTeX version by M. Coste, August 29,
  2006 based on mimeographed course notes by S. {\L}ojasiewicz, available at
  \url{perso.univ-rennes1.fr/michel.coste/Lojasiewicz.pdf}.

\bibitem{Lojasiewicz_1984}
S.~{\L}ojasiewicz, \emph{Sur les trajectoires du gradient d'une fonction
  analytique}, Geometry seminars, 1982--1983 ({B}ologna, 1982/1983), Univ.
  Stud. Bologna, Bologna, 1984, pp.~115--117. \MR{771152 (86m:58023)}

\bibitem{Maly_Ziemer_1997}
J.~Mal{\'y} and W.~P. Ziemer, \emph{Fine regularity of solutions of elliptic
  partial differential equations}, Mathematical Surveys and Monographs,
  vol.~51, American Mathematical Society, Providence, RI, 1997. \MR{1461542
  (98h:35080)}

\bibitem{McDuffSalamon1}
Dusa McDuff and Dietmar Salamon, \emph{{$J$}-holomorphic curves and symplectic
  topology}, American Mathematical Society Colloquium Publications, vol.~52,
  American Mathematical Society, Providence, RI, 2004. \MR{2045629}

\bibitem{Melrose_Lectures_pseudodifferential_operators}
R.~B. Melrose, \emph{Lectures on pseudodifferential operators}, Massachusetts
  Institute of Technology, November 2006,
  \url{www-math.mit.edu/~rbm/Lecture_notes.html}.

\bibitem{MMR}
John~W. Morgan, Tomasz~S. Mrowka, and Daniel Ruberman, \emph{The {$L^2$}-moduli
  space and a vanishing theorem for {D}onaldson polynomial invariants},
  Monographs in Geometry and Topology, vol.~2, International Press, Cambridge,
  MA, 1994. \MR{1287851 (95h:57039)}

\bibitem{Mrowka_Rollin_2006}
Tomasz~S. Mrowka and Yann Rollin, \emph{Legendrian knots and monopoles},
  Algebr. Geom. Topol. \textbf{6} (2006), 1--69. \MR{2199446}

\bibitem{NicolaescuSWNotes}
L.~I. Nicolaescu, \emph{Notes on {S}eiberg--{W}itten theory}, Graduate Studies
  in Mathematics, vol.~28, American Mathematical Society, Providence, RI, 2000.
  \MR{1787219 (2001k:57037)}

\bibitem{OTVortex}
C.~Okonek and A.~Teleman, \emph{The coupled {S}eiberg-{W}itten equations,
  vortices, and moduli spaces of stable pairs}, Internat. J. Math. \textbf{6}
  (1995), no.~6, 893--910, arXiv:alg-geom/9505012. \MR{1354000}

\bibitem{Papi_2005}
Marco Papi, \emph{On the domain of the implicit function and applications}, J.
  Inequal. Appl. (2005), no.~3, 221--234. \MR{2206097}

\bibitem{ParkerGauge}
Thomas~H. Parker, \emph{Gauge theories on four-dimensional {R}iemannian
  manifolds}, Comm. Math. Phys. \textbf{85} (1982), 563--602. \MR{677998
  (84b:58036)}

\bibitem{PTLocal}
Victor~Ya. Pidstrigach and Andre\u{i}~Nikolaevic Tyurin, \emph{Localisation of
  {D}onaldson invariants along the {S}eiberg--{W}itten classes},
  arXiv:dg-ga/9507004.

\bibitem{Reed_Simon_v1}
M.~Reed and B.~Simon, \emph{Methods of modern mathematical physics. {I}},
  second ed., Academic Press, New York, 1980, Functional analysis. \MR{751959
  (85e:46002)}

\bibitem{Rade_1992}
J.~R\r{a}de, \emph{On the {Y}ang--{M}ills heat equation in two and three
  dimensions}, J. Reine Angew. Math. \textbf{431} (1992), 123--163. \MR{1179335
  (94a:58041)}

\bibitem{Rudin}
W.~Rudin, \emph{Functional analysis}, second ed., International Series in Pure
  and Applied Mathematics, McGraw-Hill, Inc., New York, 1991. \MR{1157815}

\bibitem{Rybka_Hoffmann_1998}
P.~Rybka and K.-H. Hoffmann, \emph{Convergence of solutions to the equation of
  quasi-static approximation of viscoelasticity with capillarity}, J. Math.
  Anal. Appl. \textbf{226} (1998), 61--81. \MR{1646449 (99h:35146)}

\bibitem{Rybka_Hoffmann_1999}
P.~Rybka and K.-H. Hoffmann, \emph{Convergence of solutions to
  {C}ahn-{H}illiard equation}, Comm. Partial Differential Equations \textbf{24}
  (1999), 1055--1077. \MR{1680877 (2001a:35028)}

\bibitem{SalamonSWBook}
D.~A. Salamon, \emph{Spin geometry and {S}eiberg--{W}itten invariants},
  unpublished book, available at \url{math.ethz.ch/~salamon/publications.html}.

\bibitem{Simon_1983}
L.~Simon, \emph{Asymptotics for a class of nonlinear evolution equations, with
  applications to geometric problems}, Ann. of Math. (2) \textbf{118} (1983),
  525--571. \MR{727703 (85b:58121)}

\bibitem{Simon_1985}
L.~Simon, \emph{Isolated singularities of extrema of geometric variational
  problems}, Lecture Notes in Math., vol. 1161, Springer, Berlin, 1985.
  \MR{821971 (87d:58045)}

\bibitem{Simpson_1988}
C.~T. Simpson, \emph{Constructing variations of {H}odge structure using
  {Y}ang--{M}ills theory and applications to uniformization}, J. Amer. Math.
  Soc. \textbf{1} (1988), no.~4, 867--918. \MR{944577 (90e:58026)}

\bibitem{Stakgold_Holst}
I.~Stakgold and M.~Holst, \emph{Green's functions and boundary value problems},
  third ed., Wiley, Hoboken, NJ, 2011.

\bibitem{Takac_2000}
P.~Tak{\'a}{\v{c}}, \emph{Stabilization of positive solutions for analytic
  gradient-like systems}, Discrete Contin. Dynam. Systems \textbf{6} (2000),
  947--973. \MR{1788263 (2001i:35162)}

\bibitem{TauFrame}
C.~H. Taubes, \emph{A framework for {M}orse theory for the {Y}ang--{M}ills
  functional}, Invent. Math. \textbf{94} (1988), 327--402. \MR{958836
  (90a:58035)}

\bibitem{ToppingThesis}
P.~Topping, \emph{The harmonic map heat flow from surfaces}, {Ph.D.} thesis,
  University of Warwick, United Kingdom, April 1996.

\bibitem{Topping_1997}
P.~Topping, \emph{Rigidity in the harmonic map heat flow}, J. Differential
  Geom. \textbf{45} (1997), 593--610. \MR{1472890 (99d:58050)}

\bibitem{Treves1}
F.~Tr{\`e}ves, \emph{Introduction to pseudodifferential and {F}ourier integral
  operators. {V}ol. 1}, Plenum Press, New York, 1980.

\bibitem{UhlLp}
K.~K. Uhlenbeck, \emph{Connections with {$L^{p}$} bounds on curvature}, Comm.
  Math. Phys. \textbf{83} (1982), 31--42. \MR{648356 (83e:53035)}

\bibitem{UhlRem}
K.~K. Uhlenbeck, \emph{Removable singularities in {Y}ang--{M}ills fields},
  Comm. Math. Phys. \textbf{83} (1982), 11--29. \MR{648355 (83e:53034)}

\bibitem{Varadarajan}
V.~S. Varadarajan, \emph{Lie groups, {L}ie algebras, and their
  representations}, Graduate Texts in Mathematics, vol. 102, Springer--Verlag,
  New York, 1984, Reprint of the 1974 edition. \MR{746308 (85e:22001)}

\bibitem{Volpert1}
V.~Volpert, \emph{Elliptic partial differential equations. {V}olume 1:
  {F}redholm theory of elliptic problems in unbounded domains}, Monographs in
  Mathematics, vol. 101, Birkh\"auser/Springer Basel AG, Basel, 2011.
  \MR{2778694 (2012g:35003)}

\bibitem{WalpuskiThesis}
Thomas Walpuski, \emph{Gauge theory on ${G}_2$-manifolds}, {Ph.D}. thesis,
  Imperial College London, United Kingdom, 2013,
  \url{http://hdl.handle.net/10044/1/14365}.

\bibitem{Warner}
F.~W. Warner, \emph{Foundations of differentiable manifolds and {L}ie groups},
  Graduate Texts in Mathematics, vol.~94, Springer, New York, 1983. \MR{722297
  (84k:58001)}

\bibitem{Wehrheim_2004}
K.~Wehrheim, \emph{Uhlenbeck compactness}, EMS Series of Lectures in
  Mathematics, European Mathematical Society (EMS), Z\"urich, 2004. \MR{2030823
  (2004m:53045)}

\bibitem{Wilkin_2008}
G.~Wilkin, \emph{Morse theory for the space of {H}iggs bundles}, Comm. Anal.
  Geom. \textbf{16} (2008), 283--332. \MR{2425469 (2010f:53115)}

\bibitem{Wu_Xu_2013}
H.~Wu and X.~Xu, \emph{Strong solutions, global regularity, and stability of a
  hydrodynamic system modeling vesicle and fluid interactions}, SIAM J. Math.
  Anal. \textbf{45} (2013), no.~1, 181--214. \MR{3032974}

\bibitem{Wu_1988}
H.~H. Wu, \emph{The {B}ochner technique in differential geometry}, Math. Rep.
  \textbf{3} (1988), no.~2, i--xii and 289--538. \MR{1079031 (91h:58031)}

\bibitem{Yang_2003aim}
B.~Yang, \emph{The uniqueness of tangent cones for {Y}ang--{M}ills connections
  with isolated singularities}, Adv. Math. \textbf{180} (2003), 648--691.
  \MR{2020554 (2004m:58026)}

\bibitem{Yosida}
K.~Yosida, \emph{Functional analysis}, sixth ed., Springer, New York, 1980.

\bibitem{Zeidler_nfaa_v1}
E.~Zeidler, \emph{Nonlinear functional analysis and its applications, {I}.
  {F}ixed-point theorems}, Springer, New York, 1986. \MR{816732 (87f:47083)}

\bibitem{Zeidler_nfaa_v2a}
E.~Zeidler, \emph{Nonlinear functional analysis and its applications.
  {II}/{A}}, Springer--Verlag, New York, 1990, Linear monotone operators.
  \MR{1033497 (91b:47001)}

\end{thebibliography}

\bibliographystyle{amsplain-nodash}

\end{document}